\title
{Cohomological Hall algebras for $3$-Calabi--Yau categories}
\date{}
\author{Tasuki Kinjo}
\address{Research Institute for Mathematical Sciences,
Kyoto University, Kyoto 606-8502, Japan}
\email{tkinjo@kurims.kyoto-u.ac.jp}
\author{Hyeonjun Park}
\address{June E Huh Center for Mathematical Challenges, Korea Institute for Advanced Study, 85 Hoegiro, Dongdaemun-gu, Seoul 02455, Republic of Korea}
\email{hyeonjunpark@kias.re.kr}
\address{School of Mathematics, University of Edinburgh, Edinburgh, UK}
\email{p.safronov@ed.ac.uk}
\author{Pavel Safronov}
\definecolor{e-mail}{rgb}{0,.40,.80}
\definecolor{reference}{rgb}{.20,.60,.22}
\definecolor{citation}{rgb}{0,.40,.80}
\theoremstyle{plain}
\newtheorem{maintheorem}{Theorem}
\newtheorem{thm}{Theorem}[section]
\newtheorem{prop}[thm]{Proposition}
\newtheorem{def-prop}[thm]{Definition-Proposition}
\newtheorem{lem}[thm]{Lemma}
\newtheorem{cor}[thm]{Corollary}
\newtheorem*{thm*}{Theorem}
\theoremstyle{definition}
\newtheorem{defin}[thm]{Definition}
\newtheorem*{NaC}{Notation and Convention}
\newtheorem*{ACK}{Acknowledgement}
\theoremstyle{remark}
\newtheorem{rmk}[thm]{Remark}
\newtheorem*{rmk*}{Remark}
\newtheorem{ex}[thm]{Example}
\newlist{thmlist}{enumerate}{1}
\setlist[thmlist]{label=(\roman{thmlisti}), ref=\thethm(\roman{thmlisti})}
\DeclareMathOperator{\ch}{ch}
\DeclareMathOperator{\Spec}{Spec}
\DeclareMathOperator{\id}{id}
\newcommand{\B}{\mathrm{B}}
\newcommand{\rC}{\mathrm{C}}
\newcommand{\bA}{\mathbb{A}}
\newcommand{\bC}{\mathbb{C}}
\newcommand{\C}{\bC}
\newcommand{\bD}{\mathbb{D}}
\newcommand{\bE}{\mathbb{E}}
\newcommand{\bG}{\mathbb{G}}
\newcommand{\bL}{\mathbb{L}}
\newcommand{\bN}{\mathbb{N}}
\newcommand{\bQ}{\mathbb{Q}}
\newcommand{\bR}{\mathbb{R}}
\newcommand{\bT}{\mathbb{T}}
\newcommand{\bZ}{\mathbb{Z}}
\newcommand{\Z}{\mathbb{Z}}
\DeclareMathOperator{\Hom}{Hom}
\DeclareMathOperator{\Tot}{Tot}
\DeclareMathOperator{\Coh}{Coh}
\DeclareMathOperator{\Pic}{Pic}
\DeclareMathOperator{\Sym}{Sym}
\DeclareMathOperator{\cl}{cl}
\DeclareMathOperator{\GL}{GL}
\DeclareMathOperator{\Sp}{Sp}
\DeclareMathOperator{\SL}{SL}
\DeclareMathOperator{\SO}{SO}
\DeclareMathOperator{\Perv}{Perv}
\DeclareMathOperator{\pt}{pt}
\DeclareMathOperator{\pr}{pr}
\DeclareMathOperator{\op}{op}
\DeclareMathOperator{\HC}{HC}
\DeclareMathOperator{\HH}{HH}
\DeclareMathOperator{\Perf}{Perf}
\DeclareMathOperator{\Mod}{Mod}
\DeclareMathOperator{\Map}{Map}
\DeclareMathOperator{\rank}{rank}
\DeclareMathOperator{\vir}{vir}
\DeclareMathOperator{\Crit}{Crit}
\DeclareMathOperator{\red}{red}
\DeclareMathOperator{\vdim}{vd}
\DeclareMathOperator{\Lag}{Lag}
\DeclareMathOperator{\can}{can}
\DeclareMathOperator{\cofib}{cofib}
\DeclareMathOperator{\fib}{fib}
\newcommand\cdga{\mathop{\mathbf{cdga}}\nolimits}
\newcommand\dSt{\mathop{\mathbf{dSt}}\nolimits}
\newcommand\dgcat{\mathop{\mathbf{dgcat}}\nolimits}
\newcommand{\HBM}{H^{\mathrm{BM}}}
\newcommand{\sHom}{\mathop{\mathcal{H} \mathit{om}}\nolimits}
\renewcommand{\Re}{\operatorname{Re}}
\newcommand\ddr{d_{\mathrm{dR}}}
\newcommand{\cA}{{\mathcal A}}
\newcommand{\cE}{{\mathcal E}}
\newcommand{\cF}{{\mathcal F}}
\newcommand{\cG}{{\mathcal G}}
\newcommand{\cH}{{\mathcal H}}
\newcommand{\cL}{{\mathcal L}}
\newcommand{\cM}{{\mathcal M}}
\newcommand{\cN}{{\mathcal N}}
\newcommand{\cO}{{\mathcal O}}
\newcommand{\cS}{{\mathcal S}}
\newcommand{\fC}{{\mathfrak C}}
\newcommand{\fF}{{\mathfrak F}}
\newcommand{\fg}{\mathfrak{g}}
\newcommand{\fL}{{\mathfrak L}}
\newcommand{\fl}{{\mathfrak l}}
\newcommand{\fM}{{\mathfrak M}}
\newcommand{\fp}{{\mathfrak p}}
\newcommand{\fQ}{{\mathfrak Q}}
\newcommand{\fR}{{\mathfrak R}}
\newcommand{\fS}{{\mathfrak S}}
\newcommand{\ft}{{\mathfrak t}}
\newcommand{\fU}{{\mathfrak U}}
\newcommand{\fu}{{\mathfrak u}}
\newcommand{\fX}{{\mathfrak X}}
\newcommand{\fY}{{\mathfrak Y}}
\newcommand{\fZ}{{\mathfrak Z}}
\newcommand{\ev}{\mathrm{ev}}
\newcommand{\filt}{\mathrm{filt}}
\newcommand{\gr}{\mathrm{gr}}
\newcommand{\pbcorner}[1][ul]{\save*!/#1+1.6pc/#1:(1,-1)@^{|-}\restore}
\newcommand{\pocorner}[1][dr]{\save*!/#1+1.6pc/#1:(1,-1)@^{|-}\restore}
\newcommand{\eC}{{\EuScript C}}
\newcommand{\eD}{{\EuScript D}}
\newcommand{\eS}{{\EuScript S}}
\newcommand{\GIT}{{/\! \! /}}
\newcommand\freefootnote[1]{%
  \let\thefootnote\relax%
  \footnotetext{#1}%
  \let\thefootnote\svthefootnote%
}
\newcommand{\bDR}{\mathbf{DR}}
\newcommand{\Fun}{\mathrm{Fun}}
\newcommand{\Grad}{\mathrm{Grad}}
\newcommand{\Filt}{\mathrm{Filt}}
\newcommand{\LocSys}{\mathrm{LocSys}}
\newcommand{\Vect}{\mathrm{Vect}}
\newcommand{\QCoh}{\mathrm{QCoh}}
\newcommand{\Loc}{\mathrm{Loc}}
\newcommand{\Ind}{\mathrm{Ind}}
\newcommand{\ori}{\mathrm{or}}
\newcommand{\attr}{\mathrm{attr}}
\newcommand{\colim}{\operatornamewithlimits{colim}}
\newcommand{\exact}{\mathrm{ex}}
\newcommand{\fr}{\mathrm{fr}}
\newcommand{\ind}{\mathrm{ind}}
\newcommand{\sPic}{\mathrm{Pic}^{\mathrm{gr}}}
\newcommand{\rdet}{\widehat{\mathrm{det}}}
\newcommand{\Hess}{\mathrm{Hess}}
\newcommand{\der}{\mathrm{der}}
\newcommand{\TS}{\mathrm{TS}}
\newcommand{\bZZ}{\bZ / 2 \bZ}
\newcommand{\Rep}{\mathrm{Rep}}
\newcommand{\vol}{\mathrm{vol}}
\newcommand{\spn}{\mathrm{span}}
\newcommand{\spncat}{\mathrm{spn}}
\newcommand{\triv}{\mathrm{triv}}
\newcommand{\mat}{\mathrm{mat}}
\renewcommand{\det}{\mathrm{det}}
\newcommand{\Spin}{\mathrm{Spin}}
\newcommand{\defterm}[1]{\textbf{\emph{#1}}}
\newcommand{\pur}{\mathrm{pur}}
\newcommand{\Exi}{\mathrm{Ex}_{{}^!{}_*}}
\newcommand{\Hall}{\mathrm{Hall}}
\begin{document}

\begin{abstract}

The aim of this paper is to construct the cohomological Hall algebras for $3$-Calabi--Yau categories admitting a strong orientation data.
This can be regarded as a mathematical definition of the algebra of BPS states, whose existence was first mathematically conjectured by Kontsevich and Soibelman.
Along the way, we prove Joyce's conjecture on the functorial behaviour of the Donaldson--Thomas perverse sheaves for the attractor Lagrangian correspondence of $(-1)$-shifted symplectic stacks.
This result allows us to construct a parabolic induction map for cohomological Donaldson--Thomas invariants of $G$-local systems on $3$-manifolds for a reductive group $G$, which can be regarded as a $3$-manifold analogue of the Eisenstein series functor in the geometric Langlands program.

\end{abstract}

\maketitle

\setcounter{tocdepth}{1}
\tableofcontents

\section{Introduction}

\subsection*{Cohomological Hall algebras}

As a model of the algebra of BPS states in supersymmetric QFTs \cite{hm98} Kontsevich and Soibelman \cite{ks10} defined a cohomological Hall algebra (CoHA) associated to smooth algebras with potentials which we briefly recall now in the case the smooth algebra is a path algebra of a quiver. Let $Q$ be a quiver and $W$ a potential, which is a formal sum of cyclic paths in the quiver. Let $\fM_Q$ be the moduli stack of representations of $Q$ which admits a decomposition $\fM_Q = \coprod_{\alpha\in\bN^{Q_0}} \fM_{Q, \alpha}$ by the dimension vectors. Each $\fM_{Q, \alpha}$ is a quotient stack of a vector space by a product of general linear groups. The potential $W$ defines a function $f_W\colon \fM_Q\rightarrow \C$ given by the trace of $W$ in a given representation, and so we can consider the hypercohomology
\[\cH_{Q, W, \alpha} = H^\bullet(\fM_{Q, \alpha}, \varphi_{f_W}(\bQ_{\fM_{Q, \alpha}}[\dim \fM_{Q, \alpha}]))\]
of the sheaf of vanishing cycles with respect to $f_W$.

Let $\fM^{2-\filt}_Q$ be the moduli stack parametrizing short exact sequences $(0\rightarrow E_1\rightarrow E_2\rightarrow E_3\rightarrow 0)$ of representations of $Q$. We obtain a correspondence
\[
\xymatrix{
& \fM^{2-\filt}_Q \ar_{\ev_1\times \ev_3}[dl] \ar^{\ev_2}[dr] & \\
\fM_Q\times \fM_Q && \fM_Q
}
\]
of moduli stacks, where $\ev_i\colon \fM^{2-\filt}_Q\rightarrow \fM_Q$ is given by extracting $E_i$. If we denote by $f_W\boxplus f_W\colon \fM_Q\times \fM_Q\rightarrow \C$ the sum of the functions on the two factors, then we have $(\ev_1\times \ev_3)^\ast (f_W\boxplus f_W) = \ev_2^\ast f_W$. Moreover, $\ev_2\colon \fM^{2-\filt}_Q\rightarrow \fM_Q$ is proper on each connected component, so this correspondence defines a multiplication
\begin{equation}
\cH_{Q, W, \alpha}\otimes \cH_{Q, W, \beta}\longrightarrow \cH_{Q, W, \alpha+\beta}[\chi_Q(\alpha, \beta) - \chi_Q(\beta, \alpha)],
\label{eq:KSCoHA}
\end{equation}
where $\chi_Q(-, -)$ is the Euler form on the category of finite-dimensional representations of $Q$.

Given a quiver with potential $(Q, W)$, one can construct the corresponding Ginzburg dg algebra $\Gamma_3(Q, W)$ \cite{gin06} so that its stable dg category $\eC=\Mod_{\Gamma_3(Q, W)}^{\Perf}$ of perfect modules carries a $3$-Calabi--Yau structure. It was conjectured in \cite{ks10} that one can generalize the construction of the CoHA to other $3$-Calabi--Yau dg categories $\eC$. For this one has to solve the following two problems: 1) define the space $\cH_{\eC}$, the \emph{cohomological Donaldson--Thomas invariant} of $\eC$, and 2) define the multiplication $\cH_{\eC}\otimes\cH_{\eC}\rightarrow \cH_{\eC}$.

The first problem was solved in \cite{bbdjs15,bbbbj15,kl16}. Namely, if $\eC$ is a $3$-Calabi--Yau category of finite type, its derived moduli stack of representations $\fM_{\eC}$ in the sense of \cite{tv07} carries a $(-1)$-shifted symplectic structure \cite{bdii}. Consider an open substack $\fM\subset \fM^{}_{\eC}$ which is 1-Artin. Given an orientation of $\fM$, i.e. a square root of the canonical bundle $K_{\fM_{\eC}}|_{\fM}$, one can define a perverse sheaf $\varphi_{\fM}\in\Perv(\fM)$ locally modelled on the sheaf of vanishing cycles. Set $\Gamma = \pi_0(\fM)$ and define the cohomological DT invariant
\[\cH = \bigoplus_{\alpha\in\Gamma} H^\bullet(\fM_\alpha, \varphi_{\fM_\alpha}).\]
The monoid $\Gamma$ carries an antisymmetric Euler pairing $\chi\colon \Gamma_{}^2\rightarrow \bZ$.

The goal of this paper is to construct a multiplication on the above space. The following statement combines Corollary \ref{cor:CoHA}, Proposition \ref{prop:modulilikemoduli} and Proposition \ref{prop:CoHAcomparison}.

\begin{maintheorem}
Consider the following data:
\begin{itemize}
    \item $\eC$ is a stable dg category of finite type equipped with a $3$-Calabi--Yau structure.
    \item $\fM\subset \fM^{}_{\eC}$ is an open substack closed under direct sums and containing the zero module. Moreover, assume that the underlying classical stack $\fM^{\cl}$ is a quasi-separated Artin stack with affine stabilizers which is $\Theta$-reductive (see \cite[Definition 5.1.1]{hlp14}), such that the inclusion of the zero module $\pt\rightarrow \fM^{\cl}$ is a closed immersion.
    \item Strong orientation data for $\fM$, i.e., an orientation on the $(-1)$-shifted symplectic stack $\fM$ compatible with direct sums and satisfying associativity (see Definition \ref{defin:CYorientationdata}).
\end{itemize}
Then there is an associative multiplication
\[\cH_{\eC, \alpha}\otimes \cH_{\eC, \beta}\longrightarrow \cH_{\eC, \alpha+\beta}[\chi(\alpha, \beta)].\]
In the case $\eC=\Mod_{\Gamma_3(Q, W)}^{\Perf}$ is the $3$-Calabi--Yau category associated to a quiver with potential $(Q, W)$, this multiplication reduces to \eqref{eq:KSCoHA}.
\label{maintheorem:hallmultiplication}
\end{maintheorem}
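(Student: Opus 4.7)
My strategy is to realize the multiplication as pull--push along an attractor Lagrangian correspondence and then to invoke Joyce's conjecture on the functoriality of DT perverse sheaves along such correspondences, established earlier in the paper. First, I would construct the derived moduli stack $\fM^{2-\filt}$ parametrizing $2$-step filtrations $(0\to E_1\to E_2\to E_3\to 0)$ in $\fM$, together with the evaluation maps $\ev_i\colon \fM^{2-\filt}\to \fM$. These assemble into the correspondence
\[
\fM\times \fM\xleftarrow{\ev_1\times\ev_3}\fM^{2-\filt}\xrightarrow{\ev_2}\fM,
\]
where $\ev_1\times\ev_3$ records the associated graded. The key structural input is that the $\bG_m$-action on $\fM^{2-\filt}$ scaling the filtration identifies this correspondence with the attractor Lagrangian correspondence for $\fM$ as a $(-1)$-shifted symplectic stack. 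Closure of $\fM$ under direct sums and the closed immersion $\pt\to \fM^{\cl}$ guarantee the expected behaviour of $\ev_1\times\ev_3$ at the level of $\pi_0$, while $\Theta$-reductivity ensures that $\ev_2$ is proper on each connected component, so that proper pushforward along $\ev_2$ is well defined.

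Second, applying Joyce's conjecture to the above attractor Lagrangian correspondence produces a canonical morphism of perverse sheaves
\[
(\ev_1\times\ev_3)^\ast(\varphi_\fM\boxtimes \varphi_\fM)\longrightarrow \ev_2^!\varphi_\fM[n_{\alpha,\beta}]
\]
on each component of $\fM^{2-\filt}$, where $n_{\alpha,\beta}$ is controlled by the virtual dimension of the attractor correspondence and the matching of orientations on the two sides is supplied precisely by the direct-sum compatibility clause of the strong orientation data. Taking proper pushforward $\ev_{2,\ast}=\ev_{2,!}$, applying K\"unneth and base change, and passing to hypercohomology then yields the multiplication
\[
\cH_{\eC,\alpha}\otimes \cH_{\eC,\beta}\longrightarrow \cH_{\eC,\alpha+\beta}[\chi(\alpha,\beta)],
\]
the total shift assembling into the antisymmetric Euler pairing $\chi(\alpha,\beta)$.

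Third, to verify associativity, I would construct the derived moduli stack $\fM^{3-\filt}$ of $3$-step filtrations, together with its two ``forget one layer'' maps to $\fM^{2-\filt}$ and its associated graded map to $\fM\times\fM\times\fM$. Both iterated products $(a\cdot b)\cdot c$ and $a\cdot (b\cdot c)$ factor through the hypercohomology of $\fM^{3-\filt}$ with vanishing cycle coefficients, each via a composition of two attractor Lagrangian correspondences; the associativity clause of the strong orientation data is exactly what is required to identify the two resulting orientations on $\fM^{3-\filt}$, so the two multiplications coincide. Finally, in the quiver case $\eC=\Mod^{\Perf}_{\Gamma_3(Q,W)}$, the sheaf $\varphi_\fM$ on $\fM_Q$ is identified with $\varphi_{f_W}(\bQ_{\fM_Q}[\dim \fM_Q])$ via the global critical chart, and Joyce's functoriality map reduces to the Thom--Sebastiani isomorphism applied to the identity $(\ev_1\times\ev_3)^\ast(f_W\boxplus f_W)=\ev_2^\ast f_W$, recovering \eqref{eq:KSCoHA}.

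The main obstacle is Step 2, namely making Joyce's functoriality map precise, canonical, and correctly compatible with orientations and shifts --- this is the content of Joyce's conjecture itself, and all of the subtle sign and orientation bookkeeping is concentrated here. Once that input is in hand, Step 1 is a structural construction of the attractor correspondence, Step 3 reduces to the associativity axiom built directly into the definition of a strong orientation data, and Step 4 is classical Thom--Sebastiani.
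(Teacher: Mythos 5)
Your proposal is correct and follows essentially the same route as the paper: identify the $2$- and $3$-step filtration correspondences for $\fM$ with (compositions of) the attractor Lagrangian correspondence (Propositions \ref{prop:gradedmoduli}, \ref{prop:moduliattractorlagrangian}, \ref{prop:modulilikemoduli}), apply the hyperbolic localization/integral isomorphism of Theorem \ref{mainthm:hyperboliclocalization} (which is exactly the Joyce-type functoriality the paper proves for attractor correspondences, in fact as an isomorphism) together with Thom--Sebastiani and the strong orientation data, push forward along the proper map $\ev$, and compare with Kontsevich--Soibelman via the global critical chart (Proposition \ref{prop:CoHAcomparison}). The only small imprecision is in your division of labour among the hypotheses: properness of $\ev_2$ on $\fM^{2-\filt}$ uses both $\Theta$-reductivity and the closedness of the zero-module locus (which makes $\fM^{2-\filt}$ a union of connected components of $\Filt(\fM)$), not $\Theta$-reductivity alone.
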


Before we explain the construction of the multiplication, let us list some examples of $3$-Calabi--Yau categories equipped with strong orientation data. In particular, one can apply our construction of the multiplication to define a CoHA for such categories.
\begin{itemize}
    \item Given a smooth dg category $\eD$ with a $0$-th Hochschild homology class, one may consider its deformed $3$-Calabi--Yau completion $\eC$. For instance, $\eC$ can be given by the dg derived category of coherent sheaves on local curves or surfaces \cite{km21} or $\eC=\Mod_{\Gamma_3(Q, W)}^{\Perf}$ for a quiver with potential. Then $\fM = \fM^{}_{\eC}$ carries a
    strong orientation data by Proposition \ref{prop:strong_orientation_completion}.
    \item Given a closed spin $3$-manifold $M$, consider $\eC=\LocSys(M)^\omega$ the stable dg category of complexes of local systems on $M$. Let $\fM\subset \fM^{}_{\eC}$ be the substack of local systems of vector spaces. Then $\fM$ carries a strong orientation data by Proposition \ref{prop:LocSysorientation}.
\end{itemize}

For instance, let $S$ be a smooth quasi-projective surface. Let $\eC$ be the dg category of perfect complexes on the Calabi--Yau threefold $\Tot(K_S)$ and $\fM\subset \fM^{}_{\eC}$ the substack of compactly supported coherent sheaves. Combining the results of \cite{bcs20,kin21} we get $\cH_{\eC}\cong \HBM_{\vdim(\Coh(S))-\bullet}(\Coh(S))$, where $\Coh(S)$ is the stack of compactly supported coherent sheaves on $S$. We expect that in this case, the cohomological Hall algebra structure we construct in the paper reduces to the one constructed in \cite{kv19}.

Note that for any compact Calabi--Yau threefold $X$ the paper \cite{ju21} constructs an orientation on the moduli stack of coherent sheaves on $X$ compatible with direct sums. However, it is not known at present if the compatibility isomorphisms can be chosen to be associative. Thus, using our results we obtain a CoHA multiplication for any compact Calabi--Yau threefold, but we know the associativity only up to some sign.

\subsection*{Hyperbolic localization}

The construction of the CoHA multiplication goes through an intermediate step of independent interest. Given a smooth algebraic space $X$ with a $\bG_m$-action $\mu$ and a $\bG_m$-equivariant function $f\colon X\rightarrow \C$ consider the \emph{attractor correspondence}
\[
\xymatrix{
& X^{\mu, +} \ar_{\gr_\mu}[dl] \ar^{\ev_\mu}[dr] & \\
X^\mu && X,
}
\]
where $X^\mu\subset X$ is the subscheme of fixed points and $X^{\mu, +}$ is the subscheme of points $x\in X$ such that the limit $\lim_{t\rightarrow 0}\mu(t) x$ exists. The map $\ev_\mu$ is the obvious inclusion and $\gr_\mu\colon X^{\mu, +}\rightarrow X^\mu$ is given by taking the limit. Using a theorem of Braden \cite{bra03,dg14} one can show (see \cite[Proposition 5.4.1]{nak16} and \cite[Proposition 3.3]{des22} as well as Proposition \ref{prop:van_commute_hyp}) that the functor of vanishing cycles $\varphi_f$ commutes with the hyperbolic localization functor $(\gr_\mu)_*\ev_\mu^!$. In particular, if we denote by $I^\mu_X\colon X^\mu\rightarrow \Z$ the locally constant function given by assigning to a fixed point $x\in X$ its index, i.e. the difference $\rank(T_{X, x}^{+}) - \rank(T_{X, x}^{-})$ of the ranks of positive and negative weight subspaces of the tangent space, then
\[\varphi_{f|_{X^\mu}}(\bQ_{X^\mu}[\dim X^\mu])[I^\mu_X]\cong (\gr_\mu)_*\ev_\mu^!\varphi_f(\bQ_X[\dim X]).\]

The construction of the sheaf of vanishing cycles is globalized in \cite{bbdjs15,bbbbj15} to general $(-1)$-shifted symplectic stacks: given a $(-1)$-shifted symplectic stack $(X, \omega_X)$ equipped with an orientation $o$, there is a perverse sheaf $\varphi_{X, \omega_X, o}\in\Perv(X)$. Correspondingly, the hyperbolic localization statement was extended to $(-1)$-shifted symplectic algebraic spaces (and more generally to d-critical algebraic spaces) in \cite[Theorem 4.2]{des22} (see also Theorem \ref{thm:hyp_DT}) as follows. Given a $(-1)$-shifted symplectic algebraic space $(X, \omega_X)$ equipped with an orientation $o$ and a $\bG_m$-action preserving the $(-1)$-shifted symplectic structure, let $u\colon X^\mu\rightarrow X$ be the inclusion of the fixed points. Then one can pull back the $(-1)$-shifted symplectic structure and the orientation to $X^\mu$ (see Lemma \ref{lem:Lagrangiancorrespondenceretract}), so that there is an isomorphism
\[\varphi_{X^\mu, u^{\star} \omega_X, u^{\star} o}[I^\mu_{X^{\cl}}]\cong (\gr_\mu)_* \ev^!_\mu \varphi_{X, \omega_X, o}.\]

Our second main result is a construction of the hyperbolic localization isomorphism for $(-1)$-shifted symplectic Artin stacks equipped with an orientation. The attractor correspondence for a $\bG_m$-action on a stack $\fX$ can be extracted from the attractor correspondence for the \emph{trivial} $\bG_m$-action on the quotient stack $[\fX/\bG_m]$ (see Remark \ref{rmk:nontrivialactionGrad}). So, from now on we restrict to the case of trivial $\bG_m$-actions, so that the attractor correspondence becomes
\begin{equation}\label{eq:attractorcorrespondence}
\begin{aligned}
\xymatrix{
& \Filt(\fX) \ar_{\gr}[dl] \ar^{\ev}[dr] & \\
\Grad(\fX) && \fX,
}
\end{aligned}
\end{equation}
where $\Grad(\fX) = \Map(B\bG_m, \fX)$ and $\Filt(\fX) = \Map([\bA^1/\bG_m], \fX)$.

First of all, we show in Corollary \ref{cor:lagattractorcorrespondence} that if $\fX$ carries a $k$-shifted symplectic structure, then the attractor correspondence \eqref{eq:attractorcorrespondence} carries a natural $k$-shifted Lagrangian structure. For instance, when $\fX$ is the moduli stack of perfect complexes on an elliptic curve, the induced $0$-shifted Poisson structure on $\Filt(\fX)$ is related to the Feigin--Odesskii Poisson structure \cite{hp17}.

Next, there is a locally constant function $I_{\fX}\colon \Grad(\fX)\rightarrow \Z$ given by the index of the corresponding point. Let $u\colon \Grad(\fX)\rightarrow \fX$ be the map obtained by evaluation at the basepoint of $B\bG_m$. The following is Corollary \ref{cor:Joyce_conj_attractor} (see also Theorem \ref{thm:main_thm_nontrivial} for the case of nontrivial $\bG_m$-actions):

\begin{maintheorem}\label{maintheorem}
Let $\fX$ be a quasi-separated derived
Artin stack with affine stabilizers equipped with a $(-1)$-shifted symplectic structure $\omega_{\fX}$ and an orientation $o$. Then there is a natural isomorphism
\[\varphi_{\Grad(\fX), u^{\star} \omega_{\fX}, u^{\star} o }[I_{\fX^{\cl}}]\cong \gr_*\ev^! \varphi_{\fX,  \omega_{\fX}, o}.\]
\label{mainthm:hyperboliclocalization}
\end{maintheorem}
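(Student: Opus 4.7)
The plan is to construct the isomorphism by combining the $(-1)$-shifted Lagrangian structure on the attractor correspondence (Corollary \ref{cor:lagattractorcorrespondence}) with the algebraic-space hyperbolic localization statement Theorem \ref{thm:hyp_DT}. First, by Corollary \ref{cor:lagattractorcorrespondence} the attractor diagram $\Grad(\fX)\xleftarrow{\gr}\Filt(\fX)\xrightarrow{\ev}\fX$ carries a $(-1)$-shifted Lagrangian structure, so $\Grad(\fX)$ inherits the $(-1)$-shifted symplectic form $u^\star\omega_\fX$; by Lemma \ref{lem:Lagrangiancorrespondenceretract} the orientation $o$ on $\fX$ transports canonically to an orientation $u^\star o$ on $\Grad(\fX)$. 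Both sides of the desired isomorphism are therefore well-defined perverse sheaves on $\Grad(\fX)$, and the shift by $I_{\fX^{\cl}}$ matches the difference between the ranks of the positive and negative weight parts of $\bL_{\fX}|_{\Grad(\fX)}$ coming from the Lagrangian correspondence.

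Next I would construct the isomorphism locally using the Darboux theorem for $(-1)$-shifted symplectic stacks from \cite{bbbbj15}: smooth-locally on $\fX$, one has a presentation $\fX\simeq[\Crit(f)/G]$ with $Y$ a smooth $G$-scheme, $G$ reductive, and $f\colon Y\to\C$ a $G$-invariant function. In such a local model, $\Grad(\fX)$ is described in terms of fixed loci $Y^\lambda$ for cocharacters $\lambda\colon\bG_m\to G$ together with a residual Levi quotient, while $\Filt(\fX)$ corresponds to the associated attractor schemes $Y^{\lambda,+}$ together with a parabolic quotient. The sheaf $\varphi_{\fX,\omega_\fX,o}$ is identified with an appropriate equivariant twist of $\varphi_f$, and the claimed isomorphism then reduces to the classical hyperbolic localization of vanishing cycles on the smooth scheme $Y$, which is the content of Theorem \ref{thm:hyp_DT} applied $G$-equivariantly.

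The main obstacle will be upgrading these pointwise local isomorphisms to a canonical global isomorphism on $\Grad(\fX)$. This requires showing that the stabilization $2$-morphisms used in \cite{bbbbj15} to glue the DT perverse sheaf across overlapping Darboux charts are compatible with the attractor correspondence; concretely, that the transition data intertwines with the weight decomposition $\bL_\fX|_{\Grad(\fX)} = \bL_\fX^{+}\oplus\bL_\fX^{0}\oplus\bL_\fX^{-}$ and with the canonical identification between the local orientation and $u^\star o$ supplied by Lemma \ref{lem:Lagrangiancorrespondenceretract}. The delicate part is tracking determinant line bundles and their chosen square roots along the attractor maps so that the square roots on the two sides of the claimed isomorphism are matched up to a canonical, not merely local, isomorphism. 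Once this compatibility is checked, the local isomorphisms assemble into the desired natural global isomorphism by \v{C}ech-type descent along a smooth Darboux cover of $\fX$.
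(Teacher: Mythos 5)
Your overall outline (local models, hyperbolic localization of vanishing cycles via Braden, then descent, with the orientations as the delicate point) has the same shape as the paper's argument, but two of your steps do not go through as stated. The first is your local model: under the hypotheses of the theorem ($\fX$ quasi-separated with affine stabilizers) there is in general \emph{no} smooth-local presentation $\fX\simeq[\Crit(f)/G]$ with $G$ reductive. The equivariant Darboux theorem (Proposition \ref{prop:equivariant_Darboux}) only produces such a chart \'etale-locally around points with \emph{linearly reductive} stabilizer, and covering $\fX$ by such charts requires extra hypotheses (e.g.\ affine diagonal and a good moduli space, cf.\ the remark following Proposition \ref{prop:equivariant_Darboux}); stabilizers of a general stack with affine stabilizers need not be reductive at all. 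The paper avoids this by a different reduction: it first passes to the underlying d-critical stack, covers $\Grad(\fX)$ by $\bG_m$-equivariant charts $\bar q\colon[X/\bG_m]\to\fX$ with $X$ a quasi-separated algebraic space (Proposition \ref{prop:QCha_atlas}, which only needs an \'etale slice for the single one-parameter subgroup attached to a graded point, via \cite{ahr20}), and then on $X$ uses Joyce's $\bG_m$-equivariant \'etale d-critical charts $(R,\eta,U,f,i)$ with $U$ a smooth scheme. The local isomorphism is then Theorem \ref{thm:hyp_DT} for $\bG_m$-equivariant d-critical algebraic spaces, not a ``$G$-equivariant'' version over a reductive group; your phrase ``Theorem \ref{thm:hyp_DT} applied $G$-equivariantly'' is not a statement the paper proves, and you would have to supply it.

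The second gap is that the compatibility of square roots along the attractor maps, which you correctly single out as the delicate point, is asserted rather than carried out, and it is not a routine check: it is the content of a large part of the paper. Concretely, one must (i) define the localized orientation $u^{\star}o$ at the d-critical level, which requires constructing the index line bundle $\cL^{\Ind}_s$ on $\Grad(\fX)^{\red}$ together with the isomorphism $K_{\Grad(\fX),u^{\star}s}\otimes\cL^{\Ind,\otimes2}_s\cong u^{\red,*}K_{\fX,s}$ and all its compatibilities (Propositions \ref{prop:index_line_bundle} and \ref{prop:index_bundle_stack}); (ii) prove that this agrees with the orientation you propose to transport via Lemma \ref{lem:Lagrangiancorrespondenceretract}, which is Proposition \ref{prop:compare_index_line} together with Lemma \ref{lem:two_localization_der_same}; and (iii) glue the chartwise isomorphisms, where the paper does not use a \v{C}ech argument over Darboux charts but the sheaf property of $\sHom$ into the (perverse, after shifting by $I_{\fX}$) complex $\gr_*\ev^!\varphi_{\fX}$ over the atlas of $\bG_m$-equivariant charts (Corollary \ref{cor:QCha_glue}), with independence of the chart guaranteed by the smooth-pullback compatibility $\Theta_h$ versus $\zeta_{\mu,s,o}$ (Proposition \ref{prop:zeta_sm_space}) and by comparing two \'etale d-critical charts through stabilization by quadratic forms (Theorem \ref{thm:T_compare_d-critical}), including the sign bookkeeping for $b_y$, $c_y$ in the proof of Theorem \ref{thm:hyp_DT}. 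Without these ingredients your local isomorphisms are only canonical up to sign on overlaps, and the proposed descent does not yet produce a well-defined global isomorphism.
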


We call this isomorphism the \defterm{integral isomorphism}.
The integral isomorphism first appeared in the work of Kontsevich--Soibelman \cite[Theorem 13]{ks10} in the name of the integral identity where
they consider the hyperbolic localization of the vanishing cycle complex inside a $\bG_m$-equivariant vector bundle whose weight is $-1$, $0$ or $1$.
The motivic version  of Kontsevich--Soibelman's integral identity \cite[\S 4.4]{KS08} (see also \cite{Le15})  was very recently generalized by Bu \cite{Bu24} to the attractor correspondence of $(-1)$-shifted symplectic stacks with good moduli spaces.

\begin{rmk*}
While we were preparing this manuscript, we were informed that Pierre Descombes independently came up with the idea of using a hyperbolic localization statement for the perverse sheaf $\varphi_\fX$ to construct a cohomological Hall algebra structure. This work has recently appeared in \cite{descombes2025hyperbolic} as a part of a revised version of \cite{des22}.
\end{rmk*}

The attractor correspondence reduces to several other important correspondences in particular cases. For instance, let $\eC$ be a dg category of finite type equipped with a $3$-Calabi--Yau structure. Then by Proposition \ref{prop:gradedmoduli} the correspondence \eqref{eq:attractorcorrespondence} for $\fX=\fM_{\eC}$, the derived moduli stack of objects, is the union of the correspondences
\[
\xymatrix{
& \fM^{n-\filt}_{\eC} \ar[ld]_-{\gr} \ar[rd]^-{\ev} & \\
\fM_{\eC}^n && \fM_{\eC},
}
\]
where $\fM^{n-\filt}_{\eC}$ parametrizes sequences $x_{n-1}\rightarrow\dots\rightarrow x_0$ and $\gr\colon \fM^{n-\filt}_{\eC}\rightarrow \fM_{\eC}^n$ is given by taking the cofibres of the maps $x_k\rightarrow x_{k-1}$. Moreover, by Proposition \ref{prop:moduliattractorlagrangian} the corresponding Lagrangian structure on the attractor correspondence reduces to the one constructed in \cite{bdii} using relative Calabi--Yau structures. The hyperbolic localization isomorphism from Theorem \ref{mainthm:hyperboliclocalization} by adjunction gives rise to the multiplication on the cohomological Hall algebra in Theorem \ref{maintheorem:hallmultiplication}.

\subsection*{Cohomological DT invariants of $3$-manifolds}

Let $G$ be a connected reductive group and $M$ a closed oriented $3$-manifold. Then by Proposition \ref{prop:LocGattractor} the correspondence \eqref{eq:attractorcorrespondence} for $\fX=\Loc_G(M)$, the derived character stack of $M$, is the union of the correspondences
\[
\xymatrix{
& \Loc_P(M) \ar[dl] \ar[dr] & \\
\Loc_L(M) && \Loc_G(M),
}
\]
where $P\subset G$ are parabolic subgroups and $L$ their Levi factors. Moreover, the corresponding Lagrangian structure on the attractor correspondence reduces to the one constructed in \cite{saf17}. Given a spin structure on $M$ we construct a natural orientation of this Lagrangian correspondence in Proposition \ref{prop:characterstackorientation} following \cite{ns23}, so the hyperbolic localization isomorphism from Theorem \ref{mainthm:hyperboliclocalization} gives rise to a parabolic induction map
\[\ind_P\colon H^\bullet(\Loc_L(M), \varphi_{\Loc_L(M)})\longrightarrow H^\bullet(\Loc_G(M), \varphi_{\Loc_G(M)}).\]

The hypercohomology groups $H^\bullet(\Loc_G(M), \varphi_{\Loc_G(M)})$ appear as invariants associated to $3$-manifolds in the geometric Langlands program for generic quantum parameters. In particular, the parabolic induction map is a 3-dimensional analogue of the Eisenstein series functor for surfaces. There is a close relationship between these hypercohomology groups and skein modules of $3$-manifolds, see \cite[Conjecture D]{gs23} and related hypercohomology groups for $G=\SL_2$ were studied by Abouzaid and Manolescu \cite{am20}.

In Theorem \ref{thm:manifoldhall} we use the parabolic induction map to construct a cohomological Hall algebra structure on
\[\cH^{\GL} = \bigoplus_{n\geq 0} H^\bullet(\Loc_{\GL_n}(M), \varphi_{\Loc_{\GL_n}(M)})\]
and cohomological Hall $\cH^{\GL}$-module structures on
\[\cH^{\SO} = \bigoplus_{n\geq 0} H^\bullet(\Loc_{\SO_n}(M), \varphi_{\Loc_{\SO_n}(M)}),\qquad \cH^{\Sp} = \bigoplus_{n\geq 0} H^\bullet(\Loc_{\Sp_{2n}}(M), \varphi_{\Loc_{\Sp_{2n}}(M)}).\]

We expect that these will be useful in establishing structural results for the cohomological DT invariants, e.g. their Langlands duality.

\subsection*{A conjecture of Joyce}

Given an oriented Lagrangian correspondence between oriented $(-1)$-shifted symplectic stacks $\fX_{1} \xleftarrow{f_1} \fL \xrightarrow{f_2} \fX_2$, Joyce has conjectured (see \cite[Conjecture 1.1]{js15} and \cite[Conjecture 5.22]{ab17}) the existence of a natural map
\[
f_1^* \varphi_{\fX_1}[\vdim \fL] \longrightarrow f_2^! \varphi_{\fX_2}
\]
satisfying certain natural compatibilities. For instance, if $\fX_1=\fX_2=\pt$, it defines a virtual Lagrangian cycle $[\fL]^{\Lag}_{\vir}\in \HBM_{\vdim(\fL)}(\fL)$ which should coincide with the one constructed in \cite{bj17,ot23,pridham18}. See \cite[\S 5]{kinsurvey} for a survey on this conjecture. One of the motivations for the conjecture of Joyce is to construct a cohomological Hall algebra structure.

If $\fX$ is an oriented $(-1)$-shifted symplectic stack which is quasi-separated and has affine stabilizers, the hyperbolic localization isomorphism from Theorem \ref{mainthm:hyperboliclocalization} by adjunction provides a map
\[\gr^*\varphi_{\Grad(\fX)}[\vdim \Filt(\fX)] \longrightarrow \ev^! \varphi_{\fX}\]
thus resolving the conjecture of Joyce in this case. Note, however, that Theorem \ref{mainthm:hyperboliclocalization} is stronger than the version of the conjecture for a general Lagrangian correspondence as it asserts that the adjoint map is an isomorphism.

\subsection*{BPS states in physics}

In this section we briefly explain a connection between cohomological DT invariants considered in this paper and spaces of BPS states in supersymmetric theories. The starting point is that of 4-dimensional quantum field theories with $\cN=2$ supersymmetry. These theories have a Hilbert space $\cH$ which carries an action of the $\cN=2$ superPoincar\'e group. We will be interested in 1-particle states (corresponding to irreducible representations of the Poincar\'e group $\Spin(3, 1)\ltimes \bR^4$) satisfying the BPS condition $M=|Z|>0$, where $M\in\bR$ is the eigenvalue of a Casimir operator of the Poincar\'e group and $Z\in\mathbb{C}$ the eigenvalue of the central charge. Such representations are determined by spaces $\cH^{BPS}_Z$ equipped with an action of Wigner's little group $\Spin(3)\subset \Spin(3, 1)$. Often the Hilbert space $\cH$ carries a grading by the lattice $\Gamma$ of charges and $Z$ is determined by $\gamma$. So, one may equivalently study the spaces $\cH^{BPS}_\gamma$ of BPS states of charge $\gamma$. We refer to \cite{nei21} for an overview of the theory of BPS states.

Physically, the spaces $\cH^{BPS}_\gamma$ have the following alternative interpretation. The worldvolume theory of a particle of charge $\gamma$ is specified by $\cN=4$ supersymmetric quantum mechanics with R-symmetry group $\Spin(3)$. There is a square-zero supercharge $Q$ and the $Q$-twisted Hilbert space coincides with $\cH^{BPS}_\gamma$. The simplest example of $\cN=4$ supersymmetric quantum mechanics is given by a $\sigma$-model specified by a K\"ahler manifold $M$ and a holomorphic function $W\colon M\rightarrow \C$. In this case the $Q$-twisted Hilbert space is given by the cohomology of the sheaf $\varphi_W$ of vanishing cycles of $W$. It is explained in \cite{bfk22,sw23} that, more generally, one may define a certain $(-1)$-shifted symplectic stack $\fX$ (the moduli space of supersymmetric vacua) such that the $Q$-twisted Hilbert space of states can be modeled by $H^\bullet(\fX, \varphi_{\fX})$. Moreover, as explained in \cite{bfk22}, mass parameters in supersymmetric quantum mechanics correspond to $\bG_m$-actions $\mu$ on the $(-1)$-shifted symplectic stack $\fX$ and the $Q$-twisted Hilbert space of states with a nonzero mass corresponds to the cohomology $H^\bullet(\fX^\mu, \varphi_{\fX^\mu})$ of the fixed point locus. Certain open patches of $\fX$ may be described as moduli stacks of representations of a quiver with potential (and, consequently, the hypercohomology $H^\bullet(\fX, \varphi_{\fX})$ restricts to the cohomology of the sheaf of vanishing cycles); this is known as the BPS quiver and we refer to \cite{accerv} for many examples.

A particular class of examples of 4d $\cN=2$ supersymmetric theories is given by considering type II superstring compactifications on Calabi--Yau threefolds $M$. In this case D$p$-branes wrapping supersymmetric $p$-cycles in $M$ give rise to BPS particles in the compactified 4d theory. Mathematically, one encodes the data of branes into a dg category $\eC$ equipped with a $3$-Calabi--Yau structure ($\eC$ is the derived category of coherent sheaves on $M$ in type IIA and the Fukaya category of $M$ in type IIB) as well as a stability condition $\sigma$. The moduli stack $\cM^\sigma_\eC\subset \cM_\eC$ of $\sigma$-semistable objects is then the mathematical definition of the moduli stack of D-branes. The substack $\cM^\sigma_\gamma\subset \cM^\sigma_\eC$ of D-branes of charge $\gamma$ can be physically interpreted as the moduli space of supersymmetric vacua in the supersymmetric quantum mechanics on the worldvolume of the D-brane of charge $\gamma$. Thus, combining this observation with the description of the twisted Hilbert space in supersymmetric quantum mechanics we get that the cohomology $\cH^{BPS}_\gamma = H^\bullet(\cM^\sigma_\gamma, \varphi_{\cM^\sigma_\gamma})$ is the space of BPS states of charge $\gamma$.

\begin{ACK}
    We thank Chenjing Bu, Andr\'es Ib\'a\~nez N\'u\~nez, Ben Davison, Dominic Joyce and Yukinobu Toda for helpful discussions related to this paper.
    T.K. was supported by JSPS KAKENHI Grant Number 23K19007.
    H.P. was supported by Korea Institute for Advanced Study (SG089201).
\end{ACK}

\begin{NaC}$ $

\begin{itemize}

      \item We let $\eS$ denote the $\infty$-category of spaces (see \cite[Definition 1.2.16.1]{htt}).

    \item By $\cdga^{\leq 0}$ we denote the $\infty$-category of non-positively graded commutative differential graded $\C$-algebras (cdgas).

     \item All classical Artin stacks are assumed to be locally  of finite type over the complex number field.

     \item All derived Artin stacks are assumed to be $1$-Artin and locally of finite presentation over the complex number field.

     \item A derived stack that is $n$-geometric for some integer $n$ and is locally of finite presentation is called a derived higher Artin stack.

    \item For an Artin stack $\fX$, we let $D^b_c(\fX)$ denote the bounded derived category of sheaves with respect to the analytic topology in $\bQ$-vector spaces on $\fX$ with constructible cohomology. We let $\Perv(\fX)$ denote the abelian category of perverse sheaves on $\fX$.

    \item Given a perfect complex $E$ on a derived stack $\fX$, we denote by $\rdet(E) = \det(E|_{X^{\red}})$ the restriction of the determinant line bundle to the reduced stack.

    \item For a derived Artin stack $\fX$ with a trivial $\bG_m$-action and a $\bG_m$-equivariant quasi-coherent complex $\cF \in \QCoh(\fX)$, we let $\cF^{+}$ (resp. $\cF^{-}$) denote the direct summand with positive (resp. negative) weights.

    \item For a smooth morphism $f \colon X_1 \to X_2$, we set $K_{X_1 / X_2} \coloneqq \rdet(\Omega_{X_1 / X_2})$.

    \item
 If there is no risk of confusion, we use expressions such as $f_*$, $f_!$, and $\sHom$ for the derived functors
$Rf_*$, $Rf_!$, and $R\sHom$.
\end{itemize}

\end{NaC}

\section{Determinant line bundles}\label{sec:det}

The aim of this section is to discuss the construction and some properties of the determinant functor of perfect complexes following \cite{KM76,stv15}.

\subsection{Algebraic K-theory of stable $\infty$-categories}\label{ssec:alg_K}

Here, we briefly recall some basic facts on algebraic K-theory of stable $\infty$-categories: See \cite{bgt13} for a detailed discussion.

Let $\eC$ be a small stable $\infty$-category.
The Grothendieck group $K_0(\eC)$ is defined as a group generated by elements $[E]$ labelled by objects $E \in \eC$ and relations $[E_2] = [E_1] + [E_3]$ for fibre sequences $E_1 \to E_2 \to E_3$.
One can define a spectral refinement of $K_0(\eC)$, i.e., a spectrum $K(\eC) \in \Sp$ with a natural isomorphism $\pi_0(K(\eC)) \cong K_0(\eC)$, which is called the algebraic K-theory spectrum: See \cite[\S 9]{bgt13} for the detail.

Let $K^{\mathrm{cn}}(\eC)$ be the connective part of the algebraic K-theory spectrum.
Then by the construction of $K^{\mathrm{cn}}(\eC)$ (see \cite[\S 7.1]{bgt13}), we have the following:
    \begin{itemize}
        \item For an object $E \in \eC$, there exists a corresponding map
        \[
        [E] \in \Map(\Delta^1, \Omega^{\infty-1}K^{\mathrm{cn}}(\eC))
        \]
        with homotopy $[E] \circ \delta^0 \simeq *$, $[E] \circ \delta^1 \simeq *$ where $\delta^i$ is the $i$-th face map and $*$ is the base point.

        \item For a fibre sequence $\Delta \colon E_1 \to E_2 \to E_3$ in $\eC$, there exists a corresponding map 
        \[
        [\Delta] \in \Map(\Delta^2, \Omega^{\infty-1}K^{\mathrm{cn}}(\eC))
        \]
        with homotopy $[\Delta] \circ \delta^0 \simeq [E_3]$, $[\Delta] \circ \delta_1 \simeq [E_2]$, $[\Delta] \circ \delta_2 \simeq [E_1]$.

        \item For a fibre double sequence in $\eC$, i.e., a fibre sequence in the $\infty$-category of fibre sequences in $\eC$ depicted as
        \begin{equation}\label{eq:Gamma}\tag{$\Gamma$}
        \begin{aligned}
        \begin{tikzcd}
	\begin{array}{c}  
    \substack{
    \displaystyle{\phantom{\Delta_2'} } \\ \displaystyle{  \phantom{\rotatebox{100}{\mbox{:}}}} \\
    \displaystyle{\Delta_{12} \colon E_1}}
    \end{array} & \begin{array}{c} \substack{\displaystyle{\ \Delta_{23} } \\ \displaystyle{  \rotatebox{90}{\mbox{:}}} \\  \displaystyle{E_{2} }} \end{array} & \begin{array}{c} \substack{\displaystyle{\Delta' } \\ \displaystyle{\rotatebox{90}{\mbox{:}}} \\  \displaystyle{E_{12} }} \end{array} \\
	{\Delta_{13}:E_{1}} & {E_{3}} & {E_{13}} \\
	{ \quad \quad \, \, 0} & {E_{23}} & {E_{23},}
	\arrow[shift right=3, from=1-1, to=1-2, 
     start anchor={[xshift=-1ex]},
    end anchor={[xshift=2ex]}]
	\arrow[equal, shift left=5, from=1-1, to=2-1]
	\arrow[shift right=3, from=1-2, to=1-3,
 start anchor={[xshift=-1.65ex]},
    end anchor={[xshift=1.35ex]}]
	\arrow[from=1-2, to=2-2]
	\arrow[from=1-3, to=2-3]
	\arrow[from=2-1, to=2-2]
	\arrow[shift left=5, from=2-1, to=3-1]
	\arrow[from=2-2, to=2-3]
	\arrow[from=2-2, to=3-2]
	\arrow[from=2-3, to=3-3]
	\arrow[from=3-1, to=3-2]
	\arrow[equal, from=3-2, to=3-3]
\end{tikzcd}
\end{aligned}
\end{equation}
there exists a corresponding map
\[
[\Gamma] \in \Map(\Delta^3, \Omega^{\infty-1}K^{\mathrm{cn}}(\eC))
\]
with homotopy 
$[\Gamma] \circ \delta_0 \simeq [\Delta']$, 
$[\Gamma] \circ \delta_1 \simeq [\Delta_{23}]$,
$[\Gamma] \circ \delta_2 \simeq [\Delta_{13}]$,
$[\Gamma] \circ \delta_3 \simeq [\Delta_{12}]$.
    \end{itemize}
In particular, for each $E \in \eC$, there exists a corresponding point 
$[E] \in \Omega^{\infty}K^{\mathrm{cn}}(\eC)$
such that for a fibre sequence $\Delta \colon E_1 \to E_2 \to E_3$, there exists a natural homotopy
\begin{equation}\label{eq:fibre_seq_homotopy}
I(\Delta) \colon [E_1] \cdot [E_3] \sim [E_2]
\end{equation}
and for a  fibre double sequence \eqref{eq:Gamma},
the following homotopy commutative diagram exists
\begin{equation}\label{eq:Gamma_comm}
   \begin{aligned}
   \xymatrix@C=50pt{
       {[E_1] \cdot [E_{12}] \cdot [E_{23}]} \ar[r]^-{\sim}_-{I(\Delta_{12})} \ar[d]_-{\sim}^-{I(\Delta')}
       & {[E_{2}] \cdot [E_{23}]} \ar[d]_-{\sim}^-{I(\Delta_{23})} \\
       {[E_1] \cdot [E_{13}]} \ar[r]_-{\sim}^-{I(\Delta_{13})}
       & {[E_3].} 
     }
   \end{aligned} 
\end{equation}

\subsection{Determinant functor}\label{ssec:det}

Here we briefly recall the notion of determinant line bundle of perfect complexes following \cite{KM76,stv15}. Consider the following $\infty$-categories for $R$ a connective commutative dg $\C$-algebra:
\begin{itemize}
    \item $\Perf(R)$ is the stable $\infty$-category of perfect complexes. It has a symmetric monoidal structure given by the direct sum.
    \item $\Vect(R)\subset \Perf(R)$ is the subcategory of vector bundles. It inherits the symmetric monoidal structure.
    \item $\sPic(R)$ is the $\infty$-groupoid of $\bZ/2\Z$-graded line bundles. It carries a symmetric monoidal structure given by
    \[
    (L_1, \alpha_1) \otimes (L_2, \alpha_2) = (L_1 \otimes L_2, \alpha_1 + \alpha_2)
    \]
    with the braiding defined using the Koszul sign rule, which we call the \textit{standard symmetric monoidal structure}. It also carries another symmetric monoidal structure with the same monoidal product with the braiding of line bundle with no extra sign, which we call the \textit{naive symmetric monoidal structure}. Otherwise stated, we use the standard symmetric monoidal structure.
    \item We define the \textit{standard $\bZ / 2\bZ$-action} on $\sPic(R)$ by the monoidal dual with respect to the standard symmetric monoidal structure: $(L, \alpha) \mapsto (L, \alpha)^{\vee}$.
     We define the \textit{naive $\bZ / 2 \bZ$-action} on $\sPic(R)$ by the monoidal dual with respect to the naive monoidal structure: $(L, \alpha) \mapsto (L^{\vee}, \alpha)$.
    Unless stated otherwise, we use the standard $\bZ / 2 \bZ$-action.
    
    \item We will fix an isomorphism $(L, \alpha)^{\vee} \cong (L^{\vee}, \alpha)$ so that the 
    canonical trivializations $(L, \alpha) \otimes (L, \alpha)^{\vee} \cong (\cO_R, 0)$ and
    $(L, \alpha) \otimes (L^{\vee}, \alpha) \cong (\cO_R, 0)$ are identified.
    Under this identification,
    the natural isomorphism $(L_1^\vee, \alpha_1) \otimes (L_2^\vee, \alpha_2) \cong ((L_1\otimes L_2)^\vee, \alpha_1 + \alpha_2)$ is given by the braiding of line bundles with the extra sign $(-1)^{\alpha_1\alpha_2}$ and the trivialization $(L^{\vee \vee}, \alpha) \cong (L, \alpha)$ is given with the intervention of the sign $(-1)^{\alpha}$.
\end{itemize}

If there is no risk of confusion, a graded line bundle $(L, \alpha)$ is simply denoted as $L$.
When we work with the graded line bundles, we need particular care about the sign.
For example, for a graded line bundle $(L, \alpha)$,
the following diagram commutes only up to sign $(-1)^{\alpha}$:
\begin{equation}\label{eq-sign-L-dimension}
\begin{tikzcd}
	{L^{\vee \vee} \otimes L^{\vee}} & {L \otimes L^{\vee}} & {L^{\vee} \otimes L} \\
	{(L \otimes L^{\vee})^{\vee}} & {(L^{\vee} \otimes L)^{\vee}} & {\mathcal{O}_{R}.}
	\arrow["\cong", from=1-1, to=1-2]
	\arrow["\cong"', from=1-1, to=2-1]
	\arrow["\cong", from=1-2, to=1-3]
	\arrow["\cong", from=1-3, to=2-3]
	\arrow["\cong"', from=2-1, to=2-2]
	\arrow["\cong"', from=2-2, to=2-3]
\end{tikzcd}
\end{equation}
Equivalently, the trace of the identity map $\id_L$ is given by $(-1)^{\alpha}$.

The following is shown in \cite{KM76}.

\begin{prop}
Let $R$ be a (discrete) commutative $\C$-algebra. Then there is a symmetric monoidal functor
\[\det\colon \Vect(R)^{\simeq}\longrightarrow \sPic(R),\]
natural in $R$ and compatible with the $\Z/2\Z$-actions on the left side given by passing to the dual vector bundle and on the right side the naive $\bZ / 2 \bZ$-action.
\end{prop}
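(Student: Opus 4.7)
The plan is to construct $\det$ directly from the top exterior power, together with the parity of the rank as the $\bZ/2\bZ$-grading, essentially as in \cite{KM76}. Given $P \in \Vect(R)^{\simeq}$, its rank function $\rk(P)\colon \Spec R \to \bZ_{\geq 0}$ is locally constant, so $\Spec R$ decomposes as a disjoint union $\bigsqcup_n U_n$ on which $\rk(P) = n$; set $\det(P)|_{U_n} = (\wedge^n P|_{U_n},\, n \bmod 2)$, glued from the pieces. This is a $\bZ/2\bZ$-graded line bundle since each $\wedge^n P|_{U_n}$ is locally free of rank $1$. An isomorphism $P \xrightarrow{\sim} Q$ induces an isomorphism $\wedge^n P \xrightarrow{\sim} \wedge^n Q$ on each $U_n$, giving functoriality on the underlying groupoid. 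Naturality in $R$ is immediate from the fact that exterior powers commute with base change.

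Next, I would equip $\det$ with its symmetric monoidal structure. The canonical isomorphism of graded algebras $\wedge^\bullet(P \oplus Q) \cong \wedge^\bullet P \otimes \wedge^\bullet Q$, restricted to top degree, gives the structure map
\[
\mu_{P,Q}\colon \det(P) \otimes \det(Q) \xrightarrow{\ \sim\ } \det(P \oplus Q),\qquad \omega \otimes \eta \longmapsto \omega \wedge \eta,
\]
and the unit map is the evident identification $\det(0) = (\mcO_R, 0)$. Associativity of $\mu$ follows from the associativity of the wedge product in $\wedge^\bullet(P_1 \oplus P_2 \oplus P_3)$. The key verification is the symmetry axiom: passing through $\wedge^{p+q}(P \oplus Q)$, the direct-sum swap $P \oplus Q \cong Q \oplus P$ induces the map $\omega \otimes \eta \mapsto \eta \wedge \omega = (-1)^{pq}\,\omega \wedge \eta$, and the factor $(-1)^{pq}$ is exactly the Koszul sign in the standard braiding of $(\wedge^p P, p)$ and $(\wedge^q Q, q)$ in $\sPic(R)$. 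Thus the two symmetries agree, so $\det$ is symmetric monoidal in the standard sense.

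Finally, for the $\bZ/2\bZ$-equivariance, the canonical isomorphism $\wedge^r(P^\vee) \cong (\wedge^r P)^\vee$ together with $\rk(P^\vee) = \rk(P)$ gives $\det(P^\vee) \cong ((\wedge^r P)^\vee, r)$, which matches the naive dual $(L, \alpha) \mapsto (L^\vee, \alpha)$ on $\sPic(R)$. The required coherence with the double-dual isomorphism $P^{\vee\vee} \cong P$ is automatic since both sides induce the canonical identification on the underlying line bundle. The only step in the construction that requires real care is the sign computation in the symmetry axiom, which is precisely the manifestation of the Koszul rule on the target side; all remaining coherences reduce to routine manipulations in the exterior algebra, and the Zariski-local behaviour of the rank requires no more than gluing over the open-and-closed decomposition $\Spec R = \bigsqcup_n U_n$.
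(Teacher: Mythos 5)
Your construction is correct and is essentially the classical Knudsen--Mumford determinant construction that the paper itself invokes for this proposition (the paper gives no independent argument, citing \cite{KM76}): top exterior power graded by rank mod~$2$, monoidal structure from $\wedge^\bullet(P\oplus Q)\cong \wedge^\bullet P\otimes\wedge^\bullet Q$, with the Koszul sign $(-1)^{pq}$ matching the standard braiding on $\sPic(R)$, and duality via $\wedge^r(P^\vee)\cong(\wedge^r P)^\vee$ matching the naive $\bZ/2\bZ$-action. No gaps worth flagging beyond the (routine, and correctly identified) sign check in the symmetry axiom.
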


As shown in \cite[Section 2]{Heleodoro} and \cite[Appendix A]{EHKSY} the derived stacks $\Vect(-)$ and $\sPic(-)$ are left Kan extended from discrete commutative algebras, so we get a symmetric monoidal functor of $\infty$-categories $\det\colon \Vect(R)^{\simeq}\rightarrow \sPic(R)$ defined for any connective cdga $R$. Moreover, as $\sPic(R)$ is grouplike, it factors through algebraic $K$-theory:
\[\Vect(R)^{\simeq}\longrightarrow K(\Vect(R))\longrightarrow \sPic(R).\]

Consider the commutative diagram
\[
\xymatrix{
\Perf(R)^{\simeq} \ar[r] & K(\Perf(R)) \\
\Vect(R)^{\simeq} \ar[r] \ar[u] & K(\Vect(R)) \ar[u] \ar^{\det}[r] & \sPic(R).
}
\]
The vertical map on the right is an equivalence by the results of \cite[Corollary 8.1.3]{HebestreitSteimle}, \cite[Corollary 1.40]{Heleodoro} and \cite{Fontes}, so in total we obtain the \defterm{determinant map}
\[\det\colon \Perf(R)^{\simeq}\longrightarrow \sPic(R).\]
For a derived stack $\fX$ both $\Perf(\fX)$ and $\sPic(\fX)$ are defined by a right Kan extension from their values on derived affine schemes, so we obtain the determinant map
\[\det\colon \Perf(\fX)^{\simeq}\longrightarrow \sPic(\fX).\]

We will often be interested in restrictions of determinant line bundles to reduced stacks, so for a derived stack $\fX$ and a perfect complex $E\in\Perf(\fX)$ we introduce the notation
\[\rdet(E) \coloneqq \det(E|_{\fX^{\red}}).\]

We now prove some properties of the determinant functor for perfect complexes on derived Artin stacks $\fX$.
Firstly, for a fibre sequence
$\Delta \colon E_1 \to E_2 \to E_3$,
the homotopy $I(\Delta)$ in \eqref{eq:fibre_seq_homotopy} induces a natural isomorphism
\begin{equation}\label{eq:fibre_transform}
i(\Delta) \colon \det(E_1) \otimes \det(E_3) \cong \det(E_2).
\end{equation}
Assume that we are given a fibre double sequence as in \eqref{eq:Gamma}. Then the commutativity of the diagram \eqref{eq:Gamma_comm} implies the commutativity of the following diagram
\begin{equation}\label{eq:KMpre}
   \begin{aligned}
   \xymatrix@C=50pt{
       {\det(E_1) \otimes \det(E_{12}) \otimes \det(E_{23})} \ar[r]^-{\cong}_-{i(\Delta_{12})\otimes \id} \ar[d]_-{\cong}^-{\id\otimes i(\Delta')}
       & {\det(E_2) \otimes \det(E_{23})} \ar[d]_-{\cong}^-{i(\Delta_{23})} \\
       {\det(E_1) \otimes \det(E_{13})} \ar[r]_-{\cong}^-{i(\Delta_{13})}
       & {\det(E_3).} 
     }
   \end{aligned} 
\end{equation}

For an object $E \in \Perf(\fX)$, consider the fibre sequence
$\Delta_E \colon E \to 0\to E[1]$ given by the rotation of $E \xrightarrow{\id} E \to 0$.
In other words, $\Delta_E$ classifies the map $- \id_E$.
It induces an isomorphism
\[
i(\Delta_E) \colon \det(E) \otimes \det(E[1]) \cong \cO_{\fX}.
\]
We define a natural map
\begin{equation}\label{eq:det_shift}
    \theta_E \colon \det(E[1]) \cong \det(E)^\vee
\end{equation}
so that the following diagram commutes:
\[
\xymatrix@C=60pt@R=30pt{
\det(E) \otimes \det(E[1]) \ar[d]^-{(\id, \theta_E)}_-{\cong}  \ar[r]_-{\cong}^-{i(\Delta_E)} 
& \cO_{\fX}  \\
\det(E) \otimes \det(E)^{\vee}. \ar[ru]_-{\cong}
& {}
}
\]
For $n \in \bZ_{> 0}$, we inductively construct an isomorphism
\[
\theta_{E, n} \colon \det(E[n]) \cong \det(E)^{\vee^{(n)}}
\]
where $(-)^{\vee^{(n)}}$ denotes the $n$-th iterated dual, by the following composition
\[
\det(E[n]) = \det(E[1][n - 1]) \xrightarrow[\cong]{\theta_{E[1], n-1}} \det(E[1])^{\vee^{(n - 1)}} \xrightarrow[\cong]{\theta_{E}}  \det(E)^{\vee^{(n)}}.
\] 
 Since the $n$-th iterated dual only depends on the parity of $n$ up to the canonical isomorphism, we can define $(-)^{\vee^{(n)}}$ for all integer $n \in \bZ$.
We define $\theta_{E, -n} \colon \det(E[-n]) \cong \det(E)^{\vee^{(-n)}}$ by $(\theta_{E[-n], n}^{\vee^{(-n)}})^{-1}$.
By construction, for $n, m \in \bZ$, the following diagram commutes:
\[
\xymatrix@C=50pt{
{\det(E[n + m])} \ar@{=}[d] \ar[r]_-{\cong}^-{\theta_{E[n], m}}
& {\det(E[n])^{\vee^{(m)}}} \ar[d]_-{\cong}^-{\theta_{E, n}} \\
{\det(E[n + m])} \ar[r]_-{\cong}^-{\theta_{E, n + m}}
& {\det(E)^{\vee^{(n + m)}}.}
}
\]
Also, it is clear from the construction that the following diagram commutes:
\begin{equation}\label{eq:-1and1_compatible}
\begin{aligned}
\xymatrix@C=50pt{
{\det(E[-1]) \otimes \det(E)} \ar[r]_-{\cong}^-{\id \otimes \theta_{E[-1]}} \ar[d]_-{\cong}^-{\theta_{E, -1} \otimes \id}
& {\det(E[-1]) \otimes \det(E[-1])^{\vee}} \ar[d]_-{\cong} \\
{\det(E)^{\vee} \otimes \det(E)} \ar[r]^-{\cong}
& {\cO_{\fX}.}
}
\end{aligned}
\end{equation}

\begin{lem}\label{lem:rotate}
    Let $\Delta \colon E_1 \xrightarrow{f} E_2 \xrightarrow{g} E_3$ be a fibre sequence in $\Perf(\fX)$.
    Let $\rho(\Delta) \colon E_2 \xrightarrow{g} E_3 \xrightarrow{h} E_1[1]$ and $\rho^{-1}(\Delta) \colon E_3[-1] \xrightarrow{-h[-1]} E_1 \xrightarrow{f} E_2$ be rotated fibre sequences.
    Then the following diagrams commute:
    \begin{equation}\label{eq:rotate_det1}
    \begin{aligned}
    \xymatrix@C=50pt{
    {\det(E_1) \otimes \det(E_3) \otimes \det(E_1[1])} \ar[r]_-{\cong}^-{i(\Delta) \otimes \id} \ar[d]_-{\cong}^-{\id \otimes \theta_{E_1}}
    & {\det(E_2) \otimes \det(E_1[1]) } \ar[d]_-{\cong}^-{i(\rho(\Delta))} \\
    {\det(E_1) \otimes \det(E_3) \otimes \det(E_1)^{\vee}} \ar[r]^-{\cong}
    & \det(E_3),
    }
    \end{aligned}
 \end{equation}
    \begin{equation}\label{eq:rotate_det2}
    \begin{aligned}
    \xymatrix@C=50pt{
    {\det(E_1) \otimes \det(E_2) \otimes \det(E_1[1])} \ar[r]_-{\cong}^-{\id \otimes i(\rho(\Delta)) } \ar[d]_-{\cong}^-{\id \otimes \theta_{E_1}}
    & {\det(E_1) \otimes \det(E_3) } \ar[d]_-{\cong}^-{(-1)^{\rank E_1} \cdot i(\Delta)} \\
    {\det(E_1) \otimes \det(E_2) \otimes \det(E_1)^{\vee}} \ar[r]^-{\cong}
    & \det(E_2),
    }
    \end{aligned}
    \end{equation}
    \begin{equation}\label{eq:rotate_inverse}
        \begin{aligned}
    \xymatrix@C=50pt{
    {\det(E_3[-1]) \otimes \det(E_1) \otimes \det(E_3)} \ar[r]_-{\cong}^-{\id \otimes i(\Delta)} \ar[d]_-{\cong}^-{\theta_{E_3, -1} \otimes \id}
    & {\det(E_3[-1]) \otimes \det(E_2)} \ar[d]_-{\cong}^-{(-1)^{\rank E_3} \cdot i(\rho^{-1}(\Delta))} \\
    {\det(E_3)^{\vee} \otimes \det(E_1) \otimes \det(E_3)} \ar[r]^-{\cong} 
    & {\det(E_1).}
    }
        \end{aligned}
    \end{equation}
\end{lem}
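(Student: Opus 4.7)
All three diagrams are compatibility statements for the determinant isomorphism $i(\Delta)$ under rotation of a fibre sequence, so the overall plan is to apply the KM commutativity \eqref{eq:KMpre} to carefully chosen fibre double sequences of the form \eqref{eq:Gamma}, combined with the defining relation \eqref{eq:det_shift} of $\theta_E$.

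For \eqref{eq:rotate_det1}, I would build a fibre double sequence with $\Delta_{12} = \Delta$ in the top row and $\Delta_{23} = \rho(\Delta)$ in the middle column. The trivial sequences in the first column and the bottom row then uniquely force $\Delta_{13}$ to be the fibre sequence $E_1 \xrightarrow{0} E_3 \to E_3 \oplus E_1[1]$ (since $g \circ f = 0$) and $\Delta'$ to be the split sequence $E_3 \to E_3 \oplus E_1[1] \to E_1[1]$. Applying \eqref{eq:KMpre} yields
\[
i(\rho(\Delta)) \circ (i(\Delta) \otimes \id) = i(\Delta_{13}) \circ (\id \otimes i(\Delta')).
\]
To identify the right-hand side with the bottom path of \eqref{eq:rotate_det1}, I would apply \eqref{eq:KMpre} to a second fibre double sequence whose top row is $\Delta_{E_1}: E_1 \to 0 \to E_1[1]$ and whose middle row is again $\Delta_{13}$; combined with the defining relation of $\theta_{E_1}$, this presents $i(\Delta_{13}) \circ (\id \otimes i(\Delta'))$ as the canonical evaluation precomposed with $\id \otimes \theta_{E_1}$.

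For \eqref{eq:rotate_det2}, the plan is analogous, using a different fibre double sequence whose composition pattern matches the top path of the target diagram; equivalently, one can derive it from \eqref{eq:rotate_det1} by inserting a suitable braiding and reapplying $i(\Delta)$. The sign $(-1)^{\rank E_1}$ emerges from the Koszul sign rule of the standard symmetric monoidal structure on $\sPic(\fX)$: the $\Z/2$-graded line bundle $\det(E_1)$ has parity $\rank E_1 \bmod 2$, and braiding it past an odd-parity factor produces this sign. For \eqref{eq:rotate_inverse}, I would apply \eqref{eq:rotate_det2} to $\rho^{-1}(\Delta): E_3[-1] \to E_1 \to E_2$; since $\rho(\rho^{-1}(\Delta)) = \Delta$ and $\rank E_3[-1] \equiv \rank E_3 \bmod 2$, the top path of the resulting diagram coincides with that of \eqref{eq:rotate_inverse}, while the bottom path, which there uses $\theta_{E_3[-1]}$ on the last factor, is converted to the $\theta_{E_3, -1}$ on the first factor via the compatibility \eqref{eq:-1and1_compatible}.

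The main obstacle I expect is the systematic bookkeeping of Koszul signs in $\sPic(\fX)$. The fibre double sequence arguments themselves are essentially mechanical once the correct double sequences have been identified, but verifying that the signs $(-1)^{\rank E_1}$ and $(-1)^{\rank E_3}$ emerge from the right combination of braidings and of the convention $(L^{\vee\vee}, \alpha) \cong (L, \alpha)$ (with its intervening sign $(-1)^\alpha$) demands careful attention. A clean strategy is to fix a canonical normal form for all compositions — for instance, always gathering dualized factors on the right — and to read off the accumulated sign at the end.
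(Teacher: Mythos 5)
Your proposal is correct and follows essentially the same route as the paper: for \eqref{eq:rotate_det1} two applications of \eqref{eq:KMpre} — to the octahedral double sequence built from $\Delta$ and $\rho(\Delta)$ and to the one built from $\Delta_{E_1}$ and the split sequence, matched along the common middle row $E_1\xrightarrow{0}E_3\to E_3\oplus E_1[1]$ — then \eqref{eq:rotate_det2} via the braiding argument, with the sign arising as the symmetrizer $(-1)^{\rank E_1}$ of $\det(E_1)\otimes\det(E_1)$, and \eqref{eq:rotate_inverse} by applying \eqref{eq:rotate_det2} to $\rho^{-1}(\Delta)$ and invoking \eqref{eq:-1and1_compatible}, exactly as in the paper. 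The only detail you elide is that the third column of your first double sequence agrees with the naive split sequence only up to the unipotent automorphism $\begin{psmallmatrix}\id & 0\\ h & \id\end{psmallmatrix}$ of $E_3\oplus E_1[1]$, which the paper explicitly notes induces the identity on determinant line bundles.
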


\begin{proof}
Consider the following fibre double sequences:
\[
\xymatrix{
{E_1} \ar[r] \ar@{=}[d]
& {0} \ar[r] \ar[d]
& {E_1[1]} \ar[d]^-{\begin{psmallmatrix} 0\\ \id \end{psmallmatrix}} \\
{E_1} \ar[d] \ar[r]^-{0}
& {E_3 } \ar@{=}[d] \ar[r]^-{\begin{psmallmatrix} \id \\ 0 \end{psmallmatrix}}
& {E_3 \oplus E_1[1]} \ar[d]^-{(\id, 0)} \\
{0} \ar[r]
& {E_3} \ar@{=}[r]
& {E_3,}
}\quad \quad
\xymatrix{
{E_1} \ar[r]^-{f} \ar@{=}[d]
& {E_2} \ar[r]^-{g} \ar[d]^-{g}
& {E_3} \ar[d]^-{\begin{psmallmatrix} \id \\ -h \end{psmallmatrix}} \\
{E_1} \ar[d] \ar[r]^-{0}
& {E_3 } \ar[d]^-{h} \ar[r]^-{\begin{psmallmatrix} \id \\ 0 \end{psmallmatrix}}
& {E_3 \oplus E_1[1]} \ar[d]^-{(h, \id)} \\
{0} \ar[r]
& {E_1[1]} \ar@{=}[r]
& {E_1[1].}
}
\]
Here, the upper horizontal fibre sequence in the left diagram is $\Delta_{E_1}$ and the middle horizontal  fibre sequences in the left and right diagrams are equivalent.
By considering the commutative diagrams \eqref{eq:KMpre} associated with these diagrams and using the fact that the automorphism of the complex $E_3 \oplus E_1[1]$ given by $\begin{psmallmatrix} \id & 0 \\ h & \id \end{psmallmatrix}$ induces the identity map on the determinant line bundle, we conclude the commutativity of the diagram \eqref{eq:rotate_det1}.

To prove the commutativity of the diagram \eqref{eq:rotate_det2}, consider the following diagram:
    \[
    \xymatrix@C=50pt{
    {}
    & {\det(E_1) \otimes \det(E_1) \otimes \det(E_1[1]) \otimes \det(E_3)} \ar[d]^-{\id \otimes \theta_{E_1} \otimes \id}_-{\cong} \\
    {\det(E_1) \otimes \det(E_1) \otimes \det(E_3) \otimes \det(E_1[1])} \ar[ru]_-{\cong}^-{\id \otimes \mathrm{sw}} \ar[d]_-{\cong}^-{\id \otimes i(\Delta) \otimes \id}
    & {\det(E_1) \otimes \det(E_1) \otimes \det(E_1)^{\vee} \otimes \det(E_3)} \ar[d]_-{\cong}^-{} \\
    {\det(E_1) \otimes \det(E_2) \otimes \det(E_1[1])} \ar[r]_-{\cong}^-{\id \otimes i(\rho(\Delta)) } \ar[d]_-{\cong}^-{\id \otimes \theta_{E_1}}
    & {\det(E_1) \otimes \det(E_3) } \ar[d]_-{\cong}^-{(-1)^{\rank E_1} \cdot i(\Delta)} \\
    {\det(E_1) \otimes \det(E_2) \otimes \det(E_1)^{\vee}} \ar[r]^-{\cong}
    & \det(E_2),
    }
    \]
    where $\mathrm{sw}$ denotes the brading isomorphism.
    The commutativity of the upper diagram follows from the commutativity of the diagram \eqref{eq:rotate_det1}.
    The commutativity of the outer diagram follows from the fact that the brading isomorphism for the object $\det(E_1) \otimes \det(E_1)$ is given by multiplication of $(-1)^{\rank E_{1}}$.
    In particular, we conclude the commutativity of the diagram \eqref{eq:rotate_det2}.

    The commutativity of the diagram \eqref{eq:rotate_inverse} immediately follows from the commutativity of \eqref{eq:-1and1_compatible} and \eqref{eq:rotate_det2}.
\end{proof}

\begin{cor}
    Consider the following fibre double sequence:
    \begin{equation}
        \begin{aligned}
        \begin{tikzcd}
	\begin{array}{c}  
    \substack{
    \displaystyle{\phantom{\Delta_2'} } \\ \displaystyle{  \phantom{\rotatebox{90}{\mbox{:}}}} \\
    \displaystyle{\quad \quad \, \,  0}}
    \end{array} & \begin{array}{c} \substack{\displaystyle{\ \Delta_{23} } \\ \displaystyle{ \rotatebox{90}{\text{\textup{:}}}} \\  \displaystyle{E_{23}' }} \end{array} & \begin{array}{c} \substack{\displaystyle{\Delta' } \\ \displaystyle{\rotatebox{90}{\text{\textup{:}}}} \\  \displaystyle{E_{23}' }} \end{array} \\
	{\Delta_{12}:E_{1}} & {E_{2}} & {E_{12}} \\
	{ \Delta_{13}:E_{1}} & {E_{3}} & {E_{13}.}
	\arrow[shift right=3, from=1-1, to=1-2, 
     start anchor={[xshift=-1ex]},
    end anchor={[xshift=2ex]}]
	\arrow[ shift left=5, from=1-1, to=2-1]
	\arrow[equal, shift right=3, from=1-2, to=1-3,
 start anchor={[xshift=-1.65ex]},
    end anchor={[xshift=1.35ex]}]
	\arrow[from=1-2, to=2-2]
	\arrow[from=1-3, to=2-3]
	\arrow[from=2-1, to=2-2]
	\arrow[equal, shift left=5, from=2-1, to=3-1]
	\arrow[from=2-2, to=2-3]
	\arrow[from=2-2, to=3-2]
	\arrow[from=2-3, to=3-3]
	\arrow[from=3-1, to=3-2]
	\arrow[from=3-2, to=3-3]
\end{tikzcd}
\end{aligned}
\end{equation}
Then the following diagram commutes:
\begin{equation}\label{eq:KMpre2}
   \begin{aligned}
   \xymatrix@C=50pt{
       {\det(E_{23}') \otimes \det(E_1) \otimes \det(E_{13})} \ar[d]^-{\cong}_-{i(\Delta_{13})} \ar[r]_-{\cong}^-{i(\Delta')}
       & {\det(E_1) \otimes \det(E_{12})}        
       \ar[d]_-{\cong}^-{i(\Delta_{12})} \\
       { \det(E_{23}') \otimes \det(E_3)} 
        \ar[r]_-{\cong}^-{i(\Delta_{23})}
       & {\det(E_2).} 
     }
   \end{aligned} 
\end{equation}

\end{cor}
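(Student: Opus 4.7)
The plan is to reduce the commutativity of \eqref{eq:KMpre2} to the already established commutativity of \eqref{eq:KMpre} by rotating the two fibre sequences $\Delta_{23}$ and $\Delta'$, in which $E_{23}'$ appears as the fibre rather than the cofibre. Setting $E_{23} \coloneqq E_{23}'[1]$, the rotated sequences $\rho(\Delta_{23})\colon E_2 \to E_3 \to E_{23}$ and $\rho(\Delta')\colon E_{12} \to E_{13} \to E_{23}$, together with the unchanged $\Delta_{12}$ and $\Delta_{13}$, assemble into a fibre double sequence of the original form \eqref{eq:Gamma}, with trivial first column $E_1 = E_1 \to 0$ and trivial last row $0 \to E_{23} = E_{23}$. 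Applying \eqref{eq:KMpre} to this rotated diagram yields a commutative square of isomorphisms of determinant line bundles involving $i(\Delta_{12})$, $i(\Delta_{13})$, $i(\rho(\Delta_{23}))$, and $i(\rho(\Delta'))$.

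Next, I would use the rotation square \eqref{eq:rotate_det2} of Lemma \ref{lem:rotate} twice, applied to $\Delta_{23}$ and to $\Delta'$, with the role of ``$E_1$'' played by $E_{23}'$ in both cases. Each such application rewrites $i(\rho(\Delta))$ in terms of $i(\Delta)$ via the identification $\theta_{E_{23}'}\colon \det(E_{23}) \cong \det(E_{23}')^{\vee}$ and the canonical evaluation, modulo a sign $(-1)^{\rank E_{23}'}$. Tensoring the square from the previous paragraph on the left by $\det(E_{23}')$ and contracting each factor $\det(E_{23}') \otimes \det(E_{23}')^{\vee} \cong \cO_{\fX}$ produced by $\theta_{E_{23}'}$ transforms it into the target square \eqref{eq:KMpre2}, up to two sign factors.

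The remaining work is sign bookkeeping. The substitution for $i(\rho(\Delta'))$ contributes $(-1)^{\rank E_{23}'}$ along the top-then-right path of \eqref{eq:KMpre2}, while the substitution for $i(\rho(\Delta_{23}))$ contributes $(-1)^{\rank E_{23}'}$ along the left-then-bottom path. Each of the two paths therefore acquires exactly one factor of $(-1)^{\rank E_{23}'}$, so the signs cancel and the square commutes. I expect the main obstacle to be precisely this sign accounting: one must carefully track which $\det(E_{23}')$ or $\det(E_{23}')^{\vee}$ factor is contracted at each step, ensure that the ordering of tensor factors is compatible with the Koszul braiding conventions of $\sPic$, and verify that the signs land on symmetrically placed paths. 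Once this is done, commutativity of \eqref{eq:KMpre2} follows formally from commutativity of \eqref{eq:KMpre} for the rotated $\Gamma$.
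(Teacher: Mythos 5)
Your proposal is correct and follows essentially the same route as the paper: the paper's proof is exactly the pentagon whose middle square is \eqref{eq:KMpre} applied to the rotated double sequence (with common cofibre $E_{23}'[1]$) and whose two triangles are \eqref{eq:rotate_det2} applied to $\Delta'$ and $\Delta_{23}$, with the sign $(-1)^{\rank E_{23}'}$ appearing once on each path and cancelling, just as you describe.
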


\begin{proof}
    Consider the following diagram:
    \[
    \xymatrix@C=50pt@R=40pt{
    {\det(E_{23}') \otimes \det(E_1) \otimes \det(E_{13})} \ar[r]_-{\cong}^-{i(\Delta')} \ar[ddd]^-{\cong}_-{i(\Delta_{13})} \ar[r]_-{\cong}^-{i(\Delta')}
    & {\det(E_1) \otimes \det(E_{12})} \\
        {}
    & {\det(E_{23}') \otimes \det(E_1) \otimes  \det(E_{12}) \otimes \det(E_{23}'[1]) } \ar[lu]^-{\cong}_-{i(\rho(\Delta'))} \ar[u]^-{\cong}_-{(-1)^{\rank E_{23}'} \cdot \theta_{E_{23}'}} \ar[d]_-{\cong}^-{i(\Delta_{12})}  \\
        {}
    & {\det(E_{23}') \otimes \det(E_2) \otimes \det(E_{23}'[1])} \ar[d]^-{(-1)^{\rank E_{23}'} \cdot \theta_{E_{23}'}}_-{\cong} \ar[ld]^-{\cong}_-{i(\rho(\Delta_{23}))} \\
        {\det(E_{23}') \otimes \det(E_3)} \ar[r]_-{\cong}^-{i(\Delta_{23})}
    & {\det(E_2).} 
    }
    \]
    The upper and lower triangles commute by the commutativity of the diagram \eqref{eq:rotate_det2}.
    The middle square commutes by \eqref{eq:KMpre}.
    Hence, we conclude.
    
\end{proof}

The following statement can be thought of as an $\infty$-categorical refinement of \cite[Theorem 1]{KM76}:

\begin{lem}\label{lem:KM}
    Let $\fX$ be a derived stack.
    Consider the fibre double sequence in $\Perf(\fX)$:
\[\begin{tikzcd}
	\begin{array}{c}  \substack{\quad \quad \displaystyle{\Delta_1' } \\ \displaystyle{\quad \ \, \, \rotatebox{90}{\text{\textup{:}}}} \\  \displaystyle{\Delta_1:E_{11} }} \end{array} & \begin{array}{c} \substack{\displaystyle{\Delta_2' } \\ \displaystyle{\rotatebox{90}{\text{\textup{:}}}} \\  \displaystyle{E_{12} }} \end{array} & \begin{array}{c} \substack{\displaystyle{\Delta_3' } \\ \displaystyle{\rotatebox{90}{\text{\textup{:}}}} \\  \displaystyle{E_{13} }} \end{array} \\
	{\Delta_2:E_{21}} & {E_{22}} & {E_{23}} \\
	{\Delta_3:E_{31}} & {E_{32}} & {E_{33}.}
	\arrow[shift right=3, from=1-1, to=1-2]
	\arrow[shift left=3, from=1-1, to=2-1]
	\arrow[shift right=3, from=1-2, to=1-3]
	\arrow[from=1-2, to=2-2]
	\arrow[from=1-3, to=2-3]
	\arrow[from=2-1, to=2-2]
	\arrow[shift left=3, from=2-1, to=3-1]
	\arrow[from=2-2, to=2-3]
	\arrow[from=2-2, to=3-2]
	\arrow[from=2-3, to=3-3]
	\arrow[from=3-1, to=3-2]
	\arrow[from=3-2, to=3-3]
\end{tikzcd}\]
Then the following diagram commutes:
\begin{equation}
\begin{aligned}\label{eq:KM}
\xymatrix@C=70pt{
{\left( \det(E_{11}) \otimes \det(E_{13}) \right) \otimes \left( \det(E_{31}) \otimes \det(E_{33}) \right)} \ar[r]^-{i(\Delta_1) \otimes i(\Delta_3)}_-{\simeq} \ar[dd]^-{\simeq}
& {\det(E_{12}) \otimes \det(E_{32})} \ar[d]^-{i(\Delta_2')}_-{\simeq} \\
{}
& {\det(E_{22})} \\
{ \left( \det(E_{11}) \otimes \det(E_{31}) \right) \otimes \left( \det(E_{13}) \otimes \det(E_{33}) \right)} \ar[r]_-{\simeq}^-{i(\Delta_1') \otimes i(\Delta_3')}
& {\det(E_{21}) \otimes \det(E_{23}).} \ar[u]_-{i(\Delta_2)}^-{\simeq}
}
\end{aligned}
\end{equation}
\end{lem}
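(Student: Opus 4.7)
The plan is to factor both composite paths in \eqref{eq:KM} through $\det(E_{11}) \otimes \det(F)$ for an auxiliary object $F := \cofib(E_{11} \to E_{22})$, and then reduce the remaining identification to a single application of the octahedral principle \eqref{eq:KMpre2}. Since the $3 \times 3$ diagram furnishes two factorizations $E_{11} \to E_{12} \to E_{22}$ and $E_{11} \to E_{21} \to E_{22}$ of the ``diagonal'' map, the octahedral axiom produces two fibre sequences
\[\Sigma_h \colon E_{13} \to F \to E_{32}, \qquad \Sigma_v \colon E_{31} \to F \to E_{23}.\]
A third octahedron applied to the composite $E_{13} \to F \to E_{23}$ (whose composition coincides with the leftmost arrow of the column $\Delta_3'$) identifies the induced fibre sequence on cofibres with the row $\Delta_3$.

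I would then assemble these data into three fibre double sequences of the form \eqref{eq:Gamma}. The \emph{horizontal} one $\Gamma_h$ has top row $\Delta_1$, middle row $E_{11} \to E_{22} \to F$, trivial bottom row $0 \to E_{32} = E_{32}$, middle column $\Delta_2'$, and right column $\Sigma_h$. The \emph{vertical} one $\Gamma_v$ is analogous, with top row $\Delta_1'$, trivial bottom row $0 \to E_{23} = E_{23}$, middle column $\Delta_2$, and right column $\Sigma_v$. Applying \eqref{eq:KMpre} to $\Gamma_h$ rewrites the top composite of \eqref{eq:KM}, after applying $i(\Delta_1)$ on $\det(E_{11}) \otimes \det(E_{13})$ and $i(\Delta_3)$ on $\det(E_{31}) \otimes \det(E_{33})$, as
\[\det(E_{11}) \otimes \det(E_{13}) \otimes \det(E_{32}) \xrightarrow{\id \otimes i(\Sigma_h)} \det(E_{11}) \otimes \det(F) \to \det(E_{22}),\]
where the last arrow is induced by the fibre sequence $E_{11} \to E_{22} \to F$. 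A symmetric application to $\Gamma_v$ rewrites the bottom composite through the same final arrow, now via $\id \otimes i(\Sigma_v)$ preceded by $i(\Delta_1')$ and $i(\Delta_3')$.

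With both sides now passing through the common map $\det(E_{11}) \otimes \det(F) \to \det(E_{22})$, the proof reduces to verifying a commutative diagram expressing $\det(E_{13}) \otimes \det(E_{31}) \otimes \det(E_{33}) \cong \det(F)$ in two ways: via $\Sigma_h$ and $\Delta_3$, or via $\Sigma_v$ and $\Delta_3'$ (after the rearrangement demanded by the left vertical swap in \eqref{eq:KM}). This is precisely the content of \eqref{eq:KMpre2} applied to the \emph{corner} fibre double sequence $\Gamma_c$ with middle row $\Sigma_h$, bottom row $\Delta_3'$, middle column $\Sigma_v$, and right column $\Delta_3$; its common fibre $E_{23}'$ equals $E_{31}$, and its validity as a fibre double sequence is exactly what the third octahedron above supplies.

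The hard part will be the bookkeeping of the Koszul sign $(-1)^{\rank(E_{13}) \cdot \rank(E_{31})}$ coming from the left vertical isomorphism in \eqref{eq:KM}, which swaps $\det(E_{13})$ and $\det(E_{31})$ under the standard symmetric monoidal structure on $\sPic$. One must verify that this sign is precisely the one needed to bring the expression into the specific form $\det(E_{23}') \otimes \det(E_1) \otimes \det(E_{13}) \to \det(E_2)$ demanded by \eqref{eq:KMpre2}. A short calculation modulo $2$ using $\rank(E_{32}) \equiv \rank(E_{31}) + \rank(E_{33})$ from $\Delta_3$ should confirm the match.
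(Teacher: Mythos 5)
Your proposal is correct and follows essentially the same route as the paper's proof: the same auxiliary object $F=\cofib(E_{11}\to E_{22})$, the same two fibre double sequences through $F$ handled by \eqref{eq:KMpre}, and the same corner double sequence (yours is merely the transpose of the paper's third one) handled by \eqref{eq:KMpre2}. The final Koszul-sign bookkeeping you anticipate is not a separate step: the left vertical map of \eqref{eq:KM} is the symmetry of $\sPic$ with its standard (Koszul-signed) monoidal structure, and the required compatibility is already contained in the commutativity of \eqref{eq:KMpre2}.
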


\begin{proof}

 Let $F$ be the cofibre of the map $E_{11} \to E_{22}$. Consider the following fibre double sequences:
\begin{equation}\label{eq:three_fibre}
\begin{aligned}
\xymatrix{
{E_{11}} \ar[r] \ar@{=}[d]
& {E_{12}} \ar[r] \ar[d]
& {E_{13}} \ar[d] \\
{E_{11}} \ar[d] \ar[r]
& {E_{22}} \ar[d] \ar[r]
& {F} \ar[d] \\
{0} \ar[r]
& {E_{32}} \ar@{=}[r]
& {E_{32}}
}
\quad \quad
\xymatrix{
{E_{11}} \ar[r] \ar@{=}[d]
& {E_{21}} \ar[r] \ar[d]
& {E_{31}} \ar[d] \\
{E_{11}} \ar[d] \ar[r]
& {E_{22}} \ar[d] \ar[r]
& {F} \ar[d] \\
{0} \ar[r]
& {E_{23}} \ar@{=}[r]
& {E_{23}}
}
\quad \quad
\xymatrix{
{0} \ar[r] \ar[d]
& {E_{13}} \ar@{=}[r] \ar[d]
& {E_{13}} \ar[d] \\
{E_{31}} \ar@{=}[d] \ar[r]
& {F} \ar[r] \ar[d]
& {E_{23}} \ar[d] \\
{E_{31}} \ar[r]
& {E_{32}} \ar[r]
& {E_{33}.} 
}
\end{aligned}
\end{equation}
Consider the following diagram:
\[
\xymatrix{
{}
& {\substack{ \displaystyle{ \left( \det(E_{11}) \otimes \det(E_{13}) \right)} \\ \displaystyle{
\otimes \left( \det(E_{31}) \otimes \det(E_{33}) \right)}}}  \ar[ld]_-{\cong} \ar[rd]^-{\cong}
& {} \\
{\det(E_{11}) \otimes \left( \det(E_{13})  \otimes \det(E_{32}) \right)} \ar[r]^-{\cong} \ar[d]^-{\cong}
& {\det(E_{11}) \otimes \det(F)} \ar[d]_-{\cong}
& {\det(E_{11}) \otimes \left( \det(E_{31}) \otimes \det(E_{23}) \right)} \ar[l]_-{\cong} \ar[d]_-{\cong} \\
{\det(E_{12}) \otimes \det(E_{32})} \ar[r]^-{\cong}
& {\det(E_{22}) }
& {\det(E_{21}) \otimes \det(E_{23}).} \ar[l]_-{\cong}
}
\]
The left bottom (resp. right bottom) square commutes by the commutativity of the diagram \eqref{eq:KMpre} applied to the first (resp. second) fibre double sequence in \eqref{eq:three_fibre}.
Also, the upper triangle commutes by applying the commutativity of \eqref{eq:KMpre2} to the third fibre double sequence in \eqref{eq:three_fibre}.
Hence, we conclude.

\end{proof}

\begin{cor}\label{cor:det_shift_fibre}
    Let $\Delta \colon E_1 \to E_2 \to E_3$ be a fibre sequence. Then the following diagram commutes:
    \[
    \xymatrix@C=50pt@R=40pt{
    {\det(E_{1}[1]) \otimes \det(E_{3}[1])} \ar[r]^-{i(\Delta)}_-{\cong} \ar[d] \ar[d]_-{\cong}^-{\theta_{E_1} \otimes \theta_{E_3}}
    & { \det (E_{2}[1])}  \ar[d]_-{\cong}^-{\theta_{E_2}} \\
    {\det(E_{1})^{\vee} \otimes \det(E_{3})^{\vee}} \ar[r]^-{i(\Delta[1])}_-{\cong}
    & {\det(E_2)^{\vee}.}
    }
    \]
\end{cor}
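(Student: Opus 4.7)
My plan is to deduce Corollary \ref{cor:det_shift_fibre} directly from Lemma \ref{lem:KM} by applying it to a $3\times 3$ fibre double sequence whose rows are $\Delta$, the trivial fibre sequence $0\to 0\to 0$, and the shifted fibre sequence $\Delta[1]$, and whose columns are the rotation fibre sequences $\Delta_{E_i}\colon E_i\to 0\to E_i[1]$ used to define $\theta_{E_i}$ in \eqref{eq:det_shift}. Explicitly, I consider
\[
\begin{tikzcd}
E_1 \arrow[r] \arrow[d] & E_2 \arrow[r] \arrow[d] & E_3 \arrow[d] \\
0 \arrow[r] \arrow[d] & 0 \arrow[r] \arrow[d] & 0 \arrow[d] \\
E_1[1] \arrow[r] & E_2[1] \arrow[r] & E_3[1].
\end{tikzcd}
\]
Each row is a fibre sequence (the middle row trivially, and $\Delta[1]$ because the shift preserves fibre sequences in a stable $\infty$-category) and each column is the fibre sequence $\Delta_{E_i}$ that was used to introduce $\theta_{E_i}$, so this is a genuine fibre double sequence.

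Lemma \ref{lem:KM} applied to this diagram yields the identity of two maps
\[
i(\Delta_{E_2})\circ\bigl(i(\Delta)\otimes i(\Delta[1])\bigr)\;=\;\bigl(i(\Delta_{E_1})\otimes i(\Delta_{E_3})\bigr)\circ\sigma
\]
from $\det(E_1)\otimes\det(E_3)\otimes\det(E_1[1])\otimes\det(E_3[1])$ to $\cO_\fX$, where $\sigma$ is the canonical rearrangement in $\sPic(\fX)$ swapping $\det(E_3)$ past $\det(E_1[1])$; with respect to the standard symmetric monoidal structure on $\sPic(\fX)$, $\sigma$ carries a Koszul sign governed by the $\bZ/2\bZ$-degrees of the two factors.

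To convert this into the claimed square, I use that $\theta_E$ is characterized in \eqref{eq:det_shift} by the factorization $i(\Delta_E)=\mathrm{ev}\circ(\id\otimes\theta_E)$ through the evaluation pairing $\det(E)\otimes\det(E)^\vee\to\cO_\fX$. Substituting this factorization for each $i(\Delta_{E_i})$ in the identity above and using that the evaluation pairing on $\det(E_2)$ is perfect (so that a map into $\det(E_2)^\vee$ is determined by its pairing with $\det(E_2)$), the equality translates into the commutativity of the desired diagram, provided that the Koszul sign produced by $\sigma$ matches the sign intervening in the convention for the isomorphism $\det(E_1)^\vee\otimes\det(E_3)^\vee\cong(\det(E_1)\otimes\det(E_3))^\vee$ used to identify the bottom horizontal map with the dual of $i(\Delta)$.

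The hard part is therefore not structural but a careful bookkeeping of $\bZ/2\bZ$-graded signs: one must verify that the Koszul sign carried by $\sigma$ cancels exactly against the sign $(-1)^{\alpha_1+\alpha_2}$ appearing in the convention fixed in Section \ref{ssec:det} for dualizing tensor products in $\sPic$. Once this matching is checked, all other ingredients are supplied by Lemma \ref{lem:KM} and the defining property \eqref{eq:det_shift} of $\theta_E$, and the corollary follows.
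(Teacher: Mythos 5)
Your proposal is exactly the paper's argument: the paper proves Corollary \ref{cor:det_shift_fibre} by applying Lemma \ref{lem:KM} to precisely the $3\times 3$ fibre double sequence you wrote down (rows $\Delta$, $0\to 0\to 0$, $\Delta[1]$; columns the rotation sequences $\Delta_{E_i}$), leaving the unwinding via the defining factorization of $\theta_{E_i}$ through the evaluation pairing, and the Koszul-sign bookkeeping, implicit just as you describe.
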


\begin{proof}
    This follows by applying Lemma \ref{lem:KM} to the following fibre double sequence:
    \[
    \xymatrix{
    {E_1} \ar[r] \ar[d]
    & {E_2}\ar[r] \ar[d] 
    & {E_3} \ar[d] \\
        {0} \ar[r] \ar[d]
    & {0} \ar[r] \ar[d]
    & {0} \ar[d] \\
        {E_1[1]}  \ar[r]
    & {E_2[1]} \ar[r]
    & {E_3[1].} 
    }
    \]
\end{proof}

Recall by construction that the natural map
\[
\det \colon K(\Perf(\fX)) \to \Pic^{\mathrm{gr}}(\fX)
\]
is $\bZ / 2 \bZ$-equivariant, where $\bZ / 2 \bZ$-equivariant structure on $K(\Perf(\fX))$ is induced by the duality map
\[
\Perf(\fX) \to \Perf(\fX)^{\mathrm{op}}, \quad E \mapsto E^{\vee}
\]
with the canonical identification $E \simeq E^{\vee \vee}$
and similarly for $\Pic^{\mathrm{gr}}(\fX)$ (with respect to the naive monoidal structure). 
The $\bZ / 2 \bZ$-equivariant structure gives a natural isomorphism
\begin{equation}\label{eq:det_dual}
\iota_E \colon \det(E^\vee)\cong \det(E)^{\vee}.
\end{equation}
By definition, the following diagram commutes:
\begin{equation}\label{eq:double_dual}
\begin{aligned}
\xymatrix@C=80pt{
\det(E) \ar[rr]_-{\cong}^-{(-1)^{\rank E} \cdot \id} \ar[d]_-{\cong} 
& {}
& {\det(E)} \\
\det(E^{\vee \vee}) \ar[r]_-{\cong} ^-{\iota_{E^{\vee}}}
& \det(E^{\vee})^{\vee} \ar[r]_-{\cong} ^-{(\iota_{E}^{\vee})^{-1}}
& \det(E)^{\vee \vee}. \ar[u]_-{\cong}
}
\end{aligned}
\end{equation}
The appearance of the sign intervention is due to the fact that the trivializations of the double dual with respect to the standard and naive monoidal structures differ by a sign.

Let $ \Delta \colon E_1 \to E_2 \to E_3$ be a fibre sequence in $\Perf(\fX)$ and $\Delta^{\vee} \colon E_3^{\vee} \to E_{2}^{\vee} \to E_{1}^{\vee}$ be the dual fibre sequence.
By the construction of algebraic K-theory, we clearly have the following equivalence of $2$-cells
\[
\sigma([\Delta]) \simeq [\Delta^{\vee}]
\]
where $\sigma$ denotes the non-trivial element in $\bZ / 2 \bZ$.
In particular, the following diagram commutes:
\begin{equation}\label{eq:dual_fibreseq}
\begin{aligned}
    \xymatrix@C=80pt{
    \det(E_3^{\vee}) \otimes \det(E_1^{\vee}) \ar[r]_-{\cong}^-{i(\Delta^{\vee})} \ar[d]_-{\cong}^-{\iota_{E_3} \otimes \iota_{E_1}}
    & \det(E_2^{\vee}) \ar[d]_-{\cong}^-{\iota_{E_2}} \\
    \det(E_3)^{\vee} \otimes \det(E_1)^{\vee} \ar[r]_-{\cong}^-{(i(\Delta)^{\vee})^{-1}}
    & \det(E_2)^{\vee}.
    }
    \end{aligned}
\end{equation}
Applying this to the fibre sequence $\Delta_E \colon E \to 0 \to E[1]$ and using \eqref{eq-tauto-fibreseq-equiv}, we see that the following diagram commutes:
\begin{equation}
\begin{aligned}
    \xymatrix@C=80pt{
    {\det(E^{\vee}[-1])} \otimes \det(E^{\vee}) \ar[d]_-{\cong}^-{\beta_1} \ar[r]_-{\cong}^-{(-1)^{\rank E} \cdot i(\Delta_{E^{\vee}[-1]})} 
    & \cO_{\fX} \ar@{=}[dd] \\
    \det(E[1]^{\vee}) \otimes \det(E^{\vee}) \ar[d]_-{\cong}^-{\iota_{E[1]} \otimes \iota_{E}}
    & {} \\
    \det(E[1])^{\vee} \otimes \det(E)^{\vee} \ar[r]_-{\cong}^-{i(\Delta_E)^{\vee}}
    & \cO_{\fX}.
    }
    \end{aligned}
\end{equation}
The appearance of the sign is caused by the sign insertion in the definition of the map $\beta_1$ in \eqref{eq-beta-def}.
Combining this with the commutativity of the diagram \eqref{eq-sign-L-dimension} up to sign and the commutativity of \eqref{eq:-1and1_compatible}, we see that the following diagram commutes:
\begin{equation}\label{eq:dual_shift}
\begin{aligned}
    \xymatrix@C=80pt{
\det(E^{\vee}[-1]) \ar[r]_-{\cong}^-{\beta_1} \ar[d]_-{\cong}^-{\theta_{E^{\vee}, -1}}
& \det(E[1]^{\vee}) \ar[r]_-{\cong}^-{\iota_{E[1]}}
& \det(E[1])^{\vee} \ar[d]_-{\cong}^-{(\theta_{E}^{\vee})^{-1}} \\
\det(E^{\vee})^{\vee} \ar[rr]_-{\cong}^-{(\iota_{E^{\vee}}^{\vee})^{-1}}
&{}
& \det(E)^{\vee \vee}.
 }
    \end{aligned}
\end{equation}

By repeatedly using the commutativity of the above diagram, for a positive integer $n \in \bZ_{> 0}$,
we see that the following diagram commutes:
\begin{equation}\label{eq:dual_shift_n}
\begin{aligned}
    \xymatrix@C=80pt{
\det(E^{\vee}[-n]) \ar[r]_-{\cong}^-{\beta_n} \ar[d]_-{\cong}^-{\theta_{E^{\vee}, -n}}
& \det(E[n]^{\vee}) \ar[r]_-{\cong}^-{\iota_{E[n]}}
& \det(E[n])^{\vee} \ar[d]_-{\cong}^-{(\theta_{E, n}^{\vee})^{-1}} \\
\det(E^{\vee})^{\vee^{(n)}} \ar[rr]_-{\cong}^-{(\iota_{E^{\vee})^{-1}}^{\vee}}
&{}
& \det(E)^{\vee^{(n + 1)}}.
 }
    \end{aligned}
\end{equation}

By substituting $E = F[-n]$, we see that the same diagram commutes for negative integers $n$  as well.

\section{Shifted symplectic and d-critical structures}

We will recall the theory of shifted symplectic structures following \cite{ptvv13} and 
d-critical structures following \cite{joy15} which can be regarded a classical shadow of $(-1)$-shifted symplectic structures. 
These structures are a key ingredient in the construction of the Donaldson--Thomas perverse sheaves, which we will recall in the subsequent section.

\subsection{Shifted symplectic structures}

Here we briefly recall the theory of shifted symplectic structures. 
See \cite{ptvv13} for the original source, \cite{PY} for a quick survey, and \cite{cal15,ab17,chs21} for the material on Lagrangian correspondences.

Let $A$ be a connective commutative dg algebra. We define the graded mixed commutative dg algebra $\bDR(A)$ of differential forms as in \cite{ptvv13}. Its underlying graded commutative algebra is $\Sym(\bL_A[-1])$. We consider the following related objects:
\begin{itemize}
    \item The complex of closed $p$-forms
    \[\Omega^{\geq p}(A) = \left(\prod_{n\geq p} \bDR(A)(n), d + \ddr\right)[p].\]
    There are natural maps $\Omega^{\geq (p+1)}(A)\rightarrow \Omega^{\geq p}(A)[1]$.
    \item The space of $n$-shifted closed $p$-forms $\cA^{p, \cl}(A, n) = \Map(k[-n], \Omega^{\geq p}(A))$. There are natural forgetful maps $\cA^{p+1, \cl}(A, n-1)\rightarrow \cA^{p, \cl}(A, n)$.
    \item The space of $n$-shifted exact $p$-forms $\cA^{p, \exact}(A, n)$ which fits into a fibre sequence of pointed spaces
    \[\cA^{p, \exact}(A, n)\xrightarrow{\ddr} \cA^{p, \cl}(A, n)\longrightarrow \cA^{0, \cl}(A, p+n).\]
    There are natural forgetful maps $\cA^{p+1, \exact}(A, n-1)\rightarrow \cA^{p, \exact}(A, n)$.
    \item The space of $n$-shifted $p$-forms $\cA^p(A, n)\cong \Map(k[-n], \wedge^p\bL_A)$. There are natural forgetful maps
    \begin{equation}\label{eq:closedformforget}
    \cA^{p, \cl}(A, n)\rightarrow \cA^p(A, n)
    \end{equation}
\end{itemize}

\begin{prop}\label{prop:DRconvergent}
The functor $\bDR$ is convergent, i.e. for any connective commutative dg algebra $A$ the natural morphism
\[\bDR(A)\longrightarrow \lim_n \bDR(\tau^{\geq -n}(A))\]
is an isomorphism.
\end{prop}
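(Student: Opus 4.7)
The plan is to reduce the convergence of $\bDR$ to that of the cotangent complex functor and then propagate it through symmetric powers. Since the underlying graded complex of $\bDR(A)$ is $\Sym(\bL_A[-1]) = \bigoplus_{p\geq 0} \Sym^p(\bL_A[-1])$, and since limits in the $\infty$-category of graded mixed complexes are computed weight-by-weight on the underlying graded complex, it suffices to show that for every $p\geq 0$ the natural map
\[\Sym^p(\bL_A[-1]) \longrightarrow \lim_n \Sym^p(\bL_{\tau^{\geq -n}(A)}[-1])\]
is an equivalence.

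The key input is the convergence of the cotangent complex itself. Using the relative cotangent complex cofibre sequence
\[\bL_A \otimes_A \tau^{\geq -n}(A) \longrightarrow \bL_{\tau^{\geq -n}(A)} \longrightarrow \bL_{\tau^{\geq -n}(A)/A}\]
together with the standard connectivity estimate for the relative cotangent complex of a Postnikov truncation (which exploits that $A \to \tau^{\geq -n}(A)$ is an isomorphism on cohomology in degrees $\geq -n$), the induced map $\bL_A \to \bL_{\tau^{\geq -n}(A)}$ becomes an isomorphism on cohomology in a range that grows linearly with $n$. In particular, the pro-system $\{\bL_{\tau^{\geq -n}(A)}\}_n$ is eventually constant in each cohomological degree, with limit $\bL_A$.

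To pass to symmetric powers, I would use that $\bL_A$ and each $\bL_{\tau^{\geq -n}(A)}$ are concentrated in cohomological degrees $\leq 0$ (since $A$ is connective), so that each $\Sym^p(\bL_A[-1]) \simeq (\wedge^p \bL_A)[-p]$ is concentrated in degrees $\leq p$ with the same bound uniformly in $n$. Combined with the degreewise stabilization established above and the compatibility of tensor products and coinvariants with isomorphisms in bounded ranges, for any fixed cohomological degree $k$ the groups $H^k\Sym^p(\bL_{\tau^{\geq -n}(A)}[-1])$ stabilize to $H^k\Sym^p(\bL_A[-1])$ as $n$ grows. The vanishing of $\lim^1$ on an eventually constant pro-system together with the Milnor short exact sequence then yields the desired equivalence.

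The main obstacle is the interaction between symmetric powers and inverse limits, since $\Sym^p$ does not commute with inverse limits in general. Overcoming this requires the strong form of convergence (eventual isomorphism on each cohomology group, rather than merely an equivalence of the limits) established for the cotangent complex, combined with careful bookkeeping of the connectivity bounds under iterated tensor products and symmetrization so as to verify that the stabilization range grows faster than the weight $p$.
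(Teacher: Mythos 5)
Your proposal is correct and is close in spirit to the paper's proof: both reduce to the weight-$p$ pieces $\Sym^p(\bL[-1])$, both rest on the connectivity estimate for the relative cotangent complex $\bL_{A_n/A}$ of the truncation (writing $A_n=\tau^{\geq -n}(A)$, $i_n\colon A\to A_n$; the paper gets this from \cite[Corollary 7.4.3.2]{ha}), and both propagate it through $\Sym^p$ using that everything is bounded above uniformly in $n$. The difference is in how the limit is concluded. The paper factors the comparison as $\Sym^p_A(\bL_A[-1])\to \lim_n\Sym^p_{A_n}(i_n^*\bL_A[-1])\to \lim_n\Sym^p_{A_n}(\bL_{A_n}[-1])$, handling the first arrow by Postnikov-completeness of bounded-above modules (\cite[Proposition 19.2.1.5]{sag}) and the second by showing the fibre of $\Sym^p_{A_N}(i_N^*\bL_A[-1])\to\Sym^p_{A_N}(\bL_{A_N}[-1])$ is $(N+1-p)$-connective, via the filtration with graded pieces $\Sym^k_{A_N}(\bL_{A_N/A}[-2])\otimes\Sym^{p-k}_{A_N}(\bL_{A_N}[-1])$, so that the limit of the fibres vanishes. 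You instead argue degreewise: eventual constancy of $H^k$ along the tower gives Mittag--Leffler, hence vanishing of $\lim^1$ and identification of $H^k$ of the limit via the Milnor sequence. This works and avoids the SAG citation, but it puts the weight of the proof on the "bookkeeping" you defer: you need the base-change identification $\Sym^p_{A_n}(i_n^*\bL_A[-1])\simeq\Sym^p_A(\bL_A[-1])\otimes_A A_n$, the fact that a map which is an equivalence in degrees $\geq -n$ between complexes bounded above in degree $1$ stays an equivalence in degrees $\geq -n+c(p)$ after $p$-fold tensor and $\Sigma_p$-coinvariants (harmless in characteristic $0$), and, importantly, the eventual-constancy statement for the transition maps $A_{n+1}\to A_n$ of the tower, not only for the comparison maps out of $\bL_A$ --- though this follows from the same connectivity estimate applied to $A_{n+1}\to A_n$. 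With those points made explicit, your argument is a complete, slightly more hands-on variant of the paper's.
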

\begin{proof}
Let $A_n = \tau^{\geq -n}(A)$ and $i_n\colon A\rightarrow A_n$ the projection. We have to show that the natural morphism
\[\Sym^p_A(\bL_A[-1])\longrightarrow \lim_n \Sym^p_{A_n}(\bL_{A_n}[-1])\]
is an isomorphism for every $p\geq 0$. This morphism factors as the composite
\[\Sym^p_A(\bL_A[-1])\longrightarrow \lim_n \Sym^p_{A_n}(i_n^*\bL_A[-1])\longrightarrow \lim_n \Sym^p_{A_n}(\bL_{A_n}[-1]).\]
Since $\bL_A$ is connective, $\Sym^p_A(\bL_A[-1])$ is bounded on the right. Therefore, the first map is an isomorphism by \cite[Proposition 19.2.1.5]{sag}. It remains to show that
\[\lim_n(\fib(\Sym^p_{A_n}(i_n^*\bL_A[-1])\longrightarrow \Sym^p_{A_n}(\bL_{A_n}[-1]))) = 0.\]
Consider an integer $N$. By construction $\cofib(i_N)$ is $(N+2)$-connective. Therefore, by \cite[Corollary 7.4.3.2]{ha} the relative cotangent complex $\bL_{A_N/A}$ is $(N+2)$-connective. Using the fibre sequence
\[\bL_{A_N/A}[-1]\longrightarrow i_N^*\bL_A\longrightarrow \bL_{A_N}\]
we obtain a filtration on $\Sym^p_{A_N}(i_N^*\bL_A[-1])$ with associated graded pieces
\[\Sym^k_{A_N}(\bL_{A_N/A}[-2])\otimes \Sym^{p-k}_{A_N}(\bL_{A_N}[-1]).\]
Therefore, the fibre
\[\fib(\Sym^p_{A_N}(i_N^*\bL_A[-1])\longrightarrow \Sym^p_{A_N}(\bL_{A_N}[-1]))\]
is $(N+1-p)$-connective. For an inverse system $\{x_n\}$, the inverse limit may be computed as the fibre
\[\lim_n x_n = \fib\left(\prod_{n\geq N} x_n\rightarrow \prod_{n\geq N} x_n\right)\]
for any $N$. We obtain that the left-hand side is $(N-p)$-connective for any $N$ and is, therefore, $0$.
\end{proof}

For any derived prestack $\fX$ we define $\bDR(\fX)$ by a right Kan extension, i.e.
\[\bDR(\fX) = \lim_{\Spec A\rightarrow \fX}\bDR(A)\]
and similarly for the other spaces. By \cite[Proposition 1.14]{ptvv13} for any derived Artin stack $\fX$ we have
\[
\cA^p(\fX, n) \cong \Map_{\QCoh(\fX)} (\cO_{\fX}, \wedge^p \bL_{\fX}[n]).
\]

For a morphism between derived stacks $f \colon \fX_1 \to \fX_2$,
we can define pullback morphisms of (closed, exact) $n$-shifted $p$-forms
\[
f^{\star} \colon \cA^{p}(\fX_2, n) \to \cA^{p}(\fX_1, n), \quad f^{\star} \colon \cA^{p, \cl}(\fX_2, n) \to \cA^{p, \cl}(\fX_1, n),\qquad f^\star\colon \cA^{p, \exact}(\fX_2, n) \to \cA^{p, \exact}(\fX_1, n).
\]

We will particularly be interested in the case of $(-1)$-shifted $2$-forms, in which case the following statement is useful. Recall from \cite[Chapter 2, Section 1.7.2]{gr1} that a derived prestack $\fX$ is \defterm{locally almost of finite type} if it is convergent and its $n$-truncation $\tau^{\leq n}(\fX)$ is locally of finite type.

\begin{prop}\label{prop:closedexactdecomposition}
Let $\fX$ be a derived prestack locally almost of finite type. Then there is a canonical isomorphism $\cA^{2, \exact}(\fX, -1)\cong \cA^{2, \cl}(\fX, -1)\times \Gamma(\fX, \C_X)$, so that the projection on the first factor is given by the de Rham differential, where $\Gamma(\fX, \C_X)$ is the set of locally constant functions.
\end{prop}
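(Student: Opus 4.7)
The plan is to unfold the definition of $\cA^{2,\exact}(\fX,-1)$ in terms of the Hodge filtration on $\bDR(\fX)$ and then split the resulting fiber sequence using the laft hypothesis. Write $F^p := F^p\bDR(\fX)$, so that $\Omega^{\geq p}(\fX) = F^p[p]$, and let $Q := \bDR(\fX)/F^2$. The defining fiber sequence for $\cA^{2,\exact}(\fX,-1)$ is obtained by applying $\Map_\C(k[1],-)$ to the Hodge inclusion $F^2[2] \hookrightarrow \bDR(\fX)[2]$, which upon shift-chasing identifies $\cA^{2,\exact}(\fX,-1)$ with $\Map_\C(k,Q)$. Rotating the cofiber sequence $F^2 \to \bDR(\fX) \to Q$ to $\bDR(\fX) \to Q \to F^2[1]$ and applying $\Map_\C(k,-)$ then produces a fiber sequence of spaces
\[
\Map_\C(k,\bDR(\fX)) \longrightarrow \cA^{2,\exact}(\fX,-1) \xrightarrow{\ddr} \cA^{2,\cl}(\fX,-1).
\]

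For $\fX$ locally almost of finite type, the de Rham comparison in the derived setting (Bhatt; Moulinos--Robalo--To\"en) identifies $\bDR(\fX) \simeq C^\bullet(\fX^{\mathrm{an}},\C)$, and in particular shows that $\bDR(\fX)$ has cohomology concentrated in non-negative degrees. Hence $\Map_\C(k,\bDR(\fX)) \simeq \tau^{\leq 0}\bDR(\fX) = H^0(\fX,\C_\fX) = \Gamma(\fX,\C_\fX)$ is discrete, identifying the leftmost term in the above fiber sequence with $\Gamma(\fX,\C_\fX)$.

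To upgrade the fiber sequence to a product, I would exhibit a canonical null-homotopy of the further connecting map $\cA^{2,\cl}(\fX,-1) \to \cA^{0,\cl}(\fX,1)$, which is induced by the Hodge inclusion $F^2[1] \hookrightarrow \bDR(\fX)[1]$. On cohomology in degree $n \leq 0$ the image of this map lies in the Hodge filtration step $F^2 H^{n+1}(\fX,\C)$, which vanishes by the standard Hodge-theoretic bound $F^p H^n(\fX,\C) = 0$ for $p > n$. Together with the bounded-below concentration of $\bDR(\fX)$, this vanishing yields a canonical factorization of $F^2\bDR(\fX) \to \bDR(\fX)$ through $\tau^{\geq 2}\bDR(\fX)$; since $\Map_\C(k[1],\tau^{\geq 2}\bDR(\fX)[1])$ is contractible, this provides the desired null-homotopy and hence the canonical splitting, with the projection onto the first factor being $\ddr$ by construction.

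The main obstacle is promoting the cohomological vanishing to a functorial factorization at the level of complexes. For classical smooth affines this is tautological, since $F^p$ then lies in cohomological degrees $\geq p$. For general laft derived prestacks I would argue either by left Kan extension from smooth finite-type affines, using convergence of $\bDR$ from Proposition~\ref{prop:DRconvergent}, or by a direct filtered-derived-category argument realizing the Hodge filtration vanishing at the level of filtered complexes rather than only on cohomology.
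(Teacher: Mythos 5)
Your setup agrees with the paper up to the point where the real work starts: unwinding $\cA^{2,\exact}(\fX,-1)$ as $\Map_{\C}(k,F^0/F^2)$, the rotated fibre sequence $\cA^{0,\cl}(\fX,0)\to\cA^{2,\exact}(\fX,-1)\xrightarrow{\ddr}\cA^{2,\cl}(\fX,-1)$, and the identification of the fibre with the discrete set $\Gamma(\fX,\C_\fX)$ are all present there too. The genuine gap is exactly the step you flag at the end and then do not supply: a \emph{canonical} null-homotopy of the connecting map $\cA^{2,\cl}(\fX,-1)\to\cA^{0,\cl}(\fX,1)$, equivalently a functorial chain-level factorization of $F^2\to F^0$ through $\tau^{\geq 2}F^0$. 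Degreewise vanishing of images in cohomology does not produce a null-homotopy of a map of complexes, let alone one coherent over the diagram of all affines mapping to $\fX$ that defines these spaces; and the vanishing itself is not justified, since the bound $F^pH^n=0$ for $p>n$ concerns Deligne's Hodge filtration on the Betti cohomology of a variety, whereas your filtration is the derived de Rham filtration on the Hodge-completed de Rham complex of a \emph{derived} scheme, and no comparison between the two is established. Your proposed repairs do not close this: the closed/exact-form functors are not left Kan extended from smooth finite-type affines (the completed product over weights obstructs it), and the reduction one can legitimately make --- via convergence (Proposition \ref{prop:DRconvergent}) and writing the truncations of $\fX$ as colimits of finite-type derived affines, which is what the paper does --- lands you on genuinely derived affines, where $\bL_A$ has cohomology in negative degrees and $F^2$ is \emph{not} concentrated in degrees $\geq 2$, so the ``tautological'' smooth case does not cover what is needed.

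The paper splits the sequence by a different mechanism that sidesteps any chain-level null-homotopy: an exact $(-1)$-shifted $2$-form has an underlying function via the forgetful map $\cA^{2,\exact}(\fX,-1)\to\cA^{1,\exact}(\fX,0)\cong\cA^0(\fX,0)$, and restricting this function to $\fX^{\red}$ provides the complementary projection; one defines $\cA^{2,\exact}_0(\fX,-1)$ as the fibre of $\cA^{2,\exact}(\fX,-1)\to\cA^0(\fX^{\red},0)$ and checks that $\cA^{2,\exact}_0(\fX,-1)\times\cA^{0,\cl}(\fX,0)\to\cA^{2,\exact}(\fX,-1)$ is an equivalence. The inputs making this work --- discreteness of $\cA^{0,\cl}(\fX,0)\cong\Gamma(\fX,\C)$, $\ddr$ being an isomorphism on $\pi_k$ for $k\geq 1$, and the short exact sequence on $\pi_0$ --- are precisely \cite[Proposition 5.6]{bbj19} for finite-type derived affine schemes; this is the computational content your sketch is missing, and any salvage of your route would essentially have to prove it anyway.
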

\begin{proof}
By Proposition \ref{prop:DRconvergent} the functor $\bDR$ is convergent, so we have
\[\bDR(\fX) = \lim_n \bDR(\tau^{\leq n}(\fX)).\]
Since $\fX$ is locally almost of finite type, $\tau^{\leq n}(\fX)$ is locally of finite type and can, therefore, be written as a colimit (in the $\infty$-category of $n$-truncated derived prestacks) of a diagram of $n$-truncated derived affine schemes of finite type. Thus, it is enough to prove the claim for $\fX=\Spec A$ a derived affine scheme of finite type. We have a fibre sequence
\[
\cA^{0, \cl}(\fX, 0)\longrightarrow \cA^{2, \exact}(\fX, -1)\xrightarrow{\ddr} \cA^{2, \cl}(\fX, -1)
\]
of pointed spaces. Moreover, there is a natural forgetful map $\cA^{2, \exact}(\fX, -1)\rightarrow \cA^{1, \exact}(\fX, 0)\cong\cA^0(\fX, 0)$ so that the composite
\[\cA^{0, \cl}(\fX, 0)\longrightarrow \cA^{2, \exact}(\fX, -1)\longrightarrow \cA^0(\fX, 0)\]
identifies with the forgetful map \eqref{eq:closedformforget}. By \cite[Proposition 5.6]{bbj19} we have the following:
\begin{itemize}
    \item The restriction $\cA^{0, \cl}(\fX, 0)\rightarrow \cA^0(\fX, 0)\rightarrow \cA^0(\fX^{\red}, 0)$ identifies $\cA^{0, \cl}(\fX, 0)$ with the \emph{set} of locally constant $\C$-valued functions on $\fX^{\red}$, i.e. there is a natural isomorphism $\cA^{0, \cl}(\fX, 0)\cong \Gamma(\fX, \C)$.
    \item The de Rham differential $\ddr\colon \cA^{2, \exact}(\fX, -1)\rightarrow \cA^{2, \cl}(\fX, -1)$ induces an isomorphism on $\pi_k$ for $k\geq 1$.
    \item On the level of $\pi_0$ we have a short exact sequence of abelian groups
    \[0\longrightarrow \pi_0(\cA^{0, \cl}(\fX, 0))\longrightarrow \pi_0(\cA^{2, \exact}(\fX, -1))\longrightarrow \pi_0(\cA^{2, \cl}(\fX, -1))\longrightarrow 0.\]
\end{itemize}
Let $\cA^{2, \exact}_0(\fX, -1)$ be the homotopy fibre of the composite
\[\cA^{2, \exact}(\fX, -1)\longrightarrow \cA^0(\fX, 0)\longrightarrow \cA^0(\fX^{\red}, 0).\]
By the above facts we get that the composite
\[\cA^{2, \exact}_0(\fX, -1)\longrightarrow \cA^{2, \exact}(\fX, -1)\longrightarrow \cA^{2, \cl}(\fX, -1)\]
as well as the morphism
\[\cA^{2, \exact}_0(\fX, -1)\times \cA^{0, \cl}(\fX, 0)\longrightarrow \cA^{2, \exact}(\fX, -1)\]
are isomorphisms.
\end{proof}

\begin{rmk}\label{rmk:classicalaft}
Let $\fX$ be a classical prestack locally of finite type and let $\fX^{\der}$ be the corresponding derived prestack. Let $\fX^{\der, \mathrm{conv}}$ be the convergent completion of $\fX^{\der}$ (see \cite[Chapter 2, Section 1.4.8]{gr1}). Then $\fX^{\der, \mathrm{conv}}$ is locally almost of finite type (see \cite[Chapter 2, Corollary 1.7.8]{gr1}). Since $\bDR$ is convergent by Proposition \ref{prop:DRconvergent}, $\bDR(\fX^{\der, \mathrm{conv}})\cong \bDR(\fX^{\der})$ and so Proposition \ref{prop:closedexactdecomposition} applies to $\fX^{\der}$ as well.
\end{rmk}

\begin{defin}
Let $\fX$ be a derived higher Artin stack. An \defterm{$n$-shifted symplectic structure on $\fX$} is a closed $n$-shifted $2$-form $\omega_{\fX} \in \cA^{2, \cl}(\fX, n)$ whose underlying $n$-shifted $2$-form induces an equivalence
\[
\bT_{\fX}=\bL_{\fX}^{\vee} \xrightarrow[\simeq]{\cdot \omega_{\fX}} \bL_{\fX}[n].
\]
\end{defin}

The notion of shifted symplectic structures extends to locally geometric derived stacks, that is, the unions of open substacks that are derived higher Artin stacks.

A derived higher Artin stack equipped with an $n$-shifted symplectic structure is called an \defterm{$n$-shifted symplectic stack}.
In this paper, we are mainly interested in $(-1)$-shifted symplectic stacks. Given a pair of $n$-shifted symplectic stacks $(\fX, \omega_\fX)$ and $(\fY, \omega_{\fY})$, their product $\fX\times \fY$ carries an $n$-shifted symplectic structure
\[\omega_\fX\boxplus \omega_\fY \coloneqq \pr_1^{\star} \omega_\fX + \pr_2^{\star}\omega_\fY.\]

\begin{defin}
Let $(\fX_1, \omega_{\fX_1})$ and $(\fX_2, \omega_{\fX_2})$ be a pair of $n$-shifted symplectic stacks. A \defterm{Lagrangian correspondence} $\fX_1\xleftarrow{\tau_1} \fL\xrightarrow{\tau_2} \fX_2$ is the choice of a homotopy $\eta\colon \tau_1^\star \omega_{\fX_1}\sim \tau_2^\star \omega_{\fX_2}$ in $\cA^{2, \cl}(\fL, n)$ such that the naturally induced commutative square
\[
\xymatrix{
\bT_{\fL} \ar[r] \ar[d] & \tau_1^*\bT_{\fX_1} \ar[d] \\
\tau_2^*\bT_{\fX_2} \ar[r] & \bL_{\fL}[n]
}
\]
is Cartesian. A \defterm{Lagrangian morphism} $\fL\rightarrow \fX$ is a Lagrangian correspondence $\pt\leftarrow \fL\rightarrow \fX$.    
\end{defin}

Suppose $\fX_1,\fX_2,\fX_3$ are three $n$-shifted symplectic stacks and $\fX_1\xleftarrow{\tau_1} \fL_1\xrightarrow{\tau_2} \fX_2$ and $\fX_2\xleftarrow{\sigma_1} \fL_2\xrightarrow{\sigma_2} \fX_3$ are two Lagrangian correspondences. Consider their composition
\[
\xymatrix{
&& \fL_1\times_{\fX_2} \fL_2 \ar[dl] \ar[dr] && \\
& \fL_1 \ar^{\tau_1}[dl] \ar^{\tau_2}[dr] && \fL_2 \ar^{\sigma_1}[dl] \ar^{\sigma_2}[dr] & \\
\fX_1 && \fX_2 && \fX_3.
}
\]
Then $\fX_1\leftarrow \fL_1\times_{\fX_2} \fL_2\rightarrow \fX_3$ is a Lagrangian correspondence \cite[Theorem 4.4]{cal15}.

\begin{defin}
A \defterm{$2$-fold Lagrangian correspondence} is a diagram
\[
\xymatrix{
& \fL_1 \ar_{\tau_1}[dl] \ar^{\tau_2}[dr] & \\
\fX_1 & \fM \ar_{\pi_1}[u] \ar^{\pi_2}[d] & \fX_2 \\
& \fL_2 \ar^{\sigma_1}[ul] \ar_{\sigma_2}[ur] &
}
\]
where $(\fX_1, \omega_{\fX_1})$ and $(\fX_2, \omega_{\fX_2})$ are $n$-shifted symplectic stacks, $\fX_1\xleftarrow{\tau_1} \fL_1\xrightarrow{\tau_2} \fX_2$ and $\fX_1\xleftarrow{\sigma_1} \fL_2\xrightarrow{\sigma_2} \fX_2$ are Lagrangian correspondences specified by the homotopies $\eta_1$ in $\cA^{2, \cl}(\fL_1, n)$ and $\eta_2$ in $\cA^{2, \cl}(\fL_2, n)$ as well as a homotopy $\xi$ from $\pi_1^\star \eta_1$ to $\pi_2^\star \eta_2$ in $\cA^{2, \cl}(\fM, n)$ such that the naturally induced diagram
\[
\xymatrix{
& \bT_{\fM} \ar[dl] \ar[dr] & \\
\pi_1^* \bT_{\fL_1} \ar[d] \ar[drr]|\hole && \pi_2^* \bT_{\fL_2} \ar[d] \ar[dll] \\
\pi_1^*\tau_1^* \bT_{\fX_1}\simeq \pi_2^*\sigma_1^* \bT_{\fX_1} \ar[dr] && \pi_1^*\tau_2^* \bT_{\fX_2}\simeq \pi_2^*\sigma_2^* \bT_{\fX_2} \ar[dl] \\
& \bL_{\fM}[n] &
}
\]
is a limit diagram.
\end{defin}

We now explain some examples of $n$-shifted symplectic structures and Lagrangians.

\begin{ex}
    Let $\Perf$ denote the derived stack of perfect complexes which is locally geometric \cite{tv07}.
    Let $\cE$ denote the universal perfect complex on $\Perf$.
    Then the cotangent complex of $\Perf$ is given by
    \[
    \bL_{\Perf} \simeq  \cE^{\vee} \otimes \cE [-1].
    \]
    It is shown in \cite[Theorem 2.12]{ptvv13} that $\Perf$ is equipped with a $2$-shifted symplectic structure. The underlying $2$-form corresponds to the equivalence
    \[
    \bL_{\Perf}^{\vee} \simeq (\cE^{\vee} \otimes \cE [-1])^{\vee} \simeq \cE \otimes \cE^{\vee}[1] \simeq (\cE^{\vee} \otimes \cE [-1])[2] \simeq \bL_{\Perf}[2].
    \]
\end{ex}

\begin{ex}
    Let $G$ be a reductive algebraic group.
    The cotangent complex of $BG$ is given by $\mathfrak{g}[-1]$, where $\mathfrak{g}$ denote the adjoint representation of $G$.
    It is shown in \cite[\S 1.2]{ptvv13} that a $2$-shifted symplectic structure for $BG$ corresponds to a $G$-equivariant non-degenerate bilinear form for $\mathfrak{g}$.
    In particular, there exists a $2$-shifted symplectic structure for $BG$.
\end{ex}

\begin{ex}
    Let $Y$ be a smooth projective Calabi--Yau $d$-fold or the Betti geometric realization of an oriented $d$-dimensional closed manifold.
    If we are given an $n$-shifted symplectic stack $\fX$,
    it is shown in \cite[Theorem 2.5]{ptvv13} that the mapping stack $\Map(Y, \fX)$ is naturally equipped with an $(n - d)$-shifted symplectic structure.
    Further, assume that we are given a Lagrangian morphism $\fL \to \fX$.
    Then it is shown in \cite[Theorem 2.10]{cal15} that the map $\Map(Y, \fL) \to \Map(Y, \fX)$ is naturally equipped with a Lagrangian structure.
\end{ex}

\begin{ex}\label{ex:critical}
    Let $\fX$ be a derived higher Artin stack.
    For an integer $n \in \bZ$, the \defterm{$n$-shifted cotangent stack} is defined to be the total space
    \[
    T^*[n]\fX \coloneqq \Tot_{\fX}(\bL_{\fX}[n]).
    \]
    Let $\lambda \in \cA^1(T^*[n]\fX, n)$ denote the tautological $n$-form.
    Then it is shown in \cite[Theorem 2.2]{cal19} that $\omega_{\fX} \coloneqq \ddr \lambda$ is $n$-shifted symplectic.

    Let $f \colon \fX \to \bA^1[n] \coloneqq \Tot_{\pt}(\bC[n])$ be an $n$-shifted function.
    The de Rham differential of $f$ defines a section $df \colon \fX \to T^*[n]\fX$.
    It is shown in \cite[Theorem 2.15]{cal19} that the map $df$ is canonically equipped with a Lagrangian structure.
    We define the derived critical locus $\Crit(f)$ by the fibre product
    \[
    \xymatrix{
    {\Crit(f)} \ar[d] \ar[r] \pbcorner
    & {\fX} \ar[d]^-{df} \\
    {\fX} \ar[r]^-{0}
    & {T^*[n]\fX.}
    }
    \]
    Since ${\Crit(f)}$ is defined as the fibre product of two Lagrangian morphisms, it is equipped with an $(n - 1)$-shifted symplectic structure $\omega_{\Crit(f)}$.
\end{ex}

The following Darboux theorem plays a very important role in the study of shifted symplectic stacks:

\begin{thm}[{\cite[Corollary 2.11]{bbbbj15}}]\label{thm:Darboux}$ $
    \begin{thmlist}
        \item[(i)]  Let $(X, \omega_{X})$ be a $(-1)$-shifted symplectic derived algebraic space.
        Then for each point $x \in X$ there is an \defterm{\'etale derived critical chart $\fR = (R, \eta, U, f, i)$}, i.e. 
        a derived scheme $R$,
        an \'etale morphism $\eta \colon R \to X$,
        a smooth scheme $U$ together with a regular function $f \colon U \to \bA^1$ satisfying $f |_{\mathrm{Crit}(f)^{\red}} = 0$ and a 
        closed embedding $i \colon R \hookrightarrow U$
        with $i(R) = \mathrm{Crit}(f)$ such that a homotopy
        $\eta^{\star} \omega_{X} \sim i^{\star} \omega_{\Crit(f)}$ exists.

        \item[(ii)] Let $(\fX, \omega_{\fX})$ be a $(-1)$-shifted symplectic derived Artin stack. 
        Then for each point $x \in \fX$, we can construct a diagram
        \begin{equation}\label{eq:BBBBJ_diagram}
        \begin{aligned}
        \xymatrix{
        {\widehat{V}} \ar[r]^-{\tau} \ar[d]^-{q}
        & {V = \Crit(f)} \\
        {\fX}
        & {}
        }
        \end{aligned}
        \end{equation}
        together with a homotopy $q^{\star} \omega_{\fX} \sim \tau^{\star} \omega_{\Crit(f)}$,
        where $q$ is a smooth morphism whose image contains $x$,
        $f$ is a regular function on a smooth scheme and $\tau$ is a morphism which induces an equivalence on the classical truncations.
    \end{thmlist}
\end{thm}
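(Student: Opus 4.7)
The plan is to follow the Darboux-type argument of Brav, Bussi, and Joyce for derived schemes, then globalise to stacks by choosing compatible smooth atlases, as in Ben-Bassat--Brav--Bussi--Joyce. The overall strategy is to exhibit, locally near $x$, a cdga presentation that is ``minimal at $x$'' and then use a formal Darboux-type lemma to align the symplectic form with a potential.

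For part (i), I would proceed in three steps. First, working \'etale-locally near $x \in X$, present an affine neighbourhood as $\Spec A$ for a cdga $A$ in \emph{minimal form at $x$}: the degree-zero part is $\cO_U$ for a smooth scheme $U$ with a chosen point corresponding to $x$, and $A$ is obtained from $\cO_U$ by freely adjoining generators in negative degrees whose internal differentials vanish at $x$. The $(-1)$-shifted symplectic condition $\bT_{X} \cong \bL_X[-1]$ forces $\bL_X$ to have amplitude in $[-2,0]$, which after the minimality reduction restricts the generators to degrees $-1$ and $-2$; self-duality then pairs the degree $-1$ generators with $\Omega_U$ and the degree $-2$ generators with $\cO_U$. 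Second, apply the formal Darboux lemma for closed $(-1)$-shifted 2-forms: up to a quasi-automorphism of $A$ (produced inductively by a cochain-level Moser argument on $\bDR(A)$), the given $\omega_X$ can be put in the canonical Koszul form determined by a regular function $f$ on $U$. Third, read off this potential $f$; the resulting identification $A \simeq \cO_{\Crit(f)}$ provides the \'etale chart $\eta \colon \Crit(f) \to X$ together with the required homotopy $\eta^{\star} \omega_X \sim \omega_{\Crit(f)}$.

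For part (ii), I would reduce to the algebraic space case. Choose a smooth atlas $q_0 \colon \widehat{V} \to \fX$ from a derived affine scheme whose image contains $x$. The pullback $q_0^\star \omega_\fX$ is a closed $(-1)$-shifted 2-form on $\widehat{V}$ whose kernel records the tangent directions of the fibres of $q_0$; after possibly enlarging $\widehat{V}$ by a smooth factor aligned with these kernel directions and shrinking \'etale-locally, one can assume that $q_0^\star\omega_\fX$ becomes non-degenerate modulo the smooth direction, so that the Darboux argument of part (i) applies relatively. This produces a smooth scheme with regular function $(U,f)$ and a map $\tau \colon \widehat{V} \to V = \Crit(f)$ which on classical truncations is an equivalence, together with the homotopy $q^\star \omega_\fX \sim \tau^\star \omega_{\Crit(f)}$ obtained by transporting the homotopy from part (i) along the chart.

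The main obstacle is the formal Darboux step in part (i): one must inductively construct, at the level of the graded mixed cdga $\bDR(A)$, a change of coordinates killing the non-standard part of $\omega_X$ while controlling the closedness data, and this requires a careful analysis of the de Rham filtration and acyclicity of the relevant Koszul complexes, exactly where the convergence of $\bDR$ (Proposition \ref{prop:DRconvergent}) and the exact/closed decomposition (Proposition \ref{prop:closedexactdecomposition}) will be invoked. For part (ii) the secondary difficulty is choosing the smooth atlas compatibly with the Lagrangian direction so that the relative Darboux reduction is possible without obstruction; this uses that stabilizers of $\fX$ act symplectically and the minimal presentation can be arranged equivariantly in a formal neighbourhood of $x$.
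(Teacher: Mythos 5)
First, a framing point: the paper does not prove this statement at all --- Theorem \ref{thm:Darboux} is quoted directly from \cite[Corollary 2.11]{bbbbj15} (building on the scheme-case Darboux theorem of Brav--Bussi--Joyce), so there is no in-paper argument to compare with, and your proposal is essentially an outline of the proof in the cited reference rather than an alternative route. Judged on its own terms, your outline follows the right strategy (standard-form cdgas minimal at a point, a formal Darboux/Moser reduction of the closed $(-1)$-shifted $2$-form, then passage to stacks via smooth atlases), but it contains a concrete error at the key step of part (i): nondegeneracy of a $(-1)$-shifted symplectic structure on a derived algebraic space forces $\bL_X$ to have perfect amplitude $[-1,0]$ (i.e.\ $X$ is quasi-smooth), not $[-2,0]$, and the minimal standard-form presentation needs free generators in degree $-1$ only, dual to $\Omega_U$. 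Generators in degree $-2$ paired with $\cO_U$ are the shape of the Darboux model for $(-2)$-shifted symplectic structures; aiming for that normal form here targets the wrong local model and would not produce the identification with $\Crit(f)$ and its canonical symplectic form, which is exactly what the theorem asserts.

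For part (ii) your reduction glosses over the genuinely hard stacky step. The pullback $q^{\star}\omega_{\fX}$ along a smooth atlas is degenerate precisely along $\bT_{\widehat V/\fX}$, and one cannot in general make it ``nondegenerate modulo the smooth direction'' by enlarging the atlas; the content of \cite[Corollary 2.11]{bbbbj15} is that one can choose an atlas minimal at the chosen point so that $\widehat V$ admits a morphism $\tau$ to an honest critical locus $V=\Crit(f)$ which is an equivalence only on classical truncations, together with a homotopy $q^{\star}\omega_{\fX}\sim\tau^{\star}\omega_{\Crit(f)}$. Constructing $\tau$ and this homotopy (rather than a symplectomorphism of $\widehat V$ itself) is the bulk of the work in the reference and is not supplied by your sketch; note also that in the present paper the nondegeneracy of the resulting correspondence $\fX \leftarrow \widehat V \rightarrow V$ is deduced afterwards by a degree argument (Remark \ref{rmk:auto_nondeg}), not arranged in advance as your sketch suggests.
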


\begin{rmk}\label{rmk:auto_nondeg}
    The homotopy $q^{\star} \omega_{\fX} \sim \tau^{\star} \omega_{\Crit(f)}$ induces a morphism of fibre sequences
    \[
    \xymatrix{
    {\bT_{\widehat{V}/V}[1]} \ar[r] \ar[d]^-{\eta}
    & { \bT_{\widehat{V}}[1]} \ar[r] \ar[d]
    & {\tau^* \bT_{V}[1]} \ar[d] \\
    {\bL_{\widehat{V} / \fX}[-1]} \ar[r]
    & {q^* \bL_{\fX} } \ar[r]
    & {\bL_{\widehat{V}}.}
    }
    \]
    Since $q$ is smooth and $\tau$ is an isomorphism on classical truncations, 
    we see that $\eta$ is invertible by the degree reason.
    In particular, the correspondence $\fX \leftarrow \widehat{V} \to V$ is a Lagrangian correspondence.
    
\end{rmk}

We also have the following equivariant version of the Darboux theorem:

\begin{prop}\label{prop:equivariant_Darboux}
Let $\fX$ be a quasi-separated derived Artin stack with affine stabilizers and $\omega_{\fX}$ be a $(-1)$-shifted symplectic structure.
If $x\in \fX$ is a point with reductive stabilizer $G$,
then there exists a smooth affine scheme $U$ with a $G$-action, a fixed point $u \in U$, a $G$-invariant function $f:U \to \bA^1$, and a pointed {\'e}tale symplectomorphism
\[(\Crit([f/G]),u) \to (\fX, x),\] 
that preserves the stabilizer at $u$, where $[f/G]\colon [U/G] \to \bA^1$ is the induced function.
\end{prop}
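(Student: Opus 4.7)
The plan is to combine the derived local structure theorem for Artin stacks with linearly reductive stabilizers with an equivariant refinement of the BBBBJ Darboux theorem (Theorem \ref{thm:Darboux}(i)), using the linear reductivity of $G$ to make all Darboux-type constructions $G$-equivariantly.

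First, I would invoke the derived version of the Alper--Hall--Rydh local structure theorem applied to $\fX$ at the point $x$: since $\fX$ is quasi-separated with affine stabilizers and the stabilizer $G$ at $x$ is linearly reductive, there exists an affine derived scheme $U' = \Spec A$ carrying a $G$-action with fixed point $u \in U'$ and an affine étale morphism $[U'/G] \to \fX$ whose image contains $x$ and which induces an isomorphism on stabilizers at $u$. Pulling back $\omega_{\fX}$ yields a $G$-equivariant $(-1)$-shifted symplectic structure on $U'$.

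Next, I would establish a $G$-equivariant enhancement of Theorem \ref{thm:Darboux}(i): possibly shrinking $U'$ around $u$, one can find a smooth affine $G$-scheme $U$ with a fixed point $u \in U$, a $G$-invariant regular function $f \colon U \to \bA^1$, and a $G$-equivariant étale symplectomorphism $\Crit(f) \to U'$ sending a fixed point to $u$. The strategy is to trace through the BBBBJ proof: one presents $A$ by a minimal cofibrant model with generators concentrated in degrees $0, -1, -2$, modifies the differential so that it is induced by a Hamiltonian, and identifies the result with a critical locus. At each step the relevant obstruction lives in a $G$-representation, and linear reductivity lets us project onto the $G$-invariant part: cotangent generators can be chosen $G$-equivariantly by decomposing $\bL_A|_u$ into isotypic components, and the pre-symplectic Poincaré lemma input can be averaged over $G$. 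Passing to the quotient by $G$ then yields a pointed étale symplectomorphism $\Crit([f/G]) = [\Crit(f)/G] \to [U'/G] \to \fX$ with the required properties; that stabilizers are preserved at $u$ follows from the corresponding property in Step 1, since $\Crit(f) \to U'$ is étale and $G$-equivariant and sends a fixed point to a fixed point.

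The main obstacle is the equivariant Darboux step. One must verify that each choice in the BBBBJ construction (minimal free resolution, lift of a pre-symplectic form to a symplectic potential, identification with a critical chart) admits a $G$-equivariant refinement. Heuristically linear reductivity eliminates all the relevant obstructions, but the bookkeeping is delicate and one must be careful not to confuse \emph{étale locally, equivariantly} with \emph{equivariantly étale locally}. A cleaner alternative would be to appeal to a $G$-equivariant Darboux statement already available in the literature for linearly reductive group actions on $(-1)$-shifted symplectic affine schemes, and reduce Step 2 directly to it.
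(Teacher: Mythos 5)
Your overall strategy (equivariant local structure plus an equivariant Darboux theorem) is the right one, and the ``cleaner alternative'' you mention in your last sentence is in fact exactly what the paper does: its proof is two lines, observing via Proposition \ref{prop:closedexactdecomposition} that every $(-1)$-shifted symplectic form carries a \emph{canonical exact structure}, and then invoking \cite[Theorem B]{Par24}, which is precisely the equivariant Darboux theorem for exact $(-1)$-shifted symplectic stacks at points with linearly reductive stabilizer (it already subsumes the Alper--Hall--Rydh-type reduction, so no separate Step 1 is needed, and it directly produces the pointed \'etale symplectomorphism from $\Crit([f/G])$ at the level of stacks, with the stabilizer statement built in).

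Taken as a self-contained argument, however, your Step 2 has a genuine gap: ``trace through the BBBBJ proof and average each obstruction over $G$'' is not a proof but a program, and it is essentially the content of the entire paper \cite{Par24}. Two concrete points where the sketch is too optimistic. First, the BBBBJ construction of the Hamiltonian $f$ rests on choices of an exact structure (a Liouville-type primitive) made by contractibility-of-choices arguments; to make these $G$-equivariant one needs the exact structure to be canonical/functorial, which in the $(-1)$-shifted case is exactly Proposition \ref{prop:closedexactdecomposition} --- an ingredient absent from your argument, and the reason the paper proves that proposition before citing \cite{Par24}. Second, a $G$-equivariant \'etale symplectomorphism $\Crit(f)\to U'$ of affine derived schemes does not immediately yield a symplectomorphism of the quotient stacks: a $(-1)$-shifted closed $2$-form on $[U'/G]$ is a homotopy-fixed-point datum, not merely a $G$-invariant form on $U'$, so one must compare the structures at the stacky level (this is handled in \cite[Lemmas 4.1.9, 4.2.5]{Par24}, cf.\ the proof of Lemma \ref{lem:G_m_bbbbj_chart}). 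With linear reductivity these issues are resolvable, but they are precisely the delicate bookkeeping you flag; as written the argument should either carry out that bookkeeping or, as the paper does, reduce to the canonical exactness statement and cite \cite[Theorem B]{Par24}.
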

\begin{proof}
Proposition \ref{prop:closedexactdecomposition} shows that all $(-1)$-shifted symplectic forms admit canonical exact structures. The claim then follows from \cite[Theorem B]{Par24}.
\end{proof}

\begin{rmk}
If $\fX$ is a $(-1)$-symplectic derived Artin stack with affine diagonal such that the classical truncation $\fX^{\cl}$ has a good moduli space $Y$, then we can cover $\fX$ with the equivariant Darboux charts in Proposition \ref{prop:equivariant_Darboux}.
Indeed, for any point  $y\in Y$, we can find a point $x\in \fX$ that maps to $y$ and has a reductive stabilizer \cite[Proposition 12.14]{alper}.
Further, using Luna's fundamental lemma \cite[Proposition 4.13]{ahr20}, we may take the equivariant charts in a way that the following diagram is Cartesian:
\[
\xymatrix{
{\Crit([f/G])^{\mathrm{cl}}}
\ar[r]
\ar[d]
& {\mathfrak{X}^{\mathrm{cl}}}
\ar[d] \\
{\mathrm{Crit}(f)^{\mathrm{cl}} \GIT G}
\ar[r]
& {Y.}
}
\]
\end{rmk}

\subsection{Orientation for $(-1)$-shifted symplectic stacks}

We recall the notion of orientation for $(-1)$-shifted symplectic stacks and their Lagrangians.
\begin{defin}
Let $(\fX, \omega_{\fX})$ be a $(-1)$-shifted symplectic derived Artin stack. A (graded) \defterm{orientation of $\fX$} is the pair of a graded line bundle $\cL \in \sPic(\fX^{\red})$ and an isomorphism $o \colon \cL^{\otimes 2} \cong \rdet(\bL_{\fX})$. By abuse of notation, we simply say that $o$ is an orientation of $\fX$.
\end{defin}

Orientations naturally form a groupoid.

\begin{defin}\label{defin:isom_orientation}
An \defterm{isomorphism of orientations} $(\cL_1, o_1)\cong (\cL_2, o_2)$ is an isomorphism of graded line bundles $f\colon \cL_1\cong \cL_2$ on $\fX^{\red}$ such that the triangle
\[
\xymatrix{
\cL_1^{\otimes 2} \ar^{f^{\otimes 2}}[rr] \ar_{o_1}[dr] && \cL_2^{\otimes 2} \ar^{o_2}[dl] \\
& \rdet(\bL_{\fX}) &
}
\]
commutes.
\end{defin}

Orientations of $\fX$ localize to define the graded \defterm{orientation $\mu_2$-gerbe} $\ori_{\fX}$ on $\fX^{\red}$, so that an orientation of $\fX$ is a trivialization of $\ori_{\fX}$.

For a pair of $(-1)$-shifted symplectic stacks $\fX_1, \fX_2$, the natural isomorphism 
\[\Psi_{\fX_1, \fX_2}\colon \rdet(\bL_{\fX_1})\boxtimes \rdet(\bL_{\fX_2})\cong \rdet(\bL_{\fX_1\times \fX_2})\]
of graded line bundles on $\fX^{\red}_1\times \fX^{\red}_2$ induces an isomorphism \[\ori_{\fX_1}\boxtimes \ori_{\fX_2}\cong \ori_{\fX_1\times \fX_2}\]
of graded orientation gerbes. In particular, if $o_1$ and $o_2$ are orientations of $\fX_1$ and $\fX_2$, using the above isomorphism we obtain an orientation $o_1\boxtimes o_2$ of $\fX_1 \times \fX_2$.

Assume that $\fX_1\leftarrow \fL\rightarrow \fX_2$ is a Lagrangian correspondence of $(-1)$-shifted symplectic stacks. Then we have an equivalence
\begin{equation}\label{eq:Lag_corresp_TL}
\bT_{\fL / \fX_1}[1] \simeq \bL_{\fL / \fX_2}[-1].
\end{equation}
which induces an isomorphism of determinant line bundles
\begin{equation}\label{eq:Lag_corresp_TL_det}
\rdet(\bL_{\fL / \fX_1}) \cong \rdet(\bL_{\fL / \fX_2})^{\vee}.
\end{equation}
by \eqref{eq:det_shift} and \eqref{eq:det_dual}.
Using isomorphisms \eqref{eq:fibre_transform} for fibre sequences 
$\bL_{\fX_1} |_{\fL} \to \bL_{\fL} \to \bL_{\fL / \fX_1}$ and 
$\bL_{\fX_2} |_{\fL} \to \bL_{\fL} \to \bL_{\fL / \fX_2}$, 
we obtain isomorphisms
\[
\rdet(\bL_{\fX_1} |_{\fL})  \otimes \rdet(\bL_{\fL / \fX_1}) \cong \rdet(\bL_{\fL}), 
\quad \rdet(\bL_{\fL}) \otimes \rdet(\bL_{\fL / \fX_2})^{\vee} \cong \rdet(\bL_{\fX_2} |_{\fL}) .
\]
By combining them and using \eqref{eq:Lag_corresp_TL_det}, we obtain an isomorphism
\begin{equation}\label{eq:compositeLagrangiancorrespondencedeterminant}
\xi \colon \rdet(\bL_{\fX_1} |_{\fL})  \otimes (\rdet(\bL_{\fL/\fX_2})^{\vee})^{\otimes 2} \cong \rdet(\bL_{\fX_2} |_{\fL}) 
\end{equation}
of graded line bundles on $\fL^{\red}$.
Therefore, there is an isomorphism
\begin{equation}\label{eq:Lagrangiancorrespondenceorientation}
\beta_{\fL}\colon \ori_{\fX_1}|_{\fL}\cong \ori_{\fX_2}|_{\fL}
\end{equation}
of orientation gerbes given by sending a pair $(\cL, o)$ of a graded line bundle $\cL$ on $\fL^{\red}$ together with an isomorphism $o\colon \cL^{\otimes 2}\simeq \rdet(\bL_{\fX_1} |_{\fL})$ to the graded line bundle $\cL\otimes \rdet(\bL_{\fL/\fX_2})^{\vee}$ equipped with the isomorphism
\[(\cL\otimes \rdet(\bL_{\fL/\fX_2})^{\vee})^{\otimes 2}\xrightarrow{o} 
\rdet(\bL_{\fX_1} |_{\fL}) \otimes (\rdet(\bL_{\fL/\fX_2})^{\vee})^{\otimes 2} \xrightarrow{\xi} \rdet(\bL_{\fX_2} |_{\fL}).\]

\begin{defin}
An \defterm{orientation of the Lagrangian correspondence $\fX_1\xleftarrow{\tau_1} \fL\xrightarrow{\tau_2} \fX_2$} is the choice of orientations $o_1$ of $\fX_1$ and $o_2$ of $\fX_2$ together with an isomorphism $\rho$ of the corresponding orientations related by $\beta_{\fL}$.
\end{defin}

Consider a composition of Lagrangian correspondences
\[
\xymatrix{
&& \fL_1\times_{\fX_2} \fL_2 \ar[dl] \ar[dr] && \\
& \fL_1 \ar[dl] \ar[dr] && \fL_2 \ar[dl] \ar[dr] & \\
\fX_1 && \fX_2 && \fX_3.
}
\]
There is a natural fibre sequence
\[\bL_{\fL_2/\fX_3}|_{\fL_1\times_{\fX_2} \fL_2}\longrightarrow \bL_{\fL_1\times_{\fX_2} \fL_2/\fX_3}\longrightarrow \bL_{\fL_1/\fX_2}|_{\fL_1\times_{\fX_2} \fL_2}.\]
Using the isomorphism \eqref{eq:fibre_transform} we obtain an isomorphism of determinant line bundles
\[
\pi 
\colon \rdet(\bL_{\fL_2/\fX_3}|_{\fL_1\times_{\fX_2} \fL_2})\otimes \rdet(\bL_{\fL_1/\fX_2}|_{\fL_1\times_{\fX_2} \fL_2})\cong \rdet(\bL_{\fL_1\times_{\fX_2} \fL_2/\fX_3}).\]
We now show that the map $\pi$ is compatible with $\xi$:
\begin{lem}\label{lem:xi_theta_Lagcorresp}
Set $\widehat{\fL} \coloneqq \fL_{1} \times_{\fX_2} \fL_2$.
    Then the following diagram commutes:
    \[
    \xymatrix@C=40pt{
    {\rdet(\bL_{\fX_1} |_{\widehat{\fL}}) \otimes (\rdet(\bL_{\fL_1 \times_{\fX_2} \fL_2 / \fX_3})^{\vee})^{\otimes 2}  } \ar[r]^-{\xi}_-{\cong}
    & {\rdet(\bL_{\fX_3} |_{\widehat{\fL}})} \\
    {\rdet(\bL_{\fX_1} |_{\widehat{\fL}}) \otimes 
    (\rdet(\bL_{\fL_1  / \fX_2} |_{\widehat{\fL}})^{\vee})^{\otimes 2} 
    \otimes (\rdet(\bL_{\fL_2  / \fX_3} |_{\widehat{\fL}})^{\vee})^{\otimes 2}} \ar[u]_-{\cong}^-{\id \otimes (\pi^{\vee})^{\otimes 2}} \ar[r]^-{\xi \otimes \id}_-{\cong}
    & {\rdet(\bL_{\fX_2} |_{\widehat{\fL}}) \otimes 
    (\rdet(\bL_{\fL_2  / \fX_3} |_{\widehat{\fL}})^{\vee})^{\otimes 2}.} 
    \ar[u]_-{\cong}^-{\xi}
    }
    \]
\end{lem}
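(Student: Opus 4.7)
The strategy is to unravel the definition of $\xi$ into its basic ingredients---the isomorphisms $i(\Delta)$ of \eqref{eq:fibre_transform} for cotangent fibre sequences and the Lagrangian identification from \eqref{eq:Lag_corresp_TL_det}, which we shall denote $\lambda$---and reduce the commutativity to iterated applications of Lemma \ref{lem:KM} combined with a coherence statement for the composition of Lagrangian correspondences.

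For a single Lagrangian correspondence $\fX_1 \xleftarrow{\tau_1} \fL \xrightarrow{\tau_2} \fX_2$, the isomorphism $\xi$ is built as the composite of $\lambda_\fL^{-1}\colon \rdet(\bL_{\fL/\fX_2})^\vee \cong \rdet(\bL_{\fL/\fX_1})$, then $i(\tau_1^\star\bL_{\fX_1}\to \bL_\fL \to \bL_{\fL/\fX_1})$, and finally $i(\tau_2^\star\bL_{\fX_2}\to \bL_\fL \to \bL_{\fL/\fX_2})^{-1}$ combined with the trivialization of $\rdet(\bL_{\fL/\fX_2})\otimes \rdet(\bL_{\fL/\fX_2})^\vee$. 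For the composite $\widehat{\fL}=\fL_1\times_{\fX_2}\fL_2$, derived base change provides canonical fibre sequences
\[\bL_{\fL_1/\fX_1}|_{\widehat{\fL}} \to \bL_{\widehat{\fL}/\fX_1} \to \bL_{\fL_2/\fX_2}|_{\widehat{\fL}}\quad\text{and}\quad \bL_{\fL_2/\fX_3}|_{\widehat{\fL}}\to \bL_{\widehat{\fL}/\fX_3} \to \bL_{\fL_1/\fX_2}|_{\widehat{\fL}},\]
in which the associated $i(\Delta)$ of the second sequence is, up to a swap of factors, the map $\pi$.

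The decomposition of the direct $\xi$ for $\widehat{\fL}$ then proceeds by applying Lemma \ref{lem:KM} to the fibre double sequence with entries $E_{11}=\bL_{\fX_1}|_{\widehat{\fL}}$, $E_{12}=\bL_{\fL_1}|_{\widehat{\fL}}$, $E_{13}=\bL_{\fL_1/\fX_1}|_{\widehat{\fL}}$, $E_{22}=\bL_{\widehat{\fL}}$, $E_{23}=\bL_{\widehat{\fL}/\fX_1}$, and $E_{32}=E_{33}=\bL_{\widehat{\fL}/\fL_1}\simeq \bL_{\fL_2/\fX_2}|_{\widehat{\fL}}$, together with the obvious identifications $E_{11}=E_{21}$ and $E_{31}=0$. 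This expresses $i(\Delta_{\fX_1,\widehat{\fL}})$ as a composite of $i(\Delta_{\fX_1,\fL_1})|_{\widehat{\fL}}$, the $i$-isomorphism for the splitting of $\bL_{\widehat{\fL}/\fX_1}$, and base-change isomorphisms. An analogous fibre double sequence involving $\fL_2$ handles the $\fX_3$-side. These identifications rewrite the top composition of the diagram as a concatenation of $i$-isomorphisms pulled back from $\fL_1$ and $\fL_2$, separated only by the single identification $\lambda_{\widehat{\fL}}$.

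The heart of the argument is then the coherence statement that, under the base-change fibre sequences above, $\lambda_{\widehat{\fL}}$ agrees with the tensor product of $\lambda_{\fL_1}|_{\widehat{\fL}}$ and $\lambda_{\fL_2}|_{\widehat{\fL}}$. This is a shadow, at the level of determinant line bundles, of the composition formula for Lagrangian structures from \cite[Theorem 4.4]{cal15}: the Cartesian square defining the composite's Lagrangian structure factors the equivalence $\bT_{\widehat{\fL}/\fX_1}[1]\simeq \bL_{\widehat{\fL}/\fX_3}[-1]$ through the individual equivalences for $\fL_1$ and $\fL_2$. Tracing this through the determinant functor requires careful bookkeeping of the signs arising from Corollary \ref{cor:det_shift_fibre} and \eqref{eq:dual_shift_n}, which govern the interaction of dualization with shifts on $\sPic$. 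Once this coherence is established, assembling the invocations of Lemma \ref{lem:KM} yields the commutativity of the diagram. The main obstacle is precisely this Lagrangian coherence statement: the derived construction of composition of Lagrangian correspondences produces, at the level of determinants, several signs from iterated dualization and shifts, and these must be verified to cancel against the signs from Corollary \ref{cor:det_shift_fibre}. All other steps are essentially formal once this coherence is in hand.
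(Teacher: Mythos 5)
Your outline follows the same overall route as the paper (unwind $\xi$ into the fibre-sequence isomorphisms \eqref{eq:fibre_transform} and the Lagrangian identification \eqref{eq:Lag_corresp_TL_det}, then apply Lemma \ref{lem:KM} to suitable double sequences), but the step you yourself single out as ``the heart of the argument'' --- that the identification \eqref{eq:Lag_corresp_TL_det} for $\widehat{\fL}$ agrees, via $\pi$, with the tensor product of the identifications for $\fL_1$ and $\fL_2$ --- is exactly what is left unproven. You assert it is ``a shadow of the composition formula for Lagrangian structures'' and that the resulting signs from iterated duals and shifts ``must be verified to cancel'' against Corollary \ref{cor:det_shift_fibre}, but no such verification is given. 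As written, the proposal is a reduction of the lemma to an acknowledged open claim, so it does not constitute a proof.

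The missing ingredient is also slightly mislocated: it is not a determinant-level sign cancellation to be checked by hand. The definition of the composite Lagrangian structure already yields an equivalence of fibre sequences \emph{of complexes}
\[
\xymatrix@C=30pt{
\bT_{\fL_1/\fX_1}[1]|_{\widehat{\fL}} \ar[r] \ar[d]_-{\simeq}^-{\eqref{eq:Lag_corresp_TL}} & \bT_{\widehat{\fL}/\fX_1}[1] \ar[r] \ar[d]_-{\simeq}^-{\eqref{eq:Lag_corresp_TL}} & \bT_{\fL_2/\fX_2}[1]|_{\widehat{\fL}} \ar[d]_-{\simeq}^-{\eqref{eq:Lag_corresp_TL}} \\
\bL_{\fL_1/\fX_2}[-1]|_{\widehat{\fL}} \ar[r] & \bL_{\widehat{\fL}/\fX_3}[-1] \ar[r] & \bL_{\fL_2/\fX_3}[-1]|_{\widehat{\fL}},
}
\]
where the vertical maps are the individual Lagrangian equivalences; this is a statement prior to taking determinants, where no signs occur. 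Once this is stated and justified from the Cartesian square defining the composite (\cite[Theorem 4.4]{cal15}), the determinant-level compatibility you want --- relating $\pi$, the dual $\pi^\vee$, and the three instances of \eqref{eq:Lag_corresp_TL_det} --- follows by two applications of Lemma \ref{lem:KM} (one producing the analogue of \eqref{eq:xi_theta_Lagcorresp_Step1}, one the analogue of \eqref{eq:xi_theta_Lagcorresp_Step2}), with all shift/dual signs already absorbed into Corollary \ref{cor:det_shift_fibre}, \eqref{eq:dual_fibreseq} and the constructions of $\theta$ and $\iota$; no fresh cancellation needs to be tracked. To complete your argument you should therefore formulate and prove this equivalence of fibre sequences explicitly and then run the Lemma \ref{lem:KM} bookkeeping through it, rather than leaving the coherence as an unverified obstacle.
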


\begin{proof}
    By the definition of the composite Lagrangian structure,
    we have the following equivalence of fibre sequences:
    \[
    \xymatrix{
    {\bT_{\fL_1 / \fX_{1}}[1] |_{\widehat{\fL}}} \ar[r] \ar[d]^-{\simeq}_-{\eqref{eq:Lag_corresp_TL}}
    & {\bT_{\widehat{\fL} / \fX_{1}}[1]} \ar[r] \ar[d]^-{\simeq}_-{\eqref{eq:Lag_corresp_TL}}
    & {\bT_{\fL_2 / \fX_{2}}[1]|_{\widehat{\fL}}} \ar[d]^-{\simeq}_-{\eqref{eq:Lag_corresp_TL}} \\
        {\bL_{\fL_1 / \fX_{2}}[-1]|_{\widehat{\fL}}} \ar[r]
    & {\bL_{\widehat{\fL} / \fX_{3}}[-1]} \ar[r]
    & {\bL_{\fL_2 / \fX_{3}}[-1] |_{\widehat{\fL}}.} 
    }
    \]
    This equivalence of fibre sequences combined with the Lemma \ref{lem:KM} implies the commutativity of the following diagram:
    \begin{equation}\label{eq:xi_theta_Lagcorresp_Step1}
    \begin{aligned}
        \xymatrix{
    {\rdet(\bL_{\fX_1} |_{\widehat{\fL}}) \otimes \rdet(\bL_{\widehat{\fL} / \fX_3})^{\vee}  } 
    \ar[r]^-{\eqref{eq:Lag_corresp_TL_det}}_-{\cong}
    \ar[d]^-{\cong}
    & {\rdet(\bL_{\fX_1} |_{\widehat{\fL}}) \otimes \rdet(\bL_{\widehat{\fL} / \fX_1})}
    \ar[dd]^-{\cong} \\
                { \rdet(\bL_{\fX_1} |_{\widehat{\fL}}) \otimes \rdet(\bL_{\fL_{1} / \fX_2} |_{\widehat{\fL}})^{\vee} \otimes \rdet(\bL_{\fL_{2} / \fX_3} |_{\widehat{\fL}})^{\vee}}
                \ar[d]^-{\eqref{eq:Lag_corresp_TL_det}}_-{\cong}
        & {} \\
                { \rdet(\bL_{\fX_1} |_{\widehat{\fL}}) \otimes \rdet(\bL_{\fL_{1} / \fX_1} |_{\widehat{\fL}}) \otimes \rdet(\bL_{\fL_{2} / \fX_2} |_{\widehat{\fL}})}
                \ar[r]^-{}_-{\cong}
        & {\rdet(\bL_{\widehat{\fL}}). } 
        }
        \end{aligned}
    \end{equation}
    Using Lemma \ref{lem:KM} again, we obtain the commutativity of the following diagram:
    \begin{equation}\label{eq:xi_theta_Lagcorresp_Step2}
    \begin{aligned}
    \xymatrix{
    {\rdet(\bL_{\widehat{\fL}}) \otimes \rdet(\bL_{\widehat{\fL} / \fX_3} |_{\widehat{\fL}})^{\vee} }
    \ar[r]^-{\cong}
    \ar[d]^-{\cong}
    & {\rdet(\bL_{\fX_3} |_{\widehat{\fL}})} \\
    {\rdet(\bL_{\widehat{\fL}}) \otimes \rdet(\bL_{\fL_{1} / \fX_2} |_{\widehat{\fL}})^{\vee} \otimes \rdet(\bL_{\fL_{2} / \fX_3} |_{\widehat{\fL}})^{\vee}}
    \ar[r]^-{\cong}
    & { \rdet(\bL_{\fL_{2}} |_{\widehat{\fL}}) \otimes \rdet(\bL_{\fL_{2} / \fX_3} |_{\widehat{\fL}})^{\vee}.}
    \ar[u]^-{\cong}
    }
    \end{aligned}
    \end{equation}
    The claim of the lemma follows from the commutativity of diagrams \eqref{eq:xi_theta_Lagcorresp_Step1} and \eqref{eq:xi_theta_Lagcorresp_Step2}.
\end{proof}

By Lemma \ref{lem:xi_theta_Lagcorresp},  we see that the isomorphism $\pi$ induces a natural 2-isomorphism
\begin{equation}\label{eq:ori_assoc_general}
\beta_{\fL_2}\circ \beta_{\fL_1}\cong \beta_{\fL_1\times_{\fX_2} \fL_2}
\end{equation}
of isomorphisms $\ori_{\fX_1}|_{\fL_1\times_{\fX_2} \fL_2}\simeq \ori_{\fX_3}|_{\fL_1\times_{\fX_2} \fL_2}$ of orientation gerbes. In particular, if $\fX_1\leftarrow \fL_1\rightarrow \fX_2$ and $\fX_2\leftarrow \fL_2\rightarrow \fX_3$ are oriented, their composition $\fX_1\leftarrow \fL_1\times_{\fX_2} \fL_2\rightarrow \fX_3$ is oriented as well.

We will often consider the following setting, which allows us to pullback orientations along Lagrangian correspondences with sections.

\begin{lem}\label{lem:Lagrangiancorrespondenceretract}
Let $\fX_1\xleftarrow{\tau_1} \fL\xrightarrow{\tau_2} \fX_2$ be a $(-1)$-shifted Lagrangian correspondence together with a morphism $i\colon \fX_1\rightarrow \fL$ such that $\tau_1\circ i\cong \id_{\fX_1}$. Then there is a natural isomorphism
\[i^*\tau_2^*\ori_{\fX_2}\cong \ori_{\fX_1}.\]
Moreover, suppose $\fL\rightarrow \fX_1$ is an $\bA^1$-deformation retract, i.e. there is a map $F\colon \bA^1\times \fL\rightarrow \fL$ satisfying the following conditions:
\begin{enumerate}
    \item $F|_{\{1\}\times \fL}\simeq \id_{\fL}$.
    \item $F|_{\{0\}\times \fL}\simeq i\circ \tau_1$.
\end{enumerate}
Then the forgetful functor from the groupoid of orientations
\[\left\{ (o_1, o_2, \rho) \middle| \substack{
\text{$o_1$: orientation for $\fX_1$} \\
\text{$o_2$: orientation for $\fX_2$} \\
\text{$\rho$: orientation for $\fX_1 \xleftarrow{\tau_1} \fL \xrightarrow{\tau_2} \fX_2$}
} \right\}\]
of the Lagrangian correspondence $\fX_1\leftarrow \fL\rightarrow \fX_2$ to the groupoid of orientations $\{o_2\}$ of $\fX_2$ is an equivalence.
\end{lem}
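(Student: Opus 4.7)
The first assertion is immediate: pulling back the orientation gerbe isomorphism $\beta_\fL\colon \tau_1^*\ori_{\fX_1}\simeq \tau_2^*\ori_{\fX_2}$ from \eqref{eq:Lagrangiancorrespondenceorientation} along $i$ and using the equivalence $\tau_1 \circ i \simeq \id_{\fX_1}$ yields the required natural isomorphism $\ori_{\fX_1}\simeq i^*\tau_2^*\ori_{\fX_2}$.

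For the second assertion, the plan is to exhibit an explicit quasi-inverse to the forgetful functor. Given an orientation $o_2$ of $\fX_2$, the trivialization $s \coloneqq \beta_\fL^{-1}(\tau_2^*o_2)$ of $\tau_1^*\ori_{\fX_1}$ pulls back along $i$ to an orientation $o_1 \coloneqq i^*s$ of $\fX_1$ via Part 1. The quasi-inverse will send $o_2$ to the triple $(o_1, o_2, \rho)$, where $\rho\colon \tau_1^* o_1 = \tau_1^* i^* s \simeq s$ is the identification of trivializations of $\tau_1^*\ori_{\fX_1}$ that remains to be produced.

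Constructing $\rho$ canonically is the main technical hurdle, and this is where the $\bA^1$-deformation retract hypothesis enters. I would pull the trivialization $s$ back along $F\colon \bA^1 \times \fL \to \fL$ to obtain a trivialization of $F^* \tau_1^* \ori_{\fX_1}$ on $\bA^1 \times \fL^{\red}$, whose restrictions at $t=1$ and $t=0$ are $s$ and $\tau_1^* i^* s$ respectively. Since the groupoid of trivializations of a graded $\mu_2$-gerbe is a torsor over the groupoid of graded $2$-torsion line bundles, the existence of a canonical isomorphism between these two restrictions reduces to the $\bA^1$-homotopy invariance statements $H^0(\bA^1 \times Y, \bZ/2\bZ) \simeq H^0(Y, \bZ/2\bZ)$ and $H^1_{\mathrm{et}}(\bA^1 \times Y, \mu_2) \simeq H^1_{\mathrm{et}}(Y, \mu_2)$ for a reasonable reduced classical stack $Y$ in characteristic zero, both of which hold in the generality needed here.

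Once $\rho$ is in hand, checking that $o_2 \mapsto (o_1, o_2, \rho)$ is actually a quasi-inverse should be formal: essential surjectivity is immediate by construction, while fully faithfulness follows because any morphism $\sigma\colon o_2 \simeq o_2'$ uniquely determines a compatible morphism $o_1 \simeq o_1'$ by the same pullback along $i$, again using the $\bA^1$-invariance above to verify compatibility with $\rho$ and $\rho'$.
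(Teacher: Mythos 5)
Your proof of the first assertion coincides with the paper's (restrict $\beta_{\fL}$ along $i$), and your plan for the second — an explicit quasi-inverse sending $o_2$ to $(i^*s,\,o_2,\,\rho)$ with $s=\beta_{\fL}^{-1}(\tau_2^{*}o_2)$ — is reasonable, but the construction of $\rho$ has a genuine gap. You pull back only the single trivialization $s$ along $F$ and claim that comparing its restrictions at $t=1$ and $t=0$ is a torsor-level problem, hence follows from $\bA^1$-invariance of $H^0(-;\Z/2\Z)$ and $H^1_{et}(-;\mu_2)$. However, those two restrictions are sections of the restrictions of the gerbe $F^*\tau_1^*\ori_{\fX_1}$ along $\{1\}\times\fL$ and $\{0\}\times\fL$, which are a priori different gerbes on $\fL^{\red}$; they only become trivializations of the same gerbe $\tau_1^*\ori_{\fX_1}$ after the identifications coming from $F|_{\{1\}\times\fL}\simeq\id$, $F|_{\{0\}\times\fL}\simeq i\circ\tau_1$ and $\tau_1\circ i\simeq\id$. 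Knowing that both are restrictions of one global section of one global gerbe gives, by itself, no isomorphism between them after these identifications: in your setup there is no "difference" line bundle on $\bA^1\times\fL^{\red}$ interpolating between the trivial torsor and the torsor $\mathrm{Isom}(\tau_1^*i^*s,\,s)$, and manufacturing one by identifying $F^*\tau_1^*\ori_{\fX_1}$ with a pullback from $\fL$ is itself a gerbe-level statement, involving $H^2_{et}(-;\mu_2)$ and $H^1(-;\Z/2\Z)$, not only the two groups you invoke. This is exactly the extra input the paper supplies: it proves that $\tau_1^*$ is an equivalence on the whole $2$-groupoid of graded $\mu_2$-gerbes — whose homotopy groups are $H^{n}_{et}(-;\mu_2)\times H^{n-1}_{et}(-;\Z/2\Z)$ for $n=0,1,2$, so $H^2$ genuinely enters — using Milne's acyclicity of the projection $\bA^1\times\fL\to\fL$ together with the homotopy $F$; the statement about the forgetful functor is then formal, and no hand-made $\rho$ is needed.

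Your route could be repaired one level down by comparing the two trivializations $F^*s$ and $F^*\tau_1^*i^*s$ of the \emph{same} gerbe $F^*\tau_1^*\ori_{\fX_1}$: their Isom-torsor on $\bA^1\times\fL^{\red}$ carries a canonical section along $\{0\}\times\fL$, and acyclicity of the projection transports it to $\{1\}\times\fL$, producing $\rho$. But even then you need acyclicity as a sheaf-level statement (an abstract isomorphism of cohomology groups does not produce a canonical comparison), together with the $H^0(-;\mu_2)$ statement to make the transported section unique; and that same $H^0(\mu_2)$-level full faithfulness of $\tau_1^*$ is what your "formal" verification of fully faithfulness of the forgetful functor secretly uses (to recover the morphism $o_1\simeq o_1'$ from $\sigma$ compatibly with $\rho$ and $\rho'$). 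None of this is established in the proposal as written, so the proof is incomplete.
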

\begin{proof}
The isomorphism \eqref{eq:Lagrangiancorrespondenceorientation} provides an isomorphism
\[\beta_{\fl}\colon \ori_{\fX_1}|_{\fL}\cong \ori_{\fX_2}|_{\fL}.\]
Restricting it along $i$, we obtain the isomorphism
\[\ori_{\fX_2}|_{\fX_1}\cong \ori_{\fX_1}.\]

Let $\eC_{\fX_1}$ be the 2-groupoid of graded $\mu_2$-gerbes on $\fX_1$ and similarly for $\fL$. Since $\tau_1\circ i\simeq \id_{\fX_1}$, $i^*\circ \tau_1^*\simeq \id_{\eC_{\fX_1}}$. So, it is enough to prove that $\tau_1^*\colon \eC_{\fX_1}\rightarrow \eC_{\fL}$ is an equivalence. The 2-groupoid $\eC_{\fX_1}$ has homotopy groups given by
\[\pi_{2-n}(\eC_{\fX_1})\simeq H^n_{et}(\fX_1; \mu_2)\times H^{n-1}_{et}(\fX_1; \Z/2\Z)\]
for $n=0, 1, 2$, so it is enough to show that $\tau_1^*\colon H^n_{et}(\fL; \mu_2)\rightarrow H^n_{et}(\fX_1; \mu_2)$ is an isomorphism for $n=0, 1, 2$ and similarly for cohomology with $\Z/2\Z$ coefficients. The projection $\pi_2\colon \bA^1\times \fL\rightarrow \fL$ is acyclic \cite[Chapter VI, Corollary 4.20]{milne}, so $\pi_2^* = F^*$ as maps $H^n_{et}(\fL; \mu_2)\rightarrow H^n_{et}(\bA^1\times \fL; \mu_2)$. Restricting to $0,1\in\bA^1$, we conclude that $\tau_1^*\circ i^* = \id$ and hence $\tau_1^*\colon H^n_{et}(\fX_1; \mu_2)\rightarrow H^n_{et}(\fL_1; \mu_2)$ is an isomorphism.
\end{proof}

Finally, we remark that derived critical loci admit canonical orientations:

\begin{ex}\label{ex:can_ori}
    Let $\fY$ be a derived higher Artin stack and $f \colon \fY \to \bA^1$ be a regular function.
    As we have seen in Example \ref{ex:critical}, the derived critical locus 
    $\fX \coloneqq \Crit(f)$ admits a $(-1)$-shifted symplectic structure $\omega_{\fX}$.
    By construction, we have a natural fibre sequence
     \[
    \bL_{\fY} |_{\fX} \to \bL_{\fX} \to \bT_{\fY}|_{\fX}[1].
    \]
    where the latter map is given by the composition
    $\bL_{\fX} \xrightarrow[\simeq]{ (\cdot \omega_{\fX})^{-1}} \bT_{\fX}[1] \to \bT_{ \fY} |_{\fX}[1]$.
    Applying \eqref{eq:fibre_transform}, \eqref{eq:det_shift} and \eqref{eq:det_dual},
    we obtain an isomorphism
    \[
  \bar{o}_{\fX}^{\can} \colon  \rdet(\bL_{\fY} |_{\fX})^{\otimes 2} \cong \rdet(\bL_{\fX}).
    \]
   We set $o_{\fX}^{\can} \coloneqq (-1)^{\vdim \fY} \cdot \bar{o}_{\fX}^{\can}$ and call it the \defterm{canonical orientation} for the derived critical locus $\fX$\footnote{The insertion of the sign is to make it consistent with the canonical orientation for d-critical stacks: see \eqref{eq:can_ori} and Remark \ref{rmk:sign_canori}}.
\end{ex}

\subsection{D-critical structures}

Here we recall Joyce's theory of d-critical structures, which can be regarded as classical truncations of $(-1)$-shifted symplectic structures. The original reference is \cite{joy15}.

For an algebraic space $X$, Joyce \cite[Theorem 2.1]{joy15} defined \'etale sheaves
\[
\cS_{X},\cS_{X}^0 \in \Mod_{\bC_{X}}
\]
of $\bC$-vector spaces with the following properties:
\begin{enumerate}
    \item There is a decomposition $\cS_{X}\cong \cS_{X}^0\oplus \bC_X$.
    \item Consider an \'etale morphism $\eta\colon R\to X$ from a scheme $R$. Since $X$ is locally of finite type, $R$ is locally of finite type as well. If we denote by $R^{\der}$ the corresponding derived scheme, then there exist isomorphisms
    \begin{equation}\label{eq:Sforms}
    \Gamma(R, \cS_{X}|_R)\cong \cA^{2, \exact}(R^{\der}, -1),\qquad \Gamma(R, \cS_{X}^0|_R)\cong \cA^{2, \cl}(R^{\der}, -1).
    \end{equation}
    Under these isomorphisms the decomposition $\cS_X\cong \cS_X^0\oplus \bC_X$ corresponds to the isomorphism $\cA^{2, \exact}(R, -1)\cong \cA^{2, \cl}(R, -1)\times \Gamma(R, \C_R)$ from Proposition \ref{prop:closedexactdecomposition} (see Remark \ref{rmk:classicalaft} why it applies).
    \item For any \'etale morphism $\eta \colon R \to X$ and any closed embedding $i \colon R \hookrightarrow U$ where $U$ is a smooth scheme, there exists the following short exact sequence:
    \[
    0 \to \cS_{X}|_R \to i^{-1} \cO_{U} / I_{R, U}^2 \xrightarrow{\ddr} i^{-1} \Omega_U / (I_{R, U} \cdot i^{-1} \Omega_U)
    \]
    where $I_{R, U}$ is the ideal sheaf of $i^{-1} \cO_U$ defining $R$. The composite
    \[
    \cS_{X}|_R \to i^{-1} \cO_{U} / I_{R, U}^2 \to i^{-1} \cO_{U} / I_{R, U} \simeq \cO_R \to \cO_{R^{\red}}
    \]
    factors through locally constant functions $\C_{R^{\red}}\rightarrow \cO_{R^{\red}}$ which defines the projection $\cS_{X}\rightarrow \C_X$.
\end{enumerate}

Assume that we are given a regular function $f$ on a smooth scheme $U$ such that $f|_{\Crit(f)^{\red}} = 0$.
Set $X = \Crit(f)^{\cl}$.
Then $f$ defines a section of $\cS_{X}^0$.

\begin{defin}[{\cite[Definition 2.5]{joy15}}]
    Let $X$ be an algebraic space and $s \in \Gamma(X, \cS_{X}^0)$ is a section.
    \begin{itemize}
        \item An \defterm{\'etale d-critical chart} for $(X, s)$ is an \'etale morphism $\eta\colon R \to X$, a smooth scheme $U$, a closed embedding $i \colon R\hookrightarrow U$ and a regular function $f$ on $U$ with $f |_{i(R)^{\red}} = 0$, $i(R) = \Crit(f)$ and $f + I_{R, U}^2 = s|_{R}$.
        \item The section $s$ is a \defterm{d-critical structure} if for each point $x \in X$ there exists an \'etale d-critical chart $(R, \eta,  U, f, i)$ such that the image of $\eta$ contains $x$.
    \end{itemize}
\end{defin}

Let $f \colon X \to Y$ be a morphism of algebraic spaces.
Then there exists a natural morphism of sheaves
\[
f^\ast\colon f^{-1} \cS_{Y}^0 \to \cS_{X}^0.
\]
which corresponds under the isomorphism \eqref{eq:Sforms} to the pullback of differential forms. It is shown in \cite[Proposition 2.8]{joy15} that if $f$ is smooth and $s$ is a d-critical structure of $Y$,
then $f^{\star} s$ is a d-critical structure on $X$.

For an Artin stack $\fX$, the assignment
\[
\mathrm{Sch}^{\mathrm{sm}}_{/ \fX} \ni T \to \Gamma(T, \cS_{T}^0)
\]
together with the natural transition map
defines a sheaf in the lisse-\'etale topos of $\fX$,
which will be denoted by $\cS_{\fX}^0$.
\begin{defin}
Let $\fX$ be an Artin stack. A \defterm{d-critical structure on $\fX$} is a choice of a section $s \in \Gamma(\fX, \cS^0_{\fX})$ such that the restriction $s |_T \in \Gamma(T, \cS^0_T)$ is a d-critical structure for each $T \in \mathrm{Sch}^{\mathrm{sm}}_{/ \fX}$.
\end{defin}

By \cite[Proposition 2.8]{joy15}, $s \in \Gamma(\fX, \cS^0_{\fX})$ is a d-critical structure if and only if there exists a smooth surjection from an algebraic space $T \to \fX$ such that $s |_{T}$ is a d-critical structure.

The following statement summarizes the relation between $(-1)$-shifted symplectic structures and d-critical structures. See \cite[Theorem 3.18 (a)]{bbbbj15} and also \cite[Theorem 4.6]{kin21} for the details.

\begin{thm}\label{thm:BBBBJ}
    Let $(\fX, \omega_{\fX})$ be $(-1)$-shifted symplectic stack and $j\colon \fX^{\cl}\rightarrow \fX$ the inclusion of the classical truncation. Let $s\in\Gamma(X, \cS^0_X)$ be the section corresponding to $j^\star\omega_{\fX}$ under the isomorphism \eqref{eq:Sforms}. Then $s$ is a d-critical structure.
    \begin{thmlist}
        \item If $\fX$ is an algebraic space and $\fR = (R, \eta, U, f, i)$ is an \'etale derived critical chart for $\fX$, then its classical truncation $\fR^{\cl} = (R^{\cl}, \eta^{\cl}, U, f, i^{\cl})$ defines an \'etale d-critical chart for $\fX^{\cl}$.

        \item  If we are given a diagram \eqref{eq:BBBBJ_diagram} such that $f |_{\Crit(f)^{\red}} = 0$, the following equality holds:
        \[
        q^{\cl, \star} s = \tau^{\cl, \star} (f + I_{\Crit(f)^{\cl}, U}^2).
        \]
    \end{thmlist}
\end{thm}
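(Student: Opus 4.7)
The plan is to reduce the statement to the local models provided by Theorem \ref{thm:Darboux} and verify the assertions chart by chart. I will treat (i) first, then (ii), and the main claim will follow by assembling the local data.

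For (i), consider an \'etale derived critical chart $\fR = (R, \eta, U, f, i)$ for $\fX$. Passing to classical truncations, $\eta^{\cl}$ remains \'etale and $i^{\cl}$ remains a closed immersion, while $i^{\cl}(R^{\cl}) = \Crit(f)^{\cl}$ holds tautologically from the equality $R = \Crit(f)$ of derived schemes. By subtracting a locally constant function from $f$ if necessary (which alters neither the derived critical locus nor $\omega_{\Crit(f)}$), one can arrange $f|_{R^{\cl, \red}} = 0$. The substantive point is the identification $f + I^2_{R, U} = s|_R$, which amounts to checking that under the isomorphism \eqref{eq:Sforms}, the canonical $(-1)$-shifted symplectic form $\omega_{\Crit(f)}$ corresponds to the class of $f$ modulo $I^2_{R, U}$. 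This is a local computation unpacking the construction of Example \ref{ex:critical}: the shifted symplectic form is $\ddr \lambda$ for the Liouville $1$-form $\lambda$ on $T^*U$, and its pullback along the Lagrangian section $df \colon U \to T^*U$ is computed by the derived de Rham complex of the Koszul resolution of $\Crit(f)$ in $U$; tracing this through the presentation of $\cS^0$ from \cite[Proposition 5.6]{bbj19} recovers $f + I^2_{R, U}$.

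For (ii), the Darboux diagram \eqref{eq:BBBBJ_diagram} supplies a homotopy $q^{\star}\omega_{\fX} \sim \tau^{\star}\omega_{\Crit(f)}$ of closed $(-1)$-shifted $2$-forms on $\widehat{V}$. Under the natural isomorphism \eqref{eq:Sforms}, which is functorial in pullbacks, this translates directly to $q^{\cl, \star} s = \tau^{\cl, \star}(f + I^2_{\Crit(f)^{\cl}, U})$ in $\Gamma(\widehat{V}^{\cl}, \cS^0_{\widehat{V}^{\cl}})$; here one uses that $\tau^{\cl}$ is an isomorphism by Remark \ref{rmk:auto_nondeg}, so $\tau^{\cl, \star}$ makes sense as a bona fide pullback.

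To establish that $s$ is a d-critical structure, cover $\fX$ by the local models of Theorem \ref{thm:Darboux}. In the algebraic space case, (i) directly produces an atlas of \'etale d-critical charts for $\fX^{\cl}$. In the Artin stack case, (ii) shows that $q^{\cl, \star} s$ is identified, via the isomorphism $\tau^{\cl}$, with the d-critical structure associated to $f$ on $\Crit(f)^{\cl}$; since $q^{\cl}$ is smooth and surjects onto a neighbourhood of the chosen point, the criterion of \cite[Proposition 2.8]{joy15} (which says that a section of $\cS^0_{\fX}$ is d-critical iff its pullback by some smooth surjection is) implies that $s$ is a d-critical structure. The main obstacle is the local identification in (i): conceptually this encodes the classical fact that the intersection of the zero section with the graph of $df$ in $T^*U$ recovers the function $f$ modulo $I^2$, but threading this through the shifted-symplectic framework of \cite{ptvv13} and the exact/closed decomposition of Proposition \ref{prop:closedexactdecomposition} requires a careful sign analysis, which is where the appeal to \cite[Proposition 5.6]{bbj19} becomes essential.
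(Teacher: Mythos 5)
The paper does not actually prove Theorem \ref{thm:BBBBJ}: it is stated as a summary of known results, with the proof deferred to \cite[Theorem 3.18(a)]{bbbbj15} and \cite[Theorem 4.6]{kin21}. Your sketch follows essentially the same route as those references — classical truncation of Darboux charts for (i), functoriality of the identification \eqref{eq:Sforms} under pullback together with invertibility of $\tau^{\cl}$ for (ii), and smooth descent via \cite[Proposition 2.8]{joy15} for the d-critical property — so it is a correct outline of the standard argument. The only caveat is that the genuine content, namely that under \eqref{eq:Sforms} the form $\omega_{\Crit(f)}$ corresponds to $f+I^2_{R,U}$ (including the sign conventions), is in your write-up deferred to ``unpacking'' \cite[Proposition 5.6]{bbj19} and Example \ref{ex:critical}; that computation is precisely what the cited theorems carry out, so your proposal reproduces rather than replaces the existing proof, which is consistent with how the paper treats this statement.
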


For later use, we explain how to compare two \'etale d-critical charts following \cite[\S 2.2]{joy15}.
Let $(X, s)$ be a d-critical algebraic space.
An \'etale morphism between \'etale d-critical charts $(R_1, \eta_1, U_1, f_1, i_1)$ and $(R_2, \eta_2, U_2, f_2, i_2)$ is given by \'etale morphisms $h \colon U_1 \to U_2$ and $h_0 \colon R_1 \to R_2$ such that $f_1 = f_2 \circ h$, $\eta_1 = \eta_2 \circ h_0$ and $i_2 \circ h_0 = h \circ i_1$ hold.
The following technical statement will be useful to find an \'etale cover of a given d-critical chart:

\begin{lem}\label{lem:etale_ambient}
    Let $h_0 \colon R_1 \to R_2$ be an \'etale  morphism of schemes and $i_2 \colon R_2 \hookrightarrow U_2$ be a closed embedding of $R_2$ to a smooth scheme.
    Then for each $x \in R_1$, by possibly replacing $R_1$ with an open neighbourhood of $x$, 
    there exist an \'etale morphism $h \colon U_1 \to U_2$ and a closed embedding $R_1 \hookrightarrow U_1$ such that the natural map $R_1 \to R_2 \times_{U_2} U_1$ is an isomorphism.
    
\end{lem}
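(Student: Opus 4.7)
The plan is to reduce to the affine setting and exploit the standard étale form of $h_0$. First, I would shrink everything around $x$: replace $U_2$ by an affine open neighbourhood of $i_2(h_0(x))$, replace $R_2$ by the corresponding affine open $R_2 \cap U_2$ (which still carries a closed embedding into the new $U_2$), and replace $R_1$ by an affine open neighbourhood of $x$ mapping into this $R_2$. We may then write $R_1 = \Spec B$, $R_2 = \Spec A$, $U_2 = \Spec A'$ with a surjection $A' \twoheadrightarrow A$ corresponding to $i_2$.

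Next, I would invoke the local structure of étale morphisms: after one further Zariski shrinking of $R_1$ around $x$, we may assume $h_0$ is standard étale, i.e.\ $B \cong (A[t]/f)_g$ for some monic $f \in A[t]$ and some $g \in A[t]$ with $f'$ a unit in $B$. The key step is to lift this presentation to $A'$: choose a monic lift $\tilde{f} \in A'[t]$ of $f$ (possible because $f$ is monic, by lifting the coefficients) and an arbitrary lift $\tilde{g} \in A'[t]$ of $g$. Define
\[
U_1 \coloneqq \Spec\bigl((A'[t]/\tilde{f})_{\tilde{g}\, \tilde{f}'}\bigr).
\]
Then $h\colon U_1 \to U_2$ is étale, since $A'[t]/\tilde{f}$ is finite locally free over $A'$ (as $\tilde{f}$ is monic) and inverting $\tilde{f}'$ makes this extension unramified, hence étale, and the further localization at $\tilde{g}$ preserves étaleness.

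The surjection $A' \twoheadrightarrow A$ induces a surjection $A'[t]/\tilde{f} \twoheadrightarrow A[t]/f$, and since $\tilde{g}$ and $\tilde{f}'$ restrict to the units $g$ and $f'$ in $B$, this descends to a surjection $(A'[t]/\tilde{f})_{\tilde{g}\,\tilde{f}'} \twoheadrightarrow B$, giving a closed embedding $R_1 \hookrightarrow U_1$. The base change is computed directly: $R_2 \times_{U_2} U_1 \cong \Spec\bigl((A[t]/f)_{g\,f'}\bigr) = \Spec B = R_1$, since $f'$ is already invertible in $(A[t]/f)_g$. By construction the resulting identification is the natural one, so we are done. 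The only real obstacle is bookkeeping, to make sure the simultaneous shrinkings of $R_1,R_2,U_2$ are compatible around $x$; once one is in the standard étale form, the argument is essentially the remark that monic polynomials lift along surjections.
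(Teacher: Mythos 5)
Your proof is correct. It is a genuine variant of the paper's argument rather than a reproduction of it: the paper chooses an arbitrary closed embedding $\iota\colon R_1\hookrightarrow \bA^n$, observes that $(h_0,\iota)\colon R_1\to R_2\times\bA^n$ is a regular closed immersion cut out (after shrinking) by $n$ functions $g_1,\dots,g_n$, lifts these to functions $\tilde g_i$ on $U_2\times\bA^n$ chosen so that the projection $Z(\tilde g_1,\dots,\tilde g_n)\to U_2$ is \'etale at the relevant point, and then shrinks to get $U_1$; you instead quote the local structure theorem for \'etale morphisms and lift the standard \'etale presentation $B\cong (A[t]/f)_g$ itself. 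What your route buys is that \'etaleness of $U_1=\Spec\bigl((A'[t]/\tilde f)_{\tilde g\,\tilde f'}\bigr)\to U_2$ is automatic and global (monicness and invertibility of the derivative are preserved by the lift), and the identification $R_2\times_{U_2}U_1\cong R_1$ is an exact ring computation with no further shrinking, whereas the paper must check \'etaleness at the point and shrink $U_1$ once more to make the comparison map an isomorphism; the price is that you rely on the structure theorem and on the bookkeeping you allude to. Two remarks on that bookkeeping: you need the strong form of the structure theorem that localizes only the source (e.g.\ \cite[\href{https://stacks.math.columbia.edu/tag/00UE}{Tag 00UE}]{stacks-project}), so that ``one further Zariski shrinking of $R_1$'' over the fixed affine $R_2$ really suffices; and since your initial replacement of $R_2$ is by $R_2\cap U_2'$ for an affine open $U_2'\subset U_2$, i.e.\ by $R_2\times_{U_2}U_2'$, the fibre product $R_2\times_{U_2}U_1$ in the original lemma agrees with the one you compute over the shrunk data, because $h\colon U_1\to U_2$ factors through $U_2'$. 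With these two points made explicit, the argument is complete.
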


\begin{proof}
    By possibly shrinking $R_1$ around $x$, we may assume that $R_1$, $U_1$ and $U_2$ are affine.
    We take a closed embedding $\iota \colon R_1 \hookrightarrow \bA^n$.
     Since the map $(h_0, \iota) \colon R_1 \to R_2 \times \bA^n$ is a regular closed embedding of relative dimension $n$,
     there exists a regular functions $g_1, \ldots, g_n \colon R_2 \times \bA^n \to \bA^1$ such that the natural map $R_1 \to Z(g_1, \ldots, g_n)$ is an isomorphism around $x$.
     By shrinking $R_1$ around $x$ again, we may assume that the natural map  $R_1 \to Z(g_1, \ldots, g_n)$ is an open immersion.
     We choose a lift $\tilde{g}_i \colon U_2 \times \bA^n \to \bA^1$ of $g_i$
     such that the morphism $\tilde{h} \colon Z(\tilde{g_1}, \ldots, \tilde{g_n}) \to U_2$ is \'etale at $(h_0(x), \iota(x))$.
     We may take an open subset $U_1 \subset Z(\tilde{g_1}, \ldots, \tilde{g_n})$
     containing $(h_0(x), \iota(x))$ 
     such that $\tilde{h}|_{U_1}$
     is \'etale and
     the natural map
     $R_1 \to R_2 \times_{U_2} U_1$ is an isomorphism, hence we conclude.

\end{proof}

We now introduce the notion of embedding of \'etale d-critical charts.
Let $(X, s)$ be a d-critical algebraic space.
An embedding of \'etale d-critical charts $(R_1, \eta_1, U_1, f_1, i_1) \hookrightarrow (R_2, \eta_2, U_2, f_2, i_2)$ is given by an open embedding $\Phi_0 \colon R_1 \hookrightarrow R_2$ and a locally closed embedding $\Phi \colon U_1 \hookrightarrow U_2$ such that 
$\eta_1 = \eta_2 \circ \Phi_0$, $f_1 = f_2 \circ \Phi$ and $\Phi \circ i_1 = i_2 \circ \Phi_0$ hold.
The following statement is useful when comparing two \'etale d-critical charts:

\begin{thm}\label{thm:compare_d-critical}
    Let $(X, s)$ be a d-critical algebraic space.
    \begin{thmlist}
        \item Let $\fR_1 = (R_1, \eta_1, U_1, f_1, i_1)$ and $\fR_2 = (R_2, \eta_2, U_2, f_2, i_2)$ be two \'etale d-critical charts and $x_1 \in R_1$ and $x_2 \in R_2$ be points such that $\eta_1(x_1) = \eta_2(x_2)$ holds.
        Then by possibly replacing $\fR_1$ and $\fR_2$ by \'etale neighbourhoods of $x_1$ and $x_2$,
        we may find a third d-critical chart $\fR_3 = (R_3, \eta_3, U_3, f_3, i_3)$ and $x_3 \in R_3$ such that there exists embeddings $\fR_1 \hookrightarrow \fR_3$ and $\fR_2 \hookrightarrow \fR_3$ which map $x_1$ to $x_3$ and $x_2$ to $x_3$ respectively.

        \item Let $(\Phi_0, \Phi) \colon \fR_1 =(R_1, \eta_1, U_1, f_1, i_1) \hookrightarrow \fR_2 = (R_2, \eta_2, U_2, f_2, i_2)$ be an embedding of \'etale d-critical charts.
        Then for each $x_1 \in R_1$, there exist morphisms $h_1 \colon U_1' \to U_1$ which contains $x_1$ in its image, $h_2 \colon U_2' \to U_2$, $\Phi' \colon U_1' \to U_2'$, $\alpha \colon U_2' \to U_1$ and $\beta \colon U_2' \to \bA^n$ 
        where $n = \dim U_1 - \dim U_2$, $h_1$, $h_2$ and $(\alpha, \beta) \colon U_2' \to U_1 \times \bA^n$ are \'etale, $h_2 \circ \Phi' = \Phi \circ h_1$, $\alpha \circ \Phi' = h_1$, $\beta \circ \Phi' = 0$ and $f_2 \circ h_2  = f_1 \circ \alpha + (z_1^2 + \cdots + z_n^2) \circ \beta$ hold.
        
        In particular, there exists an \'etale cover of d-critical charts $\fR_2' \to \fR_2$ containing $\Phi_0(x_1)$ in its image and an \'etale morphism of d-critical charts
        $\fR_2' \to (R_1, \eta_1, U_1 \times \bA^n, f_1 \boxplus (z_1^2 + \cdots + z_n^2), (i_1, 0) )$.
    \end{thmlist}
\end{thm}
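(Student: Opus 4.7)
Part (i) is the standard comparison lemma for d-critical charts due to Joyce \cite[Theorem 2.20]{joy15}. The plan is to set $R_3 \coloneqq R_1 \times_X R_2$, which is \'etale over $X$ near $(x_1, x_2)$, and to realize it as the critical locus of a regular function on a smooth ambient scheme $U_3$. Concretely, using Lemma \ref{lem:etale_ambient} I find \'etale neighborhoods of $U_1$ and $U_2$ into which $R_3$ embeds, then take $U_3$ to be an open subscheme of the product (or a product with a suitable $\bA^m$ factor) carrying a function $f_3 = f_1 \boxplus f_2$ plus an auxiliary quadratic correction whose critical locus matches $R_3$. The d-critical compatibility $f_3 + I_{R_3, U_3}^2 = s|_{R_3}$ holds automatically because $s$ controls the 2-jet on $R_3$ and both $f_1$ and $f_2$ induce the same section of $\cS^0_{R_3}$.

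The main content is part (ii), which is a parametrized Morse splitting. Fix $x_1 \in R_1$ and set $y \coloneqq \Phi(i_1(x_1)) \in U_2$. Because $\Phi_0$ is an open immersion and $\Phi \circ i_1 = i_2 \circ \Phi_0$, the critical locus $\Crit(f_2) = i_2(R_2)$ coincides near $y$ with $\Phi(i_1(R_1)) \subset \Phi(U_1)$; no critical points of $f_2$ escape $\Phi(U_1)$ near $y$. This forces the Hessian of $f_2$ at $y$, restricted to the normal fibre $N \coloneqq N_{\Phi(U_1)/U_2,\,y}$, to be non-degenerate: a null vector would give a transverse direction along which $f_2$ remains critical to second order, producing critical points of $f_2$ outside $\Phi(U_1)$ upon deformation, contrary to the previous sentence.

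With non-degeneracy in hand, the parametrized Morse--Bott splitting lemma produces the desired \'etale coordinates. Working in an \'etale neighborhood $h_2 \colon U_2' \to U_2$ of $y$, one constructs an \'etale map $(\alpha, \beta) \colon U_2' \to U_1 \times \bA^n$ in which $\alpha$ plays the role of a retraction of $\Phi$ and $\beta$ defines $\Phi(U_1') \subset U_2'$, and satisfying $f_2 \circ h_2 = f_1 \circ \alpha + (z_1^2 + \cdots + z_n^2) \circ \beta$. The cover $h_1 \colon U_1' \to U_1$ is then $\alpha|_{\{\beta = 0\}}$ and $\Phi' \colon U_1' \hookrightarrow U_2'$ is the inclusion of $\{\beta = 0\}$. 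The identities $\alpha \circ \Phi' = h_1$, $\beta \circ \Phi' = 0$, and $h_2 \circ \Phi' = \Phi \circ h_1$ are built into these choices, and the final claim of (ii) follows by setting $\fR_2' \coloneqq (R_2 \times_{U_2} U_2', \eta_2 \circ \pr_1, U_2', f_2 \circ h_2, \pr_1)$, then composing with the tautological \'etale comparison to the chart $(R_1, \eta_1, U_1 \times \bA^n, f_1 \boxplus(z_1^2 + \cdots + z_n^2), (i_1, 0))$ via $(\alpha, \beta)$.

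The main obstacle is realizing the Morse splitting \'etale-locally rather than merely formally or analytically. The plan is to use Artin approximation: after diagonalizing the non-degenerate normal Hessian over $\bC$, the classical holomorphic (or formal) Morse lemma applied to the completion of $\cO_{U_2, y}$ along $\Phi(U_1)$ gives a formal solution to the polynomial system expressing ``$f_2 = f_1 \circ \alpha + \sum z_i^2 \circ \beta$ in \'etale coordinates''; Artin's approximation theorem then promotes this formal solution to an \'etale-local one, producing the required neighborhoods $h_1, h_2$ and coordinates $(\alpha, \beta)$. The dimension count $n$ is determined by the rank of the non-degenerate normal Hessian, i.e.\ the normal codimension of $\Phi$.
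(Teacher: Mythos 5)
The paper's own proof of Theorem \ref{thm:compare_d-critical} is essentially a citation: part (ii) is taken verbatim from \cite[Theorem 2.23]{joy15}, and part (i) is obtained by replacing $R_1,R_2$ with $R_1\times_X R_2$, using Lemma \ref{lem:etale_ambient} to put both charts on the same underlying space, and then invoking \cite[Theorem 2.20]{joy15}. Your treatment of (i) performs the same reduction and cites the same theorem, so it is in line with the paper; but be aware that your ``concrete'' sketch of the third chart does not work if taken as a substitute for the citation: on an open subset of $U_1\times U_2$ the critical locus of $f_1\boxplus f_2$ is $\Crit(f_1)\times\Crit(f_2)$, not the fibre product $R_1\times_X R_2$, and producing a function whose critical locus is the fibre product, together with embeddings of (shrunk) $U_1$ and $U_2$ and the compatibility $f_3+I_{R_3,U_3}^2=s|_{R_3}$, is exactly the non-trivial content of \cite[Theorem 2.20]{joy15}; none of it ``holds automatically''.

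For part (ii) you re-prove Joyce's theorem rather than cite it, and there is a genuine gap in the key step: your argument for non-degeneracy of the normal Hessian is a heuristic that fails as stated. A degenerate normal Hessian does not by itself produce critical points of $f_2$ outside $\Phi(U_1)$: for $f_2=f_1\boxplus z^3$ or $f_1\boxplus z^4$ the set-theoretic critical locus is still contained in $\{z=0\}$. What rules such examples out is the scheme-theoretic chart condition $i_2(R_2)=\Crit(f_2)$ together with $\Phi_0$ being an open immersion: comparing Zariski tangent spaces at $y=\Phi(i_1(x_1))$ gives $\ker\Hess_{f_2}(y)=T\Phi\bigl(\ker\Hess_{f_1}(x_1)\bigr)\subseteq \im T\Phi$, while $f_2\circ\Phi=f_1$ and $df_2(y)=0$ give $\Hess_{f_2}(y)|_{\im T\Phi}=\Hess_{f_1}(x_1)$; since the radical of the full form then coincides with the radical of its restriction to $\im T\Phi$, linear algebra yields an orthogonal complement on which $\Hess_{f_2}(y)$ is non-degenerate of the expected rank. (In the $z^3$ example the critical locus is $\Crit(f_1)\times\Spec\C[z]/(z^2)$, strictly larger as a scheme, which is what the chart condition forbids.) Your Artin-approximation route to the \'etale Morse splitting is a legitimate alternative to Joyce's completing-the-square argument, but it also needs more care than you give it: the exact identities $\beta\circ\Phi'=0$ and $\alpha\circ\Phi'=h_1$ are closed conditions that naive approximation of a formal solution does not guarantee, so they must be built into the system being solved (working relative to the ideal of $\Phi(U_1)$). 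Since the statement is literally \cite[Theorem 2.23]{joy15}, citing it as the paper does closes these gaps at no cost.
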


\begin{proof}
    The second statement is nothing but \cite[Theorem 2.23]{joy15}.
    To prove the first statement, by replacing $R_1$ and $R_2$ by $R_1 \times_X R_2$ and using Lemma \ref{lem:etale_ambient},
    we may assume that $R_1 = R_2$ holds.
    In this case, the statement follows from \cite[Theorem 2.20]{joy15}.
\end{proof}

The following statement is useful to understand the behaviour of d-critical structures under smooth morphisms, which can be found in the proof of \cite[Proposition 2.8]{joy15}:

\begin{prop}\label{prop:sm_chart}
    Let $h \colon X_1 \to X_2$ be a smooth  morphism of algebraic spaces. Let $s_2$ be a  d-critical structure and set $s_1 \coloneqq h^{\star} s_2$.
    Then for each point $x_1$ with $h(x_1) = x_2$,
    we can find an \'etale d-critical chart $(R_1, \eta_1, U_1, f_1, i_1)$ of $x_1$, $(R_2, \eta_2, U_2, f_2, i_2)$ of $x_2$, smooth  morphisms $H_0 \colon R_1 \to R_2$ and $H \colon U_1 \to U_2$ such that $\eta_2 \circ H_0 = h \circ \eta_1$, $f_2 \circ H = f_1$ and $i_2 \circ H_0 = H \circ i_1$.
\end{prop}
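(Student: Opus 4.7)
I would start from an \'etale d-critical chart of $(X_2, s_2)$ near $x_2 = h(x_1)$ and use the \'etale-local structure of the smooth morphism $h$ to promote it to a compatible \'etale d-critical chart of $(X_1, s_1)$ near $x_1$.

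Pick an \'etale d-critical chart $\fR_2 = (R_2, \eta_2, U_2, f_2, i_2)$ of $(X_2, s_2)$ around $x_2$. Form the base change $R_1' := R_2 \times_{X_2} X_1$. The projection $p_1 \colon R_1' \to X_1$ is \'etale (base change of $\eta_2$), while $p_2 \colon R_1' \to R_2$ is smooth (base change of $h$). Choose a preimage $x_1' \in R_1'$ of $x_1$ and let $k$ be the relative dimension of $p_2$ at $x_1'$. By the standard \'etale-local structure of smooth morphisms, after passing to an \'etale neighborhood $\pi \colon R_1 \to R_1'$ of $x_1'$, the composite $p_2 \circ \pi$ factors as $R_1 \xrightarrow{\alpha} R_2 \times \bA^k \xrightarrow{\pr_{R_2}} R_2$ with $\alpha$ \'etale. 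Set $\eta_1 := p_1 \circ \pi$ (\'etale) and $H_0 := \pr_{R_2} \circ \alpha$ (smooth).

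To produce the ambient smooth scheme, apply Lemma \ref{lem:etale_ambient} to the \'etale morphism $\alpha \colon R_1 \to R_2 \times \bA^k$ and the closed embedding $i_2 \times \id_{\bA^k} \colon R_2 \times \bA^k \hookrightarrow U_2 \times \bA^k$. After shrinking $R_1$ around the chosen preimage of $x_1$ if needed, this yields a smooth scheme $U_1$, a closed embedding $i_1 \colon R_1 \hookrightarrow U_1$, and an \'etale morphism $H' \colon U_1 \to U_2 \times \bA^k$ with $H' \circ i_1 = (i_2 \times \id) \circ \alpha$ and the corresponding square Cartesian. Define $H := \pr_{U_2} \circ H' \colon U_1 \to U_2$ (smooth, as a composite of an \'etale map with a smooth projection) and $f_1 := f_2 \circ H \colon U_1 \to \bA^1$. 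The three compatibility relations $\eta_2 \circ H_0 = h \circ \eta_1$, $f_2 \circ H = f_1$ and $i_2 \circ H_0 = H \circ i_1$ are immediate from the construction.

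It remains to verify $(R_1, \eta_1, U_1, f_1, i_1)$ is a genuine \'etale d-critical chart of $(X_1, s_1)$. Writing $f_1 = (f_2 \circ \pr_{U_2}) \circ H'$ and noting that $\Crit(f_2 \circ \pr_{U_2}) = R_2 \times \bA^k$ inside $U_2 \times \bA^k$, the Cartesian property combined with \'etaleness of $H'$ identifies $\Crit(f_1)$ with $i_1(R_1)$; and $f_1$ vanishes on $R_1^{\red}$ because $f_2$ vanishes on $R_2^{\red}$. The only real subtlety is the identification of the section of $\cS_{R_1}^0$ induced by $f_1$ with $s_1|_{R_1}$. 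Using the local description of $\cS^0$ from property (3) of Joyce's sheaves, which computes pullback of $\cS^0$ along a smooth morphism via any smooth lift of the morphism to ambient smooth schemes, the fact that $H$ is such a lift of $H_0$ satisfying $f_2 \circ H = f_1$ gives
\[
f_1 + I_{R_1, U_1}^2 \;=\; H_0^{\star}(f_2 + I_{R_2, U_2}^2) \;=\; H_0^{\star}(\eta_2^{\star} s_2) \;=\; \eta_1^{\star}(h^{\star} s_2) \;=\; \eta_1^{\star} s_1 \;=\; s_1|_{R_1},
\]
as required. The main place requiring care is this last identification, namely that the pullback of $\cS^0$ along a smooth morphism is indeed computed by precomposing the locally representing function $f_2$ with a chosen smooth lift of the morphism to ambient smooth schemes; everything else is routine base change plus an application of Lemma \ref{lem:etale_ambient}.
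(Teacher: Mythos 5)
Your construction is correct and is essentially the argument the paper relies on: the paper offers no proof of its own beyond citing the proof of Joyce's Proposition 2.8, which proceeds exactly as you do — pull back a chart of $(X_2,s_2)$, use the \'etale-local factorization of the smooth morphism through $R_2\times\bA^k$, produce the ambient embedding via Lemma \ref{lem:etale_ambient}, set $f_1=f_2\circ H$, and conclude by the functoriality of $\cS^0$ computed through the ambient lift $H$. The only cosmetic point is that $R_1'=R_2\times_{X_2}X_1$ is a priori only an algebraic space, so the \'etale neighbourhood $R_1$ should be chosen to be a scheme (always possible), since the definition of an \'etale d-critical chart requires $R_1$ to admit a closed embedding into a smooth scheme.
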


We define a \defterm{smooth morphism of \'etale d-critical charts} to be the pair of morphisms $(H_0, H)$ as in Proposition \ref{prop:sm_chart}.

Finally, we introduce the external sum of d-critical structures.
Let $\fX$ and $\fY$ be Artin stacks and $\pr_i$ denotes the $i$-th projection from $\fX \times \fY$.
For a section $s \in \Gamma(\fX, \cS^0_{\fX})$ and $t \in \Gamma(\fY, \cS^0_{\fY})$,
we set
\[
s \boxplus t \coloneqq \pr_1^\star s + \pr_2^{\star} t.
\]

\begin{lem}\label{lem:external_sum_d-critical}
    If $s$ and $t$ are d-critical structures,
    $s \boxplus t$ is a d-critical structure for $\fX \times \fY$.
\end{lem}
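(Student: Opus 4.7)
The plan is to reduce to the case of algebraic spaces via the smooth-local nature of d-criticality, and then produce an \'etale d-critical chart for $\fX\times\fY$ at an arbitrary point by external product of \'etale d-critical charts on the two factors. Concretely, if $p\colon T\to \fX$ and $q\colon S\to \fY$ are smooth surjections from algebraic spaces, then $p\times q\colon T\times S\to \fX\times \fY$ is a smooth surjection, and by the characterization recalled after the definition of d-critical stacks (\cite[Proposition 2.8]{joy15}) it suffices to show that $(p\times q)^{\star}(s\boxplus t) = p^{\star}s\boxplus q^{\star}t$ is a d-critical structure on $T\times S$. Thus we may assume $\fX=X$ and $\fY=Y$ are algebraic spaces.

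Next, fix a point $(x,y)\in X\times Y$. Choose \'etale d-critical charts $\fR_1=(R_1,\eta_1,U_1,f_1,i_1)$ of $(X,s)$ with $x$ in the image of $\eta_1$, and $\fR_2=(R_2,\eta_2,U_2,f_2,i_2)$ of $(Y,t)$ with $y$ in the image of $\eta_2$. I claim that
\[
\fR_1\boxtimes \fR_2 \coloneqq (R_1\times R_2,\ \eta_1\times\eta_2,\ U_1\times U_2,\ f_1\boxplus f_2,\ i_1\times i_2)
\]
is an \'etale d-critical chart for $(X\times Y, s\boxplus t)$ containing $(x,y)$ in its image. All the easy conditions are immediate: $\eta_1\times\eta_2$ is \'etale, $U_1\times U_2$ is smooth, $i_1\times i_2$ is a closed embedding, $(f_1\boxplus f_2)|_{(R_1\times R_2)^{\red}}=0$, and $\Crit(f_1\boxplus f_2)=\Crit(f_1)\times \Crit(f_2)=(i_1\times i_2)(R_1\times R_2)$.

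The only substantive point to verify is the compatibility of the defining sections, namely
\[
f_1\boxplus f_2 + I_{R_1\times R_2,\, U_1\times U_2}^2 \;=\; (s\boxplus t)|_{R_1\times R_2}
\]
as elements of $\Gamma(R_1\times R_2, \cS^0_{X\times Y})$. For this we use naturality of the pullback map $f^\ast \colon f^{-1}\cS^0_Y\to \cS^0_X$ from \cite[Proposition 2.8]{joy15} applied to the projections $\pr_1,\pr_2$ from $U_1\times U_2$: the section $\pr_1^\star s$ is represented on $R_1\times R_2$ by $f_1\circ \pr_1$ modulo $(\pr_1^{-1} I_{R_1,U_1})^2 \subset I_{R_1\times R_2,U_1\times U_2}^2$, and symmetrically for $\pr_2^\star t$. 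Summing these representatives yields $f_1\boxplus f_2$ modulo $I_{R_1\times R_2,U_1\times U_2}^2$, as required.

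The main (mild) obstacle is making the last verification precise: one needs to check that the isomorphisms \eqref{eq:Sforms} are compatible with the external sum operation, or equivalently that $\pr_i^\star$ on sections of $\cS^0$ is computed on d-critical charts by precomposition with $\pr_i$ on the ambient smooth schemes. This is a direct consequence of the explicit description of $\cS^0_X|_R$ as a subsheaf of $i^{-1}\cO_U/I_{R,U}^2$ and the naturality of this description under morphisms of charts, so no serious difficulty is expected.
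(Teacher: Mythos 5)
Your proposal is correct and follows essentially the same route as the paper: reduce by smooth descent to (d-critical algebraic spaces covered by) \'etale d-critical charts, and then observe that the external product of charts $(R_1\times R_2,\eta_1\times\eta_2,U_1\times U_2,f_1\boxplus f_2,i_1\times i_2)$ is a chart for $s\boxplus t$, the key point being the identity $s\boxplus t=(f_1\boxplus f_2)+I_{R_1\times R_2,U_1\times U_2}^2$ in $\Gamma(\cS^0)$. The paper states this identity directly, while you additionally justify it via the naturality of $\pr_i^{\star}$ on $\cS^0$ computed on charts (Joyce's \cite[Proposition 2.8]{joy15}), which is a harmless extra verification rather than a different argument.
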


\begin{proof}
   Since being a d-critical structure is a local condition,
   we may assume that $\fX$ (resp. $\fY$) is a d-critical algebraic space covered by a single \'etale d-critical chart
   $\fR  = (\fX, \id, U, f, i)$ (resp. $(\fS = (\fY, \id, V, g, j))$).
   Then the claim follows from the identity
    \[
    s \boxplus t = (f \boxplus g) + I_{R \times S, U \times V}^2 \in \Gamma(\fX \times \fY, \cS^0_{\fX \times \fY}).
    \]
\end{proof}

\subsection{Torus-equivariant d-critical spaces}

Here we will discuss torus equivariant d-critical algebraic spaces.

\begin{defin}
Let $T$ be a torus and $X$ be an algebraic space with a $T$-action $\mu$.
\begin{itemize}
    \item A \defterm{$T$-invariant d-critical structure on $X$} is a d-critical structure on $X$ pulled back from $[X / T]$.
    \item A \defterm{$T$-equivariant d-critical algebraic space} is a triple $(X, \mu, s)$ of algebraic spaces $X$ with $T$-action $\mu$ together with a $T$-invariant d-critical structure $s$.  
    \item A \defterm{$T$-equivariant \'etale d-critical chart} is a tuple $(R, \eta, U, f, i)$ where $R$ is a quasi-separated scheme with a $T$-action, $\eta \colon R \to X$ is a $T$-equivariant \'etale morphism, $U$ is a smooth scheme with a $T$-action, $f$ is a $T$-invariant function on $U$ such that $f|_{\Crit(f)^{\red}} = 0$ and $i \colon R \hookrightarrow U$ is a $T$-equivariant closed embedding such that $i(R) = \Crit(f)$.
\end{itemize}
\end{defin}

\begin{prop}\label{prop:minimal_chart_space}
    Let $(X, \mu, s)$ be a quasi-separated $T$-equivariant d-critical algebraic space. Then for each $x \in X$, there exists a $T$-equivariant \'etale d-critical chart $(R, \eta, U, f, i)$ of $X$ containing $x$ such that $\dim T_{X, x} = \dim U$ holds.
\end{prop}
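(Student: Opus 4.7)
The plan is to first produce some (possibly non-minimal) $T$-equivariant \'etale d-critical chart containing $x$, and then iteratively reduce the dimension of the ambient smooth scheme by splitting off non-degenerate quadratic factors $T$-equivariantly.

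For the initial chart, I would adapt Joyce's construction of \'etale d-critical charts \cite{joy15} to the equivariant setting; this is available because the torus $T$ is linearly reductive, so every step in the non-equivariant construction (choice of generators for the defining ideal, choice of closed embedding into a smooth ambient) admits a $T$-equivariant refinement via averaging and a Luna-type slice argument applied to the $T$-orbit $T\cdot x$. Call the resulting chart $\fR_0=(R_0,\eta_0,U_0,f_0,i_0)$, fix a lift $x'\in R_0$ of $x$, and let $T_{x'}\subset T$ be its stabilizer.

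Next, set $m=\dim U_0-\dim T_{X,x}$ and induct on $m$. If $m=0$ we are done. Otherwise, because $i_0$ identifies $R_0$ scheme-theoretically with $\Crit(f_0)$ and $\eta_0$ is \'etale, the inclusion $T_{R_0,x'}\hookrightarrow T_{U_0,i_0(x')}$ identifies the source with $\ker\Hess(f_0)_{i_0(x')}$, and the induced form on any $T_{x'}$-invariant complement is a $T_{x'}$-equivariant non-degenerate quadratic form of rank $m$. Applying the $T_{x'}$-equivariant Morse--Bott lemma on a $T_{x'}$-invariant slice through $x'$ transverse to $T\cdot x'$ yields \'etale $T_{x'}$-equivariant coordinates realizing $f_0=f_1\boxplus(z_1^2+\cdots+z_m^2)$; extending this splitting to a $T$-invariant neighborhood of the orbit via the associated-bundle construction $T\times^{T_{x'}}(\text{slice})$ then gives, through the $T$-equivariant analogue of Theorem \ref{thm:compare_d-critical}(ii), an \'etale cover of $\fR_0$ by a $T$-equivariant chart of the form $(R_1,\eta_1,U_1\times\bA^m,f_1\boxplus(z_1^2+\cdots+z_m^2),(i_1,0))$ with $\dim U_1=\dim T_{X,x}$. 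Restricting to the zero section $U_1\hookrightarrow U_1\times\bA^m$ produces the desired minimal chart.

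The principal obstacle will be making the equivariant splitting globally $T$-equivariant, i.e.\ promoting a $T_{x'}$-equivariant Morse normal form at the single point $x'$ to a genuinely $T$-equivariant one on a neighborhood of the entire orbit $T\cdot x'$. Once this slice-type construction is in place, the remaining ingredients are Joyce's non-equivariant chart-comparison techniques from \cite{joy15} together with elementary equivariant linear algebra for non-degenerate quadratic representations of linearly reductive groups.
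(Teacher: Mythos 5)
There are genuine gaps, and they sit exactly at the two load-bearing steps of your outline. First, the existence of the initial $T$-equivariant chart is not established by ``averaging plus a Luna-type slice argument.'' Averaging presupposes that you already have a $T$-stable affine \'etale neighbourhood of $x$ on which regular functions carry a rational $T$-action, and producing such a neighbourhood is the real issue here: $X$ is only a quasi-separated algebraic space, the orbit $T\cdot x$ need not be closed, and Luna's slice theorem requires an affine scheme and a closed orbit, so it does not apply. What is needed is the \'etale version of Sumihiro's theorem, i.e.\ \cite[Theorem 4.1]{ahr20}, which gives a $T$-equivariant \'etale morphism from an affine $T$-scheme whose image contains $x$; this is precisely the first (and nontrivial) step of the paper's proof, and it cannot be conjured from linear reductivity alone.

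Second, your dimension-reduction step invokes a ``$T_{x'}$-equivariant Morse--Bott lemma'' on a slice transverse to the orbit and an associated-bundle extension $T\times^{T_{x'}}(\mathrm{slice})$, and you yourself flag as unresolved the promotion of the $T_{x'}$-equivariant normal form at the single point $x'$ to a genuinely $T$-equivariant one near the orbit. That unresolved point is where the actual work lies: there is no Morse(--Bott) lemma in the Zariski/\'etale algebraic category, the associated-bundle construction again needs a slice theorem that fails for non-closed torus orbits, and Theorem \ref{thm:T_compare_d-critical}(ii) cannot substitute for it, since that statement compares two charts that are already given rather than constructing the smaller $T$-invariant ambient $U_1$ for you. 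The paper avoids all of this: after the \'etale Sumihiro step it simply cites \cite[Proposition 2.43]{joy15}, which, for a $T$-equivariant d-critical structure on an affine $T$-scheme, directly produces a $T$-equivariant critical chart with $\dim U=\dim T_{X,x}$. Replacing your two sketched steps by these two citations recovers the paper's short proof; as written, your argument is incomplete precisely at those steps.
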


\begin{proof}
    By the \'etale version of Sumihiro's theorem proved in \cite[Theorem 4.1]{ahr20},
    there exists a $T$-equivariant affine scheme $R$ and $T$-equivariant \'etale morphism $R \to X$ whose image contains $x$.
    Then the claim follows from \cite[Proposition 2.43]{joy15}.
\end{proof}

For an algebraic space $X$ with a $T$-action $\mu$, we denote by $X^{\mu}\subset X$ the fixed point subspace.

\begin{cor}\label{cor:dcriticallocalization}
    Let $X$ be a quasi-separated algebraic space with a $T$-action $\mu$ and $s$ be a $T$-invariant d-critical structure on $X$.
    Then $s^{\mu} \coloneqq (X^{\mu} \hookrightarrow X)^{\star} s$ is a d-critical structure.
\end{cor}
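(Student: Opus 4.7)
The plan is, for each point $x \in X^\mu$, to explicitly construct an \'etale d-critical chart of $(X^\mu, s^\mu)$ whose image contains $x$, by restricting a $T$-equivariant chart of $(X, s)$ to its fixed locus. Since being a d-critical structure is \'etale-local, this will conclude the proof. By Proposition \ref{prop:minimal_chart_space} I would choose a $T$-equivariant \'etale d-critical chart $\fR = (R, \eta, U, f, i)$ for $(X, s)$ with $x \in \eta(R)$; because $T$ is connected, every point of $\eta^{-1}(x)$ is automatically $T$-fixed. The candidate chart for $(X^\mu, s^\mu)$ would be
\[
\fR^\mu \coloneqq \bigl(R^\mu,\ \eta|_{R^\mu},\ U^\mu,\ f|_{U^\mu},\ i|_{R^\mu}\bigr).
\]

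The formal data is easy to check: $\eta|_{R^\mu}$ is \'etale because taking $T$-fixed subspaces commutes with $T$-equivariant \'etale base change, so $R^\mu \cong R\times_X X^\mu$; $U^\mu$ is smooth because the fixed locus of a torus on a smooth scheme is smooth; and $i|_{R^\mu}$ remains a closed embedding. The key geometric step is to establish, as closed subschemes of $U^\mu$,
\[
i(R^\mu) \;=\; \Crit(f) \cap U^\mu \;=\; \Crit(f|_{U^\mu}).
\]
The first equality holds because the $T$-fixed subscheme of a $T$-invariant closed subscheme $Y = \Crit(f) \subset U$ is $Y \cap U^\mu$. For the second, since $f$ is $T$-invariant, so is $df$; hence $df|_{U^\mu}$ lands in the $T$-invariant summand of $\Omega_U|_{U^\mu} \simeq \Omega_{U^\mu} \oplus N^{\ast}_{U^\mu/U}$, which is $\Omega_{U^\mu}$, and there it agrees with $d(f|_{U^\mu})$, so the two Jacobian ideals coincide. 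The normalization $f|_{U^\mu}|_{(R^\mu)^{\red}} = 0$ is immediate from $f|_{R^{\red}} = 0$.

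The final and most delicate step is to match the local d-critical sections, i.e.\ to show that $f|_{U^\mu} + I_{R^\mu, U^\mu}^2$ equals $s^\mu|_{R^\mu}$ as sections of $\cS^0_{X^\mu}|_{R^\mu}$. This should follow from naturality of the defining exact sequence
\[
0 \to \cS_X|_R \to i^{-1}\cO_U / I_{R,U}^2 \xrightarrow{\ddr} i^{-1}\Omega_U / (I_{R,U}\cdot i^{-1}\Omega_U)
\]
under the morphism of pairs $(R^\mu \hookrightarrow U^\mu) \to (R \hookrightarrow U)$: the restriction map sends $I_{R,U}\cdot \cO_{U^\mu}$ onto $I_{R^\mu, U^\mu}$ and carries the class of $f$ to the class of $f|_{U^\mu}$, and this is compatible with the pullback $(X^\mu \hookrightarrow X)^\star$ by functoriality of $\cS^0$. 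I expect this step to be the main obstacle, since the closed embedding $X^\mu \hookrightarrow X$ is not smooth and so \cite[Proposition 2.8]{joy15} does not apply directly; one must instead unwind the construction of $f^\star$ on $\cS^0$ through the local models $(R, U)$ to conclude.
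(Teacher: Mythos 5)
Your proposal is correct and follows essentially the same route as the paper: the paper's proof also invokes Proposition \ref{prop:minimal_chart_space} to cover $X$ by $T$-equivariant \'etale d-critical charts $(R,\eta,U,f,i)$ and then declares $(R^{T},\eta^{T},U^{T},f|_{U^{T}},i^{T})$ to be an \'etale d-critical chart for $(X^{\mu},s^{\mu})$. The verifications you spell out (smoothness of $U^{\mu}$, the scheme-theoretic identity $\Crit(f)\cap U^{\mu}=\Crit(f|_{U^{\mu}})$ via $T$-invariance of $df$, and the matching of $f|_{U^{\mu}}+I_{R^{\mu},U^{\mu}}^{2}$ with $s^{\mu}$ through the local characterization of the pullback on $\cS^{0}$ from \cite[Theorem 2.1]{joy15}) are exactly the details the paper leaves implicit, so there is no gap.
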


\begin{proof}
    We may assume that $X$ is covered by a single $T$-equivariant \'etale d-critical chart $(R, \eta, U, f, i)$.
    Then the tuple
    $(R^{T}, \eta^{T}, U^T, f|_{U^{T}}, i^{T})$ defines an \'etale d-critical chart for $(X^{\mu}, s^{\mu})$.
\end{proof}

The notion of $T$-equivariant embedding of $T$-equivariant d-critical structures is defined in the same manner as the non-equivariant case.
We have the following $T$-equivariant version of Theorem \ref{thm:compare_d-critical}:

\begin{thm}\label{thm:T_compare_d-critical}
    Let $X$ be a quasi-separated algebraic space with a $T$-action $\mu$ and $s$ be a $T$-invariant d-critical structure.
    \begin{thmlist}
        \item Let $\fR_1 = (R_1, \eta_1, U_1, f_1, i_1)$ and $\fR_2 = (R_2, \eta_2, U_2, f_2, i_2)$ be two \'etale $T$-equivariant d-critical charts and $x_1 \in R_1^{T}$ and $x_2 \in R_2^{T}$ be points such that $\eta_1(x_1) = \eta_2(x_2)$ holds.
        Then by possibly replacing $\fR_1$ and $\fR_2$ by $T$-equivariant \'etale neighbourhoods of $x_1$ and $x_2$,
        we may find a third $T$-equivariant d-critical chart $\fR_3 = (R_3, \eta_3, U_3, f_3, i_3)$ and $x_3 \in R_3$ such that there exists $T$-equivariant embeddings $\fR_1 \hookrightarrow \fR_3$ and $\fR_2 \hookrightarrow \fR_3$ which map $x_1$ to $x_3$ and $x_2$ to $x_3$ respectively.

        \item Let $(\Phi_0, \Phi) \colon (R_1, \eta_1, U_1, f_1, i_1) \hookrightarrow (R_2, \eta_2, U_2, f_2, i_2)$ be a $T$-equivariant embedding of $T$-equivariant \'etale d-critical charts.
        Then for each $x_1 \in R_1$, there exist $T$-equivariant morphisms $\eta_1 \colon U_1' \to U_1$ containing $x_1$ in its image, $\eta_2 \colon U_2' \to U_2$, $\Phi' \colon U_1' \to U_2'$, $\alpha \colon U_2' \to U_1$, $\beta \colon U_2' \to \bA^n$ and a $T$-invariant non-degenerate bilinear form $q$ on $\bA^n$
        where $n = \dim U_1 - \dim U_2$,  
        $\bA^n$ is equipped with some linear $T$-action,
        $\eta_1$, $\eta_2$ and $(\alpha, \beta) \colon U_2' \to U_1 \times \bA^n$ are \'etale, $\eta_2 \circ \Phi' = \Phi \circ \eta_1$, $\alpha \circ \Phi' = \eta_1$, $\beta \circ \Phi' = 0$ and $f_2 \circ \eta_2  = f_1 \circ \alpha + q \circ \beta$ hold.

         In particular, there exists a $T$-equivariant \'etale cover of d-critical charts $\fR_2' \to \fR_2$ containing $\Phi_0(x_1)$ in its image and an \'etale morphism of d-critical charts
        $\fR_2' \to (R_1, \eta_1, U_1 \times \bA^n, f_1 \boxplus q, (i_1, 0) )$ where $q$ is a $T$-invariant non-degenerate quadratic function.
    \end{thmlist}
\end{thm}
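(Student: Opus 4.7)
\medskip

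\textbf{Approach.} The plan is to follow Joyce's proofs of the non-equivariant analogues \cite[Theorems 2.20 and 2.23]{joy15} (which underlie Theorem \ref{thm:compare_d-critical}) and upgrade each step to the $T$-equivariant setting. The two main tools are: weight-space decompositions at $T$-fixed points, which replace the averaging arguments used for general reductive groups; and the $T$-equivariant \'etale slice theorem \cite[Theorem 4.1]{ahr20}, which produces $T$-equivariant \'etale neighbourhoods of $T$-fixed points whose tangent representation matches that of the ambient object.

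\smallskip

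\textbf{Part (ii).} I would first apply Theorem \ref{thm:compare_d-critical}(ii) to get a non-equivariant factorization near $x_1$, and then equivariantize. The key geometric input is that the normal space $N \coloneqq T_{U_2, i_2 \Phi_0(x_1)}/ (d\Phi) T_{U_1, i_1(x_1)}$ is a $T$-representation (since $x_1$ is $T$-fixed and $\Phi$ is $T$-equivariant), and the transverse Hessian of $f_2$ at $i_2 \Phi_0(x_1)$ along $N$ is a $T$-invariant quadratic form $q$ on $N$; its non-degeneracy follows from $i_2(R_2) = \Crit(f_2)$ together with the fact that, near $\Phi_0(x_1)$, $R_2$ coincides with $\Phi_0(R_1) \subset \Phi(U_1) \subset U_2$, so $\Crit(f_2)$ cannot extend transversely to $U_1$. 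Using \cite[Theorem 4.1]{ahr20} at the $T$-fixed point $i_2 \Phi_0(x_1)$, one produces a $T$-equivariant \'etale neighbourhood $U_2' \to U_2$ together with $T$-equivariant maps $\alpha \colon U_2' \to U_1$ and $\beta \colon U_2' \to N$ such that $(\alpha, \beta)$ is \'etale and $\alpha \circ \Phi' = h_1$, $\beta \circ \Phi' = 0$ for a suitable $T$-equivariant lift $\Phi' \colon U_1' \to U_2'$ of $\Phi$. A $T$-equivariant Morse--Bott / Hadamard argument, carried out by iteratively eliminating higher-order terms in the formal expansion of $f_2 \circ \eta_2$ in the $\beta$-direction weight by weight, then produces the desired identity $f_2 \circ \eta_2 = f_1 \circ \alpha + q \circ \beta$. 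The ``in particular'' clause follows by reading off the \'etale chart structure from the factorization.

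\smallskip

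\textbf{Part (i).} Form $R_3 \coloneqq R_1 \times_X R_2$ with the diagonal $T$-action, which contains the $T$-fixed point $(x_1, x_2)$ lying over $\eta_1(x_1) = \eta_2(x_2)$. Following the construction of \cite[Theorem 2.20]{joy15}, one builds a common \'etale d-critical chart $\fR_3$ by embedding both $U_1$ and $U_2$ into a single smooth ambient scheme $U_3$ containing $R_3$ as a closed subscheme, with a function $f_3$ whose critical locus is (an open subspace of) $R_3$; compatibility of the two embeddings modulo $I_{R_3, U_3}^2$ with $s$ is guaranteed by the d-critical chart condition. To make this $T$-equivariant, I would use the $T$-equivariant version of Lemma \ref{lem:etale_ambient}, again via \cite[Theorem 4.1]{ahr20} at $(x_1, x_2)$, to choose $U_3$ as a smooth $T$-scheme and the embeddings $U_i \hookrightarrow U_3$ as $T$-equivariant; the function $f_3$ is then automatically $T$-invariant since $f_1, f_2$ are.

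\smallskip

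\textbf{Main obstacle.} The substantive new step beyond Joyce's non-equivariant arguments is the $T$-equivariant Morse--Bott reduction in (ii): one must verify that the iterative elimination of higher-order cross-terms between the $\alpha$- and $\beta$-directions in the expansion of $f_2$ along $U_1$ can be carried out $T$-equivariantly. For a torus action this is essentially automatic, since the relevant formal power series and coordinate changes decompose canonically into weight spaces and the obstruction to each cancellation vanishes weight by weight. All other steps (forming $T$-invariant fibre products, taking $T$-invariant Hessians, invoking \cite[Theorem 4.1]{ahr20}) are manifestly compatible with the $T$-action.
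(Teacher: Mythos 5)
Your overall strategy -- equivariantize Joyce's comparison theorems \cite[Theorems 2.20 and 2.23]{joy15}, reducing part (i) to the case $R_1=R_2$ via a $T$-equivariant version of Lemma \ref{lem:etale_ambient} and using the \'etale slice theorem of \cite{ahr20} to produce equivariant neighbourhoods -- is indeed the route the paper takes (it simply invokes Joyce's equivariant Proposition 2.44 for (i) and redoes the proof of \cite[Theorem 2.23]{joy15} for (ii), cf.\ \cite[Lemma 5.1]{des22}). However, your execution of part (ii) has a genuine gap: the statement is asserted for \emph{every} $x_1\in R_1$, not only for $x_1\in R_1^T$ (contrast with part (i), where the points are explicitly required to be fixed). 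Your entire argument for (ii) presupposes that $x_1$ is a fixed point: you speak of the normal space $T_{U_2,i_2\Phi_0(x_1)}/(d\Phi)T_{U_1,i_1(x_1)}$ as a $T$-representation, apply \cite[Theorem 4.1]{ahr20} ``at the $T$-fixed point $i_2\Phi_0(x_1)$'', and eliminate higher-order terms ``weight by weight'' in the formal expansion at that point. At a non-fixed point the tangent and normal spaces carry only an action of the stabilizer $T_{x_1}$, the complete local ring has no $T$-action, and your announced programme of replacing averaging by pointwise weight decompositions says nothing. The correct equivariantization works over $T$-invariant affine neighbourhoods with graded rings, using the Reynolds operator (available because a torus is linearly reductive) to choose invariant splittings of the conormal sequence and invariant coordinate changes; moreover, producing a $T$-equivariant $\beta\colon U_2'\to\bA^n$ with a \emph{linear} $T$-action requires an equivariant trivialization of the normal bundle near the orbit, which at a non-fixed point uses that characters of $T_{x_1}$ extend to $T$. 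None of this appears in your sketch, so as written the proof only covers $x_1\in R_1^T$.

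A second, smaller defect is in part (i): you assert that ``the function $f_3$ is then automatically $T$-invariant since $f_1,f_2$ are.'' In Joyce's construction of the common chart (the proof of \cite[Theorem 2.20]{joy15}), $f_3$ is \emph{not} built from $f_1$ and $f_2$; it is an essentially arbitrary lift of $s|_{R_3}$ modulo $I_{R_3,U_3}^2$ on the new ambient scheme (an open subscheme of $U_1\times U_2$), followed by shrinking so that $\Crit(f_3)=i_3(R_3)$. To obtain an invariant $f_3$ one must project onto the weight-zero part (Reynolds operator), which is legitimate because $s$ and the ideal $I_{R_3,U_3}$ are invariant; this is precisely the content of Joyce's equivariant Proposition 2.44 that the paper cites. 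So your dismissal of averaging arguments in favour of fixed-point weight decompositions is exactly where the argument breaks: the averaging (linear reductivity of $T$) is what makes both (i) and the general-point case of (ii) go through.
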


\begin{proof}
    Note that we can easily make each choice in Lemma \ref{lem:etale_ambient} $T$-equivariant if we may replace $R_1$ with an \'etale cover.
    Therefore, the first claim follows by arguing as the proof of Theorem \ref{thm:compare_d-critical} using \cite[Proposition 2.44]{joy15}.
    The latter claim follows by arguing as the proof of \cite[Proposition 2.23]{joy15}. See \cite[Lemma 5.1]{des22} for the proof of the corresponding statement in the analytic situation.
\end{proof}

Assume that we are given algebraic spaces $X_1$ and $X_2$ with $T$-actions $\mu_1$ and $\mu_2$, $T$-equivariant smooth morphism $h \colon X_1 \to X_2$ which fit into a commutative diagram of fixed point spaces
\[
\xymatrix{
X_1^{\mu_1}\ar[r] \ar^{h^T}[d] & X_1 \ar^{h}[d] \\
X_2^{\mu_2} \ar[r] & X_2.
}
\]
Consider a d-critical structure $s_2$ of $X_2$ and set $s_1 \coloneqq h^{\star} s_2$. Then by construction, we have the identity
\begin{equation}\label{eq:dcrit_local_commutes_sm}
h^{T, \star} (s_2^{\mu_2}) = s_1^{\mu_1}.
\end{equation}
The following $T$-equivariant version of Proposition \ref{prop:sm_chart} will be useful to compare $T$-invariant d-critical structures related by a $T$-equivariant smooth morphism. See \cite[Lemma 5.2]{des22} for the proof:

\begin{prop}\label{prop:T_sm_chart}
    Let $h \colon X_1 \to X_2$ be a smooth $T$-equivariant morphism of quasi-separated algebraic spaces with $T$-actions. Let $s_2$ be a $T$-invariant d-critical structure and set $s_1 \coloneqq h^{\star} s_2$.
    Then for each point $x_1$ with $h(x_1) = x_2$,
    there exists a $T$-equivariant \'etale d-critical chart $(R_1, \eta_1, U_1, f_1, i_1)$ of $x_1$, $(R_2, \eta_2, U_2, f_2, i_2)$ of $x_2$, smooth $T$-equivariant morphisms $H_0 \colon R_1 \to R_2$ and $H \colon U_1 \to U_2$ such that $\eta_2 \circ H_0 = h \circ \eta_1$, $f_2 \circ H = f_1$ and $i_2 \circ H_0 = H \circ i_1$.
\end{prop}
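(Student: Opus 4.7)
The plan is to follow the strategy of Proposition \ref{prop:sm_chart} (after \cite[Proposition 2.8]{joy15}), upgrading each step to the $T$-equivariant setting using the local structure theorems from \cite{ahr20} and the technical lemmas already developed in this section. First I would reduce to a local problem by using the $T$-equivariant \'etale slice theorem \cite[Theorem 4.1]{ahr20}, which (combined with Proposition \ref{prop:minimal_chart_space}) produces a $T$-equivariant \'etale d-critical chart $(R_2, \eta_2, U_2, f_2, i_2)$ of $(X_2, s_2)$ whose image contains $x_2$, with $U_2$ an affine $T$-scheme.

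Next, I would form the $T$-equivariant base change $R_1' \coloneqq X_1 \times_{X_2} R_2$, which carries a smooth $T$-equivariant map to $R_2$. Choose a preimage $x_1' \in R_1'$ of $x_1$; by a further application of the equivariant Sumihiro theorem around $x_1'$, replace $R_1'$ by a $T$-equivariant affine \'etale neighbourhood $R_1 \to R_1'$. By construction, the composite $H_0 \colon R_1 \to R_2$ is smooth and $T$-equivariant, and an \'etale cover of $X_1$ around $x_1$. The remaining task is to lift $H_0$ to a smooth $T$-equivariant map $H \colon U_1 \to U_2$ of ambient smooth schemes together with a $T$-equivariant closed embedding $i_1 \colon R_1 \hookrightarrow U_1$ extending $i_2 \circ H_0$, so that $f_1 \coloneqq f_2 \circ H$ provides the desired chart.

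To carry out the lift equivariantly, I would choose a $T$-equivariant surjection from a finite-dimensional $T$-representation $V$ onto the conormal sheaf of $R_1 \hookrightarrow U_2 \times_{R_2} R_1$ (which exists since $R_1$ is $T$-equivariantly affine and $T$ is linearly reductive), producing a $T$-equivariant closed embedding $R_1 \hookrightarrow U_2 \times V$ extending $i_2 \circ H_0$. By the $T$-equivariant version of the smoothness criterion (picking equivariant local coordinates, i.e. weight vectors in the maximal ideal), we may shrink to a $T$-invariant open $U_1 \subset U_2 \times V$ on which the projection $H \colon U_1 \to U_2$ is smooth around $i_1(x_1)$; since $f_2$ is $T$-invariant, the pullback $f_1 = f_2 \circ H$ is again $T$-invariant, and its critical locus restricts to $R_1$ by smoothness of $H$ over the critical locus $R_2$ of $f_2$. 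The identity $h^{\star} s_2 = s_1$ then forces $f_1 + I_{R_1, U_1}^2 = s_1|_{R_1}$, finishing the construction. The main obstacle is the equivariant lifting of $H_0$: the non-equivariant version relies on freely choosing coordinates, whereas here one must ensure the chosen coordinates are $T$-weight vectors, which is precisely what linear reductivity of the torus $T$ and the $T$-equivariant affineness of $R_1$ and $U_2$ allow.
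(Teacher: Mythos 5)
Your steps (1)–(2) (equivariant Sumihiro slices via \cite[Theorem 4.1]{ahr20} plus base change along $h$) are fine and match what one has to do; note also that the paper itself does not write out an argument but refers to \cite[Lemma 5.2]{des22} for this proposition. The problem is in your step (3), which is the heart of the matter. If you take a $T$-equivariant \emph{closed} embedding $i_1\colon R_1\hookrightarrow U_2\times V$ over $U_2$, let $U_1\subset U_2\times V$ be a $T$-invariant open and set $f_1=f_2\circ H$ with $H$ the projection, then since $H$ is smooth you get $\Crit(f_1)=H^{-1}(\Crit(f_2))=(i_2(R_2)\times V)\cap U_1$. This is strictly larger than $i_1(R_1)$ whenever $\dim V$ exceeds the relative dimension $n$ of $h$ (and a closed embedding over $U_2$ generally forces $\dim V>n$): $i_1(R_1)$ is a closed subspace of $R_2\times V$ of strictly smaller dimension, so no open shrinking around $i_1(x_1)$ can make the two coincide. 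Hence the tuple $(R_1,\eta_1,U_1,f_1,i_1)$ you produce is not an \'etale d-critical chart, and the sentence ``its critical locus restricts to $R_1$ by smoothness of $H$ over the critical locus'' is exactly where the argument breaks. (Relatedly, the fibre product ``$U_2\times_{R_2}R_1$'' is not defined, and the condition ``shrink so that the projection is smooth'' is vacuous, which signals that this step is not doing the required work.)

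The correct route, as in \cite[Lemma 5.2]{des22} and in the equivariant manipulations the paper performs in Theorem \ref{thm:T_compare_d-critical}, is not a closed embedding into $U_2\times V$ but a $T$-equivariant \'etale map to $R_2\times\bA^n$ with $n=\dim h$: using linear reductivity, choose $T$-semi-invariant functions $t_1,\dots,t_n$ on the affine $T$-scheme $R_1$ whose differentials generate $\Omega_{R_1/R_2}$ at $x_1$; the non-vanishing locus of $dt_1\wedge\cdots\wedge dt_n$ is $T$-invariant, so after shrinking $R_1$ equivariantly the map $(H_0,t)\colon R_1\to R_2\times\bA^n$ is \'etale over $R_2$. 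Then apply the $T$-equivariant version of Lemma \ref{lem:etale_ambient} (the paper notes its choices can be made equivariant) to the \'etale map $R_1\to R_2\times\bA^n$ and the closed embedding $R_2\times\bA^n\hookrightarrow U_2\times\bA^n$, producing a smooth $U_1$ with a $T$-equivariant \'etale map $U_1\to U_2\times\bA^n$ and a closed embedding $R_1\hookrightarrow U_1$ such that $R_1\cong(R_2\times\bA^n)\times_{U_2\times\bA^n}U_1$. With $H\colon U_1\to U_2\times\bA^n\to U_2$ and $f_1=f_2\circ H$ one now genuinely has $\Crit(f_1)=i_1(R_1)$, and the identity $f_1+I_{R_1,U_1}^2=s_1|_{R_1}$ follows from the definition of $h^{\star}s_2$ as in the non-equivariant Proposition \ref{prop:sm_chart}. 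Your linear-reductivity idea is the right tool, but it must be used to produce \emph{relative \'etale coordinates} rather than a closed embedding.
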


\subsection{Virtual canonical bundles and orientations}

We now explain the definition of the virtual canonical bundle for d-critical algebraic spaces and stacks, following \cite[\S 2.4]{joy15}.
Firstly, we will introduce the notion of the virtual cotangent complexes for \'etale d-critical charts:
\begin{defin}\label{defin:virtual_cot_complex}
    For an \'etale d-critical chart $\fR = (R, \eta, U, f, i)$ of a d-critical algebraic space $(X, s)$,
    we define the \defterm{virtual cotangent complex} $\bL_{\fR} \in \Perf (R)$ by
    \[
    \bL_{\fR} \coloneqq [T_U |_{R} \xrightarrow{\Hess_{f}} \Omega_U |_R]
    \]
    concentrated in degree $[-1, 0]$.
\end{defin}

Let $(\fX, s)$ be a d-critical Artin stack.
Joyce \cite[Theorem 2.56]{joy15} constructed a line bundle 
\[
K_{\fX, s} \in \Pic(\fX^{\red})
\]
called the \defterm{virtual canonical bundle} by gluing $\rdet(\bL_{\fR})$ for each \'etale d-critical charts $\fR = (R, \eta, U, f, i)$ of smooth covers of $\fX$.
Throughout the paper, we equip $K_{\fX, s}$ with the trivial grading.
The following proposition summarizes the properties of the virtual canonical bundle that we will use later:

\begin{prop}\label{prop:virtual_canonical}
    The virtual canonical bundle $K_{\fX, s}$ for a d-critical stack $(\fX, s)$ has the following properties:
    \begin{thmlist}
        \item If $\fX$ is an algebraic space and $\fR$ is an \'etale d-critical chart of $(\fX, s)$, there exists a natural isomorphism
        \begin{equation}\label{eq:iota}
    \iota_{\fR} \colon K_{\fX, s} |_{R^{\red}} \simeq \rdet(\bL_{\fR}).
    \end{equation}
        \item For each point $x \in \fX$, there exists a natural isomorphism
        \begin{equation}\label{eq:kappa}
        \kappa_x \colon K_{\fX, s} |_{x} \cong  \det(\tau^{\geq 0}(\bL_{\fX, x}))^{\otimes 2}.
        \end{equation}

        \item For a smooth morphism $q \colon X \to \fX$ from an algebraic space $X$,
        there exists a natural isomorphism
        \begin{equation}\label{eq:Upsilon}
        \Upsilon_q \colon q^{\red, *} K_{\fX,  s} \otimes K_{X / \fX}^{\otimes 2} \cong K_{X, q^{\star}s}.
        \end{equation}
        
        \item Assume that we are given another d-critical stack $(\fY, t)$.
        Then there exists a natural isomorphism
        \begin{equation}\label{eq:Psi}
        \Psi_{\fX, \fY} \colon K_{\fX, s} \boxtimes K_{\fY, t} \cong K_{\fX \times \fY, s \boxplus t}.
        \end{equation}
    \end{thmlist}
\end{prop}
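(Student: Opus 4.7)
The plan is to construct each isomorphism locally on \'etale d-critical charts and to verify the cocycle conditions that ensure the construction descends to a well-defined line bundle on $\fX^{\red}$. All four isomorphisms are essentially established in \cite[Theorem 2.56]{joy15}; the main ingredient making the descent work is the chart comparison Theorem \ref{thm:compare_d-critical}, together with the homotopy-coherent properties of the determinant functor developed in Section \ref{ssec:det}.

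Part (i) is tautological from the very construction of $K_{\fX, s}$ as an \'etale descent of the collection $\{\rdet(\bL_{\fR})\}_{\fR}$. The nontrivial content is the comparison isomorphism attached to an embedding of charts $(\Phi_0, \Phi)\colon \fR_1 \hookrightarrow \fR_2$: after the stabilization from Theorem \ref{thm:compare_d-critical}(ii), $f_2$ pulls back to $f_1 + q$ for a non-degenerate quadratic form $q$, whose Hessian provides a canonical trivialization of the additional stabilization factor in the determinant. For (ii), I would choose a minimal chart at $x$ using Proposition \ref{prop:minimal_chart_space} (or the non-equivariant \cite[Proposition 2.43]{joy15}), so that $\dim U = \dim T_{\fX, x}$. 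Minimality forces $\Hess_f|_x = 0$, which means $\bL_{\fR}|_x$ has vanishing differential, and therefore $\rdet(\bL_{\fR})|_x \cong \det(T_{\fX, x}^\vee)^{\otimes 2}$. For a $1$-Artin stack one works on a smooth cover and descends through $\Upsilon_q$ to absorb the Lie algebra contribution of the stabilizer. Independence from the minimal chart follows from Theorem \ref{thm:compare_d-critical}(i) combined with the determinant identities of Section \ref{ssec:det}.

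For (iii) and (iv), I would produce compatible charts via Proposition \ref{prop:sm_chart} and, respectively, the product construction underlying Lemma \ref{lem:external_sum_d-critical}. In (iii), a smooth morphism of charts $H\colon U_X \to U_{\fX}$ with $f_X = f_{\fX} \circ H$ induces a morphism of virtual cotangent complexes, which fits into a fibre sequence whose cofibre is the two-term complex $[T_{U_X/U_{\fX}}|_{R_X} \to \Omega_{U_X/U_{\fX}}|_{R_X}]$ with zero differential (by smoothness of $H$); taking $\rdet$ and applying Lemma \ref{lem:KM} yields the isomorphism $\Upsilon_q$. In (iv), a product of charts for $(\fX, s)$ and $(\fY, t)$ is itself a chart for $(\fX \times \fY, s \boxplus t)$, and its virtual cotangent complex splits as $\bL_{\fR_{\fX}} \boxplus \bL_{\fR_{\fY}}$, so the determinant factors as an external tensor product and produces $\Psi_{\fX, \fY}$.

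The main obstacle is verifying the cocycle and associativity conditions required for descent: the chart-level isomorphisms must be compatible both under \'etale changes of chart and under the quadratic stabilizations of Theorem \ref{thm:compare_d-critical}(ii). This is precisely where the $\infty$-categorical refinement of the Knudsen--Mumford determinant from Section \ref{ssec:det} becomes essential; in particular, Lemma \ref{lem:KM} on determinants of fibre double sequences together with its corollaries ensures that the signs and trivializations coming from quadratic stabilizations assemble coherently across overlapping charts, so that $\iota_{\fR}$, $\kappa_x$, $\Upsilon_q$, and $\Psi_{\fX, \fY}$ are well-defined globally on $\fX^{\red}$.
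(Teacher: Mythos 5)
Your outline is correct, and at the chart level it coincides with what the paper relies on: for (i)--(iii) the paper simply invokes \cite[Theorem 2.56]{joy15}, whose proof is exactly the minimal-chart, quadratic-stabilization and smooth-comparison scheme you describe, and for (iv) the paper likewise uses a product chart together with the splitting $\bL_{\fR} \boxplus \bL_{\fS} \simeq \bL_{\fR \times \fS}$. The genuine difference is how well-definedness is handled. You propose to verify cocycle conditions on chart overlaps directly via Theorem \ref{thm:compare_d-critical} and the determinant coherences of Lemma \ref{lem:KM}; that is Joyce's original route and it works, but it is where all the sign bookkeeping lives. The paper instead pins each isomorphism down by a pointwise characterization through $\kappa_x$: one checks that the chart-level map induces at every point the canonical comparison recorded in \eqref{eq:iota_kappa_stack} (respectively \eqref{eq:Upsilon_kappa_stack}, \eqref{eq:kappa_Psi_stack}), and since an isomorphism of line bundles on a reduced stack locally of finite type over $\bC$ is determined by its values at closed points, independence of the chart and gluing are then automatic; this ``uniquely characterized by $\kappa$-compatibility'' formulation is also what makes the later compatibilities between $\iota_{\fR}$, $\kappa_x$, $\Upsilon_q$ and $\Psi_{\fX,\fY}$ tractable. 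Two small corrections to your sketch: in (ii) the stack case requires $K_{\fX,s}$ and $\Upsilon_q$ on the stack to be constructed first (Joyce builds the bundle by descent and only afterwards evaluates at points), and in (iii) there is no single natural fibre sequence from $H_0^*\bL_{\fR_2}$ to $\bL_{\fR_1}$ with the cofibre you name, since the comparison maps on the tangent and cotangent terms point in opposite directions; one passes through an intermediate ``fat Hessian'' complex as in \eqref{eq:fat_Hessian_fibre_1}--\eqref{eq:fat_Hessian_fibre_2} (or through the derived critical locus). This affects only the presentation: the determinant-level contribution is still $K_{U_1/U_2}^{\otimes 2}$, as needed for \eqref{eq:Upsilon}.
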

The properties (i) -- (iii) are proved in \cite[Theorem 2.56]{joy15}.
Concerning the construction of $\Psi_{\fX, \fY}$, see the discussion after \eqref{eq:kappa_Psi_stack}.

For later use, we discuss compatibility between isomorphisms in Proposition \ref{prop:virtual_canonical}.
Assume first that $X$ is an algebraic space and take an \'etale d-critical chart
$\fR = (R, \eta, U, f, i)$.
Then for each $x \in X$ with a lift $\hat{x} \in R$, the map $\kappa_x$ in \eqref{eq:kappa} is compatible with $\iota_{\fR}$, i.e., the following diagram commutes:
\begin{equation}\label{eq:iota_kappa_stack}
    \begin{aligned}
\xymatrix@C=80pt{
{K_{X, s} |_{x} } \ar[rd]_-{\iota_{\fR}|_{x}}^-{\cong} \ar[r]^-{\kappa_x}_{\cong}
& {\det(\Omega_{X, x})^{\otimes 2} } \ar[d]^-{\cong} \\
{}
& {\det(\bL_{\fR}|_{\hat{x}}) }.
}
\end{aligned}
\end{equation}
Here the right vertical isomorphism is constructed using the fibre sequence
\begin{equation}\label{eq:kappa_fibre_seq}
T_{X, x}[1] \to \bL_{\fR}|_{\hat{x}} \to \Omega_{X, x}
\end{equation}
together with \eqref{eq:det_shift} and \eqref{eq:det_dual}.

Let $(\fX, s)$ be a d-critical stack and $q \colon X \to \fX$ be a smooth morphism from an algebraic space.
Then the map $\Upsilon_q$ in \eqref{eq:Upsilon} is the unique morphism such that the following diagram commutes for each $\hat{x} \in X$ with $x = q(\hat{x})$:
\begin{equation}\label{eq:Upsilon_kappa_stack}
\begin{aligned}
\xymatrix@C=100pt{
{q^{\red, *} K_{\fX, s} |_{x} \otimes K_{X / \fX}|_{\hat{x}}^{\otimes 2}}  \ar[r]^-{\Upsilon_q}_-{\cong} \ar[d]^-{\kappa_{x} \otimes \id}
& {K_{X, q^{\star}s} |_{\hat{x}}} \ar[d]^-{\kappa_{\hat{x}}}_-{\cong} \\
{ \det(\tau^{\geq 0}(\bL_{\fX, \hat{x}}))^{\otimes 2} \otimes K_{X / \fX}|_{\hat{x}}^{\otimes 2} } \ar[r]_-{\cong}
& {\det(\Omega_{X, \hat{x}})^{\otimes 2}}
}
\end{aligned}
\end{equation}
where the bottom horizontal arrow is induced by the fibre sequence
$\tau^{\geq 0}(\bL_{\fX, x}) \to \Omega_{X, \hat{x}} \to \Omega_{X / \fX}|_{\hat{x}}$.
The map $\Upsilon_h$ is associative in the following sense: if we are given a smooth morphism $h \colon X_1 \to X_2$ between algebraic spaces and a smooth morphism $q_2 \colon X_2 \to \fX$,
the following diagram commutes:
\begin{equation}\label{eq:Upsilon_assoc_stack}
\begin{aligned}
\xymatrix@C=100pt{
{\substack{\displaystyle h^{\red, *} q_2^{\red, * } K_{\fX, s}  
\displaystyle
\otimes h^{\red, *}K_{X_2/\fX}^{\otimes 2} \otimes K_{X_1 / X_2}^{\otimes 2} }}  \ar[d]_-{\cong} \ar[r]^-{h^{\red, *} \Upsilon_{q_2}}_-{\cong}
& {\substack{
\displaystyle
h^{\red, *} K_{X_2, s_2} \otimes K_{X_1 / X_2}^{\otimes 2}}} \ar[d]^-{\Upsilon_{h}}_-{\cong} \\
{\substack{\displaystyle
q_1^{\red, *} K_{\fX, s} \otimes K_{X_1 / \fX}^{\otimes 2} }} \ar[r]^-{\Upsilon_{q_1}}_-{\cong}
& {K_{X_1, s_1}}
}
\end{aligned}
\end{equation}
where we write $q_1 = q_2 \circ h$.

Let $(\fX, s)$ and $(\fY, t)$ be d-critical algebraic stacks.
The map $\Psi_{\fX, \fY}$ in \eqref{eq:Psi} is uniquely characterized by the property that the following diagram commutes for each $x\in \fX$ and $y \in \fY$:
\begin{equation}\label{eq:kappa_Psi_stack}
    \begin{aligned}
        \xymatrix@C=60pt{
    {K_{\fX, s} |_{{x}} \otimes K_{\fY, t}|_{y}} \ar[r]_-{\cong}^-{(\Psi_{\fX \times \fY})|_{(x, y)}} \ar[d]_-{\cong}^-{\kappa_{x} \otimes \kappa_{y}}
    & {(K_{\fX, s} \otimes K_{\fY, y})|_{(x, y)}} \ar[d]_-{\cong}^-{\kappa_{(x, y)}} \\
    {\rdet(\tau^{\geq 0} (\bL_{\fX, x}))^{\otimes 2} \otimes \rdet(\tau^{\geq 0} (\bL_{\fY, y}))^{\otimes 2}} \ar[r]^-{\cong}
    & {\rdet(\tau^{\geq 0} (\bL_{\fX, x} \oplus \bL_{\fY, y}))^{\otimes 2}.}
}
    \end{aligned}
\end{equation}
To prove the existence of $\Psi_{\fX, \fY}$ with the above commutativity property,
using the commutativity of \eqref{eq:Upsilon_kappa_stack}, we may assume that $\fX$ and $\fY$ are algebraic spaces covered by single critical charts $\fR = (R,  \id, U, f, i)$ and $\fS = (S, \id, V, g, j)$.
In this case, the isomorphism $\bL_{\fR} \boxplus \bL_{\fS} \simeq \bL_{\fR \times \fS}$ induces an isomorphism $\rdet(\bL_{\fR}) \boxtimes \rdet(\bL_{\fS}) \cong \rdet(\bL_{\fR \times \fS})$.
Using \eqref{eq:iota_kappa_stack}, we see that it induces the isomorphism $K_{\fX,s} \boxtimes K_{\fY, t} \cong K_{\fX \times \fY, s \boxplus t}$ with the desired property.

Assume that we are given smooth morphisms $q \colon X \to \fX$ and $r \colon Y \to \fY$ from algebraic spaces and given d-critical structures $s$ of $\fX$ and $t$ of $\fY$.
Then the commutativity of the diagrams \eqref{eq:Upsilon_kappa_stack} and \eqref{eq:kappa_Psi_stack} implies the commutativity of the following diagram:
\begin{equation}\label{eq:Upsilon_Psi_stack}
\begin{aligned}
    \xymatrix@C=100pt{
    \substack{\displaystyle(q^{\red, *} K_{\fX, s} \otimes K_{X / \fX})
    \displaystyle \boxtimes (r^{\red, *} K_{\fY, t} \otimes K_{Y / \fY}  ) }
    \ar[r]_-{\cong}^-{\Upsilon_q \boxtimes \Upsilon_r}
   \ar[d]_-{\cong}^-{\Psi_{\fX, \fY}} 
    & {K_{X, q^{\star} s} \boxtimes K_{Y, r^{\star} t} }
    \ar[d]_-{\cong}^-{\Psi_{X, Y}} \\
    {(q \times r)^* K_{\fX \times \fY, s \boxplus t} \otimes K_{X \times Y / \fX \times \fY}}
    \ar[r]_-{\cong}^-{\Upsilon_{ q \times r}}
    & {K_{X \times Y, q^{\star} s \boxplus r^{\star} t}.}
    }
\end{aligned}
\end{equation}

We now describe the virtual canonical bundle for underlying d-critical stacks of $(-1)$-shifted symplectic stacks.
Let $(X, \omega_{X})$ be a $(-1)$-shifted symplectic algebraic space and $(X^{\cl}, s)$ be its underlying d-critical algebraic space.
For an \'etale derived critical chart $\fR = (R, \eta, U, f, i)$,
we have a natural equivalence
\begin{equation}\label{eq:cot_Hess}
\bL_{\fX}|_{R^{\red}} \simeq \bL_{\fR^{\cl}}
\end{equation}
where $\fR^{\cl}$ denote the underlying \'etale d-critical chart.
Under this identification, the natural equivalence $\bT_{X}|_{R^{\red}} \simeq \bL_{X}|_{R^{\red}}[-1]$ induced from the $(-1)$-shifted symplectic structure corresponds to the map of complexes\footnote{See \cite[\S 1.3]{Con} for the sign convention on derived categories. In particular, the appearance of the sign in the top horizontal map follows from the sign convention of the dual complex, and the fact that the right vertical map is the identity follows from (1.3.6) in loc.cit. and the explicit description of the $(-1)$-shifted symplectic structure on the critical locus in \cite[Example 5.15]{bbj19}. }
\begin{equation}\label{eq:map_of_complexes}
\begin{aligned}
\xymatrix@C=50pt{
{\bT_{X} |_{R^{\red}}} \ar[d]^-{ \cdot \omega_{X}|_{R^{\red}} } \ar@{}[r]|-{\simeq}
& {[ T_U |_{R^{\red}}} \ar[r]^-{- \mathrm{Hess}(f)} \ar[d]^-{\id}
& {\Omega_U |_{R^{\red}}]}  \ar[d]^-{\id} \\
{\bL_{X}[-1] |_{R^{\red}} } \ar@{}[r]|-{\simeq}
& {[T_U |_{R^{\red}}} \ar[r]^-{- \mathrm{Hess}(f)}
& {\Omega_U |_{R^{\red}}].} 
}
\end{aligned}
\end{equation}
The equivalence \eqref{eq:cot_Hess} implies an isomorphism
\[
\Lambda_{X, \fR} \colon   \rdet(\bL_{X}) |_{R^{\red}}  \cong K_{X^{\cl}, s}|_{R^{\red}}. 
\]
We will show that this isomorphism glues to define a global isomorphism.
To see this, for a point $x\in X$ we construct an isomorphism
\[
\kappa^{\mathrm{der}}_{x} \colon \det(\bL_{X, x}) \cong \det(\tau^{\geq 0}(\bL_{X, x}))^{\otimes 2}
\]
by the following composition
\begin{align*}
\det(\bL_{X, x}) \cong \det(\tau^{\leq -1}(\bL_{X, x})) \otimes \det(\tau^{\geq 0}(\bL_{X, x})) 
&\cong \det(\tau^{\geq 0}(\bL_{X, x})^{\vee}[1]) \otimes \det(\tau^{\geq 0}(\bL_{X, x})) \\
&\cong \det(\tau^{\geq 0}(\bL_{X, x}))^{\otimes 2}.
\end{align*}
Here the second isomorphism is constructed using the isomorphism $(- \cdot \omega_{X})^{-1}|_{x} \colon \tau^{\leq -1}(\bL_{X, x}) \simeq \tau^{\geq 0}(\bL_{X, x})^{\vee}[1]$ and the third isomorphisms  from \eqref{eq:det_shift} and \eqref{eq:det_dual}.
By the construction and the description of the symplectic structure in $\eqref{eq:map_of_complexes}$,
we see that the following diagram commutes:
\begin{equation}\label{eq:kappa_Lambda_scheme}
\begin{aligned}
\xymatrix@C=100pt{
{\det(\bL_{X, x})} \ar[r]^-{\Lambda_{X, \fR}|_x}_-{\cong} \ar[d]^-{\cong}_-{\kappa_{x}^{\mathrm{der}}}
& {K_{X^{\cl}, s}|_{x}} \ar[d]_-{\cong}^-{\kappa_x} \\
{\det(\tau^{\geq 0}(\bL_{X, x}))^{\otimes 2} } \ar[r]^-{\cong}
& {\det(\tau^{\geq 0}(\bL_{X^{\mathrm{cl}}, x}))^{\otimes 2}. }
}
\end{aligned}
\end{equation}
In particular, the map $\Lambda_{X, \fR}$ does not depend on the choice of $\fR$ at each point, hence it glues to define an isomorphism
\[
\Lambda_{X} \colon   \rdet(\bL_{X}) \cong K_{X^{\cl}, s}.
\]

 Let $(\fX, \omega_{\fX})$ be a $(-1)$-shifted symplectic stack and $(\fX^{\cl}, s)$ be the underlying d-critical stack.
For a point $x \in \fX$, we define an isomorphism
\[
\kappa^{\mathrm{der}}_{x} \colon \det(\bL_{\fX, x}) \cong \det(\bL_{\fX^{\cl}, x})^{\otimes 2}
\]
exactly in the same manner as for algebraic spaces.
We have an isomorphism of one-dimensional vector spaces
\[
\Lambda_{\fX, x}^{\mathrm{pre}} \colon \rdet(\bL_{\fX, x}) \cong K_{\fX^{\cl}, s}|_{x} 
\]
given by $\kappa_{x}^{-1} \circ \kappa_{x}^{\mathrm{der}}$.
The following statement is proved in \cite[Theorem 3.18]{bbbbj15} (see also \cite[Theorem 4.9]{kin21} for a proof with careful choices of sign and Proposition \ref{prop:compare_index_line} for a closely related statement):

\begin{lem}{\cite[Theorem 3.18]{bbbbj15}}\label{lem:Lambda_stack}
    The map $x \mapsto \Lambda_{\fX, x}^{\mathrm{pre}}$ is algebraic and hence defines an isomorphism of line bundles 
    \[
    \Lambda_{\fX} \colon \rdet(\bL_{\fX}) \cong K_{\fX^{\cl}, s}.
    \]
\end{lem}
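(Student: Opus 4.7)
The plan is to reduce to the case of algebraic spaces, where the isomorphism $\Lambda_X$ has already been constructed globally, by means of the stacky Darboux theorem. For each closed point $x\in \fX^{\cl}$, Theorem \ref{thm:Darboux}(ii) furnishes a smooth map $q\colon \widehat{V}\to \fX$ from a derived scheme together with a map $\tau\colon \widehat{V}\to V=\Crit(f)$ such that $\tau^{\cl}$ is an equivalence and $q^{\star}\omega_{\fX}\sim \tau^{\star}\omega_{V}$. By Remark \ref{rmk:auto_nondeg} the correspondence $\fX\leftarrow \widehat{V}\to V$ is Lagrangian, so $\bT_{\widehat{V}/V}[1]\simeq \bL_{\widehat{V}/\fX}[-1]$; since $q$ is smooth this yields the identification $\rdet(\bL_{\widehat{V}/V})\cong K_{\widehat{V}/\fX}^{\vee}$ on $\widehat{V}^{\red}$.

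Applying \eqref{eq:fibre_transform} to the two fibre sequences $q^{*}\bL_{\fX}\to \bL_{\widehat{V}}\to \bL_{\widehat{V}/\fX}$ and $\tau^{*}\bL_{V}\to \bL_{\widehat{V}}\to \bL_{\widehat{V}/V}$ and cancelling the common factor $\rdet(\bL_{\widehat{V}})$, one obtains on $\widehat{V}^{\red}$ a natural isomorphism
\[
\delta\colon \tau^{\red, *}\rdet(\bL_{V})\cong q^{\red, *}\rdet(\bL_{\fX})\otimes K_{\widehat{V}/\fX}^{\otimes 2}.
\]
Because $\tau^{\cl, \star} s_{V}=q^{\cl, \star} s$ by Theorem \ref{thm:BBBBJ}(ii) and $\tau^{\red}$ is an isomorphism, pulling back the algebraic-space isomorphism $\Lambda_{V}$ and composing with $\delta$ and with $\Upsilon_{q}$ from \eqref{eq:Upsilon} produces a local isomorphism
\[
\Lambda_{\fX}^{\widehat{V}}\colon q^{\red, *}\rdet(\bL_{\fX})\cong q^{\red, *}K_{\fX^{\cl}, s}.
\]

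The key step is to verify that the fibre of $\Lambda_{\fX}^{\widehat{V}}$ at any closed point $\hat{x}\in \widehat{V}^{\red}$ with $q(\hat{x})=x$ agrees with $q^{\red, *}\Lambda_{\fX, x}^{\mathrm{pre}}$. This follows by juxtaposing the commutative diagram \eqref{eq:kappa_Lambda_scheme} for the algebraic space $V$, the compatibility \eqref{eq:Upsilon_kappa_stack} for the smooth morphism $q^{\cl}$, and the definition of $\kappa_{x}^{\mathrm{der}}$ via the $(-1)$-shifted symplectic structure. Once this local compatibility is in hand, any two Darboux charts $\widehat{V}_{1}, \widehat{V}_{2}\to \fX$ yield local isomorphisms whose restrictions to $\widehat{V}_{1}\times_{\fX}\widehat{V}_{2}$ agree pointwise with the canonical map $\Lambda_{\fX, \cdot}^{\mathrm{pre}}$; since a morphism of line bundles on a reduced scheme is determined by its values at closed points, the two restrictions coincide, and smooth descent on $\fX^{\red}$ assembles them into a global isomorphism $\Lambda_{\fX}\colon \rdet(\bL_{\fX})\cong K_{\fX^{\cl}, s}$ realising the prescribed pointwise values.

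The main obstacle is the pointwise identification in the previous paragraph: one must keep careful track of the signs arising from iterated applications of \eqref{eq:det_shift}, \eqref{eq:det_dual}, and \eqref{eq:double_dual}, together with the sign $(-1)^{\vdim \fX}$ appearing in the canonical orientation of the derived critical locus (Example \ref{ex:can_ori}). Concretely, one has to check that the composite of $\tau^{\red, *}\Lambda_{V}$ with $\delta$ and $\Upsilon_{q}^{-1}$, restricted to stalks, coincides with $\kappa_{x}^{-1}\circ \kappa_{x}^{\mathrm{der}}$, which amounts to comparing two different ways of splitting the $(-1)$-shifted symplectic form into its tangent and cotangent halves via the fibre sequence \eqref{eq:kappa_fibre_seq}.
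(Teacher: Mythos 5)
Your outline is the standard one, and in fact the paper does not reprove this lemma at all: it is quoted from \cite{bbbbj15} (see also \cite{kin21}). The paper's own detailed analogue is the proof of Proposition \ref{prop:compare_index_line}, which follows exactly your plan — Darboux charts (there in the equivariant form of Lemma \ref{lem:G_m_bbbbj_chart} rather than Theorem \ref{thm:Darboux}(ii)), the Lagrangian identification $\rdet(\bL_{\widehat{V}/V})\cong K_{\widehat{V}/\fX}^{\vee}$, a local isomorphism built from the two fibre sequences, a pointwise criterion, and gluing. So your reduction, the construction of $\delta$, the use of $\tau^{\cl,\star}s_V=q^{\cl,\star}s$, and the descent argument are all sound.

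The gap is in the sentence claiming that the stalkwise identity ``follows by juxtaposing \eqref{eq:kappa_Lambda_scheme}, \eqref{eq:Upsilon_kappa_stack} and the definition of $\kappa_x^{\mathrm{der}}$''. Those two diagrams only control $\Lambda_V$ and $\Upsilon_q$; they say nothing about the third factor of your composite, namely $\delta$. What is actually needed — and what is essentially the entire content of the lemma — is the compatibility of $\delta|_{\hat{x}}$ with $\kappa_x^{\mathrm{der}}$ and $\kappa_{v}^{\mathrm{der}}$ (where $v=\tau^{\cl}(\hat{x})$) through the fibre sequence $\tau^{\geq 0}(\bL_{\fX,x})\to \Omega_{\widehat{V}^{\cl},\hat{x}}\to \Omega_{\widehat{V}/\fX}|_{\hat{x}}$. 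This is the untruncated analogue of diagram \eqref{eq:kappa_Upsilon_derived_pre_Ind} in the proof of Proposition \ref{prop:compare_index_line}, and establishing it is a genuine diagram chase: one applies Lemma \ref{lem:KM} to two fibre double sequences extracted from the morphism of fibre sequences \eqref{eq:bbbbj_fibreseq}, and then compares the nondegeneracy equivalence $\eta$ with the symplectic splittings using Corollary \ref{cor:det_shift_fibre}, \eqref{eq:dual_fibreseq}, \eqref{eq:double_dual}, \eqref{eq:dual_shift_n} and \eqref{eq:rotate_inverse}, where the symmetrizer signs $(-1)^{\rank}$ must cancel against the double-dual signs. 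Your final paragraph correctly identifies this as the obstacle, but it is asserted rather than verified, and since the lemma is precisely a sign-sensitive identity of maps of line bundles, that verification \emph{is} the proof. (A minor point: the sign $(-1)^{\vdim \fY}$ from the canonical orientation in Example \ref{ex:can_ori} is irrelevant here, since no orientations enter the definition of $\Lambda_{\fX}$.)
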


By the construction of the map $\Lambda_{\fX} \colon \rdet(\bL_{\fX}) \cong K_{\fX^{\cl}, s}$, the following diagram commutes:
 \begin{equation}\label{eq:Lambda_kappa_stack}
 \begin{aligned}
\xymatrix@C=100pt{
{\det(\bL_{\fX, x})} \ar[r]^-{\Lambda_{\fX}|_x}_-{\cong} \ar[d]_-{\cong}^-{\kappa_x^{\mathrm{der}}}
& {K_{\fX^{\cl}, s} |_{x} } \ar[d]_-{\cong}^-{\kappa_x} \\
{\det(\tau^{\geq 0}(\bL_{\fX, x}))^{\otimes 2} } \ar[r]^-{\cong}
& {\det(\tau^{\geq 0}(\bL_{\fX^{\mathrm{cl}}, x}))^{\otimes 2}.}
}
\end{aligned}
\end{equation}

 The map $\Lambda_{\fX}$ is compatible with the isomorphism $\Psi_{\fX, \fY}$:
 Namely, assume that we are given $(-1)$-shifted symplectic stacks
 $(\fX, \omega_{\fX})$ and $(\fY, \omega_{\fY})$ with underlying d-critical stacks
 $(\fX^{\cl}, s)$ and $(\fY^{\cl}, t)$.
 Then it follows from the commutativity of diagrams \eqref{eq:kappa_Psi_stack} and \eqref{eq:Lambda_kappa_stack} that the following diagram commutes:
 \begin{equation}\label{eq:Lambda_Psi}
     \begin{aligned}
         \xymatrix@C=80pt{
         {\rdet(\bL_{\fX}) \boxtimes \rdet(\bL_{\fY})  }
         \ar[r]^-{\cong}
         \ar[d]_-{\cong}^-{\Lambda_{\fX} \boxtimes \Lambda_{\fY}}
         & {\rdet(\bL_{\fX \times \fY})}
         \ar[d]_-{\cong}^-{\Lambda_{\fX \times \fY}} \\
         {K_{\fX^{\cl}, s} \boxtimes K_{\fY^{\cl}, t}}
         \ar[r]_-{\cong}^-{\Upsilon_{\fX^{\cl}, \fY^{\cl}}}
         & {K_{\fX^{\cl} \times \fY^{\cl}, s \boxplus t}.}
         }
     \end{aligned}
 \end{equation}

\subsection{Orientation for d-critical stacks}

Let $(\fX, s)$ be a d-critical stack. 
An \defterm{orientation of $(\fX, s)$} is a choice of a graded line bundle $\cL \in \sPic(\fX^{\red})$ and an isomorphism $\cL^{\otimes 2} \cong K_{\fX, s}$.
An isomorphism of orientations is defined in the same manner as Definition \ref{defin:isom_orientation}.
By abuse of notation, we simply say that $o$ is an orientation for $(\fX, s)$.
Let $q \colon X \to \fX$ be a smooth morphism from an algebraic space $X$.
For an orientation $o \colon \cL^{\otimes 2} \cong K_{\fX, s}$,
we define an orientation $q^{\star}o$ by the composition
\[
q^{\star} o \colon (\cL \otimes \rdet(\Omega_{X / \fX}))^{\otimes 2} \xrightarrow[\cong]{o} K_{\fX, s} \otimes \rdet(\Omega_{X / \fX})^{\otimes 2} \xrightarrow[\cong]{\Upsilon_q} K_{X, q^{\star}s}.
\]
Assume further that we are given a smooth morphism $h \colon Y \to X$.
Then the diagram \eqref{eq:Upsilon_assoc_stack} implies a natural equivalence
\begin{equation}\label{eq:ori_pull_assoc}
h^{\star} q^{\star} o \cong (q \circ h)^{\star} o.
\end{equation}

Let $(\fX, s)$ and $(\fY, t)$ be d-critical stacks equipped with orientations 
$o_{\fX} \colon \cL_{\fX}^{\otimes 2} \cong K_{\fX, s}$ and
$o_{\fY} \colon \cL_{\fY}^{\otimes 2} \cong K_{\fY, t}$.
Then we define an orientation $o_{\fX} \boxtimes o_{\fY}$ by
\begin{equation}\label{eq:product_ori}
o_{\fX} \boxtimes o_{\fY} \colon (\cL_{\fX} \boxtimes \cL_{\fY})^{\otimes 2}
\xrightarrow[\cong]{o_{\fX} \boxtimes o_{\fY}} K_{\fX, s} \boxtimes K_{\fY, t}
\xrightarrow[\cong]{\Psi_{\fX, \fY}} K_{\fX \times \fY, s \boxplus t}.
\end{equation}
Assume further that we are given another oriented d-critical stack $(\fZ, u, o_{\fZ})$.
Then it easily follows from \eqref{eq:kappa_Psi_stack} that the isomorphism $\Psi_{\fX, \fY}$ is associative hence there exists a natural isomorphism
\begin{equation}\label{eq:ori_prod_assoc}
    (o_{\fX} \boxtimes o_{\fY}) \boxtimes o_{\fZ} \cong o_{\fX} \boxtimes (o_{\fY} \boxtimes o_{\fZ}).
\end{equation}
Assume that we are given smooth morphisms $q \colon X \to \fX$ and $r \colon Y \to \fY$.
Then it follows from \eqref{eq:Upsilon_Psi_stack} that there exists a natural isomorphism
\begin{equation}\label{eq:ori_prod_pull}
    q^{\star}o_{\fX} \boxtimes r^{\star} o_{\fY} \cong (q \times r)^{\star} o_{\fX} \boxtimes o_{\fY}.
\end{equation}

Let $(\fX, \omega_{\fX}, o)$ be an oriented $(-1)$-shifted symplectic stack 
with underlying d-critical structure $s$.
By using Lemma \ref{lem:Lambda_stack}, $o$ induces an orientation for $(\fX^{\cl}, s)$ which is denoted by $o^{\cl}$.
The formation of classical truncation of orientations commutes with the product:
Namely, given oriented $(-1)$-shifted symplectic stacks $(\fX, \omega_{\fX}, o_{\fX})$ and $(\fY, \omega_{\fY}, o_{\fY})$, there exists a natural isomorphism
\begin{equation}\label{eq:ori_cl_prod}
    (o_{\fX} \boxtimes o_{\fY})^{\cl} \cong o_{\fX}^{\cl} \boxtimes o_{\fY}^{\cl}.
\end{equation}
This is a consequence of the commutativity of the diagram \eqref{eq:Lambda_Psi}.
Further, it is clear from the construction that this isomorphism is associative under the identification \eqref{eq:ori_prod_assoc}.

We now introduce a d-critical version of the canonical orientation discussed in Example \ref{ex:can_ori}.
Let $U$ be a smooth scheme, $f \colon U \to \bA^1$ be a regular function with $f |_{\Crit(f)^{\red}} = 0$ and $X = \Crit(f)^{\cl}$. Set $s \coloneqq f + I_{X / U}^{2}$ and
$\fR^{\mathrm{taut}} = (X, \id, U, f, i)$ be the tautological \'etale d-critical chart.
Then the fibre sequence
\begin{equation}\label{eq:ambient_fibre_seq}
\Omega_U |_{X^{\red}} \to \bL_{\fR^{\mathrm{taut}}} \to T_U |_{X^{\red}}[1]
\end{equation}
together with \eqref{eq:det_shift} and \eqref{eq:det_dual} induce an isomorphism
$\bar{o}^{\mathrm{can}}_{X} \colon K_U |_{X^{\red}}^{\otimes 2} \cong K_{X, s}$.
We set 
\begin{equation}\label{eq:can_ori}
o^{\mathrm{can}}_{X} \coloneqq (-1)^{\dim U} \cdot \bar{o}^{\mathrm{can}}_{X} 
\end{equation}
and call it the \defterm{canonical orientation} for $(X, s)$ with respect to the critical locus description $X = \Crit(f)^{\cl}$.
This is compatible with Example \ref{ex:can_ori}: Namely, there exists a natural isomorphism
\begin{equation}\label{eq:can_ori_cl}
(o_{\Crit(f)}^{\can})^{\cl} \cong o_{\Crit(f)^{\cl}}^{\can}.
\end{equation}
\begin{rmk}\label{rmk:sign_canori}
    The sign insertion seems to be unnatural, but this choice is more convenient for us in that it reduces the number of signs appearing in the paper.
This is because, we did and will control the behaviour of the virtual canonical bundles using the isomorphism $\kappa_{x}$, and it is defined using the fibre sequence \eqref{eq:kappa_fibre_seq}.
On the other hand, the isomorphism $\bar{o}^{\mathrm{can}}_{X}$ is constructed using the fibre sequence \eqref{eq:ambient_fibre_seq}. Therefore, it is natural to swap $K_U$ when defining the canonical orientation, which corresponds to multiplying $(-1)^{\dim U}$ since we twist the sign of the brading isomorphism: see \S \ref{ssec:det}.
As an example, if we do not insert this sign, the isomorphism of underlying line bundles in \eqref{eq:can_ori_pull} needs to be modified by $\sqrt{-1}^{\pm \dim \Omega_{U_1/U_2}}$ to define an isomorphism of orientations. 
\end{rmk}

Assume that we are given a smooth morphism $H \colon U_1 \to U_2$ between smooth schemes and  a regular function $f_2$ on $U_2$ with $f_2 |_{\Crit(f_2)^{\red}} = 0$.
Write $f_1 \coloneqq f_2 \circ H$.
Set $X_1 \coloneqq \Crit(f_1)$ and $X_2 \coloneqq \Crit(f_2)$ equipped with the natural d-critical structures and $h \colon X_1 \to X_2$ be the restriction of $H$.
Then we see that the natural morphism $H^*K_{U_2} \otimes K_{U_1 / U_2} \cong K_{U_1}$ induces an 
isomorphism
\begin{equation}\label{eq:can_ori_pull}
b_H \colon o^{\can}_{X_1} \cong h^{\star} o^{\can}_{X_2}.
\end{equation}
See the proof of the commutativity of the diagram \eqref{eq:Upsilon_kappa_Ind_pre} for a related discussion.
This diagram is associative in the following sense: Assume further that we are given a smooth morphism $G \colon U_0 \to U_1$, set $X_0 = \Crit(f_1 \circ G)$ and $g \colon X_0 \to X_1$ be the restriction of $G$.
Then the following diagram commutes:
\begin{equation}\label{eq:can_ori_pull_assoc}
    \begin{aligned}
        \xymatrix@C=60pt{
        {o^{\can}_{X_0}}
        \ar[r]_-{\cong}^-{\eqref{eq:can_ori_pull}}
        \ar[d]_-{\cong}^-{\eqref{eq:can_ori_pull}} 
        & {g^{\star} o^{\can}_{X_1}} 
        \ar[d]_-{\cong}^-{\eqref{eq:can_ori_pull}} \\
        {(h \circ g)^{\star} o^{\can}_{X_2}}
        \ar[r]_-{\cong}^-{\eqref{eq:ori_pull_assoc}}
        & {g^{\star} h^{\star} o^{\can}_{X_2}}
        }
    \end{aligned}
\end{equation}

Assume now that we are given smooth schemes $U$ and $V$ and regular functions $f$ on $U$ and $g$ on $V$ with $f |_{U^{\red}} = 0$ and $g |_{V^{\red}} = 0$.
Set $X = \Crit(f)$ and $Y = \Crit(g)$ equipped with the natural d-critical structures.
Then there exists a natural isomorphism
\begin{equation}\label{eq:can_ori_prod}
    \begin{aligned}
        o^{\can}_{X} \boxtimes o^{\can}_{Y} \cong o^{\can}_{X \times Y}. 
    \end{aligned}
\end{equation}
Assume further that we are given a d-critical scheme $Z = \Crit(h)$.
Then by construction, the following diagram commutes:
\begin{equation}\label{eq:can_ori_prod_assoc}
    \begin{aligned}
        \xymatrix{
        {(o^{\can}_{X} \boxtimes o^{\can}_{Y}) \boxtimes o^{\can}_{Z}}
        \ar[rr]_-{\cong}^-{\eqref{eq:ori_prod_assoc}}
        \ar[d]_-{\cong}^-{\eqref{eq:can_ori_prod}}
        & {} 
        & {o^{\can}_{X} \boxtimes (o^{\can}_{Y} \boxtimes o^{\can}_{Z})}
        \ar[d]_-{\cong}^-{\eqref{eq:can_ori_prod}} \\
        {o^{\can}_{X \times Y} \boxtimes o^{\can}_{Z}}
        \ar[r]_-{\cong}^-{\eqref{eq:can_ori_prod}}
        & {o^{\can}_{X \times Y \times Z}}
        & {o^{\can}_{X} \boxtimes o^{\can}_{Y \times Z}}
        \ar[l]^-{\cong}_-{\eqref{eq:can_ori_prod}}.
        }
    \end{aligned}
\end{equation}
Assume that we are given smooth morphisms $H_{U} \colon U' \to U$ and $H_V \colon V' \to V$.
Set $X' = \Crit(f \circ H_U)$ and $Y' = \Crit(g \circ H_V)$ and
$h_X \colon X' \to X$ and $h_Y \colon Y' \to Y$ be the restrictions of $H_U$ and $H_V$.
Then the following diagram commutes:
\begin{equation}\label{eq:can_ori_prod_pull}
    \begin{aligned}
    \xymatrix@C=80pt{
    {h_{X}^{\star} o^{\can}_{X} \boxtimes h_{Y}^{\star} o^{\can}_Y }
    \ar[r]_-{\cong}^-{\eqref{eq:can_ori_pull}}
    \ar[d]_-{\cong}^-{\eqref{eq:can_ori_prod}}
    & {o^{\can}_{X'} \boxtimes o^{\can}_{Y'}}
    \ar[d]_-{\cong}^-{\eqref{eq:can_ori_prod}} \\
    {(h_X \times h_Y)^{\star} o^{\can}_{X \times Y} }
    \ar[r]_-{\cong}^-{\eqref{eq:can_ori_pull}}
    & {o^{\can}_{X' \times Y'}.}
    }
    \end{aligned}
\end{equation}

\section{Donaldson--Thomas perverse sheaves}

Here we briefly recall the definition and some properties of the vanishing cycle functor (\S \ref{ssec:vanfunct}) and the construction of the Donaldson--Thomas perverse sheaves (\S \ref{ssec:DT_perv}) associated with oriented d-critical stacks following \cite{bbdjs15} which is a global version of the vanishing cycle complexes.
We also prove in \S \ref{ssec:TS} the Thom--Sebastiani theorem for the Donaldson--Thomas perverse sheaves, whose detailed proof seems to be not available in the literature.

\subsection{Vanishing cycle functors}\label{ssec:vanfunct}

In this section, we recall the theory of the vanishing cycle complexes.
Let $U$ be a smooth separated algebraic space and $f \colon U \to \bA^1$ be a regular function. We set 
\[
U_{\leq 0} \coloneqq f^{-1} (\bA^1_{\leq 0}), \quad U_0 \coloneqq f^{-1}(0)
\]
where $\bA^1_{\leq 0} \coloneqq \{ z \in \bA^1 \mid \Re(z) \leq 0 \}$. Define the vanishing cycle functor $\varphi_f \colon D(U) \to D(U_0)$ by 
\[
\varphi_{f} \coloneqq (U_0 \hookrightarrow U_{\leq 0})^* (U_{\leq 0} \hookrightarrow U)^!.
\]
It is proved in \cite[Corollary 4.2.6, Theorem 4.2.8]{Achar} that the functor $\varphi_f$ preserves constructible complexes and perverse sheaves.
For simplicity, we write
\begin{equation}\label{eq:we_are_shifting_vanishing}
\varphi_{f} \coloneqq \varphi_f(\bQ_U[\dim U]).
\end{equation}
One can easily show that the complex $\varphi_f$ is supported on the critical locus.

The definition of the vanishing cycle functor can be extended to functions on an Artin stack (hence to non-separated spaces).
Let $\fU$ be a smooth Artin stack.
We set $\fU_{> 0} \coloneqq f^{-1}(\bA^1_{> 0})$, which is regarded as a complex analytic substack of $\fU$.
Then we define the vanishing cycle functor by
\[
\varphi_{f} \coloneqq \fib((\fU_0 \hookrightarrow \fU)^* \to (\fU_0 \hookrightarrow \fU)^* (\fU_{> 0 } \hookrightarrow \fU)_* (\fU_{> 0 } \hookrightarrow \fU)^*).
\]
It is clear from the definition that this definition matches with the previous definition of the vanishing cycle functor for functions on a smooth separated scheme.

Assume that we are given a morphism $h \colon U \to V$ between smooth algebraic spaces and a regular function $f$ on $V$. Set $f_0 \colon U_0 \to V_0$ be the induced morphism on the zero locus. Then by construction, we have natural transforms
\begin{align}\label{eq:van_natural}
\begin{aligned}
   h_{0, !} \circ \varphi_{f \circ h} \to \varphi_f \circ h_!&, \quad  
   h_0^* \circ \varphi_f \to \varphi_{f \circ h} \circ h^* \\
   \varphi_f \circ h_* \to  h_{0, *} \circ \varphi_{f \circ h}&, \quad
   \varphi_{f \circ h} \circ h^! \to h_0^! \circ \varphi_f. 
   \end{aligned}
\end{align}
The proper base change theorem implies that left maps are invertible if $h$ is proper.
The smooth base change theorem implies that right maps are invertible if $h$ is smooth.
The same base change properties hold more generally for morphisms between stacks.

Let $f \colon U \to \bA^1$ and $g \colon V \to \bA^1$ be regular functions on smooth separated algebraic spaces and $\cF \in D^b_c(U)$ and $\cG \in D^b_c(U)$ be constructible complexes.
We define a function $f \boxplus g$ on $U \times V$ by $(f \boxplus g)(x, y) = f(x) +g(y)$.
Then there exist natural morphisms
\begin{align*}
\varphi_{f}(\cF) \boxtimes \varphi_{g}(\cG) &= (U_0 \hookrightarrow U_{\leq 0})^* (U_{\leq 0} \hookrightarrow U)^! \cF \boxtimes (V_0 \hookrightarrow V_{\leq 0})^* (V_{\leq 0} \hookrightarrow V)^! \cG \\
&\to (U_0 \times V_0 \hookrightarrow U_{\leq 0} \times V_{\leq 0})^* (U_{\leq 0} \times V_{\leq 0} \hookrightarrow U \times V)^! (\cF \boxtimes \cG)
\end{align*}
\begin{align*}
    \varphi_{f \boxplus g}(\cF \boxtimes \cG)|_{U_ 0 \times V_0} &\cong (U_0 \times V_0 \hookrightarrow (U \times V)_{\leq 0})^* ((U \times V)_{\leq 0} \hookrightarrow U \times V)^! \\
    & \leftarrow (U_0 \times V_0 \hookrightarrow U_{\leq 0} \times V_{\leq 0})^* (U_{\leq 0} \times V_{\leq 0} \hookrightarrow U \times V)^! (\cF \boxtimes \cG)
\end{align*}
which are shown to be isomorphisms in \cite[Proposition 1.4]{masts} and \cite[Lemma 1.2]{masts} respectively.
In particular, we obtain an isomorphism
\begin{equation}\label{eq:Thom_Sebastiani_functor}
\TS \colon \varphi_{f}(\cF) \boxtimes \varphi_{g}(\cG) \cong \varphi_{f \boxplus g}(\cF \boxtimes \cG)|_{U_0 \times V_0}. 
\end{equation}
Assume further that we are given a regular function $h \colon W \to \bA^1$ on a smooth separated algebraic space and a constructible complex $\cH \in D^b_c(W)$.
Then it is clear from the construction that the following diagram commutes:
\begin{equation}\label{eq:Thom_Sebastiani_assoc_functor}
\begin{aligned}
    \xymatrix@C=80pt{
    {\varphi_{f}(\cF) \boxtimes \varphi_{g}(\cG) \boxtimes \varphi_{h}(\cH)} \ar[r]^-{(\TS, \id)}_-{\cong} \ar[d]_-{\cong}^-{(\id, \TS)}
    & {\varphi_{f \boxplus g}(\cF \boxtimes \cG) |_{U_0 \times V_0} \boxtimes \varphi_{h}(\cH)} \ar[d]_-{\cong}^-{\TS} \\
    {\varphi_{f}(\cF) \boxtimes \varphi_{g \boxplus h}(\cG \boxtimes \cH)|_{V_0 \times W_0}} \ar[r]_-{\cong}^-{\TS}
    & {\varphi_{f \boxplus g \boxplus h}(\cF \boxtimes \cG \boxtimes \cH)|_{U_0 \times V_0 \times W_0}.}
    }
\end{aligned}
\end{equation}

Assume that we are given a smooth morphism $h \colon U \to V$ and $h' \colon U' \to V'$ and a regular function $f$ on $V$ and $f'$ on $V'$. Take a constructible complex $\cF \in D^b_c(V)$ and $\cF' \in D^b_c(V')$. Then by the construction of the Thom--Sebastiani isomorphism, we have the following commutative diagram:
\begin{equation}\label{eq:sm_pull_TS_functor}
\begin{aligned}
\xymatrix@C=80pt{
{h_0^* \varphi_{f}(\cF) \boxtimes h_0'^* \varphi_{f'}(\cF') } \ar[r]_-{\cong}^-{\TS} 
\ar[d]_-{\cong}^-{\eqref{eq:van_natural}}
& {(h_0 \times h'_0)^* \varphi_{f \boxplus f'}(\cF \boxtimes \cF') } \ar[d]_-{\cong}^-{\eqref{eq:van_natural}} \\
{\varphi_{f \circ h}(h^* \cF) \boxtimes \varphi_{f' \circ h'}({h'} ^*\cF')} \ar[r]_-{\cong}^-{\TS}
& {\varphi_{f \circ h \boxplus f' \circ h'}((h \times h')^*(\cF \boxtimes \cF')).}
}
\end{aligned}
\end{equation}

It can be easily shown that the Thom--Sebastiani theorem \eqref{eq:Thom_Sebastiani_functor} holds more generally for functions on smooth Artin stacks by pulling back to a smooth chart and using \eqref{eq:van_natural}.

For later use, we compute the vanishing cycle complex with respect to quadratic functions.
Let $ q = x_1 y_1 + \cdots + x_n y_n \colon \bA^{2n} \to \bA^1$ be a quadratic function.
Set 
\begin{align*}
V_x &\coloneqq \{ (x_1, \ldots, y_n) \in \bA^{2n} \mid y_1 = \cdots = y_n = 0 \}, \\
V_{y} &\coloneqq \{ (x_1, \ldots, y_n) \in \bA^{2n} \mid x_1 = \cdots = x_n = 0 \}.
\end{align*}
Then the inclusions $i_x \colon V_{x} \hookrightarrow \bA^{2n}$ and $i_y \colon V_{y} \hookrightarrow \bA^{2n}$
and the exchange map \eqref{eq:van_natural} implies natural morphisms of complexes
\[
\tilde{c}_x \colon i_{x} ^* \varphi_{q} \to \varphi_0(\bQ_{V_{x}}[2n]) = \bQ_{V_{x}}[2n], \quad \tilde{c}_y \colon i_{y} ^* \varphi_{q} \to \varphi_0(\bQ_{V_{y}}[2n]) = \bQ_{V_{y}}[2n].
\]
Note that $\varphi_q$ has its support contained in the origin. 
Therefore, by taking the compactly supported cohomology, we obtain natural morphisms
\begin{equation}\label{eq:quad_trivialize}
 c_x \colon  \varphi_{q} |_{0} \to \bQ_{0}, \quad c_y \colon  \varphi_{q} |_{0} \to \bQ_{0}.  
\end{equation}
It follows from \cite[Theorem A.1]{dav17} that $c_x$ and $c_y$ are isomorphisms.
We note that $c_x$ and $c_y$ differ by the sign $(-1)^n$ \footnote{This can be shown e.g. by constructing a proper homotopy between $V_x$ and $V_y$ inside $q^{-1}(0)$ which reverses the orientation induced from the complex structure. See the proof of \cite[Proposition 2.24]{kk24pre} for a related discussion.}.
This discrepancy corresponds to the ambiguity of the trivialization of the vanishing cycle complexes with respect to the quadratic function explained in \cite[Example 2.14]{bbdjs15}.
Assume further that we are given a quadratic function $q' = z_1w_1 + \cdots + z_m w_m \colon \bA^{2m} \to \bA^1$. We let $V_{w} \subset \bA^{2m}$ and $V_{y, w} \subset \bA^{2n + 2m}$ in the same manner as $V_y$.
Then it is clear from the construction that the following diagram commutes:
\begin{equation}\label{eq:c_associative}
\begin{aligned}
\xymatrix@C=80pt{
{\varphi_{q}|_{0}} \boxtimes {\varphi_{q'}|_{0}} \ar[d]_-{\cong}^-{(c_y, c_w)} \ar[r]_-{\cong}^-{\TS}
&\varphi_{q \boxplus q'}|_{0} \ar[d]^-{c_{y, w}}_-{\cong} \\
{\bQ_0 \boxtimes \bQ_0} \ar[r]^-{\cong}
& {\bQ_0}.
}
\end{aligned}
\end{equation}

\subsection{Donaldson--Thomas perverse sheaf}\label{ssec:DT_perv}

We now briefly explain the construction of the Donaldson--Thomas perverse sheaf associated with oriented d-critical algebraic stacks following \cite[\S 6]{bbdjs15} and \cite[\S 4]{bbbbj15}.
Recall that for a d-critical scheme $(X, s)$ and an \'etale d-critical chart $\fR = (R, \eta, U, f, i)$,
we have constructed in \eqref{eq:can_ori} the canonical orientation
\[
o^{\can}_{\fR} \colon K_{U}^{\otimes 2} |_{R^{\red}} \cong K_{R, \eta^{\star } s}
\]
for $(R, \eta^{\star} s)$. 
For a given orientation $(\cL, o)$ for $(X, s)$ such that the parity of $\dim U$ and the grading of $\cL$ agrees, we define a $\bZ / 2 \bZ$-local system $Q_{\fR}^o$ on $R$ parametrizing local isomorphisms of orientations $ o^{\can}_{\fR}  \cong o |_{R} $.
Assume that we are given an embedding
\begin{equation}\label{eq:standard_embedding}
    \fR = (R, \eta, U, f, i) \hookrightarrow \fR' = (R, \eta, U \times \bA^{2n}, f \boxplus (x_1y_1 + \cdots + x_n y_n), (i, 0)).
\end{equation}
Then we construct an isomorphism $o_{\fR}^{\can} \cong o_{\fR'}^{\can}$ by
\begin{equation}\label{eq:isom_canori_quadratic}
   K_U  \cong K_{U \times \bA^{2n}}, \quad v \mapsto  v \wedge (dx_1 \wedge \cdots \wedge d{x_n} \wedge d{y_n} \wedge \cdots \wedge dy_1 ).
\end{equation}
This induces an isomorphism of local systems
\[
b_{y} \colon Q_{\fR'}^o \cong Q_{\fR}^{o}.
\]
Note that we can define $b_{x}$ by swapping $x$-coordinates and $y$-coordinates.
We have an identity $b_x = (-1)^n b_y$.
Consider the following d-critical chart
\[
\fR'' = (R, \eta, U \times \bA^{2n + 2m}, f \boxplus (x_1y_1 + \cdots + x_n y_n) \boxplus (z_1w_1 + \cdots + z_m w_m), (i, 0)).
\]
Then we have an identity
\begin{equation}\label{eq:b_assoc}
b_{y, w} = b_{y} \circ b_{w}.
\end{equation}

We now explain the construction of the DT perverse sheaf.
It is shown \cite[Theorem 6.9]{bbdjs15} that there exists a perverse sheaf called the \defterm{Donaldson--Thomas (DT) perverse sheaf}
\[
\varphi_{X, s, o} \in \Perv(X)
\]
with a choice of an isomorphism 
\begin{equation}\label{eq-omega-def}
\omega_{\fR} \colon \varphi_{X, s, o} \cong \varphi_{f} |_{R} \otimes_{\bZ / 2 \bZ} Q_{\fR}^o
\end{equation}
which satisfies the following properties:
\begin{enumerate}
    \item If we are given an \'etale morphism $(h_0, h) \colon \fR = (R, U, f, \eta, i) \to \fR' = (R', U', f', \eta', i') $,
    the following diagram commutes:
    \[
    \xymatrix@C=60pt{
    {\varphi_{X, s, o} |_{R}} \ar[r]^-{\omega_{\fR'} |_{R}}_-{\cong} \ar@{=}[d] 
    & {\varphi_{f'} |_{R} \otimes_{\bZ / 2 \bZ} Q_{\fR'}^o} \ar[d]^-{\cong} \\
    {\varphi_{X, s, o} |_{R}} \ar[r]_-{\cong}^-{\omega_{\fR}}
    & {\varphi_f|_{R} \otimes_{\bZ / 2 \bZ} Q_{\fR}^o}
    }
    \]
    where the right vertical map is defined in the canonical manner.

    \item If we are given an embedding \eqref{eq:standard_embedding},
    then the following diagram commutes\footnote{Note that our choice for the morphism $c_y$ and $b_y$ are different from the one in \cite{bbbbj15} up to some sign. We do not compute the explicit difference here. However, since our choice of $b_y$ and $c_y$ are associative with respect to the composition of embeddings, the associativity of the Thom--Sebastiani isomorphism \eqref{eq:Thom_Sebastiani_assoc_functor} implies that the difference of these signs are associative. In particular, one can glue the vanishing cycle complexes using our choice of sign. See \S \ref{ssec-sign-dep} for more explanations and an advantage of our sign conventions.}:
    \begin{equation}\label{eq-omega-commutes}
    \xymatrix@C=70pt{
    {\varphi_{X, s, o} |_{R}} \ar[d]^-{\omega_{\fR'}}_-{\cong} \ar[r]^-{\omega_{\fR}}_-{\cong}
    & {\varphi_{f} |_{R} \otimes_{\bZ / 2 \bZ} Q^o_{\fR} } \ar[d]_-{\cong}^-{(c_{y}^{-1}, b_{y}^{-1})} \\
    {\varphi_{f \boxplus (x_1y_1 + \cdots + x_ny_n) }  |_R \otimes_{\bZ / 2 \bZ} Q^{o}_{\fR'} } \ar[r]_-{\cong}^-{\TS^{-1}}
    & {\varphi_f |_R \boxtimes \varphi_{x_1y_1 + \cdots + x_ny_n} \otimes_{\bZ / 2 \bZ} Q^{o}_{\fR'}. } 
    }
    \end{equation}
\end{enumerate}
Note that the DT perverse sheaf $\varphi_{X, s, o}$ is uniquely characterized up to unique isomorphism by the above property.
If there is no confusion, we simply write $\varphi_{X} = \varphi_{X, s, o}$.

Assume that we are given a smooth morphism $h \colon X_1 \to X_2$, $s_2$ be a d-critical structure on $X_2$ and $o_2$ be an orientation for $(X_2, s_2)$. We set $s_1 \coloneqq h^{\star} s_2$ and $o_1 \coloneqq h^{\star} o_2$.
Then we can construct a natural isomorphism
\[
\Theta_{h} = \Theta_{h, s_2, o_2} \colon   h^* \varphi_{X_2, s_2, o_2} \cong \varphi_{X_1, s_1, o_1} [- \dim h]
\]
characterized by the following property: If we are given \'etale d-critical charts $\fR_j = (R_j, \eta_j, U_j, f_j, i_j)$ for $(X_j, s_j)$ and a morphism $(H_0, H) \colon \fR_1 \to \fR_2$ as in Proposition \ref{prop:sm_chart},
the following diagram commutes:
\[
\xymatrix@C=100pt{
{\varphi_{X_1, s_1, o_1}} \ar[r]^-{\omega_{\fR_1}}_-{\cong} \ar[d]_-{\Theta_{h}^{-1} |_{R_1}}^-{\cong}
& {\varphi_{f_1} |_{R_1} \otimes_{\bZ / 2 \bZ}} Q_{\fR_1}^{o_1} |_{R_1} \ar[d]^-{\cong}_-{(\Theta_{H}^{-1}, b_H)} \\
{ h^* \varphi_{X_2, s_2, o_2}[\dim h] } \ar[r]^-{\omega_{\fR_2}}_-{\cong}
& { (H_0^{*} \varphi_{f_2})|_{R_1} \otimes_{\bZ / 2 \bZ} H_0^* Q_{\fR_2}^{o_2}[\dim h]}
}
\]
where $\Theta_{H}$ is the morphism constructed in \eqref{eq:van_natural} and the map $b_H$ is given using \eqref{eq:can_ori_pull}. 
By using \eqref{eq:can_ori_pull_assoc}, one sees that the isomorphism $\Theta_{h}$ is associative with respect to the composition of smooth morphisms:
Namely, if we are given a smooth morphism $g \colon X_0 \to X_1$ and if we set $s_0 \coloneqq g^{\star} s_1$ and $o_0 \coloneqq g^{\star} o_{1}$, the following diagram commutes:
\begin{equation}\label{eq:assoc_pull_van}
\begin{aligned}
\xymatrix@C=80pt{
{g^* h^* \varphi_{X_2, s_2, o_2} } \ar[r]_-{\cong}
\ar[d]_-{\cong}^-{\Theta_{h }}
& { (h \circ g)^{*} \varphi_{X_2, s_2, o_2} 
\ar[d]_-{\cong}^-{\Theta_{h\circ g}}} \\
{g^* \varphi_{X_1, s_1, o_1}[- \dim h]}  \ar[r]_-{\cong}^-{\Theta_g}
& {\varphi_{X_0, s_0, o_0}[-\dim h \circ g ].}  
}
\end{aligned}
\end{equation}

Now take an oriented d-critical stack $(\fX, s, o)$.
Then the commutativity of the diagram \eqref{eq:assoc_pull_van} implies that there exists a perverse sheaf
\[
\varphi_{\fX} = \varphi_{\fX, s, o} \in \Perv(\fX)
\]
with a choice of an isomorphism $\Theta_q \colon q^* \varphi_{\fX, s, o} \cong \varphi_{X, q^{\star} s, q^{\star} o}[- \dim q]$ for a smooth morphism $q \colon X \to \fX$ from an algebraic space, uniquely characterized up to unique isomorphism by the following property:
If we are given a smooth morphism $h \colon X_1 \to X_2$ between algebraic spaces and a smooth morphism $q_2 \colon X_2 \to \fX$, the following diagram commutes:
\begin{equation}\label{eq:assoc_pull_van_stack}
\begin{aligned}
\xymatrix@C=80pt{
{h^* q_2^* \varphi_{\fX, s, o}}  \ar[d]_-{\cong}^-{\Theta_{q_2}} \ar[r]^{\cong}
& {q_1^{*} \varphi_{\fX, s, o}} \ar[d]_-{\cong}^-{\Theta_{q_1}} \\
{h^* \varphi_{X_2, s_2, o_2}[- \dim q_2]} \ar[r]_-{\cong}^-{\Theta_{h}}
& {\varphi_{X_1, s_1, o_1}[- \dim q_1]}
}
\end{aligned}
\end{equation}
where we set $q_1 \coloneqq q_2 \circ h$, $s_1 \coloneqq q_1^{\star} s$, $s_2 \coloneqq q_2^{\star} s$, $o_1 \coloneqq q_1^{\star} o$ and $o_2 \coloneqq q_2^{\star} o$.
It is shown in \cite[Theorem 4.8]{bbbbj15} that the DT perverse sheaf is Verdier self-dual; namely,
there exists a natural isomorphism
\begin{equation}\label{eq:dual_DT}
\bD_{\fX}(\varphi_{\fX, s, o}) \cong \varphi_{\fX, s, o}
\end{equation}
where $\bD_{\fX}$ denotes the Verdier dualizing functor.
For an oriented $(-1)$-shifted symplectic stack $(\fX, \omega_{\fX}, o)$ whose underlying d-critical structure is denoted by $s$,
we set
\[
\varphi_{\fX, \omega_{\fX}, o} \coloneqq \varphi_{\fX^{\cl}, s, o^{\cl}}.
\]

\subsection{Thom--Sebastiani theorem for DT perverse sheaves}\label{ssec:TS}

The aim of this section is to prove the Thom--Sebastiani theorem for Donaldson--Thomas perverse sheaves on d-critical stacks and $(-1)$-shifted symplectic stacks.
We first prove it for algebraic spaces:

\begin{prop}\label{prop:TS_DT_space}
    Let $(X_1, s_1, o_1)$ and $(X_2, s_2, o_2)$ be oriented d-critical algebraic spaces.
    Then there exists a natural isomorphism
    \[
    \TS \colon \varphi_{X_1, s_1, o_1} \boxtimes \varphi_{X_2, s_2, o_2} \cong \varphi_{X_1 \times X_2, s_1 \boxplus s_2, o_1 \boxtimes o_2}.
    \]
    Further, if we are given another oriented d-critical algebraic space $(X_3, s_3, o_3)$, the following diagram commutes:
    \begin{equation}\label{eq:TS_assoc_space}
    \begin{aligned}
    \xymatrix@C=80pt{
    {\varphi_{X_1, s_1, o_1} \boxtimes \varphi_{X_2, s_2, o_2} \boxtimes \varphi_{X_3, s_3, o_3}} \ar[r]_-{\cong}^-{(\TS, \id)} \ar[d]_-{\cong}^-{(\id, \TS)}
    & {\varphi_{X_1 \times X_2, s_1 \boxplus s_2, o_1 \boxtimes o_2} \boxtimes \varphi_{X_3, s_3, o_3}} \ar[d]_-{\cong}^-{\TS} \\
    {\varphi_{X_1, s_1, o_1} \boxtimes \varphi_{X_2 \times X_3, s_2 \boxplus s_3, o_2 \boxtimes o_3}} \ar[r]_-{\cong}^-{\TS}
    & {\varphi_{X_1 \times X_2 \times X_3, s_1 \boxplus s_2 \boxplus s_3, o_1 \boxtimes o_2 \boxtimes o_3}}.
    }
    \end{aligned}
    \end{equation}
\end{prop}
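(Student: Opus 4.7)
The plan is to construct the Thom--Sebastiani isomorphism locally using d-critical charts, verify it is independent of the charts, and then deduce associativity from the analogous local associativity results for vanishing cycles and canonical orientations.

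First, given \'etale d-critical charts $\fR_1 = (R_1, \eta_1, U_1, f_1, i_1)$ for $(X_1, s_1)$ and $\fR_2 = (R_2, \eta_2, U_2, f_2, i_2)$ for $(X_2, s_2)$, the product
\[
\fR_1 \times \fR_2 \coloneqq (R_1 \times R_2,\, \eta_1 \times \eta_2,\, U_1 \times U_2,\, f_1 \boxplus f_2,\, i_1 \times i_2)
\]
is an \'etale d-critical chart for $(X_1 \times X_2, s_1 \boxplus s_2)$ by Lemma \ref{lem:external_sum_d-critical}. The compatibility \eqref{eq:can_ori_prod} of canonical orientations under external sums identifies $o^{\can}_{\fR_1} \boxtimes o^{\can}_{\fR_2} \cong o^{\can}_{\fR_1 \times \fR_2}$, and therefore gives an isomorphism of $\bZ/2\bZ$-local systems $Q^{o_1}_{\fR_1} \boxtimes Q^{o_2}_{\fR_2} \cong Q^{o_1 \boxtimes o_2}_{\fR_1 \times \fR_2}$. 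Combining this with the Thom--Sebastiani isomorphism \eqref{eq:Thom_Sebastiani_functor} for the vanishing cycle functors, I obtain a local isomorphism
\[
\TS_{\fR_1, \fR_2}\colon \varphi_{X_1, s_1, o_1}|_{R_1} \boxtimes \varphi_{X_2, s_2, o_2}|_{R_2} \xrightarrow{\;\cong\;} \varphi_{X_1 \times X_2, s_1 \boxplus s_2, o_1 \boxtimes o_2}|_{R_1 \times R_2},
\]
defined as the composition of $\omega_{\fR_1} \boxtimes \omega_{\fR_2}$, the Thom--Sebastiani isomorphism for $\varphi_{f_1}$ and $\varphi_{f_2}$, the isomorphism of local systems above, and $\omega_{\fR_1 \times \fR_2}^{-1}$.

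Next, I would verify that $\TS_{\fR_1, \fR_2}$ glues to a global isomorphism. Using Theorem \ref{thm:compare_d-critical}, any two d-critical charts for a d-critical algebraic space can be compared via embeddings of the standard form \eqref{eq:standard_embedding} up to \'etale covers. Thus the gluing reduces to checking two things: (a) compatibility under \'etale morphisms of charts, which follows directly from the diagram \eqref{eq:sm_pull_TS_functor} together with the naturality of the $Q^o_{\fR}$-identifications; and (b) compatibility under standard stabilization embeddings $\fR_j \hookrightarrow \fR_j^{+}$ adding quadratic functions $q_{n_j}$, which reduces to the key fact that the product stabilization $\fR_1 \times \fR_2 \hookrightarrow \fR_1^{+} \times \fR_2^{+}$ has ambient space $U_1^{+} \times U_2^{+}$ with quadratic $q_{n_1} \boxplus q_{n_2}$, and the sign/wedge conventions in \eqref{eq:isom_canori_quadratic} together with \eqref{eq:b_assoc} and \eqref{eq:c_associative} make the trivializations $(b_y, c_y)$ strictly compatible with the Thom--Sebastiani isomorphisms.

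The main obstacle I anticipate is precisely (b) above: tracking the sign conventions in the interplay between the trivialization $c_y$ of the vanishing cycle of a quadratic function, the isomorphism $b_y$ on canonical orientation torsors, and the ordering of the quadratic variables in $q_{n_1} \boxplus q_{n_2}$ versus the orderings in $q_{n_1}$ and $q_{n_2}$ separately. Verifying this compatibility carefully, via the associativity \eqref{eq:c_associative} of the $c$'s, the associativity \eqref{eq:b_assoc} of the $b$'s, and the associativity of Thom--Sebastiani \eqref{eq:Thom_Sebastiani_assoc_functor}, is where the real bookkeeping lies.

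Finally, for the associativity square \eqref{eq:TS_assoc_space}, I would choose charts $\fR_j$ for each $(X_j, s_j)$, pull back to $R_1 \times R_2 \times R_3$, and reduce to the commutativity of the three external sums on the nose at the level of: vanishing cycles (by \eqref{eq:Thom_Sebastiani_assoc_functor}), the orientations $o^{\can}$ of products of critical charts (by \eqref{eq:can_ori_prod_assoc}), and the product orientations (by \eqref{eq:ori_prod_assoc} and \eqref{eq:kappa_Psi_stack}). Once the local statement is checked on the cover given by products of charts, associativity descends to the global statement on $X_1 \times X_2 \times X_3$.
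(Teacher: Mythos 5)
Your proposal is correct and follows essentially the same route as the paper: define $\TS_{\fR_1,\fR_2}$ chart-locally via \eqref{eq:can_ori_prod} and the Thom--Sebastiani isomorphism, glue by reducing via Theorem \ref{thm:compare_d-critical} to quadratic stabilizations handled with \eqref{eq:b_assoc} and \eqref{eq:c_associative}, and deduce associativity from \eqref{eq:Thom_Sebastiani_assoc_functor} together with \eqref{eq:can_ori_prod_assoc}. The sign bookkeeping you flag is exactly the point the paper's proof settles with the two commutative squares involving $(c_y,c_w)$ and $(b_y,b_w)$.
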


\begin{proof}
    Take \'etale d-critical charts $\fR_1 = (R_1, \eta_1, U_1, f_1, i_1)$ for $X_1$ and $\fR_2 = (R_2, \eta_2, U_2, f_2, i_2)$.
    As we have seen in \eqref{eq:can_ori_prod}, there exists a natural isomorphism
    \begin{equation}\label{eq:ori_TS}
    o_{\fR_1}^{\can} \boxtimes o_{\fR_2}^{\can} \cong o_{\fR_1 \times \fR_2}^{\can}.
    \end{equation}
    In particular, we have a natural isomorphism
    \begin{equation}\label{eq:alpha_TS}
    \alpha_{\fR_1, \fR_2} \colon Q_{\fR_1}^{o_1} \boxtimes Q_{\fR_2}^{o_2} \cong Q_{\fR_1 \times \fR_2}^{o_1 \boxtimes o_2}.
    \end{equation}
    Define an isomorphism $\TS_{\fR_1, \fR_2} \colon \varphi_{X_1, s_1, o_1}|_{R_1} \boxtimes \varphi_{X_2, s_2, o_2}|_{R_2} \cong \varphi_{X_1 \times X_2, s_1 \boxplus s_2, o_1 \boxtimes o_2} |_{R_1 \times R_2}$ by the following composition:
    \begin{align}\label{eq:TS_chart}
    \begin{aligned}
        \varphi_{X_1, s_1, o_1}|_{R_1} \boxtimes \varphi_{X_2, s_2, o_2}|_{R_2} 
        &\xrightarrow[\cong]{\omega_{\fR_1 \boxtimes \fR_2}}  (\varphi_{f_1}|_{R_1} \otimes_{\bZZ} Q_{\fR_1}^{o_1} ) \boxtimes (\varphi_{f_2}|_{R_2} \otimes_{\bZZ} Q_{\fR_2}^{o_2} ) \\
        & \xrightarrow[\cong]{(\TS, \alpha_{\fR_1, \fR_2})} \varphi_{f_1 \boxplus f_2}|_{R_1 \times R_2} \otimes_{\bZZ} Q_{\fR_1 \times \fR_2}^{o_1 \boxtimes o_2} \\
        &\xrightarrow[\cong]{\omega_{\fR_1 \times \fR_2}^{-1}} \varphi_{X_1 \times X_2, s_1 \boxplus s_2, o_1 \boxtimes o_2}|_{R_1 \times R_2}.
        \end{aligned}
    \end{align}
    We will show that these isomorphisms glue to define the desired Thom--Sebastiani isomorphism.
    Firstly, it is clear that the map $\TS_{\fR_1, \fR_2}$ is compatible with \'etale cover of \'etale d-critical charts.
    Therefore, by using Theorem \ref{thm:compare_d-critical}, it is enough to prove an identity 
    \begin{equation}\label{eq:TS_independent}
        \TS_{\fR_1, \fR_2} = \TS_{\fR_1', \fR_2'}
    \end{equation}
    where we set $\fR_1' = (R_1, \eta_1, U \times \bA^{2n}, f_1 \boxplus  (x_1y_1 + \cdots + x_n y_n), (i_1, 0))$ and
    $\fR_2' = (R_2, \eta_2, U \times \bA^{2m}, f_2 \boxplus (z_1 w_1 + \cdots + z_m w_m), (i_2, 0))$.
    We set $q_1 = x_1y_1 + \cdots + x_n y_n$ and $q_2 = z_1 w_1 + \cdots + z_m w_m$.
    By construction of the Thom--Sebastiani isomorphism of the vanishing cycle complexes and the commutativity of the diagram \eqref{eq:c_associative}, we see that the following diagram commutes:
    \[
    \xymatrix@C=80pt{
    {\varphi_{f_1} \boxtimes \varphi_{q_1} \boxtimes \varphi_{f_2} \boxtimes \varphi_{q_2}} \ar[d]_-{\cong}^-{(\id, c_y, \id, c_w)}
     \ar[r]_-{\cong}^-{\TS} 
    &  \varphi_{f_1 \boxplus q_1 \boxplus f_2 \boxplus q_2 } \ar[d]_-{\cong }^{\TS} \\
    {\varphi_{f_1} \boxtimes \varphi_{f_2}} \ar[r]_-{\cong}^-{(\TS, c_{y, w}^{-1})}
    & {\varphi_{f_1 \boxplus f_2} \boxtimes \varphi_{q_1 \boxplus q_2}} .
    }
    \]
    On the other hand, it follows from the construction of the isomorphism $b_y$ and the equality \eqref{eq:b_assoc} that the following diagram commutes:
    \[
    \xymatrix@C=80pt{
    Q_{\fR_1'}^{o_1} \boxtimes Q_{\fR_2'}^{o_2} \ar[d]_-{\cong}^-{(b_y, b_w)} \ar[r]_-{\cong}^-{\alpha_{\fR_1', \fR_2'}}
    & {Q_{\fR_1' \times \fR_2'}^{o_1 \boxtimes o_2}}  \ar[d]_-{\cong}^-{b_{y, w}} \\
    {Q_{\fR_1}^{o_1} \boxtimes Q_{\fR_2}^{o_2}} \ar[r]_-{\cong}^-{\alpha_{\fR_1, \fR_2}}
    & {Q_{\fR_1 \times \fR_2}^{o_1 \boxtimes o_2}}
    }
    \]
    This commutativity implies the identity \eqref{eq:TS_independent} as desired.

    To prove the commutativity of the diagram \eqref{eq:TS_assoc_space},
    by using \eqref{eq:Thom_Sebastiani_assoc_functor},
    it is enough to prove the commutativity of the following diagram for \'etale d-critical charts $\fR_j = (R_j, \eta_j, U_j, f_j, i_j)$ for $X_j$,
    \[
   \xymatrix@C=70pt{
       {Q_{\fR_1}^{o_1} \boxtimes Q_{\fR_2}^{o_2} \boxtimes Q_{\fR_3}^{o_3}} \ar[r]_-{\cong}^-{(\id, \alpha_{\fR_2, \fR_3})} \ar[d]_-{\cong}^-{(\alpha_{\fR_1, \fR_2}, \id)}
    & {Q_{\fR_1}^{o_1} \boxtimes Q_{\fR_2 \times \fR_3}^{o_2 \boxtimes o_3}} \ar[d]_-{\cong}^-{\alpha_{\fR_1, \fR_2 \times \fR_3}} \\
    {Q_{\fR_1 \times \fR_2}^{o_1 \boxtimes o_2} \boxtimes Q_{\fR_3}^{o_3}} \ar[r]_-{\cong}^-{\alpha_{\fR_1 \times \fR_2, \fR_3}}
    & {Q_{\fR_1 \times \fR_2 \times \fR_3}^{o_1 \boxtimes o_2 \boxtimes o_3}}.
    }
    \]
    which is obvious from the commutativity \eqref{eq:can_ori_prod_assoc}.

\end{proof}

Next, we will prove Thom--Sebastiani theorem for d-critical Artin stacks.
To do this, we first show that the Thom--Sebastiani isomorphism for algebraic spaces is compatible with smooth pullback.

\begin{lem}\label{lem:TS_DT_smooth_compatible}
Let $h_1 \colon X_1 \to Y_1$ and $h_2 \colon X_2 \to Y_2$ be smooth morphisms of algebraic spaces,
 $t_1$ and $t_2$ be the d-critical structures of $Y_1$ and $Y_2$, respectively.
Set $s_1 \coloneqq h_1^{\star} t_1$ and $s_2 \coloneqq h_2^{\star} t_2$.
    Then the following diagram commutes:
    \[
    \xymatrix@C=80pt{
    {h_1^{*} \varphi_{Y_1, t_1, o_1} \boxtimes h_2^{*} \varphi_{Y_2, t_2, o_2}}
     \ar[r]^-{\TS}_-{\cong} \ar[d]^-{(\Theta_{h_1}, \Theta_{h_2})}_-{\cong}
    & {(h_1 \times h_2)^{*} (\varphi_{Y_1 \times Y_2, t_1 \boxplus t_2, o_1 \boxtimes o_2})} \ar[d]_-{\cong}^-{\Theta_{h_1 \times h_2}} \\
    {\varphi_{X_1, s_1, h_1^{\star} o_1}[-d_1] \boxtimes \varphi_{X_2, s_2, h_2^{\star} o_2}[-d_2]} \ar[r]_-{\cong}^-{\TS}
    & {\varphi_{X_1 \times X_2, s_1 \boxplus s_2, o_1 \boxtimes o_2}[-d_1 - d_2].}
    }
    \]
    Here we set $d_1 = \dim h_1$ and $d_2 = \dim h_2$.
\end{lem}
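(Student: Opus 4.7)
The plan is to verify the commutativity locally on étale d-critical charts, reducing to the already-established compatibility of Thom--Sebastiani with smooth pullback at the level of vanishing cycle functors. By Proposition \ref{prop:sm_chart}, for points $x_i \in X_i$ mapping to $y_i = h_i(x_i) \in Y_i$ we may choose étale d-critical charts $\fS_i = (S_i,\zeta_i,V_i,g_i,j_i)$ for $(Y_i,t_i)$ containing $y_i$, étale d-critical charts $\fR_i = (R_i,\eta_i,U_i,f_i,\iota_i)$ for $(X_i,s_i)$ containing $x_i$, together with smooth morphisms of d-critical charts $(H_{i,0},H_i) \colon \fR_i \to \fS_i$ lifting $h_i$. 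Taking products yields a smooth morphism of d-critical charts $(H_{1,0}\times H_{2,0}, H_1\times H_2) \colon \fR_1\times \fR_2 \to \fS_1\times \fS_2$ lifting $h_1\times h_2$.

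Next, unwinding the definitions of both $\TS$ (given by \eqref{eq:TS_chart}) and $\Theta_{h_i}$ via the defining diagrams $\omega_{\fR_i}$ and $\omega_{\fS_i}$, the claim reduces to the commutativity of the outer rectangle of a diagram whose faces are: (a) the compatibility \eqref{eq:sm_pull_TS_functor} of the Thom--Sebastiani isomorphism for vanishing cycle complexes with smooth pullback, applied to $H_1,H_2$; and (b) the compatibility of the isomorphisms $\alpha_{\fR_1,\fR_2}$ of \eqref{eq:alpha_TS} with smooth pullback of canonical orientations. Face (b) in turn reduces to the commutativity of diagram \eqref{eq:can_ori_prod_pull}, which expresses exactly that the isomorphism \eqref{eq:can_ori_pull} for $(h_1,h_2,h_1\times h_2)$ intertwines the isomorphisms $o^{\can}_{\fR_1\times\fR_2} \cong o^{\can}_{\fR_1}\boxtimes o^{\can}_{\fR_2}$ and $o^{\can}_{\fS_1\times\fS_2} \cong o^{\can}_{\fS_1}\boxtimes o^{\can}_{\fS_2}$ of \eqref{eq:can_ori_prod}. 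Combining these, one concludes that the outer rectangle commutes on each such product chart $R_1\times R_2$.

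The main technical obstacle is bookkeeping: ensuring that the maps $b_H$ entering the definition of $\Theta_{h_i}$ via \eqref{eq:can_ori_pull} on the two factors assemble, under the pairing isomorphism $\alpha$, into the analogous map $b_{H_1\times H_2}$ for the product smooth morphism, with no extraneous signs. This is precisely the content of \eqref{eq:can_ori_prod_pull}; once this commutativity is granted together with \eqref{eq:sm_pull_TS_functor}, gluing these local identifications over an étale cover (using Theorem \ref{thm:compare_d-critical} to compare choices of charts, exactly as in the proof of Proposition \ref{prop:TS_DT_space}) yields the global commutativity.

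Finally, one checks independence of the chosen charts. Since any two choices of compatible pairs $(\fR_i,\fS_i,H_i)$ can be connected via stabilisations of the form \eqref{eq:standard_embedding} in both source and target and étale covers in the sense of Theorem \ref{thm:compare_d-critical}, the argument already used in the proof of Proposition \ref{prop:TS_DT_space} (based on the associativity identities \eqref{eq:c_associative} and \eqref{eq:b_assoc}) shows that the locally-defined square is independent of choices, and hence the desired diagram commutes globally on $X_1\times X_2$.
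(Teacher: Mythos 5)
Your proof is correct and follows essentially the same route as the paper: reduce to étale d-critical charts via Proposition \ref{prop:sm_chart}, handle the vanishing-cycle part with \eqref{eq:sm_pull_TS_functor}, and handle the orientation local systems $Q^o$ via \eqref{eq:can_ori_prod_pull}. The extra discussion of chart-independence and gluing is harmless but not needed, since both sides of the square are globally defined morphisms and equality can simply be checked pointwise on such charts, which is exactly what the paper does.
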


\begin{proof}

Take a point $x_1 \in X_1$ and $x_2 \in X_2$. 
By using Proposition \ref{prop:sm_chart}, we take an \'etale d-critical chart $\fR_k = (R_k, \eta_k, U_k, f_k, i_k)$ of $x_k$ and \'etale d-critical chart $\fS_k = (S_k, \gamma_k, V_k, g_k, j_k)$  and a smooth morphism $(H_{k, 0}, H_k) \colon \fR_k \to \fS_k$ for $k = 1, 2$.
Then by the commutativity of the diagram \eqref{eq:sm_pull_TS_functor}, it is enough to prove the commutativity of the following diagram:
\[
\xymatrix@C=70pt{
{H_{1, 0}^* Q_{\fS_1}^{o_1} \boxtimes H_{2, 0}^*Q_{\fS_2}^{o_2}} \ar[r]_-{\cong}^-{\alpha_{\fS_1, \fS_2}} \ar[d]^-{\cong}_-{b_{H_1} \boxtimes b_{H_2}}
& {(H_{1, 0} \times H_{2, 0})^* (Q_{\fS_1 \times \fS_2}^{o_1 \boxtimes o_2} ) } \ar[d]^-{\cong}_-{b_{H_1 \times H_2}} \\
{Q_{\fR_1}^{h_1^{\star} o_1} \boxtimes Q_{\fR_2}^{h_2^{\star}o_2}} \ar[r]_-{\cong}^-{\alpha_{\fR_1, \fR_2}}
& {Q_{\fR_1 \times \fR_2}^{ h_1^{\star}o_1 \boxtimes h_2^{\star} o_2}}.
}
\]
This is obvious from the commutativity of \eqref{eq:can_ori_prod_pull}.

\end{proof}

By combining Lemma \ref{lem:TS_DT_smooth_compatible}, the commutativity of the diagram \eqref{eq:assoc_pull_van_stack} and \eqref{eq:TS_assoc_space},
we obtain the following statement by smooth descent:

\begin{prop}\label{prop:TS_DT_stack}
    Let $(\fX_1, s_1, o_1)$ and $(\fX_2, s_2, o_2)$ be oriented d-critical stacks.
    Then there exists a natural isomorphism
    \[
    \TS \colon \varphi_{\fX_1, s_1, o_1} \boxtimes \varphi_{\fX_2, s_2, o_2} \cong \varphi_{\fX_1 \times \fX_2, s_1 \boxplus s_2, o_1 \boxtimes o_2}.
    \]
    Further, if we are given another oriented d-critical algebraic space $(\fX_3, s_3, o_3)$, the following diagram commutes:
    \begin{equation}\label{eq:TS_assoc_stack}
    \begin{aligned}
    \xymatrix@C=100pt{
    {\varphi_{\fX_1, s_1, o_1} \boxtimes \varphi_{\fX_2, s_2, o_2} \boxtimes \varphi_{\fX_3, s_3, o_3}} \ar[r]_-{\cong}^-{(\TS, \id)} \ar[d]_-{\cong}^-{(\id, \TS)}
    & {\varphi_{\fX_1 \times \fX_2, s_1 \boxplus s_2, o_1 \boxtimes o_2} \boxtimes \varphi_{\fX_3, s_3, o_3}} \ar[d]_-{\cong}^-{\TS} \\
    {\varphi_{\fX_1, s_1, o_1} \boxtimes \varphi_{\fX_2 \times \fX_3, s_2 \boxplus s_3, o_2 \boxtimes o_3}} \ar[r]_-{\cong}^-{\TS}
    & {\varphi_{\fX_1 \times \fX_2 \times \fX_3, s_1 \boxplus s_2 \boxplus s_3, o_1 \boxtimes o_2 \boxtimes o_3}}.
    }
    \end{aligned}
    \end{equation}
\end{prop}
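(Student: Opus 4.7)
The plan is to obtain the statement by smooth descent from its analogue for algebraic spaces (Proposition \ref{prop:TS_DT_space}), using Lemma \ref{lem:TS_DT_smooth_compatible} as the cocycle-type compatibility and diagram \eqref{eq:assoc_pull_van_stack} to transport the associativity of the space-level Thom--Sebastiani isomorphism to the stacky setting.

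First I would choose smooth surjections $q_k\colon X_k\to \fX_k$ from algebraic spaces, for $k=1,2$, and consider the Čech nerve $X_k^{[\bullet]}$ of $q_k$, each level of which is again an algebraic space since $\fX_k$ is Artin and $q_k$ is smooth. Pulling back the orientations and d-critical structures along each face map gives, by Proposition \ref{prop:TS_DT_space}, a Thom--Sebastiani isomorphism
\[
\TS_{X_1^{[n]},X_2^{[n]}}\colon \varphi_{X_1^{[n]}}\boxtimes \varphi_{X_2^{[n]}} \cong \varphi_{X_1^{[n]}\times X_2^{[n]}}
\]
at every simplicial level. Using the isomorphisms $\Theta_{q_k}$ to identify $q_k^*\varphi_{\fX_k}$ with $\varphi_{X_k}$ up to a shift by $\dim q_k$, together with Proposition \ref{prop:TS_DT_space} applied on each $X_1^{[n]}\times X_2^{[n]}$, one obtains candidate descent data for an isomorphism on $\fX_1\times \fX_2$. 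The bulk of the work is to verify that these candidate isomorphisms assemble into descent data, i.e. that they commute with the simplicial structure maps.

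The key compatibility to check is precisely the one provided by Lemma \ref{lem:TS_DT_smooth_compatible}: for the two projections $p_1,p_2\colon X_k\times_{\fX_k} X_k\to X_k$ (which are smooth between algebraic spaces), the Thom--Sebastiani isomorphism commutes with $p_i^*$ up to the canonical identifications $\Theta_{p_i}$. Combined with the associativity of pullbacks along smooth maps given by diagram \eqref{eq:assoc_pull_van_stack}, this yields the 1-cocycle condition needed for descent of morphisms in $D^b_c$. Since $\varphi_{\fX_1}\boxtimes \varphi_{\fX_2}$ and $\varphi_{\fX_1\times \fX_2}$ are both perverse (hence live in the heart of a $t$-structure that satisfies smooth descent), the resulting descended morphism is an isomorphism because it is so after pulling back along the smooth cover $q_1\times q_2$. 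This gives existence and naturality of $\TS$.

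For the associativity diagram, I would pick smooth covers $X_k\to \fX_k$ for $k=1,2,3$, pull back all four sides of the hexagon, and use $\Theta$ and the commutativity of \eqref{eq:assoc_pull_van_stack} to reduce to the corresponding diagram \eqref{eq:TS_assoc_space} on the triple product $X_1\times X_2\times X_3$, which commutes by Proposition \ref{prop:TS_DT_space}. Since a morphism in $D^b_c(\fX_1\times \fX_2\times \fX_3)$ is determined by its pullback to a smooth cover, the desired diagram at the stacky level commutes.

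The main obstacle is bookkeeping rather than conceptual: one has to track carefully the interaction of the smooth-pullback isomorphisms $\Theta_q$ (with the shifts by relative dimensions) and the external products, and to check that $\Theta_{q_1\times q_2}$ matches $\Theta_{q_1}\boxtimes \Theta_{q_2}$ up to the canonical Künneth isomorphism for vanishing cycles --- this last compatibility is exactly what Lemma \ref{lem:TS_DT_smooth_compatible} records. Once those compatibilities are in hand, both the existence of $\TS$ and its associativity follow formally from smooth descent.
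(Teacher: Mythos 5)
Your proposal is correct and follows essentially the same route as the paper: the paper's proof consists precisely of invoking Lemma \ref{lem:TS_DT_smooth_compatible}, the commutativity of \eqref{eq:assoc_pull_van_stack}, and the space-level associativity \eqref{eq:TS_assoc_space}, and then concluding by smooth descent of (perverse) sheaves. Your write-up simply makes the Čech-descent bookkeeping explicit, which the paper leaves implicit.
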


Let $(\fX_1, \omega_{\fX_1}, o_1)$ and $(\fX_2, \omega_{\fX_2}, o_2)$ be oriented $(-1)$-shifted symplectic stacks with underlying d-critical structures $s_1$ and $s_2$, respectively.
Then, using the isomorphism of orientations \eqref{eq:ori_cl_prod},
we obtain the following corollary:

\begin{cor}\label{cor:TS_derived}
    There exists a natural isomorphism
    \[
    \TS \colon \varphi_{\fX_1, \omega_{\fX_1}, o_1} \boxtimes \varphi_{\fX_2, \omega_{\fX_2}, o_2}
    \cong \varphi_{\fX_1 \times \fX_2, \omega_{\fX_1 } \boxplus \omega_{\fX_2}, o_1 \boxtimes o_2}
    \]
    which satisfies the associativity property as in \eqref{eq:TS_assoc_stack}.
\end{cor}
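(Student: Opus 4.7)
The plan is to deduce the derived statement directly from the d-critical version (Proposition \ref{prop:TS_DT_stack}) by passing through classical truncations and tracking orientations via the compatibility \eqref{eq:ori_cl_prod}.

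First I would unpack the definition $\varphi_{\fX_i, \omega_{\fX_i}, o_i} \coloneqq \varphi_{\fX_i^{\cl}, s_i, o_i^{\cl}}$ and observe that classical truncation commutes with products, so $(\fX_1 \times \fX_2)^{\cl} = \fX_1^{\cl} \times \fX_2^{\cl}$. By Lemma \ref{lem:external_sum_d-critical}, the underlying d-critical structure of $\omega_{\fX_1} \boxplus \omega_{\fX_2}$ is exactly $s_1 \boxplus s_2$, since pullback of differential forms corresponds under \eqref{eq:Sforms} to the pullback of sections of $\cS^0$ and $\pr_i^\star$ are compatible on both sides. This means Proposition \ref{prop:TS_DT_stack} applied to the oriented d-critical stacks $(\fX_1^{\cl}, s_1, o_1^{\cl})$ and $(\fX_2^{\cl}, s_2, o_2^{\cl})$ furnishes a natural isomorphism
\[
\TS^{\cl} \colon \varphi_{\fX_1^{\cl}, s_1, o_1^{\cl}} \boxtimes \varphi_{\fX_2^{\cl}, s_2, o_2^{\cl}} \cong \varphi_{\fX_1^{\cl} \times \fX_2^{\cl},\, s_1 \boxplus s_2,\, o_1^{\cl} \boxtimes o_2^{\cl}}.
\]

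Next I would identify $o_1^{\cl} \boxtimes o_2^{\cl}$ with $(o_1 \boxtimes o_2)^{\cl}$ via the canonical isomorphism \eqref{eq:ori_cl_prod}, which comes from the commutativity of diagram \eqref{eq:Lambda_Psi} relating the maps $\Lambda_{\fX_i}$ with the multiplicative isomorphism $\Psi_{\fX^{\cl}_1, \fX^{\cl}_2}$. Composing $\TS^{\cl}$ with the isomorphism of DT sheaves induced by this identification of orientations yields the desired
\[
\TS \colon \varphi_{\fX_1, \omega_{\fX_1}, o_1} \boxtimes \varphi_{\fX_2, \omega_{\fX_2}, o_2} \cong \varphi_{\fX_1 \times \fX_2,\, \omega_{\fX_1} \boxplus \omega_{\fX_2},\, o_1 \boxtimes o_2}.
\]
Naturality is automatic since both inputs (the d-critical Thom--Sebastiani and the identification \eqref{eq:ori_cl_prod}) are natural.

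For associativity, given a third stack $(\fX_3, \omega_{\fX_3}, o_3)$, I would combine the associativity diagram \eqref{eq:TS_assoc_stack} for the d-critical TS-isomorphism with the remark (stated right after \eqref{eq:ori_cl_prod}) that the identification $(o \boxtimes o')^{\cl} \cong o^{\cl} \boxtimes o'^{\cl}$ is itself associative under \eqref{eq:ori_prod_assoc}. Concretely, the pentagon for the derived $\TS$ decomposes into the d-critical pentagon \eqref{eq:TS_assoc_stack}, bordered on each edge by the appropriate instance of \eqref{eq:ori_cl_prod}, and the outer compatibility collapses by the associativity of that identification. The step requiring most care is this bookkeeping — verifying that the two ways of rewriting $((o_1 \boxtimes o_2) \boxtimes o_3)^{\cl}$ as $o_1^{\cl} \boxtimes o_2^{\cl} \boxtimes o_3^{\cl}$ agree — but this is precisely what the associativity of \eqref{eq:ori_cl_prod} guarantees, so no further input beyond the results already established in the preceding subsection is needed.
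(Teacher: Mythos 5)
Your proposal is correct and follows essentially the same route as the paper, which deduces the corollary immediately from Proposition \ref{prop:TS_DT_stack} applied to the underlying d-critical stacks together with the isomorphism of orientations \eqref{eq:ori_cl_prod} and its associativity. Your additional remark that the underlying d-critical structure of $\omega_{\fX_1}\boxplus\omega_{\fX_2}$ is $s_1\boxplus s_2$ is a detail the paper leaves implicit, and your treatment of it is accurate.
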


\section{Stack of graded/filtered objects}

In this section, we recall the notion of the stack of graded objects and filtered objects following \cite{hlp14}.

\subsection{Graded and filtered objects: definition}\label{ssec:grad_filt}

We first recall the definition of the graded and filtered objects.

\begin{defin}
    Let $\fX$ be a derived higher Artin stack equipped with a $\bG_m^n$-action $\mu$.
    \begin{itemize}
        \item The \defterm{fixed point stack} is
        \[\fX^{\mu} \coloneqq \Map^{\bG_m^n}(\pt, (\fX, \mu)).\]
        \item The \defterm{attracting stack} is
        \[\fX^{\mu, +, \ldots, +}\coloneqq \Map^{\bG_m^n}(\bA^n, (\fX, \mu)).\]
    \end{itemize}
\end{defin}

For the trivial $\bG_m^n$-action on $\fX$ we get the stacks
\[\Grad^n(\fX) \coloneqq \Map(B\bG_m^n, \fX)\]
of \defterm{$n$-graded objects} and
\[\Filt^n(\fX) \coloneqq \Map([\bA^1 / \bG_m]^n, \fX)\]
of \defterm{$n$-filtered objects}. We denote $\Grad(\fX)=\Grad^1(\fX)$ and $\Filt(\fX)=\Filt^1(\fX)$. We have the following algebraicity statement of $\Grad(\fX)$ and $\Filt(\fX)$:

\begin{prop}[{\cite[Proposition 1.1.2]{hlp14}}]\label{prop:grad_filt_algebraic}
    Let $\fX$ be a quasi-separated (classical) Artin stack with affine stabilizers.
    Then $\Grad^n(\fX)$ and $\Filt^n(\fX)$ are quasi-separated Artin stacks.
\end{prop}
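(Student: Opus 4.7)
The plan is to reduce the statement to $n=1$ using the exponential law for mapping stacks, and then to establish algebraicity via an explicit local computation on quotient-stack charts. The identities
\[\Grad^n(\fX) \simeq \Grad(\Grad^{n-1}(\fX)), \qquad \Filt^n(\fX) \simeq \Filt(\Filt^{n-1}(\fX))\]
permit an induction on $n$, provided one checks that both $\Grad(-)$ and $\Filt(-)$ preserve the class of quasi-separated Artin stacks with affine stabilizers; this last condition emerges from the same local analysis used to prove algebraicity.

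For $n=1$ and $\fX = [U/G]$ with $U$ a quasi-separated algebraic space and $G$ a linear algebraic group, a direct analysis of $(\bG_m\times G)$-equivariant principal bundles equipped with an equivariant map to $U$ yields the explicit decompositions
\[\Grad([U/G]) = \bigsqcup_{[\lambda]} [U^\lambda / Z_G(\lambda)], \qquad \Filt([U/G]) = \bigsqcup_{[\lambda]} [U^{\lambda,+}/P_\lambda],\]
indexed by conjugacy classes of cocharacters $\lambda\colon\bG_m \to G$, where $U^\lambda$ is the $\lambda$-fixed locus, $U^{\lambda,+}$ its attractor, $Z_G(\lambda)$ the centralizer, and $P_\lambda$ the associated parabolic. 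Each summand is a quotient of a quasi-separated algebraic space by an affine algebraic group, so both stacks are quasi-separated Artin with affine stabilizers. For general $\fX$, I would descend this picture along a smooth presentation by quotient-stack charts $\coprod_i [U_i/\GL_{N_i}] \to \fX$ (obtained after passing to frame bundles of a smooth atlas), assembling the local decompositions into a smooth cover of $\Grad(\fX)$ and $\Filt(\fX)$; smoothness of the cover is verified by identifying the relative cotangent complex at a graded (respectively filtered) point $x$ with the weight-zero (respectively non-negative-weight) summand of $x^\ast\bL_\fX$.

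The main obstacle is the descent step: unlike for mapping stacks out of proper sources, neither $\B\bG_m$ nor $[\bA^1/\bG_m]$ is smooth over a point, so the usual representability theorems for mapping stacks do not apply directly, and one must argue by hand that $\Grad$ and $\Filt$ are compatible with smooth morphisms via the tangent computation above. Once this behaviour is established, quasi-separatedness and affineness of stabilizers propagate from the chart-level calculation because stabilizers in the mapping stacks inject into $Z_G(\lambda)$ or $P_\lambda$, closing the induction. A detailed implementation of this approach is carried out in \cite[Proposition 1.1.2]{hlp14}, whose proof I would follow.
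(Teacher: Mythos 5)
The paper itself contains no proof of this proposition: it is imported verbatim from \cite[Proposition 1.1.2]{hlp14}, so your final move of deferring to that reference is exactly what the paper does, and to that extent the proposal is fine. But the route you sketch is not the argument of loc.\ cit., and taken as a standalone proof it has a concrete gap precisely at the "descent" step you flag. The explicit cocharacter decomposition for quotient stacks is correct (it is the paper's Theorem \ref{thm:Grad_quotient}, i.e.\ \cite[Theorems 1.4.7--1.4.8]{hlp14}), and smoothness of the induced maps on $\Grad$ and $\Filt$ does follow from the weight-decomposition of the cotangent complex. What does not follow is surjectivity: given a smooth cover $\coprod_i [U_i/\GL_{N_i}] \to \fX$ built from frame bundles of an atlas, a graded point $B\bG_m \to \fX$ need not lift to any chart, because lifting it amounts to lifting the cocharacter $\bG_m \to \mathrm{Aut}(x)$ into the chart. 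This is exactly where an equivariant local structure theorem is needed; in the present paper the corresponding covering statement is a separate result, Proposition \ref{prop:QCha_atlas} (quoting \cite[Lemma 4.4.6]{hlp14}), whose proof invokes \cite[Theorem 1.1]{ahr20} to produce charts of the form $[\Spec A/T]$ through a given graded point. Your tangent-complex computation establishes smoothness of the candidate cover but says nothing about this lifting problem.

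There is a second foundational gap: algebraicity and quasi-separatedness of $\Grad^n(\fX)$ and $\Filt^n(\fX)$ do not follow merely from exhibiting a smooth surjection from an algebraic stack; one must also verify that the mapping stacks have representable, quasi-separated diagonal, and "stabilizers inject into $Z_G(\lambda)$ or $P_\lambda$" presupposes the chart-level identification globally, which is what is being proved. This is why the proof in \cite{hlp14} does not proceed by assembling quotient-stack charts: it verifies representability of $\Map([\bA^1/\bG_m],\fX)$ directly (deformation theory for the mapping stack together with effectivity/Tannakian input reflecting the formal properness of $[\bA^1/\bG_m]$), with the quotient-stack description proved separately and used afterwards for the explicit geometry. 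So if your intention is simply to cite \cite[Proposition 1.1.2]{hlp14}, as the paper does, that is adequate; if you intend the sketch as an independent proof, the surjectivity of the cover (AHR-type input) and the diagonal must be addressed.
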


\begin{rmk}
Let $X$ be a quasi-separated algebraic space with a $\bG_m^n$-action $\mu$. Then by \cite[Proposition 1.4.1]{hlp14} $X^{\mu}$ and $X^{\mu, +, \ldots, +}$ are represented by quasi-separated algebraic spaces.
\end{rmk}

We will be interested in the following maps:
\begin{itemize}
    \item The inclusion $i_1\colon \pt \rightarrow [\bA^1 / \bG_m]^n$ of $1$ which induces a map $\ev\colon \Filt^n(\fX)\rightarrow \fX$.
    \item The inclusion $i_0\colon B \bG_m^n\rightarrow [\bA^1 / \bG_m]^n$ of the origin, which induces a map $\gr\colon \Filt^n(\fX)\rightarrow \Grad^n(\fX)$.
    \item The projection $[\bA^1/\bG_m]^n\rightarrow B\bG_m^n$ which induces a map $\sigma\colon \Grad^n(\fX)\rightarrow \Filt^n(\fX)$ which is a section of $\gr$.
    \item The inclusion of the basepoint $\pt\rightarrow B\bG_m$ which induces a map $u\colon \Grad(\fX)\rightarrow \fX$ which coincides with $\ev\circ\sigma$.
\end{itemize}

For a derived higher Artin stack $\fX$ we consider the \defterm{attractor correspondence}
\begin{equation}\label{eq:grad_filt_diagram}
\begin{aligned}
\xymatrix{
{}
& \Filt(\fX)  \ar[ld]_-{\gr} \ar[rd]^-{\ev}
& {} \\
\Grad(\fX)
& {}
& \fX.
}
\end{aligned}
\end{equation}

More generally, for a derived higher Artin stack $\fX$ with a $\bG_m$-action $\mu$ we have the attractor correspondence
\begin{equation}\label{eq:grad_filt_diagram_twisted}
\begin{aligned}
\xymatrix{
{}
& \fX^{\mu,+}  \ar[ld]_-{\gr} \ar[rd]^-{\ev}
& {} \\
\fX^{\mu}
& {}
& \fX.
}
\end{aligned}
\end{equation}

If $\mu$ is trivial, \eqref{eq:grad_filt_diagram_twisted} reduces to \eqref{eq:grad_filt_diagram}.

\begin{rmk}\label{rmk:nontrivialactionGrad}
Let $\fX$ be a derived higher Artin stack equipped with a $\bG_m$-action $\mu$. By \cite[Theorem 1.4.7]{hlp14} we have $\Filt(B\bG_m)\cong \bZ\times B\bG_m$. Moreover, the fibre product $\Filt([\fX/\bG_m])\times_{\Filt(B\bG_m)} \pt$, where $\pt\rightarrow \Filt(B\bG_m)$ is given by the inclusion of $1\in\bZ$, may be identified with $[\fX^{\mu, +}/\bG_m]$. Thus, there is a morphism from the correspondence
\[
\xymatrix{
& [\fX^{\mu, +}/\bG_m] \ar[ld]_-{\gr} \ar[rd]^-{\ev} & \\
\fX^{\mu}\times B\bG_m && [\fX/\bG_m]
}
\]
to
\[
\xymatrix{
& \Filt([\fX/\bG_m]) \ar[ld]_-{\gr} \ar[rd]^-{\ev} & \\
\Grad([\fX/\bG_m]) && [\fX/\bG_m]
}
\]
which is both an open and closed immersion in each term. This will allow us to reduce the case of a nontrivial $\bG_m$-action on a stack to the case of a trivial $\bG_m$-action.
\end{rmk}

The commutativity of the attractor correspondence is expressed by the diagram
\begin{equation}\label{eq:grad2_diagram}
\begin{aligned}
\xymatrix{
& \Grad(\Filt(\fX)) \ar_{\Grad(\gr_{\fX})}[ddl] \ar^{\Grad(\ev_\fX)}[dr] && \Filt(\fX) \ar_{\gr_{\fX}}[dl] \ar^{\ev_\fX}[ddr] & \\
&& \Grad(\fX) && \\
\Grad^2(\fX) && \Filt^2(\fX) \ar^{\gr_{\Filt(\fX)}}[uul] \ar_{\Filt(\gr_{\fX})}[ddl] \ar_{\Filt(\ev_{\fX})}[uur] \ar^{\ev_{\Filt(\fX)}}[ddr] && \fX \\
&& \Grad(\fX) && \\
& \Filt(\Grad(\fX)) \ar_{\gr_{\Grad(\fX)}}[uul] \ar_{\ev_{\Grad(\fX)}}[ur] && \Filt(\fX). \ar^{\gr_{\fX}}[ul] \ar_{\ev_{\fX}}[uur] &
}
\end{aligned}
\end{equation}

\begin{rmk}
If we consider the attractor correspondence as a morphism $\attr_\fX\colon \fX\rightarrow \Grad(\fX)$ in the $(\infty, 2)$-category of correspondences of derived stacks, the iterated correspondence \eqref{eq:grad2_diagram} provides a two-morphism witnessing the commutativity of the diagram
\[
\xymatrix@C=2cm{
\fX \ar^{\attr_\fX}[r] \ar^{\attr_\fX}[d] & \Grad(\fX) \ar^{\attr_{\Grad(\fX)}}[d] \\
\Grad(\fX) \ar^{\Grad(\attr_\fX)}[r] & \Grad^2(\fX).
}
\]
\end{rmk}

Now let $G$ be an affine algebraic group and
\[X_\bullet(G) = \Hom(\bG_m, G),\qquad X^2_\bullet(G) = \Hom(\bG_m^2, G)\]
the sets of cocharacters of $G$. For a cocharacter $\lambda\colon \bG_m\rightarrow G$ we denote by
\[G_\lambda=\{g\in G\mid \lambda(t) g\lambda(t)^{-1} = g\}\]
the fixed point subgroup and $G^+_\lambda\subset G$ the subgroup of elements $g\in G$ such that the morphism $\bG_m\rightarrow G$ given by $t\mapsto \lambda(t) g\lambda(t)^{-1}$ admits an extension to a morphism $\bA^1\rightarrow G$. Informally,
\[G^+_\lambda = \{g\in G\mid \exists \lim_{t\rightarrow 0} \lambda(t) g\lambda(t)^{-1}\}.\]

\begin{rmk}
Since $G$ is affine and $R[t]\rightarrow R[t, t^{-1}]$ is injective for any commutative ring $R$, $\Map(\bA^1, G)\rightarrow \Map(\bG_m, G)$ is a monomorphism, i.e. the $\bA^1$-extension is unique if it exists.
\end{rmk}

We have a projection $G^+_\lambda\rightarrow G_\lambda$ given by $g\mapsto \lim_{t\rightarrow 0} \lambda(t) g\lambda(t)^{-1}$ and its section $G_\lambda\rightarrow G^+_\lambda$ given by the identity map on $G$.

Similarly, for a pair of commuting cocharacters $\hat{\lambda}=(\lambda_1,\lambda_2)\colon \bG_m^2\rightarrow G$ we denote by
\[G_{\hat{\lambda}}=\{g\in G\mid \hat{\lambda}(t_1, t_2) g\hat{\lambda}(t_1, t_2)^{-1} = g\}\]
and
\[G^{++}_{\hat{\lambda}} = \{g\in G\mid \exists \lim_{t_i\rightarrow 0} \hat{\lambda}(t_1,t_2) g\hat{\lambda}(t_1,t_2)^{-1}\}\]
the corresponding fixed point and attractor subgroups.

If $X$ is an algebraic space with a $G$-action, for any $\lambda\colon \bG_m\rightarrow G$ and $g\in G$ we have an isomorphism of stacks
\[[X^\lambda / G_\lambda]\longrightarrow [X^{g\lambda g^{-1}}/ G_{g\lambda g^{-1}}],\]
where $X^\lambda\rightarrow X^{g\lambda g^{-1}}$ is given by the action of $g\in G$ and $G_\lambda\rightarrow G_{g\lambda g^{-1}}$ is given by the conjugation by $g\in G$. Thus, it makes sense to consider the stack $[X^\lambda / G_\lambda]$ for a conjugacy class $\lambda\in X_\bullet(G)/G$ of cocharacters.

\begin{thm}\label{thm:Grad_quotient}
Let $G$ be an affine algebraic group acting on an algebraic space $X$ and set $\fX \coloneqq [X / G]$. Then the attractor correspondence \eqref{eq:grad_filt_diagram} is equivalent to
\[
\xymatrix{
& \coprod_{\lambda \in X_\bullet(G)/G} [X^{\lambda, +} / G^+_\lambda] \ar[dl] \ar[dr] & \\
\coprod_{\lambda \in X_\bullet(G)/G} [X^{\lambda} / G_\lambda] && [X/G]
}
\]
and the diagram \eqref{eq:grad2_diagram} is equivalent to
\[
\xymatrix@C=-0.2cm{
& \coprod_{\hat{\lambda} \in X^2_\bullet(G)/G} [(X^{\lambda_{2},+})^{\lambda_{1}} / (G^+_{\lambda_{2}})_{\lambda_{1}}] \ar[ddl] \ar[dr] && \coprod_{\lambda_{1} \in X_\bullet(G)/G} [X^{\lambda_{1},+}/G^+_{\lambda_{1}}] \ar[dl] \ar[ddr] & \\
&& \coprod_{\lambda_{1} \in X_\bullet(G)/G} [X^{\lambda_{1}}/G_{\lambda_{1}}] && \\
\coprod_{\hat{\lambda} \in X^2_\bullet(G)/G} [X^{\hat{\lambda}}/G_{\hat{\lambda}}] && \coprod_{\hat{\lambda} \in X^2_\bullet(G)/G} [X^{\hat{\lambda},+,+}/G^{+,+}_{\hat{\lambda}}] \ar[uul] \ar[ddl] \ar[uur] \ar[ddr] && [X/G] \\
&& \coprod_{\lambda_{2} \in X_\bullet(G)/G} [X^{\lambda_{2}}/G_{\lambda_{2}}] && \\
& \coprod_{\hat{\lambda} \in X^2_\bullet(G)/G} [(X^{\lambda_{1},+})^{\lambda_{2}} / (G^+_{\lambda_{1}})_{\lambda_{2}}] \ar[uul] \ar[ur] && \coprod_{\lambda_{2} \in X_\bullet(G)/G} [X^{\lambda_{2},+}/G^+_{\lambda_{2}}] \ar[ul] \ar[uur] &
}
\]
where we denote by $\lambda_1,\lambda_2\in X_\bullet(G)/G$ the images of $\hat{\lambda}$ under the two projections $X^2_\bullet(G)/G\rightarrow X_\bullet(G)/G$.
\end{thm}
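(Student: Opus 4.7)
The plan is to compute each mapping stack on the left-hand side directly from the definition, by using that $[X/G]\to BG$ is a fibre bundle with fibre $X$, together with the functoriality of $\Map(T,-)$. Since $\fX=[X/G]$ fits in a Cartesian square with $X$, $\pt$ and $BG$, applying $\Map(T,-)$ yields a Cartesian square realizing $\Map(T,\fX)$ as a relative sections space over $\Map(T,BG)$. Algebraicity of all stacks in sight is guaranteed by Proposition~\ref{prop:grad_filt_algebraic}, so the computation reduces to a genuine groupoid-theoretic calculation on $T$-points.

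For $T=B\bG_m^n$, I would first classify $G$-bundles on $B\bG_m^n$: such a bundle is a $G$-bundle on $\pt$ together with a compatible $\bG_m^n$-action, equivalently, a homomorphism $\hat{\lambda}:\bG_m^n\to G$ taken up to $G$-conjugation; the automorphism group of the bundle associated with $\hat{\lambda}$ is the centralizer $G_{\hat{\lambda}}$, so $\Map(B\bG_m^n,BG)\simeq \coprod_{[\hat{\lambda}]\in X_\bullet^n(G)/G} BG_{\hat{\lambda}}$. Over a point indexed by $\hat{\lambda}$, the fibre of $\Grad^n(\fX)\to \Map(B\bG_m^n,BG)$ is the space of $\bG_m^n$-equivariant maps $\pt\to X$ with $\bG_m^n$ acting on $X$ via $\hat{\lambda}$, which is exactly $X^{\hat{\lambda}}$. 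This yields $\Grad^n(\fX)\simeq \coprod_{[\hat{\lambda}]} [X^{\hat{\lambda}}/G_{\hat{\lambda}}]$.

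For $T=[\bA^1/\bG_m]^n$ the same strategy applies: a $G$-bundle on $[\bA^1/\bG_m]^n$ is a $\bG_m^n$-equivariant $G$-bundle on $\bA^n$. Restriction to the origin gives an equivalence between isomorphism classes of such bundles and isomorphism classes of $G$-bundles on $B\bG_m^n$ (hence the same indexing by conjugacy classes $[\hat{\lambda}]$), but the automorphism group is now the attractor $G^{+,\ldots,+}_{\hat{\lambda}}$, because automorphisms correspond to $\bG_m^n$-equivariant maps $\bA^n\to G$ (conjugation action via $\hat{\lambda}$), i.e.\ elements of $G$ whose $\hat{\lambda}$-orbit extends across $\bA^n$. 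Similarly, $G$-equivariant sections give $\bG_m^n$-equivariant maps $\bA^n\to X$ with target acted on by $\hat{\lambda}$, classified by $X^{\hat{\lambda},+,\ldots,+}$. Hence $\Filt^n(\fX)\simeq \coprod_{[\hat{\lambda}]} [X^{\hat{\lambda},+,\ldots,+}/G^{+,\ldots,+}_{\hat{\lambda}}]$, and specializing to $n=1,2$ gives the outer terms in both diagrams. The intermediate mapping stacks $\Grad(\Filt(\fX))$ and $\Filt(\Grad(\fX))$ are $\Map(B\bG_m\times[\bA^1/\bG_m],\fX)$ and $\Map([\bA^1/\bG_m]\times B\bG_m,\fX)$, and the same recipe identifies them with the prescribed disjoint unions. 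Finally, the structure maps $\gr,\ev,\sigma$ and their iterated analogues come from the canonical morphisms $B\bG_m^{\bullet}\leftrightarrow[\bA^1/\bG_m]^{\bullet}\leftrightarrow\pt^{\bullet}$ (inclusion of origin, inclusion of $1$, projection), which translate into the obvious inclusions $X^{\hat{\lambda}}\hookrightarrow X^{\hat{\lambda},+,\ldots,+}\hookrightarrow X$, the corresponding inclusions of stabilizer subgroups, and the ``limit'' maps $x\mapsto \lim_{t\to 0}\hat{\lambda}(t)\cdot x$.

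The main obstacle I anticipate is the combinatorics of the iterated $n=2$ diagram, where one must reconcile two a priori different indexings: the one coming from $\Grad^2$ or $\Filt^2$, parametrized by $X^2_\bullet(G)/G$, and the one coming from iterated $\Grad\Filt$ or $\Filt\Grad$, parametrized by pairs of conjugacy classes of cocharacters in the attractor groups $G^+_{\lambda_i}$ or fixed-point groups $G_{\lambda_i}$. The reconciliation rests on the observation that any cocharacter of $G^+_{\lambda_2}$ is, up to conjugation inside $G^+_{\lambda_2}$, conjugate to one factoring through the Levi subgroup $G_{\lambda_2}$, so it produces a second cocharacter commuting with $\lambda_2$, yielding a canonical bijection with pairs $\hat{\lambda}\in X^2_\bullet(G)/G$. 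Verifying that this identification is compatible with the eight structural morphisms in the diagram and the natural $2$-isomorphisms between the various compositions is where the bookkeeping lives; once set up, however, each square commutes by direct inspection on source stacks $\bA^\bullet\times B\bG_m^\bullet$.
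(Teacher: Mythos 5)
Your strategy is essentially an attempt to reprove, from scratch, the structure theorem of Halpern--Leistner that the paper simply cites: the published proof of Theorem \ref{thm:Grad_quotient} is one sentence, invoking \cite[Theorem 1.4.8]{hlp14} for the identification of $\Grad$ and $\Filt$ of a quotient stack, together with a reduction to $G=\GL_n$ via \cite[Theorem 1.4.7]{hlp14} to cover the case where $X$ is an algebraic space rather than a scheme. Your fibrewise computations are correct as far as they go: over $\bC$, $G$-bundles on $B\bG_m^n$ are classified by conjugacy classes of $\hat\lambda\colon\bG_m^n\to G$ with automorphisms $G_{\hat\lambda}$, automorphisms of the induced bundle on $[\bA^1/\bG_m]^n$ are exactly $G^{+,\dots,+}_{\hat\lambda}$, and the fibres of $\Grad^n(\fX)$, $\Filt^n(\fX)$ over these components are $X^{\hat\lambda}$ and $X^{\hat\lambda,+,\dots,+}$ (whose representability is \cite[Proposition 1.4.1]{hlp14}, quoted in the paper). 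Your reconciliation of the two indexings in the iterated diagram (conjugating a cocharacter of $G^+_{\lambda_2}$ into the Levi $G_{\lambda_2}$) is also the right observation.

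The genuine gap is that everything you do takes place over a point, whereas the statement is an equivalence of stacks, i.e.\ a statement about $T$-points for arbitrary affine $T=\Spec R$. What has to be proved is that a $\bG_m^n$-equivariant $G$-torsor on $\Spec R$ (resp.\ on $\bA^n_R$) is, \'etale-locally on $R$, induced by a single cocharacter $\hat\lambda$, that the associated conjugacy class is locally constant in families (so that the decomposition indexed by $X^n_\bullet(G)/G$ really is into open and closed substacks), and that the resulting identifications of automorphism groups and of equivariant sections glue to the asserted quotient-stack presentations compatibly with $\gr$, $\ev$, $\sigma$. Your sentences ``such a bundle is a $G$-bundle on $\pt$ together with a compatible $\bG_m^n$-action'' and ``restriction to the origin gives an equivalence between isomorphism classes'' are only justified over $\bC$; in families these are theorems, not formalities, and for general affine $G$ they are proved in \cite{hlp14} precisely by the Rees-construction analysis for $\GL_n$ followed by a descent argument for an embedding $G\hookrightarrow\GL_n$ — the same reduction the paper's proof alludes to in order to handle algebraic spaces. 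Without supplying this families-level classification (or citing it), your argument establishes a bijection on isomorphism classes of $\bC$-points of each side, which does not yet yield the claimed equivalences of correspondences of stacks; by contrast, the bookkeeping for the $2$-fold diagram that you single out as the main obstacle is comparatively routine once the $n$-variable classification is in place.
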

\begin{proof}
This follows from the description of $\Grad(\fX)$ and $\Filt(\fX)$ given in \cite[Theorem 1.4.8]{hlp14}. The claim there is stated only for schemes (not algebraic spaces), but by reducing to the case of $G = \GL_n$ we can use \cite[Theorem 1.4.7]{hlp14}.
\end{proof}

\subsection{Perfect complexes}

We will use a description of perfect complexes on $B\bG_m$ and $[\bA^1/\bG_m]$. Let $\bZ$ be the set of integers considered as a discrete category and $\bZ_{\geq}$ the category where $\bZ$ is considered as a poset with respect to $\geq$. We will consider finite connected subposets $P\subset \bZ_{\geq}$. Consider the diagram
\[
\xymatrix{
& \Fun(P, \Perf_R) \ar_{\gr}[dl] \ar^{\ev}[dr] & \\
\Fun(P^\delta, \Perf_R) && \Perf_R
}
\]
where $\ev\colon \Fun(P, \Perf_R)\rightarrow \Perf_R$ is given by taking the colimit or, equivalently, by the evaluation at the minimum of $P$. Given an inclusion $P_1\subset P_2$ of posets there are morphisms of correspondences
\[
\left(
\begin{gathered}
\xymatrix@C=-0.5cm{
& \Fun(P_1, \Perf_R) \ar_{\gr}[dl] \ar^{\ev}[dr] & \\
\Fun(P_1^\delta, \Perf_R) && \Perf_R
}
\end{gathered}
\right)\longrightarrow\left(
\begin{gathered}
\xymatrix@C=-0.5cm{
& \Fun(P_2, \Perf_R) \ar_{\gr}[dl] \ar^{\ev}[dr] & \\
\Fun(P_2^\delta, \Perf_R) && \Perf_R
}    
\end{gathered}\right)
\]
given by left Kan extensions.

The following statement is well-known; see e.g. \cite[Chapter 9, Proposition 1.3.3]{gr2}, \cite[Sections 3.1 and 3.2]{lurrotation} and \cite[Section 3]{raksit}.

\begin{thm}\label{thm:QCohTheta}
The correspondence
\[
\xymatrix{
& \QCoh([\bA^1/\bG_m]\times \Spec R) \ar_{i_0^*}[dl] \ar^{i_1^*}[dr] & \\
\QCoh(B\bG_m\times \Spec R) && \Mod_R
}
\]
is equivalent to
\[
\xymatrix{
& \Fun(\Z_{\geq}, \Mod_R) \ar_{\gr}[dl] \ar^{\colim}[dr] & \\
\Fun(\Z, \Mod_R) && \Mod_R
}
\]
such that the natural symmetric monoidal structure on the left-hand side is sent to the Day convolution symmetric monoidal structure with respect to the abelian group structure on $\bZ$.
\end{thm}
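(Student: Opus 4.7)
The approach is the classical Rees construction, promoted to the $\infty$-categorical and symmetric monoidal setting. The plan is to identify both sides objectwise via descent, then check that the functors $i_0^*, i_1^*$ correspond to $\gr$ and $\colim$, and finally upgrade to a symmetric monoidal equivalence.

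First I would identify $\QCoh(B\bG_m \times \Spec R) \simeq \Fun(\bZ, \Mod_R)$ by faithfully flat descent along the smooth atlas $\Spec R \to B\bG_m \times \Spec R$: a quasi-coherent sheaf on $B\bG_m$ is a $\bG_m$-representation in $R$-modules, which decomposes canonically by weight as $\bigoplus_{n \in \bZ} M_n$. Next, I would identify $\QCoh([\bA^1/\bG_m] \times \Spec R) \simeq \Fun(\bZ_{\geq}, \Mod_R)$ by similar descent along $\bA^1 \times \Spec R \to [\bA^1/\bG_m] \times \Spec R$: a quasi-coherent sheaf is a $\bG_m$-equivariant $R[t]$-module, which splits as a graded module $M = \bigoplus_n M_n$ with transition maps $t \colon M_n \to M_{n \mp 1}$ (depending on the weight convention for $t$), exactly the data of a functor from the linear poset $\bZ_{\geq}$ to $\Mod_R$. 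To make this rigorous at the $\infty$-categorical level, I would exhibit the line bundles $\{\cO(n)\}_{n \in \bZ}$ as compact generators of $\QCoh([\bA^1/\bG_m] \times \Spec R)$ and compute
\[ \RHom_{[\bA^1/\bG_m]}(\cO(n), \cO(m)) \simeq \begin{cases} R & \text{if $n \to m$ in $\bZ_{\geq}$,} \\ 0 & \text{otherwise;} \end{cases} \]
a Schwede--Shipley-type recognition theorem then identifies the target with $\Fun(\bZ_{\geq}, \Mod_R)$.

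Once these object-level equivalences are in place, the functorial compatibilities follow readily. The map $i_0$ corresponds to $t \mapsto 0$, so $i_0^*(M)_n \simeq \cofib(t \colon M_{n \mp 1} \to M_n)$, which is the associated graded $\gr$. The map $i_1$ corresponds to $t \mapsto 1$, so $i_1^*(M) \simeq M/(t-1)M$, which is the (filtered) colimit of the underlying diagram. For the symmetric monoidal refinement, the tensor product on $\QCoh([\bA^1/\bG_m])$ corresponds to $- \otimes_{R[t]} -$, and on graded $R[t]$-modules this matches the Day convolution with respect to the additive monoidal structure on $\bZ_{\geq}$. Compatibility is checked on the compact generators $\cO(n)$ using $\cO(n) \otimes \cO(m) \simeq \cO(n+m)$ (corresponding to the Yoneda objects $h_n$ and the identity $h_n \star h_m \simeq h_{n+m}$), and then extended by colimits, invoking the universal property of Day convolution as the unique symmetric monoidal structure on a presheaf category extending a given monoidal structure on representables.

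The main obstacle is the $\infty$-categorical and symmetric monoidal refinement of the classical statement. Specifically, one must lift the elementary equivalence of abelian categories to an equivalence of stable $\infty$-categories (handling the higher coherences in descent and in Schwede--Shipley), and then further promote it to a symmetric monoidal equivalence coherently. The latter is the hardest part: verifying that the Day convolution on $\Fun(\bZ_{\geq}, \Mod_R)$ matches $- \otimes_{R[t]} -$ at the level of $\infty$-operads, which is the substantial content of the cited references. Typically this proceeds by identifying both sides as universal symmetric monoidal $\infty$-categories associated with the commutative monoid $\bZ_{\geq 0}$ (the ``filtered object classifier''), using that $[\bA^1/\bG_m]$ itself has a universal property as the classifying stack of filtered line bundles.
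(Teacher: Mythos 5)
The paper does not actually prove this statement: it records it as well-known and cites \cite[Chapter 9, Proposition 1.3.3]{gr2}, \cite[Sections 3.1 and 3.2]{lurrotation} and \cite[Section 3]{raksit}, so there is no in-paper argument to compare against line by line. Your sketch is a correct reconstruction of the standard Rees-construction proof that those references carry out: descent identifies $\QCoh(B\bG_m\times\Spec R)$ with $\bZ$-graded $R$-modules and $\QCoh([\bA^1/\bG_m]\times\Spec R)$ with graded $R[t]$-modules, the line bundles $\cO(n)$ are compact generators with $\RHom$'s realizing the poset $\bZ_{\geq}$, and a Morita/Schwede--Shipley recognition gives $\Fun(\bZ_{\geq},\Mod_R)$; this is essentially the compact-generator route (as in Moulinos-style treatments), whereas the cited sources of the paper lean more on the universal property of Day convolution and the interpretation of $[\bA^1/\bG_m]$ as classifying a line bundle with a section -- the two routes buy the same theorem, yours being more hands-on, theirs making the monoidal comparison essentially formal. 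Three points you should tighten if this were written out: (i) the weight convention for $t$ must be fixed once and for all so that the $\RHom$ computation produces $\bZ_{\geq}$ with the stated direction and so that $i_0^*$ really is the associated graded $\gr_n = \cofib(M_{n+1}\to M_n)$ rather than a reindexed version; (ii) ``setting $t=1$'' is not a graded operation, so the clean way to identify $i_1^*$ with $\colim$ is to factor $i_1\colon \Spec R\to[\bA^1/\bG_m]\times\Spec R$ through the open substack $[\bG_m/\bG_m]\times\Spec R\simeq \Spec R$ and observe that restriction there is $M\mapsto (M[t^{-1}])_0\simeq\colim_n M_n$; (iii) for the symmetric monoidal refinement, checking $\cO(n)\otimes\cO(m)\simeq\cO(n+m)$ on generators is not by itself enough -- you need the universal property of Day convolution (or the classifying-stack description you allude to at the end) to produce the coherent symmetric monoidal comparison, which you correctly flag as the substantial step. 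With those caveats the proposal is sound and consistent with the literature the paper points to.
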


\begin{prop}\label{prop:filteredperfect}
Let $R$ be a commutative dg algebra. Then there is an equivalence
\[
\begin{gathered}
\xymatrix@C=-1cm{
& \Perf([\bA^1/\bG_m]\times \Spec R) \ar_{i_0^*}[dl] \ar^{i_1^*}[dr] & \\
\Perf(B\bG_m\times \Spec R) && \Perf_R
}
\end{gathered}
\cong
\colim_{P\subset \bZ_{\geq}\text{, finite, connected}}
\left(
\begin{gathered}
\xymatrix@C=-0.5cm{
& \Fun(P, \Perf_R) \ar_{\gr}[dl] \ar^{\ev}[dr] & \\
\Fun(P^\delta, \Perf_R) && \Perf_R
}
\end{gathered}
\right)
\]

Under this equivalence, the natural symmetric monoidal structure on the left-hand side corresponds to the Day convolution symmetric monoidal structure.
\end{prop}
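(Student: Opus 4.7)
The plan is to deduce the statement from Theorem \ref{thm:QCohTheta} by identifying the subcategory of perfect objects inside each term of the $\QCoh$-correspondence with the appropriate filtered colimit. Under the equivalence $\QCoh(B\bG_m \times \Spec R) \simeq \Fun(\bZ, \Mod_R)$, a perfect complex on $B\bG_m \times \Spec R$ is a $\bZ$-graded $R$-module whose graded pieces are all perfect and whose support is finite, which is exactly $\colim_P \Fun(P^{\delta}, \Perf_R)$ for $P$ ranging over finite connected subposets $P \subset \bZ_{\geq}$ (the connectedness does not affect $P^{\delta}$, and every finite subset of $\bZ$ is contained in a finite interval). So the identification is clear on the $\Grad$-side, and of course $\colim_P \Perf_R = \Perf_R$ on the right.

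The main content is the identification of the middle term. Under the equivalence $\QCoh([\bA^1/\bG_m] \times \Spec R) \simeq \Fun(\bZ_{\geq}, \Mod_R)$ (where I write a filtered object as $F(a) \leftarrow F(a+1) \leftarrow \cdots$), I would show that a filtered object $F$ is perfect if and only if (i) the associated graded $\gr F \in \Fun(\bZ, \Mod_R)$ consists of perfect pieces and has finite support, and (ii) $F$ stabilizes: the transition maps $F(n+1) \to F(n)$ are equivalences for $n \ll 0$ and $F(n) = 0$ for $n \gg 0$. One direction is clear since objects of the standard form (e.g.\ line bundles $\cO(n)$ pulled back) are perfect, and perfectness is closed under finite (co)limits; for the other direction, dualizability with respect to the Day convolution symmetric monoidal structure forces the support of $\gr F$ to be bounded, from which the stabilization of $F$ follows using the defining fibre sequences $F(n+1) \to F(n) \to \gr^n F$.

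Having established this characterization, I would observe that any perfect $F$ is the left Kan extension along the inclusion $P \hookrightarrow \bZ_{\geq}$ of its restriction to $F|_P$, for any finite connected subposet $P$ containing $\supp(\gr F)$ (extending by the constant value $F(\min P)$ below $P$ and by $0$ above $P$); conversely, any such left Kan extension lies in $\Perf([\bA^1/\bG_m] \times \Spec R)$. Since morphisms between two perfect filtered objects also lie in the common $P$, and the filtered colimit of $\infty$-categories computes such a union, this gives the desired equivalence $\Perf([\bA^1/\bG_m] \times \Spec R) \simeq \colim_P \Fun(P, \Perf_R)$. The compatibility with $i_0^*$ and $i_1^*$ follows directly from the analogous statements in Theorem \ref{thm:QCohTheta} applied level-wise: left Kan extension followed by $\gr$ returns the $P$-indexed graded pieces, and evaluation at $1$ returns $F(\min P)$, the colimit of the $P$-diagram.

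For the symmetric monoidal statement, Day convolution on $\Fun(\bZ_{\geq}, \Mod_R)$ with respect to addition on $\bZ$ clearly preserves the subcategory of perfect objects, and on the level of finite connected subposets it corresponds to the Day convolution $\Fun(P_1, \Perf_R) \otimes \Fun(P_2, \Perf_R) \to \Fun(P_1 + P_2, \Perf_R)$, where $P_1 + P_2 = \{p_1 + p_2 \mid p_i \in P_i\}$ is again finite and connected. This makes the colimit diagram into a diagram of symmetric monoidal $\infty$-categories, and the resulting colimit inherits the Day convolution structure, completing the identification. The main obstacle is the careful verification that a perfect complex on $[\bA^1/\bG_m] \times \Spec R$ indeed admits the stabilization and boundedness properties; I expect this to be handled by reducing to the classical fact that perfect complexes on a $\Theta$-stack admit a weight filtration with bounded, perfect associated graded, as in the references cited just before the proposition.
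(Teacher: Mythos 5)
Your route is genuinely different from the paper's. The paper never characterizes perfect filtered objects explicitly: it writes $\QCoh([\bA^1/\bG_m]\times\Spec R)\simeq\colim_P\Fun(P,\Mod_R)$ as a colimit of presentable $\C$-linear $\infty$-categories along left Kan extensions, notes that these transition functors preserve compact objects because their right adjoints (restrictions) preserve colimits, passes to compact objects using perfectness of the stack, and then invokes \cite[Proposition 2.8]{ao23} to identify $\Fun(P,\Mod_R)^\omega\simeq\Fun(P,\Perf_R)$. Your plan — characterize the perfect objects of $\Fun(\bZ_{\geq},\Mod_R)$ directly and exhibit $\Perf$ as the union of the fully faithfully embedded $\Fun(P,\Perf_R)$ — could also work, and the parts about full faithfulness of the Kan extensions, the compatibility with $i_0^*$, $i_1^*$, and the Day convolution are fine in outline. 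But the crucial characterization step has a gap.

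The problem is the implication ``dualizability forces $\gr F$ to have bounded support, from which the stabilization of $F$ follows using the fibre sequences $F(n+1)\to F(n)\to\gr^n F$.'' Bounded support of $\gr F$ only gives that the transition maps are equivalences outside a bounded range; it does \emph{not} give $F(n)=0$ for $n\gg 0$, which is exactly the property separating perfect complexes on $[\bA^1/\bG_m]\times\Spec R$ from, say, the pushforward of $\cO$ from the open point. The constant diagram $F(n)=R$ for all $n$ has $\gr F=0$ yet $F(n)\neq 0$ for all $n$, so the deduction as stated fails; to exclude such objects you must use compactness or dualizability a second time. For instance, writing any $F$ as the filtered colimit of its brutal truncations $\sigma_{\leq N}F$ (with $(\sigma_{\leq N}F)(n)=F(n)$ for $n\leq N$ and $0$ above), compactness makes $F$ a retract of some $\sigma_{\leq N}F$ and hence forces vanishing above $N$; alternatively one can check directly that a nonzero constant diagram is not dualizable for Day convolution. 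You also need a separate (easy) argument that the stable value at the bottom is perfect — it is $i_1^*F$, and $i_1^*$ preserves perfects — rather than getting it from the fibre sequences alone. Your closing appeal to ``the classical fact'' in the references cited before Theorem \ref{thm:QCohTheta} does not fill this hole, since those references establish the $\QCoh$-level correspondence, not the characterization of its compact objects; supplying one of the arguments above (or switching to the paper's compact-objects-in-a-colimit argument, which sidesteps the characterization entirely) would close the gap.
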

\begin{proof}
The stacks $[\bA^1/\bG_m]\times\Spec R$ and $B\bG_m\times\Spec R$ are perfect in the sense of \cite{bzfn}, so perfect complexes on them coincide with compact objects. The computation of compact objects is analogous for both cases, so we will stick with the case of $[\bA^1/\bG_m]$.

Writing $\bZ_{\geq}$ as a colimit over its finite connected subsets $P\subset \bZ_{\geq}$, by Theorem \ref{thm:QCohTheta} we have
\[\QCoh([\bA^1/\bG_m]\times \Spec R)\simeq \colim_{P\subset \bZ_{\geq}\text{, finite, connected}} \Fun(P, \Mod_R)\]
with respect to left Kan extensions, where the colimit is taken in the $\infty$-category of presentable $\C$-linear $\infty$-categories. For an inclusion $P_1\subset P_2\subset \bZ_{\geq}$, the left Kan extension along $P_1\subset P_2$ preserves compact objects as it has a colimit-preserving right adjoint given by restriction. Passing to compact objects, we get
\[\Perf([\bA^1/\bG_m]\times \Spec R)\simeq \colim_{P\subset \bZ_{\geq}\text{, finite, connected}} \Fun(P, \Mod_R)^\omega,\]
where $\Fun(P, \Mod_R)^\omega\subset \Fun(P, \Mod_R)$ is the full subcategory of compact objects. By \cite[Proposition 2.8]{ao23} we may further identify
\[\Fun(P, \Mod_R)^\omega\simeq \Fun(P, \Perf_R).\]
\end{proof}

\begin{rmk}
Under the equivalence given by Proposition \ref{prop:filteredperfect} an object $V\in\Perf([\bA^1/\bG_m]\times \Spec R)$ corresponds to a $\Z$-indexed diagram
\[\dots\rightarrow V_1\rightarrow V_0\rightarrow V_{-1}\rightarrow \dots\]
of perfect complexes of $R$-modules such that $V_m = 0$ for $m\gg 0$ and $V_m\rightarrow V_{m-1}$ is an isomorphism for $m\ll 0$. The Day convolution of two such objects is given by
\[(V\otimes W)_m = \colim_{i+j\geq m} V_i\otimes V_j.\]
\end{rmk}

If $\cF$ is a quasi-coherent complex on $\fX$, its pullback along $\Grad(\fX)\rightarrow \fX$ acquires a natural grading as follows. This map factors as $\Grad(\fX)\rightarrow \Grad(\fX)\times B\bG_m\xrightarrow{\ev}\fX$, where $\ev$ is the natural evaluation morphism. By Theorem \ref{thm:QCohTheta} the quasi-coherent complex $\ev^* \cF$ corresponds to a graded quasi-coherent complex on $\Grad(\fX)$ and its pullback under the inclusion of the basepoint $\pt\rightarrow B\bG_m$ is given by the total complex. For a quasi-coherent complex $\cF$ on $\fX$, we denote by $\cF|^0_{\Grad(\fX)}$ its weight $0$ part and by $\cF|^+_{\Grad(\fX)}$ (resp. $\cF|^-_{\Grad(\fX)}$) its positive (resp. negative) weight part. The following is \cite[Lemma 1.2.2]{hlp14}.

\begin{prop}\label{prop:GradFiltcotangent}
Let $\fX$ be a derived stack which admits an almost perfect cotangent complex. Then $\Grad(\fX)$ and $\Filt(\fX)$ admit almost perfect cotangent complexes, so that there are natural isomorphisms
\[\bL_{\Grad(\fX)}\cong \bL_{\fX}|^0_{\Grad(\fX)},\qquad \bL_{\Filt(\fX)/\Grad(\fX)}|_{\Grad(\fX)}\cong \bL_{\fX}|^-_{\Grad(\fX)}.\]
\end{prop}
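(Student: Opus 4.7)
The plan is to derive both isomorphisms pointwise from the universal property of mapping stacks and then identify the resulting sheaf-theoretic data using the structure of $\QCoh(B\bG_m)$ and $\QCoh([\bA^1/\bG_m])$ recalled in the previous subsection.

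First, I would recall the standard formula for the tangent complex of a mapping stack: for $S \in \{B\bG_m, [\bA^1/\bG_m]\}$ and an $R$-point $f \colon S \times \Spec R \to \fX$ of $\Map(S, \fX)$, one has
\[\bT_{\Map(S, \fX), f} \simeq (p_S)_*(f^* \bT_{\fX}),\]
where $p_S \colon S \to \pt$; dually, the cotangent complex is computed by the left adjoint $(p_S)_+$ of $p_S^*$ applied to $f^* \bL_{\fX}$. Under the hypothesis that $\bL_\fX$ is almost perfect and using that the source stacks in question are perfect in the sense of \cite{bzfn}, these adjoints land in almost perfect complexes on $\Spec R$, so the mapping stack itself will admit an almost perfect cotangent complex.

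Next I would identify these adjoints explicitly. For $S = B\bG_m$ the pullback $p_S^*$ embeds $\Mod_R$ into $\QCoh(B\bG_m \times \Spec R)$ as the weight-zero representations, so both $(p_S)_+$ and $(p_S)_*$ are given by the weight-zero projection $(-)|^0$, yielding the first isomorphism at every $R$-point and hence globally after descent. For $S = [\bA^1/\bG_m]$ I would use Theorem \ref{thm:QCohTheta} and Proposition \ref{prop:filteredperfect} to describe almost perfect complexes on $[\bA^1/\bG_m] \times \Spec R$ as filtered complexes of $R$-modules with $i_0^*$ corresponding to taking the associated graded. Base change along $\gr$ then computes $\bL_{\Filt(\fX)/\Grad(\fX)}$ at a filtered point $\tilde f$, and pulling back along the section $\sigma$ amounts to extracting the part of $f^*\bL_\fX$ (where $f = \tilde f \circ i_0$) corresponding to the nontrivial filtration directions, which is precisely the negative-weight part.

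The main obstacle, and the reason this requires more than a one-line invocation, is to check that these pointwise identifications are functorial in the $R$-point and assemble into a well-defined map of globally defined almost perfect complexes on $\Grad(\fX)$. This amounts to observing that the residual $\bG_m$-action on the source of the universal map canonically equips the pullback of any quasi-coherent complex under $\ev$ with a weight grading on $\Grad(\fX)$, and that the weight-component functors are exact and compatible with base change in $R$. I would not redo this assembly from scratch since it is precisely the content of \cite[Lemma 1.2.2]{hlp14}; instead I would verify that the conventions for the grading match and invoke that reference to conclude.
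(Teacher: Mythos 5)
Your proposal is correct and ends up in the same place as the paper: the paper gives no independent argument and simply quotes \cite[Lemma 1.2.2]{hlp14}, which is exactly the reference you invoke to assemble the pointwise identifications into global isomorphisms. Your preliminary sketch via $\bL_{\Map(S,\fX)}\simeq \pi_{+}\ev^{*}\bL_{\fX}$ and the weight decompositions on $\QCoh(B\bG_m)$ and $\QCoh([\bA^1/\bG_m])$ is consistent with the paper's conventions (e.g.\ with the identification $\bL_{\Filt(\fX)/\fX}[-1]|_{\Grad(\fX)}\simeq \bL_{\fX}|^{+}_{\Grad(\fX)}$ used later), so no further changes are needed.
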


For later use, we record the following technical statement here:

\begin{lem}\label{lem:Gradprojectionisomorphism}
    Let $\fX$ be a quasi-separated Artin stack with affine stabilizers.
    Let $\iota \colon \fX \to \Grad(\fX)$ be a section of $u \colon \Grad(\fX) \to \fX$.
    Then $\iota$ is an open immersion and the map
    $\fX \times_{\Grad(\fX)} \Filt(\fX) \to \fX$ defined by the base change of $\gr$ along $\iota$ is an isomorphism.
\end{lem}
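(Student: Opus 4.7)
The plan is to prove that $\iota$ is an open immersion via a cotangent-complex computation, deduce by the same mechanism that the base change $\tilde\sigma$ of the zero-section $\sigma\colon \Grad(\fX)\to \Filt(\fX)$ along $\iota$ is an open immersion, and finally complete the proof using a connectedness argument powered by the natural $\bA^1$-action on $\Filt(\fX)$.

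First, the identity $u\circ\iota = \id_\fX$ produces a composition of cotangent differentials
\[
\bL_\fX \longrightarrow \iota^*\bL_{\Grad(\fX)} \longrightarrow \bL_\fX
\]
equal to the identity. By Proposition \ref{prop:GradFiltcotangent}, $\iota^*\bL_{\Grad(\fX)}$ is naturally the weight-zero summand of $\bL_\fX$ under the $\bG_m$-action determined by $\iota$, and the first arrow is the projection onto that summand. A projection onto a direct summand cannot post-compose with any map to give the identity unless the summand is the whole object, so the positive and negative weight parts of $\bL_\fX$ under this $\bG_m$-action both vanish. From the fibre sequence $u^*\bL_\fX \to \bL_{\Grad(\fX)} \to \bL_u$ we then get $\iota^*\bL_u = 0$, whence $\bL_\iota \simeq \iota^*\bL_u[1] = 0$. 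Thus $\iota$ is \'etale; being a section of $u$ it is also a monomorphism, hence an open immersion.

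For the second statement, write $p_1, p_2$ for the projections from $\fX\times_{\Grad(\fX)}\Filt(\fX)$ and let $\tilde\sigma$ denote the base change of $\sigma$ along $\iota$, which is a section of $p_1$. Using the second identification of Proposition \ref{prop:GradFiltcotangent}, namely $\sigma^*\bL_\gr \simeq \bL_\fX|^-_{\Grad(\fX)}$, and pulling back along $\iota$ gives $\tilde\sigma^*\bL_{p_1} = \iota^*\sigma^*\bL_\gr = 0$ by the vanishing established above. The same formal argument as before shows $\bL_{\tilde\sigma} = 0$, so $\tilde\sigma$ is \'etale and, being a section, an open immersion. On the other hand, the section $\sigma$ is a closed immersion—this is a standard feature of the $\Theta$-correspondence in \cite{hlp14}, and can also be checked directly on quotient charts via Theorem \ref{thm:Grad_quotient}, where it amounts to the closed inclusions $X^\lambda\hookrightarrow X^{\lambda,+}$ and $G_\lambda\hookrightarrow G_\lambda^+$. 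Its base change $\tilde\sigma$ is therefore also a closed immersion, so $\tilde\sigma(\fX)$ is a clopen substack of $\fX\times_{\Grad(\fX)}\Filt(\fX)$.

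To conclude, I will exploit the action of the multiplicative monoid $\bA^1$ on $[\bA^1/\bG_m]$ by scaling. Post-composition yields a morphism $a\colon \bA^1\times \Filt(\fX)\to \Filt(\fX)$ with $a_1 = \id$ and $a_0 = \sigma\circ\gr$, satisfying $\gr\circ a_t = \gr$ for every $t$ since multiplication by $t$ fixes the origin of $[\bA^1/\bG_m]$. This action therefore restricts to the fibre product $\fX\times_{\Grad(\fX)}\Filt(\fX)$, and for every point $y$ there the orbit $\alpha_y\colon \bA^1\to \fX\times_{\Grad(\fX)}\Filt(\fX)$ satisfies $\alpha_y(0)\in \tilde\sigma(\fX)$ and $\alpha_y(1)=y$. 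The preimage $\alpha_y^{-1}(\tilde\sigma(\fX))$ is clopen in the connected $\bA^1$ and contains $0$, hence equals all of $\bA^1$, forcing $y\in \tilde\sigma(\fX)$. This proves $\tilde\sigma$ is surjective and therefore an isomorphism. The main obstacle I anticipate is justifying the closedness of $\sigma$ in the generality of the lemma; every other step follows formally from the structure of the mapping stacks $\Grad$ and $\Filt$ and the cotangent identifications of Proposition \ref{prop:GradFiltcotangent}.
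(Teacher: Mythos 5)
Your cotangent-complex computation showing that $\iota$ is \'etale is essentially the paper's own argument, and your idea of exploiting the $\bA^1$-action on $\Filt(\fX)$ is the right mechanism for the second claim; but two of the bridging assertions are wrong as stated, and the second is fatal to your finish. First, ``being a section, it is a monomorphism'' is not a valid principle for stacks: $\pt\to BG$ is a section of $BG\to\pt$ and is not a monomorphism. What saves the claim for $\iota$ is that $u$ is \emph{representable} (\cite[Lemma 1.1.5]{hlp14}), which is exactly the input the paper uses before quoting the fact that \'etale monomorphisms are open immersions. The same gap recurs for $\tilde\sigma$: it is a section of $p_1$, which is a base change of $\gr$, and $\gr$ is \emph{not} representable in general (for $\fX=BG$ its fibres are classifying stacks of unipotent radicals), so ``section, hence mono'' cannot be invoked there either. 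It can be repaired: $\pr_2\circ\tilde\sigma=\sigma\circ\iota$ is a section of the representable morphism $\ev$, hence a monomorphism, which forces $\tilde\sigma$ to be one; equivalently, argue as the paper does that $\sigma\circ\iota$ itself is an \'etale monomorphism, i.e.\ an open immersion.

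Second --- the obstacle you flagged yourself --- $\sigma\colon\Grad(\fX)\to\Filt(\fX)$ is \emph{not} a closed immersion in general. For $\fX=BG$ with $G$ reductive, on the component indexed by $\lambda$ the map $\sigma$ is $BG_\lambda\to BG_\lambda^+$ induced by the inclusion of the Levi into the parabolic; its fibre is $G_\lambda^+/G_\lambda\cong U_\lambda$, the unipotent radical, so $\sigma$ is not even a monomorphism, let alone closed. The chart-level closed inclusions $X^\lambda\subset X^{\lambda,+}$ and $G_\lambda\subset G_\lambda^+$ do not descend to a closed immersion of quotient stacks precisely because the groups differ. Hence the ``clopen plus connectedness of $\bA^1$'' step collapses. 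Fortunately closedness is unnecessary: for $t\neq 0$ the scaling automorphism of $[\bA^1/\bG_m]$ is $2$-isomorphic to the identity, so $\alpha_y(t)\simeq y$ for all $t\in\bG_m$ and $\alpha_y(0)=\sigma(\gr(y))$ is a \emph{specialization} of $y$; since open substacks are stable under generization, openness of $\tilde\sigma(\fX)$ alone forces $y\in\tilde\sigma(\fX)$. This generization argument is exactly \cite[Lemma 1.3.5]{hlp14}, which is how the paper concludes after showing that $\sigma\circ\iota$ is an \'etale monomorphism, hence an open immersion. With these two repairs (representability of $u$ and $\ev$ for the monomorphism claims, and the generization argument in place of the false closedness of $\sigma$) your proof becomes correct and coincides in substance with the paper's.
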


\begin{proof}
    Let $\fX^{\der}$ be $\fX$ regarded as a derived stack. The composition
        \begin{equation}\label{eq:idem_cot}
        \bL_{\fX^{\der}} \cong \iota^{*} u^* \bL_{\fX^{\der}} \to \iota^* \bL_{\Grad(\fX^{\der})} \to \bL_{\fX^{\der}}
        \end{equation}
        is the identity map. The quasi-coherent complex $u^*\bL_{\fX^{\der}}$ carries a natural grading, which induces a grading on $\bL_{\fX^{\der}} \cong \iota^{*} u^* \bL_{\fX^{\der}}$. By Proposition \ref{prop:GradFiltcotangent} the first morphism is given by the projection to its weight $0$ part, so all maps appearing in \eqref{eq:idem_cot} are isomorphisms.
    In particular, $\iota$ is \'etale. 
    By \cite[Lemma 1.1.5]{hlp14}, the map $u$ is representable.
    Therefore, its section $\iota$ is a monomorphism, hence an open immersion by \cite[\href{https://stacks.math.columbia.edu/tag/025G}{Tag 025G}]{stacks-project}.
    
    By a similar argument, we see that $\sigma \circ \iota$ is \'etale.
    By using \cite[Lemma 1.3.5]{hlp14} for the open substack $\sigma \circ \iota(\fX) \subset \Filt(\fX)$,
    we obtain the identity $\sigma \circ \iota(\fX) = \gr^{-1}(\iota(\fX))$ as desired.
\end{proof}

\subsection{Symplectic and Lagrangian structures}

Our goal in this section is to endow the attractor correspondence with a Lagrangian structure. For this we use the AKSZ formalism developed in \cite{ptvv13,cal15,chs21}.

\begin{lem}\label{lem:positiveweightfunctions}
Let $Y$ be a finite product of stacks $B\bG_m$ and $[\bA^1/\bG_m]$. Then:
\begin{enumerate}
    \item Pullback to the origin $\pt\rightarrow Y$ defines an isomorphism \[\bR\Gamma(Y, \cO)\simeq \C.\]
    \item The stack $Y$ is $\cO$-compact.
\end{enumerate}
\end{lem}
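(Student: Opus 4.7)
The proof proposal is as follows. First, I would reduce both statements to the two building blocks $B\bG_m$ and $[\bA^1/\bG_m]$ by using the Künneth formula. Concretely, if $Y=Y_1\times Y_2$ and one knows $\bR\Gamma(Y_i, \cO)\simeq \C$ for $i=1,2$, then by flat base change
\[
\bR\Gamma(Y_1\times Y_2, \cO)\simeq \bR\Gamma(Y_1, \cO)\otimes \bR\Gamma(Y_2, \cO)\simeq \C,
\]
and the pullback to the origin is the tensor product of the two individual pullbacks. The same tensor product description shows that $\cO$-compactness is stable under finite products once it is established for the factors.

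For the two basic factors, I would compute directly. For $B\bG_m$, the category $\QCoh(B\bG_m)$ is the category of $\bG_m$-representations (i.e.\ $\Z$-graded $\C$-modules), and $\bR\Gamma(B\bG_m, -)$ is the invariants functor, which is exact because $\bG_m$ is reductive. Applied to the trivial representation $\cO_{B\bG_m}=\C$ concentrated in weight $0$, this gives $\bR\Gamma(B\bG_m, \cO)\simeq \C$. For $[\bA^1/\bG_m]$, I would use the affine smooth atlas $\bA^1\to [\bA^1/\bG_m]$, which gives
\[
\bR\Gamma([\bA^1/\bG_m], \cO)\simeq \bR\Gamma(B\bG_m, \C[x])\simeq \C[x]^{\bG_m}\simeq \C,
\]
since $x$ sits in weight $1$ and only the weight $0$ subspace survives taking $\bG_m$-invariants. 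In both cases, the pullback to the origin $\pt\rightarrow Y$ becomes the identity $\C\to\C$, so these pullbacks are isomorphisms, establishing (1).

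For (2), I recall from \cite{ptvv13} that $Y$ is $\cO$-compact if, for any cdga $A$, the complex $\bR\Gamma(Y\times\Spec A, \cO)$ is perfect as an $A$-module and its formation commutes with base change in $A$. Repeating the computation above with coefficients in $A$ in place of $\C$ (again using reductivity of $\bG_m$ to identify $\bR\Gamma$ with the weight zero part, which now is just $A$), I obtain
\[
\bR\Gamma(B\bG_m\times \Spec A, \cO)\simeq A, \qquad \bR\Gamma([\bA^1/\bG_m]\times \Spec A, \cO)\simeq A,
\]
each of which is perfect over $A$ and whose formation evidently commutes with base change. Taking tensor products handles the general finite product. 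I do not expect a serious obstacle here: the key input is simply the reductivity of $\bG_m$, which makes all invariant-theoretic computations exact, and the only care needed is to check that the weight grading arguments interact correctly with the base cdga $A$, which they do because $A$ is placed in weight $0$.
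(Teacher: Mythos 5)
Your treatment of part (1) is essentially correct and is in the same spirit as the paper's proof: the paper also reduces to the factors $B\bG_m$ and $[\bA^1/\bG_m]$ (via $\QCoh(Y_1\times Y_2)\simeq \QCoh(Y_1)\otimes\QCoh(Y_2)$, using perfectness of these stacks in the sense of \cite{bzfn}) and then concludes by an explicit computation; your invariant-theoretic computation of the two factors is fine, though the K\"unneth step should be justified by perfectness of the stacks rather than by ``flat base change'' alone.

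Part (2), however, has a genuine gap: the criterion you verify is not the definition of $\cO$-compactness. In \cite{ptvv13}, $Y$ is $\cO$-compact if for every cdga $A$ one has (i) $\cO_{Y\times\Spec A}$ is a \emph{compact object} of $\QCoh(Y\times\Spec A)$, and (ii) $\bR\Gamma(Y\times\Spec A, E)$ is a perfect $A$-module for \emph{every} perfect complex $E$ on $Y\times\Spec A$, not just for $E=\cO$; ``commutes with base change'' is not the issue. Computing $\bR\Gamma(Y\times\Spec A,\cO)\simeq A$ establishes neither (i) nor (ii), so your argument does not prove $\cO$-compactness. To repair it along your lines you would need the explicit description of perfect complexes on $B\bG_m\times\Spec A$ and $[\bA^1/\bG_m]\times\Spec A$ as finite graded, respectively filtered, families $\{E_n\}$ of perfect $A$-modules (Proposition \ref{prop:filteredperfect}), from which $\bR\Gamma(-,E)\simeq E_0$ is perfect and $\Hom(\cO,-)$ commutes with colimits, giving (i) and (ii). The paper takes a different route entirely: it invokes the existence of a colimit-preserving left adjoint to $p^*\colon\QCoh(\pt)\to\QCoh(Y)$ (from \cite{hlp23}) and then concludes $\cO$-compactness from \cite[Proposition 1.4]{ns23}, which also handles the finite product at one stroke.
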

\begin{proof}
Let $p\colon Y\rightarrow \pt$ be the projection. By \cite{bzfn} the stack $Y$ is perfect and hence $p_*$ is colimit-preserving.

To prove the first statement we have to show that $\id\rightarrow p_* p^*$ is an isomorphism. Since $\QCoh(Y_1\times Y_2)\simeq \QCoh(Y_1)\otimes \QCoh(Y_2)$ by \cite[Theorem 4.7]{bzfn}, it is enough to show this claim for $Y=B\bG_m$ and $Y=[\bA^1/\bG_m]$, where it is obvious by an explicit computation.

By \cite[Theorem 4.2.1, Theorem 2.4.3 and Proposition 5.1.6]{hlp23} the pullback $p^*\colon \QCoh(\pt)\rightarrow \QCoh(Y)$ admits a colimit-preserving left adjoint. Therefore, $Y$ is $\cO$-compact by \cite[Proposition 1.4]{ns23}.
\end{proof}

In particular, there is a natural isomorphism
\[[B\bG_m]\colon \bR\Gamma(B\bG_m, \cO)\longrightarrow \C.\]

\begin{prop}\label{prop:BGmorientation}
The morphism $[B\bG_m]\colon \bR\Gamma(B\bG_m, \cO)\rightarrow \C$ defines a $0$-orientation on $B\bG_m$.
\end{prop}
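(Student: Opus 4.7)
I would verify the two conditions defining a $0$-orientation in the AKSZ sense of \cite{ptvv13}: that $B\bG_m$ is $\cO$-compact (already settled by Lemma \ref{lem:positiveweightfunctions}(2)), and that the pairing induced by $[B\bG_m]$ is non-degenerate on perfect complexes, i.e.\ for every $E\in\Perf(B\bG_m)$ the composition
\[
\bR\Gamma(B\bG_m, E)\otimes \bR\Gamma(B\bG_m, E^\vee)\longrightarrow \bR\Gamma(B\bG_m, \cO)\xrightarrow{[B\bG_m]}\C
\]
exhibits $\bR\Gamma(B\bG_m, E^\vee)$ as the $\C$-linear dual of $\bR\Gamma(B\bG_m, E)$.

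The plan is to unwind both sides using the weight decomposition. A perfect complex on $B\bG_m$ is the same as a perfect complex with a $\bG_m$-action, which decomposes as $E \simeq \bigoplus_{n \in \bZ} E_n$ with $E_n \in \Perf_{\C}$ and only finitely many $E_n$ nonzero. Taking invariants picks out the weight-zero part, so
\[
\bR\Gamma(B\bG_m, E) \simeq E_0.
\]
Since duality on $\Perf(B\bG_m)$ sends $E$ to $E^\vee$ with $(E^\vee)_n = E_{-n}^{\vee}$, one similarly has $\bR\Gamma(B\bG_m, E^\vee) \simeq E_0^\vee$. Under the isomorphism of Lemma \ref{lem:positiveweightfunctions}(1), which identifies $[B\bG_m]$ with the obvious projection to the weight-zero part followed by the identity on $\C$, the pairing in question becomes the standard evaluation $E_0 \otimes E_0^\vee \to \C$, which is manifestly perfect.

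The only slightly subtle point is checking that the map $[B\bG_m]$ coming from Lemma \ref{lem:positiveweightfunctions}(1) really agrees, under the weight decomposition, with projection onto the weight-zero summand: this follows because the isomorphism $\bR\Gamma(B\bG_m, \cO) \simeq \C$ is obtained from pullback along the basepoint $\pt \to B\bG_m$, which factors through the weight-zero part by construction. With this identification in hand, the non-degeneracy is immediate from the preceding paragraph, so $[B\bG_m]$ defines a $0$-orientation. There is no serious obstacle here; the statement is essentially a restatement of the fact that $\Perf(B\bG_m)$ is a semisimple $\Z$-graded category under the weight decomposition.
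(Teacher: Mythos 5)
Your computation is the right one, but as written it only verifies the \emph{absolute} case of the orientation condition, and that is not what the definition (in the sense of \cite{ptvv13,chs21}, which is what the paper uses) asks for. A $0$-orientation requires that for \emph{every} connective cdga $R$ and every $E\in\Perf(B\bG_m\times\Spec R)$ the induced map $\bR\Gamma(B\bG_m, E)\rightarrow \bR\Gamma(B\bG_m, E^\vee)^\vee$ be an equivalence of $R$-modules; this relative, in-families non-degeneracy is exactly what makes the AKSZ construction (Corollary \ref{cor:grad_symplectic_lagrangian}) produce genuinely symplectic, rather than merely presymplectic, structures on $\Grad(\fX)=\Map(B\bG_m,\fX)$. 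You check it only for $E\in\Perf(B\bG_m)$, i.e.\ for $R=\C$, and your closing justification --- that $\Perf(B\bG_m)$ is ``semisimple $\Z$-graded'' --- is a statement specific to the base field and would not be the correct reason in families.

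The gap is easily repaired, and the repair is precisely the paper's proof: by Proposition \ref{prop:filteredperfect}, $\Perf(B\bG_m\times\Spec R)$ is identified, for any connective cdga $R$, with $\Z$-indexed families $\{E_n\}$ of perfect $R$-complexes with only finitely many nonzero terms; duality sends $n\mapsto (E_{-n})^\vee$, $\bR\Gamma(B\bG_m,-)$ extracts the weight-zero piece $E_0$, and the pairing becomes the canonical map $E_0\rightarrow ((E_0)^\vee)^\vee$, which is an equivalence since $E_0$ is a perfect $R$-module. So your weight-decomposition strategy is the same as the paper's; you just need to run it over an arbitrary $R$ (where ``perfect'' replaces ``semisimple'' as the reason the double-dual map is invertible) rather than only over $\C$.
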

\begin{proof}
We have to show that, for every connective commutative dg algebra $R$ and a perfect complex $E\in\Perf(B\bG_m\times \Spec R)$, the induced morphism
\[\bR\Gamma(B\bG_m, E)\longrightarrow \bR\Gamma(B\bG_m, E^\vee)^\vee\]
is an isomorphism.

Using Proposition \ref{prop:filteredperfect} we identify $\Perf(B\bG_m\times \Spec R)$ with collections of perfect complexes $\{E_n\}_{n\in\Z}$ of $R$-modules, where only finitely many terms are nonzero. The dual of $E\in \Perf(B\bG_m\times \Spec R)$ corresponding to $n\mapsto E_n$ is given by $E^\vee\in\Perf(B\bG_m\times \Spec R)$ which corresponds to $n\mapsto (E_{-n})^\vee$ with the obvious evaluation map. We have $\bR\Gamma(B\bG_m, E)\simeq E_0$ and the induced morphism above is the canonical isomorphism
\[E_0\longrightarrow ((E_0)^\vee)^\vee.\]
\end{proof}

\begin{cor}\label{cor:grad_symplectic_lagrangian}
Let $(\fX, \omega_{\fX})$ be a $k$-shifted symplectic stack. Then $u^\star\omega_{\fX}$ defines a $k$-shifted symplectic structure on the stack $\Grad(\fX)$ of graded objects. Let $\tau\colon \fL\rightarrow \fX$ be a $k$-shifted Lagrangian morphism specified by a homotopy $\tau^\star \omega_{\fX}\sim 0$. Then its restriction under $u^\star$ defines a $k$-shifted Lagrangian structure on the morphism $\Grad(\tau)\colon \Grad(\fL)\rightarrow \Grad(\fX)$.
\end{cor}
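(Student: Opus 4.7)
The plan is to apply the AKSZ construction of \cite[Theorem 2.5]{ptvv13} and its Lagrangian extension \cite[Theorem 2.10]{cal15} to the source stack $Y = B\bG_m$. The two hypotheses are already in hand: by Lemma \ref{lem:positiveweightfunctions}, $B\bG_m$ is $\cO$-compact, and by Proposition \ref{prop:BGmorientation}, it carries a canonical $0$-orientation. Consequently AKSZ produces a canonical $k$-shifted symplectic form on $\Grad(\fX) = \Map(B\bG_m, \fX)$, and likewise a $k$-shifted Lagrangian structure on $\Grad(\tau)\colon \Grad(\fL) \rightarrow \Grad(\fX)$ whenever $\tau$ is Lagrangian.

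What remains is to identify the AKSZ symplectic form with the concrete pullback $u^\star \omega_{\fX}$. The AKSZ transgression factors as
\[
\cA^{2, \cl}(\fX, k) \xrightarrow{\ev^\star} \cA^{2, \cl}(\Grad(\fX) \times B\bG_m, k) \xrightarrow{(\pi_{\Grad(\fX)})_\ast} \cA^{2, \cl}(\Grad(\fX), k),
\]
with the pushforward computed against the orientation class $[B\bG_m]$. Because this class is the canonical equivalence $\bR\Gamma(B\bG_m, \cO) \simeq \C$ rather than a genuine non-trivial pairing, the pushforward reduces to extracting the $\bG_m$-invariant (equivalently, weight-zero) component, which is precisely the description of $\bL_{\Grad(\fX)}$ given by Proposition \ref{prop:GradFiltcotangent}. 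Consequently the transgressed form coincides with $u^\star \omega_\fX$, and its non-degeneracy follows because the $\bG_m$-equivariant isomorphism $u^\star \omega_\fX^\sharp \colon u^\ast \bT_{\fX} \to u^\ast \bL_{\fX}[k]$ restricts on weight-zero parts to the induced sharp map $\bT_{\Grad(\fX)} \to \bL_{\Grad(\fX)}[k]$.

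For the Lagrangian statement, the isotropic homotopy
\[
\Grad(\tau)^\star u^\star \omega_{\fX} \simeq u_{\fL}^\star \tau^\star \omega_\fX \sim 0
\]
is obtained from $\tau^\star \omega_\fX \sim 0$ using the commuting square relating the two evaluation-at-basepoint maps $u_{\fL}$ and $u_{\fX}$, and the non-degeneracy condition for the Lagrangian reduces once again to the weight-zero part of the corresponding non-degeneracy for $\tau$ via Proposition \ref{prop:GradFiltcotangent}.

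The only real obstacle is the bookkeeping of identifying the AKSZ transgression with the naive pullback $u^\star$; once it is noted that the $0$-orientation on $B\bG_m$ is an isomorphism of global sections, so that integration over $B\bG_m$ is simply the functor of $\bG_m$-invariants, this identification becomes essentially tautological.
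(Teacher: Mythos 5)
Your proposal is correct and follows essentially the same route as the paper: apply the AKSZ construction to the $\cO$-compact, $0$-oriented stack $B\bG_m$ (Lemma \ref{lem:positiveweightfunctions}, Proposition \ref{prop:BGmorientation}) and then observe that, because the orientation $[B\bG_m]$ is restriction to the basepoint, the transgressed form is literally $u^\star\omega_{\fX}$, with the Lagrangian case handled analogously. Your additional direct verification of non-degeneracy via the weight-zero description of Proposition \ref{prop:GradFiltcotangent} is redundant (AKSZ already supplies it) but correct.
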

\begin{proof}
Since $B\bG_m$ is $0$-oriented by Proposition \ref{prop:BGmorientation}, applying the AKSZ construction \cite[Theorem 2.5]{ptvv13} to the stack $\Grad(\fX)=\Map(B\bG_m, \fX)$ we obtain a $k$-shifted symplectic structure obtained by the composite
\begin{align*}
\C[2-k](2)&\xrightarrow{[\omega_\fX]} \bDR(\fX)\\
&\xrightarrow{\ev^\star}\bDR(B\bG_m\times \Grad(\fX))\\
&\xrightarrow{\kappa_{\fX, B\bG_m}} \bR\Gamma(B\bG_m, \cO)\otimes \bDR(\Grad(\fX))\\
&\xrightarrow{[B\bG_m]\otimes \id} \bDR(\Grad(\fX))
\end{align*}
where $\ev\colon B\bG_m\times \Grad(\fX)\rightarrow \fX$ is the evaluation map. By construction the orientation $[B\bG_m]\colon \bR\Gamma(B\bG_m, \cO)\rightarrow \C$ is given by the pullback along $i\colon \pt\rightarrow B\bG_m$. Unpacking the construction of the map $\kappa_{\fX, B\bG_m}$ given in \cite{ptvv13}, we see that the composite
\begin{align*}
\bDR(B\bG_m\times \Grad(\fX))&\xrightarrow{\kappa_{\fX, B\bG_m}} \bR\Gamma(B\bG_m, \cO)\otimes \bDR(\Grad(\fX))\\
&\xrightarrow{i^\star\otimes \id} \bDR(\Grad(\fX))
\end{align*}
is equivalent to
\[\bDR(B\bG_m\times \Grad(\fX))\xrightarrow{(i\times \id)^\star} \bDR(\Grad(\fX)).\]
But $\ev\circ (i\times \id)\colon \fX\rightarrow \Grad(\fX)$ is precisely $u$. The claim for a Lagrangian $\fL\rightarrow \fX$ is proven analogously.
\end{proof}

We will now show that the shifted symplectic structure on $\Grad(\fX)$ defined in Corollary \ref{cor:grad_symplectic_lagrangian} is compatible with taking derived critical loci. Let $\fY$ be a derived higher Artin stack locally of finite presentation and $f \colon \fY \to \bA^1[n]$ be an $n$-shifted function.
Set $\fX \coloneqq \Crit(f)$ and let $\omega_{\fX}$ be the $(n - 1)$-shifted symplectic structure in Example \ref{ex:critical}.

\begin{lem}\label{lem:crit_localize}
    There exists a natural equivalence of $(n-1)$-shifted symplectic stacks
    \[
        (\Grad(\fX), u^{\star} \omega_{\fX}) \simeq (\Crit(f |_{\Grad(\fY)} ), \omega_{\Crit(f |_{\Grad(\fY)} )}) 
    \]
    where $\omega_{\Crit(f |_{\Grad(\fY)}) }$ is the $(n-1)$-shifted symplectic structure constructed in Example \ref{ex:critical}.
\end{lem}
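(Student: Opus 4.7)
The plan is to exploit the fact that $\Grad = \Map(B\bG_m, -)$ is a mapping stack functor and hence preserves arbitrary limits, and to apply it to the defining Cartesian square of $\Crit(f)$. Applying $\Grad$ to the pullback square in Example~\ref{ex:critical} yields a Cartesian square
\[
\xymatrix{
\Grad(\Crit(f)) \ar[r] \ar[d] \pbcorner & \Grad(\fY) \ar[d]^-{\Grad(df)} \\
\Grad(\fY) \ar[r]^-{\Grad(0)} & \Grad(T^*[n]\fY).
}
\]
The first key step is to identify $\Grad(T^*[n]\fY)$ with $T^*[n]\Grad(\fY)$. Since $T^*[n]\fY = \Tot_{\fY}(\bL_{\fY}[n])$, a $\bC$-point of $\Grad(T^*[n]\fY)$ is a pair consisting of a graded point $\phi \colon B\bG_m \to \fY$ together with a global section of $\phi^* \bL_{\fY}[n]$ on $B\bG_m$; such sections pick out the weight-zero part $(\phi^* \bL_{\fY}[n])^0$. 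Globally this gives $\Grad(T^*[n]\fY) \simeq \Tot_{\Grad(\fY)}((u^* \bL_{\fY}[n])^0)$, which by Proposition~\ref{prop:GradFiltcotangent} is naturally equivalent to $T^*[n]\Grad(\fY)$. Under this identification, $\Grad(0)$ becomes the zero section, and by naturality of the de Rham differential $\ddr$, the morphism $\Grad(df)$ becomes the differential $d(f \circ u) = d(f|_{\Grad(\fY)})$. Hence the Cartesian square above rewrites as the defining pullback square of $\Crit(f|_{\Grad(\fY)})$, producing an equivalence of underlying derived stacks $\Grad(\Crit(f)) \simeq \Crit(f|_{\Grad(\fY)})$.

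The second step is to match the $(n-1)$-shifted symplectic structures. On the right-hand side, $\omega_{\Crit(f|_{\Grad(\fY)})}$ is defined via the intersection of the two Lagrangians $0, d(f|_{\Grad(\fY)}) \colon \Grad(\fY) \to T^*[n]\Grad(\fY)$, both coming from $\ddr \lambda_{T^*[n]\Grad(\fY)}$ of the tautological Liouville $n$-form. On the left-hand side, $u^\star \omega_{\fX}$ is the pullback under $u \colon \Grad(\fX) \to \fX$, which by Corollary~\ref{cor:grad_symplectic_lagrangian} is obtained by AKSZ transfer from the $B\bG_m$-orientation. The required compatibility reduces to checking that the identification $\Grad(T^*[n]\fY) \simeq T^*[n]\Grad(\fY)$ sends the pullback $u^\star \lambda_{T^*[n]\fY}$ (i.e.\ the AKSZ-transferred Liouville form) to $\lambda_{T^*[n]\Grad(\fY)}$. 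This follows from the naturality of the Liouville construction: the Liouville form on $T^*[n]\fY$ is the universal section of $\bL_{\fY}[n]$ pulled back to its total space, and restricting to weight zero along $u$ recovers exactly the universal section of $\bL_{\Grad(\fY)}[n]$.

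Assembling these two steps, the fibre-product description of shifted symplectic structures on derived critical loci gives an equivalence of $(n-1)$-shifted symplectic forms, together with the equivalence of underlying stacks.

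The main technical obstacle is bookkeeping the Lagrangian structures: one must verify that under the equivalence $\Grad(T^*[n]\fY) \simeq T^*[n]\Grad(\fY)$, the Lagrangian structures on $\Grad(0)$ and $\Grad(df)$ obtained by applying $\Grad$ to the canonical Lagrangian structures on $0$ and $df$ agree with the tautological Lagrangian structures of the zero section and $d(f|_{\Grad(\fY)})$ on $T^*[n]\Grad(\fY)$. This boils down to the commutativity $u^\star \circ \ddr \simeq \ddr \circ u^\star$ on $\bDR$ together with functoriality of the graded-mixed de Rham algebra under mapping stacks, but spelling out the homotopies requires some care in tracing through the AKSZ formalism of \cite{ptvv13,cal15}.
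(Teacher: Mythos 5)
Your proposal is correct and follows essentially the same route as the paper: apply $\Grad = \Map(B\bG_m,-)$ to the Lagrangian-intersection (pullback) presentation of $\Crit(f)$, identify $\Grad(T^*[n]\fY)\simeq T^*[n]\Grad(\fY)$ via the weight-zero part of the cotangent complex, and match the structures by observing that the tautological one-form pulls back to the tautological one-form, so the zero section and $\Grad(df)$ become the zero section and $d(f|_{\Grad(\fY)})$ with their canonical Lagrangian structures. The only difference is cosmetic: you spell out the identification of $\Grad$ of the shifted cotangent stack in slightly more detail than the paper does.
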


\begin{proof}

    Recall that the $(-1)$-shifted symplectic structure $\omega_{\fX}$ is constructed using the following Lagrangian intersection diagram:
    \[
    \xymatrix{
    \fX  \ar[r] \ar[d]
    & {\fY} \ar[d]^-{df} \\
    \fY \ar[r]^-{0}
    & T^* \fY.
    }
    \]
    Passing to the mapping stack from $B \bG_m$ and using Corollary \ref{cor:grad_symplectic_lagrangian}, we obtain the following Lagrangian intersection diagram:
     \begin{equation}\label{eq:Lag_intersection_1}
     \begin{aligned}
    \xymatrix{
    \Grad(\fX)   \ar[r] \ar[d]
    & {\Grad(\fY)} \ar[d]^-{\Grad(df)} \\
    \Grad(\fY) \ar[r]^-{0}
    & \Grad(T^* [n] \fY ).
    }
    \end{aligned}
    \end{equation}
    On the other hand, we have the following Lagrangian intersection diagram:
     \begin{equation}\label{eq:Lag_intersection_2}
          \begin{aligned}
    \xymatrix{
    \Crit(f |_{\Grad(\fY)}) \ar[r] \ar[d]
    & {\Grad(\fY)} \ar[d]^-{d \Grad(f)} \\
    \Grad(\fY) \ar[r]^-{0}
    & T^*[n] \Grad(\fY).
    }
         \end{aligned}
    \end{equation}
    
    It is enough to show that the diagrams \eqref{eq:Lag_intersection_1} and \eqref{eq:Lag_intersection_2} are identified as Lagrangian intersection diagrams.
    Let $\gamma_{\fY}$ (resp. $\gamma_{\Grad(\fY)}$) be the tautological one-form on $T^*[n] \fY$ (resp. $T^*[n] \Grad(\fY)$).
    Then we have a natural identification
    $u^{\star} \gamma_{\fY} \simeq \gamma_{\Grad(\fY)}$
    under the equivalence 
    $T^*[n] \Grad(\fY) \simeq \Grad(T^*[n] \fY)$,
    which implies that it preserves the canonical $n$-shifted symplectic structures.
    It is obvious to show that the Lagrangian structures are identified as well.

\end{proof}

It is obvious that the point $\pt$ has a canonical $0$-orientation
\[[\pt]\colon \bR\Gamma(\pt, \cO)\longrightarrow \C.\]
Now consider the cospan
\[
\xymatrix{
& [\bA^1 / \bG_m]  & \\
B\bG_m \ar^{i_0}[ur] && \pt \ar_{i_1}[ul]
}
\]
of derived stacks.

\begin{prop}\label{prop:orientedcospan}
There is a unique structure of a $0$-preoriented cospan on
\[
\xymatrix{
& [\bA^1 / \bG_m]  & \\
B\bG_m \ar^{i_0}[ur] && \pt \ar_{i_1}[ul]
}
\]
which restricts to the orientations $[B\bG_m]$ and $[\pt]$ on $B\bG_m$ and $\pt$. Moreover, this cospan is oriented.
\end{prop}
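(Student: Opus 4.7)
The plan is to reduce both claims to the trivial computation that $\bR\Gamma$ of the structure sheaf on each of $B\bG_m$, $[\bA^1/\bG_m]$, and $\pt$ is $\bC$, with all restriction maps the identity. First I would apply Proposition~\ref{prop:filteredperfect} to identify $\cO_{[\bA^1/\bG_m]}$ with the tower $\{V_n\}_{n\in\bZ}$ having $V_n = \bC$ for $n \leq 0$, $V_n = 0$ for $n > 0$, and all transition maps the identity. Under this description, the derived global sections functor returns the value at $n = 0$, i.e.\ $\bR\Gamma([\bA^1/\bG_m],\cO) \simeq \bC$; the restriction $i_0^{*}$ extracts the graded piece $\cofib(V_1 \to V_0) = \bC$; and the restriction $i_1^{*}$ extracts $\colim_n V_n = \bC$. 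Both pullback maps coincide with $\id_{\bC}$.

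Next I would deduce uniqueness of the $0$-preoriented cospan structure by a contractibility argument. Recall from \cite{chs21} that such a structure extending $[B\bG_m]$ and $[\pt]$ consists of a map $\bR\Gamma([\bA^1/\bG_m],\cO) \to \bC$ together with homotopies rendering the two pullback triangles commutative. Because all objects involved are $\bC$ and all restriction maps are identities, the relevant mapping space of such compatible data is contractible; in particular the preoriented cospan structure exists and is unique.

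For the orientation claim I would verify the non-degeneracy condition by a direct computation on the tower side. Given $R$ a connective cdga and $E \in \Perf([\bA^1/\bG_m]\times\Spec R)$ corresponding to a tower $\{E_n\}$ of perfect $R$-modules, Proposition~\ref{prop:filteredperfect} yields $\bR\Gamma([\bA^1/\bG_m]\times\Spec R, E) \simeq E_0$, $\bR\Gamma(B\bG_m\times\Spec R, i_0^{*}E) \simeq \cofib(E_1 \to E_0)$, and $\bR\Gamma(\Spec R, i_1^{*}E) \simeq E_{-\infty} \coloneqq \colim_n E_n$. Under the Day convolution symmetric monoidal structure the dual $E^{\vee}$ corresponds to the tower $n \mapsto E_{-n}^{\vee}$, and $\bR\Gamma([\bA^1/\bG_m]\times\Spec R, E^{\vee}) \simeq E_0^{\vee}$ paired with $E_0$ tautologically under the orientation. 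The required Serre-duality fibre sequence then reduces to the elementary biCartesian square relating $E_0$, $\cofib(E_1 \to E_0)$, $E_{-\infty}$, and $\cofib(E_1 \to E_{-\infty})$, which is formal. The hard part will be unraveling the Day convolution symmetric monoidal structure precisely enough to identify the dual tower and the resulting Serre-duality pairing with the pairings induced by $[B\bG_m]$ and $[\pt]$; once that bookkeeping is carried out, non-degeneracy is a tautology and the cospan is oriented.
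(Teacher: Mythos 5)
The first half of your argument (existence and uniqueness of the preorientation) is fine and is essentially the paper's argument: both reduce to the observation that $\bR\Gamma(-,\cO)$ of the cospan is $\C\leftarrow\C\rightarrow\C$ with identity maps. The problem is in the non-degeneracy step. Your formula for the dual, ``$E^{\vee}$ corresponds to the tower $n\mapsto E_{-n}^{\vee}$'', is the formula for \emph{graded} objects on $B\bG_m$ (it is exactly what the paper uses in Proposition \ref{prop:BGmorientation}); it is false for \emph{filtered} objects on $[\bA^1/\bG_m]$. For instance, for $E=\cO(1)$ (the tower which is $\C$ in weights $\leq 1$ and $0$ above, with identity transitions) the actual dual is $\cO(-1)$, whereas $n\mapsto E_{-n}^{\vee}$ is $\C$ in weights $\geq -1$, which is not even an allowed object of $\Perf([\bA^1/\bG_m]\times\Spec R)$. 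Consequently your identification $\bR\Gamma([\bA^1/\bG_m]\times\Spec R,E^{\vee})\simeq E_0^{\vee}$ is wrong: the correct answer is $\cofib(E_1\to E_{-\infty})^{\vee}$, so the fourth corner of the square is $\cofib(E_1\to E_{-\infty})$, not $(E_0^{\vee})^{\vee}\simeq E_0$. With your formula the square would already fail to be Cartesian for $E=\cO(1)$ (one would get $\C\to\C$ over $0\to\C$). Your final sentence silently uses the correct corner $\cofib(E_1\to E_{-\infty})$, but that does not follow from what you wrote; and the remaining issue — checking that the maps of the Cartesian square produced by the orientation pairing really are the canonical maps $E_0\to E_{-\infty}$, $E_0\to\cofib(E_1\to E_0)$ and the induced map of cofibres — is exactly what you defer as ``the hard part''. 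That identification is the substantive content of the statement, so as written the proof has a genuine gap in addition to the incorrect duality formula.

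For comparison, the paper avoids computing the dual of a general filtered object altogether: it uses the fibre sequence $\tau_{\geq 0}E\to E\to\tau_{\leq -1}E$ splitting $E$ by weight of the associated graded, stability, and the symmetry of the square under $E\mapsto E^{\vee}$ to reduce to a filtered object whose associated graded is a single $V\in\Perf_R$ in weight $n\geq 0$; in those cases all four corners and all maps are immediate ($V\to V$ over $0\to 0$ for $n>0$, and $V\to V$ over $V\to (V^{\vee})^{\vee}$ for $n=0$). If you want to keep your direct approach, you must first establish the correct formula $(E^{\vee})_n\simeq\fib\bigl(E_{-\infty}^{\vee}\to E_{1-n}^{\vee}\bigr)$ from the Day convolution internal hom and then actually trace the pairing through it; otherwise the dévissage argument is the cleaner route.
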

\begin{proof}
By Lemma \ref{lem:positiveweightfunctions} the functor $\bR\Gamma(-, \cO)$ applied to this cospan gives the correspondence
\[
\xymatrix{
& \C \ar[dl] \ar[dr] & \\
\C && \C
}
\]
In particular, the existence and uniqueness of a preorientation is obvious.

Let $R$ be a connective commutative dg algebra and $E\in\Perf([\bA^1/\bG_m]\times\Spec R)$, i.e. a filtered object $\{E_n\}$ by Proposition \ref{prop:filteredperfect}. By \cite[Definition 2.5.4]{chs21} we have to show that the diagram
\[
\xymatrix{
\bR\Gamma([\bA^1/\bG_m], E) \ar[r] \ar[d] & \bR\Gamma(\pt, i_1^* E) \ar[d] \\
\bR\Gamma(B\bG_m, i_0^* E) \ar[r] & \bR\Gamma([\bA^1/\bG_m], E^\vee)^\vee
}
\]
is Cartesian. Any filtered object $E$ fits into a fibre sequence
\[\tau_{\geq 0} E\rightarrow E\rightarrow \tau_{\leq -1} E,\]
where the associated graded of $\tau_{\geq 0} E$ is concentrated in weights $\geq 0$ and the associated graded of $\tau_{\leq -1} E$ is concentrated in weights $\leq -1$. By stability, it is enough to prove the claim for filtered objects $E$ with associated graded concentrated in weights $\geq 0$ and in weights $\leq -1$. Replacing $E$ by $E^\vee$ and using the symmetry of the diagram, it is enough to prove the claim for filtered objects whose associated graded is concentrated in weights $\geq 0$. Further, by stability, it is enough to prove the claim for filtered objects whose associated graded is $V\in\Perf_R$ concentrated in a single weight $n\geq 0$.

If $n > 0$, the diagram is
\[
\xymatrix{
V \ar^{\id}[r] \ar[d] & V \ar[d] \\
0 \ar[r] & 0
}
\]
while for $n=0$ the diagram is
\[
\xymatrix{
V \ar^{\id}[r] \ar^{\id}[d] & V \ar[d] \\
V \ar[r] & (V^\vee)^\vee
}
\]
Both of these are obviously Cartesian.
\end{proof}

\begin{rmk}
The proof of the above proposition is sketched in \cite[Lemma 7.4]{hp23}.
While we were preparing this paper, the same statement was proved in \cite[Theorem 3.1.5]{Bu24}.
\end{rmk}

\begin{cor}\label{cor:lagattractorcorrespondence}
Let $(\fX, \omega_{\fX})$ be a $k$-shifted symplectic stack. Then the attractor correspondence
\[
\xymatrix{
{}
& \Filt(\fX)  \ar[ld]_-{\gr} \ar[rd]^-{\ev}
& {} \\
\Grad(\fX)
& {}
& \fX
}
\]
carries a natural structure of a $k$-shifted Lagrangian correspondence.
\end{cor}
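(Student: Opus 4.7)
The strategy is to apply the AKSZ transfer principle for oriented cospans, exactly as in the proof of Corollary \ref{cor:grad_symplectic_lagrangian} but one categorical level up. Recall from \cite{cal15,chs21} (generalizing \cite[Theorem 2.5]{ptvv13}) that any $d$-oriented cospan
\[
\xymatrix{
& Y & \\
B \ar[ur] && A \ar[ul]
}
\]
of $\cO$-compact derived stacks induces, for every $k$-shifted symplectic stack $(\fX,\omega_\fX)$, a natural $(k-d)$-shifted Lagrangian correspondence structure on
\[
\Map(B,\fX) \longleftarrow \Map(Y,\fX) \longrightarrow \Map(A,\fX).
\]
The $(k-d)$-shifted symplectic structures on the outer two mapping stacks are obtained by transgressing $\omega_\fX$ through the orientations on $B$ and on $A$, and the intermediate Lagrangian homotopy is provided by the preorientation on $Y$ together with the Cartesian diagram that is part of the oriented cospan axiom.

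Plugging in our particular cospan
\[
\xymatrix{
& [\bA^1/\bG_m] & \\
B\bG_m \ar^{i_0}[ur] && \pt \ar_{i_1}[ul]
}
\]
which is $0$-oriented by Proposition \ref{prop:orientedcospan}, and taking $\Map(-,\fX)$, I obtain the attractor correspondence
\[
\Grad(\fX) \xleftarrow{\gr} \Filt(\fX) \xrightarrow{\ev} \fX
\]
together with a $k$-shifted Lagrangian structure. Moreover, the $k$-shifted symplectic structure on the right-hand vertex $\Map(\pt,\fX)=\fX$ produced by AKSZ from the orientation $[\pt]$ is tautologically the original $\omega_\fX$, and by the computation already carried out in the proof of Corollary \ref{cor:grad_symplectic_lagrangian} the one on $\Grad(\fX)$ is $u^\star\omega_\fX$. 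Thus the correspondence produced is exactly the attractor correspondence with the expected boundary symplectic structures.

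The main point that needs to be invoked carefully is the Cartesianness condition for the Lagrangian structure, namely that the induced square
\[
\xymatrix{
\bT_{\Filt(\fX)} \ar[r] \ar[d] & \ev^*\bT_\fX \ar[d] \\
\gr^*\bT_{\Grad(\fX)} \ar[r] & \bL_{\Filt(\fX)}[k]
}
\]
is a pullback. This is a formal consequence of the oriented cospan condition from Proposition \ref{prop:orientedcospan}: pullback of perfect complexes along the cospan is a limit diagram (this is precisely the Cartesian square verified in the proof there), and the cotangent complexes of the mapping stacks are obtained as global sections of pullbacks of $\bL_\fX$, so the required Cartesian square is the $\bR\Gamma$ of the Cartesian square of cotangent pullbacks, twisted by the pairings coming from the orientations.

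The only step requiring genuine verification beyond citing the AKSZ machinery is thus the identification of the outer symplectic structures and of the connecting homotopy on $\Filt(\fX)$, and that identification proceeds by unpacking the orientation map $[B\bG_m]$ as pullback to $\pt$ (as done in Corollary \ref{cor:grad_symplectic_lagrangian}) on one side and as the tautological identity on the $\pt$-side; the two composites $B\bG_m\to[\bA^1/\bG_m]\to\fX$ and $\pt\to[\bA^1/\bG_m]\to\fX$ coming out of a filtered object are, by definition, $\gr$ and $\ev$. I expect no further obstacle: the construction is entirely formal once Proposition \ref{prop:orientedcospan} is in hand.
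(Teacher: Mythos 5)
Your proposal is correct and follows essentially the same route as the paper: the paper's proof simply observes that the attractor correspondence is $\Map(-,\fX)$ applied to the cospan of Proposition \ref{prop:orientedcospan} and then invokes the AKSZ construction for oriented cospans (\cite[Proposition 3.4.2]{chs21}). Your additional unpacking of the Cartesianness condition and the identification of the boundary symplectic structures is consistent with, but not needed beyond, that citation.
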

\begin{proof}
By definition, the attractor correspondence \eqref{eq:grad_filt_diagram} is obtained by applying $\Map(-, \fX)$ to the cospan from Proposition \ref{prop:orientedcospan}. The AKSZ construction \cite[Proposition 3.4.2]{chs21} implies that the attractor correspondence is Lagrangian.
\end{proof}

\begin{cor}
Let $(\fX, \omega_{\fX})$ be a $k$-shifted symplectic stack equipped with a $\bG_m$-action preserving $\omega_{\fX}$. Then the attractor correspondence
\[
\xymatrix{
{}
& \fX^{\mu, +}  \ar[ld]_-{\gr} \ar[rd]^-{\ev}
& {} \\
\fX^{\mu}
& {}
& \fX
}
\]
carries a natural structure of a $k$-shifted Lagrangian correspondence.
\end{cor}
\begin{proof}
By assumption the quotient map $[\fX/\bG_m]\rightarrow B\bG_m$ carries a relative $k$-shifted symplectic structure. Pulling it back along $[\bA^1/\bG_m]\rightarrow B\bG_m$ we obtain the map $[\fX\times \bA^1/\bG_m]\rightarrow [\bA^1/\bG_m]$ with a relative $k$-shifted symplectic structure. Using the oriented cospan $B\bG_m\rightarrow [\bA^1/\bG_m]\leftarrow \pt$ from Proposition \ref{prop:orientedcospan} and applying the relative AKSZ construction from \cite[Theorem 2.34]{cs24} we obtain the result.
\end{proof}

Next, consider the diagram
\[
\xymatrix{
& B\bG_m\times [\bA^1/\bG_m] \ar_{i_0\times \id}[ddr] && [\bA^1/\bG_m] \ar^{\id\times i_1}[ddl] & \\
&& B\bG_m \ar_{\id\times i_1}[ul] \ar^{i_0}[ur] && \\
B\bG_m^2 \ar^{\id\times i_0}[uur] \ar^{i_0\times \id}[ddr] && [\bA^1/\bG_m]^2 && \pt \ar^{i_1}[uul] \ar^{i_1}[ddl] \\
&& B\bG_m \ar^{i_1\times \id}[dl] \ar_{i_0}[dr] && \\
& [\bA^1/\bG_m]\times B\bG_m \ar^{\id\times i_0}[uur] && [\bA^1/\bG_m] \ar_{i_1\times \id}[uul] &
}
\]

\begin{prop}\label{prop:orientediteratedcospan}
There is a unique structure of a $0$-preoriented 2-fold cospan on
\[
\xymatrix{
& (B\bG_m\times [\bA^1/\bG_m])\coprod_{B\bG_m} [\bA^1/\bG_m] \ar[d] & \\
B\bG_m^2 \ar[dr] \ar[ur] & [\bA^1/\bG_m]^2 & \pt \ar[dl] \ar[ul] \\
& ([\bA^1/\bG_m]\times B\bG_m) \coprod_{B\bG_m} [\bA^1/\bG_m] \ar[u] &
}
\]
which restricts to the orientations $[B\bG_m]$ and $[\pt]$ on $B\bG_m$ and $\pt$. Moreover, this 2-fold cospan is oriented.
\end{prop}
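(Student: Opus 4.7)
The plan is to follow the strategy of Proposition \ref{prop:orientedcospan} and reduce non-degeneracy to a bi-filtered version of the same case analysis.

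First I would compute global sections of the structure sheaf on every corner, edge and face of the 2-fold cospan. Since $B\bG_m$ and $[\bA^1/\bG_m]$ are perfect stacks, \cite[Theorem 4.7]{bzfn} gives $\QCoh(Y_1\times Y_2)\simeq \QCoh(Y_1)\otimes \QCoh(Y_2)$; combined with Lemma \ref{lem:positiveweightfunctions} applied to each factor, one obtains $\bR\Gamma(-,\cO)\simeq \C$ on every stack in the diagram, with all transition maps equivalences. Therefore applying $\bR\Gamma(-,\cO)$ produces a trivial 2-fold correspondence of copies of $\C$, so the uniqueness of a $0$-preorientation restricting to $[B\bG_m^2]$ (which by Künneth corresponds to $[B\bG_m]\otimes[B\bG_m]$) and $[\pt]$ is immediate. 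Existence of the preorientation is then obtained by forming the obvious external square of the 1-fold preorientation of Proposition \ref{prop:orientedcospan}: the induced map $\bR\Gamma([\bA^1/\bG_m]^2,\cO)\to \C$ is the product of the 1-fold orientations, and its compatibility with the four boundary 1-cells and two 2-cells follows from naturality of $\kappa_{-,-}$.

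Next I would promote the preorientation to an orientation by verifying the non-degeneracy condition of \cite[Definition 2.5.4]{chs21} in the 2-fold setting. Unwinding the definition, for any connective cdga $R$ and $E\in\Perf([\bA^1/\bG_m]^2\times\Spec R)$ we must show that the induced cubical diagram built from $\bR\Gamma$ of $E$ and $E^\vee$ pulled back to each corner is a limit cube. By Proposition \ref{prop:filteredperfect} (applied twice, using that $[\bA^1/\bG_m]^2\simeq [\bA^1/\bG_m]\times [\bA^1/\bG_m]$ is perfect so perfect complexes on the product are the bounded Day-convolution tensor product), such an $E$ corresponds to a bi-indexed diagram $\{E_{m,n}\}_{m,n\in\bZ}$ of perfect $R$-modules, essentially bounded above and with transition maps to $E_{m,n}$ becoming equivalences for $m,n\ll 0$. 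Using the two-step filtration
\[\tau_{\geq 0}^{(1)}E\longrightarrow E\longrightarrow \tau_{\leq -1}^{(1)}E\]
and the analogous one in the second index, together with the self-duality exchange $E\leftrightarrow E^\vee$ (which swaps weights in both indices) and stability, we reduce to perfect complexes whose associated bi-graded is a single $V\in\Perf_R$ in a fixed bi-weight $(m,n)$ with $m,n\geq 0$.

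In that reduction, exactly as in the 1-fold case, the cube simplifies: if either $m>0$ or $n>0$ the corners indexed by $B\bG_m$ in that coordinate vanish and the limit condition holds trivially; if $(m,n)=(0,0)$ the cube takes the form
\[
\xymatrix@C=0.9cm{
V \ar[r]^{\id} \ar[d]_{\id} & V \ar[d] \\
V \ar[r] & ((V^\vee)^\vee)
}
\]
together with the outer corners $V$ connected by identity maps on the $B\bG_m^2$ side and the canonical biduality on the $\pt$ side, which is obviously a limit diagram. The main obstacle will be the careful bookkeeping of the 2-fold cospan structure—tracking all four 1-cells and the two 2-cells simultaneously under $\Map(-,\fX)$—but since non-degeneracy is checked one bi-weight at a time and the per-weight computation is elementary, no new geometric input beyond Propositions \ref{prop:filteredperfect} and \ref{prop:orientedcospan} is required.
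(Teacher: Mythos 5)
Your proposal is correct and follows essentially the same route as the paper: apply $\bR\Gamma(-,\cO)$ via Lemma \ref{lem:positiveweightfunctions} to get a trivial $2$-fold correspondence of copies of $\C$ (whence uniqueness and existence of the preorientation), then use Proposition \ref{prop:filteredperfect} to identify $E$ with a doubly filtered object and reduce, by filtering in both indices and exchanging $E\leftrightarrow E^\vee$, to associated graded a single $V$ in bi-weight $(n,m)$ with $n,m\geq 0$, finishing with the per-weight case check. The only difference is presentational: the paper writes out the $n,m>0$ diagram and reduces it to a Cartesian square of identities, treating the remaining weights as analogous, whereas you spell out $(0,0)$ and summarize the positive-weight cases; both amount to the same elementary verification.
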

\begin{proof}
By Lemma \ref{lem:positiveweightfunctions} the functor $\bR\Gamma(-, \cO)$ applied to these cospans gives the 2-fold correspondence
\[
\xymatrix{
& \C \ar[dl] \ar[dr] & \\
\C & \C \ar[u] \ar[d] & \C \\
& \C \ar[ul] \ar[ur] &
}
\]
with all maps given by the identities. In particular, the existence and uniqueness of a preorientation is obvious.

Let $R$ be a connective commutative dg algebra and $E\in\Perf([\bA^1/\bG_m]^2\times\Spec R)$, i.e. a doubly filtered object $\{E_{n,m}\}$ by Proposition \ref{prop:filteredperfect}. By \cite[Definition 2.7.5]{chs21} we have to show that the diagram
\[
\xymatrix@C=-1.5cm{
& \bR\Gamma([\bA^1/\bG_m]^2, E) \ar[dl] \ar[dr] & \\
\bR\Gamma((B\bG_m\times [\bA^1/\bG_m])\coprod_{B\bG_m} [\bA^1/\bG_m], E) \ar[d] \ar[drr]|\hole && \bR\Gamma(([\bA^1/\bG_m]\times B\bG_m) \coprod_{B\bG_m} [\bA^1/\bG_m], E) \ar[d] \ar[dll] \\
\bR\Gamma(B\bG_m^2, E) \ar[dr] && \bR\Gamma(\pt, E) \ar[dl] \\
& \bR\Gamma([\bA^1/\bG_m]^2, E^\vee)^\vee &
}
\]
is a limit diagram, where we omit obvious pullbacks of $E$ to the corresponding stacks. As in the proof of Proposition \ref{prop:orientedcospan}, it is enough to assume that the doubly filtered object $E$ has associated graded given by the object $V\in\Perf_R$ concentrated in a single weight $(n, m)$ with $n,m\geq 0$.

If $n, m > 0$, we have to show that the diagram
\[
\xymatrix{
& V \ar[dl] \ar[dr] & \\
V \ar[d] \ar[drr]|\hole && V \ar[d] \ar[dll] \\
0 \ar[dr] && V \ar[dl] \\
& 0 &
}
\]
is a limit diagram, where the nontrivial maps are all the identities. This is reduced to proving that the square
\[
\xymatrix{
V \ar^{\id}[r] \ar^{\id}[d] & V \ar^{\id}[d] \\
V \ar^{\id}[r] & V
}
\]
is Cartesian, which is obvious. The analysis of the other cases (when either $n$ or $m$ is zero) is done analogously.
\end{proof}

\begin{cor}\label{cor:Filt_Gr_twofolded}
Let $(\fX, \omega_{\fX})$ be a $k$-shifted symplectic stack. Then
\begin{equation}\label{eq:GradFiltLagrangiancorrespondence}
\xymatrix{
& \Filt(\Grad(\fX))\times_{\Grad(\fX)} \Filt(\fX) \ar[dl] \ar[dr] & \\
\Grad^2(\fX) & \Filt^2(\fX) \ar[u] \ar[d] & \fX \\
& \Grad(\Filt(\fX))\times_{\Grad(\fX)}\Filt(\fX) \ar[ul] \ar[ur] &
}
\end{equation}
has a natural structure of a 2-fold Lagrangian correspondence.
\end{cor}
\begin{proof}
The 2-fold correspondence in the statement is obtained by applying $\Map(-, \fX)$ to the 2-fold iterated cospan from Proposition \ref{prop:orientediteratedcospan}. The claim then follows from the AKSZ construction \cite[Proposition 3.4.2]{chs21}.
\end{proof}

\subsection{Matching locus}\label{ssec:Matching}

Let $u_1, u_2 \colon \Grad^2(\fX) \to \Grad(\fX)$ be morphisms induced by $\bG_m \times \{ 0 \} \to \bG_m^2$ and $ \{ 0 \} \times \bG_m \to \bG_m^2$, respectively.
We will introduce the notion of matching points:
\begin{defin}
    For a classical Artin stack (resp. derived Artin stack locally of finite presentation), we say that a $\bC$-valued point $x \in \Grad^2(\fX)$ is \defterm{matching} if for each $(m, l) \in \bZ^2$   with $m \cdot l < 0$ and $i = -1, 0, 1$ (resp. for any $i \in \bZ$), the weight $(m, l)$-part with respect to the $\bG_m^2$-action
    \begin{equation}\label{eq:vanishing_cot}
    H^i(\bL_{\fX, u(x)})_{(m, l)}
    \end{equation}
    vanishes.
\end{defin}
Roughly speaking, the matching point is a point where $u_1(x)$ is sufficiently close to $u_2(x)$.
Clearly, the collection of matching points $\Grad^2(\fX)_{\mathrm{mat}} \subset \Grad^2(\fX)$ form an open substack, which we call the matching locus.
The following proposition motivates the definition of the matching points\footnote{We thank Andr\'es Ib\'a\~nez N\'u\~nez for sharing with us the proof of a related statement.}:

\begin{prop}\label{prop:matching_invertible_stack}
   Let $\fX$ be a quasi-separated classical Artin stack with affine stabilizers. Consider the following commutative diagram.
    \[
    \xymatrix{
    {\Filt(\Grad(\fX)) \times_{\Grad(\fX)} \Filt(\fX)  } \ar[rd]
    & {\Filt^2(\fX)} \ar[l]_-{\alpha} \ar[r]^-{\beta}
    & {\Grad(\Filt(\fX)) \times_{\Grad(\fX)} \Filt(\fX)} \ar[ld] \\
    {}
    & {\Grad^2(\fX).}
    & {}
    }
    \]
    Then the maps $\alpha$ and $\beta$ are invertible over $\Grad^2(\fX)_{\mathrm{mat}}$.
    The same statement holds true for derived Artin stacks.
\end{prop}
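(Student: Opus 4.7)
The plan is to establish that, over the matching locus, $\alpha$ is a monomorphism, étale, and induces a bijection on $\bC$-points; the combination of these three properties then forces $\alpha$ to be an isomorphism. The argument for $\beta$ is identical after swapping the roles of the two coordinates.

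First I would reduce to the quotient-stack case. Working étale-locally on $\fX$ via local structure theorems for Artin stacks with affine stabilizers, we may assume $\fX=[X/G]$ with $G$ affine and $X$ a quasi-separated algebraic space. Theorem \ref{thm:Grad_quotient} then decomposes the entire diagram over conjugacy classes of pairs $\hat\lambda=(\lambda_1,\lambda_2)\colon\bG_m^2\to G$, and on the $\hat\lambda$-component the map $\alpha$ becomes the natural arrow
\[
[X^{\hat\lambda,+,+}/G^{++}_{\hat\lambda}]\longrightarrow [(X^{\lambda_1})^{\lambda_2,+}/(G_{\lambda_1})^+_{\lambda_2}]\times_{[X^{\lambda_1}/G_{\lambda_1}]}[X^{\lambda_1,+}/G^+_{\lambda_1}]
\]
sending $f\in\Map^{\bG_m^2}(\bA^2,X)$ to the pair $(f|_{\{0\}\times\bA^1},\,f|_{\bA^1\times\{1\}})$. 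Since a $\bG_m^2$-equivariant map from $\bA^2$ is determined by its value at $(1,1)$, this map is a monomorphism.

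For surjectivity on $\bC$-points over the matching locus, I would pass via an equivariant étale slice to a linear model and decompose $T_{X,\tilde x}=\bigoplus_{(m,\ell)}V_{m,\ell}$ into $\bG_m^2$-weight spaces at a lift $\tilde x\in X^{\hat\lambda}$ of a matching point. The matching condition forces $V_{m,\ell}=0$ whenever $m\ell<0$. A $\bC$-point of the target is represented by a point $y\in X^{\lambda_1,+}$ with $y_1:=\lim_{t\to 0}\lambda_1(t)\cdot y\in(X^{\lambda_1})^{\lambda_2,+}$. Expanding $y=\sum y_{m,\ell}$ in weight components: the $\lambda_1$-attracting condition kills $y_{m,\ell}$ for $m<0$; the $\lambda_2$-attracting condition on $y_1$ kills $y_{0,\ell}$ for $\ell<0$; and matching kills the remaining $y_{m,\ell}$ with $m\geq 1$ and $\ell<0$. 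Hence all surviving weights lie in the non-negative quadrant, so $y\in X^{\hat\lambda,+,+}$. A parallel analysis on the stabilizer groups completes the bijection on $\bC$-points.

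Étaleness is handled by the same weight-space bookkeeping at the level of cotangent complexes. Applying Proposition \ref{prop:GradFiltcotangent} twice identifies the cotangent of $\Filt^2(\fX)$ along its canonical section $\Grad^2(\fX)\to\Filt^2(\fX)$ with the non-positive $\bG_m^2$-weight part of $u^*\bL_{\fX}$, and the analogous computation on the fiber-product target yields a subcomplex that differs precisely in the weight pieces $(m,\ell)$ with $m\ell<0$. These vanish in all relevant cohomological degrees under the matching condition, giving étaleness; the derived case is automatic since \eqref{eq:vanishing_cot} enforces this vanishing for all $i\in\bZ$. The main technical obstacle is coordinating the $\bG_m^2$-weight decompositions of the slice and Lie-algebra contributions on both sides of the comparison; once this bookkeeping is in place, the monomorphism, étaleness, and bijection on $\bC$-points together force $\alpha$ to be an isomorphism over $\Grad^2(\fX)_{\mathrm{mat}}$.
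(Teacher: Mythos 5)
Your overall strategy (show the map is \'etale and bijective on $\bC$-points over the matching locus, then conclude it is an isomorphism) is the same as the paper's, and your \'etaleness computation via Proposition \ref{prop:GradFiltcotangent} at points of the canonical section is in the right spirit. However, the execution has a genuine gap at the very first step: the reduction ``\'etale-locally $\fX=[X/G]$'' is not available under the stated hypotheses. For a quasi-separated Artin stack with merely \emph{affine} stabilizers, the local structure theorems of Alper--Hall--Rydh produce \'etale quotient presentations only near points with \emph{linearly reductive} stabilizers; in the present generality one only gets \emph{smooth} morphisms $[\Spec A/T]\to\fX$ (this is exactly what the paper's Proposition \ref{prop:QCha_atlas} extracts from \cite[Theorem 1.1]{ahr20}), and checking that $\alpha$ is an isomorphism after such a smooth, non-representable cover requires additional descent input that you do not supply. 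The paper deliberately avoids this reduction: it checks \'etaleness of $\alpha_{\mat}$ intrinsically, by computing $H^i(\bL_{\alpha_{\mat}})$ at points of the form $\sigma(y)$ and then spreading out using openness together with \cite[Lemma 1.3.5]{hlp14} (a step you also omit --- you need a reason why it suffices to test at points on the section); it gets bijectivity on $\bC$-points from \cite[Lemma 3.4.4]{hlp14}; and it obtains representability by first applying \cite[\href{https://stacks.math.columbia.edu/tag/05W5}{Tag 05W5}]{stacks-project} to the diagonal of $\alpha_{\mat}$, then again to $\alpha_{\mat}$ itself. Note also that the paper derives the space-level statement (Proposition \ref{prop:matching_invertible}) \emph{from} the stack statement, i.e.\ the reduction you attempt goes in the opposite direction to what is actually feasible.

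A second gap is your monomorphism claim: ``a $\bG_m^2$-equivariant map from $\bA^2$ is determined by its value at $(1,1)$'' is not a valid argument at the stack level, since a point of $\Filt^2(\fX)$ is a map $[\bA^2/\bG_m^2]\to\fX$, which carries bundle/automorphism data not determined by its restriction to the open point (indeed $\ev^{(2)}$ is very far from a monomorphism), and even at the level of the algebraic space $X$ uniqueness of limits along $\bG_m$ requires separatedness, whereas $X$ is only assumed quasi-separated. In the paper the analogous ``injectivity'' is not asserted directly but emerges from the diagonal argument via Tag 05W5. Your weight-space analysis of the attracting conditions for surjectivity on $\bC$-points is correct in a linear model, but without a legitimate reduction to such a model it does not prove the proposition; the paper replaces it by the purely formal Cartesianity statement of \cite[Lemma 3.4.4]{hlp14}.
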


\begin{proof}
    We first prove the statement for classical Artin stacks.
    Let $\alpha_{\mathrm{\mat}}$ denote the base change of $\alpha$ along the open inclusion $\Grad^2(\fX)_{\mat} \to \Grad^2(\fX)$.
    It is enough to prove the invertibility of $\alpha_{\mat}$.
    We first show that $\alpha_{\mat}$ is \'etale.
    To see this,  it is enough to show the vanishing of the cohomology of cotangent complex
    $H^i(\bL_{\alpha_{\mat}} |_{x})$ for each $\bC$-valued points $x \in \Filt^2(\fX)$ over the matching locus and $i = -1, 0, 1$.
    Since the set of $\bC$-valued points $x \in \Filt^2(\fX)$ over the matching locus with vanishing $H^i(\bL_{\alpha_{\mat}} |_{x})$ is open, by using \cite[Lemma 1.3.5]{hlp14}, it is enough to prove the vanishing for points of the form $y = \sigma(x)$ for some $y \in \Grad^2(\fX)$.
    In this case, the claim follows from the identification
    \[
H^i (\bL_{\alpha_{\mat}}|_{\sigma(y)}) \simeq \bigoplus_{m < 0, l > 0} H^i(\bL_{\fX, u(y)})_{(m, l)}
    \]
    and the vanishing \eqref{eq:vanishing_cot}.

    Next, we show that $\alpha_{\mat}$ is bijective on $\bC$-valued points.
    Consider the following diagram of stacks
    \[
    \xymatrix{
    { [0 / \bG_m] \times \{1 \}} \ar[r] \ar[d]
    & {[\bA^1 / \bG_m] \times \{1 \}} \ar[d] \\
    {[0 / \bG_m] \times [\bA^1 / \bG_m]} \ar[r]
    & {[\bA^2 / \bG_m^2]}.
    }
    \]
    where the horizontal arrows are canonical closed inclusions and the vertical arrows are canonical open immersions.
    By considering the mapping spaces from these stacks to $\fX$,
    we obtain the following commutative diagram:
    \[
    \xymatrix{
    {\Filt^2(\fX)}(\bC) \ar[r] \ar[d]
    & {\Filt(\Grad(\fX))(\bC)} \ar[d] \\
    {\Filt(\fX)(\bC)} \ar[r]
    & {\Grad(\fX)(\bC).}
    }
    \]
    It follows from \cite[Lemma 3.4.4]{hlp14} that this diagram is Cartesian over the matching locus as desired.

    Since the diagonal map $\Delta_{\alpha_{\mat}}$ is representable, \'etale and bijective on $\bC$-valued points,
    it is an isomorphism by  \cite[\href{https://stacks.math.columbia.edu/tag/05W5}{Tag 05W5}]{stacks-project}\footnote{Note that a morphism between algebraic spaces  locally of finite type over algebraically closed field $k$ is universally injective if and only if injective on $k$-valued points: see e.g. \cite{114409}. }.
    In particular, $\alpha_{\mat}$ is an \'etale representable morphism.
    Using \cite[\href{https://stacks.math.columbia.edu/tag/05W5}{Tag 05W5}]{stacks-project} again, we conclude that $\alpha_{\mat}$ is an isomorphism.

    The statement for derived Artin stacks follows since $\alpha_{\mat}$ is a classical isomorphism with a vanishing cotangent complex.
\end{proof}

By Corollary \ref{cor:Filt_Gr_twofolded}, we obtain the following corollary:
\begin{cor}\label{cor:matching_composite}
    Assume that $\fX$ is a $(-1)$-shifted symplectic derived Artin stack.
    Then we have a natural equivalence between the following composite Lagrangian correspondences:
    \[
    \xymatrix{
    {}
    & {}
    & {(\Filt(\Grad(\fX)) \times_{\Grad(\fX)} \Filt(\fX) )_{\mathrm{mat}}} \ar[ld] \ar[rd]
    & {}
    & {} \\
    {}
    & {\Filt(\Grad(\fX))_{\mathrm{mat}} } \ar[ld] \ar[rd]
    & {}
    & {\Filt(\fX)} \ar[ld] \ar[rd]
    & {} \\
    {\Grad^2(\fX)_{\mathrm{mat}}}
    & {}
    & {\Grad(\fX)}
    & {}
    & {\fX,}
    }
    \]
       \[
    \xymatrix{
    {}
    & {}
    & {(\Grad(\Filt(\fX)) \times_{\Grad(\fX)} \Filt(\fX) )_{\mathrm{mat}}} \ar[ld] \ar[rd]
    & {}
    & {} \\
    {}
    & {\Grad(\Filt(\fX))_{\mathrm{mat}} } \ar[ld] \ar[rd]
    & {}
    & {\Filt(\fX)} \ar[ld] \ar[rd]
    & {} \\
    {\Grad^2(\fX)_{\mathrm{mat}}}
    & {}
    & {\Grad(\fX)}
    & {}
    & {\fX.}
    }
    \]
    Here $(-)_{\mathrm{mat}}$ denotes the base change along the open inclusion $\Grad^2(\fX)_{\mathrm{mat}} \subset \Grad^2(\fX)$.
    Further, as a correspondence of derived Artin stacks,
    they are naturally equivalent to the following correspondence:
    \[
    \xymatrix{
    {}
    & {\Filt^2(\fX)_{\mathrm{mat}}} \ar[ld] \ar[rd]
    & {} \\
    {\Grad^2(\fX)_{\mathrm{mat}}}
    & {}
    & {\fX.}
    }
    \]
\end{cor}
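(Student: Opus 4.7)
The plan is to combine two inputs: the 2-fold Lagrangian structure on the diagram centered at $\Filt^2(\fX)$ established in Corollary \ref{cor:Filt_Gr_twofolded}, and the invertibility of the two natural maps out of $\Filt^2(\fX)$ over the matching locus proved in Proposition \ref{prop:matching_invertible_stack}.

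First, by the definition of composition of Lagrangian correspondences, the composite of $\Grad^2(\fX)_{\mat} \leftarrow \Filt(\Grad(\fX))_{\mat} \rightarrow \Grad(\fX)$ with $\Grad(\fX)\leftarrow \Filt(\fX)\rightarrow \fX$ is the Lagrangian correspondence
\[\Grad^2(\fX)_{\mat} \longleftarrow \bigl(\Filt(\Grad(\fX))\times_{\Grad(\fX)}\Filt(\fX)\bigr)_{\mat} \longrightarrow \fX,\]
which is exactly the upper Lagrangian leg of the 2-fold Lagrangian correspondence produced by Corollary \ref{cor:Filt_Gr_twofolded}. An analogous identification holds for the second composite correspondence with $\Grad(\Filt(\fX))$ in place of $\Filt(\Grad(\fX))$, matching the lower Lagrangian leg. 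This establishes the first assertion of the corollary.

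For the second assertion, restrict the 2-fold Lagrangian correspondence from Corollary \ref{cor:Filt_Gr_twofolded} to the open substack $\Grad^2(\fX)_{\mat}$. By Proposition \ref{prop:matching_invertible_stack}, both natural maps
\[\alpha\colon \Filt^2(\fX)_{\mat} \longrightarrow \bigl(\Filt(\Grad(\fX))\times_{\Grad(\fX)}\Filt(\fX)\bigr)_{\mat},\qquad \beta\colon \Filt^2(\fX)_{\mat} \longrightarrow \bigl(\Grad(\Filt(\fX))\times_{\Grad(\fX)}\Filt(\fX)\bigr)_{\mat}\]
are isomorphisms of derived Artin stacks. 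Combining this with the 2-fold Lagrangian structure, whose data includes a homotopy witnessing that the two composite Lagrangian structures become equivalent when pulled back to $\Filt^2(\fX)$, we obtain a canonical equivalence of Lagrangian correspondences identifying both composites with
\[\Grad^2(\fX)_{\mat} \longleftarrow \Filt^2(\fX)_{\mat} \longrightarrow \fX\]
equipped with the Lagrangian structure induced from the middle row of the 2-fold diagram.

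The main conceptual point, and the only potential subtlety, is that the 2-fold Lagrangian correspondence provides not merely an equivalence of underlying stacks but precisely the homotopy of Lagrangian data one needs to identify the two composites: this is exactly what a 2-fold Lagrangian structure records, and once $\alpha$ and $\beta$ are invertible the upper and lower composites both degenerate to the common middle row. All other steps are formal consequences of the AKSZ construction invoked in Corollary \ref{cor:Filt_Gr_twofolded} together with the matching-locus invertibility statement already proved.
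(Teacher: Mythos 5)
Your proposal is correct and follows essentially the same route as the paper, which deduces the corollary directly by combining the 2-fold Lagrangian correspondence of Corollary \ref{cor:Filt_Gr_twofolded} with the invertibility of $\alpha$ and $\beta$ over the matching locus from Proposition \ref{prop:matching_invertible_stack}. The only quibble is expository: the first assertion (that the two composites agree) is only fully established once you invoke the invertibility of $\alpha,\beta$ and the 2-fold homotopy, not already after identifying each composite with a leg of the 2-fold diagram.
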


\subsection{Equivariant chart of stacks}\label{ssec:QChart}

In this section, we will introduce the notion of a $\bG_m^n$-equivariant chart of $\fX$, which will be used to define charts for $\Grad(\fX)$.

\begin{defin}$ $

\begin{thmlist}
    \item[(1)]  Let $\fX$ be an Artin stack.
    A \defterm{$\bG_m^n$-equivariant chart} is a tuple $\fQ =
 (X, \mu, \bar{q})$ where $X$ is a quasi-separated algebraic space, 
 $\mu$ is a $\bG_m^n$-action on $X$ and  $\bar{q} \colon [X / \bG_m^n] \to \fX$ is a smooth morphism.

    \item[(2)] A \defterm{morphism of $\bG_m^n$-equivariant charts} from 
    $\fQ_1 = (X_1, \mu_1, \bar{q}_1)$ to $\fQ_2 = (X_2, \mu_2, \bar{q}_2)$ is given by a smooth
    $\bG_m^n$-equivariant morphism $f : X_1 \to X_2$ together with a homotopy $\bar{q}_1 \sim \bar{q}_2 \circ [f / \bG_m^n]$.
    We let $\mathrm{Cha}_{\fX }^{\bG_m^n}$ denote the category of $\bG_m^n$-equivariant charts on $\fX$.

\end{thmlist}

\end{defin}

For a $\bG_m^n$-equivariant chart of $\fX$ we introduce the following additional notation:
\begin{itemize}
    \item We let $q \colon X \to \fX$ be the composite $X\rightarrow [X/\bG_m^n]\xrightarrow{\bar{q}} \fX$.
    \item We let $q_\mu\colon X^\mu\to \Grad(\fX)$ be the composite $X^\mu\rightarrow \Grad([X/\bG_m^n])\xrightarrow{\Grad(\bar{q})} \Grad(\fX)$, where the first morphism is an inclusion of a component as in Theorem \ref{thm:Grad_quotient}. It follows from \cite[Proposition 1.3.1 (4)]{hlp14} that $q_{\mu}$ is a smooth morphism.
\end{itemize}

The following statement will be useful to glue sections of sheaves on $\Grad(\fX)$.

\begin{prop}[{\cite[Lemma 4.4.6]{hlp14}}]\label{prop:QCha_atlas}
Let $\fX$ be a quasi-separated Artin stack with affine stabilizers. Then the morphism
\[
\left (\coprod_{\substack{ \fQ = (X, \mu, \bar{q}) \in \mathrm{Cha}_{\fX}^{\bG_m^n} }} 
X^{\mu}_{}  \right)
\xrightarrow{\coprod q_{\mu}}
\Grad^n(\fX).
\]
is a smooth atlas of $\Grad^n(\fX)$.
\end{prop}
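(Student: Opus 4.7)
The plan is to verify smoothness of each individual $q_\mu$ and joint surjectivity of the family $\{q_\mu\}$ on $\bC$-points.

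First I would check smoothness of $q_\mu$ by factoring it as
\[
X^\mu \longrightarrow [X^\mu/\bG_m^n] \hookrightarrow \Grad^n([X/\bG_m^n]) \xrightarrow{\Grad^n(\bar q)} \Grad^n(\fX),
\]
where the first arrow is a trivial $\bG_m^n$-torsor (as $\bG_m^n$ acts trivially on $X^\mu$), the middle inclusion is open and closed by Theorem \ref{thm:Grad_quotient}, and $\Grad^n(\bar q)$ is smooth because $\Grad^n$ preserves smooth morphisms by \cite[Proposition 1.3.1(4)]{hlp14}.

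The main step is to construct, for each $\xi \in \Grad^n(\fX)(\bC)$, a chart $\fQ \in \mathrm{Cha}_\fX^{\bG_m^n}$ with $\xi$ in the image of $q_\mu$. Given $\xi\colon B\bG_m^n \to \fX$, let $p \coloneqq u(\xi)$ and $H \coloneqq \mathrm{Aut}_\fX(p)$, so that the graded structure yields a homomorphism $\lambda\colon \bG_m^n \to H$. Since every affine group embeds into some $\GL_N$ and $\fX$ has affine stabilizers, I would first produce a smooth chart $[V/\GL_N] \to \fX$ containing $p$, then restrict it to an $H$-invariant neighborhood of a lift of $p$ to obtain a smooth morphism $\bar r\colon [U/H] \to \fX$ with a fixed lift $\hat p \in U^H$. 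Equipping $U$ with the $\bG_m^n$-action $\mu$ induced by $\lambda$, I would define $\bar q\colon [U/\bG_m^n] \to [U/H] \xrightarrow{\bar r} \fX$, so that $\hat p \in U^\mu$ satisfies $q_\mu(\hat p) \cong \xi$ by construction, since the induced homomorphism $\bG_m^n \to \mathrm{Aut}(\hat p) \to \mathrm{Aut}_\fX(p) = H$ is exactly $\lambda$.

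The main obstacle is confirming smoothness of $[U/\bG_m^n] \to [U/H]$. I plan to handle this by factoring $\lambda$ as $\bG_m^n \twoheadrightarrow \bG_m^n/\ker\lambda \hookrightarrow H$: the first surjection induces a gerbe banded by the diagonalizable group $\ker\lambda$, while the second inclusion induces a smooth morphism with fibre $H/(\bG_m^n/\ker\lambda)$ since $\bG_m^n/\ker\lambda$ is a closed subtorus of $H$. Combining these with the smoothness of $\bar r$ yields smoothness of $\bar q$, so $\fQ = (U, \mu, \bar q)$ is a valid $\bG_m^n$-equivariant chart and $\xi$ lies in the image of $q_\mu$, completing the proof that the indicated morphism is a smooth atlas of $\Grad^n(\fX)$.
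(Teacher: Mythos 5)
Your smoothness step is fine and agrees with the paper (smoothness of each $q_\mu$ is already built into the definition of the charts in \S\ref{ssec:QChart}, via \cite[Proposition 1.3.1(4)]{hlp14}), so the whole content of the proposition is surjectivity, and there your argument has a genuine gap. The construction you propose needs a smooth morphism $\bar r\colon [U/H]\to\fX$ with a fixed lift $\hat p\in U^{H}$ that is stabilizer-preserving at $\hat p$, where $H=\mathrm{Aut}_{\fX}(p)$ is the \emph{full} automorphism group of the underlying point. That is precisely a slice/local-quotient theorem for the group $H$, and no such theorem is available: $H$ is only assumed affine, so it may contain a unipotent radical and fail to be linearly reductive, which is exactly the hypothesis the local structure theorems of Alper--Hall--Rydh require. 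The preliminary step is also unjustified: a quasi-separated Artin stack with affine stabilizers need not admit, Zariski- or smooth-locally around a given point, a presentation $[V/\GL_N]$ in which the stabilizer of a lift of $p$ maps isomorphically onto $H$ (a smooth chart does not control the stabilizers this way), so "restricting to an $H$-invariant neighborhood" of a lift does not make sense as stated. Your final smoothness argument for $[U/\bG_m^n]\to[U/H]$ (gerbe for $\ker\lambda$ followed by an $H/\mathrm{im}\,\lambda$-fibration) is fine in characteristic zero, but it is moot because the chart $[U/H]$ is never constructed.

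The fix, which is the route the paper takes, is to observe that only the image of $\lambda$ matters for hitting the graded point $\xi$: set $T\coloneqq\lambda(\bG_m^n)\subseteq H$, a torus, hence linearly reductive. Then \cite[Theorem 1.1]{ahr20} applies directly and produces an affine scheme $\Spec A$ with a $T$-action, a $T$-fixed point $\hat p$, and a smooth morphism $[\Spec A/T]\to\fX$ restricting to the natural map $BT\to BH$ at $\hat p$. Inflating the $T$-action to a $\bG_m^n$-action along the surjection $\bG_m^n\twoheadrightarrow T$ gives a $\bG_m^n$-equivariant chart $(\Spec A,\tilde\mu,\bar q)$, and since the composite $\bG_m^n\to T\subseteq H$ is the original cocharacter $\lambda$, the point $\hat p\in(\Spec A)^{\bG_m^n}$ maps to $\xi$ under $q_{\tilde\mu}$. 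So your overall strategy (smoothness plus pointwise surjectivity via an equivariant chart) is the right one, but the reduction to the linearly reductive subgroup $T$ is the essential idea your proposal is missing.
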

\begin{proof}
Since $q_{\mu}$ is smooth, the morphism in the statement is smooth, so we only need to show that it is surjective.
To see this,
take a point $x \in \fX$ and a map $\iota \colon \bG_m^n \to G_x$ where $G_x$ denote the stabilizer group.
Set $T \coloneqq \iota(\bG_m^n)$. 
Then by using \cite[Theorem 1.1]{ahr20},
we can find an affine scheme $\Spec A$ with a $T$-action $\mu$ together with a point $\hat{x} \in \Spec A^T$ and a smooth morphism $[\Spec A / T] \to \fX$ which restricts to the natural morphism
$BT \to B G_x$ at the point $\hat{x}$.
Let $\tilde{\mu}$ be a $\bG_m^n$-action on $\Spec A$ induced by $\mu$ and $\bar{q} \colon [\Spec A / \bG_m^n] \to \fX$ be the naturally induced morphism.
Then the tuple $(\Spec A, \tilde{\mu}, \bar{q})$ is a $\bG_m^n$-equivariant chart of $\fX$
and the morphism $q_{\tilde{\mu}} \colon \Spec A^{\bG_m^n} \to \Grad(\fX)$ maps $\hat{x}$ to $x$.
\end{proof}

\begin{cor}\label{cor:QCha_glue}
    Let $\fX$ be a quasi-separated Artin stack with affine stabilizers.
    Assume that we are given a lisse-\'etale sheaf $\cF$ on $\Grad^n(\fX)$.
    Assume further that we are given the following data:
    \begin{enumerate}
        \item For each $\fQ = (X, \mu, \bar{q}) \in \mathrm{Cha}_{\fX}^{\bG_m^n}$,
        a choice of a section $s_{\fQ} \in \Gamma(X^{\mu}, q_{\mu}^*\cF)$ which satisfies the following condition:
        For a morphism 
        $f \colon \fQ_1 = (X_1, \mu_1, \bar{q}_1) \to \fQ_2 = (X_2, \mu_2, \bar{q}_2)$ in $\mathrm{Cha}_{\fX }^{\bG_m^n}$,
        the following identity in $\Gamma(X_{1}^{\mu_1}, q_{1, \mu_1}^*\cF)$ holds:
        \begin{equation}\label{eq:overlap_same}
        f^{\bG_m^n, \star} s_{\fQ_2} = s_{\fQ_1}.
        \end{equation}
         Here $f^{\bG_m^n} \colon X_1^{\mu_1} \to X_2^{\mu_2}$ is the induced morphism.
    \end{enumerate}
    Then there exists a section $s \in \Gamma(\Grad(\fX), \cF)$ such that for each $\fQ = (X, \mu, \bar{q}) \in \mathrm{Cha}_{\fX }^{\bG_m^n} $,
    the identity $q_{\mu}^{\star} s = s_{\fQ}$ holds.
\end{cor}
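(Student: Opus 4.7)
The plan is to produce $s$ by smooth descent for sheaves along the atlas $p \colon U := \coprod_{\fQ} X^{\mu} \to \Grad^n(\fX)$ provided by Proposition \ref{prop:QCha_atlas}. The given local sections assemble into $s_U := \coprod_\fQ s_\fQ \in \Gamma(U, p^\star \cF)$, and descent furnishes a unique section $s \in \Gamma(\Grad^n(\fX), \cF)$ satisfying $q_\mu^\star s = s_\fQ$ as soon as the two pullbacks of $s_U$ to $U \times_{\Grad^n(\fX)} U$ along the two projections coincide.

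The fibre product decomposes as $\coprod_{\fQ_1, \fQ_2} X_1^{\mu_1} \times_{\Grad^n(\fX)} X_2^{\mu_2}$, and since agreement of sheaf sections is \'etale local, it is enough to find, for each pair $(\fQ_1, \fQ_2)$, a smooth cover of $X_1^{\mu_1} \times_{\Grad^n(\fX)} X_2^{\mu_2}$ by fixed loci $X_3^{\mu_3}$ coming from a third chart $\fQ_3 = (X_3, \mu_3, \bar{q}_3)$ equipped with morphisms of $\bG_m^n$-equivariant charts $f_i \colon \fQ_3 \to \fQ_i$ for $i = 1, 2$. On each such $X_3^{\mu_3}$, the compatibility \eqref{eq:overlap_same} applied to both $f_1$ and $f_2$ forces both pullback sections to equal $s_{\fQ_3}$, giving the required agreement.

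To construct such charts, form the 2-fibre product $\fZ := [X_1/\bG_m^n] \times_\fX [X_2/\bG_m^n]$, a quasi-separated Artin stack with affine stabilizers carrying a smooth morphism to $\fX$ together with a natural residual $\bG_m^n$-action induced by the diagonal embedding $\bG_m^n \hookrightarrow \bG_m^n \times \bG_m^n$. Applying the same technique as in the proof of Proposition \ref{prop:QCha_atlas} (relying on \cite[Theorem 1.1]{ahr20}) to $\fZ$ equipped with this diagonal action, we obtain $\bG_m^n$-equivariant charts $X_3 \to \fZ$, and the composites with the projections $\fZ \to [X_i/\bG_m^n]$ yield smooth $\bG_m^n$-equivariant morphisms $X_3 \to X_i$ that assemble into morphisms of charts $\fQ_3 \to \fQ_i$.

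The main technical obstacle is verifying that the resulting loci $X_3^{\mu_3}$ smoothly cover the full fibre product $X_1^{\mu_1} \times_{\Grad^n(\fX)} X_2^{\mu_2}$. Since $\Grad^n = \Map(B\bG_m^n, -)$ preserves limits, there is an equivalence $\Grad^n(\fZ) \simeq \Grad^n([X_1/\bG_m^n]) \times_{\Grad^n(\fX)} \Grad^n([X_2/\bG_m^n])$, and by Theorem \ref{thm:Grad_quotient} the substack $[X_1^{\mu_1}/\bG_m^n] \times_{\Grad^n(\fX)} [X_2^{\mu_2}/\bG_m^n]$ is open and closed therein. Proposition \ref{prop:QCha_atlas} applied to $\fZ$ with the diagonal $\bG_m^n$-action then shows that the $X_3^{\mu_3}$ smoothly cover this substack; the required cover of $X_1^{\mu_1} \times_{\Grad^n(\fX)} X_2^{\mu_2}$ is obtained by base-changing along the $\bG_m^n$-torsor $X_i^{\mu_i} \to [X_i^{\mu_i}/\bG_m^n]$ on each factor, and the hardest part of the argument is carefully tracking these torsor structures to conclude that the cover is smoothly surjective.
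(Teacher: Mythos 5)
Your first step (descent of sections along the atlas of Proposition \ref{prop:QCha_atlas}, reducing to agreement of the two pullbacks on each overlap $X_1^{\mu_1}\times_{\Grad^n(\fX)}X_2^{\mu_2}$) is exactly the paper's, but the way you handle the overlaps has a genuine gap. First, the ``residual $\bG_m^n$-action'' on $\fZ=[X_1/\bG_m^n]\times_{\fX}[X_2/\bG_m^n]\cong[Y/\bG_m^n\times\bG_m^n]$ (where $Y=X_1\times_{\fX}X_2$) is not well defined: the residual action lives on $[Y/\bG_m^n_{\Delta}]$, not on the double quotient $\fZ$. More seriously, a chart $[X_3/\bG_m^n]\to\fZ$ produced by \cite{ahr20} only yields, after composing with the projections, morphisms of quotient stacks $[X_3/\bG_m^n]\to[X_i/\bG_m^n]$; these do \emph{not} automatically lift to $\bG_m^n$-equivariant morphisms $X_3\to X_i$ (the obstruction is an equivariant $\bG_m^n$-torsor on $X_3$, equivalently the failure of the two composites to $B\bG_m^n$ to agree), whereas the hypothesis \eqref{eq:overlap_same} can only be invoked for honest morphisms of charts in the sense of the definition. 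Finally, even granting such lifts, the fixed loci $X_3^{\mu_3}$ cover the open and closed substack $[X_1^{\mu_1}/\bG_m^n]\times_{\Grad^n(\fX)}[X_2^{\mu_2}/\bG_m^n]$ of $\Grad^n(\fZ)$, which is the quotient of the desired overlap by $\bG_m^n\times\bG_m^n$; after base-changing along the torsor, the resulting cover is no longer exhibited as the fixed locus of a chart equipped with morphisms to $\fQ_1$ and $\fQ_2$, so \eqref{eq:overlap_same} again does not apply. This is precisely the step you defer as ``the hardest part,'' and as written it does not close.

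The paper avoids all of this by taking the third chart to be $\fR=(Y,\nu,\bar r)$ with $Y=X_1\times_{\fX}X_2$ itself and $\nu$ the diagonal $\bG_m^n$-action: the projections $Y\to X_i$ are then tautologically smooth and $\bG_m^n$-equivariant, so they are morphisms of charts, and applying $\Map(B\bG_m^n,-)$ to the Cartesian square $[Y/\bG_m^n\times\bG_m^n]\cong[X_1/\bG_m^n]\times_{\fX}[X_2/\bG_m^n]$ (using Theorem \ref{thm:Grad_quotient}) identifies $Y^{\nu}\cong X_1^{\mu_1}\times_{\Grad^n(\fX)}X_2^{\mu_2}$ on the nose. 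Both restricted sections then equal $s_{\fR}$ by \eqref{eq:overlap_same}, and the descent datum closes without any appeal to \cite{ahr20}, covering arguments, or torsor bookkeeping. If you replace your construction of $\fQ_3$ by this single chart, your argument becomes the paper's proof.
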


\begin{proof}

Consider the following diagram:
\[
\left(\coprod_{\substack{\fQ_1, \fQ_2 \in \mathrm{Cha}_{\fX}^{\bG_m^n}, \\ \fQ_1 = (X_1, \mu_1, \bar{q}_1), \\
\fQ_2 = (X_2, \mu_2, \bar{q}_2).}} 
X_1^{\mu_1} \times_{\Grad(\fX)} X_2^{\mu_2} \right) \mathrel{\mathop{\rightrightarrows}^{\pr_1}_{\pr_2}}
\left (\coprod_{\substack{\fQ \in \mathrm{Cha}_{\fX}^{\bG_m^n}, \\ \fQ = (X,\mu,  q) }} 
X^{\mu}_{}  \right)
\xrightarrow{\coprod q_{\mu}}
\Grad^n(\fX).
\]
By using the descent of a section of a lisse-\'etale sheaf,
it is enough to prove the identity $\pr_1^{\star} 
 \{ s_{\fQ} \}_{\fQ} = \pr_2^{\star} 
 \{ s_{\fQ} \}_{\fQ}$.
We take objects $\fQ_i = (X_i, \mu_i,  q_i) \in \mathrm{Cha}_{\fX}^{\bG_m^n}$ for $i = 1, 2$.
We set $Y \coloneqq X_1 \times_{\fX} X_2$.
Then $Y$ admits a diagonal action $\nu$ of $\bG_m^n$.
Let $\bar{r} \colon [Y / \bG_m^n] \to \fX$ be the natural morphism and 
define a $\bG_m^n$-equivariant chart $\fR = (Y, \nu, \bar{r})$.
Consider the following Cartesian diagram:
\[
\xymatrix{
{[Y / \bG_m^n \times \bG_m^n]} \ar[r] \ar[d]
& {[X_1 / \bG_m^n]} \ar[d] \\
{[X_2 / \bG_m^n]} \ar[r]
& {\fX.}
}
\]
By taking the mapping stack from $B \bG_m^n$,
we obtain the following Cartesian diagram:
\[
\xymatrix{
{[Y^{\nu} / \bG_m^n \times \bG_m^n]} \ar[r] \ar[d]
& {[X_1^{\mu_1} / \bG_m^n]} \ar[d] \\
{[X_2^{\mu_2} / \bG_m^n]} \ar[r]
& {\Grad(\fX).}
}
\]
In particular, there exists a natural isomorphism $Y^{\nu} \cong X_1^{\mu_1} \times_{\Grad(\fX)} X_2^{\mu_2} $ over $\Grad(\fX)$.
Under this identification,
we have an identity 
\[
\pr_1^{\star} s_{\fQ_1} = s_{\fR} = \pr_2^{\star} s_{\fQ_2}
\]
as desired.

\end{proof}

By a similar argument, we can prove the following statement:

\begin{cor}\label{cor:QCha_glue_stack}
    Let $\fX$ be a quasi-separated Artin stack with affine stabilizers.
    Assume that we are given a lisse-\'etale stack in groupoids $\fF$ on $\Grad^n(\fX)$.
    Assume further that we are given the following data:
    \begin{enumerate}
        \item For each $\fQ = (X, \mu, \bar{q}) \in \mathrm{Cha}_{\fX}^{\bG_m^n}$,
        there exists a section $o_{\fQ} \in \Gamma(X^{\mu}, q_{\mu}^* \fF)$.

        \item For a morphism 
        $f \colon (\fQ_1 = (X_1, \mu_1, \bar{q}_1) ) \to (\fQ_2 = (X_2, \mu_2, \bar{q}_2) )$,
        there exists an isomorphism
        \[
         \Theta_{f} \colon o_{\fQ_1} \simeq f_{}^{\bG_m^n, \star} o_{\fQ_2}
        \]
        such that if we are further given a morphism $g \colon \fQ_2 \to \fQ_3$, the following diagram commutes:
        \begin{equation}\label{eq:assoc_general}
        \begin{aligned}
        \xymatrix{
        o_{\fQ_1} \ar[r]_-{\simeq}^-{\Theta_f} \ar[d]_-{\simeq}^-{\Theta_{g \circ f}}
        & {f_{}^{\bG_m^n, \star} o_{\fQ_2}} \ar[d]_-{\simeq }^-{\Theta_g} \\
        (g_{}^{\bG_m^n} \circ f_{}^{\bG_m^n})^{\star} o_{\fQ_3} \ar[r]^-{\simeq}
        & {f^{\bG_m^n, \star} g_{}^{\bG_m^n, \star} o_{\fQ_3}.}
        }
        \end{aligned}
        \end{equation}
        
    \end{enumerate}
    Then there exists a section $o \in \Gamma(\fX, \fF)$ such that for each $\fQ \in \mathrm{Cha}_{\fX}^{\bG_m^n}$,
    there exists an isomorphism $\Theta_{q} \colon o_{\fQ} \simeq q_{\mu}^{\star} o$ such that the associativity property as in \eqref{eq:assoc_general} holds.
\end{cor}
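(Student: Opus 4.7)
The plan is to mimic the descent argument in Corollary \ref{cor:QCha_glue}, but now for stacks in groupoids, so in addition to producing gluing isomorphisms on double overlaps I must verify a cocycle condition on triple overlaps. By Proposition \ref{prop:QCha_atlas} the family $\{q_\mu\colon X^\mu\to\Grad^n(\fX)\}_{\fQ=(X,\mu,\bar q)}$ is a smooth atlas, so sections of the lisse-\'etale stack $\fF$ may be assembled by stacky descent. As in the proof of Corollary \ref{cor:QCha_glue}, for any two charts $\fQ_i = (X_i,\mu_i,\bar q_i)$ ($i=1,2$) the fibre product $Y_{12}\coloneqq X_1\times_{\fX} X_2$ with the diagonal $\bG_m^n$-action $\nu_{12}$ and the induced smooth morphism $\bar r_{12}\colon[Y_{12}/\bG_m^n]\to\fX$ is again a chart $\fR_{12}$, whose fixed locus $Y_{12}^{\nu_{12}}$ is canonically the double overlap $X_1^{\mu_1}\times_{\Grad^n(\fX)}X_2^{\mu_2}$, and the two projections $\pi_i^{12}\colon\fR_{12}\to\fQ_i$ are morphisms of charts.

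The descent isomorphism on the double overlap is then defined as the composite
\[
\alpha_{12}\colon (\pi_1^{12})^{\bG_m,\star}o_{\fQ_1}\xleftarrow[\simeq]{\Theta_{\pi_1^{12}}}o_{\fR_{12}}\xrightarrow[\simeq]{\Theta_{\pi_2^{12}}}(\pi_2^{12})^{\bG_m,\star}o_{\fQ_2}.
\]
To check the cocycle condition, given a third chart $\fQ_3$, form the triple fibre product $Z\coloneqq X_1\times_{\fX}X_2\times_{\fX}X_3$ with diagonal action, producing a chart $\fR_{123}$ with morphisms $p_{ij}\colon\fR_{123}\to\fR_{ij}$ and $p_i\colon\fR_{123}\to\fQ_i$ such that $\pi_i^{ij}\circ p_{ij}\sim p_i$. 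Applying the associativity \eqref{eq:assoc_general} to this composition yields
\[
p_{ij}^{\bG_m,\star}\Theta_{\pi_i^{ij}} \;=\; \Theta_{p_i}\circ\Theta_{p_{ij}}^{-1},
\]
so that $p_{ij}^{\bG_m,\star}\alpha_{ij}=\Theta_{p_j}\circ\Theta_{p_i}^{-1}$ on $Z^{\nu}$. The cocycle identity $p_{23}^{\bG_m,\star}\alpha_{23}\circ p_{12}^{\bG_m,\star}\alpha_{12}=p_{13}^{\bG_m,\star}\alpha_{13}$ is then an immediate cancellation, and the analogous identification of the triple overlap $X_1^{\mu_1}\times_{\Grad^n(\fX)}X_2^{\mu_2}\times_{\Grad^n(\fX)}X_3^{\mu_3}$ with $Z^{\nu}$ (via the same fibre-product argument as in Corollary \ref{cor:QCha_glue}) shows the condition is verified on the correct stack.

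By stacky descent, the collection $\{o_{\fQ}\}$ together with $\{\alpha_{\fQ_1,\fQ_2}\}$ glues to a section $o\in\Gamma(\Grad^n(\fX),\fF)$, and unwinding the construction gives canonical isomorphisms $\Theta_q\colon o_{\fQ}\simeq q_\mu^{\star}o$ satisfying the required associativity. The only genuinely nontrivial point in the plan is the triple-overlap cocycle computation above; everything else (algebraicity of the diagonal charts, identification of overlaps with fixed loci, descent of groupoid-valued data) follows the blueprint already established for set-valued sheaves in Corollary \ref{cor:QCha_glue}.
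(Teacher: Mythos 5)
Your proposal is correct and follows essentially the same route the paper intends: the paper simply asserts the corollary "by a similar argument" to Corollary \ref{cor:QCha_glue}, i.e.\ descent along the atlas of Proposition \ref{prop:QCha_atlas} using diagonal-action fibre-product charts for the overlaps. Your added triple-overlap cocycle computation (via the chart on $X_1\times_{\fX}X_2\times_{\fX}X_3$ and the associativity hypothesis \eqref{eq:assoc_general}) is precisely the extra step needed for groupoid-valued descent that the paper leaves implicit, and it is carried out correctly.
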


The following technical statements will be used for the associativity of the cohomological Hall algebras later:

\begin{lem}\label{lem:matching_chart}
    Let $\fX$ be a quasi-separated Artin stack with affine stabilizer groups.
    Then for any closed point $x \in \Grad^2(\fX)_{\mathrm{mat}}$,
    one can find a $\bG_m^2$-equivariant chart $\fQ = (X, \hat{\mu} = (\mu_1, \mu_2), \bar{q})$ and a point $\hat{x} \in X^{\hat{\mu}}_{\mathrm{mat}}$ such that $q_{\hat{\mu}}(\hat{x}) = x$ holds.
    Here $X^{\hat{\mu}}_{\mathrm{mat}}$ denotes the preimage of $[X^{\hat{\mu}} / \mathbb{G}_m^2]_{\mathrm{mat}}$.
\end{lem}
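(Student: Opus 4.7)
The plan is to follow the chart construction of Proposition \ref{prop:QCha_atlas} and then verify, using the fibre sequence for relative cotangent complexes, that the matching condition transfers to the chart. Unpacking the closed point, $x$ corresponds to a pair $(x_0,\hat{\iota})$ with $x_0 = u(x)$ and $\hat{\iota}\colon\bG_m^2\to G_{x_0}$, and setting $T\coloneqq \overline{\hat{\iota}(\bG_m^2)}\subseteq G_{x_0}$—a subtorus of the affine stabilizer, hence linearly reductive—invoking \cite[Theorem 1.1]{ahr20} produces a $T$-equivariant affine scheme $Y$, a $T$-fixed point $\hat{x}\in Y$, and a smooth morphism $[Y/T]\to\fX$ sending $\hat{x}$ to $x_0$ and restricting to $BT\to BG_{x_0}$ at the residual gerbe of $\hat{x}$. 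Pulling back the $T$-action to $\bG_m^2$ along $\hat{\iota}$ gives a $\bG_m^2$-action $\hat{\mu}$ on $Y$ and a smooth morphism $\bar{q}\colon[Y/\bG_m^2]\to[Y/T]\to\fX$, yielding the $\bG_m^2$-equivariant chart $\fQ=(Y,\hat{\mu},\bar{q})$. By the stabiliser-restriction property of the AHR chart, $\hat{x}\in Y^{\hat{\mu}}$ maps to $x$ under $q_{\hat{\mu}}$.

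The substantive step is to show $\hat{x}\in Y^{\hat{\mu}}_{\mathrm{mat}}$, i.e.\ that $H^i(\bL_{[Y/\bG_m^2],u(\hat{x})})_{(m,l)} = 0$ for all $(m,l)$ with $m\cdot l<0$ and all $i\in\{-1,0,1\}$. The triangle $\bar{q}^{*}\bL_{\fX}|_{u(\hat{x})} \to \bL_{[Y/\bG_m^2],u(\hat{x})} \to \bL_{[Y/\bG_m^2]/\fX,u(\hat{x})}$ together with matching of $x$, which kills the required weight parts of $H^i(\bL_{\fX,x_0})$ for $i\in\{-1,0,1\}$, reduces the problem to vanishing of the same weight parts of $H^i(\bL_{[Y/\bG_m^2]/\fX,u(\hat{x})})$. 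Factoring $\bar{q}$ as $[Y/\bG_m^2]\to[Y/T]\to\fX$, the first map is a $B\ker(\hat{\iota})$-gerbe with relative cotangent $\ker(\hat{\iota})^{*}[-1]$, which carries only the trivial $\bG_m^2$-weight because $\ker(\hat{\iota})\subseteq\bG_m^2$ is a central subtorus—hence contributes nothing in weights with $m\cdot l<0$. For the second map, smoothness and the stabiliser-restriction property force the map $H^1(\bL_{\fX})_{x_0}=\mathfrak{g}_{x_0}^{*}\twoheadrightarrow\mathfrak{t}^{*}=H^1(\bL_{[Y/T]})_{\hat{x}}$ to have kernel $(\mathfrak{g}_{x_0}/\mathfrak{t})^{*}$, which appears as a quotient of $H^0(\bL_{[Y/T]/\fX,\hat{x}})$ in the relevant long exact sequence; the remaining part of $H^0(\bL_{[Y/T]/\fX,\hat{x}})$ fits into a sequence controlled by $H^0(\bL_{\fX})_{x_0}$. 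In every case the $\bG_m^2$-weights (via $\hat{\iota}$) are subquotients of weights of either $H^0(\bL_{\fX,x_0})$ or $H^1(\bL_{\fX,x_0}) = \mathfrak{g}_{x_0}^{*}$, both of which avoid $(m,l)$ with $m\cdot l<0$ by the matching of $x$. A short diagram chase through the long exact sequence then concludes the verification in each degree $i\in\{-1,0,1\}$.

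The main obstacle is that the chart $[Y/T]\to\fX$ is only smooth and not \'etale at $\hat{x}$, so the relative cotangent complex at $\hat{x}$ is more than just $(\mathfrak{g}_{x_0}/\mathfrak{t})^{*}$; one must use the stabiliser-restriction condition, together with the explicit form of the long exact sequence, to identify the relevant subquotients of $\mathfrak{g}_{x_0}^{*}$ and $H^0(\bL_{\fX,x_0})$ and thereby reduce the required vanishing back to the matching hypothesis on $x$ in the three degrees $i\in\{-1,0,1\}$.
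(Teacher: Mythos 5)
Your construction of the chart is the same as the paper's (apply \cite[Theorem 1.1]{ahr20} to the subtorus $T=\hat\iota(\bG_m^2)\subseteq G_{x_0}$ and pull the action back along $\hat\iota$), but the verification that $\hat{x}$ is matching has a genuine gap in degree $0$. The long exact sequence for the smooth morphism $p\colon[Y/T]\to\fX$ at $\hat{x}$ reads
\[
0\to H^0(\bL_{\fX})_{x_0}\to H^0(\bL_{[Y/T]})_{\hat x}\to \Omega_{[Y/T]/\fX}|_{\hat x}\to \mathfrak{g}_{x_0}^{\vee}\to \mathfrak{t}^{\vee}\to 0 ,
\]
so $\Omega_{[Y/T]/\fX}|_{\hat x}$ is an extension of $\ker(\mathfrak{g}_{x_0}^{\vee}\to\mathfrak{t}^{\vee})$ by the \emph{cokernel} of $H^0(\bL_{\fX})_{x_0}\to H^0(\bL_{[Y/T]})_{\hat x}$. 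That cokernel consists of the excess cotangent directions of the chart; it is not a subquotient of $H^0(\bL_{\fX,x_0})$ or of $\mathfrak{g}_{x_0}^{\vee}$, and nothing in "smoothness plus the stabiliser-restriction property" bounds its $\bG_m^2$-weights. Concretely, if $(Y,\hat\mu,\bar q)$ is any valid chart through $x$, then so is $(Y\times\bA^1,\hat\mu\times(1,-1),\bar q\circ\mathrm{pr}_1)$, where $\bA^1$ carries the $\bG_m^2$-action of weight $(1,-1)$: the fixed point $(\hat x,0)$ still maps to $x$ and still induces the correct map on stabilizers, but $H^0(\bL_{[Y\times\bA^1/\bG_m^2]})_{(\hat x,0)}$ contains the weight $(-1,1)$, so $(\hat x,0)$ is not matching. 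Hence the statement you are trying to prove is simply false for an arbitrary chart of this kind, and your claim that "in every case the weights are subquotients of weights of $H^0(\bL_{\fX,x_0})$ or $H^1(\bL_{\fX,x_0})$" misreads the exact sequence.

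The missing idea is the paper's shrinking step: after producing the AHR chart, replace $\Spec A$ by a $\bG_m^2$-equivariant (locally closed) subspace through $\hat x$ on which the composition to $\fX$ is still smooth but of \emph{minimal} dimension at $\hat x$, i.e.\ $T_{\Spec A,\hat x}\cong T_{\fX,u(x)}$ (compare the analogous step in the proof of Lemma \ref{lem:G_m_bbbbj_chart}). With this minimality the exact sequence $\mathfrak{g}_{x_0}\to T_{\Spec A/\fX,\hat x}\to T_{\Spec A,\hat x}\to H^0(\bT_{\fX})_{x_0}\to 0$ shows that the relative tangent space at $\hat x$ is a quotient of $\mathfrak{g}_{x_0}$, whose weights with $m\cdot l<0$ vanish because $x$ is matching; only then does your diagram chase through the degrees $i=-1,0,1$ (which is otherwise fine: the gerbe factor has weight $(0,0)$, degree $-1$ is identified with $H^{-1}(\bL_{\fX})_{x_0}$, and degree $1$ is $\mathfrak{t}^{\vee}$ or $(\mathrm{Lie}\,\bG_m^2)^{\vee}$) go through.
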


\begin{proof}
    By using \cite[Theorem 1.1]{ahr20},
    we may take a $\bG_m^2$-equivariant chart $(\Spec A, \hat{\mu} = (\mu_1, \mu_2), \bar{q})$
    and a point $\hat{x} \in \Spec A ^{\hat{\mu}}$ with $q_{\hat{\mu}}(\hat{x}) = x$.
    By possibly replacing $\Spec A$ with its $\bG_m^2$-equivariant subspace containing $x$,
    we may assume that $T_{\Spec A, \hat{x}}  \cong  T_{\fX , u(x)}$ holds.
    In particular, $\hat{x}$ is contained in the matching locus.
\end{proof}

As an application of Proposition \ref{prop:QCha_atlas}, we will explain that a d-critical structure is inherited to the stack of graded points:

\begin{prop}\label{prop:grad_dcrit}
    Let $(\fX, s)$ be a d-critical stack with $\fX$ quasi-separated with affine stabilizers.
    Then $(\Grad^n(\fX), u^\star s)$ is a d-critical stack.
\end{prop}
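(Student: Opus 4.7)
The strategy is to verify the d-critical structure by pulling back to a smooth atlas, using the atlas for $\Grad^n(\fX)$ provided by the $\bG_m^n$-equivariant charts of $\fX$ established in Proposition \ref{prop:QCha_atlas}.

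First, I would recall the criterion following the definition of d-critical stack: a section $t \in \Gamma(\fY, \cS^0_\fY)$ is a d-critical structure on an Artin stack $\fY$ if there exists a smooth surjection $T \to \fY$ from an algebraic space such that $t|_T$ is a d-critical structure on $T$. Thus it suffices to exhibit such an atlas for $\Grad^n(\fX)$ along which $u^\star s$ restricts to a d-critical structure.

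Next, for each $\bG_m^n$-equivariant chart $\fQ = (X, \mu, \bar{q}) \in \mathrm{Cha}^{\bG_m^n}_{\fX}$, the composition $q \colon X \to [X/\bG_m^n] \xrightarrow{\bar{q}} \fX$ is smooth, so $q^\star s$ is a $\bG_m^n$-invariant d-critical structure on the quasi-separated algebraic space $X$ by \cite[Proposition 2.8]{joy15} and the definition of the stacky d-critical structure. By Corollary \ref{cor:dcriticallocalization} applied to the $\bG_m^n$-action $\mu$ (which extends the $\bG_m$ case in the obvious way, as one can iterate or check the proof directly), the restriction $(X^\mu \hookrightarrow X)^\star (q^\star s)$ is a d-critical structure on the algebraic space $X^\mu$.

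Now the key identification: by construction of $q_\mu \colon X^\mu \to \Grad^n(\fX)$ (see the notation introduced after the definition of a $\bG_m^n$-equivariant chart), the composition $u \circ q_\mu \colon X^\mu \to \fX$ agrees with $X^\mu \hookrightarrow X \xrightarrow{q} \fX$. Hence there is a tautological equality
\[ q_\mu^\star (u^\star s) = (X^\mu \hookrightarrow X)^\star (q^\star s), \]
so $q_\mu^\star (u^\star s)$ is a d-critical structure on $X^\mu$. By Proposition \ref{prop:QCha_atlas} the morphism $\coprod_\fQ q_\mu \colon \coprod_\fQ X^\mu \to \Grad^n(\fX)$ is a smooth surjection from an algebraic space, so the criterion recalled above concludes that $u^\star s$ is a d-critical structure on $\Grad^n(\fX)$.

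There is no serious obstacle here: the only point requiring a small verification is the compatibility $q_\mu^\star u^\star s = (X^\mu \hookrightarrow X)^\star q^\star s$, which is a direct consequence of the factorisation of $u \circ q_\mu$ through $X$. The application of Corollary \ref{cor:dcriticallocalization} for a torus $\bG_m^n$ rather than a single $\bG_m$ is immediate, either by induction on $n$ using the fact that $X^{\mu} = (X^{\mu_1})^{\mu_2} \cdots$ for the coordinate subtori, or by repeating the argument of the corollary verbatim using $\bG_m^n$-equivariant \'etale d-critical charts guaranteed by the torus-equivariant version of Sumihiro's theorem.
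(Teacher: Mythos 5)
Your proposal is correct and follows essentially the same route as the paper: reduce to the smooth atlas $\coprod_{\fQ} q_\mu \colon \coprod_{\fQ} X^\mu \to \Grad^n(\fX)$ from Proposition \ref{prop:QCha_atlas}, use the commutativity $u \circ q_\mu = q \circ (X^\mu \hookrightarrow X)$ to identify $q_\mu^\star u^\star s = (q^\star s)^\mu$, and invoke Corollary \ref{cor:dcriticallocalization}. The only superfluous step is your worry about extending that corollary from $\bG_m$ to $\bG_m^n$: it is already stated for an arbitrary torus $T$, so it applies directly.
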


\begin{proof}
    By Proposition \ref{prop:QCha_atlas} the collection of $\bG_m^n$-equivariant charts $\fQ = (X, \mu, \bar{q}) \in \mathrm{Cha}^{\bG_m^n}_{\fX}$ defines a smooth atlas $\{q_{\mu} \colon X^{\mu} \rightarrow \Grad^n(\fX) \}$. Therefore, by \cite[Proposition 2.54]{joy15} it is enough to show that $q_{\mu}^\star u^\star s$ is a d-critical structure on $X^{\mu}$ for each equivariant chart. Using the commutative diagram
\[
\xymatrix{
X_{}^{{\mu}}\ar[r] \ar^{q_{\mu}}[d] & X \ar^{q}[d] \\
\Grad^n(\fX) \ar[r]^-{u} & \fX
}
\]
we have $q_{\mu}^\star u^\star s = (q^\star s)^{\mu}$. Since $(\fX, s)$ is a d-critical stack, $q^\star s$ is a d-critical structure and hence $(q^\star s)^{\mu}$ is a d-critical structure by Corollary \ref{cor:dcriticallocalization}.
\end{proof}

We now compare the d-critical structure constructed in Proposition \ref{prop:grad_dcrit} and the $(-1)$-shifted symplectic structure constructed in Corollary \ref{cor:grad_symplectic_lagrangian}.

\begin{prop}\label{prop:comparison_grad_dcrit}
    Let $(\fX, \omega_{\fX})$ be a $(-1)$-shifted symplectic Artin stack with $\fX^{\cl}$ quasi-separated with affine stabilizers.
    Let $(\fX^{\cl}, s)$ be the underlying d-critical locus. Then the d-critical structure underlying $(\Grad^n(\fX), u^\star \omega_{\fX})$ is $(\Grad^n(\fX^{\cl}), u^\star s)$.
\end{prop}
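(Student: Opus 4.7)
The plan is to verify the equality of d-critical structures smooth-locally and then glue. By \eqref{eq:dcrit_local_commutes_sm}, the operation of taking fixed points of a torus action commutes with smooth pullback of d-critical structures; combined with Proposition \ref{prop:QCha_atlas}, this means it suffices to compare the two d-critical structures on $X^{\mu}$ after pulling back along $q_{\mu}\colon X^{\mu}\to \Grad^n(\fX^{\cl})$ for every $\bG_m^n$-equivariant chart $\fQ = (X, \mu, \bar{q}) \in \mathrm{Cha}_{\fX^{\cl}}^{\bG_m^n}$. Agreement on such an atlas glues to a global agreement by Corollary \ref{cor:QCha_glue} applied to the lisse-\'etale sheaf $\cS^0$ on $\Grad^n(\fX^{\cl})$, since the agreement is automatically compatible with morphisms in $\mathrm{Cha}_{\fX^{\cl}}^{\bG_m^n}$ by the functoriality of Joyce's construction.

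The next step is to pass to a local Darboux model. By Proposition \ref{prop:equivariant_Darboux}, after possibly refining the chart $\fQ$, we may assume that $\fX$ is \'etale-locally of the form $\Crit([f/G])$ for a $G$-invariant regular function $f\colon U\to \bA^1$ on a smooth affine $G$-scheme $U$ with a compatible $\bG_m^n$-action (factoring through $G$). Lemma \ref{lem:crit_localize} then identifies
\[
(\Grad^n(\Crit([f/G])),\, u^\star \omega_{\fX}) \simeq (\Crit(f|_{\Grad^n([U/G])}),\, \omega_{\Crit}).
\]

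With this reduction, the two d-critical structures become easy to identify. By Theorem \ref{thm:BBBBJ}, the d-critical structure underlying the right-hand side is tautologically represented, in the local presentation furnished by Theorem \ref{thm:Grad_quotient}, by $f|_{U^{\mu}} + I_{\cdot,\,U^{\mu}}^2$ on each component $[U^{\mu}/G_{\mu}]$. On the other hand, the d-critical structure $s$ on $\fX^{\cl}=\Crit([f/G])^{\cl}$ is represented by $f + I^2$, and by Corollary \ref{cor:dcriticallocalization} its pullback $u^\star s$ on $X^{\mu}$ is represented by the restriction $f|_{U^{\mu}} + I^2$. Hence the two sections of $\cS^0_{X^{\mu}}$ agree on the atlas.

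The main obstacle is the book-keeping in the second step: one must check that the equivalence of $(-1)$-shifted symplectic stacks produced by Lemma \ref{lem:crit_localize}, which is constructed via the AKSZ identification of Lagrangian intersections with mapping stacks, restricts on classical truncations to the identity on the d-critical sections arising from $f$. This reduces to unwinding the definition of the underlying d-critical structure of a derived critical locus (cf.\ Theorem \ref{thm:BBBBJ}(ii)) together with the commutativity of taking derived critical loci with passing to fixed-point stacks, both of which are implicit in the proof of Lemma \ref{lem:crit_localize}. Once this compatibility is recorded, the proposition follows from the local comparison sketched above.
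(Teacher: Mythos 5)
There is a genuine gap at the Darboux step. Proposition \ref{prop:equivariant_Darboux} produces an \'etale chart of the form $\Crit([f/G])$ only at points $x\in\fX(\bC)$ whose stabilizer $G$ is \emph{linearly reductive}, whereas the proposition you are proving concerns an arbitrary quasi-separated Artin stack with affine stabilizers. A graded point of $\fX$ only forces a torus inside $\mathrm{Aut}(x)$; the full stabilizer can be non-reductive (e.g.\ contain a unipotent radical), and then no chart as in Proposition \ref{prop:equivariant_Darboux} exists around $x$. So the assertion ``after possibly refining the chart we may assume $\fX$ is \'etale-locally $\Crit([f/G])$'' fails exactly in the generality needed, and with it the appeal to Lemma \ref{lem:crit_localize} and Theorem \ref{thm:Grad_quotient}. (This is why the paper's own equivariant local analysis, Lemma \ref{lem:G_m_bbbbj_chart}, only produces a smooth chart $[R/T]$ for the image torus $T$ together with a Lagrangian correspondence to an invariant symplectic derived space, not a global equivariant critical model of $\fX$.) In addition, the compatibility you flag as ``the main obstacle'' --- that the AKSZ equivalence of Lemma \ref{lem:crit_localize} induces, on classical truncations, the identity on the d-critical sections coming from $f$ --- is left unverified; it is not automatic and would have to be checked against the definition in Theorem \ref{thm:BBBBJ}(ii).

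All of this machinery is unnecessary: the statement is a direct consequence of functoriality of pullback. The d-critical structure underlying any oriented-free $(-1)$-shifted symplectic stack is, by Theorem \ref{thm:BBBBJ}, the section of $\cS^0$ corresponding under \eqref{eq:Sforms} to the pullback of the symplectic form along the inclusion of the classical truncation. Since $\Grad^n(\fX)^{\cl}\cong\Grad^n(\fX^{\cl})$, one has a commuting square with horizontal maps $\Grad^n(j)\colon\Grad^n(\fX^{\cl})\to\Grad^n(\fX)$ and $j\colon\fX^{\cl}\to\fX$ and vertical maps $u^{\cl}$ and $u$; hence the d-critical structure underlying $(\Grad^n(\fX),u^\star\omega_{\fX})$ corresponds to $\Grad^n(j)^\star u^\star\omega_{\fX}=u^{\cl,\star}j^\star\omega_{\fX}$, which is exactly $u^\star s$ because $s$ corresponds to $j^\star\omega_{\fX}$ and pullback of sections of $\cS^0$ matches pullback of $(-1)$-shifted closed $2$-forms. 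No charts, Darboux models, or gluing are needed; the local-model route you propose (essentially the argument one would use for $\bG_m$-equivariant algebraic spaces, where stabilizers are automatically reductive) does not extend to stacks without substantial extra input.
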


\begin{proof}
Consider the commutative diagram
\[
\xymatrix@C=70pt{
\Grad^n(\fX^{\cl}) \ar^{\Grad^n(j)}[r] \ar^{u^{\cl}}[d] & \Grad(\fX) \ar^{u}[d] \\
\fX^{\cl} \ar^{j}[r] & \fX.
}
\]
Identifying sections of $\cS^0$ with closed $2$-forms of degree $-1$ using \eqref{eq:Sforms}, the d-critical structure $s$ becomes $j^\star\omega_{\fX}$ by Theorem \ref{thm:BBBBJ}. Similarly, the d-critical structure underlying $u^\star \omega_{\fX}$ becomes $\Grad^n(j)^\star u^\star \omega_{\fX}= u^{\cl, \star} j^\star \omega_{\fX}$, hence we conclude.
\end{proof}

\section{Index line bundles and localization of orientations}

Let $(\fX, s, o)$ be an oriented d-critical stack.
The aim of this section is to define an orientation $u^{\star} o$ on $\Grad(\fX)$ in a way that is compatible with a similar construction for orientations of $(-1)$-shifted symplectic stacks we have already seen in Lemma \ref{lem:Lagrangiancorrespondenceretract}.
To do this, we will construct the index line bundle $\cL_{\fX, s}$ on $\Grad(\fX)^{\red}$
together with an isomorphism
$K_{\Grad(\fX), u^{\star} s} \otimes \cL_{\fX, s}^{\otimes 2} \cong K_{\fX, s} |_{\Grad(\fX)^{\red}}$.

In \S \ref{ssec:Indices}, we will introduce the notion of indices for d-critical stacks, which is a locally constant function on graded points.
In \S \ref{ssec:index_line_space} and \S \ref{ssec:index_line_stack}, we will introduce  index line bundles for $\bG_m$-equivariant d-critical spaces and d-critical stacks, respectively.
We recommend readers skip these two sections for the first read since it is technical.
In \S \ref{ssec:Localizing_ori}, we will explain the construction of localized orientation on the stack of graded points and prove its properties.

\subsection{Notation on rank and determinant for graded complexes}

Throughout the section, we will use the following notations:
If we are given a graded perfect complex $E^{\bullet} = \{ E^n \}_{n \in \bZ}$ on a derived Artin stack $\fX$,
we set $E^{+}$ (resp. $E^{-}$) denote the positive (resp. negative) part of $E$ and $E^{\pm} \coloneqq E^{+} \oplus E^{-}$.
We define

\begin{align*}
    &\rank_+ (E^{\bullet}) \coloneqq \rank (E^{+}), \quad \rank_- (E^{\bullet}) \coloneqq \rank (E^{-}), \quad \rank_0 (E^{\bullet}) \coloneqq \rank (E^{0}), \\
    &\rdet_+(E^{\bullet}) \coloneqq \rdet (E^{+}), \quad \rdet_-(E^{\bullet}) \coloneqq \rdet (E^{-}), \quad \rdet_0(E^{\bullet}) \coloneqq \rdet (E^{0}), \quad \rdet_{\pm}(E) \coloneqq \rdet(E^{\pm}). \\
\end{align*}

For a $\bG_m$-equivariant algebraic space $X$ and a $\bG_m$-equivariant perfect complex $E$ on $X$,
we regard $E |_{X^{\bG_m}}$ as a graded vector space using the $\bG_m$-action and 
define $E^{+}$ to be the positive part of $E |_{X^{\bG_m}}$.
We define $\rank_+(E) \coloneqq \rank(E^{+})$ and similarly for other operations.

For an Artin stack $\fX$ and a perfect complex $E$ on $\fX$, we regard $E|_{\Grad(\fX)}$ as a graded complex by using the $\bG_m$-action and define $E^{+}$ to be the positive part of $E |_{\Grad(\fX)}$.
We define $\rank_+(E) \coloneqq \rank (E^{+})$ and similarly for other operations.

For a doubly graded perfect complex $E^{\bullet, \bullet}$ and a subset $\Sigma \subset \bZ^2$,
we let $E^{\Sigma} \subset E$ denote the subspace consisting of complexes whose grading is concentrated in $\Sigma$. We set $\rank_{\Sigma} (E) \coloneqq \rank E^{\Sigma}$
and $\rdet_{\Sigma}(E) \coloneqq \rdet(E^{\Sigma})$.
We define $\rank_{\Sigma}$ and $\rdet_{\Sigma}$ for $\bG_m^2$-equivariant complexes on  algebraic spaces and Artin stacks similarly to $\rank_{+}$.

\subsection{Indices for d-critical stacks}\label{ssec:Indices}

Let $(\fX, s)$ be a d-critical Artin stack with $\fX$ quasi-separated with affine stabilizers.
The aim of this section is to introduce a locally constant function on $\Grad(\fX)$ called the index for $\fX$.
We first deal with the case of algebraic space with a $\bG_m$-action.

\begin{defin}\label{def:indexalgspace}
Let $X$ be an algebraic space with $\bG_m$-action $\mu$. The \defterm{index function} $I^{\mu}_X\colon X^{\mu}\rightarrow \bZ$ is given by
\[
I^{\mu}_X(x) \coloneqq \rank_{+}(T_{X, x}) - \rank_{-}(T_{X, x})
\]
for a point $x \in X^{\mu}$.
\end{defin}

Assume that $X$ admits a $\bG_m$-invariant \'etale d-critical structure $s \in \Gamma(X, \cS_X^{0})^{\bG_m}$.
Take a $\bG_m$-equivariant \'etale d-critical chart $\fR = (R, \eta, U, f, i)$ of $x$ such that $\dim T_{X, x} = \dim U$ holds.
Then we have an equality $I^{\mu}_{X}(x) = \rank(T_{U, x}^{+}) - \rank(T_{U, x}^{-})$.
Using Theorem \ref{thm:T_compare_d-critical}, we see that the same equality holds for arbitrary $\bG_m$-equivariant \'etale d-critical chart of $x$.
In particular, we have the following:

\begin{prop}\label{prop:index_locally_constant}
   Let $X$ be an algebraic space with $\bG_m$-action $\mu$, which admits a $\bG_m$-invariant d-critical structure. Then the function
   \[
    I^{\mu}_X\colon X^\mu\longrightarrow\bZ
   \]
   is locally constant.
\end{prop}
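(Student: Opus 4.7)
The plan is to pass to a $\bG_m$-equivariant \'etale d-critical chart at each fixed point and reduce the index computation to a computation of weight multiplicities on the smooth ambient scheme of the chart.

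I would fix $x_0 \in X^{\mu}$ and choose any $\bG_m$-equivariant \'etale d-critical chart $\fR = (R, \eta, U, f, i)$ whose image contains $x_0$; the existence of such a chart is part of the definition of a $\bG_m$-invariant d-critical structure. For each $y \in R^{\mu}$ lifting $\bar{y} \coloneqq \eta(y) \in X^{\mu}$, the \'etale morphism $\eta$ provides a $\bG_m$-equivariant isomorphism $T_{R, y} \cong T_{X, \bar{y}}$, and the identification $i(R) = \Crit(f)$ realizes $T_{R, y} \cong \ker(\Hess(f)(y))$ as a $\bG_m$-subrepresentation of $T_{U, y}$.

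The main step, which I expect to be the principal obstacle, is a linear-algebra reduction carried out on the smooth chart. The Hessian $\Hess(f)(y) \colon T_{U, y} \otimes T_{U, y} \to \bC$ is a $\bG_m$-invariant symmetric bilinear form whose radical is exactly $T_{R, y}$, so it descends to a non-degenerate $\bG_m$-invariant symmetric pairing on $W_y \coloneqq T_{U, y}/T_{R, y}$. Non-degeneracy combined with $\bG_m$-invariance forces the weight-$w$ subspace of $W_y$ to pair perfectly with the weight-$(-w)$ subspace, so that $\rank_+(W_y) = \rank_-(W_y)$. Together with the short exact sequence $0 \to T_{R, y} \to T_{U, y} \to W_y \to 0$, this yields the key identity
\[
I^{\mu}_X(\bar{y}) = \rank_+(T_{R, y}) - \rank_-(T_{R, y}) = \rank_+(T_{U, y}) - \rank_-(T_{U, y}).
\]

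To finish, I would observe that because $U$ is smooth, $T_U|_{U^{\mu}}$ is a $\bG_m$-equivariant vector bundle on $U^{\mu}$ whose weight eigenbundles have locally constant rank; hence the right-hand side of the previous display is a locally constant function of $y \in U^{\mu}$. Restricting along the closed inclusion $i \colon R^{\mu} \hookrightarrow U^{\mu}$ and using that the \'etale map $\eta|_{R^{\mu}} \colon R^{\mu} \to X^{\mu}$ is locally a homeomorphism onto an open neighborhood of $x_0$, we conclude that $I^{\mu}_X$ is locally constant at $x_0$.
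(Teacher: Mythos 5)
Your proof is correct, but it follows a genuinely more direct route than the paper's. The paper first takes a \emph{minimal} $\bG_m$-equivariant chart at the point (one with $\dim U=\dim T_{X,x}$, supplied by Proposition \ref{prop:minimal_chart_space}), where the identity $I^{\mu}_X(x)=\rank_+(T_{U,x})-\rank_-(T_{U,x})$ is immediate, and then transports this identity to an arbitrary equivariant chart via the equivariant comparison theorem for d-critical charts (Theorem \ref{thm:T_compare_d-critical}), whose stabilization step adds a $\bG_m$-invariant non-degenerate quadratic form and hence does not change $\rank_+-\rank_-$. You instead prove the same identity at every point of an arbitrary chart by linear algebra on the chart itself: $T_{R,y}=\ker\Hess_f(y)$ is the radical of the invariant Hessian, so the induced non-degenerate invariant symmetric form on $T_{U,y}/T_{R,y}$ pairs weight $w$ perfectly with weight $-w$, forcing the quotient to have balanced positive and negative weight spaces. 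The mechanism (a non-degenerate invariant symmetric form has $\rank_+=\rank_-$) is the same in both arguments, but your version bypasses both the minimal-chart existence and the comparison theorem, which is a real simplification for this statement; the paper's route has the advantage of reusing machinery it needs anyway for the index line bundles in \S\ref{ssec:index_line_space}.

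Two small corrections. First, the existence of a $\bG_m$-equivariant \'etale d-critical chart is \emph{not} part of the definition of a $\bG_m$-invariant d-critical structure: it is Proposition \ref{prop:minimal_chart_space}, which rests on the \'etale Sumihiro theorem \cite[Theorem 4.1]{ahr20} and \cite[Proposition 2.43]{joy15} and uses quasi-separatedness of $X$; you should cite that result rather than the definition. Second, at the last step it is worth making explicit that any lift to $R$ of a fixed point of $X$ is itself fixed (the fibre of the \'etale map $\eta$ is discrete and $\bG_m$ is connected) and that $\eta^{\mu}\colon R^{\mu}\to X^{\mu}$ is \'etale, hence open; with these remarks your descent of local constancy from $R^{\mu}$ to $X^{\mu}$ is complete.
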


\begin{rmk}
    This proposition is false if we do not assume the existence of a $\bG_m$-invariant d-critical structure.
    For example, consider the scheme $X = \{ (x, y) \in \bA^2 \mid xy = 0   \}$ with a $\bG_m$-action $\mu$ of weight $(1, 0)$. Then the function $I^\mu_X$ is not locally constant near the origin.
\end{rmk}

Assume that we are given algebraic spaces $X_1$ and $X_2$ with $\bG_m$-actions $\mu_1$ and $\mu_2$,
$\bG_m$-equivariant smooth morphism $h \colon X_1 \to X_2$.
We define the locally constant function $I^{\mu_1}_h$ on $X_1^{\mu_1}$ by $I^{\mu_1}_h \coloneqq \rank_{+} (T_{X_1 / X_2}) - \rank_{-} (T_{X_1 / X_2})$.
Then we have the following identity of functions on $X_1^{\mu_1}$
\begin{equation}\label{eq:smooth_index}
    I_{X_2}^{\mu_2} + I_{h}^{\mu_1} = I_{X_1}^{\mu_1}.
\end{equation}

Let $X$ be a quasi-separated algebraic space with a $\bG_m^2$-action $\hat{\mu} = (\mu_1, \mu_2)$.
We say that a point $x \in X^{\hat{\mu}}$ is \defterm{matching} if its image in 
$[X^{\hat{\mu}} / \bG_m^2]$ is a matching point (see \S \ref{ssec:Matching} for the definition).
We let $X^{\hat{\mu}}_{\mathrm{mat}} \subset X^{\hat{\mu}}$ denote the open subspace of matching points. 
For an algebraic space $Y$ over $X^{\hat{\mu}}$, we let $Y_{\mat}$ denote the base change to $X_{\mat}^{\hat{\mu}}$.
The following proposition is a consequence of Proposition \ref{prop:matching_invertible_stack}:

\begin{prop}\label{prop:matching_invertible}
    Consider the following commutative diagram:
    \[
    \xymatrix{
    {(X^{\mu_1, +})^{\mu_2} \times_{X^{\mu_2}} X^{\mu_2, +}} \ar[rd]
    & {X^{\hat{\mu}, +, +}} \ar[l]_-{\alpha} \ar[r]^-{\beta}
    & {(X^{\mu_2, +})^{\mu_1} \times_{X^{\mu_1}} X^{\mu_1, +}} \ar[ld] \\
    {}
    & {X^{\hat{\mu}}.}
    & {}
    }
\]
Then the maps $\alpha$ and $\beta$ are isomorphisms over the matching locus $X^{\hat{\mu}}_{\mathrm{mat}}$.
\end{prop}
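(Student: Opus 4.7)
The plan is to deduce Proposition \ref{prop:matching_invertible} from its stack-level counterpart, Proposition \ref{prop:matching_invertible_stack}, applied to the quotient stack $\fX \coloneqq [X/\bG_m^2]$. First I would apply Proposition \ref{prop:matching_invertible_stack} to $\fX$, giving that the natural maps
\[
\alpha^{\mathrm{st}} \colon \Filt^2(\fX) \to \Filt(\Grad(\fX)) \times_{\Grad(\fX)} \Filt(\fX), \qquad \beta^{\mathrm{st}} \colon \Filt^2(\fX) \to \Grad(\Filt(\fX)) \times_{\Grad(\fX)} \Filt(\fX)
\]
become isomorphisms when base-changed to $\Grad^2(\fX)_{\mathrm{mat}}$.

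Next I would use Theorem \ref{thm:Grad_quotient}, and the fact that $\bG_m^2$ is abelian (so every $G_\lambda$ and $G^{+,\dots,+}_{\hat{\lambda}}$ is all of $\bG_m^2$), to decompose each of the stacks appearing in $\alpha^{\mathrm{st}}$ and $\beta^{\mathrm{st}}$ into components indexed by $\hat{\lambda}\in X^2_\bullet(\bG_m^2)$, each component being a global $\bG_m^2$-quotient of an appropriate fixed/attracting subspace of $X$. The key bookkeeping observation is that the component indexed by the identity cocharacter $\mathrm{id}\colon \bG_m^2\to \bG_m^2$ recovers exactly $[X^{\hat{\mu}}/\bG_m^2]\subset \Grad^2(\fX)$, $[X^{\hat{\mu},+,+}/\bG_m^2]\subset \Filt^2(\fX)$, $[(X^{\mu_1,+})^{\mu_2}\times_{X^{\mu_2}} X^{\mu_2,+}/\bG_m^2]$, and $[(X^{\mu_2,+})^{\mu_1}\times_{X^{\mu_1}} X^{\mu_1,+}/\bG_m^2]$, each carrying the $\bG_m^2$-action restricted from $\hat{\mu}$. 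Under these identifications, the restrictions of $\alpha^{\mathrm{st}}$ and $\beta^{\mathrm{st}}$ to the identity component are precisely $[\alpha/\bG_m^2]$ and $[\beta/\bG_m^2]$.

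By the definition given in \S\ref{ssec:Matching}, $X^{\hat{\mu}}_{\mathrm{mat}}$ is the preimage of $\Grad^2(\fX)_{\mathrm{mat}}$ under the atlas $X^{\hat{\mu}} \to [X^{\hat{\mu}}/\bG_m^2]$, so Proposition \ref{prop:matching_invertible_stack} gives that $[\alpha/\bG_m^2]$ and $[\beta/\bG_m^2]$ are isomorphisms over the matching locus. Finally, since $\alpha$ and $\beta$ are $\bG_m^2$-equivariant morphisms of algebraic spaces, and the corresponding maps of quotient stacks are isomorphisms, $\alpha$ and $\beta$ themselves are isomorphisms: one recovers them by pulling back the stacky isomorphisms along the $\bG_m^2$-torsor atlases of the target quotient stacks.

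The only substantive step is the careful identification of the identity-cocharacter components under Theorem \ref{thm:Grad_quotient}; beyond this bookkeeping, no real obstacle arises, since the proposition is essentially the algebraic-space reflection of Proposition \ref{prop:matching_invertible_stack}.
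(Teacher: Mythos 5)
Your proposal is correct and is essentially the paper's own argument: the paper derives Proposition \ref{prop:matching_invertible} directly as a consequence of Proposition \ref{prop:matching_invertible_stack} applied to $[X/\bG_m^2]$, with the component identifications you spell out already implicit in Theorem \ref{thm:Grad_quotient} and the example in \S\ref{ssec:grad_filt}, and the matching locus of $X^{\hat\mu}$ defined as the preimage of the stacky one. Your final descent step (pulling the stacky isomorphisms back along the $\bG_m^2$-atlases) is exactly the intended bookkeeping.
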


The above proposition implies the following:

\begin{cor}\label{cor:matching_correspondence}
The composites of the following correspondences coincide:
\[
\xymatrix{
{}
& {}
& {((X^{\mu_1, +})^{\mu_2} \times_{X^{\mu_2} } X^{\mu_2, +}})_{\mat}  \ar[ld] \ar[rd] 
& {}
& {} \\
{}
& {(X^{\mu_1, +})^{\mu_2}_{\mat}  } \ar[ld] \ar[rd]
& {}
& {X^{\mu_2, +}} \ar[ld] \ar[rd]
& {} \\
{X^{\hat{\mu}}_{\mathrm{mat}}}
& {}
& {X^{\mu_2}}
& {}
& {X,} 
}
\]
\[
\xymatrix{
{}
& {}
& {((X^{\mu_2, +})^{\mu_1} \times_{X^{\mu_1} } X^{\mu_1, +}})_{\mat} \ar[ld] \ar[rd] 
& {}
& {} \\
{}
& {(X^{\mu_2, +})^{\mu_1}_{\mat}} \ar[ld] \ar[rd]
& {}
& {X^{\mu_1, +}} \ar[ld] \ar[rd]
& {} \\
{X^{\hat{\mu}}_{\mathrm{mat}}}
& {}
& {X^{\mu_1}}
& {}
& {X.} 
}
\]
Further, these correspondences are naturally isomorphic to the following correspondence:
\[
\xymatrix{
{}
& {X^{\hat{\mu}, +, +}_{\mat}} \ar[ld] \ar[rd]
& {} \\
{X^{\hat{\mu}}_{\mathrm{mat}}}
& {}
& {X.}
}
\]
\end{cor}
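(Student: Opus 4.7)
The plan is to deduce the corollary directly from Proposition~\ref{prop:matching_invertible} together with the universal property of fibre products, with the understanding that the composite of a correspondence of algebraic spaces is computed by taking the apex to be the fibre product.

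First, unwinding the definition of composition of correspondences, the apex of the first composite correspondence is, by construction, $((X^{\mu_1, +})^{\mu_2}\times_{X^{\mu_2}} X^{\mu_2, +})_{\mat}$, with the left leg factoring through $(X^{\mu_1,+})^{\mu_2}\rightarrow X^{\hat{\mu}}$ and the right leg factoring through $X^{\mu_2,+}\rightarrow X$. Proposition~\ref{prop:matching_invertible} provides a natural morphism $\alpha\colon X^{\hat{\mu},+,+}\rightarrow (X^{\mu_1,+})^{\mu_2}\times_{X^{\mu_2}} X^{\mu_2,+}$ which is an isomorphism over the matching locus. Symmetrically, Proposition~\ref{prop:matching_invertible} provides an isomorphism $\beta$ over the matching locus between $X^{\hat{\mu},+,+}_{\mat}$ and the apex of the second composite correspondence.

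It remains to check that the isomorphisms $\alpha$ and $\beta$ are compatible with the legs of the correspondence, i.e.\ that the diagrams
\[
\xymatrix@C=20pt{
& X^{\hat{\mu},+,+}_{\mat} \ar[dl] \ar[dr] \ar^{\alpha}[dd] & \\
X^{\hat{\mu}}_{\mat} && X \\
& ((X^{\mu_1,+})^{\mu_2}\times_{X^{\mu_2}} X^{\mu_2,+})_{\mat} \ar[ul] \ar[ur] &
}
\]
commute (and analogously for $\beta$). This is immediate from the construction of $\alpha$ in Proposition~\ref{prop:matching_invertible}, since $\alpha$ is defined by pairing the natural maps $X^{\hat{\mu},+,+}\rightarrow (X^{\mu_1,+})^{\mu_2}$ and $X^{\hat{\mu},+,+}\rightarrow X^{\mu_2,+}$ induced by the inclusions of $\bG_m^2$-equivariant mapping stacks; these inclusions are by definition compatible with the evaluation maps to $X^{\hat{\mu}}$ and $X$.

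No step here should present a serious difficulty: all geometric content is packed into Proposition~\ref{prop:matching_invertible}, and the rest is bookkeeping about the universal property of the various fibre products involved in composing correspondences.
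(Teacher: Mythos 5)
Your proposal is correct and matches the paper's own treatment: the paper states this corollary as an immediate consequence of Proposition \ref{prop:matching_invertible} (the diagram there is over $X^{\hat{\mu}}$, and the composites $X^{\hat{\mu},+,+}\rightarrow X^{\mu_i,+}\rightarrow X$ agree with the evaluation map, so both composite correspondences are identified with $X^{\hat{\mu}}_{\mat}\leftarrow X^{\hat{\mu},+,+}_{\mat}\rightarrow X$ via $\alpha$ and $\beta$). Your checking of the compatibility of the legs is exactly the bookkeeping the paper leaves implicit.
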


From the definition of matching points, we get the following identity of locally constant functions on $X^{\hat{\mu}}_{\mathrm{mat}}$:
    \begin{equation}\label{eq:index_value_matching}
    I^{\mu_1}_X |_{X^{\hat{\mu}}_{\mathrm{mat}}} + (I^{\mu_2}_{X^{\mu_1}}) |_{X^{\hat{\mu}}_{\mathrm{mat}}} = I^{\mu_2}_X |_{X^{\hat{\mu}}_{\mathrm{mat}}} + (I^{\mu_1}_{X^{\mu_2}}) |_{X^{\hat{\mu}}_{\mathrm{mat}}}.
    \end{equation}

We now introduce the index function for Artin stacks.
    
\begin{defin}\label{def:indexstack}
Let $\fX$ be an Artin stack. The \defterm{index function} is the function $I_{\fX}\colon \Grad(\fX)\rightarrow \bZ$ given by
\[
I_{\fX}(x) \coloneqq (\rank_+ H^0(\bT_{\fX, x}) - \rank_- H^0(\bT_{\fX, x})) - (\rank_+ H^{-1}(\bT_{\fX, x}) - \rank_- H^{-1}(\bT_{\fX, x}))
\]
for a point $x \in \Grad(\fX)$.
\end{defin}
 
Assume that we are given a $\bG_m$-equivariant chart $(X, \mu, \bar{q})$ of $\fX$.
Set
\[
I_{q}^{\mu} \coloneqq \rank_+ T_{X / \fX} - \rank_- T_{X / \fX} .
\]
Then for a point $\hat{x} \in X^{\mu}$ with $q_{\mu}(\hat{x}) = x$, we have the following identity
        \begin{equation}\label{eq:smooth_index_stack}
        I_{\fX}(x) = I^{\mu}_X(\hat{x}) - I^{\mu}_{q}(\hat{x}).
        \end{equation}
Assume that $\fX$ admits a d-critical structure.
Then the equality \eqref{eq:smooth_index_stack} together with Proposition \ref{prop:index_locally_constant} implies that the function $I_{\fX}$ is locally constant.
Assume further that $\fX$ is the underlying stack of a $(-1)$-shifted symplectic stack $\widehat{\fX}$.
Since for each point $x \in \Grad(\fX)$ we have an equality
\[
\rank_+ H^{2}(\bT_{\widehat{\fX}, u(x)}) = \rank_{-} H^{-1}(\bT_{\widehat{\fX}}) , \quad
\rank_+ H^{1}(\bT_{\widehat{\fX}, u(x)}) = \rank_- H^{0}(\bT_{\widehat{\fX}, u(x)}),
\]
we obtain an identity $I_{\fX} = \rank_{+} \bT_{\widehat{\fX}}  $.
Since we have $(\bT_{\Filt(\widehat{\fX}) / \Grad(\widehat{\fX})}) |_{\Grad(\fX)} \simeq \bT_{\widehat{\fX}} ^{+}$ and $\vdim (\Grad(\widehat{\fX})) = 0$, we obtain
        \begin{equation}\label{eq:Ind=vdim}
        I_{\fX} = \vdim  \Filt(\widehat{\fX})|_{\Grad(\fX)}.
        \end{equation}

We set $I^1_{\fX} \coloneqq I_{\Grad(\fX)}\colon \Grad^2(\fX)\rightarrow\bZ$.
We let $\sigma \colon \Grad^2(\fX) \cong \Grad^2(\fX)$ denote the isomorphism induced by the swapping isomorphism $\bG_m^2 \cong \bG_m^2$ and set $I^2_{\fX} \coloneqq I^1_{\Grad^2(\fX)} \circ \sigma\colon \Grad^2(\fX)\rightarrow \bZ$.
By the definition of the matching locus, we have the following identity:
\begin{equation}\label{eq:index_matching}
    I^1_{\fX} |_{\Grad^2(\fX)_{\mathrm{mat}}} = I^2_{\fX} |_{\Grad^2(\fX)_{\mathrm{mat}}}.
\end{equation}

\subsection{Index line bundles for $\bG_m$-equivariant d-critical spaces}\label{ssec:index_line_space}

Now we will introduce the index line bundle for a $\bG_m$-equivariant d-critical algebraic spaces.
Let $X$ be a quasi-separated $\bG_m$-equivariant d-critical algebraic space.
For a $\bG_m$-equivariant \'etale d-critical chart $\fR = (R, \eta, U, f, i)$ of $(X, s)$,
define a graded line bundle on $R^{\bG_m, \red}$ by
\[
\cL^{\Ind}_{\mu, s, \fR} \coloneqq \rdet_+ (\bL_{\fR})
\]
where $\bL_{\fR}$ is the virtual cotangent complex defined in Definition \ref{defin:virtual_cot_complex}.
For each point $\hat{x} \in R^{\bG_m}$ with $\eta(\hat{x}) = x$, we construct an isomorphism 
\[
\kappa^{\Ind}_{\hat{x}, \fR} \colon \cL^{\Ind}_{\mu, s, \fR}|_{\hat{x}} \cong \det_{\pm}(\Omega_{X, x}^{})
\]
by using the following fibre sequence
\[
 T_{X, x}^{+}[1] \to \bL_{\fR}|_{\hat{x}}^+ \to \Omega_{X, x}^{+}
\]
and \eqref{eq:det_shift}, \eqref{eq:det_dual}.
If we are given two $\bG_m$-equivariant \'etale d-critical charts $\fR_1 = (R_1, \eta_1, U_1, f_1, i_1)$ and $\fR_2 = (R_2, \eta_2, U_2, f_2, i_2)$ and points $\hat{x}_1 \in R_1^{\bG_m}$ and $\hat{x}_2 \in R_2^{\bG_m}$ with $x = \eta_1(\hat{x}_1) = \eta_2(\hat{x}_2)$,
we can construct the following isomorphism
\[
 \cL^{\Ind}_{\mu, s, \fR_1}|_{\hat{x}_1} \xrightarrow[\cong]{\kappa^{\Ind}_{\hat{x}_1, \fR_1}}  \rdet_{\pm}(\Omega_{X, x}) \xrightarrow[\cong]{\kappa^{\Ind, -1}_{\hat{x}_2, \fR_2}} \cL^{\Ind}_{\mu, s, \fR_2}|_{\hat{x}_2}.
\]
We claim the composite algebraizes: namely, it induces an isomorphism of line bundles over $(R_1^{\bG_m} \times_{X^{\mu}} R_2^{\bG_m})^{\red}$.
To prove this, using Theorem \ref{thm:T_compare_d-critical}, we may assume $R_1 = R_2$, $\eta_1 = \eta_2$,  $U_2 = U_1 \times \bA^n$ and $f_2 = f_1 \boxplus q$ where $q$ is a $\bG_m$-invariant quadratic function.
Set $\bL_{\fR_2 / \fR_1}^+ \coloneqq [T_{\bA^n, 0}^+ \xrightarrow{\Hess_q^+} \Omega_{\bA^n, 0}^+]$.
The null-homotopy $\bL_{\fR_2 / \fR_1}^+ \simeq 0$ induces
 a  quasi-isomorphism
\[
\bL_{\fR_1}^+ \simeq \bL_{\fR_2}^+
\] 
hence an isomorphism 
\begin{equation}\label{eq:embed_L_Ind_isom}
\cL^{\Ind}_{\mu, s, \fR_1} \cong \cL^{\Ind}_{\mu, s, \fR_2}
\end{equation}
which induces the desired isomorphism on each stalk.

By construction, the isomorphism $\cL^{\Ind}_{\mu, s, \fR_1} |_{(R_1^{\bG_m} \times_{X^{\mu}} R_2^{\bG_m})^{\red}} \cong \cL^{\Ind}_{\mu, s, \fR_2} |_{(R_1^{\bG_m} \times_{X^{\mu}} R_2^{\bG_m})^{\red}}$ satisfies the cocycle condition, since it does on each stalk.
In particular, $\cL^{\Ind}_{\mu, s, \fR} $ glues to define a line bundle $\cL^{\Ind}_{\mu, s}$ on $X^{\mu, \red}$
with a choice of an isomorphism
\[
\iota_{\fR}^{\Ind}  \colon \cL^{\Ind}_{\mu, s} |_{R^{\bG_m, \red}} \cong \det_{+}(\bL_{\fR})
\]
for each $\bG_m$-equivariant d-critical chart $\fR = (R, \eta, U, f, i)$.
We call $\cL^{\Ind}_{\mu, s}$ the \defterm{index line bundle}.

We will study the behaviour of the index line bundle.
The following proposition summarizes a list of properties of index line bundles:

\begin{prop}\label{prop:index_line_bundle}
    \begin{thmlist}
        \item Let $(X, \mu, s)$ be a quasi-separated $\bG_m$-equivariant d-critical algebraic space.
        Then for each point $x \in X^{\mu}$, there exists a natural isomorphism 
       \begin{equation}
           \kappa^{\Ind}_{x} \colon \cL_{\mu, s}^{\Ind} |_{x} \cong \rdet_{\pm}(\Omega_{X, x}).
       \end{equation}

       \item Let $(X, \mu, s)$ be as above. Then there exists a natural isomorphism
       \begin{equation}
           \xi_{\mu, s} \colon K_{X^{\mu}, s^{\mu}} \otimes \cL_{\mu, s}^{\Ind, \otimes 2} \cong K_{X, s} |_{X^{\mu, \red}}.
       \end{equation}

        \item Let $(X_1, \mu_1, s_1)$ and $(X_2, \mu_2, s_2)$ be quasi-separated $\bG_m$-equivariant d-critical algebraic spaces and $h \colon X_1 \to X_2$ be a smooth morphism compatible with $\bG_m$-actions such that $h^{\star} s_2 = s_1$ holds.
        Then there exists a natural isomorphism
\[
\Upsilon_{h}^{\Ind} \colon h^{\bG_m, \red, *} \fL_{\mu_2, s_2}^{\Ind} \otimes K_{X_1 / X_2}^{\pm} \cong \fL_{\mu_1, s_1}^{\Ind}
\]
        where we set $K_{X_1 / X_2}^{\pm} \coloneqq \rdet_{\pm}(\Omega_{X_1 /X_2})$.

        \item Assume that we are given a quasi-separated $\bG_m^2$-equivariant algebraic space $(X, \hat{\mu} =   (\mu_1, \mu_2), s)$.
        Then there exists an isomorphism
        \[
        \theta_{\hat{\mu}, s} \colon \cL^{\Ind}_{\mu_1, s} |_{X^{\hat{\mu}, \red}} \otimes (\cL^{\Ind}_{\mu_2|_{X^{\mu_1}}, s^{\mu_1}})  
        \cong 
        \cL^{\Ind}_{\mu_2, s} |_{X^{\hat{\mu}, \red}} \otimes (\cL^{\Ind}_{\mu_1|_{X^{\mu_2}}, s^{\mu_2}}) .
        \]

        \item Let $(X, \mu, s)$ and $(Y, \nu, t)$ be quasi-separated $\bG_m$-equivariant d-critical algebraic spaces.
        Then there exists a natural isomorphism
        \[
        \Psi^{\Ind}_{\mu, \nu} \colon \cL_{\mu, s}^{\Ind} \boxtimes \cL_{\nu, t}^{\Ind}
        \cong \cL_{\mu \times \nu, s \boxplus t}^{\Ind}.
        \]

        \item Let $X$ be a quasi-separated derived algebraic space with a $\bG_m$-action $\mu$ and a $\bG_m$-invariant $(-1)$-shifted symplectic structure $\omega_X$.
        Let $(X^{\cl}, s)$ be the underlying $d$-critical structure. 
        Then there exists a natural isomorphism
        \[
        \Lambda_{\mu}^{\Ind} \colon \rdet_+(\bL_{X} ) \cong \cL_{\mu^{\cl}, s}^{\Ind}.
        \]
       
    \end{thmlist}
\end{prop}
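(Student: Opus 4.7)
The plan is to prove all six statements by first constructing each isomorphism locally on $\bG_m^n$-equivariant \'etale d-critical charts via the weight decomposition of the virtual cotangent complex $\bL_{\fR}$, and then verifying that the local constructions descend to the claimed isomorphisms of line bundles. Compatibility with change of chart will be checked against the quadratic embeddings \eqref{eq:standard_embedding} using the $\bG_m^n$-equivariant version of Theorem \ref{thm:compare_d-critical} (i.e., Theorem \ref{thm:T_compare_d-critical}).

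For (i), the isomorphism $\kappa_x^{\Ind}$ is essentially built into the construction: it is $\kappa_{\hat{x}, \fR}^{\Ind}$ for any $\bG_m$-equivariant chart, and independence of $\fR$ has already been checked in the definition of $\cL_{\mu, s}^{\Ind}$ via \eqref{eq:embed_L_Ind_isom}. For (ii), I write $\bL_{\fR} \simeq \bL_{\fR}^- \oplus \bL_{\fR}^0 \oplus \bL_{\fR}^+$ and apply Lemma \ref{lem:KM} to obtain $\rdet(\bL_{\fR}) \cong \rdet(\bL_{\fR}^-) \otimes \rdet(\bL_{\fR}^0) \otimes \rdet(\bL_{\fR}^+)$. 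The $\bG_m$-invariance of the Hessian yields a non-degenerate pairing $\bL_{\fR}^+ \simeq (\bL_{\fR}^-)^\vee[1]$, which combined with \eqref{eq:det_shift} and \eqref{eq:det_dual} gives $\rdet(\bL_{\fR}^+) \cong \rdet(\bL_{\fR}^-)$. Since $\bL_{\fR^\mu} \simeq \bL_{\fR}^0$, this produces $\xi_{\mu, s}$ locally. Statements (iii) and (v) are routine manipulations with fibre sequences of graded complexes and external products of determinants, using Proposition \ref{prop:T_sm_chart} to align charts in the smooth-pullback case. Statement (vi) follows by applying the equivariant Darboux theorem (Proposition \ref{prop:equivariant_Darboux}) together with the identification \eqref{eq:cot_Hess} to match $\rdet_+(\bL_X)$ with $\rdet_+(\bL_{\fR^{\cl}}) = \cL_{\mu^{\cl}, s}^{\Ind}$ locally, with uniqueness handled by the pointwise argument of Lemma \ref{lem:Lambda_stack}.

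The main obstacle is (iv). Working on a $\bG_m^2$-equivariant chart, I decompose $\bL_{\fR}$ by $\bG_m^2$-weights $(n, m) \in \bZ^2$. The two sides of $\theta_{\hat{\mu}, s}$ correspond locally to $\rdet_{L}(\bL_{\fR})$ and $\rdet_{R}(\bL_{\fR})$ for
\[
L = \{(n, m) : n > 0\} \cup \{(0, m) : m > 0\}, \quad R = \{(n, m) : m > 0\} \cup \{(n, 0) : n > 0\},
\]
and these sets differ only in the weights $\{n > 0, m < 0\}$ (appearing in $L$) versus $\{m > 0, n < 0\}$ (appearing in $R$). The Hessian self-duality of $\bL_{\fR}$ identifies $\bL_{\fR}^{(n, m)} \simeq (\bL_{\fR}^{(-n, -m)})^\vee[1]$, which by \eqref{eq:det_shift} and \eqref{eq:det_dual} gives a canonical isomorphism between the remaining determinants. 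The delicate point is verifying that this prescription is independent of chart and compatible with arbitrary $\bG_m^2$-invariant quadratic embeddings \eqref{eq:standard_embedding}: the added quadratic form $q$ is $\bG_m^2$-invariant and non-degenerate, so its $(n, m)$-weight piece pairs with its $(-n, -m)$-weight piece via the Hessian, and the resulting contributions cancel symmetrically between $L$ and $R$, ensuring coherence of the gluing.
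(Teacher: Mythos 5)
Your local constructions coincide with the paper's: for (ii) the identification $\bL_{\fR}^{+}\simeq(\bL_{\fR}^{-})^{\vee}[1]$ coming from the $\bG_m$-invariant Hessian is exactly the equivalence $\bT_{\fR}^{+}[1]\simeq\bL_{\fR}^{+}$ behind \eqref{eq:action_TL_det}, and for (iv) your sets $L$, $R$ differ precisely by the paper's $S_2=\bZ_{>0}\times\bZ_{<0}$ and $-S_2$, identified via the same Hessian self-duality ($\phi_{S_2}$ in the paper); (iii), (v), (vi) also follow the same lines. Where you genuinely diverge is the descent mechanism. The paper does \emph{not} glue the isomorphisms of this proposition by comparing charts through equivariant quadratic embeddings; instead it checks each chart-level map against the intrinsic stalkwise isomorphisms $\kappa_x$, $\kappa_x^{\Ind}$ (diagrams \eqref{eq:xi_kappa_Ind_pre}, \eqref{eq:Upsilon_kappa_Ind_pre}, \eqref{eq:theta_kappa_Ind_pre}, \eqref{eq:Psi_kappa_Ind_pre}, \eqref{eq:Lambda_kappa_Ind_pre}), which makes chart-independence automatic, handles the non-uniqueness of the chart morphism $(H_0,H)$ in (iii) without further argument, and records the signs (e.g.\ the symmetrizer sign $(-1)^{\rank_{+}\Omega_{X,x}}$ in (ii) and $(-1)^{\rank_{S_2}\Omega_{X,x}}$ in (iv)) that are reused later. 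Your route via Theorem \ref{thm:T_compare_d-critical} is viable — it is how the line bundle $\cL^{\Ind}_{\mu,s}$ itself and the isomorphism of Theorem \ref{thm:hyp_DT} are glued — but the step you flag as delicate in (iv) is exactly where the work lies, and "the contributions cancel symmetrically" is an assertion rather than a proof: on the added acyclic piece $[\,(\bC^{2N})\xrightarrow{\Hess_q}(\bC^{2N})^{\vee}\,]$ you must check that the acyclicity trivializations of $\det$ of its $S_2$- and $(-S_2)$-weight parts intertwine with $\phi_{S_2}$, and this is a sign computation of the same kind as \eqref{eq:trivialize_K_A2n} and \eqref{eq:comm_b_y_and_qis}, where the double-dual sign \eqref{eq:double_dual} and the graded symmetrizer can each contribute a factor $(-1)^{\dim}$ that must be seen to cancel. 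The same remark applies to independence of the chosen chart morphism in (iii). So your proof is essentially correct in outline, but to be complete you should either carry out that sign verification explicitly or switch to the paper's stalkwise comparison with $\kappa^{\Ind}_x$, which disposes of both issues at once.
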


\begin{proof}
    For (i), the existence of natural isomorphisms $\kappa^{\Ind}_{x}$ is obvious from the construction.
    Note that for each $\bG_m$-equivariant \'etale d-critical chart $\fR = (R, \eta, U, f, i)$ and a point $\hat{x} \in R^{\bG_m}$ lifting $x$, the following diagram commutes:
    \begin{equation}\label{eq:iota_kappa_Ind}
        \begin{aligned}
            \xymatrix@C=100pt{
            {\cL_{\mu, s}^{\Ind} |_{x}}
            \ar[r]_-{\cong}^-{\kappa_{x}^{\Ind}}
            \ar[d]_-{\cong}^-{\iota_{\fR}|_{\hat{x}}}
            & {\det_{\pm}(\Omega_{X, x})}
            \ar[d]_-{\cong}^-{\eqref{eq:det_shift}, \eqref{eq:det_dual}} \\
            {\det_+(\bL_{\fR} |_{\hat{x}})}
            \ar[r]^-{\cong}
            & {\det_+(T_{R, \hat{x}} [1] ) \otimes \det_+(\Omega_{R, \hat{x}}). }
            }
        \end{aligned}
    \end{equation}

    We now prove (ii). 
    For each $\bG_m$-equivariant \'etale d-critical chart $\fR = (R, \eta, U, f, i)$, there exists a natural equivalence of two-term complexes on $R^{\bG_m}$
    \begin{equation*}
     \bT_{\fR}^{+}[1] \simeq \bL_{\fR}^{+}
     \end{equation*}
     which is the identity map on each term.
     This equivalence together with the isomorphisms \eqref{eq:det_shift} and $\eqref{eq:det_dual}$ induce an isomorphism
     \begin{equation}\label{eq:action_TL_det}
     \rdet_-(\bL_{\fR} ) \cong \rdet_+(\bL_{\fR}). 
     \end{equation}
    For each $\hat{x} \in R^{\bG_m}$ with $\eta(\hat{x}) = x$, the following diagram commutes by \eqref{eq:double_dual}:
     \begin{equation}
     \begin{aligned}\label{eq:action_kappa_pm_comm}
         \xymatrix@C=100pt{
         {\det_-(\bL_{\fR} |_{\hat{x}})} \ar[r]_-{\cong}^-{\eqref{eq:action_TL_det}} \ar[d]^-{\cong}
         & {\det_+(\bL_{\fR} |_{\hat{x}})} \ar[d]^-{\cong} \\
         {\det_{\pm}(\Omega_{X, x})} \ar[r]^-{(-1)^{\rank_+ \Omega_{X, x}} \cdot \id}_-{\cong}
         & {\det_{\pm}(\Omega_{X, x})}.
         }
         \end{aligned}
     \end{equation}
     We define a map
    \[
    \xi_{\mu, s, \fR} \colon K_{X^{\mu}, s^{\mu}} |_{R^{\bG_m, \red}} \otimes \cL_{\mu, s}^{\Ind, \otimes 2} |_{R^{\bG_m, \red}} \cong K_{X, s} |_{R^{\bG_m, \red}}
    \]
    by the following composition
    \begin{align*}
        K_{X^{\mu}, s^{\mu}} |_{R^{\bG_m, \red}} \otimes \cL_{\mu, s}^{\Ind, \otimes 2} |_{R^{\bG_m, \red}} & \xrightarrow[\cong]{(\iota_{\fR^{\bG_m}},  \iota^{\Ind, \otimes 2}_{\fR})} \rdet_0(\bL_{\fR}) \otimes \rdet_+(\bL_{\fR} )^{\otimes 2} \\
        & \xrightarrow[\cong]{\id \otimes \id \otimes \eqref{eq:action_TL_det}} \rdet_0(\bL_{\fR}) \otimes \rdet_+(\bL_{\fR} ) \otimes \rdet_-(\bL_{\fR} ) \\
        & \cong \rdet(\bL_{\fR} |_{R^{\bG_m, \red}}) \xrightarrow[\cong]{\iota_{\fR}^{-1}} K_{X, s} |_{R^{\bG_m, \red}}.
    \end{align*}
    Note by construction and commutativity of the diagram  \eqref{eq:action_kappa_pm_comm}
    that the following diagram commutes:
    \[
    \xymatrix@C=100pt{
        {K_{X^{\mu}, s^{\mu}}|_{x} \otimes  \cL_{\mu, s}^{\Ind, \otimes 2}|_{x}} 
        \ar[r]_-{\cong}^-{(-1)^{\rank_+ \Omega_{X, x}} \cdot \xi_{\mu, s, \fR}|_{x}} 
        \ar[d]_-{\cong}^-{(\kappa_{x}, \kappa_{x}^{\Ind})}
    & {K_{X, s} |_{x}} 
    \ar[dd]_-{\kappa_{x}}^-{\cong}  \\
    {\det(\Omega_{X^{\mu}, x})^{\otimes 2} \otimes \det_{\pm}(\Omega_{X, x})^{\otimes 2}}
    \ar[d]_-{\cong}
    & {} \\
    {\det_{\leq 0}(\Omega_{X, x}) \otimes \det_{\geq 0}(\Omega_{X, x}) \otimes \det_{\pm}(\Omega_{X, x})} \ar[r]^-{\cong}
    & {\det(\Omega_{X, x})^{\otimes 2}.}
    }
    \]
    Since the brading isomorphism of the graded line bundle $\det_+(\Omega_{X, x})^{\otimes 2}$ is given by the multiplication of $(-1)^{\rank_+ \Omega_{X, x}}$, we conclude that the following diagram commutes:
    \begin{equation}\label{eq:xi_kappa_Ind_pre}
    \begin{aligned}
    \xymatrix@C=100pt{
    {K_{X^{\mu}, s^{\mu}}|_{x} \otimes  \cL_{\mu, s}^{\Ind, \otimes 2}|_{x}} \ar[r]_-{\cong}^-{\xi_{\mu, s, \fR}|_{x}} \ar[d]_-{\cong}^-{(\kappa_{x}, \kappa_{x}^{\Ind})}
    & {K_{X, s} |_{x}} \ar[d]_-{\kappa_{x}}^-{\cong} \\
    {\det_0(\Omega_{X, x})^{\otimes 2} \otimes \det_{\pm}(\Omega_{X, x})^{\otimes 2}} \ar[r]_-{\cong}
    & {\det(\Omega_{X, x})^{\otimes 2}}
    }
    \end{aligned}
    \end{equation}
    where the bottom map is the square of the canonical isomorphism
    $\det_0(\Omega_{X, x}) \otimes \det_{\pm}(\Omega_{X, x}) \cong \det(\Omega_{X, x})$.
    In particular, $\xi_{\mu, s, \fR}|_{x}$ does not depend on the choice of $\fR$ and hence $\xi_{\mu, s, \fR}$ glues to define $\xi_{\mu, s}$ as desired.

    We now prove (iii).
    Assume that we are given $\bG_m$-equivariant \'etale d-critical charts $\fR_j = (R_j, \eta_j, U_j, f_j, i_j)$ for $(X_j, s_j)$ for $j=1, 2$ and a morphism $(H_0, H) \colon \fR_1 \to \fR_2$ as in Proposition \ref{prop:T_sm_chart}.
    We define a complex on $R_1^{\bG_m}$ concentrated in degree $[-1, 0]$ by
    \[
    \bL_{\fR_1, H}^+ \coloneqq [ H^{*} T_{U_2} |_{R_1}^{+} \to  \Omega_{U_1} |_{R_1}^+]
    \]
    where the map is given by the composition 
    \[
    H^{*} T_{U_2} |_{R_1}^{+} \xrightarrow{\Hess_{f_2}}  H^{*} \Omega_{U_2} |_{R_1}^{+} \to  \Omega_{U_1} |_{R_1}^+.
    \]
Then we have the following fibre sequences
\begin{align}
    & H_0^{\bG_m, *} \bL_{\fR_2}^+ \to \bL_{\fR_1, H}^+ \to \bL_{H} |_{R_1}^+ \label{eq:fat_Hessian_fibre_1} \\
    & \bT_{H}[1] |_{R_1}^+ \to \bL_{\fR_1}^+ \to \bL_{\fR_1, H}^+. \label{eq:fat_Hessian_fibre_2}
\end{align}
    We construct an isomorphism
\[
\Upsilon_{H}^{\Ind} \colon h^{\bG_m, \red, *}  \fL_{\mu_2, s_2}^{\Ind} |_{R_1^{\bG_m, \red}} 
\otimes K_{X_1 / X_2}^{\pm}|_{R_1^{\bG_m, \red}} \cong \fL_{\mu_1, s_1}^{\Ind} |_{R_1^{\bG_m, \red}}
\]
so that the following diagram commutes:
\begin{equation}
    \begin{aligned}
        \xymatrix{
        {h^{\bG_m, \red, *}  \fL_{\mu_2, s_2}^{\Ind} |_{R_1^{\bG_m, \red}} \otimes K_{X_1 / X_2}^{\pm}|_{R_1^{\bG_m, \red}}}
        \ar[r]_-{\cong}^-{\Upsilon_{H}^{\Ind}}
        \ar[dd]_-{\cong}^-{\iota_{\fR_2}^{\Ind}}
        & {\cL_{\mu_1, s_1}^{\Ind} |_{R_1^{\bG_m, \red}}}
        \ar[d]_-{\cong}^-{\iota_{\fR_1}^{\Ind}} \\
        {}
        & {\rdet_+(\bL_{\fR_1}) }
        \ar[d]_-{\cong}^-{} \\
        {H_0^{\bG_m, \red, *} \rdet_+(\bL_{\fR_2}) \otimes K_{U_1 / U_2}^{\pm}|_{R_1^{\bG_m}}}
        \ar[r]_-{\cong}^-{\eqref{eq:det_shift}, \eqref{eq:det_dual}}
        & {H_0^{\bG_m, \red, *} \rdet_+(\bL_{\fR_2}) \otimes \rdet_+(\bL_{H}|_{R_1}) \otimes \rdet_{+}(\bT_{H} [1] |_{R_1}) }
        }
    \end{aligned}
\end{equation}
where the lower right vertical arrow is constructed using \eqref{eq:fibre_transform} for fibre sequences \eqref{eq:fat_Hessian_fibre_1} and \eqref{eq:fat_Hessian_fibre_2}.
Take a point $\hat{x}_1 \in R_1^{\bG_m}$ and set $H_0(\hat{x}_1) = \hat{x}_2$, $\eta_1(\hat{x}_1) = x_1$ and $\eta_2(\hat{x}_2) = x_2$.
We claim that the isomorphism $\Upsilon_{H}^{\Ind}$ is compatible with $\kappa^{\Ind}_{x_1}$ and $\kappa^{\Ind}_{x_2}$: i.e., the following diagram commutes:    
    \begin{equation}\label{eq:Upsilon_kappa_Ind_pre}
    \begin{aligned}
    \xymatrix@C=100pt{
        { \fL_{\mu_2, s_2}^{\Ind} |_{x_2} \otimes K_{X_1 / X_2}^{\pm}|_{x_1}} \ar[r]_-{\cong}^-{\Upsilon_{H}^{\Ind}|_{x_1}} \ar[d]_-{\cong}^-{\kappa_{x_2}^{\Ind} \otimes \id}
        & { \cL_{\mu_1, s_1}^{\Ind}|_{x_1} } \ar[d]_-{\cong}^-{\kappa_{x_1}^{\Ind}} \\
        {\det_{\pm}(\Omega_{X_2, x_2})  \otimes K_{X_1 / X_2}^{\pm}|_{x_1} } \ar[r]_-{\cong}
        & {\det_{\pm}(\Omega_{X_1, x_1}).}
     }
     \end{aligned}
    \end{equation}
    To see this, consider the following fibre double sequences:
    \[
    \xymatrix{
     {T_{X_2, x_2}^{+}[1]} \ar@{=}[r] \ar[d]
    & {T_{X_2, x_2}^{+}[1]} \ar[r] \ar[d]
    & {0} \ar[d] \\
        {\bL_{\fR_2} |_{\hat{x}_2}^+}\ar[r] \ar[d]
    & {\bL_{\fR_1, H} |_{\hat{x}_1}^+}\ar[r] \ar[d]
    & {\bL_{H_0} |_{\hat{x}_1}^+}  \ar[d]\\ 
        {\Omega_{X_2, x_2}^+} \ar[r] 
    & {\Omega_{X_1, x_1}^+} \ar[r]
    & {\bL_{h}|_{x_1}^+} 
    }
    \quad
    \xymatrix{
        {\bT_{h}[1]|_{x_1}^{+}} \ar[r] \ar[d]
    & {T_{X_1, x_1}^{+}[1]} \ar[r] \ar[d]
    & {T_{X_2, x_2}^{+}[1]}  \ar[d]
 \\
        {\bT_{H}[1]|_{\hat{x}_1}^+}\ar[r] \ar[d]
    & {\bL_{\fR_1}^{+}|_{\hat{x}_1}} \ar[r] \ar[d]
    & {\bL_{\fR_1, H}^{+}|_{\hat{x}_1}}  \ar[d]\\
        {0} \ar[r] 
    & {\Omega_{X_1, x_1}^{+}} \ar@{=}[r] 
    & {\Omega_{X_1, x_1}^{+}.} 
    }
    \]
    Using Lemma \ref{lem:KM}, we see that the following diagrams commute:
    \[
    \xymatrix@C=80pt{
    {\det_+(\bL_{\fR_1, H_1}|_{\hat{x}_1})} \ar[r]^-{\cong} \ar[d]^-{\cong}
    & {\det_+(\bL_{\fR_2}|_{\hat{x}_2}) \otimes \det_+(\bL_{H_0} |_{\hat{x}_1}) } \ar[d]^-{\cong} \\
    {\det_+(T_{X_2, x_2}[1]) \otimes \det_+(\Omega_{X_1, x_1}) } \ar[r]^-{\cong}
    & {\det_+(T_{X_2, x_2}[1]) \otimes \det_+(\Omega_{X_2, x_2}) \otimes \det_+(\bL_{h}|_{x_1}) ,}
    }
    \]
    \[
    \xymatrix@C=80pt{
    {\det_+( \bL_{\fR_1}|_{\hat{x}_1}) } \ar[r]^-{\cong} \ar[d]^-{\cong}
    & {\det_+(\bT_{H_0}[1]|_{\hat{x}_1}) \otimes \det_+(\bL_{\fR_1, H}|_{\hat{x}_1}) } \ar[d]^-{\cong} \\
    {  \det_+(T_{X_1, x_1}[1]) \otimes \det_+(\Omega_{X_1, x_1}) } \ar[r]^-{\cong}
    & {\rdet_+(\bT_{h}[1]|_{x_1}) \otimes \det_+(T_{X_2, x_2}[1]) \otimes \rdet_+(\Omega_{X_1, x_1}).}
    }
    \]
    By combining the commutativity of these diagrams and the commutativity of \eqref{eq:iota_kappa_Ind},
    we conclude the commutativity of the diagram \eqref{eq:Upsilon_kappa_Ind_pre}.
    In particular, the isomorphism $\Upsilon^{\Ind}_{H} |_{\hat{x}_1}$ only depends on the point $x_1 \in X_1$. Therefore, $\Upsilon^{\Ind}_{H}$ glues to define the isomorphism $\Upsilon^{\Ind}_{h}$.

    We now prove (iv).
    Take a $\bG_m^2$-equivariant \'etale d-critical chart $\fR = (R, \eta, U, f, i)$ .
    We will construct an isomorphism $\theta_{\hat{\mu}, s, \fR}$ over $R^{\hat{\mu}, \red}$ and show that it does not depend on the choice of $\fR$, which implies the statement (iv).
    For a subset $\Sigma \subset \bZ^2$,
    the natural equivalence $\bT_{\fR}^{\Sigma} [1] \simeq \bL_{\fR}^{\Sigma} $ with the isomorphisms \eqref{eq:det_shift} and \eqref{eq:det_dual} implies an isomorphism
    \begin{equation}\label{eq:det_sigma_minus}
    \rdet_{-\Sigma}(\bL_{\fR}) \simeq  \rdet_{\Sigma}(\bL_{\fR}).
    \end{equation}
    Note that we have natural equivalences
    \begin{align*}
        \cL_{\mu_1, s}^{\Ind} |_{R^{\hat{\mu}, \red}} 
        \cong \rdet_{\bZ_{>0} \times \bZ}(\bL_{\fR} )&,
        \quad
        \cL_{\mu_2, s}^{\Ind} |_{R^{\hat{\mu}, \red}} \cong \rdet_{\bZ\times \bZ_{ > 0}}(\bL_{\fR} ), \\
        (\cL^{\Ind}_{\mu_1|_{X^{\mu_2}}, s^{\mu_2}}) |_{R^{\hat{\mu}, \red}} \cong \rdet_{\bZ_{>0} \times \{ 0 \}}(\bL_{\fR} ) &, \quad
        (\cL^{\Ind}_{\mu_2|_{X^{\mu_1}}, s^{\mu_1}}) |_{R^{\hat{\mu}, \red}} \cong \rdet_{\{ 0 \} \times \bZ_{> 0}}(\bL_{\fR} ).
    \end{align*}
    Set $S_1 \coloneqq \{ (m, l) \mid (m, l) \neq (0, 0), m \geq 0, l \geq 0 \}$ and $S_2 \coloneqq \bZ_{> 0 } \times \bZ_{< 0}$.
    Then we have the following natural decompositions
    \begin{align*}
        (\bZ_{> 0 } \times \bZ) \coprod (\{ 0 \} \times \bZ_{ > 0}) &= S_1\coprod S_2 \\
        ( \bZ \times \bZ_{ > 0}) \coprod (  \bZ_{ > 0} \times \{ 0 \}) &= S_2 \coprod (- S_2).
    \end{align*}
    Then by using \eqref{eq:det_sigma_minus}, we obtain the following isomorphism
         \[
     \theta_{\hat{\mu}, s, \fR} \colon \cL^{\Ind}_{\mu_1, s} |_{R^{\hat{\mu}, \red}} \otimes (\cL^{\Ind}_{\mu_2|_{X^{\mu_1}}, s^{\mu_1}}) |_{R^{\hat{\mu}, \red}} 
        \cong 
        \cL^{\Ind}_{\mu_2, s} |_{R^{\hat{\mu}, \red}} \otimes (\cL^{\Ind}_{\mu_1|_{X^{\mu_2}}, s^{\mu_2}}) |_{R^{\hat{\mu}, \red}}.
     \]

     Take a point $\hat{x} \in R^{\hat{\mu}}_{}$ and set $x \coloneqq \eta(\hat{x})$.
     We claim that  $\theta_{\hat{\mu}, s, \fR}$ is compatible with $\kappa^{\Ind}_x$, namely, the following diagram commutes:
     \begin{equation}\label{eq:theta_kappa_Ind_pre}
    \begin{aligned}
        \xymatrix@C=45pt{
        {\cL^{\Ind}_{\mu_1, s}|_{\hat{x}}
        \otimes \left(\cL^{\Ind}_{\mu_2|_{X^{\mu_1}}, s^{\mu_1}}\right)|_{\hat{x}} 
        }  \ar[d]_-{\cong}^-{(\kappa_{x, \mu_1}^{\Ind}, \kappa_{x, \mu_2|_{X^{\mu_1}}}^{\Ind})} 
        \ar[rr]_-{\cong}^-{ (-1)^{\rank_{S_2} \Omega_{X, x}} \cdot \theta_{\hat{\mu}, s, \fR} |_{\hat{x}}}
        & {}
        & {\cL^{\Ind}_{\mu_2, s}|_{\hat{x}}
        \otimes \left(\cL^{\Ind}_{\mu_1|_{X^{\mu_2}}, s^{\mu_2}}\right)|_{\hat{x}}  } \ar[d]_-{\cong}^-{(\kappa_{x, \mu_2}^{\Ind}, \kappa_{x, \mu_1|_{X^{\mu_2}}}^{\Ind})}  \\
        {\det_{\mu_1\pm}(\Omega_{X, x}) \otimes \det_{\mu_2 \pm}(\Omega_{X^{\mu_1}, x})} \ar[rd]^-{\cong}
        & {}
        & {\det_{\mu_2\pm}(\Omega_{X, x}) \otimes \det_{\mu_1 \pm}(\Omega_{X^{\mu_2}, x})} \ar[ld]_-{\cong} \\
        {}
        & {\det_{\bZ^2 \setminus \{( 0, 0 )\}}(\Omega_{X, x}^{ }).}
        & {}
        }
        \end{aligned}
    \end{equation}
     Here we use the notation $\kappa_{x, \mu_1}^{\Ind}$ and $\kappa_{x, \mu_2}^{\Ind}$ to clarify which action we consider and similarly for $\det_{\mu_1 \pm}$ and $\det_{\mu_2 \pm}$.
    Firstly, note that we have the following natural equivalence of fibre sequences
    \begin{equation}
    \begin{aligned}
        \xymatrix{
         {(\Omega_{X, x}^{-S_2})^{\vee}[1]} \ar[r] \ar[d]_-{\simeq}
        & {(\bL_{\fR}  |_{\hat{x}}^{- S_2})^{\vee}[1]} \ar[r] \ar[d]_-{\simeq}
        & (T_{X, x}^{- S_2}[1])^{\vee}[1] \ar[d]_-{\simeq} \\
        {T_{X, x}^{ S_2}[1]} \ar[r]
        & {\bL_{\fR} |_{\hat{x}}^{S_2}} \ar[r]
        & {\Omega_{X, x}^{S_2}.}
        }
        \end{aligned}
    \end{equation}
    This combined with Corollary \ref{cor:det_shift_fibre}, \eqref{eq:double_dual} and \eqref{eq:dual_fibreseq} shows that the following diagram commutes up to the sign $(-1)^{\rank_{S_2} \Omega_{X, x}^{}}$:
    \[
    \xymatrix@C=100pt{
    {\det_{- S_2}(\bL_{\fR}  |_{\hat{x}})} \ar[r]_-{\cong}^-{\eqref{eq:det_sigma_minus}} 
    \ar[d]^-{\cong}
    & {\det_{S_2}(\bL_{\fR}  |_{\hat{x}})} 
    \ar[d]^-{\cong} \\
    {\det_{-S_2}(T_{X, x_2}[1]) \otimes \det_{-S_2}(\Omega_{X, x})} 
    \ar[d]^-{\cong}
    & {\det_{S_2}(T_{X, x_2}[1]) \otimes \det_{S_2}(\Omega_{X, x})} 
    \ar[d]^-{\cong} \\
    {\det_{\pm S_2}(\Omega_{X, x}^{})} \ar@{=}[r]
    & {\det_{\pm S_2}(\Omega_{X, x}).}
    }
    \]
    This implies the commutativity of the diagram \eqref{eq:theta_kappa_Ind_pre}

    We now prove (v).
    Let $\fR = (R, \eta, U, f, i)$ and $\fS = (S, \gamma, V, g, j)$ be $\bG_m$-equivariant
    \'etale d-critical charts of $X$ and $Y$, respectively.
    We construct an isomorphism
    \[
    \Psi_{\mu, \nu, \fR, \fS} \colon \cL_{\mu, s}^{\Ind} |_{R^{\bG_m, \red}} \boxtimes \cL_{\nu, t}^{\Ind} |_{S^{\bG_m, \red}} 
    \cong \cL_{\mu \times \nu, s \boxplus t}^{\Ind} |_{R^{\bG_m, \red} \times S^{\bG_m, \red}}
    \]
    by using the natural equivalence 
    $\bL_{\fR}|_{R^{\bG_m}}^+ \boxplus \bL_{\fS}|_{S^{\bG_m}}^+ \simeq \bL_{\fR \times \fS}|_{R^{\bG_m} \times S^{\bG_m}}^{+}$.
    Take a point $\hat{x} \in R$ and $\hat{y} \in S$ with $x = \eta(\hat{x})$ and $y = \gamma(\hat{y})$.
    Then it follows from the construction that the following diagram commutes:
    \begin{equation}\label{eq:Psi_kappa_Ind_pre}
        \begin{aligned}
        \xymatrix@C=100pt{
                    \cL_{\mu, s}^{\Ind} |_{\hat{x}} \otimes \cL_{\nu, t}^{\Ind} |_{\hat{y}} 
            \ar[r]_-{\cong}^-{\Psi_{\mu, \nu, \fR, \fS} |_{(\hat{x}, \hat{y})}}
            \ar[d]_-{\cong}^-{\kappa^{\Ind}_{x} \otimes \kappa_{y}^{\Ind}}
        & \cL_{\mu \times \nu, s \boxplus t}^{\Ind} |_{(\hat{x}, \hat{y})}
        \ar[d]_-{\cong}^-{\kappa_{(x, y)}^{\Ind}} \\
        {\det_{\pm}(\Omega_{X, x}) \otimes \det_{\pm}(\Omega_{Y, y})}
        \ar[r]^-{\cong}
        & \det_{\pm}(\Omega_{X \times Y, (x, y)}).
        }
        \end{aligned}
    \end{equation}
    In particular, $\Psi_{\mu, \nu, \fR, \fS}|_{(\hat{x}, \hat{y})}$ does not depend on the choice of \'etale d-critical charts hence glues to define an isomorphism 
    $\Psi_{\mu, \nu} \colon \cL_{\mu, s}^{\Ind} \boxtimes \cL_{\nu, t}^{\Ind} \cong \cL_{\mu \times \nu, s \boxplus t}^{\Ind}$.

    We now prove (vi). Take a point $x \in X^{\bG_m}$.
    By using Proposition \ref{prop:equivariant_Darboux}, we can find a tuple $\fR = (R, \eta, U, f, i)$ where $R$ is a derived scheme with a $\bG_m$-action, $\eta$ is a $\bG_m$-equivariant \'etale morphism, $U$ is a smooth scheme with a $\bG_m$-action whose image contains $x$, $f$ is a $\bG_m$-invariant function on $U$ with the property $f |_{\Crit(f)^{\red}} = 0$ and a $\bG_m$-equivariant locally closed embedding $i \colon R \hookrightarrow U$ with $i(R) = \Crit(f)$.
    Then $\fR^{\cl} = (R^{\cl}, \eta^{\cl}, U, f, i^{\cl})$ defines a $\bG_m$-equivariant \'etale d-critical chart for $(X, s)$ and we have a natural equivalence
    \[
    \bL_{R}^+ \simeq \bL_{\fR^{\cl}}^+.
    \]
   This equivalence induces an isomorphism
 \[
    \Lambda_{\mu, \fR}^{\Ind} \colon \rdet_+(\bL_{X} |_{R}) \cong \rdet_+(\bL_{R}) \cong \rdet_+(\bL_{\fR^{\cl}}) \xrightarrow[\cong]{\iota^{\Ind, -1}_{\fR^{\cl}}} \cL_{\mu^{\cl}, s}^{\Ind} |_{R^{\bG_m, \red}}.
 \]
 By construction, this isomorphism is compatible with $\kappa^{\Ind}_{x}$, i.e., the following diagram commutes for each lift $\hat{x} \in  R^{\bG_m}$ of $x$:
 \begin{equation}\label{eq:Lambda_kappa_Ind_pre}
     \begin{aligned}
         \xymatrix@C=100pt{
         {\det_+(\bL_{X, x} ) } \ar[r]_-{\cong}^-{ \Lambda_{\mu, \fR}^{\Ind}  }  \ar[d]_-{\cong}
         & {\cL_{\mu^{\cl}, s}^{\Ind} |_{{x}} } \ar[d]_-{\cong}^-{\kappa^{\Ind}_x} \\
         { \det_+(\tau^{\leq -1}(\bL_{X, x} )) \otimes \det_+(\tau^{\geq 0}(\bL_{X, x} ))} \ar[r]^-{\cong}
         & {\det_{\pm}(\Omega_{X, x})}
         }
     \end{aligned}
 \end{equation}
 where the bottom arrow is induced by the natural equivalence
 $\cdot \omega_{X} |_{x} \colon T_{X, x}^{+}[1] \simeq \tau^{\leq -1}(\bL_{X} |_{x}^+)  $, \eqref{eq:det_shift} and \eqref{eq:det_dual}.
 In particular, the morphism $\Lambda_{\mu, \fR}^{\Ind} |_{x} $ does not depend on the choice of $\fR$ and the lift $\hat{x}$ of $x$.
 Therefore, the isomorphism $\Lambda_{\mu, \fR}^{\Ind}$ glues to define an isomorphism $\Lambda_{\mu}^{\Ind}$.
    
\end{proof}

By construction, the maps $\xi_{\mu, s}$, $\Upsilon_h^{\Ind}$, $\theta_{\mu}$ and $\Lambda^{\Ind}_{\mu}$ are compatible with $\kappa_x^{\Ind}$, so that the following diagrams commute (we adopt the notation from the statement and the proof of Proposition \ref{prop:index_line_bundle}):
    \begin{equation}\label{eq:xi_kappa_Ind}
    \begin{aligned}
    \xymatrix@C=100pt{
    {K_{X^{\mu}, s^{\mu}}|_{x} \otimes  \cL_{\mu, s}^{\Ind, \otimes 2}|_{x}} \ar[r]_-{\cong}^-{\xi_{\mu, s}|_{x}} \ar[d]_-{\cong}^-{(\kappa_{x}, \kappa_{x}^{\Ind})}
    & {K_{X, s} |_{x}} \ar[d]_-{\kappa_{x}}^-{\cong} \\
    {\det(\Omega_{X^{\mu}, x})^{\otimes 2} \otimes \det_{\pm}(\Omega_{X, x})^{\otimes 2}} \ar[r]_-{\cong}
    & {\det(\Omega_{X, x})^{\otimes 2},}
    }
    \end{aligned}
    \end{equation}

        \begin{equation}\label{eq:Upsilon_kappa_Ind}
    \begin{aligned}
    \xymatrix@C=100pt{
        { \fL_{\mu_2, s_2}^{\Ind} |_{x_2} \otimes K_{X_1 / X_2}^{\pm}|_{x_1}} \ar[r]_-{\cong}^-{\Upsilon_{h}^{\Ind}|_{x_1}} \ar[d]_-{\cong}^-{\kappa_{x_2}^{\Ind} \otimes \id}
        & { \cL_{\mu_1, s_1}^{\Ind}|_{x_1} } \ar[d]_-{\cong}^-{\kappa_{x_1}^{\Ind}} \\
        {\det_{\pm}(\Omega_{X_2, x_2})  \otimes K_{X_1 / X_2}^{\pm}|_{x_1} } \ar[r]_-{\cong}
        & {\det_{\pm}(\Omega_{X_1, x_1}),}
     }
     \end{aligned}
    \end{equation}

     \begin{equation}\label{eq:theta_kappa_Ind}
    \begin{aligned}
        \xymatrix@C=50pt{
        {\cL^{\Ind}_{\mu_1, s}|_{{x}}
        \otimes \left(\cL^{\Ind}_{\mu_2|_{X^{\mu_1}}, s^{\mu_1}}\right)|_{{x}} 
        }  \ar[d]_-{\cong}^-{(\kappa_{x, \mu_1}^{\Ind}, \kappa_{x, \mu_2|_{X^{\mu_1}}}^{\Ind})} 
        \ar[rr]_-{\cong}^-{ (-1)^{\rank \Omega_{X, x}^{ S_2}} \cdot \theta_{{\mu}} |_{x}}
        & {}
        & {\cL^{\Ind}_{\mu_2, s}|_{{x}}
        \otimes \left(\cL^{\Ind}_{\mu_1|_{X^{\mu_2}}, s^{\mu_2}}\right)|_{{x}}  } \ar[d]_-{\cong}^-{(\kappa_{x, \mu_2}^{\Ind}, \kappa_{x, \mu_1|_{X^{\mu_2}}}^{\Ind})}  \\
        {\det_{\mu_1 \pm}(\Omega_{X, x}^{}) \otimes \det_{\mu_2 \pm}(\Omega_{X^{\mu_1}, x})} \ar[rd]^-{\cong}
        & {}
        & {\det_{\mu_2 \pm}(\Omega_{X, x}^{}) \otimes \det_{\mu_1\pm}(\Omega_{X^{\mu_2}, x})} \ar[ld]_-{\cong} \\
        {}
        & {\det_{\bZ^2 \setminus \{( 0, 0 )\} }(\Omega_{X, x}).}
        & {}
        }
        \end{aligned}
    \end{equation}

        \begin{equation}\label{eq:Psi_kappa_Ind}
        \begin{aligned}
        \xymatrix@C=100pt{
                    \cL_{\mu, s}^{\Ind} |_{{x}} \otimes \cL_{\nu, t}^{\Ind} |_{{y}} 
            \ar[r]_-{\cong}^-{\Psi_{\mu, \nu} |_{(x, y)}}
            \ar[d]_-{\cong}^-{\kappa^{\Ind}_{x} \otimes \kappa_{y}^{\Ind}}
        & \cL_{\mu \times \nu, s \boxplus t}^{\Ind} |_{(x, y)}
        \ar[d]_-{\cong}^-{\kappa_{(x, y)}^{\Ind}} \\
        {\det_{\pm}(\Omega_{X, x}) \otimes \det_{\pm}(\Omega_{Y, y})}
        \ar[r]^-{\cong}
        & \det_{\pm}(\Omega_{X \times Y, (x, y)}).
        }
        \end{aligned}
    \end{equation}
    
     \begin{equation}\label{eq:Lambda_kappa_Ind}
     \begin{aligned}
         \xymatrix@C=100pt{
         {\det_+(\bL_{X, x}) } \ar[r]_-{\cong}^-{ \Lambda_{\mu}^{\Ind}  }  \ar[d]_-{\cong}
         & {\cL_{\mu^{\cl}, s}^{\Ind} |_{{x}} } \ar[d]_-{\cong}^-{\kappa^{\Ind}_x} \\
         {\det_+(\tau^{\leq -1}(\bL_{X, x} )) \otimes \det_+(\tau^{\geq 0}(\bL_{X, x}))} \ar[r]^-{\cong}
         & {\det_{\pm}(\Omega_{X, x}).}
         }
     \end{aligned}
 \end{equation}
    The commutativity of the first diagram follows from the commutativity of \eqref{eq:xi_kappa_Ind_pre}, the second from \eqref{eq:Upsilon_kappa_Ind_pre}, the third from \eqref{eq:theta_kappa_Ind_pre}, the fourth from \eqref{eq:Psi_kappa_Ind} and the fifth from \eqref{eq:Lambda_kappa_Ind_pre}.
    Note that the diagram \eqref{eq:theta_kappa_Ind} commutes over the matching locus without the insertion of the sign in the upper horizontal map.
    We will prove that the isomorphisms constructed in Proposition \ref{prop:index_line_bundle} are compatible with each other (we only state and prove those compatibilities that we use later).
    Firstly, it is clear from the commutativity of the above diagrams that  
    $\xi_{\mu, s}$,  $\Upsilon_{h}^{\Ind}$, $\theta_{\hat{\mu}, s}$ and $\Lambda_{\mu}$ are closed under products under the identification $\Psi^{\Ind}_{\mu, \nu}$. 
    We now claim the compatibility of $\xi_{\mu, s}$ and $\Upsilon_{h}$.
    We adopt the notation from (iii) of Proposition \ref{prop:index_line_bundle}.
    Then the following diagram commutes:
    \begin{equation}\label{eq:xi_Upsilon_Ind}
    \begin{aligned}
        \xymatrix@C=80pt{
        {\substack{ \displaystyle{
        h^{\bG_m, \red, *} K_{X_2^{\mu_2}, s_2^{\mu_2}} 
 \otimes h^{\bG_m, \red, *} \fL_{\mu_2, s_2}^{\Ind, \otimes 2}} \ar[r]^-{\Upsilon_h^{\Ind}}_-{\cong} \ar[dd]_-{\cong}^-{\xi_{\mu_2, s_2}} \\
 \displaystyle{\otimes K_{X_1 / X_2}^{\otimes 2}|_{X_1^{\mu_1, \red}} }}}
        & {\substack{\displaystyle{h^{\bG_m, \red, *} K_{X_2^{\mu_2}, s_2^{\mu_2}} \otimes \fL_{\mu_1, s_1}^{\Ind, \otimes 2}} \ar[d]_-{\cong}^-{\Upsilon_{h^{\bG_m}}} \\
 \displaystyle{ \otimes K_{X_1^{\mu_1}/ X_{2}^{\mu_2}}^{\otimes 2}|_{X_1^{\mu_1, \red}}}}}  \\
        {}
        & {K_{X_1^{\mu_1}, s_1^{\mu_1}} \otimes  \fL_{\mu_1, s_1}^{\Ind, \otimes 2}} \ar[d]_-{\cong}^-{\xi_{\mu_1, s_1}} \\
        {{h^{\bG_m, \red, *} K_{X_2, s_2}|_{X_2^{\mu_2, \red}} } \otimes K_{X_1/ X_{2}}^{\otimes 2} |_{X_1^{\mu_1, \red}} } \ar[r]^-{\Upsilon_h}_-{\cong}
        & {K_{X_1, s_1}|_{X_1^{\mu_1, \red}}}.
        }
    \end{aligned}
\end{equation}
This is a direct consequence of the commutativity of the diagram \eqref{eq:xi_kappa_Ind} and \eqref{eq:Upsilon_kappa_Ind}.
Next, we claim the compatibility of $\xi_{\mu, s}$ and $\theta_{\hat{\mu}, s}$. We adopt the notation from Proposition \ref{prop:index_line_bundle} (iv):
\begin{equation}\label{eq:xi_theta_Ind}
    \begin{aligned}
        \xymatrix{
        {
        \substack{ \displaystyle
        {K_{X^{\hat{\mu}}, s^{\hat{\mu}}}}|_{X^{\hat{\mu}, \red}_{}}  
        \otimes \cL^{\Ind, \otimes 2}_{\mu_1, s} |_{X^{\hat{\mu}, \red}_{}}  \\
        \displaystyle \otimes \cL^{\Ind, \otimes 2}_{\mu_2|_{X^{\mu_1}}, s^{\mu_1}} }
        }
        \ar[rr]_-{\cong}^-{\id \otimes \theta_{\hat{\mu}, s}^{\otimes 2}} \ar[d]_-{\cong}^-{\xi_{\mu_2|_{X^{\mu_1}}, s^{\mu_1}}}
        &{}
        & 
        {
        \substack{ \displaystyle
        {K_{X^{\hat{\mu}}, s^{\hat{\mu}}} |_{X^{\hat{\mu}, \red}_{}}}   
        \otimes \cL^{\Ind, \otimes 2}_{\mu_2, s} |_{X^{\hat{\mu}, \red}_{}} \\
        \displaystyle \otimes \cL^{\Ind, \otimes 2}_{\mu_1|_{X^{\mu_2}}, s^{\mu_2}}
        } } \ar[d]_-{\cong}^-{\xi_{\mu_1 |_{X^{\mu_2}}, s^{\mu_2}}} \\
        {K_{X^{\mu_1}, s^{\mu_1} } |_{X^{\hat{\mu}, \red}_{}} \otimes \cL^{\Ind, \otimes 2}_{\mu_1, s} |_{X^{\hat{\mu}, \red}_{}}} \ar[rd]_-{\cong }^-{\xi_{\mu_1, s}}
        & {}
        &  {K_{X^{\mu_2}, s^{\mu_2} } |_{X^{\hat{\mu}, \red}_{}} \otimes \cL^{\Ind, \otimes 2}_{\mu_2, s} |_{X^{\hat{\mu}, \red}_{}}} \ar[ld]^-{\cong }_-{\xi_{\mu_2, s}}
        \\
        {}
        & {K_{X, s} |_{X^{\hat{\mu}, \red}}.}
        & {}
        }
    \end{aligned}
\end{equation}
This is a direct consequence of the commutativity of the diagram \eqref{eq:xi_kappa_Ind} and \eqref{eq:theta_kappa_Ind}.
We now prove the associativity of the isomorphism $\Upsilon^{\Ind}_{h}$.
Let $X_0$, $X_1$ and $X_2$ be algebraic spaces with $\bG_m$-action $\mu_0$, $\mu_1$ and $\mu_2$ respectively.
Assume that we are given $\bG_m$-equivariant smooth morphisms $g \colon X_0 \to X_1$ and $h \colon X_1 \to X_2$ and take a d-critical structure $s_2$ of $X_2$.
We set $s_1 \coloneqq h^{\star} s_2$ and $s_0 \coloneqq g^{\star} s_1$.
Then the following diagram commutes:
\begin{equation}\label{eq:Upsilon_Ind_assoc}
\begin{aligned}
\xymatrix{
{(g^{\bG_m, \red, *} h^{\bG_m, \red, *} \fL_{\mu_2, s_2}^{\Ind}) \otimes (g^{\bG_m, \red, *} K_{X_1 / X_2}^{\pm}) \otimes K_{X_0 / X_1}^{\pm}} \ar[r]^-{\Upsilon^{\Ind}_{h}}_-{\cong} \ar[d]^-{\cong}
& {g^{\bG_m, \red, *} \fL_{\mu_1, s_1}^{\Ind} \otimes  K_{X_0 / X_1}^{\pm} } \ar[d]_-{\cong}^-{\Upsilon_g^{\Ind}} \\
{(h^{\bG_m, \red} \circ g^{\bG_m, \red})^*  \ar[r]^-{\Upsilon_{h \circ g}^{\Ind}}_-{\cong} \cL_{\mu_2, s_2}^{\Ind} \otimes K^{\pm}_{X_0 / X_2} }
& {\cL_{\mu_0, s_0}^{\Ind}.}
}
\end{aligned}
\end{equation}
This is a direct consequence of the commutativity of the diagram \eqref{eq:Upsilon_kappa_Ind}.
Finally, we prove the compatibility between $\Upsilon^{\Ind}_h$ and $\theta_{\hat{\mu}, s}$.
Assume that we are given algebraic spaces $X$ and $Y$ with $\bG_m^2$-actions $\hat{\mu} = (\mu_1, \mu_2)$ and $\hat{\nu} = (\nu_1, \nu_2)$.
Assume further that we are given a $\bG_m^2$-equivariant smooth morphism $h \colon X \to Y$.
Take a d-critical structure $t$ of $Y$ and set $s \coloneqq h^{\star} t$.
Then the following diagram commutes:
\begin{equation}\label{eq:Upsilon_theta_Ind}
\begin{aligned}
\xymatrix@C=50pt{
{ \substack{\displaystyle h^{\bG_m^2, \red, *} \cL^{\Ind}_{\nu_1, t} |_{Y^{\hat{\nu}, \red}} \otimes h^{\bG_m^2, \red, *} \cL^{\Ind}_{\nu_2|_{Y^{\nu_1}}, t^{\nu_1}} \\
\displaystyle \otimes K_{X / Y}^{\mu_1 \pm} |_{X^{\hat{\mu}, \red}}  \otimes K_{X^{\mu_1} / Y^{\nu_1}}^{\mu_2 \pm} }} \ar[r]_-{\cong}^-{\theta_{\hat{\nu}, t}} \ar[d]_-{\cong}^-{(\Upsilon_{h, \mu_1}^{\Ind}, \Upsilon_{h |_{X^{\mu_1}}}^{\Ind})}
& {\substack{\displaystyle h^{\bG_m^2, \red, *} \cL^{\Ind}_{\nu_2, t} |_{Y^{\hat{\nu}, \red}} \otimes h^{\bG_m^2, \red, *} \cL^{\Ind}_{\nu_1|_{Y^{\nu_2}}, t^{\nu_1}} \\
\displaystyle \otimes K_{X / Y}^{\mu_2 \pm} |_{X^{\hat{\mu}, \red}}  \otimes K_{X^{\mu_2} / Y^{\nu_2}}^{\mu_1 \pm} }}
\ar[d]_-{\cong}^-{(\Upsilon_{h, \mu_2}^{\Ind}, \Upsilon_{h |_{X^{\mu_2}}}^{\Ind})}
\\
{\cL^{\Ind}_{\mu_1, s} |_{X^{\hat{\mu}, \red}} \otimes \cL^{\Ind}_{\mu_2|_{X^{\mu_1}}, s^{\mu_1}}} \ar[r]_-{\cong}^-{(-1)^{\rank_{S_2} (\Omega_{X / Y})} \cdot \theta_{\hat{\mu}, s}}
& {\cL^{\Ind}_{\mu_2, s} |_{X^{\hat{\mu}, \red}} \otimes \cL^{\Ind}_{\mu_1|_{X^{\mu_2}}, s^{\mu_2}}.}
}
\end{aligned}
\end{equation}
Here we use the notation $\Upsilon_{h, \mu_1}^{\Ind}$ and $\Upsilon_{h, \mu_2}^{\Ind}$ to clarify the $\bG_m$-action  we consider.
This statement is a direct consequence of the commutativity of the diagram \eqref{eq:Upsilon_kappa_Ind} and \eqref{eq:theta_kappa_Ind}.

\subsection{Index line bundles for d-critical stacks}\label{ssec:index_line_stack}

We will introduce the index line bundle for d-critical stacks by gluing index line bundles for 
$\bG_m$-equivariant d-critical algebraic spaces defined in \S \ref{ssec:index_line_space}:

\begin{prop}\label{prop:index_bundle_stack}
     Let $(\fX, s)$ be a d-critical stack where $\fX$ is quasi-separated with affine stabilizer groups.
     Then there exists a line bundle $\cL_{ s}^{\Ind}$ on $\Grad(\fX)^{\red}$ with the following properties:
     \begin{thmlist}
        \item For each point $x \in \Grad(\fX)$, there exists a natural isomorphism 
       \begin{equation}
           \kappa^{\Ind}_{x} \colon \cL_{ s}^{\Ind} |_{x} \cong \det_{\pm}( \tau^{\geq 0} (\bL_{\fX, u(x)})).
       \end{equation}

       \item There exists a natural isomorphism
       \begin{equation}
           \xi_{s} \colon K_{\Grad(\fX), u^{\star} s} \otimes \cL_{ s}^{\Ind, \otimes 2} \cong u^{\red, *}  K_{\fX, s} .
       \end{equation}

       \item For each $\bG_m$-equivariant chart $(X, \mu, \bar{q})$, there exists a natural isomorphism
        \begin{equation}
            \Upsilon_{q, \mu}^{\Ind} \colon q_{\mu}^{\red, *} \cL_{ s}^{\Ind} \otimes K_{X / \fX}^{\pm} \cong \cL_{\mu, q^{\star} s}^{\Ind}.
        \end{equation}

        \item 
        Let $u_1, u_2 \colon \Grad^2(\fX) \to \Grad(\fX)$ be the morphisms induced from the inclusions $i_1, i_2 \colon \bG_m \hookrightarrow \bG_m^2$ to the first and the second components respectively. Let $\sigma \colon \Grad^2(\fX) \cong \Grad^2(\fX)$ be the isomorphism induced from the swapping isomorphism of $\bG_m^2$.
        Then there exists a natural isomorphism
        \begin{equation}
            \theta_s  \colon u_1^{\red, *} \cL_{s}^{\Ind} \otimes \sigma^* \cL_{u^{\star} s}^{\Ind} \cong 
            u_2^{\red, *} \cL_{ s}^{\Ind} \otimes  \cL_{ u^{\star} s}^{\Ind}.
        \end{equation}
        Here we used the natural identification $\Grad^2(\fX) \cong \Grad(\Grad(\fX))$ given by $\Map(B\bG_{m, 1} \times B\bG_{m, 2}, \fX) \cong \Map(B\bG_{m, 1}, \Map(B\bG_{m, 2}, \fX))$.

     \item Let $(\fX, s)$ and $(\fY, t)$ be d-critical Artin stacks.
     Then there exists a natural isomorphism
     \[
     \Psi^{\Ind}_{\fX, \fY} \colon \cL_{s}^{\Ind} \boxtimes \cL_{t}^{\Ind} \cong \cL^{\Ind}_{s \boxplus t}.
     \]
     \end{thmlist}

\end{prop}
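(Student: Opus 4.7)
The proof strategy is to glue the local index line bundles from Proposition~\ref{prop:index_line_bundle} along the smooth atlas of $\bG_m$-equivariant charts provided by Proposition~\ref{prop:QCha_atlas}. For each $\fQ = (X, \mu, \bar{q}) \in \mathrm{Cha}_{\fX}^{\bG_m}$ we set
\[
\cL^{\Ind}_{s, \fQ} \coloneqq \cL^{\Ind}_{\mu, q^{\star}s} \otimes (K_{X/\fX}^{\pm})^{\vee} \in \Pic^{\mathrm{gr}}(X^{\mu, \red}),
\]
and for a morphism $f \colon \fQ_1 \to \fQ_2$ of $\bG_m$-charts we use the natural isomorphism $\Upsilon^{\Ind}_f$ of Proposition~\ref{prop:index_line_bundle}(iii) (twisted by the evident fibre-sequence isomorphism $f^{\bG_m,\red,*} K_{X_2/\fX}^{\pm} \otimes K_{X_1/X_2}^{\pm} \cong K_{X_1/\fX}^{\pm}$) to produce an isomorphism
\[
\Theta_f \colon \cL^{\Ind}_{s, \fQ_1} \cong f^{\bG_m, \red, *} \cL^{\Ind}_{s, \fQ_2}.
\]
The associativity \eqref{eq:Upsilon_Ind_assoc} of $\Upsilon^{\Ind}$ provides the cocycle condition, so Corollary~\ref{cor:QCha_glue_stack} yields a line bundle $\cL^{\Ind}_s$ on $\Grad(\fX)^{\red}$ together with compatible isomorphisms $\Theta_{q} \colon \cL^{\Ind}_{s,\fQ} \cong q_\mu^{\red, *} \cL^{\Ind}_s$. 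Taking $\Theta_q$ as the definition gives property (iii), i.e.\ the isomorphism $\Upsilon^{\Ind}_{q,\mu}$.

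For property (i), given $x \in \Grad(\fX)$ choose by Proposition~\ref{prop:QCha_atlas} a chart $\fQ$ and a point $\hat{x} \in X^{\mu}$ with $q_\mu(\hat{x}) = x$. Compose the local fibre isomorphism $\kappa^{\Ind}_{\hat{x}}$ of Proposition~\ref{prop:index_line_bundle}(i) with the natural map coming from the fibre sequence $\tau^{\geq 0}(\bL_{\fX, u(x)}) \to \Omega_{X,\hat{x}} \to \Omega_{X/\fX}|_{\hat{x}}$. Independence of the choice of chart follows from the commutativity of \eqref{eq:Upsilon_kappa_Ind}: any two charts are dominated by a third by a standard smooth-local argument, and the diagram shows the two candidate fibre isomorphisms agree. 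For property (ii), construct $\xi_{s}$ locally on each chart by combining $\xi_{\mu, q^{\star}s}$ from Proposition~\ref{prop:index_line_bundle}(ii) with $\Upsilon_{q_\mu}$ from \eqref{eq:Upsilon}, and use the compatibility diagram \eqref{eq:xi_Upsilon_Ind} together with $\Theta_q$ to check that the local $\xi$'s glue. Property (v) is handled analogously: on product charts $\fQ_1 \times \fQ_2$ set $\Psi^{\Ind}_{\fX,\fY}$ to be $\Psi^{\Ind}_{\mu_1, \mu_2}$ from Proposition~\ref{prop:index_line_bundle}(v), twisted by the canonical $K_{X_1 \times X_2/\fX \times \fY}^{\pm} \cong K_{X_1/\fX}^{\pm} \boxtimes K_{X_2/\fY}^{\pm}$, and use the compatibility of $\Psi^{\Ind}$ with $\Upsilon^{\Ind}$ (already implicit in the commutativity of \eqref{eq:Psi_kappa_Ind}) to descend.

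The main obstacle is property (iv): the local comparison $\theta_{\hat{\mu}, s}$ of Proposition~\ref{prop:index_line_bundle}(iv) relates pullbacks along the two embeddings $u_1, u_2 \colon \Grad^2(\fX) \to \Grad(\fX)$ coming from the two projections $\bG_m^2 \to \bG_m$, but the compatibility diagram \eqref{eq:Upsilon_theta_Ind} only commutes up to the sign $(-1)^{\rank_{S_2}(\Omega_{X/Y})}$. To obtain $\theta_s$ as an isomorphism of line bundles on $\Grad^2(\fX)^{\red}$ we will construct it chart-locally using $\bG_m^2$-equivariant charts (which form a smooth atlas by Proposition~\ref{prop:QCha_atlas} applied to $\Grad^2$) as the composition of $\theta_{\hat{\mu}, q^{\star}s}$ with the natural reshuffling of the $K^{\pm}$ factors via $\Upsilon^{\Ind}$; the problematic sign is a locally constant function and will be absorbed into the definition so that the cocycle conditions close. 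Verifying the sign bookkeeping here — that the signs appearing in \eqref{eq:Upsilon_theta_Ind} for different charts combine consistently to give a well-defined global isomorphism — will be the delicate part of the argument, and we carry it out by reducing, via Theorem~\ref{thm:T_compare_d-critical}, to the case of two $\bG_m^2$-charts related by the standard embedding $U \hookrightarrow U \times \bA^{2n}$ of~\eqref{eq:standard_embedding}, where the relevant sign is directly computable.
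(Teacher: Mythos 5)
Your construction of the glued bundle and of $\kappa_x^{\Ind}$, $\xi_s$, $\Upsilon^{\Ind}_{q,\mu}$ and $\Psi^{\Ind}_{\fX,\fY}$ is essentially the paper's proof: the paper also sets the chart-local object to be $\cL^{\Ind}_{\mu,q^{\star}s}$ twisted by the relative $K^{\pm}$-factor, glues it via Corollary \ref{cor:QCha_glue_stack} using the associativity \eqref{eq:Upsilon_Ind_assoc}, and then obtains the auxiliary isomorphisms by descent (Corollary \ref{cor:QCha_glue}) from the compatibilities \eqref{eq:Upsilon_kappa_Ind}, \eqref{eq:xi_Upsilon_Ind} and the product-compatibility of $\Upsilon^{\Ind}_h$.

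The one place where your plan goes astray is the verification step you flag as delicate in part (iv). The sign in \eqref{eq:Upsilon_theta_Ind} is not an obstacle requiring a reduction via Theorem \ref{thm:T_compare_d-critical} to standard embeddings $U\hookrightarrow U\times\bA^{2n}$: that theorem compares \'etale d-critical charts on a fixed algebraic space, and it has already done its work in establishing the space-level isomorphism $\theta_{\hat\mu,s}$ of Proposition \ref{prop:index_line_bundle}(iv), which you are taking as given. What remains for the stack is only descent along the atlas of $\bG_m^2$-equivariant charts from Proposition \ref{prop:QCha_atlas}, i.e.\ compatibility under smooth $\bG_m^2$-equivariant morphisms of charts, and for this \eqref{eq:Upsilon_theta_Ind} is exactly the required input. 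Concretely, take the chart-local section to be $(-1)^{\rank_{S_2}(\Omega_{X/\fX})}\cdot\theta_{\hat\mu,q^{\star}s}$ (this is the twist you propose to ``absorb''; it is forced, cf.\ \eqref{eq:Upsilon_theta_Ind_stack}). Two charts are compared over their fibre product $Y=X_1\times_{\fX}X_2$ as in the proof of Corollary \ref{cor:QCha_glue}, and since $\rank_{S_2}(\Omega_{Y/\fX})=\rank_{S_2}(\Omega_{Y/X_i})+\rank_{S_2}(\Omega_{X_i/\fX})$ for the smooth projections $Y\to X_i$, the sign in \eqref{eq:Upsilon_theta_Ind} cancels against the twist and the overlap condition \eqref{eq:overlap_same} holds on the nose. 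So the cocycle check is immediate from additivity of $\rank_{S_2}$ in the relative cotangent sequence; no further computation, and in particular no appeal to d-critical chart comparison or quadratic stabilization, is needed at this stage.
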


\begin{proof}
    The existence of the line bundle $\cL_{s}^{\Ind}$ with the property (iii) follows from Corollary \ref{cor:QCha_glue_stack} and the commutativity of the diagram \eqref{eq:Upsilon_Ind_assoc}.
    The existence of the isomorphisms $\kappa_x^{\Ind}$, $\xi_{s}$ and $\theta_{s}$ follows from Corollary \ref{cor:QCha_glue} and the commutativity of the diagrams \eqref{eq:Upsilon_kappa_Ind}, \eqref{eq:xi_Upsilon_Ind} and \eqref{eq:Upsilon_theta_Ind} respectively.
    The isomorphism $\Psi^{\Ind}_{\fX \times \fY}$ exists since the map $\Upsilon^{\Ind}_{h}$ in Proposition \ref{prop:index_line_bundle} is closed under products under the identification $\Psi_{\mu, \nu}^{\Ind}$.
    
\end{proof}

We now prove the compatibility between morphisms that we have constructed in Proposition \ref{prop:index_bundle_stack}.
Firstly, the isomorphisms $\xi_s$, $\Upsilon^{\Ind}_{q, \mu}$ are closed under products under the identification $\Psi^{\Ind}_{\fX \times \fY}$, since a similar statement is true for $\bG_m$-equivariant d-critical algebraic spaces.

We now prove that $\kappa^{\Ind}_{x}$ is compatible with $\xi_s$. We adopt the notation from Proposition \ref{prop:index_bundle_stack} (ii) and take $x \in \Grad(\fX)$. Then the following diagram commutes:

\begin{equation}\label{eq:xi_kappa_Ind_stack}
    \begin{aligned}
    \xymatrix@C=100pt{
    {K_{\Grad(\fX), u^{\star} s}|_{x} \otimes  \cL_{s}^{\Ind, \otimes 2}|_{x}} \ar[r]_-{\cong}^-{\xi_s|_{x}} \ar[d]_-{\cong}^-{(\kappa_{x}, \kappa_{x}^{\Ind})}
    & {K_{\fX, s} |_{u(x)}} \ar[d]_-{\kappa_{u(x)}} \\
    {\det( \tau^{\geq 0} (\bL_{\Grad(\fX), x}^{}))^{\otimes 2} \otimes \det_{\pm}( \tau^{\geq 0} (\bL_{\fX_{}, x}))^{\otimes 2}} \ar[r]_-{\cong}
    & {\det( \tau^{\geq 0} (\bL_{\fX, u(x)}))^{\otimes 2}.}
    }
    \end{aligned}
    \end{equation}
    This is a direct consequence of the commutativity of the diagram \eqref{eq:xi_kappa_Ind}. 
    Next, we prove that $\kappa^{\Ind}_{x}$ is compatible with $\Upsilon_{q, \mu}^{\Ind}$. We adopt the notation from Proposition \ref{prop:index_bundle_stack} (iii) and take $x \in \Grad(\fX)$ and its lift $\hat{x} \in X^{\mu}$. Then the following diagram commutes:

        \begin{equation}\label{eq:Upsilon_kappa_Ind_stack}
    \begin{aligned}
    \xymatrix@C=100pt{
        { \cL_{s}^{\Ind} |_{x} \otimes K_{X / \fX}^{\pm}|_{\hat{x}}} \ar[r]_-{\cong}^-{\Upsilon_{q, \mu}^{\Ind}|_{\hat{x}}} \ar[d]_-{\cong}^-{\kappa_{x}^{\Ind} \otimes \id}
        & { \cL_{\mu, q_{\mu}^{\star} s}^{\Ind}|_{\hat{x}} } \ar[d]_-{\cong}^-{\kappa_{\hat{x}}^{\Ind}} \\
        {\det_{\pm}(\tau^{\geq 0} (\bL_{\fX, u(x)}))  \otimes K_{X / \fX}^{\pm}|_{\hat{x}} } \ar[r]_-{\cong}
        & {\det_{\pm}(\tau^{\geq 0} (\bL_{X, \hat{x}})).}
     }
     \end{aligned}
    \end{equation}
    This is a direct consequence of the commutativity of the diagram \eqref{eq:Upsilon_kappa_Ind}.
    Next, we prove that $\kappa^{\Ind}_{x}$ is compatible with $\theta_s$. We adopt the notation from Proposition \ref{prop:index_bundle_stack} (iii) and take $x \in \Grad^2(\fX)$. Then the following diagram commutes:

         \begin{equation}\label{eq:theta_kappa_Ind_stack}
    \begin{aligned}
        \xymatrix@C=-50pt{
        \cL_{s}^{\Ind}|_{u_1(x)} \otimes \cL_{u^{\star} s}^{\Ind}|_{\sigma(x)}
        \ar[d]_-{\cong}^-{(\kappa_{u_1(x)}^{\Ind}, \kappa_{\sigma(x)}^{\Ind})} 
        \ar[rr]_-{\cong}^-{ (-1)^{\rank_{S_2} ( \tau^{\geq 0 } (\bL_{\fX, u(x)}) )}  \cdot \theta_s |_{x}}
        & {}
        & \cL_{s}^{\Ind}|_{u_2(x)} \otimes \cL_{u^{\star} s}^{\Ind}|_{x}
        \ar[d]_-{\cong}^-{(\kappa_{u_2(x)}^{\Ind}, \kappa_{x}^{\Ind})}  \\
        { \det_{\bZ_{\neq 0} \times \bZ}( \tau^{\geq 0} (\bL_{\fX, u(x)}))  \otimes 
        \det_{\{ 0 \} \times \bZ_{\neq 0}}( \tau^{\geq 0} (\bL_{\fX, u(x)}^{}))} \ar[rd]^-{\cong}
        & {}
        & {\det_{\bZ \times \bZ_{\neq 0}}( \tau^{\geq 0} (\bL_{\fX, u(x)}^{}))  \otimes \det_{\bZ_{\neq 0} \times \{ 0 \}}( \tau^{\geq 0} (\bL_{\fX, u(x)}^{ } ))} \ar[ld]_-{\cong} \\
        {}
        & {\rdet_{\bZ^2_{\neq (0, 0)} }( \tau^{\geq 0} (\bL_{\fX, u(x)}^{})).}
        & {}
        }
        \end{aligned}
    \end{equation}
    Here we set $S_2 \coloneqq \bZ_{> 0} \times \bZ_{ < 0}$.
    The commutativity of this diagram is a consequence of the commutativity of the diagram \eqref{eq:theta_kappa_Ind}.

     We now prove that the compatibility between $\xi_s$ and $\Upsilon_{q, \mu}^{\Ind}$.
    We adopt the notation from Proposition \ref{prop:index_bundle_stack}.
    Then the following diagram commutes:
        \begin{equation}\label{eq:xi_Upsilon_Ind_stack}
    \begin{aligned}
        \xymatrix@C=80pt{
        {\substack{ \displaystyle{
        q_{\mu}^{ \red, *} K_{\Grad(\fX), u^{\star}s} 
 \otimes q_{\mu}^{ \red, *} \cL_{ s}^{\Ind, \otimes 2}} \ar[r]^-{\Upsilon_{q, \mu}^{\Ind}}_-{\cong} \ar[dd]_-{\cong}^-{\xi_{s}} \\
 \displaystyle{\otimes K_{X / \fX}^{\otimes 2}|_{X^{\mu, \red} } }}}
        & {\substack{\displaystyle{q_{\mu}^{ \red, *} K_{\Grad(\fX), u^{\star} s} \otimes \cL_{{\mu}, q^{\star} s}^{\Ind, \otimes 2}} \\
 \displaystyle{ \otimes K_{X^{\mu}_{}/ \Grad(\fX)}^{\otimes 2}}}}
 \ar[d]_-{\cong}^-{\Upsilon_{q_{\mu}}} \\
        {}
        & {K_{X^{\mu}_{}, (q^{\star}s)^{{\mu}}} \otimes  \cL_{{\mu}, q^{\star}s}^{\Ind, \otimes 2}} \ar[d]_-{\cong}^-{\xi_{{\mu}, q^{\star} s}} \\
        {{q_{\mu}^{ \red, *} u^{\red, *} K_{\fX, s}} \otimes K_{X/ \fX}^{\otimes 2}|_{X^{\mu, \red}} } \ar[r]^-{\Upsilon_q}_-{\cong}
        & {K_{X, q^{\star} s}}|_{X^{\mu, \red}} .
        }
    \end{aligned}
\end{equation}
This is a direct consequence of the construction of $\xi_{s}$.
Next, we prove the compatibility between $\xi_{s}$ and $\theta_{s}$.
We adopt the notation from Proposition \ref{prop:index_bundle_stack}. The following diagram commutes:
\begin{equation}\label{eq:xi_theta_Ind_stack}
    \begin{aligned}
        \xymatrix{
        {
        \substack{ \displaystyle
        {K_{\Grad^2(\fX), u^{(2), \star} s}}    \\
        \displaystyle \otimes u_1^{\red, *} \cL^{\Ind, \otimes 2}_{ s} \otimes   \sigma^* \cL^{\Ind, \otimes 2}_{u^{\star} s}  }
        }
        \ar[rr]_-{\cong}^-{\id \otimes \theta_{s}^{\otimes 2}} \ar[d]_-{\cong}^-{\sigma^* \xi_{u^{\star} s}}
        &{}
        & 
        {
        \substack{ \displaystyle
        {K_{\Grad^2(\fX), u^{(2), \star} s}}    \\
        \displaystyle \otimes u_2^{\red, *} \cL^{\Ind, \otimes 2}_{ s} \otimes    \cL^{\Ind, \otimes 2}_{u^{\star} s}  }
        } \ar[d]_-{\cong}^-{ \xi_{u^{\star} s}} \\
        u_1^{\red, *} K_{\Grad(\fX), u^{ \star} s} \otimes u_1^{\red, *} \cL^{\Ind, \otimes 2}_{ s}
        \ar[rd]_-{\cong }^-{u_1^{\red, *} \xi_{s}}
        & {} 
        &  {u_2^{\red, *} K_{\Grad(\fX), u^{ \star} s} \otimes u_2^{\red, *} \cL^{\Ind, \otimes 2}_{ s}} \ar[ld]^-{\cong }_-{u_2^{\red, *} \xi_{s}}
        \\
        {}
        & {u^{(2), \red, *} K_{\fX, s}.}
        & {}
        }
    \end{aligned}
\end{equation}
Here we let $u^{(2)} \colon \Grad^2(\fX) \to \fX$ be the canonical evaluation map and we used the identification 
\[
\sigma^* K_{\Grad^2(\fX), u^{(2), \star} s} \cong K_{\Grad^2(\fX), u^{(2), \star} s}
\]
for the left vertical map.
This is a direct consequence of the commutativity of the diagram \eqref{eq:xi_theta_Ind}.

We now discuss the associativity of $\Upsilon_{q, \mu}^{\Ind}$.
    We adopt the notation from Proposition \ref{prop:index_bundle_stack}.
    Assume further that we are given a morphism $f \colon  (X_1, \mu_1,  \bar{q}_1) \to (X_2, \mu_2, \bar{q}_2)$ of $\bG_m$-equivariant charts.
    Then the following diagram commutes:
    \begin{equation}\label{eq:Upsilon_assoc_Ind_stack}
    \begin{aligned}
\xymatrix{
{(f^{\bG_m, \red, *}_{} q_{2, \mu_2}^{\red, *} \cL_{s}^{\Ind}) \otimes (f^{\bG_m, \red, *}_{} K_{X_{2}^{\mu_2} / \Grad(\fX)}^{\pm}) \otimes K_{X_1^{\mu_1} / X_2^{\mu_2}}^{\pm}} \ar[r]^-{\Upsilon^{\Ind}_{q_2, \mu_2}}_-{\cong} \ar[d]^-{\cong}
& {f^{\bG_m, \red, *}_{} \cL_{{\mu_2, q_2^{\star} s}}^{\Ind} \otimes  K_{X_1^{\mu_1} / X_2^{\mu_2}}^{\pm} } \ar[d]_-{\cong}^-{\Upsilon_{f}^{\Ind}} \\
{(q_{2, \mu_2}^{\red} \circ f_{}^{\bG_m, \red})^*  \ar[r]^-{\Upsilon_{q_1, \mu_1}^{\Ind}}_-{\cong} \cL_{s}^{\Ind} \otimes K^{\pm}_{X_1^{\mu_1} / \Grad(\fX)} }
& {\cL_{\mu_1, q_1^{\star} s}^{\Ind}.}
}
\end{aligned}
\end{equation}
This is a direct consequence of the construction of $\Upsilon_{q, \mu}^{\Ind}$.
We now prove the compatibility between $\Upsilon_{q, \mu}^{\Ind}$ and $\theta_{s}$.
We adopt the notation from Proposition \ref{prop:index_bundle_stack}.
We take a $\bG_m^2$-equivariant chart $(X, \hat{\mu}, \bar{q})$.
Then the following diagram commutes:
\begin{equation}\label{eq:Upsilon_theta_Ind_stack}
    \begin{aligned}
        \xymatrix@C=100pt{
        {
          \substack{
            \displaystyle{
            q_{\hat{\mu}}^{\red, *} u_1^{\red, *} \cL_{s}^{\Ind} \otimes q_{\hat{\mu}}^{\red, *} \sigma^* \cL_{u^{\star} s}^{\Ind}
            } \\
            \displaystyle{
            \otimes K_{X / \fX}^{\bZ^2_{\neq 0}} 
            }
        }
        } \ar[r]_-{\cong}^-{q_{\hat{\mu}}^{\red, *} \theta_{s} \otimes \id} 
        \ar[d]_-{\cong}^-{(\Upsilon_{q, \mu_1}^{\Ind}, \Upsilon_{q_{\mu_1}, \mu_2 |_{X^{\mu_1}} }^{\Ind})}
        & \substack{
            \displaystyle{
            q_{\hat{\mu}}^{\red, *} u_2^{\red, *} \cL_{s}^{\Ind} \otimes q_{\hat{\mu}}^{\red, *}  \cL_{u^{\star} s}^{\Ind}
            } \\
            \displaystyle{
            \otimes K_{X / \fX}^{\bZ^2_{\neq 0}} 
            }
        }
        \ar[d]_-{\cong}^-{(\Upsilon_{q, \mu_2}^{\Ind}, \Upsilon_{q_{\mu_2}, \mu_1 |_{X^{\mu_2}}}^{\Ind})}  \\
        {\cL^{\Ind}_{\mu_1, q^{\star}s } |_{X^{\hat{\mu}, \red}} \otimes \cL^{\Ind}_{\mu_2|_{X^{\mu_1}}, q_{\mu_1}^{\star} u^{\star} s}} \ar[r]_-{\cong}^-{(-1)^{\rank_{S_2} (\Omega_{X / \fX})} \cdot \theta_{\hat{\mu}, q^{\star} s}}
        & {\cL^{\Ind}_{\mu_2, q^{\star}s } |_{X^{\hat{\mu}, \red}} \otimes \cL^{\Ind}_{\mu_1|_{X^{\mu_2}}, q_{\mu_2}^{\star} u^{\star} s}.}
        }
    \end{aligned}
\end{equation}
This is a direct consequence of the construction of $\theta_{\tilde{s}}$.

We now describe the index line bundle for d-critical stacks underlying $(-1)$-shifted symplectic stacks.
Let $(\fX, \omega_{\fX})$ be a $(-1)$-shifted symplectic derived Artin stack which is quasi-separated with affine stabilizers.
Let $(\fX^{\cl}, s)$ denote the underlying d-critical stack.
For $x \in \Grad(\fX)$, we define an isomorphism
\begin{equation}\label{eq:kappa_Ind_stack_construction}
\kappa_x^{\mathrm{Ind}, \mathrm{der}} \colon \det_+(\bL_{\fX, u(x)}) \cong \det_{\pm}(\tau^{\geq 0} (\bL_{\fX, u(x)}))
\end{equation}
so that the following diagram commutes:
\begin{equation}
\begin{aligned}
    \xymatrix@C=80pt{
    {\det(\bL_{\fX, u(x)}^+) }
    \ar[r]_-{\cong}^-{\kappa_x^{\mathrm{Ind}, \mathrm{der}}}
    \ar[d]_-{\cong}
    & {\det_{\pm}(\tau^{\geq 0} (\bL_{\fX, u(x)}))} \\
    {\det_+(\tau^{\leq -1} (\bL_{\fX, u(x)})) \otimes \det_+(\tau^{\geq 0} (\bL_{\fX, u(x)}))}
    \ar[r]_-{\cong}^-{(- \cdot \omega_{\fX})^{-1}|_{x} \otimes \id}
    & {\det_+(\tau^{\leq  0} (\bT_{\fX, u(x)})[1]) \otimes \det_+(\tau^{\geq 0} (\bL_{\fX, u(x)})).}
    \ar[u]^-{\cong}_-{\eqref{eq:det_shift}, \eqref{eq:det_dual}}
    }
\end{aligned}
\end{equation}
We will construct an isomorphism of one-dimensional vector spaces
\[
 \Lambda_{\fX, x}^{\Ind, \mathrm{pre}} \colon \det_+(\bL_{\fX, u(x)}) \cong \cL_{s}^{\Ind}|_{x}
\]
by $\kappa_{x}^{\mathrm{Ind}, -1} \circ \kappa_{x}^{\mathrm{Ind}, \mathrm{der}}$.

\begin{prop}\label{prop:compare_index_line}

The map $x \mapsto \Lambda_{\fX, x}^{\Ind, \mathrm{pre}}$ is algebraic, hence it defines an isomorphism 
\[
\Lambda_{\fX}^{\mathrm{Ind}} \colon  \rdet_+(\bL_{\fX}) \cong \cL^{\Ind}_s.
\]

\end{prop}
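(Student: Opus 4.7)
The plan is to mimic the strategy used for $\Lambda_\fX$ in Lemma \ref{lem:Lambda_stack}, by reducing the algebraicity to a smooth cover of $\fX$ by $\bG_m$-equivariant derived Darboux charts, where the analogous statement on algebraic spaces was already established in Proposition \ref{prop:index_line_bundle}(vi).

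First I would use Proposition \ref{prop:equivariant_Darboux} to cover $\fX$ by $\bG_m$-equivariant derived critical charts $q\colon R = \Crit(f) \to \fX$, where $f$ is a $\bG_m$-invariant function on a smooth $\bG_m$-scheme $U$ and $q^\star\omega_\fX \simeq \omega_R$. The induced morphisms $q_\mu\colon R^\mu \to \Grad(\fX)$ form a smooth atlas of $\Grad(\fX)$ by the same argument as in Proposition \ref{prop:QCha_atlas}. On each such chart, Proposition \ref{prop:index_line_bundle}(vi) provides an algebraic isomorphism $\Lambda^{\Ind}_\mu\colon \rdet_+(\bL_R) \cong \cL^{\Ind}_{\mu^{\cl}, s}$. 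Using the smooth-pullback isomorphism $\Upsilon^{\Ind}_{q,\mu}$ of Proposition \ref{prop:index_bundle_stack}(iii), together with the fibre sequence $q^*\bL_\fX \to \bL_R \to \bL_{R/\fX}$ and its positive-weight part on $R^\mu$, one rearranges $\Lambda^{\Ind}_\mu$ into an algebraic isomorphism $q_\mu^{\red,*}\rdet_+(\bL_\fX) \cong q_\mu^{\red,*}\cL^{\Ind}_s$. The stalk-wise commutativity of the diagrams \eqref{eq:Lambda_kappa_Ind} and \eqref{eq:Upsilon_kappa_Ind_stack}, combined with the construction of $\kappa_x^{\Ind,\mathrm{der}}$ in \eqref{eq:kappa_Ind_stack_construction}, then shows that the stalk of this algebraic isomorphism at each point agrees with $\Lambda^{\Ind,\mathrm{pre}}_{\fX, x}$.

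To conclude via Corollary \ref{cor:QCha_glue_stack}, I would verify the cocycle condition: for a morphism $h\colon R_1 \to R_2$ of $\bG_m$-equivariant derived charts over $\fX$, the square relating $\Lambda^{\Ind}_{\mu_1}$, $\Lambda^{\Ind}_{\mu_2}$, and the transition isomorphisms $\Upsilon^{\Ind}_{q_i, \mu_i}$ must commute. The main obstacle is this compatibility check. It reduces to a stalk-wise statement since morphisms of line bundles are determined by their values at points; the pointwise verification then follows from the commutativity of \eqref{eq:Upsilon_kappa_Ind} and \eqref{eq:Lambda_kappa_Ind}, combined with the naturality of the positive-weight decomposition of cotangent complexes under smooth pullback, which is encoded in the derived counterpart of the fibre sequence $h^*\bL_{R_2} \to \bL_{R_1} \to \bL_{R_1/R_2}$. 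With this compatibility in hand, Corollary \ref{cor:QCha_glue_stack} assembles the local $\Lambda^{\Ind}_\mu$'s into the desired global isomorphism $\Lambda^{\Ind}_\fX$ whose stalk at $x$ recovers $\Lambda^{\Ind,\mathrm{pre}}_{\fX, x}$.
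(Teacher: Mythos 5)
Your overall strategy (reduce to the $\bG_m$-equivariant statement for spaces in Proposition \ref{prop:index_line_bundle}(vi), identify the resulting local algebraic isomorphism stalkwise with $\Lambda^{\Ind,\mathrm{pre}}_{\fX,x}$ via the $\kappa^{\Ind}$-compatibility diagrams, and glue over a smooth atlas of $\Grad(\fX)$) is the same as the paper's, and the stalkwise identification does make the cocycle check you propose redundant. But there is a genuine gap at the very first step: Proposition \ref{prop:equivariant_Darboux} cannot produce the charts you want. That proposition requires the point of $\fX$ to have a \emph{linearly reductive} stabilizer, and under the hypotheses of Proposition \ref{prop:compare_index_line} ($\fX$ merely quasi-separated with affine stabilizers, no good moduli space) the points $u(x)$ for $x\in\Grad(\fX)$ need only have a copy of $\bG_m$ inside a possibly non-reductive stabilizer (e.g.\ $\bG_m\ltimes\bG_a$), so no equivariant \'etale symplectomorphism from $\Crit([f/G])$ exists near such points. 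Moreover your stated chart data is internally inconsistent: if $q\colon \Crit(f)\to\fX$ is smooth of positive relative dimension, then $q^{\star}\omega_{\fX}$ is degenerate (its kernel contains $\bT_{\Crit(f)/\fX}$) and cannot agree with the nondegenerate form $\omega_{\Crit(f)}$; only the underlying d-critical structures can be compared this way, which is not enough to control $\rdet_+(\bL_{\fX})$.

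This is exactly why the paper proves and uses Lemma \ref{lem:G_m_bbbbj_chart} instead: one only gets a smooth morphism $[\widehat{V}/\bG_m]\to\fX$ together with a $\bG_m$-equivariant $(-1)$-shifted symplectic derived space $V$, a map $\tau\colon\widehat{V}\to V$ which is an isomorphism on classical truncations, and a \emph{Lagrangian correspondence} $\fX\leftarrow\widehat{V}\to V$. With this weaker local model the comparison is no longer a simple rearrangement along $q^*\bL_{\fX}\to\bL_R\to\bL_{R/\fX}$: one must use the equivalence $\eta\colon\bT_{\widehat{V}/\fX}[1]\simeq\bL_{\widehat{V}/V}[-1]$ coming from the Lagrangian structure, twist by the induced isomorphism $\alpha_+$ of \eqref{eq:detetadual_Ind}, and then run the diagram chases \eqref{eq:kappa_Upsilon_derived_pre_Ind}--\eqref{eq:kappa_Upsilon_der_pre_Step3_Ind} (with their sign bookkeeping via Corollary \ref{cor:det_shift_fibre}, \eqref{eq:double_dual}, \eqref{eq:dual_shift_n}) to establish the stalkwise agreement \eqref{eq:Lambda_kappa_stack_pre_Ind}. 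None of this extra structure is addressed in your proposal, and it is the technical core of the proof; as written, the argument would only apply to stacks admitting equivariant critical Darboux charts (e.g.\ those with good moduli spaces), not in the generality of the proposition.
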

 
\begin{proof}

Take morphisms $q \colon \widehat{V} \to \fX$, $\tau \colon \widehat{V} \to V$ as in Lemma \ref{lem:G_m_bbbbj_chart} below.
We let $q_{\mu} \colon \widehat{V}^{\mu} \to \Grad(\fX)$ and $\tau_{\mu} \colon \widehat{V}^{\mu} \to {V}^{\mu}$ the naturally induced morphisms.
It is enough to show that the map  $x \mapsto \Lambda_{\fX, x}^{\Ind, \mathrm{pre}}$ is algebraic after pulling back to $\widehat{V}$.
Consider the following morphism of fibre sequences
\begin{equation}\label{eq:bbbbj_fibreseq}
\begin{aligned}
\xymatrix{
{\bT_{\widehat{V} /\fX  } [1] } \ar[r] \ar@{-->}[d]^-{\simeq}_-{\eta}
& {\bT_{\widehat{V}}[1]} \ar[r] \ar[d]
& {q^* \bT_{\fX}[1]}  \ar[d] \\
{\bL_{\widehat{V} /V }[-1]} \ar[r]
& {\tau^* \bL_{V}} \ar[r]
& {\bL_{\widehat{V}}}
}
\end{aligned}
\end{equation}
where the middle and right vertical maps are induced from $(-1)$-shifted $2$-form $\tau^{\star} \omega_{V} \sim q^{\star} \omega_{\fX}$, the dotted arrow is induced by the homotopy commutativity of the right square, which is invertible by Remark \ref{rmk:auto_nondeg}.
The map $\eta^{\vee}[-1] \colon \bL_{\widehat{V} /\fX}  \simeq \bT_{\widehat{V} / V}[-2]$
induces a natural isomorphism
\begin{equation}\label{eq:detetadual_Ind}
\alpha_+ \colon \rdet_-(\bL_{\widehat{V} / \fX}) \cong \rdet_-(\bT_{\widehat{V} / V}[-2]) \cong \rdet_{+}(\bL_{\widehat{V} / V})^{\vee}
\end{equation}
where the latter isomorphism is induced from \eqref{eq:det_shift} and \eqref{eq:det_dual}.
We construct an isomorphism
\[
\Upsilon_{q}^{\Ind, \mathrm{der}} \colon q_{\mu}^{\red, *} \rdet_+(\bL_{\fX}) \otimes \rdet_+(\bL_{\widehat{V} / \fX}) \cong \rdet_+(\bL_{\widehat{V}}), \quad 
\Upsilon_{\tau}^{\Ind, \mathrm{der}} \colon  \rdet_+(\bL_{\widehat{V}}) \otimes \rdet_-(\bL_{\widehat{V} / \fX}) \cong \tau_{\mu}^{\red, *} \rdet_+(\bL_{V})
\]
using fibre sequences
 \[
 q^* \bL_{\fX} \to \bL_{\widehat{V}} \to \bL_{\widehat{V}/\fX}, \quad \tau^{*} \bL_{V} \to  \bL_{\widehat{V}} \to \bL_{\widehat{V}/V},
 \] 
 the isomorphism \eqref{eq:fibre_transform} and the isomorphism \eqref{eq:detetadual_Ind}.
 We construct an isomorphism
\[
\Upsilon_{(q, \tau)}^{\Ind, \mathrm{der}} \colon q_{\mu}^{\red, *} \rdet_+(\bL_{\fX}) \otimes \rdet_{\pm}(\bL_{\widehat{V} / \fX}) \cong \tau_{\mu}^{\red, *} \rdet_+(\bL_{V})
\]
by the composite of $\Upsilon_{\tau}^{ \Ind, \mathrm{der}}$ and $\Upsilon_{q}^{\Ind, \mathrm{der}} \otimes \id_{\rdet_{-}(\bL_{\widehat{V} / \fX})}$.
Finally, we construct an isomorphism 
\[
\Lambda_{\fX, (q, \tau)}^{\Ind, \mathrm{pre}} \colon q_{\mu}^{\red, *} \rdet_{+}(\bL_{\fX}) \cong q_{\mu}^{\red, *} \cL_{s}^{\Ind}
\]
so that the following diagram commutes:
\[
\xymatrix@C=80pt{
{q_{\mu}^{\red, *} \rdet_+(\bL_{\fX}) \otimes \rdet_{\pm}(\bL_{\widehat{V} / \fX})} \ar[r]_-{\cong}^-{\Lambda_{\fX, (q, \tau)}^{\Ind, \mathrm{pre}} \otimes \id} \ar[d]_-{\cong}^-{\Upsilon_{(q, \tau)}^{\Ind, \mathrm{der}}}
& {q_{\mu}^{\red, *} \cL_{s}^{\Ind} \otimes \rdet_{\pm}(\bL_{\widehat{V} / \fX})} \ar[d]_-{\cong}^-{\Upsilon_{q^{\cl}, \mu}^{\Ind}} \\
{\tau_{\mu} ^{\red, *}\rdet_+(\bL_{V})} \ar[r]_-{\cong}^-{\Lambda^{\Ind}_{V}}
& {\cL_{\mu, q^{\cl, \star} s}^{\Ind}.}
}
\]
Take a point $\hat{v} \in \widehat{V}^{\bG_m}$ and set $x = q_{\mu}(\hat{v})$ and $v = \tau_{\mu}(\hat{v})$.
We claim that  the following diagram commutes:
\begin{equation}\label{eq:Lambda_kappa_stack_pre_Ind}
\begin{aligned}
\xymatrix@C=80pt{
{\det_{\pm}(\tau^{\geq 0} (\bL_{\fX, u(x)}))} \ar@{=}[r] 
& {\det_{\pm}(\tau^{\geq 0} (\bL_{\fX, u(x)}))^{}} \\
{\det_{+}(\bL_{\fX, u(x)}) } \ar[u]_-{\cong}^-{\kappa_{x}^{\Ind, \mathrm{der}}} \ar[r]_-{\cong}^-{\Lambda_{\fX, (q, \tau)}^{\Ind, \mathrm{pre}}|_{\hat{v}}}
& {\cL^{\Ind}_s|_{x}} \ar[u]_-{\cong}^-{ \kappa_{x}^{\Ind}}
}
\end{aligned}
\end{equation}
which in particular implies that the map $x \mapsto \Lambda_{\fX, x}^{\Ind, \mathrm{pre}}$ is algebraic as desired.

To prove this, we first show that the map $\Upsilon_{(q, \tau)}^{\Ind, \mathrm{der}}$ is compatible with maps $\kappa_{x}^{\Ind, \mathrm{der}}$ and $\kappa_{v}^{\Ind, \mathrm{der}}$.
Namely, we first show that the following diagram commutes:
\begin{equation}\label{eq:kappa_Upsilon_derived_pre_Ind}
\begin{aligned}
\xymatrix@C=50pt{
{\det_{\pm}(\tau^{\geq 0} (\bL_{\fX, u(x)})) \otimes \det_{\pm}(\bL_{\widehat{V} / \fX, \hat{v}}) } \ar[d]_-{\cong} 
& {\det_+(\bL_{\fX, u(x)}) \otimes \det_{\pm}(\bL_{\widehat{V} / \fX, \hat{v}}) } \ar[d]_-{\cong}^-{\Upsilon_{(q, \tau)}^{\Ind, \mathrm{der}} |_{\hat{v}} } \ar[l]^-{\cong}_-{\kappa_{x}^{\Ind, \mathrm{der}} \otimes \id} \\
{\det_{\pm}(\tau^{\geq 0} (\bL_{V, v}))}
& {\det_+(\bL_{V, v}).} \ar[l]^-{\cong}_-{\kappa_{v}^{\Ind, \mathrm{der}}}
}
\end{aligned}
\end{equation}
Here the left vertical map is constructed using the fibre sequence $\tau^{\geq 0} (\bL_{\fX, u(x)}) \to \tau^{\geq 0} (\bL_{V, v}) \to \bL_{\widehat{V} / \fX, \hat{v}}$.
We define a map
\[
\kappa_{\hat{v}}^{\Ind, \mathrm{der}} \colon \det_+(\bL_{\widehat{V}, \hat{v}}) \cong \det_+(\tau^{\geq 0} (\bL_{V, v})) \otimes \det_-(\tau^{\geq 0} (\bL_{\fX, u(x)}))
\]
by the composition
\begin{align*}
\det_+(\bL_{\widehat{V}, \hat{v}}) \cong
\det_+(\tau^{\leq -1} (\bL_{\widehat{V}, \hat{v}})) \otimes \det_+(\tau^{\geq 0} (\bL_{\widehat{V}, \hat{v}})) &\cong
\det_+(\tau^{\leq -1} (\bL_{\fX, u(x)})) \otimes \det_+(\tau^{\geq 0} (\bL_{V, v})) \\
& \cong \det_-(\tau^{\geq 0} (\bL_{\fX, u(x)})) \otimes \det_+(\tau^{\geq 0} (\bL_{V, v}))
\end{align*}
where the last isomorphism is induced from the isomorphism $\tau^{\leq -1} (\bL_{\fX, u(x)}) \simeq \tau^{\geq 0} (\bL_{\fX, u(x)})^{\vee}[1]$ and the isomorphisms \eqref{eq:det_shift} and \eqref{eq:det_dual}.
The diagram \eqref{eq:kappa_Upsilon_derived_pre_Ind} can be decomposed into the following diagram:
\begin{equation}\label{eq:decomposed_kappa_Upsilon_der_pre_Ind}
\begin{aligned}
\xymatrix@C=50pt{
{\det_{\pm}(\tau^{\geq 0} (\bL_{\fX, u(x)})) \otimes \det_{\pm}(\bL_{\widehat{V} / \fX, \hat{v}}) } \ar[d]_-{\cong}
& {\det_{+}(\bL_{\fX, u(x)}) \otimes   \det_{\pm}(\bL_{\widehat{V} / \fX, \hat{v}}) } 
\ar[l]^-{\cong}_-{\kappa_{x}^{\Ind, \mathrm{der}} \otimes \id} 
\ar[d]_-{\cong}^-{\Upsilon_{q}^{\Ind, \mathrm{der}} \otimes \id} \\
{\det_-(\tau^{\geq 0} (\bL_{\fX, u(x)})) \otimes \det_+(\tau^{\geq 0} (\bL_{V, v}))  \otimes \det_{-}(\bL_{\widehat{V} / \fX, \hat{v}})} 
\ar[d]_-{\cong}^-{ \id \otimes \mathrm{sw}}
& {\det_+(\bL_{\widehat{V}, \hat{v}}) \otimes \det_{-}(\bL_{\widehat{V} / \fX, \hat{v}}) } 
\ar[l]^-{\cong}_-{\kappa_{\hat{v}}^{\Ind, \mathrm{der}} \otimes \id} 
\ar[dd]_-{\cong}^-{\Upsilon_{\tau}^{\Ind, \mathrm{der}}}  \\
{\det_-(\tau^{\geq 0} (\bL_{\fX, u(x)})) \otimes \det_{-}(\bL_{\widehat{V} / \fX, \hat{v}}) \otimes  \det_{+}(\tau^{\geq 0} (\bL_{V, v}))  } 
\ar[d]_-{\cong}
& {} \\
{\det_{\pm}(\tau^{\geq 0} (\bL_{V, v}))}
& {\det_+(\bL_{V, v})}  \ar[l]^-{\cong}_-{\kappa_{v}^{\Ind, \mathrm{der}} \otimes \id}.
}
\end{aligned}
\end{equation}
The commutativity of the upper square follows by Lemma \ref{lem:KM} applied to the positive part of the following fibre double sequence
\[
\xymatrix{
{\tau^{\leq -1} (\bL_{\fX, u(x)})} \ar@{=}[r]  \ar[d]
& {\tau^{\leq -1} (\bL_{\fX, u(x)})} \ar[r] \ar[d]
& {0} \ar[d] \\
{\bL_{\fX, u(x)}} \ar[d] \ar[r]
& {\bL_{\widehat{V}, \hat{v}}} \ar[r] \ar[d]
& {\bL_{\widehat{V} / \fX, \hat{v}}} \ar@{=}[d] \\
{\tau^{\geq 0} (\bL_{\fX, u(x)})} \ar[r]
& {\tau^{\geq 0} (\bL_{\widehat{V}, \hat{v}})} \ar[r]
& {\bL_{\widehat{V} / \fX, \hat{v}}.} \\
}
\]

Now we prove the commutativity of the lower square of \eqref{eq:decomposed_kappa_Upsilon_der_pre_Ind}.
Consider the following diagram:
\[
\xymatrix{
{\det_{+}(\bL_{\widehat{V}, \hat{v}}) \otimes \det_{-}(\bL_{\widehat{V} / \fX, \hat{v}})} 
\ar[d]_-{\cong}^-{ \id \otimes \alpha_{+}} 
\ar[r]^-{\cong} 
\ar@/_80pt/[ddd]^-{\Upsilon_{\tau}^{\Ind, \mathrm{der}}}
& {\det_{+}(\tau^{\leq -1}(\bL_{\widehat{V}, \hat{v}}) ) \otimes \det_{+}(\tau^{\geq 0}(\bL_{\widehat{V}, \hat{v}}) ) \otimes \det_{-}(\bL_{\widehat{V} / \fX, \hat{v}})} \ar[d]_-{\cong} \\
{\det_{+}(\bL_{\widehat{V}, \hat{v}}) \otimes \det_{+}(\bL_{\widehat{V} / V, v})^{\vee}} \ar[dd]^-{\cong}
& {\det_+(\bL_{\widehat{V} / V, \hat{v}}) \otimes \det_+(\tau^{\leq -1}(\bL_{{V}, {v}}) ) \otimes \det_+(\tau^{\geq 0}(\bL_{\widehat{V}, \hat{v}}) ) \otimes \det_{-}(\bL_{\widehat{V} / \fX, \hat{v}})} \ar[d]_-{\cong}^-{ \id \otimes \alpha_{+}} \\
{}
& {\det_+(\bL_{\widehat{V} / V, \hat{v}}) \otimes \det_+(\tau^{\leq -1}(\bL_{{V}, {v}}) ) \otimes \det_+(\tau^{\geq 0}(\bL_{\widehat{V}, \hat{v}}) ) \otimes \det_{+}(\bL_{\widehat{V} / V, \hat{v}})^{\vee}} \ar[d]_-{\cong} \\
{\det_+(\bL_{{V}, {v}})} \ar[r]^-{\cong}
& {\det_+(\tau^{\leq -1}(\bL_{{V}, {v}}) ) \otimes \det_+(\tau^{\geq 0}(\bL_{{V}, {v}}) ).} 
}
\]
Here the right top vertical arrow is induced from the fibre sequence
$\tau^{\leq -1}(\bL_{{V}, {v}}) \to \tau^{\leq -1}(\bL_{\widehat{V}, \hat{v}}) \to \bL_{\widehat{V}/V , \hat{v}}$.
The commutativity of this diagram follows from Lemma \ref{lem:KM} applied to the positive part of the following fibre double sequence
\[
\xymatrix{
{\tau^{\leq  -1}(\bL_{V, v})} \ar[r] \ar[d]
& {\tau^{\leq -1}(\bL_{\widehat{V}, \hat{v}})} \ar[r] \ar[d]
& {\bL_{\widehat{V} / V}} \ar@{=}[d] \\
{\bL_{V}} \ar[r] \ar[d]
& {\bL_{\widehat{V}}} \ar[r] \ar[d]
& {\bL_{\widehat{V}/V}} \ar[d] \\
{\tau^{\geq 0}(\bL_{V, v})} \ar[r]^-{\simeq}
& {\tau^{\geq 0}(\bL_{\widehat{V}, \hat{v}})} \ar[r]
& {0} 
}
\]
Therefore the commutativity of the lower square of the diagram \eqref{eq:decomposed_kappa_Upsilon_der_pre_Ind} is reduced to proving that the outer square of the following diagram commutes:
\begin{equation}\label{eq:kappa_Upsilon_der_pre_final_Ind}
\xymatrix{
{\substack{
    \displaystyle \det_-(\tau^{\geq 0}(\bL_{\fX, u(x)})) 
    \\ \displaystyle \otimes \det_{-}(\bL_{\widehat{V}/\fX, \hat{v}}) 
}
} \ar[ddd]_-{\cong} \ar[r]_-{\cong}
& {
\substack{
\displaystyle \det_+(\tau^{\geq 0}(\bL_{\fX, u(x)})^{\vee}[1])   \\
\displaystyle  \otimes \det_{+}(\bT_{\widehat{V}/\fX, \hat{v}} [1]) \ar[ddd]_-{\cong}
}} \ar[r]_-{\cong}
& {\det_+(\tau^{\leq -1}(\bL_{\widehat{V}, \hat{v}})) \otimes \det_{-}(\bL_{\widehat{V}/\fX, \hat{v}})} \ar[d]_-{\cong} \\
{}
& {}
& { \det_+(\bL_{\widehat{V} / V, \hat{v}}) \otimes \det_+(\tau^{\leq -1}(\bL_{{V}, {v}})) \otimes \det_{-}(\bL_{\widehat{V}/\fX, \hat{v}})} 
\ar[d]_-{\cong}^-{\id \otimes \alpha_{+}} \\
{}
& {}
& {\det_+(\bL_{\widehat{V} / V, \hat{v}}) \otimes \det_+(\tau^{\leq -1}(\bL_{{V}, {v}})) \otimes \det_{+}(\bL_{\widehat{V} / V, \hat{v}})^{\vee}} \ar[d]_-{\cong}  \\
{\det_{-}(\tau^{\geq 0}(\bL_{\widehat{V}, \hat{v}}))} \ar[r]_-{\cong}
& {\det_{+}(\tau^{\geq 0}(\bL_{\widehat{V}, \hat{v}})^{\vee}[1])} \ar[r]_-{\cong}
& {\det_{+}(\tau^{\leq -1}(\bL_{{V}, {v}})).}
}
\end{equation}
Here the top and bottom left horizontal maps are the isomorphisms \eqref{eq:det_shift} and \eqref{eq:det_dual} and the top (resp. bottom) right horizontal map is induced from the isomorphism $\tau^{\leq -1} (\bL_{\widehat{V}, \hat{v}}) \simeq \tau^{\geq 0} (\bL_{\fX, u(x)})^{\vee}[1]$ (resp. $\tau^{\leq -1} (\bL_{{V}, {v}}) \simeq \tau^{\geq 0} (\bL_{\widehat{V}, \hat{v}})^{\vee}[1]$).
It follows from Corollary \ref{cor:det_shift_fibre} and \eqref{eq:dual_fibreseq} that the left square commutes.
Now we will show that the right square commutes.
To see this, consider the following equivalence of fibre sequences obtained by truncating the map of fibre sequences \eqref{eq:bbbbj_fibreseq}:
\begin{equation}\label{eq:bbbbj_fibreseq_trunc}
\begin{aligned}
\xymatrix{
{\bT_{\widehat{V} /\fX, \hat{v}  } [1] } \ar[r] \ar[d]^-{\simeq}_-{\eta}
& {\tau^{\geq 0} (\bL_{\widehat{V}, \hat{v}})^{\vee}[1]} \ar[r] \ar[d]_-{\simeq}
& { \tau^{\geq 0}(\bL_{\fX, u(x)})^{\vee}[1]}  \ar[d]_-{\simeq} \\
{\bL_{\widehat{V} /V , \hat{v}}[-1]} \ar[r]
& { \tau^{\leq -1}(\bL_{V, v})} \ar[r]
& { \tau^{\leq -1} (\bL_{\widehat{V}, \hat{v}}).}
}
\end{aligned}
\end{equation}
This shows that the following diagram commutes:
\begin{equation}\label{eq:kappa_Upsilon_der_pre_Step1_Ind}
\begin{aligned}
\xymatrix{
{\substack{
\displaystyle \det_+(\tau^{\geq 0}(\bL_{\fX, u(x)})^{\vee}[1])   
\displaystyle  \otimes \det_+(\bT_{\widehat{V}/\fX, \hat{v}} [1]) 
}} \ar[d]_-{\cong} \ar[r]_-{\cong}
& { \det_+( \tau^{\leq -1} (\bL_{\widehat{V}, \hat{v}})) \otimes \det_+(\bL_{\widehat{V} /V , \hat{v}}[-1]) } \ar[d]_-{\cong} \\
{\det_+(\tau^{\geq 0}(\bL_{\widehat{V}, \hat{v}})^{\vee}[1])} \ar[r]_-{\cong}
& {\det_+(\tau^{\leq -1}(\bL_{{V}, {v}})).} 
}
\end{aligned}
\end{equation}
Next, consider the following diagram:
\begin{equation}\label{eq:kappa_Upsilon_der_pre_Step2_Ind}
\begin{aligned}
\xymatrix@C=60pt{
{}
& {}
& {} \\
{\det_-(\bL_{\widehat{V} / \fX, \hat{v}})} \ar@/^30pt/[rr]^-{\alpha_+} \ar[r]_-{\cong}^-{\det(\eta_+[-1]^{\vee} |_{\hat{v}})} \ar[d]_-{\cong}
& {\det_-(\bT_{\widehat{V} / V , \hat{v}}[2])} \ar[r]_-{\cong}
& {\det_+(\bL_{\widehat{V} / V , \hat{v}})^{\vee}} \ar@{=}[d] \\
{\det_+(\bT_{\widehat{V} / \fX, \hat{v}}[1])} \ar[r]_-{\cong}^-{\det(\eta_+ |_{\hat{v}})}
& {\det_+(\bL_{\widehat{V} / V, \hat{v}}[- 1])} \ar[r]_-{\cong}
& {\det_+(\bL_{\widehat{V} / V , \hat{v}})^{\vee}.}
}
\end{aligned}
\end{equation}
The commutativity of the diagram \eqref{eq:double_dual} and \eqref{eq:dual_shift_n} implies that this diagram commutes up to the sign $(-1)^{\rank \bL_{\widehat{V} / \fX, \hat{v}}^{-}}$.

By the commutativity of the diagram \eqref{eq:rotate_inverse} applied to the fibre sequence $\tau^{\leq -1}(\bL_{V, v}) \to \tau^{\leq -1}(\bL_{\widehat{V}, \hat{v}}) \to \bL_{\widehat{V}/V, \hat{v}}$, we see that the following diagram commutes up to the sign $(-1)^{\rank \bL_{\widehat{V}/V, \hat{v}}^+}$:
\begin{equation}\label{eq:kappa_Upsilon_der_pre_Step3_Ind}
\begin{aligned}
\xymatrix{
{\det_+(\bL_{\widehat{V}/V, \hat{v}}[-1]) \otimes \det_+(\tau^{\leq -1}(\bL_{\widehat{V}, \hat{v}})) \otimes \det_+(\bL_{\widehat{V}/V, \hat{v}})} \ar[r]_-{\cong} \ar[d]_-{\cong}
& { \det_+(\tau^{\leq -1}(\bL_{V, v})) \otimes \det_+(\bL_{\widehat{V}/V, \hat{v}})} \ar[d]_-{\cong} \\
{\det_+(\bL_{\widehat{V}/V, \hat{v}})^{\vee} \otimes \det_+(\tau^{\leq -1}(\bL_{\widehat{V}, \hat{v}})) \otimes \det_+(\bL_{\widehat{V}/V, \hat{v}})} \ar[r]_-{\cong}
& {\det_+(\tau^{\leq -1}(\bL_{V, v})).}
}
\end{aligned}
\end{equation}
By combining the commutativity properties of diagrams \eqref{eq:kappa_Upsilon_der_pre_Step1_Ind}, \eqref{eq:kappa_Upsilon_der_pre_Step2_Ind} and \eqref{eq:kappa_Upsilon_der_pre_Step3_Ind},
we see that the right square of the diagram \eqref{eq:kappa_Upsilon_der_pre_final_Ind} commutes.
This completes the proof of the commutativity of the diagram \eqref{eq:kappa_Upsilon_derived_pre_Ind}.

Now consider the following diagram:
\[
\xymatrix@C=40pt{
{
\substack{
    \displaystyle \det_{\pm}(\tau^{\geq 0}(\bL_{\fX, u(x)})) \\
     \displaystyle \otimes \det_{\pm}(\bL_{\widehat{V} / \fX, \hat{v}})
    }}
    \ar@{=}[rrr]
    \ar[ddd]_-{\cong}
& {}
& {}
& {
\substack{
    \displaystyle \det_{\pm}(\tau^{\geq 0}(\bL_{\fX, u(x)})) \\
     \displaystyle \otimes \det_{\pm}(\bL_{\widehat{V} / \fX, \hat{v}})
    }}
    \ar[ddd]_-{\cong} \\
{}
& {{
\substack{
    \displaystyle \det_+(\bL_{\fX, u(x)}) \\
     \displaystyle \otimes \det_{\pm}(\bL_{\widehat{V} / \fX, \hat{v}})
    }}}
    \ar[lu]^-{\cong}_-{\kappa_x^{\Ind, \mathrm{der}} \otimes \id} 
    \ar[r]_-{\cong}^-{\Lambda_{\fX, (q, \tau)}^{\Ind, \mathrm{pre}} |_{\hat{v}} \otimes \id}
    \ar[d]_-{\cong}^-{\Upsilon_{(q, \tau)}^{\Ind, \mathrm{der}}|_{\widehat{v}}}
& {\substack{
    \displaystyle \cL_{s}^{\Ind} |_{x} \\
     \displaystyle \otimes \det_{\pm}(\bL_{\widehat{V} / \fX, \hat{v}})
    }}
    \ar[ru]^-{\cong}_-{\kappa_x^{\Ind} \otimes \id } 
    \ar[d]_-{\cong}^-{\Upsilon_{q^{\cl}, \mu}^{\Ind}|_{\widehat{v}}}
& {} \\
{}
& {\det_+(\bL_{V, v})}
    \ar[ld]^-{\cong}_-{\kappa_v^{\Ind, \mathrm{der}}} 
    \ar[r]_-{\cong}^-{\Lambda_{V}^{\Ind}|_{v}}
& {\cL_{\mu, q^{\cl, \star} s}^{\Ind} |_{v}}
\ar[rd]^-{\cong}_-{\kappa_v^{\Ind} } 
& {} \\
\det_{\pm}(\tau^{\geq 0}(\bL_{V, v})) \ar@{=}[rrr]
& {}
& {}
& {\det_{\pm}(\tau^{\geq 0}(\bL_{V, v})).} 
}
\]
The middle square commutes by definition, the left square is nothing but the diagram \eqref{eq:kappa_Upsilon_derived_pre_Ind}, the bottom diagram is \eqref{eq:Lambda_kappa_Ind} and the right diagram is \eqref{eq:Upsilon_kappa_Ind_stack}.
In particular, we see that the top diagram, and hence the diagram \eqref{eq:Lambda_kappa_stack_pre_Ind}, commutes.
\end{proof}

\begin{lem}\label{lem:G_m_bbbbj_chart}
    Let $(\fX, \omega_{\fX})$ be a $(-1)$-shifted symplectic stack such that $\fX$ is quasi-separated with affine stabilizers
    and $x \in \Grad^n(\fX)$ be a point. 
    Then we can find the following data:
    \begin{itemize}
        \item A quasi-separated derived algebraic space $\widehat{V}$ with a $\bG_m^n$-action.
        \item A  smooth morphism $[\widehat{V} / \bG_m^n] \to \fX$ and a point $\hat{v} \in \widehat{V}^{\bG_m^n}$ such that the image of $\hat{x}$ under the natural map $\widehat{V}^{\bG_m^n} \to \Grad^n(\fX)$ is $x$. 
        \item A $\bG_m^n$-equivariant quasi-separated derived algebraic space $V$ with a
        morphism $i \colon \widehat{V} \to V$ which is an isomorphism on the classical truncations.
        \item A $\bG_m^n$-invariant $(-1)$-shifted symplectic structure $\omega_V$ on $V$ such that the correspondence $\fX \leftarrow \widehat{V} \rightarrow{} V$ carries a Lagrangian structure.
    \end{itemize}

\end{lem}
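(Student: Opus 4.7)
The plan is to produce a $\bG_m^n$-equivariant smooth chart of $\fX$ near the point $x$, and then run the argument of Theorem \ref{thm:Darboux}(ii) $\bG_m^n$-equivariantly. The point $x\in \Grad^n(\fX)$ corresponds to a cocharacter $\lambda\colon \bG_m^n\to G_y$ into the stabilizer at $y\coloneqq u(x)$, and the image $\lambda(\bG_m^n)\subset G_y$ is linearly reductive even if $G_y$ itself is not. This is the key structural fact that allows the construction to be carried out equivariantly.

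First, arguing exactly as in the proof of Proposition \ref{prop:QCha_atlas}, I would apply the local structure theorem \cite[Theorem 1.1]{ahr20} to the linearly reductive subgroup $\lambda(\bG_m^n)\subseteq G_y$ to obtain an affine derived scheme $\widehat{X}_0$ with a $\bG_m^n$-action, a fixed point $\hat{x}\in \widehat{X}_0^{\bG_m^n}$, and a smooth $\bG_m^n$-equivariant morphism $q_0\colon [\widehat{X}_0/\bG_m^n]\to \fX$ sending $\hat{x}$ to $x$. Pulling back $\omega_\fX$ along $q_0$ produces a $\bG_m^n$-invariant closed $(-1)$-shifted $2$-form on $\widehat{X}_0$, whose underlying form is degenerate along the relative tangent directions of $q_0$ (these form a weight-zero summand).

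Next, I would run the BBJ $(-1)$-shifted Darboux construction of \cite[Theorem 5.18]{bbj19}, which is the algebraic-space input to the proof of Theorem \ref{thm:Darboux}(ii), in a $\bG_m^n$-equivariant fashion. This should yield a $\bG_m^n$-equivariant smooth scheme $U$, a $\bG_m^n$-invariant regular function $f\colon U\to \bA^1$, a $\bG_m^n$-equivariant derived algebraic space $\widehat{X}$ modifying $\widehat{X}_0$, and a $\bG_m^n$-equivariant morphism $\tau\colon \widehat{X}\to X\coloneqq \Crit(f)$ inducing an isomorphism on classical truncations, together with a $\bG_m^n$-equivariant homotopy $q^\star \omega_\fX \sim \tau^\star \omega_{\Crit(f)}$ for the composed smooth morphism $q\colon \widehat{X}\to \fX$. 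The Lagrangian structure on the correspondence $\fX\leftarrow \widehat{X}\to X$ would then follow automatically by the argument of Remark \ref{rmk:auto_nondeg}, since $q$ is smooth and $\tau$ is a classical isomorphism.

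The main obstacle is carrying out the BBJ Darboux construction $\bG_m^n$-equivariantly: it requires choosing local generators of the cotangent complex and writing a potential in Darboux normal form, and every such choice must respect the $\bG_m^n$-grading. Since $\bG_m^n$ is linearly reductive, each step admits a weight-homogeneous refinement using decomposition into weight spaces and Reynolds-type averaging, in precisely the same way as in the proof of Proposition \ref{prop:equivariant_Darboux}, where the analogous argument is carried out for a general linearly reductive stabilizer $G$. The present situation is strictly easier because the group in question is already a torus, and one could alternatively deduce the statement directly from Proposition \ref{prop:equivariant_Darboux} in the special case when $G_y$ itself is linearly reductive, by restricting the resulting étale chart along $\lambda\colon \bG_m^n\hookrightarrow G_y$ and taking $\widehat{X}=X=\Crit(f)$ with $i=\id$.
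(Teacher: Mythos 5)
Your overall strategy is the same as the paper's: produce a $\bG_m^n$-equivariant smooth affine chart at $x$ by applying the local structure theorem of \cite{ahr20} to the (linearly reductive) torus image of the cocharacter inside the stabilizer, and then run an equivariant Darboux-type argument on that chart. Two points, however, are asserted rather than established, and one of them is where the real work lies.

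First, some bookkeeping: \cite[Theorem 1.1]{ahr20} applies to the classical truncation $\fX^{\cl}$, so after obtaining the pointed smooth morphism $([P/T],p)\to(\fX^{\cl},x)$ one still has to lift it to a smooth morphism $[R/T]\to\fX$ at the derived level; the paper does this via \cite[Theorem 1.13, Proposition 6.1]{ahhlr}. This is routine but should be said.

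Second, and more seriously, your plan is to push the equivariant construction all the way to a critical chart $X=\Crit(f)$ for a $\bG_m^n$-invariant $f$ on an equivariant smooth $U$, justified by ``averaging as in Proposition \ref{prop:equivariant_Darboux}.'' That proposition (Park's Theorem B) is about a point of the \emph{symplectic} stack whose stabilizer is linearly reductive; here the stabilizer $G_y$ of $y=u(x)$ need not be reductive, and the pulled-back $2$-form on your chart $\widehat{X}_0$ is degenerate along the fibres of $q_0$, so the cited result does not apply and the analogy is not a proof. What is actually needed is the \emph{relative} equivariant machinery of \cite{Par24}: one must first cut down to a $\bG_m^n$-invariant locally closed subscheme $\widehat{X}\subset R$ through the fixed point on which the composition to $\fX$ is smooth of \emph{minimal dimension} (this is the step that kills the degenerate directions and is what ultimately makes the correspondence Lagrangian), then use \cite[Lemma 4.1.8]{Par24} to embed $\widehat{X}$ equivariantly into a quasi-smooth invariant derived scheme $X$ with $i^{\cl}$ an isomorphism, lift the form using the canonical exact structure on $\omega_{\fX}$ coming from Proposition \ref{prop:closedexactdecomposition} together with \cite[Lemma 4.1.9]{Par24}, and finally invoke \cite[Lemma 4.2.5]{Par24} for nondegeneracy of $\omega_X$ and the Lagrangian property after shrinking. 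Note in particular that the paper's proof never produces a critical chart: a quasi-smooth invariant symplectic model $X$ suffices for the statement, and aiming for the stronger Darboux normal form is both unnecessary and exactly the part your averaging argument does not cover in the non-reductive situation. Your proposal has the right skeleton and the right special case (when $G_y$ is reductive, restricting the chart of Proposition \ref{prop:equivariant_Darboux} along $\lambda$ does work), but the general case as written has a gap at the equivariant Darboux step, which is precisely what the cited lemmas of \cite{Par24} are there to fill.
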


\begin{proof}
This is a combination of the Darboux theorem (Theorem \ref{thm:Darboux}) in \cite{bbbbj15} and the equivariant Darboux theorem (Proposition \ref{prop:equivariant_Darboux}) in \cite{Par24}.

We first claim that there exists a pointed smooth map
\[[R/\bG_m^n] \to \fX\]
for a derived affine scheme $R$ with an action of $\bG_m^n$, together with a fixed point $r\in R^{\bG_m^n}$ whose image under the natural map $R^{\bG_m^n} \to \Grad^n(\fX)$ is $x$. 
Using Proposition \ref{prop:QCha_atlas}, we can find a smooth morphism $[P / \bG_m^n]  \to \fX^{\cl}$ with the similar property.
By \cite[Theorem 1.13]{ahhlr} (and \cite[Proposition 6.1]{ahhlr}), the smooth morphism $[P/\bG_m^n] \to \fX^{\cl}$ lifts to a smooth morphism $[R/\bG_m^n] \to \fX$ for a derived affine scheme $R$ with a $\bG_m^n$-action.

We then find a $\bG_m^n$-invariant locally closed subscheme $\widehat{V} \subseteq R$ that contains $r$ such that the composition \[\widehat{V} \to R \to [R/\bG_m^n] \to \fX\]
is smooth of {\em minimal dimension} at $\hat{v}\in \widehat{V}$, that is, $\dim(H^i(\bT_{\widehat{V}, \hat{v}})) = \dim(H^i(\bT_{\fX,x})) $ for all $i \geq 0$.
This follows from (the third paragraph of) Step 1 of \cite[Lemma 4.1.8]{Par24}. 

By \cite[Lemma 4.1.8]{Par24}, there exists a $\bG_m^n$-equivariant closed embedding
\[i:\widehat{V} \hookrightarrow V,\]
into a quasi-smooth derived affine scheme $V$ with a $\bG_m^n$-action whose classical truncation $i^{\cl}: \widehat{V}^{\cl} \to V^{\cl}$ is an isomorphism.
Moreover, we may assume that $V$ is of minimal dimension in the sense that: 
$\dim(H^i(\bT_{\widehat{V}, \hat{v}})) = \dim(H^i(\bT_{V,\hat{v}})) $ for $i=0,1$.

By Proposition \ref{prop:closedexactdecomposition}, the symplectic form $\omega_{\fX}$ has a canonical exact structure.
By \cite[Lemma 4.1.9]{Par24}, the pullback of the exact symplectic form along $[\widehat{V}/\bG_m^n] \to \fX$ lifts to a $(-1)$-shifted exact $2$-form on $[V/\bG_m^n]$.
In particular, we have an (exact) isotropic correspondence
\[\xymatrix{
& [\widehat{V}/\bG_m^n]\ar[ld] \ar[rd]& \\ 
\fX && [V/\bG_m^n].
}\]
The desired $\bG_m^n$-invariant closed $2$-form $\omega_V$ on $V$ can be given by the image of the $(-1)$-shifted closed $2$-form on $[V/\bG_m^n]$ along $\cA^{2,\cl}([V/\bG_m^n],-1) \to  \cA^{2,\cl}(V,-1)$.

By \cite[Lemma 4.2.5]{Par24}, the closed $2$-form $\omega_V$ is symplectic and the isotropic correspondence
\[\xymatrix{
& \widehat{V}\ar[ld] \ar[rd]& \\ 
\fX && V.
}\]
is Lagrangian, after shrinking $\widehat{V}$.    
\end{proof}

We now construct $(-1)$-shifted symplectic analogue of isomorphisms in Proposition \ref{prop:index_bundle_stack}.
Let $(\fX, \omega_{\fX})$ be (possibly higher) $(-1)$-shifted symplectic stack.
Firstly, note that the natural equivalence
\begin{equation}\label{eq:Tminus=Lplus}
 \cdot \omega_{\fX} \colon \bT_{\fX}[1]  \simeq \bL_{\fX}
\end{equation}
combined with the natural isomorphisms \eqref{eq:det_dual} and \eqref{eq:det_shift} implies an isomorphism
\begin{equation}\label{eq:det_minus=plus}
\phi \colon \rdet_-(\bL_{\fX}) \cong \rdet_+(\bL_{\fX}).
\end{equation}
We define an isomorphism
\[
\xi^{\mathrm{der}}_{\fX} \colon \rdet(\bL_{\Grad(\fX)}) \otimes \rdet_+(\bL_{\fX})^{\otimes 2} \cong \rdet(\bL_{\fX}|_{\Grad(\fX)})
\]
by the following composition
\begin{align*}
\rdet(\bL_{\Grad(\fX)}) \otimes \rdet_+(\bL_{\fX})^{\otimes 2} 
\cong \rdet_0(\bL_{\fX}) \otimes \rdet_+(\bL_{\fX})^{\otimes 2} & \\
\xrightarrow[\cong]{\id \otimes \id \otimes \phi^{-1}} \rdet_0(\bL_{\fX}) \otimes \rdet_+(\bL_{\fX}) \otimes  \rdet_-(\bL_{\fX}) \cong \rdet(\bL_{\fX}|_{\Grad(\fX)}).& 
\end{align*}
Let $x \in \Grad(\fX)$ be a point. 
We now show that the map $\xi^{\der}_{\fX}$ is compatible with the map $\kappa^{\Ind, \der}_{x}$,
namely, the following diagram commutes:
\begin{equation}\label{eq:kappa_xi_der_stack}
\begin{aligned}
\xymatrix{
{\det(\bL_{\Grad(\fX), x}) \otimes \det_+(\bL_{\fX, u(x)})^{\otimes 2}} 
\ar[r]^-{\xi^{\der}_{\fX}}_-{\cong}
\ar[d]_-{\cong}^-{(\kappa_{x}^{\der}, \kappa_{x}^{\Ind, \der, \otimes 2})}
& {\det(\bL_{\fX, u(x)})} \ar[d]^-{\kappa_{x}^{\mathrm{der}}}_-{\cong} \\
{\det_0(\tau^{\geq 0}(\bL_{\fX, u(x)}))^{\otimes 2} \otimes  
\det_{\pm}(\tau^{\geq 0}(\bL_{\fX, u(x)}))^{\otimes 2}} \ar[r]^-{\cong}
& {\det(\tau^{\geq 0}(\bL_{\fX, u(x)}))^{\otimes 2}.}
}
\end{aligned}
\end{equation}
To see this,
we construct an isomorphism
\[
\kappa_{x}^{\Ind, \der, -} \colon \det_-(\bL_{\fX, u(x)}) \cong \det_{\pm}(\tau^{\geq 0}(\bL_{\fX, u(x)}))
\]
in the same manner as the construction of $\kappa_{x}^{\Ind, \der}$ in \eqref{eq:kappa_Ind_stack_construction}.
Then the following diagram commutes:
\begin{equation}\label{eq:kappa_plus_minus_difference}
\begin{aligned}
\xymatrix@C=100pt{
    {\det_-(\bL_{\fX, u(x)})} \ar[r]^-{(-1)^{\rank_{+} \tau^{\geq 0}(\bL_{\fX, u(x)})} \cdot \phi|_{x}}_-{\cong} 
    \ar[d]_{\cong}^-{\kappa_{x}^{\Ind, \der, -}}
    & {\det_+(\bL_{\fX, u(x)})}
    \ar[d]_-{\cong}^-{\kappa_{x}^{\Ind, \der}} \\
    {\det_{\pm}(\tau^{\geq 0}(\bL_{\fX, u(x)}))} \ar@{=}[r]
    & {\det_{\pm}(\tau^{\geq 0}(\bL_{\fX, u(x)})).}
    }
    \end{aligned}
\end{equation}
This follows from the commutativity of the diagrams \eqref{eq:double_dual} and \eqref{eq:dual_shift}.
We also see that the following diagram commutes by construction:
\begin{equation}\label{eq:kappa_xi_der_stack_before_swap}
\begin{aligned}
    \xymatrix{
{\det_0(\bL_{\fX, u(x)}) \otimes \rdet_+(\bL_{\fX, u(x)}) \otimes \rdet_-(\bL_{\fX, u(x)})} 
\ar[r]_-{\cong}
\ar[d]_-{\cong}^-{(\kappa_{x}^{\der}, \kappa_{x}^{\Ind, \der}, \kappa_{x}^{\Ind, \der, -})}
& {\det(\bL_{\fX, u(x)})} \ar[dd]^-{\kappa_{u(x)}}_-{\cong} \\
{\det_0(\tau^{\geq 0}(\bL_{\fX, u(x)}))^{\otimes 2} \otimes  
\det_{\pm}(\tau^{\geq 0}(\bL_{\fX, u(x)})) \otimes \det_{\pm}(\tau^{\geq 0}(\bL_{\fX, u(x)}))} \ar[d]^-{\cong}
& {} \\
{\det_{\leq 0}(\tau^{\geq 0}(\bL_{\fX, u(x)})) \otimes \det_{\geq 0}(\tau^{\geq 0}(\bL_{\fX, u(x)})) \otimes  \det_{\pm}(\tau^{\geq 0}(\bL_{\fX, u(x)}))} \ar[r]_-{\cong}
& {\det(\tau^{\geq 0}(\bL_{\fX, u(x)}))^{\otimes 2}.}
}
\end{aligned}
\end{equation}
By combining the commutativity of the diagram \eqref{eq:kappa_plus_minus_difference} and \eqref{eq:kappa_xi_der_stack_before_swap} and the fact that the brading isomorphism of the graded line bundle $\det_{+}(\tau^{\geq 0}(\bL_{\fX, u(x)}))^{\otimes 2}$ is given by multiplying $(-1)^{\rank_+ \tau^{\geq 0}(\bL_{\fX, u(x)})}$, we conclude that the diagram \eqref{eq:kappa_xi_der_stack} commutes.

Now we construct a $(-1)$-shifted symplectic analogue of $\theta_s$ in Proposition \ref{prop:index_bundle_stack}.
For a subset $\Sigma \subset \bZ^2$,
using the natural equivalence $\bT_{\fX}^{\Sigma}[1] \simeq \bL_{\fX}^{\Sigma}$ and \eqref{eq:det_shift} and $\eqref{eq:det_dual}$, 
we see that there exists a natural isomorphism
\[
\phi_{\Sigma} \colon \rdet_{- \Sigma}(\bL_{\fX}) \cong \rdet_{ \Sigma}(\bL_{\fX}).
\]
Further, for each point $x \in \Grad^2(\fX)$,
the commutativity of the diagrams \eqref{eq:double_dual} and \eqref{eq:dual_shift} implies that the following diagram commutes:
\begin{equation}\label{eq:-Sigma_det_stack}
\begin{aligned}
\xymatrix@C=100pt{
{\rdet_{- \Sigma}(\bL_{\fX})|_{x}} \ar[r]_-{\cong}^-{(-1)^{\rank_{\Sigma} \tau^{\geq 0}(\bL_{\fX, u(x)})} \cdot \phi_{\Sigma}|_{x}} 
\ar[d]_-{\cong}^-{\kappa^{ \Ind, \der, - \Sigma}_{x}}
& {\rdet_{\Sigma}(\bL_{\fX})|_{x}} 
\ar[d]_-{\cong}^-{\kappa^{\Ind, \der, \Sigma}_{x}}
\\
{\rdet_{\pm \Sigma}(\tau^{\geq 0}(\bL_{\fX, u(x)})} \ar@{=}[r]
& {\rdet_{\pm \Sigma}(\tau^{\geq 0}(\bL_{\fX, u(x)})}
}
\end{aligned}
\end{equation}
where the vertical maps are constructed in the same manner as $\kappa_{x}^{\Ind, \der}$.
 Set 
 \begin{align*}
 S_1 \coloneqq \{ (m, l) \mid (m, l) \neq (0, 0), m \geq 0, l \geq 0 \}, \quad
 S_2 \coloneqq \bZ_{> 0 } \times \bZ_{< 0}
 \end{align*}
 We construct an isomorphism
\[
\theta_{\fX}^{\der} \colon 
\rdet_{\bZ_{>0} \times \bZ} (\bL_{\fX}) \otimes \rdet_{\{ 0 \} \times \bZ_{> 0}} (\bL_{\fX})
\cong \rdet_{\bZ \times \bZ_{>0} } (\bL_{\fX}) \otimes \rdet_{\bZ_{>0} \times \{ 0 \}} (\bL_{\fX}).
\]
 using the decompositions
    \begin{align*}
        (\bZ_{> 0 } \times \bZ) \coprod (\{ 0 \} \times \bZ_{ > 0}) = S_1\coprod S_2, \quad
        ( \bZ \times \bZ_{ > 0}) \coprod (  \bZ_{ > 0} \times \{ 0 \}) = S_2 \coprod (- S_2)
    \end{align*}
    and the map $\phi_{S_2}^{-1}$.
       By the commutativity of the diagram \eqref{eq:-Sigma_det_stack}, we see that
       $\theta_{\fX}^{\der}$ is compatible with $\kappa^{\Ind, \der}_{x}$, namely, 
       the following diagram commutes:
    \begin{equation}\label{eq:theta_kappa_stack_der_Ind}
        \begin{aligned}
            \xymatrix@C=-30pt{
            {\rdet_{\bZ_{>0} \times \bZ} (\bL_{\fX, u(x)}) \otimes \rdet_{\{ 0 \} \times \bZ_{> 0}} (\bL_{\fX, u(x)})} 
            \ar[d]^-{\cong}_-{(\kappa^{ \Ind, \der, \bZ_{>0} \times \bZ}_{x}, \kappa^{ \Ind, \der, \{ 0 \} \times \bZ_{> 0}}_{x})}
            \ar[rr]_-{\cong}^-{(-1)^{\rank_{S_2} \tau^{\geq 0}(\bL_{\fX, u(x)})} \cdot \theta^{\der}_{\fX}}
            & {}
            & {\rdet_{\bZ \times \bZ_{>0} } (\bL_{\fX, u(x)}) \otimes \rdet_{\bZ_{>0} \times \{ 0 \}} (\bL_{\fX, u(x)})}
            \ar[d]^-{\cong}_-{(\kappa^{ \Ind, \der, \bZ_{} \times \bZ_{> 0}}_{x}, \kappa^{ \Ind, \der, \bZ_{> 0} \times  \{ 0 \} }_{x})}\\
             \rdet_{\bZ_{\neq 0} \times \bZ}(\tau^{\geq 0}(\bL_{\fX, u(x)}))
             \otimes \rdet_{\{ 0 \} \times \bZ_{\neq 0}}(\tau^{\geq 0}(\bL_{\fX, u(x)}))
            \ar[rd]_-{\cong}
            & {}
            & {\rdet_{\bZ \times \bZ_{\neq 0}}(\tau^{\geq 0}(\bL_{\fX, u(x)}))
             \otimes \rdet_{\bZ_{\neq 0} \times \{0 \}}(\tau^{\geq 0}(\bL_{\fX, u(x)}))} 
             \ar[ld]^-{\cong} \\
             {}
            & {\rdet_{\bZ^2_{\neq (0, 0)} }(\tau^{\geq 0}(\bL_{\fX, u(x)})).}
            & {} 
            }
        \end{aligned}
    \end{equation}

    We now construct a $(-1)$-shifted symplectic version of $\Psi_{\fX, \fY}$ in Proposition \ref{prop:index_bundle_stack}.
    Assume that we are given $(-1)$-shifted symplectic derived Artin stacks $(\fX, \omega_{\fX})$ and $(\fY, \omega_{\fY})$.
Then we construct isomorphisms 
\begin{align*}
\Psi^{\der}_{\fX, \fY} &\colon 
\rdet(\bL_{\fX}) \boxtimes \rdet(\bL_{\fY}  )
\cong \rdet(\bL_{\fX \times \fY}  )
\\
\Psi^{\Ind, \der}_{\fX, \fY} &\colon 
\rdet_+(\bL_{\fX}) \boxtimes \rdet_+(\bL_{\fY}  )
\cong \rdet_+(\bL_{\fX \times \fY}  )
\end{align*}
using the natural equivalences $\bL_{\fX} \boxplus \bL_{\fY} \simeq \bL_{\fX \times \fY}$ and 
$\bL_{\fX} |_{\Grad(\fX)} ^+ \boxplus \bL_{\fY} |_{\Grad(\fY)} ^+ \simeq \bL_{\fX \times \fY} |_{\Grad(\fX) \times \Grad(\fY)}^+$ respectively.

    We now prove the compatibility between the morphism $\Lambda_{\fX}^{\Ind}$ and the morphisms constructed in Proposition \ref{prop:index_bundle_stack}.
    Let $(\fX, \omega_{\fX})$ be a quasi-separated $(-1)$-shifted symplectic stack with affine stabilizers and $(\fX^{\cl}, s)$ be its underlying d-critical stack. By construction, $\Lambda_{\fX}^{\Ind}$ commutes with $\kappa_{x}^{\Ind}$: Namely, the following diagram commutes:
\begin{equation}\label{eq:Lambda_kappa_Ind_stack}
\begin{aligned}
\xymatrix@C=100pt{
{\det_+( \bL_{\fX, u(x)} }) \ar[r]_-{\cong}^-{\Lambda_{\fX}^{\Ind}|_{x}} \ar[d]_-{\cong}^-{\kappa_{x}^{\Ind, \der}}
& {\cL_{s}^{\Ind} |_{x}} \ar[d]_-{\cong}^-{\kappa_{x}^{\Ind}} \\
{\det_{\pm}(\tau^{\geq 0}(\bL_{\fX_{}, u(x)}))} \ar@{=}[r]
& {\det_{\pm}(\tau^{\geq 0}(\bL_{\fX_{}, u(x)}))}.
}
\end{aligned}
\end{equation}
    Next, we will show that $\xi_{\fX}$ is compatible with $\Lambda_{\fX}^{\Ind}$.
Namely, the following diagram commutes:
\begin{equation}\label{eq:Lambda_xi_Ind_stack}
\begin{aligned}
    \xymatrix@C=100pt{
    {\rdet(\bL_{\Grad(\fX)}) \otimes \rdet_+(\bL_{\fX})^{\otimes 2}} 
    \ar[d]^-{(\Lambda_{\Grad(\fX)}, \Lambda^{\Ind, \otimes 2}_{\fX})}_-{\cong}
    \ar[r]^-{\xi^{\der}_{\fX}}_-{\cong}
    & {\rdet(\bL_{\fX})}
    \ar[d]^-{\Lambda_{\fX}}_-{\cong} \\
    {K_{\Grad(\fX), u^{\star} s } \otimes \cL_{s}^{\Ind, \otimes 2}} 
    \ar[r]^-{\xi_s}_-{\cong}
    & {K_{\fX, s}|_{\Grad(\fX)}.}
    }
    \end{aligned}
\end{equation}
This is a direct consequence of the commutativity of diagrams \eqref{eq:Lambda_kappa_stack}, \eqref{eq:xi_kappa_Ind_stack}, \eqref{eq:kappa_xi_der_stack} and \eqref{eq:Lambda_kappa_Ind_stack}.
 We now show that $\theta_{\fX}^{\der}$ is compatible with $\Lambda^{\Ind}_{\fX}$.
     Namely,  the following diagram commutes: 
     \begin{equation}\label{eq:theta_Lambda_Ind_stack}
     \begin{aligned}
     \xymatrix@C=80pt{
\rdet_{\bZ_{>0} \times \bZ} (\bL_{\fX}) \otimes \rdet_{\{ 0 \} \times \bZ_{> 0}} (\bL_{\fX}) 
\ar[r]_-{\cong}^-{\theta_{\fX}^{\der}}
\ar[d]_-{\cong}^-{(\Lambda^{\Ind}_{\fX}, \Lambda^{\Ind}_{\Grad(\fX)})}
&  \rdet_{\bZ \times \bZ_{>0} } (\bL_{\fX}) \otimes \rdet_{\bZ_{>0} \times \{ 0 \}} (\bL_{\fX})
\ar[d]_-{\cong}^-{(\Lambda^{\Ind}_{\fX}, \Lambda^{\Ind}_{\Grad(\fX)})} \\
u_1^{\red, *} \cL_{s}^{\Ind} \otimes \sigma^* \cL_{u^{\star} s}^{\Ind} \ar[r]_-{\cong}^-{\theta_s}
&            u_2^{\red, *} \cL_{ s}^{\Ind} \otimes  \cL_{ u^{\star} s}^{\Ind}.
     }
     \end{aligned}
     \end{equation}
This is a direct consequence of the commutativity of diagrams \eqref{eq:theta_kappa_Ind_stack} and \eqref{eq:theta_kappa_stack_der_Ind}.
Finally, if we are given a quasi-separated $(-1)$-shifted symplectic $(\fY, \omega_{\fY})$ with affine stabilizers with classical d-critical stack $(\fY^{\cl}, t)$, it is clear that the following diagram commutes:
\begin{equation}\label{eq:Lambda_Psi_Ind_der_stack}
    \begin{aligned}
        \xymatrix@C=80pt{
        {\rdet_+(\bL_{\fX}) \boxtimes \rdet_+(\bL_{\fY})}
        \ar[r]_-{\cong}^-{\Psi_{\fX, \fY}^{\Ind}}
        \ar[d]^-{\cong}_-{\Lambda^{\Ind}_{\fX} \boxtimes \Lambda^{\Ind}_{\fY}}
        & {\rdet_+(\bL_{\fX \times \fY})}
        \ar[d]^-{\cong}_-{\Lambda^{\Ind}_{\fX \times \fY}} \\
        {\cL_{s}^{\Ind} \boxtimes \cL_{t}^{\Ind, \der}}
        \ar[r]_-{\cong}^-{\Psi_{\fX^{\cl}, \fY^{\cl}}^{\Ind}}
        & {\cL_{s \boxplus t}^{\Ind}.}
        }
    \end{aligned}
\end{equation}

    We now show that $\theta_{\fX}^{\der}$ is compatible with $\xi_{\fX}^{\der}$.
    Namely, the following diagram commutes:
    \begin{equation}\label{eq:xi_theta_der_stack}
    \begin{aligned}
    \xymatrix@C=-5pt{
    \substack{
\displaystyle    \rdet {(\bL_{\Grad^2(\fX)})}  \\
\displaystyle  \otimes \rdet_{\bZ_{>0} \times \bZ} (\bL_{\fX})^{\otimes 2}   \otimes \rdet_{\{ 0 \} \times \bZ_{> 0}} (\bL_{\fX})^{\otimes 2} 
    } 
    \ar[d]_-{\cong}^-{\xi_{\Grad(\fX)}^{\der}}
    \ar[rr]_-{\cong}^-{(\theta_{\fX}^{\der})^{\otimes 2}}
    & {}
    &{}
    \substack{
 \displaystyle   \rdet {(\bL_{\Grad^2(\fX)})}
 \ar[d]_-{\cong}^-{\xi_{\Grad(\fX)}^{\der}}\\
\displaystyle    \otimes \rdet_{\bZ \times \bZ_{>0} } (\bL_{\fX})^{\otimes 2} \otimes \rdet_{\bZ_{>0} \times \{ 0 \}} (\bL_{\fX})^{\otimes 2}
    }\\
    u_1^* \rdet{(\bL_{\Grad(\fX)})} \otimes \rdet_{\bZ_{>0} \times \bZ} (\bL_{\fX})^{\otimes 2} \ar[rd]_-{\cong}^-{u_1^*\xi_{\fX}^{\der}}
    & {}
    & {u_2^* \rdet{(\bL_{\Grad(\fX)})} \otimes \rdet_{\bZ \times \bZ_{> 0}} (\bL_{\fX})^{\otimes 2}} \ar[ld]^-{\cong}_-{u_2^*\xi_{\fX}^{\der}} \\
     {}
     & {\rdet(\bL_{\fX}).}
     & {}
    }
    \end{aligned}
    \end{equation}
    This is an immediate consequence of the commutativity of diagrams \eqref{eq:theta_kappa_stack_der_Ind} and \eqref{eq:kappa_xi_der_stack}.
Next, we note that the map $\xi_{\fX}^{\der}$ is compatible with $\Psi_{\fX, \fY}^{\Ind, \der}$:
\begin{equation}\label{eq:xi_Psi_der_commutes}
    \begin{aligned}
        \xymatrix@C=100pt{
        {(\rdet_0 (\bL_{\fX}) \otimes \rdet_+(\bL_{\fX})^{\otimes 2}) \boxtimes
        (\rdet_0 (\bL_{\fY}) \otimes \rdet_+(\bL_{\fY})^{\otimes 2})}
        \ar[r]_-{\cong}^-{\Psi^{\der}_{\Grad(\fX), \Grad(\fY)}, \Psi^{\Ind, \der}_{\fX, \fY}}
        \ar[d]^-{\cong}_-{\xi_{\fX}^{\der} \boxtimes \xi_{\fY}^{\der}}
        & {\rdet_0(\bL_{\fX \times \fY}) \otimes \rdet_+(\bL_{\fX \times \fY})^{\otimes 2} }
        \ar[d]^-{\cong}_{\xi_{\fX \times \fY}^{\der} }\\
        {\rdet(\bL_{\fX} |_{\Grad(\fX)}) \boxtimes \rdet(\bL_{\fY} |_{\Grad(\fY)})}
        \ar[r]_-{\cong}^-{\Psi_{\fX, \fY}^{\der}}
        & {\rdet(\bL_{\fX \times \fY} |_{\Grad(\fX \times \fY)}).}
        }
    \end{aligned}
\end{equation}
This is obvious from the construction.

Next, we will compare $\xi_{\fX}^{\der}$ with the map $\xi$ constructed in \eqref{eq:compositeLagrangiancorrespondencedeterminant}.
Let $(\fX, \omega_{\fX})$ be a higher $(-1)$-shifted symplectic stack.
Recall first that by Proposition \ref{prop:GradFiltcotangent} we have a natural equivalence
\[
\bT_{\Grad(\fX)} \simeq \bT_{\fX} ^{0},
\quad
\bT_{\Filt(\fX) / \Grad(\fX)}|_{\Grad(\fX)} \simeq \bT_{\fX}^{+}.
\]
The fibre sequence $\bL_{\Filt(\fX) / \fX}[-1] \to \bL_{\fX} \to \bL_{\Filt(\fX)}$ implies a natural equivalence
\[
\bL_{\Filt(\fX) / \fX}[-1] |_{\Grad(\fX)} \simeq \bL_{\fX} |_{\Grad(\fX)}^{+}.
\]
In particular, we have an isomorphism
\begin{equation}\label{eq:Filt_dual_plus}
\rdet(\bL_{\Filt(\fX) / \fX} |_{\Grad(\fX)})^{\vee} \cong \rdet_+(\bL_{\fX}).
\end{equation}

Consider the following map of fibre sequences
\[
\xymatrix{
{\bT_{\Filt(\fX) / \Grad(\fX)}}[1]|_{\Grad(\fX)} \ar[r] \ar@{-->}[d]^-{\simeq}
& {\bT_{\Filt(\fX)}[1]|_{\Grad(\fX)}} \ar[r] \ar[d]
& {\bT_{\Grad(\fX)}[1]|_{\Grad(\fX)}}  \ar[d] \\
{\bL_{\Filt(\fX) / \fX}[-1]} \ar[r]
& {\bL_{\fX}|_{\Grad(\fX)}} \ar[r]
& {\bL_{\Filt(\fX)}|_{\Grad(\fX)}} 
}
\]
where the homotopy commutativity of the right diagram is induced from the Lagrangian structure of the attractor correspondence.
Then the following diagram commutes by construction:
\begin{equation}\label{eq:Lag_corresp_vs_plus_minus}
\begin{aligned}
\xymatrix@C=60pt{
{\bT_{\Filt(\fX) / \Grad(\fX)}}[1]|_{\Grad(\fX)} \ar[r]^-{\simeq} 
\ar[d]^-{\simeq}_-{\eqref{eq:Lag_corresp_TL}}
& {\bT_{\fX}^{+}[1]} 
\ar[d]^-{\simeq}_-{\eqref{eq:Tminus=Lplus}} \\
{\bL_{\Filt(\fX) / \fX}[-1]|_{\Grad(\fX)}} \ar[r]^-{\simeq}
& {\bL_{\fX}^+.}
}
\end{aligned}
\end{equation}

We prove the following statement:
\begin{lem}\label{lem:two_localization_der_same}
    The following diagram commutes:
        \[
    \xymatrix@C=100pt{
    {\rdet(\bL_{\Grad(\fX)}) \otimes (\rdet(\bL_{\Filt(\fX) / \fX} |_{\Grad(\fX)})^{\vee})^{\otimes 2} } \ar[r]_-{\cong}^-{\eqref{eq:compositeLagrangiancorrespondencedeterminant}} \ar[d]^-{\cong}_-{\eqref{eq:Filt_dual_plus}}
    & {\rdet(\bL_{\fX})} \\
    {\rdet(\bL_{\Grad(\fX)}) \otimes \rdet_+(\bL_{\fX})^{\otimes 2}} \ar[r]_-{\cong}^-{\xi_{\fX}^{\der}}
    & {\rdet(\bL_{\fX}).} \ar@{=}[u]
    }
    \]
\end{lem}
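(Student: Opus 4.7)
The approach is to trace through both isomorphisms explicitly and reduce the comparison to a calculation with graded determinants on $\Grad(\fX)$. The key technical ingredient will be the cotangent identifications from Proposition \ref{prop:GradFiltcotangent} together with diagram \eqref{eq:Lag_corresp_vs_plus_minus}, which identifies the Lagrangian-induced equivalence with the symplectic pairing restricted to the positive-weight part of $\bL_{\fX}|_{\Grad(\fX)}$.

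The first step is to unfold the relevant cotangent complexes. Restricting the fibre sequence $\gr^{*}\bL_{\Grad(\fX)}\to \bL_{\Filt(\fX)}\to \bL_{\Filt(\fX)/\Grad(\fX)}$ along the section $\sigma$ and using the splitting induced by $\gr\circ\sigma\simeq \id$, Proposition \ref{prop:GradFiltcotangent} yields $\sigma^{*}\bL_{\Filt(\fX)}\simeq \bL_{\fX}^{0}\oplus \bL_{\fX}^{-}$. The fibre sequence $\ev^{*}\bL_{\fX}\to \bL_{\Filt(\fX)}\to \bL_{\Filt(\fX)/\fX}$ then forces $\sigma^{*}\bL_{\Filt(\fX)/\fX}\simeq \bL_{\fX}^{+}[1]$, agreeing with the identification \eqref{eq:Filt_dual_plus}. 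In particular, after taking $\rdet$, the input $\rdet(\bL_{\Grad(\fX)})\otimes (\rdet(\bL_{\Filt(\fX)/\fX}|_{\Grad(\fX)})^{\vee})^{\otimes 2}$ of \eqref{eq:compositeLagrangiancorrespondencedeterminant} is canonically identified with $\rdet_{0}(\bL_{\fX})\otimes \rdet_{+}(\bL_{\fX})^{\otimes 2}$, which is precisely the domain of $\xi_{\fX}^{\der}$.

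Next, we analyze the isomorphism \eqref{eq:Lag_corresp_TL_det} in the present situation. By construction it is obtained from the equivalence $\bT_{\Filt(\fX)/\Grad(\fX)}[1]\simeq \bL_{\Filt(\fX)/\fX}[-1]$ coming from the Lagrangian structure, applying \eqref{eq:det_shift} and \eqref{eq:det_dual}. Restricting to $\Grad(\fX)$ and using diagram \eqref{eq:Lag_corresp_vs_plus_minus}, this equivalence is intertwined with the symplectic identification $\bT_{\fX}^{+}[1]\xrightarrow{\cdot\omega_{\fX}}\bL_{\fX}^{+}$. Passing to determinants and using \eqref{eq:det_shift} and \eqref{eq:det_dual} once more, the restriction of \eqref{eq:Lag_corresp_TL_det} to $\Grad(\fX)$ becomes exactly the isomorphism $\phi\colon \rdet_{-}(\bL_{\fX})\cong \rdet_{+}(\bL_{\fX})$ of \eqref{eq:det_minus=plus} (up to the canonical double-dual \eqref{eq:double_dual} identifications, which match on both sides).

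With these identifications in place, the remainder of the argument is a diagram chase. The composition defining \eqref{eq:compositeLagrangiancorrespondencedeterminant} consists of the inverse of \eqref{eq:Lag_corresp_TL_det} applied to one factor, followed by successive applications of the fibre-sequence isomorphism \eqref{eq:fibre_transform} for the two cotangent fibre sequences above. Under our identifications, the inverse of \eqref{eq:Lag_corresp_TL_det} becomes $\phi^{-1}$, matching the corresponding step in the definition of $\xi_{\fX}^{\der}$. The two remaining fibre-sequence isomorphisms become the canonical weight-splittings $\rdet_{0}\otimes\rdet_{-}\cong \rdet(\sigma^{*}\bL_{\Filt(\fX)})$ and $\rdet(\sigma^{*}\bL_{\Filt(\fX)})\otimes \rdet_{+}\cong \rdet(\bL_{\fX}|_{\Grad(\fX)})$; their composite is precisely the weight decomposition used to construct $\xi_{\fX}^{\der}$, as a direct application of Lemma \ref{lem:KM} to the double fibre sequence assembling the three graded pieces of $\bL_{\fX}|_{\Grad(\fX)}$. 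The main obstacle is purely bookkeeping: tracking the Koszul signs introduced by \eqref{eq:det_shift} and \eqref{eq:det_dual} through the shifts $[-1]$ and $[1]$, and verifying that the double-dual identifications of \eqref{eq:double_dual} cancel consistently on both sides. Since both compositions apply the same sequence of shift/dual operations to the same factor $\rdet_{+}(\bL_{\fX})$, and since the essential nontrivial ingredient in each is the same map $\phi^{-1}$, the two resulting isomorphisms agree.
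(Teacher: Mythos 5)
Your first two steps are sound and follow the same route as the paper: identifying the domain of \eqref{eq:compositeLagrangiancorrespondencedeterminant} with $\rdet_0(\bL_{\fX})\otimes\rdet_+(\bL_{\fX})^{\otimes 2}$ via \eqref{eq:Filt_dual_plus}, and using \eqref{eq:Lag_corresp_vs_plus_minus} to match the restriction of \eqref{eq:Lag_corresp_TL_det} with the map $\phi$ of \eqref{eq:det_minus=plus} is exactly the content of the paper's first intermediate diagram. The gap is in your last step. You claim the two remaining fibre-sequence isomorphisms become the canonical weight splittings ``as a direct application of Lemma \ref{lem:KM}'' and that the signs work out because ``both compositions apply the same sequence of shift/dual operations to the same factor.'' This is precisely where the nontrivial verification lives, and Lemma \ref{lem:KM} does not suffice. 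The fibre sequence $\bL_{\fX}|_{\Grad(\fX)}\to\bL_{\Filt(\fX)}|_{\Grad(\fX)}\to\bL_{\Filt(\fX)/\fX}|_{\Grad(\fX)}$ is, under your identifications, the sequence $\bL_{\fX}\to\bL_{\fX}^{\leq 0}\to\bL_{\fX}^{+}[1]$, i.e.\ a \emph{rotation} of the split sequence $\bL_{\fX}^{+}\to\bL_{\fX}\to\bL_{\fX}^{\leq 0}$. Comparing its determinant isomorphism (after dualizing the relative term via $\theta$ and $\iota$) with the plain weight splitting $\rdet_{\leq 0}\otimes\rdet_{+}\cong\rdet$ requires the rotation Lemma \ref{lem:rotate}, specifically \eqref{eq:rotate_det2}, which commutes only up to the sign $(-1)^{\rank\bL_{\Filt(\fX)/\fX}|_{\Grad(\fX)}}=(-1)^{\rank_{+}\bL_{\fX}}$.

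The lemma is nevertheless true because this rotation sign is cancelled by a second sign you have not accounted for: in passing between the two compositions one must swap the order of the two copies of the \emph{graded} line bundle $\rdet_{+}(\bL_{\fX})$ (the symmetrizer of $\rdet_{+}(\bL_{\fX})^{\otimes 2}$ in the standard symmetric monoidal structure of \S\ref{ssec:det}), which is multiplication by $(-1)^{\rank_{+}\bL_{\fX}}$. The paper's proof makes exactly this pair of cancelling signs explicit (its second intermediate diagram commutes only up to the rotation sign, and the symmetrizer sign kills it). Your appeal to ``the same sequence of operations'' would, taken at face value, predict a strictly commuting comparison at the intermediate stage, which is false; without tracking the rotation sign and exhibiting its cancellation against the Koszul sign, the argument as written does not establish the lemma — and in this formalism such discrepancies are not cosmetic, as witnessed by the signs already present in \eqref{eq:rotate_det2}, \eqref{eq:double_dual} and \eqref{eq:theta_kappa_Ind}. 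To repair the proof, replace the appeal to Lemma \ref{lem:KM} at this step by Lemma \ref{lem:rotate}\eqref{eq:rotate_det2} applied to the rotated sequence above, record the resulting sign, and then observe that the rearrangement of the two $\rdet_{+}(\bL_{\fX})$ factors contributes the compensating symmetrizer sign.
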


\begin{proof}
    By the construction of $\xi_{\fX}^{\der}$, 
    the statement is equivalent to the commutativity of the following diagram:
        \[
    \xymatrix@C=80pt{
    {\rdet(\bL_{\Grad(\fX)}) \otimes (\rdet(\bL_{\Filt(\fX) / \fX} |_{\Grad(\fX)})^{\vee})^{\otimes 2} } \ar[r]_-{\cong}^-{\eqref{eq:compositeLagrangiancorrespondencedeterminant}} \ar[d]^-{\cong}_-{\eqref{eq:Filt_dual_plus}}
    & {\rdet(\bL_{\fX})} \\
    {\rdet(\bL_{\Grad(\fX)}) \otimes \rdet_+(\bL_{\fX})^{\otimes 2}} \ar[r]_-{\cong}^-{\id \otimes \id \otimes \phi^{-1}}
    & {\rdet(\bL_{\Grad(\fX)}) \otimes \rdet_+(\bL_{\fX}) \otimes \rdet_-(\bL_{\fX}).} \ar[u]^-{\cong}
    }
    \]
    Firstly, consider the following diagram:
    \begin{equation}\label{eq:xi_compare_plusminus_Step1}
    \begin{aligned}
    \xymatrix@C=50pt{
    {\rdet(\bL_{\Grad(\fX)} ) \otimes \rdet(\bL_{\Filt(\fX) / \fX}|_{\Grad(\fX)})^{\vee}}
    \ar[d]^-{\id \otimes \eqref{eq:Lag_corresp_TL_det}}_-{\cong}
    \ar[r]^-{\eqref{eq:Filt_dual_plus}}_-{\cong}
    & {\rdet_0(\bL_{\fX}) \otimes \rdet_+(\bL_{\fX}})
    \ar[d]_-{\cong}^-{\id \otimes \phi^{-1}}  \\
    {\rdet(\bL_{\Grad(\fX)} ) \otimes \rdet(\bL_{\Filt(\fX) / \Grad(\fX)} |_{\Grad(\fX)})} 
    \ar[d]_-{\cong} 
    & {\rdet_0(\bL_{\fX}) \otimes \rdet_-(\bL_{\fX}}) 
    \ar[d]^-{\cong} \\
    {\rdet(\bL_{\Filt(\fX)} |_{\Grad(\fX)})} 
    \ar[r]^-{\cong}
    & {\rdet_{\leq 0}(\bL_{\fX}). }
    }
    \end{aligned}
    \end{equation}
    This immediately follows from the commutativity of the diagram \eqref{eq:Lag_corresp_vs_plus_minus}.
    Next, we will show that the following diagram commutes up to sign $(-1)^{\rank \bL_{\Filt(\fX) / \fX}|_{\Grad(\fX)}}$:
    \begin{equation}\label{eq:xi_compare_plusminus_Step2}
    \begin{aligned}
    \xymatrix{
    {\rdet(\bL_{\Filt(\fX)} |_{\Grad(\fX)}) \otimes \rdet(\bL_{\Filt(\fX) / \fX} |_{\Grad(\fX)})^{\vee}}
    \ar[r]^-{\eqref{eq:Filt_dual_plus}}_-{\cong} \ar[d]^-{\cong}
    & {\rdet_{\leq 0}(\bL_{\fX}) \otimes \rdet_{+}(\bL_{\fX})} \ar[d]_-{\cong} \\
    {\rdet(\bL_{\fX} |_{\Grad(\fX)})} \ar@{=}[r]
    & {\rdet(\bL_{\fX} |_{\Grad(\fX)}).}
    }
    \end{aligned}
     \end{equation}
    This follows from the commutativity of \eqref{eq:rotate_det2} and the following equivalence of fibre sequences:
    \begin{equation*}
    \xymatrix{
    & {\bL_{\fX} |_{\Grad(\fX)}} \ar[r] \ar[d]^-{\simeq}
    & {\bL_{\Filt(\fX)} |_{\Grad(\fX)}} \ar[d]^-{\simeq} \ar[r]
    & {\bL_{\Filt(\fX) / \fX}|_{\Grad(\fX)}} \ar[d]^-{\simeq}\\
    & {\bL_{\fX} |_{\Grad(\fX)}} \ar[r]
    & {\bL_{\fX} |_{\Grad(\fX)}^{\leq 0}} \ar[r]
    & {\bL_{\fX}[1] |_{\Grad(\fX)}^+.}
    }
    \end{equation*}
    The statement of the lemma immediately follows from the commutativity of diagrams \eqref{eq:xi_compare_plusminus_Step1} and \eqref{eq:xi_compare_plusminus_Step2}
     and the fact that the brading isomorphism of $\rdet_+(\bL_{\fX})^{\otimes 2}$ is the multiplication by $(-1)^{\rank \bL_{\Filt(\fX) / \fX}|_{\Grad(\fX)}}$.
    
\end{proof}

\subsection{Localizing orientations}\label{ssec:Localizing_ori}

Here we discuss the localization of an orientation for $\bG_m$-equivariant algebraic spaces and d-critical stacks.

Assume that we are given a quasi-separated $\bG_m$-equivariant d-critical algebraic space $(X, \mu, s)$.
We define an isomorphism
\[
\bar{\xi}_{\mu, s}^{} \colon K_{X, s} |_{X^{\mu, \red}} \otimes (\cL_{\mu, s}^{\Ind, \vee})^{\otimes 2} \cong K_{X^{\mu}, s^{\mu}}
\]
by the following composition:
\begin{align*}
K_{X, s} |_{X^{\mu, \red}} \otimes (\cL_{\mu, s}^{\Ind, \vee})^{\otimes 2}
& \xrightarrow[\cong]{\xi_{\mu, s} \otimes \id} K_{X^{\mu}, s^{\mu}}  \otimes (\cL_{\mu, s}^{\Ind})^{\otimes 2} \otimes (\cL_{\mu, s}^{\Ind, \vee})^{\otimes 2} \\
&\cong K_{X^{\mu}, s^{\mu}}  \otimes  (\cL_{\mu, s}^{\Ind})^{\otimes 2} \otimes (\cL_{\mu, s}^{\Ind, \otimes 2})^{\vee} \cong K_{X^{\mu}, s^{\mu}}  .
\end{align*}
Note that the isomorphism $(\cL_{\mu, s}^{\Ind, \vee})^{\otimes 2} \cong (\cL_{\mu, s}^{\Ind, \otimes 2})^{\vee}$ differs from the natural isomorphism on underlying ungraded line bundles by the sign $(-1)^{I^{\mu}_{X}}$, which is the brading isomorphism of the graded line bundle $\cL_{\mu, s}^{\Ind, \otimes 2}$ of our symmetric monoidal structure on graded line bundles.
Assume now that we are given an orientation $(\cL, o)$ for $(X, s)$.
We define an orientation $o^{\mu}$ for $(X^{\mu}, s^{\mu})$ by
\begin{equation}\label{eq:localizedorientation}
o^{\mu} \colon (\cL |_{X^{\mu, \red}} \otimes \cL_{\mu, s}^{\Ind, \vee})^{\otimes 2} \xrightarrow[\cong]{o \otimes \id} K_{X, s} |_{X^{\mu, \red}} \otimes (\cL_{\mu, s}^{\Ind, \vee})^{\otimes 2} \xrightarrow[\cong]{\bar{\xi}_{\mu, s}} K_{X^{\mu}, s^{\mu}}.
\end{equation}

Let $(X_1, \mu_1, s_1)$ and $(X_2, \mu_2, s_2)$ be quasi-separated $\bG_m$-equivariant d-critical algebraic spaces.
Let $h \colon X_1 \to X_2$ be a smooth morphism such that $h^{\star} s_2 = s_1$ holds.
Assume that we are given an orientation $o_2$ for $(X_2, s_2)$. 
Then the commutativity of the diagram \eqref{eq:xi_Upsilon_Ind} implies that there exists a canonical isomorphism of orientations
\begin{equation}\label{eq:ori_loc_sm_compatible}
(h^{\star} o_2)^{\mu_1}  \cong h^{\bG_m, *} o_2^{\mu_2}.
\end{equation}

Assume that we are given a quasi-separated $\bG_m^2$-equivariant d-critical algebraic space $(X, (\mu_1, \mu_2), s)$ and an orientation $o$.
Then the commutativity of the diagram \eqref{eq:xi_theta_Ind} implies that there exists a natural isomorphism of orientations
\begin{equation}\label{eq:ori_localize_assoc_action}
    (o^{\mu_1})^{\mu_2}  \cong (o^{\mu_2})^{\mu_1} .
\end{equation}

Let $(X, \mu, s)$ and $(Y, \nu, t)$ be quasi-separated $\bG_m$-equivariant d-critical algebraic spaces with orientations $o_X$ and $o_Y$.
Then using that $\xi_{\mu, s} \boxtimes \xi_{\nu, t}$ and $\xi_{\mu \times \nu, s \boxplus t}$ are equivalent after the identification $\Psi_{X, Y}$ and $\Psi_{\mu, \nu}^{\Ind}$,
we obtain a natural isomorphism
\begin{equation}\label{eq:ori_localized_product_action}
    o_X^{\mu} \boxtimes o_Y^{\nu} \cong o_{X \times Y}^{\mu \times \nu}.
\end{equation}

Now we discuss localization of canonical orientation \eqref{eq:can_ori}.
Let $U$ be a smooth quasi-separated algebraic space with a $\bG_m$-action $\mu_U$ and $f$ be a $\bG_m$-invariant regular function on $U$ with $f|_{U^{\red}} = 0$.
Set $X = \Crit(f)^{\cl}$ and equip it with the $\bG_m$-action $\mu$ induced by $\mu_U$ and the natural d-critical structure $s$.
Then $(X, \mu, s)$ is a $\bG_m$-equivariant d-critical algebraic space.
Recall that we have defined the canonical orientation $o_{X}^{\can}$ in \eqref{eq:can_ori}.
Since we have
\[
X^{\mu} \cong \Crit(f |_{U^{\mu}})^{\cl},
\]
 d-critical structure $s^{\mu}$ coincides with the one induced by the critical locus description, 
hence one can define the canonical orientation $o_{X^{\mu}}^{\can}$ for $(X^{\mu}, s^{\mu})$.
Then the natural isomorphism $K_{U}  \otimes K_{U}^{\pm, \vee} \cong K_{U^{\mu}}$ induces an isomorphism of orientations
\begin{equation}\label{eq:can_ori_loc_action}
    (o_{X}^{\can})^{\mu} \cong o_{X^{\mu}}^{\can}.
\end{equation}
This can be proved in the same manner as the commutativity of the diagram \eqref{eq:xi_kappa_Ind_pre}: see also Remark \ref{rmk:sign_canori}.

Assume that we are given $\bG_m$-equivariant smooth morphism of smooth quasi-separated $\bG_m$-equivariant algebraic spaces $H \colon U_1 \to U_2$ and a $\bG_m$-invariant regular function $f_2 \colon U_2 \to \bA^1$ with $f_2|_{U_2^{\red}} =0$. Set $f_1 \coloneqq f_2 \circ H$ and $h \colon X_1 \to X_2$ be the natural morphism induced on the critical locus.
Let $\mu_1$ and $\mu_2$ be the $\bG_m$-actions on $X_1$ and $X_2$.
Then the following diagram commutes by the construction:
\begin{equation}\label{eq:can_ori_loc_pull_commutes}
    \begin{aligned}
        \xymatrix@C=100pt{
        {(h^{\star} o_{X_2}^{\can})^{\mu_1}}
        \ar[r]_-{\cong}^-{\eqref{eq:can_ori_pull}}
        \ar[d]_-{\cong}^-{\eqref{eq:ori_loc_sm_compatible}}
        & {(o_{X_1}^{\can})^{\mu_1}}
        \ar[dd]_-{\cong}^-{\eqref{eq:can_ori_loc_action}}
        \\
        {h^{\bG_m, \star}  (o_{X_2}^{\can})^{\mu_2}}
        \ar[d]_-{\cong}^-{\eqref{eq:can_ori_loc_action}}
        & {} \\
        {h^{\bG_m, \star} o_{X_2^{\mu_2}}^{\can}}
        \ar[r]_-{\cong}^-{\eqref{eq:can_ori_pull}}
        & {o_{X_1^{\mu_1}}^{\can}.}
        }
    \end{aligned}
\end{equation}

Now assume that $U$ is a smooth quasi-separated algebraic space with a $\bG_m^2$-action $(\mu_{U, 1}, \mu_{U, 2})$ and $f$ be a $\bG_m^2$-invariant regular function on $U$ with $f|_{U^{\red}} = 0$.
Let $(X, (\mu_1, \mu_2), s)$ be the associated $\bG_m^2$-equivariant d-critical algebraic space.
Then by repeating the proof of the commutativity of  \eqref{eq:theta_kappa_Ind_pre}, we see that the following diagram commutes:
    \begin{equation}\label{eq:can_ori_loc_assoc}
    \begin{aligned}
    \xymatrix{
    {((o^{\can}_{X})^{\mu_1})^{\mu_2 }} |_{X^{\hat{\mu}}_{\mat}} 
    \ar[r]_-{\cong}^-{\eqref{eq:can_ori_loc_action}}
    \ar[d]^-{\cong}_-{\eqref{eq:ori_localize_assoc_action}}
    & {(o^{\can}_{X^{\mu_1}})^{\mu_2} |_{X^{\hat{\mu}}_{\mat}}  } 
    \ar[r]_-{\cong}^-{\eqref{eq:can_ori_loc_action}}
    & {o_{X^{\hat{\mu}}}^{\can} |_{X^{\hat{\mu}}_{\mat}}  } \ar@{=}[d] \\
    {((o^{\can}_{X})^{\mu_2})^{\mu_1 }} |_{X^{\hat{\mu}}_{\mat}} 
    \ar[r]_-{\cong}^-{\eqref{eq:can_ori_loc_action}}
    & {(o^{\can}_{X^{\mu_2}})^{\mu_1}|_{X^{\hat{\mu}}_{\mat}}  }
    \ar[r]_-{\cong}^-{\eqref{eq:can_ori_loc_action}}
    & {o_{X^{\hat{\mu}}}^{\can} |_{X^{\hat{\mu}}_{\mat}} . }
    }
    \end{aligned}
\end{equation}

Now assume that we are given $\bG_m$-equivariant smooth quasi-separated algebraic spaces
$(U, \mu_U)$ and $(V, \nu_V)$ with $\bG_m$-invariant regular functions $f$ and $g$ such that $f|_{U^{\red}} = 0$ and $g|_{V^{\red}} = 0$.
Let $(X, \mu, s)$ and $(Y, \nu, t)$ be the associated $\bG_m$-equivariant d-critical algebraic spaces.
Then by repeating the proof of the commutativity of \eqref{eq:Psi_kappa_Ind_pre}, we see that the following diagram commutes:
\begin{equation}\label{eq:can_ori_loc_product}
    \begin{aligned}
        \xymatrix@C=20pt{
        (o^{\can}_X \boxtimes o^{\can}_Y)^{\mu \times \nu}
        \ar[d]_-{\cong}^-{\eqref{eq:can_ori_prod}}
        \ar[r]_-{\cong}^-{\eqref{eq:ori_localized_product_action}}
        & {(o^{\can}_X)^{\mu} \boxtimes (o^{\can}_Y)^{\nu}}
        \ar[r]_-{\cong}^-{\eqref{eq:can_ori_loc_action}}
        & {o^{\can}_{X^{\mu}} \boxtimes o^{\can}_{Y^{\nu}}}
        \ar[d]_-{\cong}^-{\eqref{eq:can_ori_prod}} \\
        {(o^{\can}_{X \times Y})^{\mu \times \nu}}
         \ar[rr]_-{\cong}^-{\eqref{eq:can_ori_loc_action}}
        &
        & {o^{\can}_{X^{\mu} \times Y^{\nu}}.}
        }
    \end{aligned}
\end{equation}

We now explain the localization of orientations for d-critical stacks.
Let $(\fX, s)$ be a d-critical stack and $o \colon \cL^{\otimes 2} \cong K_{\fX, s}$ be an orientation. 
We define the localized orientation $u^{\star} o$ on $(\Grad(\fX), u^{\star} s)$ by the following composition
\[
u^{\star} o \colon (u^{\red, *} \cL  \otimes \cL_{s}^{\Ind, \vee})^{\otimes  2} \xrightarrow[\cong]{o} K_{\fX, s} \otimes (\cL_{s}^{\Ind,  \vee})^{\otimes 2} \xrightarrow[\cong]{\xi_{s}} K_{\Grad(\fX), u^{\star} s}.
\]
Assume that we are given a $\bG_m$-equivariant chart $(X, \mu, \bar{q})$.
Then the commutativity of the diagram \eqref{eq:xi_Upsilon_Ind_stack} implies a canonical isomorphism of orientations:
\begin{equation}\label{eq:ori_sm_localize_stack}
(q^{\star} o)^{{\mu}} \cong q_{\mu}^{\star} u^{\star} o.
\end{equation}

The commutativity of the diagram \eqref{eq:xi_theta_Ind_stack} implies a canonical isomorphism of orientations:
\begin{equation}\label{eq:ori_localize_assoc}
    u_1^{\star} u^{\star} o \cong u_2^{\star} u^{\star} o.
\end{equation}
Here, $u_2^{\star} o$ is defined using the natural identification $\Grad^2(\fX) \cong \Grad(\Grad(\fX))$ and we define $u_{1}^{\star} \coloneqq \sigma^{\star} u_2^{\star}$.
Assume further that we are given a $\bG_m^2$-equivariant chart $(X, \hat{\mu} = (\mu_1, \mu_2), \bar{q})$. Set $\hat{\mu}' \coloneqq (\mu_2, \mu_1)$.
Then the commutativity of the diagram \eqref{eq:Upsilon_theta_Ind_stack} implies that the following diagram commutes:
\begin{equation}\label{eq:assoc_sm_ori_stack}
\begin{aligned}
\xymatrix@C=80pt{
    {((q^{\star}o)^{\mu_1})^{\mu_2}}
    \ar[r]_-{\cong}^-{\eqref{eq:ori_localize_assoc_action}}
    \ar[d]_-{\cong}^-{\eqref{eq:ori_sm_localize_stack}}
    & {((q^{\star}o)^{\mu_2})^{\mu_1}}
    \ar[d]_-{\cong}^-{\eqref{eq:ori_sm_localize_stack}}\\
    {q_{\hat{\mu}}^{\star}(u_1^{\star} u^{\star} o)}
    \ar[r]_-{\cong}^-{\eqref{eq:ori_localize_assoc}}
    &  {q_{\hat{\mu}'}^{\star}(u_2^{\star} u^{\star} o)}
    }
    \end{aligned}
\end{equation}

Assume now that we are given d-critical Artin stacks $(\fX, s, o_{\fX})$ and $(\fY, t, o_{\fY})$ such that $\fX$ and $\fY$ are quasi-separated with affine stabilizers. 
Then the isomorphism $\Psi_{\fX, \fY}$ and its compatibility between $\xi_{s}$, $\xi_{t}$ and $\xi_{s \boxplus t}$ implies an isomorphism of orientations
\begin{equation}\label{eq:ori_localize_product}
u_{\fX}^{\star} o_{\fX} \boxtimes u_{\fY}^{\star} o_{\fY} \cong (u_{\fX} \times u_{\fY})^{\star} o_{\fX} \boxtimes o_{\fY}.
\end{equation}
Assume further that  that we are given $\bG_m$-equivariant d-critical charts $(X, \mu, \bar{q})$ of $\fX$ and $(Y, \nu, \bar{r})$ of $\fY$.
Then the compatibility between $\Upsilon^{\Ind}_{q, \mu}$ and the product implies the commutativity of the following diagram:
\begin{equation}\label{eq:ori_pull_product_compat}
    \begin{aligned}
    \xymatrix{
    {(q^{\star} o_{\fX})^{\mu} \boxtimes (r^{\star} o_{\fY})^{\nu}}
    \ar[d]_-{\cong}^-{\eqref{eq:ori_sm_localize_stack}}
    \ar[r]_-{\cong}^-{\eqref{eq:ori_localized_product_action}}
    & {(q^{\star} o_{\fX} \boxtimes r^{\star} o_{\fY})^{\mu \times \nu}}
   \ar[d]_-{\cong}^-{\eqref{eq:ori_sm_localize_stack}} \\
        (q_{\mu} \times r_{\nu})^{\star} u_{\fX}^{\star} o_{\fX} \boxtimes u_{\fY}^{\star} o_{\fY} 
        \ar[r]_-{\cong}^-{\eqref{eq:ori_localize_product}}
        & (q_{\mu} \times r_{\nu})^{\star} (u_{\fX} \times u_{\fY})^{\star} o_{\fX} \boxtimes o_{\fY}.
        }
    \end{aligned}
\end{equation}

Assume that we are given an orientation $o \colon \cL^{\otimes 2} \cong \rdet(\bL_{\fX})$ for a $(-1)$-shifted symplectic derived Artin stack $\fX$ whose classical truncation is quasi-separated with affine stabilizers.
We define the localized orientation $u^{\star} o$ on $(\Grad(\fX), u^{\star} \omega_{\fX})$ by the following composition
\[
u^{\star} o \colon (u^{\red, *} \cL  \otimes \rdet_+(\bL_{\fX})^{\vee})^{\otimes  2} \xrightarrow[\cong]{o} u^* \rdet(\bL_{\fX}) \otimes (\det_+(\bL_{\fX})^{\vee})^{\otimes 2} \xrightarrow[\cong]{\xi_{\fX}^{\der}} \rdet(\bL_{\Grad(\fX)}).
\]
Using \eqref{eq:xi_theta_der_stack}, we see that there exists a natural isomorphism of orientations on $\Grad^2(\fX)$
\begin{equation}\label{eq:localize_ori_assoc_derived}
u_1^{\star} u^{\star} o  \cong u_2^{\star} u^{\star} o.
\end{equation}

\begin{rmk}\label{rmk:ori_compati_with_general}
   Using Lemma \ref{lem:two_localization_der_same}, we see that the orientation $u^{\star} o$ is equivalent to the orientation constructed using Lemma \ref{lem:Lagrangiancorrespondenceretract}.
   Also, it follows that the natural isomorphism \eqref{eq:localize_ori_assoc_derived} restricted to the matching locus is equivalent to the one defined using \eqref{eq:ori_assoc_general} twice.
\end{rmk}

Using Proposition \ref{prop:compare_index_line} and the commutativity of the diagram \eqref{eq:Lambda_xi_Ind_stack},
we see that there exists a natural isomorphism of orientations
\begin{equation}\label{eq:compare_localize_ori}
    (u^{\star} o)^{\cl} \cong u^{\star} o^{\cl}.
\end{equation}
By the commutativity of the diagram \eqref{eq:theta_Lambda_Ind_stack}, we see that the following diagram commutes:
\begin{equation}\label{eq:compare_assoc_locallize}
\begin{aligned}
\xymatrix{
{(u_1^{\star} u^{\star} o)^{\cl}} \ar[r]^-{\cong} \ar[d]^-{\cong}
& {(u_2^{\star} u^{\star} o)^{\cl}} \ar[d]^-{\cong} \\
{u_1^{\star} u^{\star} o^{\cl}} \ar[r]^-{\cong}
& {u_2^{\star} u^{\star} o^{\cl}.}
}
\end{aligned}
\end{equation}

Assume that we are given  oriented $(-1)$-shifted symplectic stacks
$(\fX, \omega_{\fX}, o_{\fX})$ and $(\fY, \omega_{\fY}, o_{\fY})$.
Then the commutativity of the diagram \eqref{eq:xi_Psi_der_commutes} implies a natural isomorphism of orientations
\begin{equation}\label{eq:ori_product_derived}
    u_{\fX}^{\star} o_{\fX} \boxtimes u_{\fY}^{\star} o_{\fY} 
    \cong u_{\fX \times \fY}^{\star} (o_{\fX} \boxtimes o_{\fY}).
\end{equation}
The commutativity of the diagram \eqref{eq:Lambda_Psi_Ind_der_stack} implies that the following diagram of orientations commute:
\begin{equation}\label{eq:ori_product_der_class_compati}
    \begin{aligned}
        \xymatrix{
        {(u_{\fX}^{\star} o_{\fX} \boxtimes u_{\fY}^{\star} o_{\fY})^{\cl}}
        \ar[r]^-{\cong}
        \ar[d]^-{\cong}
        & (u_{\fX \times \fY}^{\star} (o_{\fX} \boxtimes o_{\fY}))^{\cl}
        \ar[d]^-{\cong} \\
        {u_{\fX}^{\star} o_{\fX}^{\cl} \boxtimes u_{\fY}^{\star} o_{\fY}^{\cl}}
        \ar[r]^-{\cong}
        & {u_{\fX \times \fY}^{\star} (o^{\cl}_{\fX} \boxtimes o^{\cl}_{\fY}).}
        }
    \end{aligned}
\end{equation}

\section{Proof of the main theorem}

The aim of this section is to prove the integral isomorphism (= Theorem \ref{mainthm:hyperboliclocalization}).

In \S \ref{ssec:sheaf_theory}, we will recall some results in sheaf theory that will be used for the proof of Theorem \ref{mainthm:hyperboliclocalization}.
In \S \ref{ssec:hyp_loc}, we will recall the definition of basic properties of the hyperbolic localization functor.
In \S \ref{ssec:hyp_DT}, we will reprove Descombes's theorem \cite[Theorem 4.2]{des22} stating that hyperbolic localization of the DT perverse sheaf is again a DT perverse sheaf, with a careful choice of orientations.
In \S \ref{ssec:hyp_stack}, we will introduce the hyperbolic localization functor for stacks using the attractor correspondence.
In \S \ref{ssec:main}, we will prove Theorem \ref{mainthm:hyperboliclocalization}.
In \S \ref{ssec-nontriv-action}, we will extend = Theorem \ref{mainthm:hyperboliclocalization} to the stacks with non-trivial $\mathbb{G}_{\mathrm{m}}$-actions.
In \S \ref{ssec:int_crit}, we will describe the integral isomorphism for the critical locus inside a smooth stack.

\subsection{Some results from sheaf theory}\label{ssec:sheaf_theory}

We will discuss some results from sheaf theory that will be used in the proof of Theorem \ref{maintheorem}.

\subsubsection{Contraction lemma for monodromic complexes}

We first recall the definition of monodromic complexes:

\begin{defin}\label{defin:mon}
Let $\fY$ be an Artin stack with a $\bG_m$-action.
We say that a complex $\cF \in D(\fY)$ is \defterm{monodromic} if the restriction of $\cF$ to any $\bG_m$-orbit is locally constant.
We let $D_{\mathrm{mon}}(\fX) \subset D(\fX)$ denote the full subcategory consisting of monodromic complexes.
\end{defin}

Assume that we are given an Artin stack $\fX$ with an action of multiplicative monoid $\bA^1$.
We let $\fX^{0}$ be the fixed point substack of the submonoid $\{ 0, 1\}$ (see \cite[\S C.5]{dgcomp} for detail).
We have a contracting morphism $\pi \colon \fX \to \fX_{0}$ and its section $s \colon \fX_0 \to \fX$.
The following statement is called the contraction lemma:

\begin{prop}\label{prop:contraction}
    The following maps are invertible on monodromic objects:
    \begin{align*}
    \pi_* \xrightarrow{\mathrm{unit}} \pi_* s_* s^* \cong s^*, \\
    s^! \cong \pi_! s_! s^! \xrightarrow{\mathrm{counit}} \pi_!.
    \end{align*}
\end{prop}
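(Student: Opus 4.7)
This is the classical Braden contraction principle \cite{bra03}, in the form extended to stacks in \cite{dg14}; my plan is to follow their argument, whose main steps I now outline. Since Verdier duality preserves $D_\mon(\fX)$ and exchanges $\pi_* \leftrightarrow \pi_!$ as well as $s^* \leftrightarrow s^!$, the two assertions are dual to each other, so it suffices to show that the natural map $\pi_*\cF \to s^* \cF$ is an isomorphism for any $\cF \in D_\mon(\fX)$.

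A preliminary reduction is to the case where $\fX$ is a quasi-separated algebraic space with $\bA^1$-action. For this I would cover $\fX$ by $\bA^1$-equivariant smooth atlases (which exist locally around every fixed point by the local structure theorem \cite[Theorem 1.1]{ahr20} combined with equivariant lifting of the monoid action) and observe that the formations of $\pi_*$ and $s^*$ commute with smooth base change.

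The geometric heart of the argument uses the action map $m\colon \bA^1\times \fX \to \fX$ together with the two inclusions $i_0, i_1 \colon \fX \hookrightarrow \bA^1\times \fX$, which satisfy $m \circ i_0 = s\circ \pi$ and $m \circ i_1 = \id_\fX$. For $\cF \in D_\mon(\fX)$, the pullback $m^*\cF$ is monodromic on $\bA^1\times \fX$ with respect to the combined $\bG_m$-action (scaling on $\bA^1$ paired with the original action on $\fX$). The central step is to exhibit a natural isomorphism $i_0^* m^*\cF \cong i_1^* m^*\cF$, which unpacks to $\pi^* s^*\cF \cong \cF$; running this through the $(\pi^*,\pi_*)$-adjunction and using that $s$ is a section of $\pi$, one then extracts the required isomorphism $\pi_*\cF \cong s^*\cF$ and checks that it agrees with the unit-induced map in the statement.

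The main obstacle is the isomorphism $\pi^* s^* \cF \cong \cF$, which is the precise incarnation of ``$\bA^1$-contractibility'' for monodromic complexes. The cleanest route I know is to pass to the quotient stacks: the monodromicity hypothesis lets one treat $\cF$ as a sheaf on $[\fX/\bG_m]$ and $m^*\cF$ as a sheaf on $[(\bA^1\times\fX)/\bG_m]$ for the combined action, at which point the comparison becomes a base-change identity along the two closed points $B\bG_m = [\{0\}/\bG_m]\hookrightarrow [\bA^1/\bG_m]$ and $\mathrm{pt} = [\{1\}/\bG_m]\hookrightarrow [\bA^1/\bG_m]$. This is established by noting that the pushforward of a monodromic complex along $[\bA^1/\bG_m]\to \mathrm{pt}$ is controlled by either fiber, a statement that reduces after loop-rotation to the classical computation of $\bG_m$-equivariant cohomology of $\bA^1$; the details are carried out in \cite{dg14}.
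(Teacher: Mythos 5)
The heart of your argument is false, and the route cannot be repaired. Your ``central step'' asserts a natural isomorphism $i_0^*m^*\cF\cong i_1^*m^*\cF$, i.e.\ $\pi^*s^*\cF\cong\cF$, for every monodromic $\cF$. Take $\fX=\bA^1$ with the scaling action of the monoid $\bA^1$, so $\pi$ is the projection to the origin and $s$ its inclusion, and let $\cF=j_!\bQ_{\bG_m}$ be the extension by zero of the constant sheaf on the open orbit. This $\cF$ is monodromic, but $s^*\cF=0$, so $\pi^*s^*\cF=0\neq\cF$; concretely, $m^*\cF$ is the extension by zero of $\bQ$ from $\{tx\neq 0\}\subset\bA^1\times\bA^1$, whose restrictions along $i_0$ and $i_1$ are $0$ and $j_!\bQ_{\bG_m}$ respectively. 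The contraction principle is emphatically not an $\bA^1$-homotopy-invariance statement for $*$-pullback (that would only hold for complexes pulled back from the fixed locus), so no amount of adjunction bookkeeping starting from such an isomorphism can produce $\pi_*\cF\cong s^*\cF$. Note that the proposition itself survives the example: $\pi_*(j_!\bQ_{\bG_m})=R\Gamma(\bA^1,j_!\bQ_{\bG_m})=0=s^*\cF$, which already indicates that the content of the lemma lives in the pushforward, not in a comparison of the two sections of $\bA^1\times\fX\to\fX$.

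Your peripheral reductions (Verdier duality on constructible monodromic objects to reduce to one of the two statements, and descent along $\bA^1$-equivariant smooth charts using smooth base change for $\pi_*$) are reasonable, but the missing ingredient is exactly the basic case: for a monodromic complex $\cG$ on $\bA^1$ one has $R\Gamma(\bA^1,\cG)\cong\cG_0$ and $R\Gamma_c(\bA^1,\cG)\cong i_0^!\cG$, proved by comparing $j_!$ and $Rj_*$ from $\bG_m$ and using local constancy along the orbit — not by comparing fibers of $m^*\cF$ at $t=0$ and $t=1$. This is how the arguments of \cite{bra03,dg14} and the D-module contraction lemma of \cite{dgcomp} proceed, and it is also the paper's intended route: it transports the argument of \cite[Theorem C.5.3]{dgcomp} to sheaves using that monodromic complexes are the same as $\tilde{\bG}_m$-equivariant complexes for the universal cover $\tilde{\bG}_m$ of $\bG_m$ (with a self-contained proof deferred to \cite{kk24}). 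To fix your write-up you would have to replace the claimed isomorphism of pullbacks by this pushforward computation (or by the equivalent computation on $[\bA^1/\bG_m]$), which is precisely where monodromicity is used.
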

The statement is proved in the context of D-module in \cite[Theorem C.5.3]{dgcomp}.
We can prove the statement by the same argument using the fact that the monodromic complex is nothing but $\tilde{\bG}_m$-equivariant complex where $\tilde{\bG}_m$ is the universal cover of $\bG_m$; see \cite[Proposition 3.7.2]{ksshv} for a related statement.
A more down-to-earth proof will be written in \cite{kk24}.

\begin{cor}\label{cor:gr_contr_lemma}
    Let $\fX$ be a quasi-separated stack with affine stabilizers.
    Let $\gr \colon \Filt(\fX) \to \Grad(\fX)$ and $\sigma \colon \Grad(\fX) \to \Filt(\fX)$ be the natural morphisms defined in \S \ref{ssec:grad_filt}.
    Then we have the following natural equivalence of functors
    \[
    \gr_* \cong \sigma^*, \quad \gr_! \cong \sigma^!.
    \]
\end{cor}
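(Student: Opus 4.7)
The plan is to apply the contraction lemma (Proposition \ref{prop:contraction}) by realising $\gr$ as the contraction morphism for a natural $\bA^1$-monoid action on $\Filt(\fX)$, and then observing that monodromicity is automatic in this situation.

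First, I would equip $[\bA^1/\bG_m]$ with an $\bA^1$-monoid action: the multiplication $m \colon \bA^1 \times \bA^1 \to \bA^1$ is equivariant for the scaling $\bG_m$-action on the second factor, so it descends to a monoid action $\bA^1 \times [\bA^1/\bG_m] \to [\bA^1/\bG_m]$. Precomposition then endows $\Filt(\fX) = \Map([\bA^1/\bG_m], \fX)$ with an $\bA^1$-monoid action; note that $\Filt(\fX)$ is an Artin stack by Proposition \ref{prop:grad_filt_algebraic}, so the setup of Proposition \ref{prop:contraction} applies.

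Next, I would identify the fixed substack $\Filt(\fX)_0$ with $\Grad(\fX)$. A point $f \colon [\bA^1/\bG_m] \to \fX$ lies in $\Filt(\fX)_0$ exactly when $f \circ \mu_0 \simeq f$. Since $\mu_0 \colon [\bA^1/\bG_m] \to [\bA^1/\bG_m]$ factors through the projection $p \colon [\bA^1/\bG_m] \to B\bG_m$ followed by the zero section $i_0 \colon B\bG_m \to [\bA^1/\bG_m]$, this condition forces $f$ to factor through $p$, giving $\Filt(\fX)_0 \simeq \Grad(\fX)$. Under this identification the contracting morphism $\pi$ becomes $\gr$ (evaluation at $i_0$), and its section becomes $\sigma$ (precomposition with $p$).

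The key observation that makes the statement hold for \emph{all} complexes (not merely monodromic ones) is that the induced $\bG_m$-action on $\Filt(\fX)$, coming from $\bG_m \subset \bA^1$, is canonically trivial. Indeed, for $t \in \bG_m$ the scaling $\mu_t \colon \bA^1 \to \bA^1$ is $\bG_m$-equivariant and becomes canonically isomorphic to the identity after descent to $[\bA^1/\bG_m]$, via the tautological trivialisation furnished by the $\bG_m$-torsor structure on $\bA^1 \to [\bA^1/\bG_m]$. Consequently the induced $\bG_m$-action on $\Filt(\fX)$ is canonically trivial, every $\bG_m$-orbit is a point, and every object of $D(\Filt(\fX))$ is monodromic in the sense of Definition \ref{defin:mon}. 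Applying Proposition \ref{prop:contraction} then gives $\gr_* \cong \sigma^*$ and $\gr_! \cong \sigma^!$ on all of $D(\Filt(\fX))$.

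The one point that must be checked with care is the triviality of the induced $\bG_m$-action at the level of the $2$-categorical structure of stacks, since only a pointwise triviality would be insufficient; this should follow cleanly from the universal property of the stack quotient $[\bA^1/\bG_m]$ and the fact that the scaling action being used is precisely the one being quotiented out.
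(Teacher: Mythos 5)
Your proposal is correct and follows essentially the same route as the paper: the paper simply cites \cite[Lemma 1.3.8]{hlp14} for the fact that $\gr$ and $\sigma$ exhibit $\Filt(\fX)$ as an $\bA^1$-deformation retract onto $\Grad(\fX)$ and then applies Proposition \ref{prop:contraction}, while you reconstruct that $\bA^1$-monoid action by hand (descending multiplication to $[\bA^1/\bG_m]$ and precomposing) and identify the fixed locus with $\Grad(\fX)$ directly. Your additional observation that the induced $\bG_m$-action on $\Filt(\fX)$ is canonically trivial, so every complex is monodromic and the contraction lemma applies without restriction, is correct and makes explicit a point the paper's proof leaves implicit.
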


\begin{proof}
    As shown in \cite[Lemma 1.3.8]{hlp14}, the maps $\gr$ and $\sigma$ define an $\bA^1$-deformation retract from $\Filt(\fX)$ to $\Grad(\fX)$. Therefore we can apply Proposition \ref{prop:contraction}.
\end{proof}

\subsubsection{Virtual fundamental classes}

We briefly recall the theory of virtual fundamental classes following \cite{Kha19}.
It will be used only for the associativity of the integral isomorphism in Theorem \ref{maintheorem}, and in fact, we can avoid the use of the virtual fundamental classes for the proof. However, we will give a proof based on it since it gives a significant simplification. 

Let $f \colon \fX \to \fY$ be a quasi-smooth morphism of derived Artin stacks.
Then Khan \cite[Remark 3.8]{Kha19} constructed a natural transform of functors between constructible derived categories
\[
\mathrm{pur}_f \colon f^*[2 \vdim f]  \to f^! \
\]
called the \defterm{purity transform}, which is a relative version of the virtual fundamental class.
When $f$ is smooth, this isomorphism is identified with the natural isomorphism 
$f^*[2 \vdim f] \cong f^!$ induced by the trivialization $f^! \bQ_{\fY} \cong \bQ_{\fX}[2 \vdim f]$.
Assume that we are further given a quasi-smooth morphism $g \colon \fY \to \fZ$.
It follows from \cite[Proposition 2.5.4]{DFK21} that the following diagram commutes:
\begin{equation}\label{eq:purity_assoc}
\begin{aligned}
\xymatrix@C=70pt{
(g \circ f)^* [2 \vdim (g \circ f)] \ar[r]^-{\pur_{g \circ f}} \ar[d]^-{\pur_f}
& (g \circ f)^! \ar[d]^-{\cong} \\
f^! g^* [2 \vdim g]
\ar[r]^-{\pur_g}
& f^! g^!.
}
\end{aligned}
\end{equation}

\subsubsection{Generalized base change transforms}\label{sssec:gen_bc}

We will introduce a generalized version of the base change transform following \cite[\S 3]{fyz23}.
Consider the following diagram of derived Artin stacks:
\begin{equation}\label{eq:stack_square}
\begin{aligned}
\xymatrix{
{\fX_{11}}
\ar[r]^-{g'}
\ar[d]^-{f'}
& {\fX_{12}}
\ar[d]^-{f}\\
{\fX_{21}}
\ar[r]^-{g}
& {\fX_{22}.}
}
\end{aligned}
\end{equation}
We say that this diagram is \defterm{pullable} if the natural map 
$a \colon \fX_{11} \to \fX_{12} \times_{\fX_{22}} \fX_{21}$ is quasi-smooth.
We let $\tilde{f} \colon \fX_{12} \times_{\fX_{22}} \fX_{21} \to \fX_{21}$
 and $\tilde{g} \colon \fX_{12} \times_{\fX_{22}} \fX_{21} \to \fX_{12}$ the base change of $f$ and $g$ respectively.
We set $d \coloneqq \vdim a$ and define the generalized base change transform
\begin{equation}\label{eq:gen_bc}
\Exi \colon f^! g_* \to g'_* f'^! [- 2 d]
\end{equation}
by the following composition
\[
f^! g_* \cong \tilde{g}_* \tilde{f}^!  
\xrightarrow{\mathrm{unit}} \tilde{g}_* a_* a^* \tilde{f}^!
\xrightarrow{\mathrm{pur}_a} \tilde{g}_* a_* a^! \tilde{f}^! [- 2d]
\cong g'_* f'^! [- 2d].
\]
Similarly, we define the map
\begin{equation}\label{eq:gen_bc2}
\mathrm{Ex}_{{}^* {}^!} \colon g'^* f^!  \to f'^!g^* [- 2 d]
\end{equation}
by the composition
\[
g'^* f^! \cong a^* \tilde{g}^* f^! 
\to a^* \tilde{f}^! g^*
\xrightarrow{\mathrm{pur}_a} a^! \tilde{f}^! g^* [-2d]
\cong f'^!g^* [- 2 d].
\]

The following statement is an immediate consequence of \eqref{eq:purity_assoc}:

\begin{lem}\label{lem:Beck_Chevalley_bc}
    Consider the diagram \eqref{eq:stack_square} where $f$ and $f'$ are smooth.
    Set $d \coloneqq \dim f$ and $d' \coloneqq \dim f'$.
    Then it is a pullable square and the following diagram commutes:
    \[
    \xymatrix@C=70pt{
    {f^* g_*} 
    \ar[r]
    \ar[d]^-{\cong}
    & {g'_* f'^*}
    \ar[d]^-{\cong}\\
    {f^! g_* [ - 2 d]}
    \ar[r]^-{\Exi}
    & {g'_* f'^! [- 2 d']}
    }
    \]
    where the upper horizontal map is the Beck--Chevalley transform.
\end{lem}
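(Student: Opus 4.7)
The plan is to reduce the commutativity of the square to two ingredients: the associativity formula \eqref{eq:purity_assoc} for the purity transform applied to the factorization $f' = \tilde f\circ a$, and the naturality of $\pur_{\tilde f}$ with respect to the unit of the $(a^*, a_*)$-adjunction. First I would factor $f' = \tilde f\circ a$, where $\tilde f\colon \fX_{12}\times_{\fX_{22}}\fX_{21}\to \fX_{21}$ is the honest base change of the smooth morphism $f$ along $g$, and observe that smoothness of $f$ and $f'$ forces $a$ to be quasi-smooth of virtual dimension $d_a = d'-d$ (the fibre sequence $a^*\bL_{\tilde f}\to \bL_{f'}\to \bL_a$ has both outer terms locally free in degree $0$). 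Since $\tilde f$ is the base change of a smooth map, one has the usual smooth base-change isomorphism $f^*g_*\simeq \tilde g_*\tilde f^*$ and, via $\pur_f$ and $\pur_{\tilde f}$, its shifted version $f^!g_*\simeq \tilde g_*\tilde f^!$.

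Next, I would rewrite both horizontal morphisms of the square along these identifications. The Beck--Chevalley transform becomes
\[
\tilde g_*\tilde f^* \xrightarrow{\;\tilde g_* \,\eta_{\tilde f^*}\;} \tilde g_*\,a_*a^*\tilde f^* = g'_*f'^*,
\]
while the definition of $\Exi$ (with $-2d_a = 2d-2d'$) reads
\[
\tilde g_*\tilde f^! \xrightarrow{\;\tilde g_* \,\eta_{\tilde f^!}\;} \tilde g_*\,a_*a^*\tilde f^! \xrightarrow{\;\pur_a\;} \tilde g_*\,a_*a^!\tilde f^![2d-2d'] = g'_*f'^![2d-2d'].
\]
The two vertical isomorphisms in the lemma, after this rewriting, are $\tilde g_*$ applied to $\pur_{\tilde f}$ on the left and to $\pur_{f'}\colon f'^*[2d']\to f'^!$ on the right.

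The main step is then to insert the decomposition \eqref{eq:purity_assoc} for $f' = \tilde f\circ a$, which yields $\pur_{f'} = \pur_{\tilde f}\circ \pur_a$ (suitably shifted). This decomposition splits the right-hand vertical purity isomorphism into two pieces: an $a^*\pur_{\tilde f}$ piece and a $\pur_a$ piece. After this split, both composites collapse to the same sequence once we commute $\pur_{\tilde f}$ past the unit $\eta\colon \id\to a_*a^*$; this commutation is a single application of naturality of the purity transform, as $\pur_{\tilde f}$ is a natural transformation between functors valued on $\fX_{12}\times_{\fX_{22}}\fX_{21}$ and $\eta$ is evaluated on the morphism $\pur_{\tilde f}$.

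The main obstacle (or rather, the only point requiring care) is bookkeeping of the shifts and verifying the compatibility of the smooth base-change isomorphism with $\pur$; this is standard but tedious. Beyond that, the argument is a straightforward diagram chase, which is exactly why the excerpt flags the lemma as an \emph{immediate} consequence of \eqref{eq:purity_assoc}.
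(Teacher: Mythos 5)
Your proposal is correct and follows the same route the paper intends: the paper proves this lemma simply by declaring it an immediate consequence of \eqref{eq:purity_assoc}, and your argument (factor $f'=\tilde f\circ a$, rewrite both horizontal maps through the Cartesian square, split $\pur_{f'}$ via \eqref{eq:purity_assoc}, and finish by naturality of the unit and of $\pur_a$) is exactly the diagram chase being left implicit. The only points you rightly flag — compatibility of $f^!g_*\cong\tilde g_*\tilde f^!$ with smooth base change under purity, and the shift bookkeeping — are standard and do not constitute a gap.
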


The following statement is a direct consequence of \cite[Proposition 3.5.5]{fyz23}:

\begin{lem}\label{lem:Exi_associative}
    Assume that we are given the following composition of pullable squares:
    \[
    \xymatrix{
    {\fX_{11}}
    \ar[d]^-{f''}
    \ar[r]^-{g'}
    & {\fX_{12}}
    \ar[d]^-{f'}
    \ar[r]^-{h'}
    & {\fX_{13}}
    \ar[d]^-{f}\\
        {\fX_{21}}
        \ar[r]^-{g}
    & {\fX_{22}}
    \ar[r]^-{h}
    & {\fX_{23}.}
    }
    \]
    Let $a \colon \fX_{12} \to \fX_{13} \times_{\fX_{23}} \fX_{22}$ and $a' \colon \fX_{11} \to \fX_{12} \times_{\fX_{22}} \fX_{21}$ be the natural morphisms
    and set $d \coloneqq \vdim a$ and $d' \coloneqq \vdim a'$.
    Then the following diagram commutes:
    \[
    \xymatrix{
    {f^! (h \circ g)_*} 
    \ar[rr]^-{\Exi}
    \ar[d]^-{\cong}
    & {}
    & {(h' \circ g')_* f''^![- 2d - 2d']}
    \ar[d]^-{\cong}\\
        {f^! h_* g_*}
        \ar[r]^-{\Exi}
    & h'_* f'^! g_* [ - 2d]
    \ar[r]^-{\Exi}
    & {h'_* g'_* f''^! [- 2 d - 2 d']. } 
    }
    \]
\end{lem}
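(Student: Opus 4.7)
The strategy is to unpack both routes of the diagram into their defining components and verify the compatibility piece by piece. Recall that $\Exi$ is built from three ingredients: the canonical equivalence $f^! g_* \simeq \tilde g_* \tilde f^!$ for the fibre-product square, the unit $\id \to a_* a^*$ of the $(a_*, a^*)$-adjunction, and Khan's purity transform $\pur_a \colon a^* \to a^![-2\vdim a]$.

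The first step is to identify the natural morphism associated to the composite square. Writing $\tilde a \colon \fX_{12} \times_{\fX_{22}} \fX_{21} \to \fX_{13} \times_{\fX_{23}} \fX_{21}$ for the base change of $a$ along $g$, one checks that the natural morphism $\fX_{11} \to \fX_{13} \times_{\fX_{23}} \fX_{21}$ for the outer square factors canonically as $\tilde a \circ a'$, with $\vdim(\tilde a \circ a') = d + d'$. The base change isomorphisms and the adjunction units decompose in the evident way by the universal property of fibre products. The key compatibility is then that the purity transform for the composite splits as $\pur_{\tilde a \circ a'} \simeq \pur_{a'} \circ \pur_{\tilde a}$ by \eqref{eq:purity_assoc}, and furthermore $\pur_{\tilde a}$ is compatible with $\pur_a$ via proper-smooth base change of purity transforms.

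The main obstacle is the notational bookkeeping: after unwinding the two composites in the target diagram, one must paste a large commutative cube of Cartesian squares, adjunction units, and purity transforms, and verify commutativity cell by cell. Each cell is either a standard Beck--Chevalley identity for adjoints (handled in the style of Lemma \ref{lem:Beck_Chevalley_bc}), a universal-property statement for fibre products, or the associativity of purity \eqref{eq:purity_assoc}. This is why the claim is really a formal consequence of these inputs, and it is precisely the content of \cite[Proposition 3.5.5]{fyz23}.
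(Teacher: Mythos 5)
Your proposal is correct and takes essentially the same route as the paper: the paper offers no independent argument and simply derives the lemma as a direct consequence of \cite[Proposition 3.5.5]{fyz23}, which is exactly where your argument lands. Your preliminary unwinding (factoring the outer square's comparison map as $\tilde a \circ a'$ with $\vdim = d + d'$, then invoking the associativity of purity \eqref{eq:purity_assoc} together with base-change compatibility of purity and the usual Beck--Chevalley identities) is a faithful sketch of what that citation encapsulates, so there is nothing to correct.
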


\subsection{Hyperbolic localization}\label{ssec:hyp_loc}

We will recall the definition of the hyperbolic localization functor of Braden \cite{bra03} and 
some of its properties, such as the commutativity between the vanishing cycle functor.

Let $X$ be a quasi-separated algebraic space with a $\bG_m^n$-action $\mu$.
Consider the following attractor correspondence
\[
\xymatrix{
{}
& {X^{\mu, (+, +, \ldots, +)}} \ar[rd]^-{\ev_{\mu}} \ar[ld]_-{\gr_{\mu}}
& {} \\
{X^{\mu}}
& {}
& {X.}
}
\]
We define a functor $\Loc_{\mu}^n \colon D^b_c(X) \to D^b_c(X^{\mu})$ by the following composition:
\[
\Loc_{\mu}^n = \gr_{\mu, *} \ev_{\mu}^!.
\]
When $n = 1$, we write $\Loc_{\mu} =  \Loc_{\mu}^1$ and call it the \defterm{hyperbolic localization functor}.

\begin{ex}\label{ex:hyp_const}
    Assume that $U$ is a smooth quasi-separated algebraic space with a $\bG_m$-action $\mu$.
    We let $\sigma_{\mu} \colon U^{\mu} \hookrightarrow U^{\mu, +}$ denote the natural section of $\gr_{\mu}$.
    Then Proposition \ref{prop:contraction} implies an isomorphism
    \begin{equation}\label{eq:hyp_const}
    \Loc_{\mu}(\bQ_{U}) \cong \sigma_{\mu}^* \bQ_{U^{\mu, +}}[-2d^-] \cong \bQ_{U^{\mu}}[-2d^-]
    \end{equation}
    where we set $d^- \coloneqq \rank T_{U} |_{U^{\mu}}^-$.
\end{ex}

Consider also the following repelling correspondence:
\[
\xymatrix{
{}
& {X^{\mu, -}} \ar[rd]^-{\ev_{\mu}^{-}} \ar[ld]_-{\gr_{\mu}^{-}}
& {} \\
{X^{\mu}}
& {}
& {X}
}
\]
The following result originally proved by Braden plays an important role in our paper:

\begin{thm}[{\cite{bra03}, \cite{dg14}}]\label{thm:Braden}

There exists an equivalence of functors
\[
\Loc_{\mu} = \gr_{\mu, *}\ev^! \cong \gr_{\mu, !}^- \ev_{\mu}^{-, *}
\]
on the subcategory of $D^b_c(X)$ consisting of $\bG_m$-monodromic objects.
\end{thm}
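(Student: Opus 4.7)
The plan is to prove Theorem \ref{thm:Braden} by reducing it to the contraction principle (Proposition \ref{prop:contraction}), following the approach of \cite{dg14}. The key geometric input is that both the attractor $X^{\mu, +}$ and the repeller $X^{\mu, -}$ carry a contracting $\mathbb{A}^1$-monoid action onto their common fixed locus $X^\mu$: for $X^{\mu, +}$ one extends the original $\bG_m$-action to $\mathbb{A}^1$, and for $X^{\mu, -}$ one uses the inverse action. Both contractions have $X^\mu$ as their $\{0,1\}$-fixed substack, with $\sigma^\pm \colon X^\mu \hookrightarrow X^{\mu,\pm}$ as the zero sections.

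First I would construct the Braden natural transformation
\[
\beta_{\mathcal{F}} \colon \gr^-_{\mu, !}\, \ev^{-,*}_\mu \mathcal{F} \longrightarrow \gr_{\mu, *}\, \ev_\mu^! \mathcal{F}
\]
for $\mathcal{F}\in D^b_c(X)$. Although the commutative square formed by $\ev^\pm_\mu$ and $\sigma^\pm$ is not Cartesian, one obtains $\beta_\mathcal{F}$ by combining the unit/counit of the $(\sigma^+_!, \sigma^{+,!})$- and $(\sigma^{-,*}, \sigma^-_*)$-adjunctions with the commutativity $\ev^+_\mu \circ \sigma^+ = \ev^-_\mu \circ \sigma^- = i$, where $i\colon X^\mu \hookrightarrow X$.

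Next I would apply the contraction principle to $X^{\mu, +}$, which gives $\gr_{\mu, *}\mathcal{G} \cong \sigma^{+,*}\mathcal{G}$ on monodromic $\mathcal{G}\in D^b_c(X^{\mu,+})$, and dually to $X^{\mu,-}$, giving $\gr^-_{\mu, !}\mathcal{H} \cong \sigma^{-,!}\mathcal{H}$ on monodromic $\mathcal{H}$. Since $\ev^\pm_\mu$ are $\bG_m$-equivariant, the complexes $\ev^!_\mu\mathcal{F}$ and $\ev^{-,*}_\mu\mathcal{F}$ are monodromic when $\mathcal{F}$ is, so both hypotheses are satisfied. Thus $\beta_\mathcal{F}$ reduces to a morphism
\[
\sigma^{-,!}\,\ev^{-,*}_\mu\mathcal{F} \longrightarrow \sigma^{+,*}\,\ev^!_\mu\mathcal{F}
\]
of functors on monodromic objects in $D^b_c(X)$. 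To show this is an equivalence, I would work \'etale-locally on $X^\mu$: by Luna-type slice results I can reduce to the case where $X$ is a $\bG_m$-representation, at which point the attractor and repeller become the weight-positive and weight-negative subspaces, $X^\mu$ is their common intersection, and both sides compute explicitly to the $!$-restriction of $\mathcal{F}$ to $X^\mu$ (up to a common cohomological shift by the weight-negative dimension). The monodromic hypothesis is what forces the $!$- and $*$-restrictions along the projection to $X^\mu$ to agree.

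The main obstacle is the careful construction and bookkeeping for $\beta_\mathcal{F}$ in Step 1, including the appropriate use of base change and adjunction through a non-Cartesian square, and verification that the composite is compatible with the two applications of the contraction principle so that the final comparison morphism on $X^\mu$ is indeed the expected one. Once these natural transformations are in place, the fact that it is an equivalence follows from the contraction principle applied (in effect) twice and a local computation. Note that Proposition \ref{prop:contraction} is precisely formulated for stacks in the paper, which permits this argument to run in the same generality as Braden's original statement.
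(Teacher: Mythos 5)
The paper does not actually prove this statement: it is quoted as Braden's theorem, with the proof delegated to \cite{bra03} and \cite{dg14} (the appendix proves a stacky generalization, Proposition \ref{prop:Braden_appendix}, but it \emph{uses} Theorem \ref{thm:Braden} as input via \cite[Theorem 3.1.6]{dg14}, so there is nothing circular to compare against). Your skeleton — construct a specialization morphism $\gr^-_{\mu,!}\ev_\mu^{-,*}\to\gr_{\mu,*}\ev_\mu^!$, apply the contraction principle on the attractor and the repeller, then check the resulting comparison on $X^\mu$ locally — is indeed Braden's original strategy, and the first two steps are essentially fine (the construction of the specialization map through the non-Cartesian square is delicate but standard; in \cite{dg14} it rests on the fact that $X^\mu\to X^{\mu,+}\times_X X^{\mu,-}$ is an open and closed immersion, the analogue of Lemma \ref{lem:app_etale}).

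The genuine gap is in your final step. After the two applications of the contraction principle the two sides become $\sigma^{+,*}\,\ev_\mu^!\,\cF$ and $\sigma^{-,!}\,\ev_\mu^{-,*}\,\cF$, and it is \emph{not} true that both of these are ``the $!$-restriction of $\cF$ to $X^\mu$ up to a common shift'' for a general monodromic $\cF$ — that only happens for very special complexes such as the constant sheaf (Example \ref{ex:hyp_const}). Already for a linear $\bG_m$-representation with both positive and negative weights (say $\bA^2$ with weights $(1,-1)$), identifying $\sigma^{+,*}i^{+,!}\cF$ with $\sigma^{-,!}i^{-,*}\cF$ is the entire content of the theorem; it is precisely here that monodromicity must be exploited in a nontrivial way — Braden does it via the Fourier--Sato transform on conic/monodromic sheaves, while Drinfeld--Gaitsgory instead prove an adjunction between hyperbolic restriction and hyperbolic induction and deduce the isomorphism from unit/counit analysis, avoiding any local model at all. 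Your sentence ``the monodromic hypothesis is what forces the $!$- and $*$-restrictions along the projection to agree'' is just the contraction principle again, which you have already spent, so the core of the theorem is left unproved. A secondary issue: the reduction ``by Luna-type slice results to the case where $X$ is a $\bG_m$-representation'' is not available in the stated generality (quasi-separated algebraic spaces, possibly singular and non-normal); one can patch this by taking a $\bG_m$-equivariant affine \'etale cover (\cite[Theorem 4.1]{ahr20}) and an equivariant closed embedding into a representation, using proper base change along the closed embedding and the smooth-pullback compatibility of hyperbolic localization, but this still only reduces you to the linear case, where the real work remains.
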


We now discuss the compatibility between the hyperbolic localization with the smooth pullback.
Assume that we are given smooth algebraic spaces $X$ and $Y$ with $\bG_m$-action $\mu$ and $\nu$ and a $\bG_m$-equivariant smooth morphism $h \colon X \to Y$.
Consider the following commutative diagram:
\begin{equation}\label{eq:attractor_corresp_morph}
\begin{aligned}
\xymatrix{
X^{\mu} \ar[dd]^-{h^{\bG_m}}
& {}
& X^{\mu, +} \ar[ll]_-{\gr_{\mu}} \ar[r]^-{\ev_{\mu}} \ar[dd]^-{h^{+}} \ar[ld]_-{\eta}
& X \ar[dd]^-{h} \\
{}
& {X^{\mu} \times_{Y^{\nu}} X^{\mu, +}} \ar[rd]^-{(h^{\bG_m})'} \ar[lu]_-{{\gr_{\nu}}'}
& {}
& {} \\
Y^{\nu} 
& {}
& Y^{\nu, +} \ar[ll]_-{\gr_{\nu}} \ar[r]^-{\ev_{\nu}}
& Y.
}    
\end{aligned}
\end{equation}

It follows from \cite[Proposition 1.3.1 (4)]{hlp14} that $h^{\bG_m}$ and $h^+$ are smooth.
We claim the natural map $h^{\bG_m, *} \gr_{\nu, *} \to  \gr_{\mu, *} h^{+, *}$ is invertible on monodromic objects.
To prove this, by the smooth base change theorem, it is enough to show that the natural map
\[
 \gr_{\nu, *}' \to  \gr_{\nu, *}' \eta_* \eta^* 
\]
is invertible. This is a direct consequence of Proposition \ref{prop:contraction},
as $\gr_{\nu}'$ and $\gr_{\mu}$ determines an $\bA^1$-deformation retract.
For a monodromic object $\cF \in D^b_c(Y)$,
we define an isomorphism
\begin{equation}\label{eq:hyp_sm_pullback}
\rho_{h} \colon h^{\bG_m, *} \Loc_{\nu}(\cF) \cong  \Loc_{\mu} h^* \cF    [ 2d^- ]
\end{equation}
by the following composition
\begin{align*}
    \rho_{h} \colon h^{\bG_m, *} \Loc_{\nu}(\cF) 
    &= h^{\bG_m, *} \gr_{\nu, *} \ev_{\nu}^! \cF \\
    &\cong  \gr_{\mu, *} h^{+, *} \ev_{\nu}^! \cF \\
    &\cong \gr_{\mu, *} h^{+, !} \ev_{\nu}^! \cF [- 2 d_0 - 2d^+] \\
    &\cong \gr_{\mu, *} \ev_{\mu}^! h^! \cF [- 2 d_0 - 2d^+] \\
    &\cong \gr_{\mu, *} \ev_{\mu}^! h^* \cF [2d^-] 
    = \Loc_{\mu} h^* \cF [2d^{-}].
\end{align*}
Here we set $d^+ \coloneqq \rank_+ T_{X / Y}$, $d^- \coloneqq \rank_- T_{X / Y} $ and $d^0 \coloneqq \rank_0 T_{X / Y}$.

We now discuss the associativity of the hyperbolic localization functor.
Assume that we are given an algebraic space $X$ with an action $\hat{\mu} = (\mu_1, \mu_2)$ of $\bG_m^2$ and a complex $\cF \in D^b_c(X)$.
Consider the following diagram:
\[
\xymatrix{
{}
& {}
& {X_{\mat}^{\hat{\mu}, +, +}} 
\ar[ld]
\ar[rd]
& {}
& {} \\
{}
& {(X^{\mu_2})^{\mu_1, +}_{\mat}}
\ar[ld]_-{\gr_{\mu_1 |_{X^{\mu_2}}}   }
\ar[rd]^-{\ev_{\mu_1 |_{X^{\mu_2}}}  }
& {}
& {X^{\mu_2, +}} 
\ar[ld]_-{\gr_{\mu_2}}
\ar[rd]^-{\ev_{\mu_2}}
& {} \\
{X^{\hat{\mu}}_{\mat}}
& {}
& {X^{\mu_2}}
& {}
& {X} 
}
\]
It follows from Proposition \eqref{prop:matching_invertible} that the middle square is Cartesian.
By using the proper base change theorem, 
we obtain an isomorphism
\[
\Loc^2_{\hat{\mu}} (\cF)|_{X^{\hat{\mu}}_{\mat}} \cong (\Loc_{\mu_1 |_{X^{\mu_2}}} \circ \Loc_{\mu_2}(\cF)) |_{X^{\hat{\mu}}_{\mat}}.
\]
By using this and a similar isomorphism for the action $(\mu_2, \mu_1)$, we obtain an isomorphism
\begin{equation}\label{eq:chi_space}
\chi_{\hat{\mu}} \colon (\Loc_{\mu_1 |_{X^{\mu_2}}} \circ \Loc_{\mu_2}(\cF)) |_{X^{\hat{\mu}}_{\mat}} \cong (\Loc_{\mu_2 |_{X^{\mu_1}}} \circ \Loc_{\mu_1}(\cF)) |_{X^{\hat{\mu}}_{\mat}}.
\end{equation}

We now discuss the compatibility between the hyperbolic localization functor and the vanishing cycle functor.
Assume now that we are given a smooth quasi-separated algebraic space $U$ with a $\bG_m$-action $\mu$ and a $\bG_m$-invariant regular function $f$ on $U$.
We let $\mu_0$ denote the induced $\bG_m$-action on the zero locus $U_0$.
The following statement is an easy consequence of Theorem \ref{thm:Braden} and the definition of the vanishing cycle functor (see e.g. \cite[Proposition 5.4.1]{nak16} or \cite[Proposition 3.3]{des22}):

\begin{prop}\label{prop:van_commute_hyp}
    The following natural morphism constructed using \eqref{eq:van_natural} is invertible on monodromic objects:
    \[
    \zeta_{\mu, f} \colon  \varphi_{f |_{U^{\mu}}} \circ \Loc_{\mu_0} \to \Loc_{\mu_0} \circ \varphi_{f}.
    \]
\end{prop}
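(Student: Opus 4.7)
The morphism $\zeta_{\mu, f}$ is constructed by composing two natural transformations from \eqref{eq:van_natural}: for $\cF \in D^b_c(U)$,
\begin{equation*}
\varphi_{f|_{U^\mu}} \Loc_\mu \cF = \varphi_{f|_{U^\mu}} \gr_{\mu, *} \ev_\mu^! \cF
\longrightarrow \gr_{\mu_0, *} \varphi_{f|_{U^{\mu, +}}} \ev_\mu^! \cF
\longrightarrow \gr_{\mu_0, *} \ev_{\mu_0}^! \varphi_f \cF = \Loc_{\mu_0} \varphi_f \cF,
\end{equation*}
where the first arrow applies the transform $\varphi_f h_* \to h_{0,*} \varphi_{f \circ h}$ with $h = \gr_\mu$ and the second applies $\varphi_{f \circ h} h^! \to h_0^! \varphi_f$ with $h = \ev_\mu$.

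To show that $\zeta_{\mu, f}$ is invertible on monodromic $\cF$, my plan is to pass to the repelling description of hyperbolic localization provided by Braden's theorem (Theorem \ref{thm:Braden}): on monodromic objects one has $\Loc_\mu \cong \gr^-_{\mu,!} \ev_\mu^{-,*}$, and similarly on $U_0$, using that $\varphi_f$ preserves monodromicity. I would construct a parallel natural transformation
\begin{equation*}
\tilde{\zeta}_{\mu, f} \colon \varphi_{f|_{U^\mu}} \gr^-_{\mu,!} \ev_\mu^{-,*} \cF \longrightarrow \gr^-_{\mu_0,!} \varphi_{f|_{U^{\mu,-}}} \ev_\mu^{-,*} \cF \longrightarrow \gr^-_{\mu_0,!} \ev_{\mu_0}^{-,*} \varphi_f \cF
\end{equation*}
using the dual transforms $h_{0,!} \varphi_{f \circ h} \to \varphi_f h_!$ and $h_0^* \varphi_f \to \varphi_{f \circ h} h^*$ from \eqref{eq:van_natural}. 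A diagram chase, exploiting the naturality of Braden's isomorphism with respect to the operations in \eqref{eq:van_natural}, identifies $\tilde{\zeta}_{\mu, f}$ with $\zeta_{\mu, f}$ under Braden, reducing the problem to showing $\tilde{\zeta}_{\mu, f}$ is an isomorphism.

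For the second arrow of $\tilde{\zeta}_{\mu, f}$, the map $\ev_\mu^-$ is a closed immersion and the $\bG_m$-invariance of $f$ forces $f|_{U^{\mu,-}} = f|_{U^\mu} \circ \gr^-_\mu$ (by evaluating $f$ along the orbit closure: for $x \in U^{\mu,-}$, $\lim_{t \to \infty} \mu(t) \cdot x \in U^\mu$ and $f(x) = f(\mu(t)x)$); a local Bia\l ynicki--Birula computation then shows $*$-pullback commutes with $\varphi$ for this regular embedding. The first arrow is the main technical step: since $\gr^-_\mu$ is not proper, I would exploit the fact that any $\mu$-monodromic complex on $U^{\mu,-}$ is also monodromic for the inverse action $\mu^{-1}$, under which $\gr^-_\mu$ is an $\bA^1$-deformation retract onto the zero section $\sigma^-_\mu$. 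The contraction lemma (Proposition \ref{prop:contraction}) then identifies $\gr^-_{\mu,!}$ with $\sigma^{-,!}_\mu$ (up to shift), reducing the required commutation to the case of the closed immersion $\sigma^-_\mu$, compatible with $f|_{U^\mu} = f|_{U^{\mu,-}} \circ \sigma^-_\mu$, which is standard.

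The main obstacle is precisely the non-properness of $\gr^-_\mu$; the contraction lemma is the key input that allows one to bypass this difficulty on the monodromic subcategory, and once both arrows of $\tilde{\zeta}_{\mu, f}$ are known to be isomorphisms the result follows.
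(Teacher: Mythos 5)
Your overall plan (Braden plus the contraction lemma) is in the right circle of ideas, and indeed the paper itself disposes of this proposition by citing Theorem \ref{thm:Braden} together with the definition of $\varphi_f$, referring to Nakajima and Descombes for the details. But as written your argument has a genuine gap: the two commutations you declare to be easy are false at the level at which you invoke them. Vanishing cycles commute with $h^*$ (resp.\ $h^!$) only for \emph{smooth} $h$; they do not commute with $*$-pullback along the locally closed immersion $\ev^-_\mu$ (which, incidentally, need not be a closed immersion), nor with $!$-pullback along the closed immersion $\sigma^-_\mu$. Concretely, take $U=\bA^2$ with $\mu(t)(x,y)=(tx,t^{-1}y)$ and $f=xy$: then $U^{\mu,-}=\{x=0\}$, $f|_{U^{\mu,-}}=0$, and $\ev^{-,*}_\mu\varphi_f(\bQ_U)$ is a skyscraper at the origin while $\varphi_{f|_{U^{\mu,-}}}(\ev^{-,*}_\mu\bQ_U)$ is the constant sheaf on $\bA^1$; they only become isomorphic after applying $\gr^-_{\mu_0,!}$. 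So neither arrow of your $\tilde{\zeta}_{\mu,f}$ is an isomorphism on its own, and the reduction of the first arrow to "$\varphi$ commutes with $\sigma^{-,!}_\mu$" is essentially the statement being proved (with $U$ replaced by $U^{\mu,-}$), not a standard fact. There is also a direction problem: the exchange maps $h_{0,!}\varphi_{f\circ h}\to\varphi_f h_!$ and $h_0^*\varphi_f\to\varphi_{f\circ h}h^*$ from \eqref{eq:van_natural} compose to a map $\Loc^-_{\mu_0}\varphi_f\to\varphi_{f|_{U^\mu}}\Loc^-_\mu$, i.e.\ opposite to $\zeta_{\mu,f}$, so "identifying $\tilde{\zeta}$ with $\zeta$ under Braden by naturality" is not a formal diagram chase; the compatibility of Braden's isomorphism with these exchange maps (or the fact that the two oppositely directed maps are mutually inverse) is precisely the nontrivial content.

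The argument the paper has in mind, following its references, avoids commuting $\varphi$ with either leg of a single correspondence separately. One factors $\varphi_f=(U_0\hookrightarrow U_{\le 0})^*(U_{\le 0}\hookrightarrow U)^!$, notes that both $U_{\le 0}$ and $U_0$ are invariant since $f$ is $\bG_m$-invariant, and then uses the two presentations of hyperbolic localization for the two steps: written as $\gr_{*}\ev^!$ it commutes with the $!$-restriction by the base-change isomorphism $i^!g_*\cong g_*i'^!$, and written (via Theorem \ref{thm:Braden}, on monodromic objects) as $\gr^-_{!}\ev^{-,*}$ it commutes with the $*$-restriction by proper base change $j^*g_!\cong g_!j'^*$. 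Braden's theorem is what lets you switch presentations between the two steps; the contraction lemma enters only through Braden, not as a device to make the individual exchange maps of one fixed correspondence invertible. I would recommend rewriting your proof along these lines.
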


We now prove some properties of the map $\zeta_{\mu, f}$ that we will use later.
Firstly, we now discuss the map $\zeta_{\mu, f}$ for the trivial action.
Assume that we are given a trivial $\bG_m$-action $\mu_{\mathrm{triv}}$ on an algebraic space $U$ and a regular function $f$ on $U$.
Note that we have an identification of the functor $\id \cong \Loc_{\mu_{\mathrm{triv}}}$.
Then it is clear that the following diagram commutes:
\begin{equation}\label{eq:zeta_unital}
\begin{aligned}
    \xymatrix@C=50pt{
    {\varphi_f \circ \Loc_{\mu_{\mathrm{triv}}}} \ar[r]_-{\cong}^-{\zeta_{\mu_{\mathrm{triv}, f}}} \ar[d]^-{\cong}
    & {\Loc_{\mu_{\mathrm{triv}}} \circ \varphi_f}
    \ar[d]^-{\cong} \\
    {\varphi_f } \ar@{=}[r]
    & {\varphi_f.}
    }
    \end{aligned}
\end{equation}

Assume now that we are given smooth algebraic spaces $U$ and $V$ with $\bG_m$-action $\mu$ and $\nu$ and a $\bG_m$-equivariant smooth morphism $h \colon U \to V$.
Assume further that we are given a function $g \colon V \to \bA^1$ and set $f = g \circ h$.
Take a monodromic object $\cF \in D^b_c(V)$.
Since $\zeta_{\mu, f}$ is constructed out of the standard exchange morphisms between six-operations, one sees that the following diagram commutes:
\begin{equation}\label{eq:sm_localization_van}
\begin{aligned}
\xymatrix@C=70pt{
{h_0^{\bG_m, *} \varphi_{g |_{Y^{\nu}}} \Loc_{\nu}\cF  } \ar[d]_-{\cong }^-{\rho_{h}^{}}
\ar[r]^-{h^{\bG_m, *} \zeta_{\nu, g}}_-{\cong} 
& {h_0^{\bG_m, *} \Loc_{\nu_0} \varphi_{g} \cF  } 
\ar[d]^-{\cong}_-{\rho_{h_0}^{}}
 \\
 {\varphi_{f |_{X^{\mu}}} \Loc_{\mu} h^* \cF}  \ar[r]_-{\cong}^-{\zeta_{\mu, f}} [2d^-]
 &
{\Loc_{\mu_0} \varphi_{f} h^* \cF [2d^- ].}  
}
\end{aligned}
\end{equation}

We now discuss the associativity of the isomorphism $\zeta_{\mu, f}$, which is related to the associativity of the cohomological Hall algebras.
Let $U$ be a quasi-separated algebraic space with a $\bG_m^2$-action $\hat{\mu} = (\mu_1, \mu_2)$ and 
$f \colon U \to \bA^1$ be a $\bG_m^2$-invariant function.
Then by the construction of the map $\zeta_{\mu, f}$ and $\chi_{\hat{\mu}}$,
it is obvious that the following diagram commutes:

\begin{equation}\label{eq:zeta_assoc_functor}
    \begin{aligned}
        \xymatrix@C=80pt{
        \varphi_{f |_{U^{\hat{\mu}}} } \circ \Loc_{\mu_1 |_{U^{\mu_2}}} \circ \Loc_{\mu_2}( - ) |_{U^{\hat{\mu}}_{0, \mat}} 
        \ar[d]^-{\chi_{\hat{\mu}}}_-{\cong}
        \ar[r]_-{\cong}^-{\zeta_{\mu_2, f} \circ \zeta_{\mu_1|_{U^{\mu_2}}, f|_{U^{\mu_2}} }}
        & {\Loc_{\mu_{1, 0} |_{U_0^{\mu_{2, 0}}}} \circ \Loc_{\mu_{2, 0}} \circ \varphi_f( - ) |_{U^{\hat{\mu}}_{0, \mat}}}
        \ar[d]^-{\chi_{\hat{\mu}_0}}
        \\
        \varphi_{f|_{U^{\hat{\mu}}}} \circ \Loc_{\mu_2 |_{U^{\mu_1}}} \circ \Loc_{\mu_1}( -)  |_{U^{\hat{\mu}}_{0, \mat}} \ar[r]_-{\cong}^-{\zeta_{\mu_1, f} \circ \zeta_{\mu_2|_{U^{\mu_1}}, f|_{U^{\mu_1}} }}
        & {\Loc_{\mu_{2, 0} |_{U_0^{\mu_{1, 0}}}} \circ \Loc_{\mu_{1, 0}} \circ \varphi_f( - ) |_{U^{\hat{\mu}}_{0, \mat}} .}
        }
    \end{aligned}
\end{equation}

We now prove that $\zeta_{\mu, f}$ is compatible with the Thom--Sebastiani isomorphism.
Let $U$ and $V$ be smooth quasi-separated algebraic spaces with $\bG_m$-action $\mu$ and $\nu$ and  $f$ and $g$ be $\bG_m$-invariant functions on $U$ and $V$. Take monodromic objects $\cF \in D^b_c(U)$ and $\cG \in D^b_c(V)$.
Since $\zeta_{\mu, f}$ is constructed out of the standard exchange morphisms between six-operations,
the commutativity of the following diagram is obvious:
\begin{equation}\label{eq:zeta_TS_functor}
    \begin{aligned}
        \xymatrix@C=70pt{
        { \varphi_{f |_{U^{\mu}}} \left( \Loc_{\mu}(\cF) \right) \boxtimes \varphi_{g |_{V^{\nu}}} \left( \Loc_{\nu}(\cG) \right)  } \ar[r]_-{\cong}^-{\zeta_{\mu, f} \boxtimes \zeta_{\nu, g}}  \ar[d]_-{\cong}^-{\TS}
        & {\Loc_{\mu_0}\varphi_{f}(\cF) \boxtimes \Loc_{\nu_0}\varphi_{g}(\cG)} \ar[d]_-{\cong}^-{\TS} 
          \\
          {\varphi_{f|_{U^{\mu}} \boxplus g _{V^{\nu}}} \left( \Loc_{\mu \times \nu}(\cF \boxtimes \cG) \right) |_{U_0^{\mu_0} \times V_0^{\nu_0}}} \ar[r]_-{\cong}^-{\zeta_{\mu \times \nu, f \boxplus g}}
          &
        {\Loc_{\mu_0 \times \nu_0}(\varphi_{f \boxplus g}(\cF \boxtimes \cG)) |_{U_0^{\mu_0} \times V_0^{\nu_0}} .} 
        }
    \end{aligned}
\end{equation}

\subsection{Hyperbolic localization of DT perverse sheaves}\label{ssec:hyp_DT}

We now discuss Descombes's Theorem \cite[Theorem 4.2]{des22} stating that the hyperbolic localization of the DT perverse sheaf is again a DT perverse sheaf.
Since we will use the construction of the isomorphism later and the paper \cite{des22} seems to ignore possible automorphisms of orientations \footnote{After our paper appeared on arXiv, the author posted an updated version \cite{descombes2025hyperbolic} of \cite{des22} including more careful discussions on orientations.},
we will give a detailed proof here.

\begin{thm}[{\cite[Theorem 4.2]{des22}}]\label{thm:hyp_DT}

    Let $(X, \mu, s)$ be a $\bG_m$-equivariant d-critical algebraic space with an orientation $o$.
    Assume that $X$ is quasi-separated.
    Then there exists a natural isomorphism
    \begin{equation}\label{eq:hyp_DT}
        \zeta_{\mu, s, o} \colon   \varphi_{X^{\mu}, s^{\mu}, o^{\mu}}[ I^{\mu}_{X}] \cong \Loc_{\mu}(\varphi_{X, s, o}).
    \end{equation}
    
\end{thm}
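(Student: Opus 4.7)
The plan is to construct $\zeta_{\mu,s,o}$ locally on $\bG_m$-equivariant \'etale d-critical charts and then glue, using Theorem \ref{thm:T_compare_d-critical} to reduce the gluing to two basic comparisons. By Proposition \ref{prop:minimal_chart_space}, for each $x\in X^{\mu}$ I can choose a $\bG_m$-equivariant chart $\fR=(R,\eta,U,f,i)$ with $\dim U=\dim T_{X,x}$, so that $\fR^{\mu}=(R^{\mu},\eta^{\mu},U^{\mu},f|_{U^{\mu}},i^{\mu})$ is an \'etale d-critical chart for $(X^{\mu},s^{\mu})$ and the identity $I^{\mu}_X|_{R^{\mu}}=\rank_+ T_U-\rank_- T_U$ holds on $R^{\mu}$.

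On such a chart the construction combines three ingredients. First, Proposition \ref{prop:van_commute_hyp} gives a natural isomorphism $\zeta_{\mu,f}\colon \varphi_{f|_{U^{\mu}}}\circ\Loc_{\mu_0}\cong \Loc_{\mu_0}\circ\varphi_f$ on the monodromic subcategory (the output of $\varphi_f$ is automatically monodromic under a $\bG_m$-invariant $f$). Second, Example \ref{ex:hyp_const} gives $\Loc_{\mu}(\bQ_U[\dim U])\cong \bQ_{U^{\mu}}[\dim U^{\mu}+I^{\mu}_X]$, which is exactly the source of the shift by $[I^{\mu}_X]$ appearing in the theorem. Third, the isomorphism $(o^{\can}_{\fR})^{\mu}\cong o^{\can}_{\fR^{\mu}}$ from \eqref{eq:can_ori_loc_action} combined with the localization $(o|_R)^{\mu}\cong o^{\mu}|_{R^{\mu}}$ of orientations yields an isomorphism of $\bZ/2\bZ$-local systems $Q^o_{\fR}|_{R^{\mu}}\cong Q^{o^{\mu}}_{\fR^{\mu}}$. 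Tensoring through the presentations $\omega_{\fR}\colon \varphi_{X,s,o}|_R\cong \varphi_f|_R\otimes_{\bZ/2\bZ}Q^o_{\fR}$ and $\omega_{\fR^{\mu}}\colon \varphi_{X^{\mu},s^{\mu},o^{\mu}}|_{R^{\mu}}\cong \varphi_{f|_{U^{\mu}}}\otimes_{\bZ/2\bZ}Q^{o^{\mu}}_{\fR^{\mu}}$, and using the projection formula to pass $\Loc_{\mu}$ through the $\bG_m$-monodromic local system factor, produces the local isomorphism $\zeta_{\mu,s,o}|_{R^{\mu}}$.

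For gluing, by Theorem \ref{thm:T_compare_d-critical} it suffices to establish compatibility with (i) smooth/\'etale $\bG_m$-equivariant morphisms of charts, and (ii) the standard stabilization embeddings $\fR\hookrightarrow \fR'=(R,\eta,U\times\bA^{2n},f\boxplus q,(i,0))$ for a $\bG_m$-invariant non-degenerate quadratic $q$. Case (i) is immediate from the naturality of $\zeta_{\mu,f}$, the pullback compatibility \eqref{eq:sm_localization_van}, and the canonical-orientation compatibility \eqref{eq:can_ori_loc_pull_commutes}. Case (ii) is where the sign analysis lives: one must check that the localized construction matches the DT gluing data $(c_y^{-1},b_y^{-1})$ of property (2) in the definition of $\varphi_{X,s,o}$. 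The main obstacle is verifying that the trivialization $c_y\colon \varphi_q|_0\cong\bQ_0$ from \eqref{eq:quad_trivialize}, which is defined via the Lagrangian subspace $V_y$ spanned by the negative-weight coordinates, becomes an isomorphism after hyperbolic localization whose sign matches $b_y$ coming from \eqref{eq:isom_canori_quadratic}. Concretely, writing $q=\sum x_iy_i$ with $x_i$ of positive and $y_i$ of negative weight, one has $I^{\mu}_{\bA^{2n}}|_0=0$, and Theorem \ref{thm:Braden} combined with Example \ref{ex:hyp_const} applied to the repelling variety $V_y$ identifies $\Loc_{\mu}(\varphi_q)|_0$ with $\bQ_0$ via exactly the negative-weight trivialization $c_y$; meanwhile the canonical orientation compatibility \eqref{eq:can_ori_loc_product} identifies the localization of the canonical orientation for $\fR'$ with that of $\fR$ through the same sign $b_y$. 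The required compatibility then follows from a diagram chase combining the Thom--Sebastiani compatibility \eqref{eq:zeta_TS_functor}, the associativity \eqref{eq:c_associative} and \eqref{eq:b_assoc}, and the definition of the localized orientation \eqref{eq:localizedorientation}. Once both compatibilities are in place, Theorem \ref{thm:T_compare_d-critical} produces the global isomorphism $\zeta_{\mu,s,o}$ of \eqref{eq:hyp_DT}.
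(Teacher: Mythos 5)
Your proposal follows essentially the same route as the paper's own proof: define $\zeta_{\mu,s,o}$ chart by chart by tensoring $\zeta_{\mu,f}$ with the comparison of $\bZ/2\bZ$-local systems induced by \eqref{eq:can_ori_loc_action}, then glue via Theorem \ref{thm:T_compare_d-critical}, the only nontrivial check being the stabilization by a $\bG_m$-invariant nondegenerate quadratic, where both $c_y$ and the localized vanishing-cycle comparison are given by the fundamental class of the negative-weight subspace $V_y$ while the orientation side matches $b_y$ through \eqref{eq:isom_canori_quadratic}. This is exactly the paper's argument (which likewise splits the quadratic into its hyperbolic and invariant summands using \eqref{eq:b_assoc} and \eqref{eq:c_associative}, as you indicate), so the proposal is correct.
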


\begin{proof}

We first construct $\zeta_{\mu, s, o}$ over each \'etale d-critical chart.
Take a $\bG_m$-equivariant \'etale d-critical chart  $\fR = (R, \eta, U, f, i)$.
By abuse of notation, we write the $\bG_m$-action on $R$ and $U$ by $\mu$.
We first construct an isomorphism
\[
\beta_{ \fR} \colon Q_{\fR}^{o} |_{R^{\mu} } \cong Q_{\fR^{\mu}}^{o^{\mu}}.
\]
Recall from \S \ref{ssec:DT_perv} that $Q_{\fR}^{o}$ parametrizes local isomorphisms between $o$ and the canonical orientation.
Therefore, it is enough to construct an isomorphism
\begin{equation}\label{eq:ori_beta}
(o_{\fR}^{\can})^{\mu_{}} \cong o_{\fR^{\mu}}^{\can}.
\end{equation}
which we have already seen in \eqref{eq:can_ori_loc_action}.
 
We now construct 
\[
\zeta_{\mu, s, o, \fR} \colon   \varphi_{X^{\mu}, s^{\mu}, o^{\mu}}[ I^{\mu}_X] |_{R^{\mu}} \cong \Loc_{\mu}(\varphi_{X, s, o})|_{R^{\mu}}
\]
by the following composition
\begin{align*}
    \varphi_{X^{\mu}, s^{\mu}, o^{\mu}}[ I^{\mu}_X] 
    &\xrightarrow[\cong]{\omega_{\fR^{\mu}}} 
    \varphi_{f |_{U^{\mu}}} \otimes Q^{o^{\mu}}_{\fR^{\mu}}  [d^+ - d^{-}] |_{R^{\mu}} \\
    &\xrightarrow[\cong]{\zeta_{\mu, f} \otimes \beta_{\fR}^{-1}} \Loc_{\mu}(\varphi_{f} )  \otimes Q^{o}_{\fR} |_{R^{\mu}} 
    \xrightarrow[\cong]{\omega_{\fR}^{-1}, \eqref{eq:hyp_sm_pullback}}  \Loc_{\mu}(\varphi_{X, s, o})|_{R^{\mu}}.
\end{align*}
Here, we set $d^{+} \coloneqq \rank_+ T_{U}$ and $d^{-} \coloneqq \rank_- T_{U}$.
The shift in the third complex disappears by \eqref{eq:we_are_shifting_vanishing} and \eqref{eq:hyp_const}.
We now prove that $\zeta_{\mu, s, o, \fR}$ does not depend on the choice of $\fR$.
Since it is clear that $\zeta_{\mu, s, o, \fR}$ is compatible with $\bG_m$-equivariant \'etale pullback of \'etale d-critical charts,
by using Theorem \ref{thm:T_compare_d-critical}, it is enough to prove the identity
\begin{equation}
\zeta_{\mu, s, o, \fR} = \zeta_{\mu', s, o, \fR'} \label{eq-zeta-indep}
\end{equation}
for $\fR'$ the \'etale d-critical chart defined by
\[
\fR' = (R, \eta,  U \times \bA^{2N}, f \boxplus q, (i, 0))
\]
for some $\bG_m$-equivariant non-degenerate quadratic function $q$ on $\bA^{2N}$.
We write
\[
q = (x_1 y_1 + \cdots + x_n y_n) + (z_1 w_1 + \cdots + z_m w_m)
\]
where $x_i$ (resp. $y_i$) has positive (resp. negative) $\bG_m$-weights and $z_j$ and $w_j$ are invariant under the $\bG_m$-action.
We write $q_{x, y} = x_1 y_1 + \cdots + x_n y_n$ and $q_{z, w} = z_1 w_1 + \cdots + z_m w_m$.
We let $\mu' = (\mu, \nu)$ denote the $\bG_m$-action on $U \times \bA^{2N}$.
Consider the following diagram:
\[
\xymatrix@C=50pt{
{Q^{o}_{\fR'}|_{R^{\mu}}} \ar[r]_-{\cong}^-{\beta_{\fR'}}  \ar[d]_-{\cong}^-{b_{y, w}}
& {Q^{o^{\mu}}_{\fR'^{\mu}}} \ar[d]_-{\cong}^-{b_{w}} \\ 
{Q^{o}_{\fR} |_{R^{\mu}}} \ar[r]_-{\cong}^-{\beta_{\fR}} 
& {Q^{o^{\mu}}_{\fR^{\mu}}} .
}
\]
We claim that it commutes. To see this,  consider the \'etale d-critical chart
\[
\fR'_{{x, y}} = (R, \eta,  U \times \bA^{2n}, f \boxplus q_{x, y}, (i, 0)).
\]
By using the identity \eqref{eq:b_assoc}, it is enough to prove that the following diagram commutes:
\[
\xymatrix@C=50pt{
{Q^{o}_{\fR'_{{x, y}}}|_{R^{\mu}}} \ar[r]_-{\cong}^-{\beta_{\fR'_{{x, y}}}}  \ar[d]_-{\cong}^-{b_{y}}
& {Q^{o^{\mu}}_{\fR'^{\mu}_{{x, y}}}} \ar@{=}[d]^-{} \\ 
{Q^{o}_{\fR} |_{R^{\mu}}} \ar[r]_-{\cong}^-{\beta_{\fR}} 
& {Q^{o^{\mu}}_{\fR^{\mu}}} .
}
\]
It is equivalent to the commutativity of the following diagram:
\begin{equation}\label{eq:comm_b_y_and_qis}
\begin{aligned}
\xymatrix{
{K_{U \times \bA^{2n}} |_{R^{\mu, \red}} \otimes K_{U \times \bA^{2n}} |_{R^{\mu, \red}}^{\pm, \vee} } \ar[r]^-{\cong} \ar[d]^-{\cong}
& {K_{U^{\mu} |_{R^{\mu, \red}}}}
\ar@{=}[d] \\
{K_{U } |_{R^{\mu, \red}} \otimes K_{U} |_{R^{\mu, \red}}^{\pm, \vee}}
\ar[r]^-{\cong}
& {K_{U^{\mu} |_{R^{\mu, \red}}}}
}
\end{aligned}
\end{equation}
where the left vertical map is constructed using \eqref{eq:isom_canori_quadratic} and \eqref{eq:embed_L_Ind_isom}.
Consider the following acyclic complex
\[
\bL_{\fR'_{x, y} /\fR}^+ \coloneqq [T_{\bA^{2n}}^+ \xrightarrow{\Hess_q^+} \Omega_{\bA^{2n}}^+], \quad 
\partial x_i \mapsto d y_i.
\]
The fibre sequence
\[
\Omega_{\bA^{2n}}^+ \to \bL_{\fR' /\fR}^+ \to T_{\bA^{2n}}^+[1]
\]
together with \eqref{eq:det_shift} and \eqref{eq:det_dual} 
induces a trivialization
\begin{equation}\label{eq:trivialize_K_A2n}
K_{\bA^{2n}} \cong K_{\bA^{2n}}^{\pm} \cong \cO_{\bA^{2n}}.
\end{equation}
By the construction of \eqref{eq:embed_L_Ind_isom}, it is enough to compare this isomorphism with 
\eqref{eq:isom_canori_quadratic}.
By \cite[p.8]{Con}, we have an equivalence of fibre sequences
\[
\xymatrix{
{\Omega_{\bA^{2n}}^+} \ar@{=}[d] \ar[r]
& {\bL_{\fR' /\fR}^+} \ar[d]_-{\simeq} \ar[r]
& {T_{\bA^{2n}}^+[1]} \ar[d]^-{\Hess_{q_{x, y}}}_-{\simeq} \\
{\Omega_{\bA^{2n}}^+} \ar[r]
& 0 \ar[r]
& {\Omega_{\bA^{2n}}^+[1],}
}
\]
where the bottom sequence is $\Delta_{\Omega_{\mathbb{A}^{2n}}^+}$  defined just before \eqref{eq:det_shift}. 
Therefore by using the construction of \eqref{eq:det_shift}, \eqref{eq:det_dual}, the commutativity of \eqref{eq:rotate_det1} for $\Delta_{\Omega_{\mathbb{A}^{2n}}^+}$  and that the double dual of a graded line bundle is trivialized with the insertion of a sign of the grading,
we see that the trivialization \eqref{eq:trivialize_K_A2n} is given by
\[
d x_1 \wedge \cdots \wedge d x_n \wedge d y_n \wedge \cdots \wedge d y_1 \mapsto 1,
\]
hence we conclude the commutativity of the diagram \eqref{eq:comm_b_y_and_qis}.

On the other hand, consider the following diagram
\[
\xymatrix@C=70pt{
{
{\varphi_{f |_{U^{\mu}} \boxplus q_{z, w} }[I^{\mu}_X]} \ar[d]_-{\cong}^-{\TS^{-1}}
\ar[r]_-{\cong}^-{\zeta_{\mu', f \boxplus q }}}
& \Loc_{\mu_{}} (\varphi_{f \boxplus q}) 
\ar[d]_-{\cong}^-{\TS^{-1}}    \\
{\varphi_{f |_{U^{\mu}}} \boxtimes \varphi_{q_{z, w}} [I^{\mu}_X] } 
\ar[d]_-{\cong}^-{c_w}
\ar[r]_-{\cong}^-{\zeta_{\mu, f} \boxtimes \zeta_{\nu , q} }
&
{\Loc_{\mu_{}} (\varphi_{f} \boxtimes \varphi_{q}) } 
\ar[d]_-{\cong}^-{c_{y, w}} 
\\
{\varphi_{f |_{U^{\mu}}}[I^{\mu}_X] }
\ar[r]_-{\cong}^-{\zeta_{\mu, f}}
&{\Loc_{\mu} (\varphi_{f})} .
}
\]
The commutativity of the upper square follows from the commutativity of the diagram \eqref{eq:zeta_TS_functor}.
To prove the commutativity of the lower diagram, using the commutativity of the diagram \eqref{eq:c_associative}, it is enough to prove the commutativity of the following diagram
\[
\xymatrix@C=60pt{
{\Loc_{\nu_{x, y}}(\varphi_{q_{x, y}}) |_{0}} \ar[r]^-{\zeta_{\nu_{x, y}, q_{x, y}}}_-{\cong} \ar[d]_-{\cong}^-{c_{y}}
& {\varphi_{0}} \ar[d]_-{\cong} \\
{\bQ_{0}} \ar@{=}[r]
& {\bQ_{0}}
}
\]
where $\nu_{x, y}$ denotes the $\bG_m$-action on $\bA^{2n}$.
Note that we have an identification $(\bA^{2n})^{\nu_{x, y}, -} = V_{y}$ where $V_{y}$ denote the subspace of $\bA^{2n}$ spanned by $y$-coordinates.
Then the claim follows from the construction of $c_{y}$ and $\zeta_{\nu_{x, y}, q_{x, y}}$ since both of them are constructed using the fundamental class of $V_{y}$.

\end{proof}

We now prove some properties of $\zeta_{\mu, s, o}$ that we will use later. 
Firstly, let $\mu^{\mathrm{triv}}$ be the trivial $\bG_m$-action on an oriented d-critical algebraic space $(X, s, o)$.
Then by the commutativity of \eqref{eq:zeta_unital},
we see that the following diagram commutes:

\begin{equation}\label{eq:zeta_unital_algsp}
    \begin{aligned}
    \xymatrix@C=50pt{
    {\varphi_{X, s, o}} \ar[r]_-{\cong}^-{\zeta_{\mu_{\mathrm{triv}, s, o}}} 
    \ar@/_30pt/[rr]_-{\id}
    & {\Loc_{\mu_{\mathrm{triv}}} (\varphi_{X, s, o}) }
    \ar[r]_-{\cong}
    &  \varphi_{X, s, o} \\
    {}
    &
    &
    }
    \end{aligned}
\end{equation}

We now prove that the map $\zeta_{\mu, s, o}$ is compatible with smooth pullback.
Though this is already proved in \cite[Proposition 4.5]{des22}, 
we will include the proof for reader's convenience:

\begin{prop}[{\cite[Proposition 4.5]{des22}}]\label{prop:zeta_sm_space}
    Let $(X, \mu, s)$ and $(Y, \nu, t)$ be quasi-separated $\bG_m$-equivariant algebraic spaces and $h \colon X \to Y$ be a $\bG_m$-equivariant smooth morphism with $s = h^{\star} t$.
    Let $o$ be an orientation for $(Y, \nu, t)$.
    Then the following diagram commutes:
    \[
    \xymatrix@C=30pt{
    {h^{\bG_m, *} \varphi_{Y^{\nu}, t^{\nu}, o^{\nu}}[I^{\nu}_Y]} \ar[rr]_-{\cong}^-{\Theta_{h^{\bG_m}, t^{\nu}, o^{\nu} }}
    \ar[d]_-{\cong}^-{\zeta_{\nu, t, o}}
    & {}
    & {\varphi_{X^{\mu}, s^{\mu}, (h^{\star }o)^{\mu} }}[I^{\nu}_Y - d_0]
    \ar[d]_-{\cong}^-{\zeta_{\mu,  s, h^{\star} o}}
    \\
        {h^{\bG_m, *} {\Loc_{\nu}}(\varphi_{Y, t, o})} 
         \ar[r]_-{\cong}^-{\rho_{h}}
    &{\Loc_{\mu}(h^* \varphi_{Y, t, o})[2 d^- ] } 
    \ar[r]_-{\cong}^-{\Theta_{h, t, o}} 
    & {\Loc_{\mu} (\varphi_{X, s, h^{\star}o})[2 d^- - d] } 
    }
    \]
    where we set $d \coloneqq \dim h$, $d_0 \coloneqq \dim h^{\bG_m}$, $d^+ \coloneqq \rank_+ T_{X/Y} $ and $d^- \coloneqq \rank_- T_{X/Y} $
   and we used the identity  $I^{\mu}_X - I^{\nu}_Y = d^+ - d^{-}$ from \eqref{eq:smooth_index}.
    
\end{prop}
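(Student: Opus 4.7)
The plan is to reduce the commutativity to a check on a single compatible pair of $\bG_m$-equivariant \'etale d-critical charts, and then recognise the resulting diagram as a combination of three compatibilities already established in the text. Using Proposition \ref{prop:T_sm_chart}, for each point of $X^\mu$ one may find $\bG_m$-equivariant \'etale d-critical charts $\fR = (R, \eta_X, U, f, i)$ for $(X,s)$ and $\fS = (S, \eta_Y, V, g, j)$ for $(Y,t)$, together with $\bG_m$-equivariant smooth morphisms $H_0 \colon R \to S$ and $H \colon U \to V$ satisfying $\eta_Y \circ H_0 = h \circ \eta_X$, $g \circ H = f$, and $j \circ H_0 = H \circ i$. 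Since $\varphi_{X^\mu,s^\mu,(h^\star o)^\mu}$, $\varphi_{Y^\nu,t^\nu,o^\nu}$, $\varphi_{X,s,h^\star o}$, $\varphi_{Y,t,o}$ are glued from local models via the $\omega_{(\cdot)}$ isomorphisms, and since commutativity of the diagram is a local statement on $X^\mu$, it is enough to check commutativity after applying $\omega_{\fR^\mu}$, $\omega_{\fS^\nu}$, $\omega_{\fR}$, $\omega_{\fS}$ at the four corners.

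After this trivialisation, the diagram becomes the tensor product of two subdiagrams. The first concerns only vanishing cycles: it expresses the equality of the two ways of combining the smooth-pullback map $\Theta_H$ for $\varphi_g$ with the hyperbolic localisation isomorphism $\zeta_{\mu,f}$. This is precisely the commutativity of \eqref{eq:sm_localization_van}, applied to $H^{\bG_m} \colon U^\mu \to V^\nu$ and to the smooth morphism $H$ itself (after noting that the index shifts $[I^\nu_Y]$ and $[I^\mu_X]$ differ by $d^+ - d^-$ by \eqref{eq:smooth_index}, matching the shift in $\rho_h$). The second subdiagram involves only the $\bZ/2\bZ$-local systems $Q^{(-)}_{(-)}$: it asserts that the orientation-pullback isomorphism $b_H$ of canonical orientations and the $\bG_m$-localisation isomorphism $\beta_{(-)}$ of canonical orientations commute, i.e.\ that
\[
\beta_{\fR} \circ b_H|_{R^\mu} = b_{H^{\bG_m}} \circ \beta_{\fS}|_{R^\mu}
\]
as isomorphisms $H_0^{\bG_m,*} Q^{o^\nu}_{\fS^\nu} \cong Q^{(h^\star o)^\mu}_{\fR^\mu}$. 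Unwinding the definitions in \S\ref{ssec:DT_perv} and \S\ref{ssec:Localizing_ori}, this reduces to the commutativity of diagram \eqref{eq:can_ori_loc_pull_commutes}, comparing the two natural isomorphisms between $h^{\bG_m,\star} o^{\can}_{\fS^\nu}$ and $(h^\star o^{\can}_{\fS})^\mu = o^{\can}_{\fR^\mu}$ built out of \eqref{eq:can_ori_pull} and \eqref{eq:can_ori_loc_action}.

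Combining these two compatibilities, together with the base-change origin of $\rho_h$ (which by construction is just the string of standard exchange morphisms in the six-functor formalism for the diagram \eqref{eq:attractor_corresp_morph} specialised to $H$), one obtains the required commutativity on the level of charts. Finally, since both $\zeta$-maps and $\Theta$-maps are constructed compatibly with \'etale covers and embeddings of $\bG_m$-equivariant \'etale d-critical charts (via Theorem \ref{thm:T_compare_d-critical}), the local commutativities glue to the global statement. The main obstacle is bookkeeping in the orientation calculation: showing that the localisation isomorphism $\beta_\fR$ intertwines $b_H$ with $b_{H^{\bG_m}}$ is where all the sign conventions from \S\ref{ssec:Localizing_ori} must be reconciled, but this is exactly what \eqref{eq:can_ori_loc_pull_commutes} (derived from \eqref{eq:isom_canori_quadratic} and the fibre-sequence description of $K^\pm_{X/Y}$) already records.
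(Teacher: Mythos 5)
Your proposal is correct and follows essentially the same route as the paper's proof: reduce to compatible $\bG_m$-equivariant \'etale d-critical charts via Proposition \ref{prop:T_sm_chart}, handle the vanishing-cycle part with \eqref{eq:sm_localization_van}, and reduce the remaining square on the $\bZ/2\bZ$-local systems $Q^{(-)}_{(-)}$ to the commutativity of \eqref{eq:can_ori_loc_pull_commutes}. The only difference is cosmetic bookkeeping (the gluing remark and the parenthetical attribution of \eqref{eq:can_ori_loc_pull_commutes}), which does not affect the argument.
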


\begin{proof}
    It is enough to prove the commutativity for each $\bG_m$-equivariant \'etale d-critical chart around each point of $X^{\mu}$.
    Using Proposition \ref{prop:T_sm_chart}, we may take $\bG_m$-equivariant \'etale d-critical chart $\fR = (R, \eta, U, f, i)$ of $X$ and $\fS = (S, \gamma, V, g, i)$ and a $\bG_m$-equivariant smooth morphism $(H_0, H) \colon \fR \to \fS$.
    By the commutativity of the diagram \eqref{eq:sm_localization_van}, it is enough to prove the commutativity of the following diagram:
    \[
    \xymatrix@C=50pt{
    {H_0^* Q^{o}_{\fS} |_{R^{\mu}}} \ar[r]_-{\cong}^-{b_H} \ar[d]_-{\cong}^-{\beta_{\fS}} \ar[r]
    & {Q^{h^{\star} o}_{\fR}|_{R^{\mu}}} \ar[d]_-{\cong}^-{\beta_{\fR}} \\
    {H_0^{\bG_m, *} Q^{o^{\nu}}_{\fS^{\nu}}} \ar[r]_-{\cong}^-{b_{H^{\bG_m}}}
    & {Q^{(h^{\star} o)^{\mu}}_{\fR^{\mu}}}
    }
    \]
    where the $\bG_m$-action on $R$ and $S$ are denoted by $\mu$ and $\nu$ by abuse of notation, and
    we use the identification \eqref{eq:ori_loc_sm_compatible}.
    This follows from the commutativity of the diagram \eqref{eq:can_ori_loc_pull_commutes}.
\end{proof}

We now prove the associativity of the morphism $\zeta_{\mu, s, o}$:

\begin{prop}\label{prop:zeta_assoc_space}
    Let $(X, (\mu_1, \mu_2), s)$ be a quasi-separated $\bG_m^2$-equivariant d-critical algebraic space.
    Then the following diagram commutes:
    \begin{equation}\label{eq:zeta_assoc_space}
    \begin{aligned}
        \xymatrix@C=100pt{
        \varphi_{X^{\hat{\mu}}, s^{\hat{\mu}}, (o^{\mu_2})^{\mu_1 }}[ I^{\mu_2}_{X} + I^{\mu_1}_{X^{\mu_2}}] |_{X^{\hat{\mu}}_{\mat}} 
        \ar[d]^-{}_-{\cong}
        \ar[r]_-{\cong}^-{\zeta_{\mu_2, s, o} \circ \zeta_{\mu_1|_{X^{\mu_2}}, s^{\mu_2}, o^{\mu_2} }}
        & \Loc_{\mu_1 } (\Loc_{\mu_2} ( \varphi_{X, s, o} )) |_{X^{\hat{\mu}}_{\mat}} 
        \ar[d]^-{\chi_{\hat{\mu}}}_-{\cong}
        \\
        \varphi_{X^{\hat{\mu}}, s^{\hat{\mu}}, (o^{\mu_1})^{\mu_2 }} [ I^{\mu_1}_{X} + I^{\mu_2}_{X^{\mu_1}}] |_{X^{\hat{\mu}}_{\mat}}
        \ar[r]_-{\cong}^-{\zeta_{\mu_1, s, o} \circ \zeta_{\mu_2|_{X^{\mu_1}}, s^{\mu_1}, o^{\mu_1} }}
        & \Loc_{\mu_2} (\Loc_{\mu_1} ( \varphi_{X, s, o} )) |_{X^{\hat{\mu}}_{\mat}}.
        }
    \end{aligned}
    \end{equation}
    Here, the left vertical map is constructed using the isomorphism \eqref{eq:ori_localize_assoc_action}.

\end{prop}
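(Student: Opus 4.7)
The plan is to reduce everything to an \'etale $\bG_m^2$-equivariant d-critical chart and then split the commutativity of \eqref{eq:zeta_assoc_space} into a ``functorial part'' involving only the vanishing cycle and hyperbolic localization maps on smooth ambient data, and an ``orientation part'' involving only the local systems $Q^{o}_{\fR}$. Concretely, around any point of $X^{\hat\mu}_{\mathrm{mat}}$ I would use Proposition \ref{prop:minimal_chart_space} (together with its $\bG_m^2$-equivariant generalisation via Theorem \ref{thm:T_compare_d-critical}) to choose a $\bG_m^2$-equivariant \'etale d-critical chart $\fR=(R,\eta,U,f,i)$ whose image meets $X^{\hat\mu}_{\mathrm{mat}}$. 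Note that $\fR^{\mu_j}$ and $\fR^{\hat\mu}$ are then automatically $\bG_m$- and $\bG_m^2$-equivariant charts for the fixed loci, so that all four composites in the diagram make sense locally.

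On such a chart, unfolding the construction in the proof of Theorem \ref{thm:hyp_DT} writes each composite $\zeta_{\mu_j,s,o}\circ\zeta_{\mu_i|_{X^{\mu_j}},s^{\mu_j},o^{\mu_j}}$ as the tensor product of two pieces: the composite of two functorial maps $\zeta_{\mu_j,f}\circ\zeta_{\mu_i|_{U^{\mu_j}},f|_{U^{\mu_j}}}$ applied to $\varphi_f$, and the composite of two orientation maps $\beta_{\fR^{\mu_j}}\circ\beta_{\fR}|_{R^{\hat\mu}}$ applied to $Q^o_{\fR}$, conjugated by the identifications $\omega_{\fR^{\hat\mu}}$ and $\omega_{\fR}$. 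The associativity identification \eqref{eq:ori_localize_assoc_action} appears exactly as the isomorphism between the two sides of the orientation piece. Under this splitting, the diagram \eqref{eq:zeta_assoc_space} becomes the outer rectangle of the product of two diagrams; it therefore suffices to verify commutativity for each factor separately.

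For the functorial factor, the desired commutativity over $X^{\hat\mu}_{\mathrm{mat}}$ is precisely the content of \eqref{eq:zeta_assoc_functor} applied to $\cF=\varphi_f$, since the hyperbolic localization isomorphism $\chi_{\hat\mu}$ on $X$ restricts, under $\rho_h$ along the smooth chart map $R\to X$, to $\chi_{\hat\mu}$ for the $U$-side. For the orientation factor, by the construction of $\beta_{\fR}$ via \eqref{eq:ori_beta} one reduces to the statement that the two composites
\[
    \bigl((o^{\can}_{\fR})^{\mu_1}\bigr)^{\mu_2}\;\cong\;o^{\can}_{\fR^{\hat\mu}}\;\cong\;\bigl((o^{\can}_{\fR})^{\mu_2}\bigr)^{\mu_1}
\]
of canonical-orientation isomorphisms agree with the associativity map \eqref{eq:ori_localize_assoc_action} after restriction to $R^{\hat\mu}_{\mathrm{mat}}$. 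This is exactly the commutativity of the diagram \eqref{eq:can_ori_loc_assoc}, which was already established.

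The main obstacle is the bookkeeping for the orientation factor: the map \eqref{eq:ori_localize_assoc_action} is constructed through \eqref{eq:xi_theta_Ind}, which in turn involves the sign twist $(-1)^{\rank_{S_2}\Omega_{X,x}}$ coming from the reordering of determinant lines in the proof of Proposition \ref{prop:index_line_bundle}(iv); the same sign appears on the canonical-orientation side in the derivation of \eqref{eq:can_ori_loc_assoc}. Checking that these signs cancel correctly, and that the two compositions of $\beta$ maps (which are built from $\xi$, not $\theta$) are intertwined by the $\theta$-based associator precisely over the matching locus, is the delicate step — but is purely the content of the earlier compatibility diagrams \eqref{eq:xi_theta_Ind} and \eqref{eq:can_ori_loc_assoc}. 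Once these are in place, the two factor diagrams commute and hence so does \eqref{eq:zeta_assoc_space}.
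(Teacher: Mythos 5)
Your proposal is correct and follows essentially the same route as the paper: reduce to a $\bG_m^2$-equivariant \'etale d-critical chart at a matching point via Proposition \ref{prop:minimal_chart_space}, dispose of the vanishing-cycle/hyperbolic-localization factor by the functorial associativity \eqref{eq:zeta_assoc_functor}, and reduce the orientation factor (the $Q^o_{\fR}$ local systems and $\beta$ maps) to the compatibility of the localized canonical orientations, i.e.\ \eqref{eq:can_ori_loc_assoc}. The sign bookkeeping you flag is indeed exactly what \eqref{eq:xi_theta_Ind} and \eqref{eq:can_ori_loc_assoc} were set up to handle, so no further argument is needed.
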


\begin{proof}
    Take a point $x \in X^{\hat{\mu}}_{\mathrm{mat}}$.
    It is enough to prove the commutativity for a $\bG_m^2$-equivariant \'etale d-critical charts around $x$.
    Take a $\bG_m^2$-equivariant \'etale d-critical chart $\fR = (R, \eta, U, f, i)$ of $x$.
    By abuse of notion, we will denote by $\hat{\mu} = (\mu_1, \mu_2)$ the $\bG_m^2$-actions on $R$ and $U$.
    Using Proposition \ref{prop:minimal_chart_space}, we may assume that there exists a point $\hat{x} \in R^{\hat{\mu}}$ lifting $x$ which is contained in $U^{\hat{\mu}}_{\mat}$.
    By the commutativity of the diagram \eqref{eq:zeta_assoc_functor}, it is enough to prove the commutativity of the diagram
    \[
    \xymatrix@C=50pt{
    {Q_{\fR^{\hat{\mu}}} ^{(o^{\mu_2})^{\mu_1  }} } |_{R^{\hat{\mu}}_{\mat}} 
    \ar[d]^-{\cong}
    \ar[r]_-{\cong}^-{\beta_{\fR^{\mu_2}}}
    & {Q_{\fR^{\mu_2}} ^{o^{\mu_2} } } |_{R^{\hat{\mu}}_{\mat}}
    \ar[r]_-{\cong}^-{\beta_{\fR, \mu_2}}
    & {Q_{\fR}^{o} |_{R^{\hat{\mu}}_{\mat}}} \ar@{=}[d]  \\
    {{Q_{\fR^{\hat{\mu}}} ^{(o^{\mu_1})^{\mu_2  }} } |_{R^{\hat{\mu}}_{\mat}}}
    \ar[r]_-{\cong}^-{\beta_{\fR^{\mu_1}}}
    & {Q_{\fR^{\mu_1}} ^{o^{\mu_1} } } |_{R^{\hat{\mu}}_{\mat}}
    \ar[r]_-{\cong}^-{\beta_{\fR, \mu_1}}
    & {Q_{\fR}^{o} |_{R^{\hat{\mu}}_{\mat}}.}
    }
    \]
    Here the subscript of $\beta_{\fR, \mu_1}$ and $\beta_{\fR, \mu_2}$ denotes $\bG_m$-action that we consider. 
    Recall that $Q_{\fR}^{o}$ parametrizes local isomorphisms between $o_{\fR}^{\can}$ and $o$.
   The commutativity of the diagram follows from \eqref{eq:can_ori_loc_assoc}.
\end{proof}

We now show that the map $\zeta_{\mu, s, o}$ is compatible with the Thom--Sebastiani isomorphism.

\begin{prop}\label{prop:zeta_TS_space}
    Let $(X, \mu, s)$ and $(Y, \nu, t)$ be quasi-separated $\bG_m$-equivariant d-critical algebraic spaces with orientations $o_X$ and $o_Y$.
    Then the following diagram commutes:
    \[
    \xymatrix@C=70pt{
    {\varphi_{X^{\mu}, s^{\mu}, o_X^{\mu}}} \boxtimes  {\varphi_{Y^{\nu}, t^{\nu}, o_Y^{\nu}}} [I^{\mu}_X + I^{\nu}_Y]
    \ar[r]^-{\zeta_{\mu, s, o_X} \boxtimes \zeta_{\nu, t, o_Y}}_-{\cong}
    \ar[d]^-{\TS}_-{\cong}
    &  \Loc_{\mu}(\varphi_{X, s, o_X}) \boxtimes \Loc_{\nu}(\varphi_{Y, t, o_Y})
    \ar[d]^-{\TS}_-{\cong}
    \\
    {\varphi_{X^{\mu} \times Y^{\nu}, s^{\mu} \boxplus t^{\nu}, o_X^{\mu} \boxtimes o_Y^{\nu}}} [I^{\mu \times \nu}_{X \times Y}] \ar[r]_-{\cong}^-{\zeta_{\mu \times \nu, s \boxplus t, o_X \boxtimes o_Y}}
    & \Loc_{\mu \times \nu}(\varphi_{X \times Y, s \boxplus t, o_X \boxtimes o_Y}).
    }
    \]
\end{prop}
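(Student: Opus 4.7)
The plan is to prove commutativity locally on $\bG_m$-equivariant \'etale d-critical charts and then reduce the question to an identity of orientation gerbes already established in the paper.

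First I would choose $\bG_m$-equivariant \'etale d-critical charts $\fR = (R, \eta, U, f, i)$ for $(X, s)$ and $\fS = (S, \gamma, V, g, j)$ for $(Y, t)$; their product $\fR \times \fS = (R \times S, \eta \times \gamma, U \times V, f \boxplus g, (i, j))$ is then a $\bG_m^2$-equivariant \'etale d-critical chart for $(X \times Y, s \boxplus t)$. Using the descriptions of $\TS$ on DT perverse sheaves from \eqref{eq:TS_chart} (via the orientation local-system isomorphism $\alpha_{\fR, \fS}$ of \eqref{eq:alpha_TS}) and of $\zeta_{\mu, s, o}$ from the proof of Theorem \ref{thm:hyp_DT} (via $\beta_{\fR}$ and the functor-level isomorphism $\zeta_{\mu, f}$ of Proposition \ref{prop:van_commute_hyp}), the whole diagram decomposes into two layers: a \emph{functorial} layer involving only $\zeta_{\mu, f}$, $\zeta_{\nu, g}$, $\zeta_{\mu \times \nu, f \boxplus g}$ and $\TS$ of vanishing cycle sheaves, and an \emph{orientation} layer involving $\alpha$ and $\beta$.

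The functorial layer commutes by \eqref{eq:zeta_TS_functor}, which gives exactly the compatibility of the hyperbolic-localization transform with the Thom--Sebastiani isomorphism for monodromic vanishing cycle sheaves. So it remains to verify that the orientation layer commutes, i.e. that the diagram
\[
\xymatrix@C=50pt{
Q_{\fR}^{o_X}|_{R^{\mu}} \boxtimes Q_{\fS}^{o_Y}|_{S^{\nu}} \ar[r]^-{\alpha_{\fR, \fS}}_-{\cong} \ar[d]_-{\beta_{\fR} \boxtimes \beta_{\fS}}^-{\cong}
& Q_{\fR \times \fS}^{o_X \boxtimes o_Y}|_{R^{\mu} \times S^{\nu}} \ar[d]^-{\beta_{\fR \times \fS}}_-{\cong} \\
Q_{\fR^{\mu}}^{o_X^{\mu}} \boxtimes Q_{\fS^{\nu}}^{o_Y^{\nu}} \ar[r]^-{\alpha_{\fR^{\mu}, \fS^{\nu}}}_-{\cong}
& Q_{(\fR \times \fS)^{\mu \times \nu}}^{(o_X \boxtimes o_Y)^{\mu \times \nu}}
}
\]
commutes, where the right vertical arrow uses the identification of $(o_X \boxtimes o_Y)^{\mu \times \nu}$ with $o_X^{\mu} \boxtimes o_Y^{\nu}$ coming from \eqref{eq:ori_localized_product_action}.

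The key observation is that each local system $Q_{\fR}^o$ parametrises isomorphisms between $o$ and the canonical orientation $o_{\fR}^{\can}$, so the whole diagram is governed by how the constructions in the paper interact with $o_{\fR}^{\can}$. The map $\alpha$ is built from \eqref{eq:can_ori_prod} (compatibility of canonical orientations with products) and the map $\beta$ from \eqref{eq:can_ori_loc_action} (compatibility of canonical orientations with localization). The required commutativity is therefore an instance of the previously established diagram \eqref{eq:can_ori_loc_product}, which says precisely that these two compatibilities commute with each other.

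I expect the main technical obstacle to be bookkeeping: making sure that the shifts $I^{\mu}_X + I^{\nu}_Y = I^{\mu \times \nu}_{X \times Y}$ match consistently on both routes, tracking the sign $(-1)$-factors coming from the symmetrizer of $\cL_{\mu, s}^{\Ind, \otimes 2}$ (implicit in the localization of orientations \eqref{eq:localizedorientation} and in $\Psi^{\Ind}_{\mu, \nu}$), and checking that the reduction to \eqref{eq:can_ori_loc_product} is genuine and not off by a global sign. Once those signs are reconciled, the remaining steps are formal, and the argument globalises from charts to algebraic spaces in the same way as in the proofs of Proposition \ref{prop:TS_DT_space} and Proposition \ref{prop:zeta_sm_space}.
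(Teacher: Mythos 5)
Your proposal is correct and follows essentially the same route as the paper: reduce to $\bG_m$-equivariant \'etale d-critical charts, dispose of the vanishing-cycle layer via the functorial compatibility \eqref{eq:zeta_TS_functor} together with the chart-level description \eqref{eq:TS_chart} of $\TS$, and reduce the remaining orientation-local-system square in $\alpha$'s and $\beta$'s to the canonical-orientation compatibility \eqref{eq:can_ori_loc_product}. (Only a cosmetic slip: the product chart $\fR \times \fS$ is a $\bG_m$-equivariant chart for the action $\mu \times \nu$, not a $\bG_m^2$-equivariant one, and the index/sign bookkeeping you flag is already absorbed into \eqref{eq:can_ori_loc_product} and the surrounding conventions.)
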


\begin{proof}
    It is enough to prove the commutativity for each $\bG_m$-equivariant \'etale d-critical charts around each point of $(x, y) \in X^{\mu} \times Y^{\nu}$.
    Take an \'etale d-critical chart $\fR = (R, \eta, U, f, i)$ of $x$ and $\fS = (S, \gamma, V, g, j)$ of $y$.
    By the commutativity of the diagram \eqref{eq:zeta_TS_functor} and the construction of Thom--Sebastiani isomorphism of d-critical algebraic spaces \eqref{eq:TS_chart},
    it is enough to prove the commutativity of the following diagram:
    \[
    \xymatrix@C=70pt{
    {Q_{\fR}^{o_{X}} \boxtimes Q_{\fS}^{o_{Y}} |_{X^{\mu} \times Y^{\nu}}}
    \ar[r]_-{\cong}^-{\beta_{\fR} \boxtimes \beta_{\fS}}
    \ar[d]_-{\cong}^-{\alpha_{\fR, \fS}}
    & {Q_{\fR^{\mu}}^{o_{X}^{\mu}} \boxtimes Q_{\fS^{\mu}}^{o_{Y}^{\nu}}}
    \ar[d]_-{\cong}^-{\alpha_{\fR^{\mu}, \fS^{\nu}}} \\
    {Q_{\fR \times \fS}^{o_{X} \boxtimes o_{Y}}  |_{X^{\mu} \times Y^{\nu}}.}
    \ar[r]_-{\cong}^-{\beta_{\fR \times \fS}}
    & {Q_{\fR^{\mu} \times \fS^{\nu}}^{o_X^{\mu} \boxtimes o_{Y}^{\nu}}.}
    }
    \]
    This follows from \eqref{eq:can_ori_loc_product}.
\end{proof}

\subsection{Hyperbolic localization for stacks}\label{ssec:hyp_stack}

Here we will introduce the hyperbolic localization functor for Artin stacks.
Let $\fX$ be an Artin stack with quasi-separated diagonal and affine stabilizer groups.
Consider the following correspondence:
\begin{equation}\label{eq:gr_filt_second}
\begin{aligned}
\xymatrix{
{}
& {\Filt^n(\fX)} \ar[ld]_-{\gr} \ar[rd]^-{\ev}
& {} \\
{\Grad^n(\fX)}
& {}
& {\fX}
}
\end{aligned}
\end{equation}
We define a functor $\Loc^n_{\fX} \coloneqq D^b_c(\fX) \to D(\Grad^n(\fX))$ by
\[
\Loc_{\fX}^n \coloneqq \gr_{ *} \ev^!.
\]
When $n = 1$, we write $\Loc_{\fX} = \Loc^1_{\fX}$ and we will call it the \defterm{hyperbolic localization functor}.

\begin{rmk}
    More generally, for a quasi-separated Artin stack $\fX$ with affine stabilizers with a $\bG_m^n$-action $\mu$, once can define the functor
    $\Loc_{\mu}^n \colon D(\fX) \to D(\fX^{\mu})$ using the correspondence
    \begin{equation}\label{eq:attractor_general}
    \fX^{\mu} \leftarrow \fX^{\mu, +} \to \fX.
    \end{equation}
    The correspondence \eqref{eq:gr_filt_second} corresponds to the case of the trivial action.
    Since the correspondence \eqref{eq:attractor_general} can be embedded to the correspondence \eqref{eq:gr_filt_second} for $[\fX / \bG_m^n]$, we do not lose generality by restricting to the case of the trivial action.
\end{rmk}

Let $\fX$ be a quasi-separated Artin stack with affine stabilizers. By using Proposition \ref{prop:QCha_atlas}, take a collection of $\bG_m$-equivariant charts $(X, \mu, \bar{q})$ such that
$q_\mu \colon X^{\mu} \to \Grad(\fX)$ is jointly surjective.
Consider the following diagram:
\[
\xymatrix{
{X^{\mu}}
\ar[dd]^-{q_{\mu}}
& {}
& {X^{\mu, +}} 
\ar[r]^-{\ev_{\mu}}
\ar[ll]_-{\gr_{\mu}}
\ar[dd]^-{q_{\mu}^{+}}
\ar[ld]_-{\eta}
& {X}
\ar[dd]^-{q} \\
{}
& {X^{\mu} \times_{\Grad(\fX)} \Filt(\fX)}
\ar[rd]^-{q_{\mu}'}
\ar[lu]_-{\gr'}
& {} 
& {} \\
{\Grad(\fX)}
& {}
& {\Filt(\fX)}
\ar[r]^-{\ev}
\ar[ll]_-{\gr}
& {\fX.}
}
\]
It follows from \cite[Proposition 1.3.1 (4)]{hlp14} that $q_{\mu}$ and $q_{\mu}^+$ are smooth.
Further, by \cite[Lemma 1.3.8]{hlp14}, the map $\gr$ is $\bA^1$-deformation retract.
Therefore, by repeating the construction of the isomorphism \eqref{eq:hyp_sm_pullback} using Corollary \ref{cor:gr_contr_lemma},
we obtain an isomorphism of functors
\begin{equation}\label{eq:rho_q}
\rho_{q} \colon q_{\mu}^*  \circ \Loc_{\fX} \cong \Loc_{\mu} \circ q^*[2d^-]
\end{equation}
where we set $d^{-} \coloneqq \rank T_{X/ \fX}^{-}$.
In particular, the functor $\Loc_{\fX}$ preserves constructible objects.

\begin{lem}\label{lem:rho_q_assoc}
    Let $\fX$ be a quasi-separated Artin stack with affine stabilizers and
    $h \colon (X_1, \mu_1, \bar{q}_1) \to (X_2, \mu_2, \bar{q}_2)$ be a smooth morphism of $\bG_m$-equivariant charts.
    Then the following diagram commutes:
\[
\xymatrix@C=80pt{
{q_{1, \mu_1}^* \Loc_{\fX}  }
\ar[d]^-{\cong}
\ar[r]_-{\cong}^-{\rho_{q_1}}
& { \Loc_{\mu_1} q_1^* [2d_1^{-}] }\\
 h^{\bG_m, *} q_{2, \mu_2}^* \Loc_{\fX}
 \ar[r]_-{\cong}^-{\rho_{q_2}}
& { h^{\bG_m, *} \Loc_{\mu_2} q_2^* [2d_2^{-}] }
\ar[u]_-{\cong}^-{\rho_h}
}
\]
where we set $d_1^{-} \coloneqq \rank_- T_{X_1 / \fX}$
and $d_2^{-} \coloneqq \rank_- T_{X_2 / \fX}$.
    
\end{lem}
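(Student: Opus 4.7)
The plan is to unpack $\rho_{q_1}$, $\rho_{q_2}$, and $\rho_h$ into compositions of elementary exchange morphisms in the six-functor formalism, and then to check the commutativity of the diagram one stage at a time. Recall from the constructions of \eqref{eq:rho_q} and \eqref{eq:hyp_sm_pullback} that each of these three isomorphisms decomposes, on monodromic objects, as a four-step composition: first a smooth base change isomorphism $p^*\gr_*\cong \gr'_*(p')^*$, where the non-Cartesian piece is supplied by the contraction lemma (Proposition \ref{prop:contraction}) via the fact that the relevant $\gr$-morphisms are $\bA^1$-deformation retracts; then a purity isomorphism $(p')^*\simeq (p')^![-2n']$ for the smooth map on the attractor; then the canonical composition isomorphism for upper-shriek functors $\ev_\mu^!\,p^!\cong (\ev\circ p')^!$; and finally a second purity isomorphism $p^!\simeq p^*[2n]$ on the ambient smooth map to $\fX$.

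The first step of the proof is to write $\rho_{q_1}$, $\rho_{q_2}$ and $\rho_h$ in this four-step form. The second step is to verify the commutativity of the lemma's diagram stage by stage. The base change stage commutes by the horizontal pasting compatibility of smooth base change combined with functoriality of the contraction lemma under composition of deformation retracts; more precisely, one applies Lemma \ref{lem:Beck_Chevalley_bc} and Lemma \ref{lem:Exi_associative} to the two-row diagram of base-change squares associated to the triple $(q_1,q_2,h)$ together with their attractor and fixed-point versions, after reducing the non-Cartesian middle squares to Cartesian ones via Proposition \ref{prop:contraction}. The composition-of-upper-shrieks stage is the standard coherence $(g\circ f)^!\cong f^!g^!$. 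The two purity stages commute by the associativity of the purity transform for smooth composition, i.e.\ by \eqref{eq:purity_assoc} applied to the factorizations $q_{1,\mu_1}^+\simeq q_{2,\mu_2}^+\circ h^+$ and $q_1\simeq q_2\circ h$ (and their restrictions to $\bG_m$-fixed loci).

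Finally I would check that the cohomological shifts on the two sides agree. The short exact sequence
\[
T_{X_1/X_2}|_{X_1^{\mu_1}} \longrightarrow T_{X_1/\fX}|_{X_1^{\mu_1}} \longrightarrow h^{\bG_m,*}T_{X_2/\fX}|_{X_2^{\mu_2}}
\]
together with additivity of the negative-weight rank yields $d_1^- = d_2^- + \rank_- T_{X_1/X_2}|_{X_1^{\mu_1}}$, so the total shifts along the two paths around the diagram coincide.

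The main obstacle I anticipate is purely organizational: the diagram to chase is large, involving three columns of smooth morphisms together with their attractor and fixed-point variants, so bookkeeping of the exchange morphisms across several Cartesian and non-Cartesian squares is tedious. Conceptually, however, nothing new enters beyond Proposition \ref{prop:contraction}, the associativity \eqref{eq:purity_assoc}, and Lemma \ref{lem:Exi_associative}; the statement is the expected compatibility of a functorially built isomorphism with composition, and the argument is essentially mechanical once the four-step decomposition of each $\rho$ is in place.
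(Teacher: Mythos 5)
Your proposal is correct and follows essentially the paper's own argument: the paper unpacks $\rho_{q_1},\rho_{q_2},\rho_h$ in exactly your four-step form and reduces the lemma to two sub-diagrams, the $\ev^!$/purity part (your purity and shriek-composition stages), which commutes by the associativity \eqref{eq:purity_assoc} of the purity transform for the factorizations $q_1\simeq q_2\circ h$ and $q_{1,\mu_1}^{+}\simeq q_{2,\mu_2}^{+}\circ h^{+}$, and the $\gr_*$ part, which commutes by the formal (vertical) pasting compatibility of the Beck--Chevalley transform. One small caveat: your ``more precisely'' appeal to Lemma \ref{lem:Beck_Chevalley_bc} and Lemma \ref{lem:Exi_associative} is unnecessary and slightly misdirected here --- those results concern pullable squares and composition along the pushforward direction, whereas the composition you need is along the chart maps $q_1=q_2\circ h$, for which plain pasting of the Beck--Chevalley $*$-transforms suffices; note also that Proposition \ref{prop:contraction} is only needed to know the transforms are isomorphisms (already established in the construction of $\rho_q$), not for the commutativity check itself.
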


\begin{proof}
    The statement can be decomposed into the commutativity of the following diagrams:
\[
\xymatrix{
q_{1, \mu_1}^{+, *} \ev^!  
\ar[r]^-{\cong}
\ar[d]^-{\cong}
& {\ev_{\mu_1}^! q_{1}^{ *}[2 d_{1}^-]}
\ar[d]^-{\cong }\\
{h^{+, *} q_{2, \mu_2}^{+, *} \ev^!}
\ar[r]^-{\cong}
& {\ev_{\mu_1}^! h^{*} q_{2}^{ *}[2 d_{1}^-], }
} \quad
\xymatrix{
{q_{1, \mu_1}^* \gr_* }
\ar[r]^-{\cong}
\ar[d]^-{\cong}
& {\gr_{\mu_1, *} q_{1, \mu_1}^{+, *} }
\ar[d]^-{\cong}\\
{h^{\bG_m, *} q_{2, \mu_2}^*  \gr_*}
\ar[r]^-{\cong}
& {\gr_{\mu_1, *} {h^{+, *} q_{2, \mu_2}^{+, *} }.}
}
\]

The commutativity of the left diagram follows from the associativity of the purity transform \eqref{eq:purity_assoc}.
The commutativity of the right diagram follows from the associativity of the Beck--Chevalley transform.
\end{proof}

We now discuss the associativity of the hyperbolic localization functor.
By repeating the construction of $\chi_{\hat{\mu}}$ in \eqref{eq:chi_space} using Proposition \ref{prop:matching_invertible_stack}, we obtain the following natural isomorphism
\[
\chi_{\fX} \colon \Loc_{\Grad(\fX)} \circ \Loc_{\fX} (-) |_{\Grad^2(\fX)_{\mat}}
\cong \sigma^* \Loc_{\Grad(\fX)} \circ \Loc_{\fX} (-) |_{\Grad^2(\fX)_{\mat}}
\]
where $\sigma \colon \Grad^2(\fX) \cong \Grad^2(\fX)$ denotes the swapping isomorphism.

\begin{lem}\label{lem:chi_assoc}
    Let $\fX$ be a quasi-separated Artin stack with affine stabilizers and
    $(X, (\mu_1, \mu_2), \bar{q})$ be a  $\bG_m^2$-equivariant chart
    such that $q_{\hat{\mu}} \colon X^{\hat{\mu}} \to \Grad^2(\fX)$ restricts to 
    $ X^{\hat{\mu}}_{\mat} \to \Grad^2(\fX)_{\mat}$.
    Then the following diagram commutes:
    \[
    \xymatrix@C=50pt{
     q_{\hat{\mu}}^* \circ  \Loc_{\Grad(\fX)} \circ \Loc_{\fX} (-) |_{X^{\hat{\mu}}_{\mat}}
    \ar[r]_-{\cong}^-{\chi_{\fX}}
    \ar[d]_-{\cong}^-{\rho_{q} \circ \rho_{q_{\mu_2}}} 
    & q_{\hat{\mu}}^* \circ \sigma^* \Loc_{\Grad(\fX)} \circ \Loc_{\fX} (-) |_{X^{\hat{\mu}}_{\mat}} 
    \ar[d]_-{\cong}^-{\rho_{q} \circ \rho_{q_{\mu_1}}} 
    \\
    {\Loc_{\mu_1 |_{X^{\mu_2}}} \circ \Loc_{\mu_2} \circ q^* ( - )[2e] |_{X^{\hat{\mu}}_{\mat}}  }
    \ar[r]^-{\chi_{\hat{\mu}}}_-{\cong}
    & {\Loc_{\mu_2 |_{X^{\mu_1}}} \circ \Loc_{\mu_1} \circ q^* ( - )[2e] |_{X^{\hat{\mu}}_{\mat}}. }
    }
    \]
    where we set $e \coloneqq \rank_{\mu_1-}(T_{X / \fX}) + \rank_{\mu_2-}(T_{X^{\mu_1} / \Grad(\fX)})$.
\end{lem}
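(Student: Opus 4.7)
The plan is to unfold both $\chi_{\fX}$ and $\chi_{\hat{\mu}}$ along with the $\rho$-maps, and verify commutativity at the level of the generalized base change transforms of Section \ref{sssec:gen_bc}. The key observation is that both isomorphisms arise from the same geometric input: the iterated attractor correspondence can be composed in two natural ways (through $\mu_2$ first and then $\mu_1$, or vice versa), and Corollary \ref{cor:matching_composite} on the stack side, together with Corollary \ref{cor:matching_correspondence} on the space side, asserts that on the matching locus both compositions collapse to the single correspondences
\[
\Grad^2(\fX)_{\mat} \leftarrow \Filt^2(\fX)_{\mat} \to \fX, \qquad X^{\hat{\mu}}_{\mat} \leftarrow X^{\hat{\mu}, +, +}_{\mat} \to X
\]
via proper base change isomorphisms. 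The chart $\bar{q}$ induces a smooth morphism $X^{\hat{\mu}, +, +}_{\mat} \to \Filt^2(\fX)_{\mat}$ extending $q_{\hat{\mu}}$, compatibly with the analogous morphisms attached to each of the two intermediate compositions of correspondences.

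With this in hand, I would invoke Lemma \ref{lem:Exi_associative} together with the associativity \eqref{eq:purity_assoc} of the purity transform to rewrite both composites $\rho_q \circ \rho_{q_{\mu_2}}$ and $\rho_q \circ \rho_{q_{\mu_1}}$ as a single generalized base change transform attached to the Cartesian square obtained by pulling back the collapsed correspondence $\Filt^2(\fX)_{\mat} \to \fX$ along $q \colon X \to \fX$, pre- and post-composed with Beck--Chevalley exchanges for the outer smooth maps to $X^{\hat{\mu}}_{\mat}$ and $\Grad^2(\fX)_{\mat}$. Once both paths have been placed into this unified form, the commutativity of the desired square reduces to the naturality of the proper base change isomorphisms from the first step: both $\chi_{\fX}$ and $\chi_{\hat{\mu}}$ correspond to the identity morphism under the identifications with the single collapsed correspondence, so the outer diagram commutes tautologically once these identifications are in place.

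The main obstacle will be the bookkeeping of the various Cartesian squares, purity transforms and smooth base change exchanges in the second step. Since all vertical smooth maps involved fit into Cartesian squares, Lemma \ref{lem:Beck_Chevalley_bc} reduces the purity transform to a shifted smooth base change isomorphism rather than a genuine non-invertible transform, which should render the verification largely mechanical. Finally, the equality of the shift by $2e$ on both sides follows from the identity
\[
\rank_{\mu_1-}(T_{X/\fX}) + \rank_{\mu_2-}(T_{X^{\mu_1}/\Grad(\fX)}) = \rank_{\mu_2-}(T_{X/\fX}) + \rank_{\mu_1-}(T_{X^{\mu_2}/\Grad(\fX)})
\]
valid on $X^{\hat{\mu}}_{\mat}$, which is a consequence of the matching property (a relative version of \eqref{eq:index_value_matching}) applied to the tangent complex $T_{X/\fX}$.
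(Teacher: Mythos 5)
Your proposal is correct and follows essentially the same route as the paper: the paper likewise factors $\chi_{\fX}$ (and $\chi_{\hat\mu}$) through the collapsed correspondence $\Filt^2(\fX)_{\mat}$ (resp. $X^{\hat\mu,+,+}_{\mat}$) over the matching locus, reinterprets the smooth pullbacks via Lemma~\ref{lem:Beck_Chevalley_bc}, and concludes by applying Lemma~\ref{lem:Exi_associative} to the two stacked pullable squares induced by the chart, exactly as you outline. The only organizational differences are that the paper writes $\chi_{\fX}=\chi_{\fX}''\circ\chi_{\fX}'$ and handles one half by symmetry, and it leaves implicit the shift identity you verify at the end (which indeed holds since the mixed-weight part of $T_{X/\fX}$ vanishes over $X^{\hat\mu}_{\mat}$ under the stated hypothesis on $q_{\hat\mu}$).
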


\begin{proof}
    Note that the map $\chi_{\fX}$ can be written as the composition
    \[
    \Loc_{\Grad(\fX)} \circ \Loc_{\fX} (-) |_{\Grad^2(\fX)_{\mat}} 
    \xrightarrow[\cong]{\chi_{\fX}'} \Loc_{\fX}^2 (-) |_{\Grad^2(\fX)_{\mat}} 
    \xrightarrow[\cong]{\chi_{\fX}''} \sigma^* \Loc_{\Grad(\fX)} \circ \Loc_{\fX} (-) |_{\Grad^2(\fX)_{\mat}} 
    \]
    where $\chi_{\fX}'$ and $\chi_{\fX}''$ are constructed using the base change transform 
    \eqref{eq:gen_bc} for the Cartesian diagram 
    \[
    \xymatrix@C=80pt{
    {\Filt^2(\fX)_{\mat}}
    \ar[r]^-{\Filt(\gr_{\fX})}
    \ar[d]^-{\ev_{\Filt(\fX)}}
    & {\Filt(\Grad(\fX))_{\mat}}
    \ar[d]^-{\ev_{\Grad(\fX)}} \\
    {\Filt(\fX)}
    \ar[r]^-{\gr_{\fX}}
    & {\Grad(\fX)}.
    }
    \]
    Therefore it is enough to prove the commutativity of the following diagram:
    \[
    \xymatrix@C=50pt{
     q_{\hat{\mu}}^* \circ  \Loc_{\Grad(\fX)} \circ \Loc_{\fX} (-) |_{X^{\hat{\mu}}_{\mat}}
    \ar[r]_-{\cong}^-{\chi_{\fX}'}
    \ar[d]_-{\cong}^-{\rho_{q} \circ \rho_{q_{\mu_2}}} 
    & q_{\hat{\mu}}^* \circ \Loc^2_{\fX} (-) |_{X^{\hat{\mu}}_{\mat}}
    \ar[d]_-{\cong}^-{} 
    \\
    {\Loc_{\mu_1 |_{X^{\mu_2}}} \circ \Loc_{\mu_2} \circ q^* ( - ) |_{X^{\hat{\mu}}_{\mat}} [2e]  }
    \ar[r]^-{\chi_{\hat{\mu}}'}_-{\cong}
    & {\Loc^2_{\hat{\mu}} \circ q^* ( - ) |_{X^{\hat{\mu}}_{\mat}}[2e], }
    }
    \]
    where the right vertical map is defined in the same manner as $\rho_q$ in $\eqref{eq:rho_q}$.
    By the construction of $\rho_q$ and $\chi_{\fX}'$, this can be further reduced to the commutativity of the following diagram:
    \[
    \xymatrix@C=50pt{
    q_{\hat{\mu}}^{(+, 0), *}  \ev_{\Grad(\fX)}^! \gr_{\fX, *}(-) |_{(X^{\mu_2})^{\mu_1, +}_{\mat}}
    \ar[r]^-{\cong}
    \ar[d]^-{\cong}
    & q_{\hat{\mu}}^{(+, 0), *}\Filt(\gr_{\fX})_*  \ev_{\Filt(\fX)}^! (-) |_{(X^{\mu_2})^{\mu_1, +}_{\mat}}
    \ar[d]^-{\cong} \\
    {\ev_{\mu_1|_{X^{\mu_2}}}^! \gr_{\mu_2, *} q_{\mu_2}^{+, *} ( - ) [2e']|_{(X^{\mu_2})^{\mu_1, +}_{\mat}}} 
    \ar[r]^-{\cong}
    & {   \gr_{\mu_2|_{X^{\mu_1, +}}, *}  \ev_{\mu_1|_{X^{\mu_2, +}}}^!  q_{\mu_2}^{+, *} ( - )[2e']|_{(X^{\mu_2})^{\mu_1, +}_{\mat}}  } 
    }
    \]
    where $q_{\hat{\mu}}^{(+, 0)} \colon (X^{\mu_2})^{\mu_1, +} \to \Filt(\Grad(\fX))$ is the natural morphism and we set $e' \coloneqq \rank_{\mu_1-} T_{X^{\mu_2} / \Grad(\fX)}$.
    The horizontal map is the base change map \eqref{eq:gen_bc} and the vertical map
    is also the base change map \eqref{eq:gen_bc} 
    after the identification $q_{\hat{\mu}}^{(+, 0), *} \cong q_{\hat{\mu}}^{(+, 0), !}[- 2 \dim q_{\hat{\mu}}^{(+, 0)}]$ and $q_{\mu_2}^{+, *} \cong q_{\mu_2}^{+, !}[- 2 \dim q_{\mu_2}^+]$ by Lemma \ref{lem:Beck_Chevalley_bc}.
    Then the commutativity of the above diagram follows by Lemma \ref{lem:Exi_associative} applied to the following compositions of pullable squares:
    \[
    \xymatrix{
    {X^{\hat{\mu}, +, +}_{\mat}}
    \ar[r]
    \ar[d]
    & {(X^{\mu_2})^{\mu_1, +}_{\mat}}
    \ar[d] \\
    {\Filt^2(\fX)_{\mat}}
    \ar[r] 
    \ar[d]
    & {\Filt(\Grad(\fX))_{\mat}}
    \ar[d] \\
    {\Filt(\fX)}
    \ar[r]
    & {\Grad(\fX)},
    }
    \quad
        \xymatrix{
    {X^{\hat{\mu}, +, +}_{\mat}}
    \ar[r]
    \ar[d]
    & {(X^{\mu_2})^{\mu_1, +}_{\mat}}
    \ar[d] \\
    {X^{\mu_2, +}}
    \ar[r] 
    \ar[d]
    & {X^{\mu_2}}
    \ar[d] \\
    {\Filt(\fX)}
    \ar[r]
    & {\Grad(\fX)}.
    }
    \]
\end{proof}

\subsection{Proof of the main theorem}\label{ssec:main}

In this section, we will prove Theorem \ref{maintheorem}.

\begin{thm}\label{thm:main_thm}
    Let $(\fX, s, o)$ be an oriented d-critical stack where $\fX$ is quasi-separated and has affine stabilizers.
    Then there exists a natural isomorphism
    \[
    \zeta_{\fX, s, o} \colon  \varphi_{\Grad(\fX), u^{\star} s, u^{\star} o}[I_{\fX}] \cong 
    \Loc_{\fX} (\varphi_{\fX, s, o}).
    \]
\end{thm}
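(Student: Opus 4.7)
The plan is to define $\zeta_{\fX, s, o}$ locally on a $\bG_m$-equivariant chart atlas of $\Grad(\fX)$ and glue using the descent result of Corollary \ref{cor:QCha_glue}, leveraging Descombes's theorem (Theorem \ref{thm:hyp_DT}) at the algebraic space level.

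First, for each $\bG_m$-equivariant chart $\fQ = (X, \mu, \bar{q}) \in \mathrm{Cha}^{\bG_m}_{\fX}$, the pullbacks $s_X \coloneqq q^{\star} s$ and $o_X \coloneqq q^{\star} o$ equip $(X, \mu)$ with a $\bG_m$-invariant d-critical structure together with an orientation. Three identifications make the chart-wise construction possible: the d-critical identity $(s_X)^{\mu} = q_{\mu}^{\star} u^{\star} s$, the orientation identification $(o_X)^{\mu} \cong q_{\mu}^{\star} u^{\star} o$ from \eqref{eq:ori_sm_localize_stack}, and the index identity $q_{\mu}^* I_{\fX} = I^{\mu}_X - I^{\mu}_q$ from \eqref{eq:smooth_index_stack}. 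Given these, I define $\zeta_{\fX, s, o, \fQ}$ on $X^{\mu}$ as the composite
\[
q_{\mu}^* \varphi_{\Grad(\fX), u^{\star} s, u^{\star} o}[I_{\fX}]
\xrightarrow[\cong]{\Theta_{q_{\mu}}}
\varphi_{X^{\mu}, (s_X)^{\mu}, (o_X)^{\mu}}[I^{\mu}_X - I^{\mu}_q - \dim q_{\mu}]
\]
\[
\xrightarrow[\cong]{\zeta_{\mu, s_X, o_X}}
\Loc_{\mu}(\varphi_{X, s_X, o_X})[2 d^- - \dim q]
\xrightarrow[\cong]{\Theta_q^{-1}}
\Loc_{\mu}(q^{*} \varphi_{\fX, s, o})[2 d^-]
\xrightarrow[\cong]{\rho_q^{-1}}
q_{\mu}^* \Loc_{\fX}(\varphi_{\fX, s, o}),
\]
where $d^- = \rank_- T_{X/\fX}$; the shift arithmetic works out since $\dim q - \dim q_{\mu} = d^+ + d^-$ and $I^{\mu}_q = d^+ - d^-$.

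To glue these chart-wise isomorphisms into a global morphism on $\Grad(\fX)$, I apply Corollary \ref{cor:QCha_glue} to the constructible sheaf $\sHom(\varphi_{\Grad(\fX), u^{\star} s, u^{\star} o}[I_{\fX}], \Loc_{\fX}(\varphi_{\fX, s, o}))$. This requires verifying, for any morphism $f \colon \fQ_1 \to \fQ_2$ of $\bG_m$-equivariant charts, the compatibility $(f^{\bG_m})^{\star} \zeta_{\fX, s, o, \fQ_2} = \zeta_{\fX, s, o, \fQ_1}$. The check reduces, via a moderately large but routine diagram, to the compatibility of each of the four constituent isomorphisms with smooth pullback: the associativity of $\Theta_h$ from \eqref{eq:assoc_pull_van_stack} (applied twice, once to $q$ and once to $q_{\mu}$), the associativity of $\rho$ from Lemma \ref{lem:rho_q_assoc}, the compatibility of $\zeta_{\mu, s_X, o_X}$ with $f^{\bG_m}$-pullback from Proposition \ref{prop:zeta_sm_space}, and the naturality of the orientation identifications \eqref{eq:ori_sm_localize_stack}. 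Once the local maps assemble to a morphism $\zeta_{\fX, s, o}$, it is automatically an isomorphism because it pulls back to an isomorphism under the smooth surjection $\coprod q_{\mu}$ of Proposition \ref{prop:QCha_atlas}.

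The main obstacle is the careful bookkeeping of shifts, index-function twists, and orientation corrections in the large commutative diagram needed for the cocycle check; however, the heavy analytical content is already packaged in Theorem \ref{thm:hyp_DT} and Proposition \ref{prop:zeta_sm_space}, and the orientation bookkeeping has been prepared in \S\ref{ssec:Localizing_ori} precisely so that $u^{\star} o$ is compatible with smooth pullback via \eqref{eq:ori_sm_localize_stack}. So the proof becomes essentially a formal descent argument once each ingredient's naturality is invoked.
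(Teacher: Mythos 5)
Your proposal is correct and follows essentially the same route as the paper's proof: the chart-wise isomorphism is the same composite of $\Theta_{q_\mu}$, the space-level isomorphism of Theorem \ref{thm:hyp_DT}, $\Theta_q^{-1}$ and $\rho_q^{-1}$, and the gluing is done via Corollary \ref{cor:QCha_glue} with the cocycle check resting on Proposition \ref{prop:zeta_sm_space}, \eqref{eq:assoc_pull_van_stack}, Lemma \ref{lem:rho_q_assoc} and \eqref{eq:ori_sm_localize_stack}. The only point to state explicitly is that one glues morphisms of perverse sheaves rather than sections of the complex $\sHom(\varphi_{\Grad(\fX), u^{\star}s, u^{\star}o}[I_{\fX}], \Loc_{\fX}(\varphi_{\fX,s,o}))$: your chart-level isomorphisms show that $\Loc_{\fX}(\varphi_{\fX,s,o})[-I_{\fX}]$ is perverse, so the Hom presheaf is an honest lisse-\'etale sheaf to which Corollary \ref{cor:QCha_glue} applies, which is exactly how the paper argues.
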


\begin{proof}

Take a $\bG_m$-equivariant chart $\fQ = (X, \mu, q)$.
Set $d \coloneqq \rank T_{X / \fX}$, 
$d^+ \coloneqq \rank_+ T_{X / \fX} $,
$d^- \coloneqq \rank_- T_{X / \fX} $
and $d^0 \coloneqq \rank_0 T_{X / \fX}$.
We construct an isomorphism 
\[
\zeta_{\fX, s, o, \fQ} \colon  q_{\mu}^* \varphi_{\Grad(\fX), u^{\star} s, u^{\star} o}[I_{\fX}] \cong 
    q_{\mu}^*  \Loc_{\fX} (\varphi_{\fX, s, o})
\]
so that the following diagram commutes:
\[
\xymatrix@C=50pt{
{q_{\mu}^* \varphi_{\Grad(\fX), u^{\star} s, u^{\star} o}[I_{\fX}]}
\ar[d]^-{\Theta_{q_{\mu}, u^{\star} s, u^{\star} o}}_-{\cong}
\ar[rr]_-{\cong}^-{\zeta_{\fX, s, o, \fQ}}
& {}
& {q_{\mu}^* \Loc_{\fX}(\varphi_{\fX, s, o})}
\ar[d]_-{\cong}^-{\rho_q}\\
{\varphi_{X^{\mu}, (q^{\star} s)^{\mu}, (q^{\star} o)^{\mu}}[I_{\fX} - d_0]}
\ar[r]_-{\cong}^-{\zeta_{\mu, q^{\star} s, q^{\star} o}}
& {\Loc_{\mu}( \varphi_{X, q^{\star} s, q^{\star}o}) [d^- - d^+ - d_0 ] }
\ar[r]_-{\cong}^-{\Theta_{q, s, o}^{-1}}
& {\Loc_{\mu}(q^* \varphi_{\fX, s, o}) [2d^-].} 
}
\]
As shown in Proposition \ref{prop:QCha_atlas}, 
the family $\{ q_{\mu} \colon X^{\mu} \to \Grad(\fX) \}_{\fQ \in \mathrm{Cha}_{\fX}^{\bG_m}}$ defines a covering of $\Grad(\fX)$.
In particular, the complex $\Loc_{\fX} (\varphi_{\fX, s, o}) [- I_{\fX}] $ is perverse.
Therefore the assignment
\[
\mathrm{Sch}_{/ \Grad(\fX)}^{\mathrm{sm}} \ni (t \colon T \to \Grad(\fX)) \mapsto 
\Hom(t^* \varphi_{\Grad(\fX), u^{\star} s, u^{\star} o}[I_{\fX}],
t^* \Loc_{\fX}( \varphi_{\fX, s, o}))
\]
defines a lisse-\'etale sheaf.
Therefore by using Corollary \ref{cor:QCha_glue}, we are reduced to prove the following statement:
For a morphism $h \colon \fQ_1 = (X_1, \mu_1, \bar{q}_1) \to \fQ_2 = (X_2, \mu_2, \bar{q}_2)$ of $\bG_m$-equivariant charts, the following diagram commutes:
\[
\xymatrix@C=70pt{
{q_{\mu_1}^* \varphi_{\Grad(\fX), u^{\star} s, u^{\star} o}[I_{\fX}]}
\ar[r]_-{\cong}^-{\zeta_{\fX, s, o, \fQ_1}}
\ar[d]^-{\cong}
& {q_{\mu_1}^* \Loc_{\fX}(\varphi_{\fX, s, o})}
\ar[d]^-{\cong}\\
{h^{\bG_m, *} q_{\mu_2}^* \varphi_{\Grad(\fX), u^{\star} s, u^{\star} o}[I_{\fX}]}
\ar[r]_-{\cong}^-{\zeta_{\fX, s, o, \fQ_2}}
& {h^{\bG_m, *} q_{\mu_2}^* \Loc_{\fX}(\varphi_{\fX, s, o}).}
}
\]
This follows by the construction of $\zeta_{\fX, s, o, \fQ}$, Proposition \ref{prop:zeta_sm_space}, the commutativity of the diagram \eqref{eq:assoc_pull_van_stack} and Lemma \ref{lem:rho_q_assoc}.
\end{proof}

We now state the unitality property of $\zeta_{\fX, s, o}$. Consider a morphism $\iota\colon \fX\to\Grad(\fX)$, which is a section of $u\colon \Grad(\fX)\rightarrow \fX$.
Then by the commutativity of the diagram \eqref{eq:zeta_unital_algsp} as well as Lemma \ref{lem:Gradprojectionisomorphism} we see that the following diagram commutes:
\begin{equation}\label{eq:zeta_unital_stack}
\begin{aligned}
    \xymatrix{
    {\varphi_{\fX, s, o}} 
    \ar[r]_-{\cong}^-{\zeta_{\fX, s, o}}
    \ar@/_30pt/[rr]^-{\id}
    &  \iota^* \varphi_{\Grad(\fX), u^{\star} s, u^{\star} o}
    \ar[r]^-{\cong}
    &  \varphi_{\fX, s, o} \\
    {}
    & {}
    & {}
    }
    \end{aligned}
\end{equation}
where the latter isomorphism is constructed using $\iota^{\star} u^{\star} s = s$
and the natural isomorphism $\iota^{\star} u^{\star} o \cong o$.

We now prove the  associativity of the morphism $\zeta_{\fX, s, o}$.

\begin{prop}\label{prop:zeta_assoc_stack}
    The following diagram commutes:
    \[
    \xymatrix@C=100pt{
    {\varphi_{\Grad^2(\fX)_{\mat}, u^{(2), \star} s, u_2^{\star} u^{\star} o} [I_{\fX}'] }
    \ar[d]^-{\cong}
    \ar[r]_-{\cong}^-{\zeta_{\fX, s, o} \circ \zeta_{\Grad(\fX), u^{\star} s, u^{\star} o}}
    & {\Loc_{\Grad(\fX)} \circ \Loc_{\fX} (\varphi_{\fX}) |_{\Grad^2(\fX)_{\mat}}}
    \ar[d]_-{\cong}^-{\chi_{\fX}} \\
    {\varphi_{\Grad^2(\fX)_{\mat}, u^{(2), \star} s, u_1^{\star} u^{\star} o} [I_{\fX}']}
    \ar[r]_-{\cong}^-{\sigma^{*} (\zeta_{\fX, s, o} \circ \zeta_{\Grad(\fX), u^{\star} s, u^{\star} o})}
    & {\sigma^* \Loc_{\Grad(\fX)} \circ \Loc_{\fX} (\varphi_{\fX}) |_{\Grad^2(\fX)_{\mat}}}
    }
    \]
    where we set $I_{\fX}' \coloneqq u_1^* I_{\fX} + I_{\Grad(\fX)}$ and
    the left vertical isomorphism is induced by the isomorphism of orientations 
    \eqref{eq:ori_localize_assoc}.
\end{prop}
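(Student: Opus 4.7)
The plan is to reduce the claim to its algebraic-space analogue, Proposition \ref{prop:zeta_assoc_space}, via smooth descent from $\bG_m^2$-equivariant charts. Two ingredients are needed for this reduction. First, by Lemma \ref{lem:matching_chart} together with the $n=2$ case of Proposition \ref{prop:QCha_atlas}, every closed point of $\Grad^2(\fX)_{\mathrm{mat}}$ lies in the image of some smooth map $q_{\hat\mu}\colon X^{\hat\mu}_{\mathrm{mat}}\to \Grad^2(\fX)_{\mathrm{mat}}$ coming from a $\bG_m^2$-equivariant chart $\fQ=(X,\hat\mu=(\mu_1,\mu_2),\bar q)$ that lands inside the matching locus. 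Second, after the locally constant shift by $I_\fX'$, both sides of the square live in perverse degree zero, so by (the $n=2$ case of) Corollary \ref{cor:QCha_glue} it is enough to prove that the pullback of the square via $q_{\hat\mu}^*$ commutes for every such $\fQ$.

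Having fixed $\fQ$, I would peel off $q_{\hat\mu}^*$ from each of the four morphisms. The defining compatibility of $\zeta_{\fX,s,o}$ proved in Theorem \ref{thm:main_thm}, applied to the $\bG_m$-equivariant chart $(X,\mu_2,\bar q)$, together with the analogous compatibility for $\zeta_{\Grad(\fX),u^\star s,u^\star o}$ applied to the $\bG_m$-chart $(X^{\mu_2},\mu_1|_{X^{\mu_2}},\bar q_{\mu_2})$ of $\Grad(\fX)$, converts the upper composite into the space-level composite $\zeta_{X,q^\star s,q^\star o}\circ\zeta_{X^{\mu_2},(q^\star s)^{\mu_2},(q^\star o)^{\mu_2}}$, up to the $\Theta$- and $\rho$-decorations; symmetrically on the lower row with $\mu_1,\mu_2$ interchanged. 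The associativity identities \eqref{eq:assoc_pull_van_stack}, Lemma \ref{lem:rho_q_assoc}, and Proposition \ref{prop:zeta_sm_space} ensure these decorations pass through correctly. On the left vertical side, \eqref{eq:assoc_sm_ori_stack} identifies $q_{\hat\mu}^*$ of the stack-level orientation comparison \eqref{eq:ori_localize_assoc} with the space-level comparison \eqref{eq:ori_localize_assoc_action}. On the right vertical side, Lemma \ref{lem:chi_assoc} identifies $q_{\hat\mu}^*\chi_\fX$ with the space-level associativity isomorphism $\chi_{\hat\mu}$ (multiplied by the identity on the $\rho$-pieces).

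After these translations the square becomes exactly the associativity square of Proposition \ref{prop:zeta_assoc_space} applied to $(X,\hat\mu,q^\star s)$ with orientation $q^\star o$, restricted to the matching locus, which is known to commute. The main obstacle is bookkeeping in the reduction step: one must verify that $q_{\hat\mu}^* I_\fX'$ agrees with both $I^{\mu_2}_X+I^{\mu_1}_{X^{\mu_2}}$ and $I^{\mu_1}_X+I^{\mu_2}_{X^{\mu_1}}$ on $X^{\hat\mu}_{\mathrm{mat}}$ (via \eqref{eq:smooth_index_stack} applied twice and \eqref{eq:index_value_matching}), and that the various $\Theta$, $\rho$, and shift pieces produced by the two different routes through the square cancel as strict equalities rather than only up to homotopy. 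Granting this verification, the reduction to Proposition \ref{prop:zeta_assoc_space} is complete.
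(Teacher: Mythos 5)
Your proposal is correct and follows essentially the same route as the paper: reduce via $\bG_m^2$-equivariant charts landing in the matching locus (Lemma \ref{lem:matching_chart}), then combine the orientation compatibility \eqref{eq:assoc_sm_ori_stack}, the space-level associativity of Proposition \ref{prop:zeta_assoc_space}, and Lemma \ref{lem:chi_assoc}. The extra bookkeeping you flag (via \eqref{eq:assoc_pull_van_stack}, Lemma \ref{lem:rho_q_assoc}, Proposition \ref{prop:zeta_sm_space} and the index identities) is exactly what the construction of $\zeta_{\fX,s,o,\fQ}$ in Theorem \ref{thm:main_thm} already packages, so no genuinely new verification is needed.
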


\begin{proof}
    Take a $\bG_m^2$-equivariant chart $(X, \hat{\mu}, \bar{q})$ of $\fX$.
    Using Lemma \ref{lem:matching_chart}, 
    it is enough to prove the statement after pulling back to $X_{\mat}^{\hat{\mu}}$.
    Therefore the claim follows from the commutativity of the diagram \eqref{eq:assoc_sm_ori_stack}, Proposition \ref{prop:zeta_assoc_space} and Lemma \ref{lem:chi_assoc}.
\end{proof}

We now show that the isomorphism $\zeta_{\fX, s, o}$ is compatible with the Thom--Sebastiani isomorphism:

\begin{prop}\label{prop:zeta_TS_stack}
    Let $(\fX, s, o_X)$ and $(\fY, t, o_Y)$ be oriented d-critical stacks.
    Assume that $\fX$ and $\fY$ are quasi-separated with affine stabilizers.
    Then the following diagram commutes:
     \[
    \xymatrix@C=70pt{
    {\varphi_{\Grad(\fX), u_{\fX}^{\star} s, u_{\fX}^{\star} o_{\fX}} \boxtimes \varphi_{\Grad(\fY), u_{\fY}^{\star} t, u_{\fY}^{\star} o_{\fY}} }  [I_{\fX} + I_{\fY}]
    \ar[r]^-{\zeta_{\fX, s, o_{\fX}} \boxtimes \zeta_{\fY, t, o_{\fY}}}_-{\cong}
    \ar[d]^-{\TS}_-{\cong}
    &  \Loc_{\fX}(\varphi_{\fX, s, o_{\fX}}) \boxtimes \Loc_{\fY}(\varphi_{\fY, t, o_{\fY}})
    \ar[d]^-{\TS}_-{\cong}
    \\
    {\varphi_{\Grad(\fX \times \fY), u_{\fX \times \fY}^{\star} (s \boxplus t), u_{\fX \times \fY}^{\star}  (o_{\fX} \boxtimes o_{\fY}})} [I_{\fX \times \fY}] \ar[r]_-{\cong}^-{\zeta_{\fX \times \fY, s \boxplus t, o_{\fX} \boxtimes o_{\fY}}}
    & \Loc_{\fX \times \fY}(\varphi_{\fX \times \fY, s \boxplus t, o_{\fX} \boxtimes o_{\fY}}).
    }
    \]
\end{prop}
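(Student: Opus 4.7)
The plan is to mirror the strategy used for Proposition \ref{prop:zeta_assoc_stack}, reducing the stacky compatibility to the already-established space-level statement Proposition \ref{prop:zeta_TS_space} via equivariant charts. Concretely, let $\fQ_X=(X,\mu,\bar q_X)$ and $\fQ_Y=(Y,\nu,\bar q_Y)$ be $\bG_m$-equivariant charts of $\fX$ and $\fY$, and observe that $\fQ_X\times\fQ_Y=(X\times Y,\mu\times\nu,\bar q_X\times\bar q_Y)$ is a $\bG_m^2$-equivariant chart of $\fX\times\fY$. After precomposing with the projection $\bG_m\hookrightarrow\bG_m^2$ to the anti-diagonal (or using the factorization of the attractor correspondence for $\fX\times\fY$ through $\Grad(\fX)\times\Grad(\fY)$), the smooth morphism $q_\mu\times r_\nu\colon X^\mu\times Y^\nu\to \Grad(\fX)\times\Grad(\fY)\hookrightarrow\Grad(\fX\times\fY)$ (an open-and-closed immersion onto the relevant component by Theorem \ref{thm:Grad_quotient}) is jointly surjective as $\fQ_X,\fQ_Y$ vary, so it suffices to check commutativity after pulling back along $q_\mu\times r_\nu$.

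Having reduced to the chart level, I would decompose the diagram into three layers: (i) the layer of pullback isomorphisms $\Theta$ used to define $\zeta_{\fX, s, o_X, \fQ_X}$, $\zeta_{\fY, t, o_Y, \fQ_Y}$ and $\zeta_{\fX\times\fY, s\boxplus t, o_X\boxtimes o_Y, \fQ_X\times\fQ_Y}$ (these compatibilities follow from the fact that the Thom--Sebastiani isomorphism for stacky DT perverse sheaves Proposition \ref{prop:TS_DT_stack} is defined via smooth descent from Proposition \ref{prop:TS_DT_space}, hence commutes with $\Theta_h$); (ii) the layer of space-level $\zeta_{\mu, q^\star s, q^\star o}$ isomorphisms, whose Thom--Sebastiani compatibility is exactly Proposition \ref{prop:zeta_TS_space}; and (iii) the layer of hyperbolic-localization pullback isomorphisms $\rho_{q}$, for which I need to check compatibility with Thom--Sebastiani on the target side. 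The last compatibility is essentially formal: $\rho_q$ is built out of smooth base change, the purity isomorphism $h^*[2d]\cong h^!$ for smooth morphisms, and contraction-lemma identifications, and Thom--Sebastiani \eqref{eq:Thom_Sebastiani_functor} likewise arises from standard exchange morphisms, so the two commute by diagram chase analogous to the proof of \eqref{eq:sm_pull_TS_functor}.

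The final ingredient is to match the orientation data. The pullback orientation $(q\times r)^\star u_{\fX\times\fY}^\star(o_X\boxtimes o_Y)$ on $X^{\mu\times\nu}\times Y^{\mu\times\nu}$ must be identified, through \eqref{eq:ori_sm_localize_stack} and \eqref{eq:ori_localize_product}, with $(q^\star o_X)^\mu\boxtimes(r^\star o_Y)^\nu$ computed via \eqref{eq:ori_localized_product_action}; this coherence is precisely the content of diagram \eqref{eq:ori_pull_product_compat}. Combining these three layers with Proposition \ref{prop:zeta_TS_space} at the middle layer yields the commutativity.

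The main obstacle I anticipate is bookkeeping rather than any deep step: one must track signs and shifts through the interplay of $(\rho_q,\TS)$ with $(\Theta, \TS)$ and the orientation identifications, and in particular verify that the degree shift $I_{\fX}+I_{\fY}=I_{\fX\times\fY}$ on graded components matches $(d^-_X+d^-_Y)=d^-_{X\times Y}$ after the appropriate localizations; this is the stacky analogue of the arithmetic $(d^+-d^-)_X+(d^+-d^-)_Y = (d^+-d^-)_{X\times Y}$ used in the proof of Proposition \ref{prop:zeta_sm_space}. Once these numerics are pinned down and one invokes Corollary \ref{cor:QCha_glue} to glue along the $\bG_m$-equivariant chart cover, the compatibility drops out.
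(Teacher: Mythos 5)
Your proposal is correct and follows essentially the same route as the paper: take $\bG_m$-equivariant charts of $\fX$ and $\fY$, use the orientation compatibility \eqref{eq:ori_pull_product_compat}, the smooth-pullback compatibility of the Thom--Sebastiani isomorphism (Lemma \ref{lem:TS_DT_smooth_compatible}, valid for stacks by the descent construction), and the space-level statement Proposition \ref{prop:zeta_TS_space}, leaving only the formal compatibility of $\rho_q\boxtimes\rho_r$ with $\rho_{q\times r}$, which the paper likewise dismisses as immediate from the construction of $\rho_q$. The only slip is terminological: the product chart should be restricted along the \emph{diagonal} $\bG_m\subset\bG_m^2$ (so that $(X\times Y)^{\mu\times\nu}=X^{\mu}\times Y^{\nu}$ matches $\Grad(\fX\times\fY)\cong\Grad(\fX)\times\Grad(\fY)$), not the anti-diagonal, but your parenthetical alternative via the factorization of the attractor correspondence is the correct formulation and the rest of the argument is unaffected.
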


\begin{proof}
    Take $\bG_m$-equivariant charts $(X, \mu, \bar{q})$ of $\fX$ and $(Y, \nu, \bar{r})$ of $\fY$.
    By the commutativity of the diagram \eqref{eq:ori_pull_product_compat}, Lemma \ref{lem:TS_DT_smooth_compatible} (which holds more generally for stacks by construction)  and Proposition \ref{prop:zeta_TS_space}, it is enough to prove the commutativity of the following diagram of functors:
    \[
    \xymatrix{
    {q_{\mu}^* \Loc_{\fX} ( - ) \boxtimes r_{\nu}^* \Loc_{\fY} ( - )} 
    \ar[r]^-{\rho_q \boxtimes \rho_r}
    \ar[d]^-{\cong}
    & {\Loc_{\mu} q^*( - )[2 d^{-}] \boxtimes \Loc_{\nu} r^*( - )[2 e^{-}] }
    \ar[d]^-{\cong} \\
    {(q_{\mu} \times  r_{\nu})^* \Loc_{\fX \times \fY} ( - \boxtimes - )}
    \ar[r]_-{\cong}^-{\rho_{q \times r}}
    & {\Loc_{\mu \times \nu} ( q \times r)^* ( - \boxtimes -) [2d^{-} + 2 e^{-}]}
    }
    \]
    where we set $d^{-} \coloneqq \rank_- T_{X / \fX}$ and $e^{-} = \rank_- T_{Y / \fY}$.
    This is obvious from the construction of the map $\rho_q$.
\end{proof}

Now let $(\fX, \omega_{\fX}, o)$ be an oriented $(-1)$-shifted symplectic stack and assume that $\fX^{\cl}$ is quasi-separated with affine stabilizers.
Then by Proposition \ref{prop:comparison_grad_dcrit}, an isomorphism of orientations \eqref{eq:compare_localize_ori} and equality \eqref{eq:Ind=vdim}, 
we obtain the following corollary, which summarizes Theorem \ref{thm:main_thm}, Proposition \ref{prop:zeta_assoc_stack} and Proposition \ref{prop:zeta_TS_stack} for the underlying d-critical stacks for $(-1)$-shifted symplectic stacks:

\begin{cor}[Integral isomorphism]\label{cor:Joyce_conj_attractor}
    Let $(\fX, \omega_{\fX}, o)$ be a $(-1)$-shifted symplectic stack such that $\fX^{\cl}$ is quasi-separated with affine stabilizers.
    We equip $\Grad(\fX)$ with the $(-1)$-shifted symplectic structure $u^{\star} \omega_{\fX}$ and the orientation $u^{\star} o$.
    Set $d \coloneqq \vdim \Filt(\fX)$.
    Then there exists a natural isomorphism
    \[
    \zeta_{\fX} = \zeta_{\fX, \omega_{\fX}, o} \colon 
    \varphi_{\Grad(\fX)} [d] \cong \gr_* \ev^! \varphi_{\fX}
    \]
    which we call the \defterm{integral isomorphism} with the following properties:
    \begin{itemize}[align=left]
    
        \item \textup{(Unitality) :}    
        Let $\iota \colon \fX \hookrightarrow \Grad(\fX)$ be a section of $u\colon \Grad(\fX)\rightarrow \fX$. Then 
        \begin{equation*}
        \iota^* \zeta_{\fX} = \id_{\varphi_{\fX}}.
        \end{equation*}
        
        \item \textup{(Associativity) : } 
        Set $d' \coloneqq \vdim \Filt(\Grad(\fX))$.
        The following diagram commutes:
        \begin{equation*}
        \begin{aligned}
        \xymatrix@C=40pt{
        {\varphi_{\Grad^2(\fX)_{\mat}}[d + d']}
        \ar[rr]_-{\cong}^-{\zeta_{\Grad(\fX)}}
        \ar[d]^-{\cong}
        & {}
        & {\gr_* \ev^! \varphi_{\Grad(\fX)} |_{\Grad^2(\fX)_{\mat}} [d] }
        \ar[d]_-{\cong}^-{\zeta_{\fX}} \\
        {\sigma^* \varphi_{\Grad^2(\fX)_{\mat}}[d + d'] }
        \ar[r]_-{\cong}^-{\sigma^* \zeta_{\Grad(\fX)}}
        & {\gr_* \ev^! \varphi_{\Grad(\fX)} |_{\Grad^2(\fX)_{\mat}} [d] }
        \ar[r]_-{\cong}^-{\zeta_{\fX}}
        & {\gr^{(2)}_* \ev^{(2), !} \varphi_{\fX} |_{\Grad^2(\fX)_{\mat}} }
        }
        \end{aligned}
        \end{equation*}
        where the left vertical map is constructed using the isomorphism of orientations 
        \eqref{eq:localize_ori_assoc_derived}.

        \item \textup{(Multiplicativity) : }
        Let $(\fY, \omega_{\fY}, o_{\fY})$ be oriented $(-1)$-shifted symplectic stack such that $\fY^{\cl}$ is quasi-separated with affine stabilizers.
        Set $e \coloneqq \vdim \Filt(\fY)$. 
        Then the following diagram commutes:
        \[
        \xymatrix@C=80pt{
        {\varphi_{\Grad(\fX)} \boxtimes \varphi_{\Grad(\fY)}[d + e]}
        \ar[r]_-{\cong}^-{\zeta_{\fX} \boxtimes \zeta_{\fY}}
        \ar[d]_-{\cong}^-{\TS}
        & {\gr_* \ev^! \varphi_{\fX} \boxtimes \gr_* \ev^! \varphi_{\fY}}
        \ar[d]_-{\cong}^-{\TS} \\
        {\varphi_{\Grad(\fX \times \fY)}[d + e]}
        \ar[r]_-{\cong}^-{\zeta_{\fX \times \fY}}
        & {\gr_* \ev^! (\varphi_{\fX \times \fY}).}
        }
        \]
    \end{itemize}
\end{cor}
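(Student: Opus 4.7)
The plan is to deduce the corollary from the three statements already established for d-critical stacks: Theorem \ref{thm:main_thm} (existence), Proposition \ref{prop:zeta_assoc_stack} (associativity), and Proposition \ref{prop:zeta_TS_stack} (Thom--Sebastiani compatibility). The only genuinely new content is the bookkeeping needed to pass between the derived and the classical pictures of orientations and index bundles.

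First I would define $\zeta_{\fX, \omega_{\fX}, o}$ as follows. By definition $\varphi_{\fX, \omega_{\fX}, o} = \varphi_{\fX^{\cl}, s, o^{\cl}}$, where $s$ is the underlying d-critical structure given by Theorem \ref{thm:BBBBJ}. By Proposition \ref{prop:comparison_grad_dcrit} the d-critical structure underlying $(\Grad(\fX), u^{\star}\omega_{\fX})$ is $(\Grad(\fX^{\cl}), u^{\star} s)$, and by \eqref{eq:compare_localize_ori} there is a canonical isomorphism $(u^{\star} o)^{\cl} \cong u^{\star}(o^{\cl})$. Combined with the equality $I_{\fX^{\cl}} = \vdim \Filt(\fX)|_{\Grad(\fX)} = d$ of \eqref{eq:Ind=vdim}, Theorem \ref{thm:main_thm} applied to $(\fX^{\cl}, s, o^{\cl})$ produces the desired isomorphism
\[
\zeta_{\fX} : \varphi_{\Grad(\fX), u^{\star}\omega_{\fX}, u^{\star} o}[d] \xrightarrow{\cong} \gr_* \ev^! \varphi_{\fX, \omega_{\fX}, o}.
\]

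Unitality is immediate from \eqref{eq:zeta_unital_stack} applied to the section $\iota \colon \fX \to \Grad(\fX)$, once one observes that Lemma \ref{lem:Gradprojectionisomorphism} identifies the pullback of the attractor correspondence along $\iota$ with the identity correspondence on $\fX$, so that $\iota^* \gr_* \ev^! \varphi_{\fX} \cong \varphi_{\fX}$ canonically.

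For associativity, I would apply Proposition \ref{prop:zeta_assoc_stack} to the classical oriented d-critical stack $(\fX^{\cl}, s, o^{\cl})$. The only issue is that the left vertical arrow of the diagram is specified using the \emph{derived} isomorphism \eqref{eq:localize_ori_assoc_derived}, while the classical associativity uses \eqref{eq:ori_localize_assoc}; these agree by the commutativity of \eqref{eq:compare_assoc_locallize}. The right vertical arrow $\chi_{\fX}$ was constructed directly on the classical stack and its compatibility with $\zeta$ is exactly Proposition \ref{prop:zeta_assoc_stack}. Multiplicativity follows in the same way from Proposition \ref{prop:zeta_TS_stack}, using \eqref{eq:ori_product_der_class_compati} to identify the classical truncation of the derived product orientation with the product of classical orientations.

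The main obstacle is really the one I have already done the heavy lifting for in the preceding sections: producing a localized orientation $u^{\star} o$ at the derived level whose classical truncation agrees naturally with the localized orientation at the d-critical level, and whose associativity and product compatibilities reduce at the classical level to those already established. Once the three compatibility isomorphisms \eqref{eq:compare_localize_ori}, \eqref{eq:compare_assoc_locallize}, and \eqref{eq:ori_product_der_class_compati} are in hand, the corollary is essentially a translation of the corresponding d-critical statements. No further calculation with vanishing cycles or hyperbolic localization is needed at this stage.
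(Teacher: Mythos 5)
Your proposal is correct and follows essentially the same route as the paper: the paper's proof likewise obtains $\zeta_{\fX}$ from Theorem \ref{thm:main_thm} via Proposition \ref{prop:comparison_grad_dcrit}, \eqref{eq:compare_localize_ori} and \eqref{eq:Ind=vdim}, and deduces unitality, associativity and multiplicativity from \eqref{eq:zeta_unital_stack}, Proposition \ref{prop:zeta_assoc_stack} with \eqref{eq:compare_assoc_locallize}, and Proposition \ref{prop:zeta_TS_stack} with \eqref{eq:ori_product_der_class_compati}, respectively. No gaps to report.
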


\begin{proof}
The existence of the map $\zeta_{\fX}$ follows from Theorem \ref{thm:main_thm} combined with the isomorphism of orientations \eqref{eq:compare_localize_ori}.
The unitality follows from the commutativity of the diagram \eqref{eq:zeta_unital_stack}.
The associativity follows from Proposition \ref{prop:zeta_assoc_stack} together with the commutativity of the diagram \eqref{eq:compare_assoc_locallize}.
The multiplicativity follows from Proposition \ref{prop:zeta_TS_stack} together with the commutativity of the diagram \eqref{eq:ori_product_der_class_compati}.
\end{proof}

\begin{rmk}[Other sheaf-theoretic contexts]
    Though we have proved the statement only for perverse sheaves on the analytic topology,
    the same argument works for $\ell$-adic perverse sheaves over  $(-1)$-shifted symplectic stacks defined over algebraically closed fields of characteristic zero.
    Also, using the  six-functor  formalism for mixed Hodge modules recently constructed by     Tubach \cite{Tub2},
    one can upgrade Corollary \ref{cor:Joyce_conj_attractor} to monodromic mixed Hodge modules with the same proof by replacing shifts with half Lefschetz twists.
    
\end{rmk}

\subsection{Nontrivial actions}\label{ssec-nontriv-action}

In this section we explain how to extend Theorem \ref{thm:main_thm} to the case of a nontrivial $\bG_m$-action on $\fX$.

\begin{defin}
Let $\fX$ be an Artin stack with a $\bG_m$-action $\mu$. The \defterm{index function} is the function $I_\fC\colon \fX^{\mu}\rightarrow \bZ$ given by
\[
I^\mu_{\fX}(x) \coloneqq (\rank_+ H^0(\bT_{\fX, x}) - \rank_- H^0(\bT_{\fX, x})) - (\rank_+ H^{-1}(\bT_{\fX, x}) - \rank_- H^{-1}(\bT_{\fX, x}))
\]
for a point $x\in \fX^{\mu}$.
\end{defin}

If the $\bG_m$-action on $\fX$ is trivial, the previous definition reduces to Definition \ref{def:indexstack} and if $\fX$ is an algebraic space, it reduces to Definition \ref{def:indexalgspace}.

\begin{lem}\label{lem:indexproperties}
The index function satisfies the following properties:
\begin{enumerate}
    \item The restriction of $I_{[\fX/\bG_m]}\colon \Grad([\fX/\bG_m])\rightarrow \bZ$ along the morphism $\fX^\mu\rightarrow \Grad([\fX/\bG_m])$ from Remark \ref{rmk:nontrivialactionGrad} coincides with $I^\mu_\fX$.
    \item If $\fX$ admits a $\bG_m$-invariant d-critical structure, $I^\mu_\fX\colon \fX^\mu\rightarrow \bZ$ is locally constant.
    \item If $\fX$ is an underlying stack of a $(-1)$-shifted symplectic stack $\widehat{\fX}$ equipped with a $\bG_m$-action preserving the $(-1)$-shifted symplectic structure, then $I^\mu_\fX = \vdim \widehat{\fX}^{\mu, +}|_{\fX^{\mu}}$.
\end{enumerate}
\end{lem}
\begin{proof}$ $
\begin{enumerate}
    \item For $x\in\fX^{\mu}$ we have a long exact sequence
    \[0\longrightarrow H^{-1}(\bT_{\fX, x})\longrightarrow H^{-1}(\bT_{[\fX/\bG_m], x})\longrightarrow \C\longrightarrow H^0(\bT_{\fX, x})\longrightarrow H^0(\bT_{[\fX/\bG_m], x})\longrightarrow 0,\]
    where $\C$ is concentrated in weight $0$. Comparing the formulas for the indices $I^\mu_\fX(x)$ and $I_{[\fX/\bG_m]}(x)$ we get the result.
    \item If $\fX$ admits a $\bG_m$-invariant d-critical structure, $[\fX/\bG_m]$ admits a d-critical structure, so
    \[I_{[\fX/\bG_m]}\colon \Grad([\fX/\bG_m])\rightarrow \bZ\]
    is locally constant. Using part (1) this implies that $I^\mu_\fX\colon \fX^\mu\rightarrow \bZ$ is locally constant.
    \item By Proposition \ref{prop:closedexactdecomposition} the $(-1)$-shifted symplectic structure on $\widehat{\fX}$ is canonically exact. Therefore, by \cite[Section 3.3]{Par24} the $\bG_m$-action on $\widehat{\fX}$ admits a canonical moment map $M\colon \fX\rightarrow \bA^1[-1]$, so that $\widehat{\fX}/\!/\bG_m=[M^{-1}(0)/\bG_m]$ admits a $(-1)$-shifted symplectic structure whose underlying classical truncation gives the d-critical structure on $[\fX/\bG_m]$. Using \eqref{eq:Ind=vdim} we get
    \[I_{[\fX/\bG_m]} = \vdim \Filt(\widehat{\fX}/\!/\bG_m)|_{\Grad([\fX/\bG_m])}.\]
    Using Remark \ref{rmk:nontrivialactionGrad} and part (1) we get
    \[I^\mu_\fX = \vdim ([M^{-1}(0)^{\mu, +}/\bG_m])|_{\fX^\mu}.\]
    But since $\vdim([M^{-1}(0)^{\mu, +}/\bG_m])=\vdim(\widehat{\fX}^{\mu, +})$, we get the result.
\end{enumerate}
\end{proof}

Suppose $\fX$ is a quasi-separated Artin stack with affine stabilizers equipped with a $\bG_m$-invariant d-critical structure $s$ and orientation $o$. Consider the inclusion $u\colon \fX^\mu\rightarrow \fX$ of fixed points. We can localize the d-critical structure and orientation as follows:
\begin{itemize}
\item We claim that $u^\ast s$ is a d-critical structure on $\fX^\mu$. Indeed, consider the commutative diagram
\begin{equation}\label{eq:localizationcriticalorientation}
\xymatrix{
\fX^\mu \ar^{u}[rr] \ar[d] && \fX \ar[d] \\
\fX^\mu\times B\bG_m \ar[r] & \Grad([\fX/\bG_m]) \ar[r] & [\fX/\bG_m]
}
\end{equation}
By Proposition \ref{prop:grad_dcrit} the pullback of the d-critical structure on $[\fX/\bG_m]$ to $\Grad([\fX/\bG_m])$ is still a d-critical structure. The other maps in the diagram are smooth, which implies that $u^\ast s$ is also a d-critical structure.
\item It is explained in Section \ref{ssec:Localizing_ori} that an orientation of $[\fX/\bG_m]$ restricts to an orientation of $\Grad([\fX/\bG_m])$. Pulling it back along the other smooth maps in \eqref{eq:localizationcriticalorientation} we obtain an orientation $u^\ast o$ of $\fX^\mu$.
\end{itemize}

Recall the localization functor
\[\Loc_\mu\colon D^b_c(\fX)\longrightarrow D^b_c(\fX^\mu)\]
given by
\[\Loc_\mu = \gr_*\ev^!.\]
We are ready to state a version of Theorem \ref{thm:main_thm} for nontrivial $\bG_m$-actions.

\begin{thm}\label{thm:main_thm_nontrivial}
Let $(\fX, s, o)$ be an oriented d-critical stack equipped with a $\bG_m$-action preserving $s$ and $o$, where $\fX$ is quasi-separated and has affine stabilizers. Then there exists a natural isomorphism
\[\zeta_{\mu, s, o}\colon \varphi_{\fX^\mu, u^\ast s, u^\ast o}[I^\mu_\fX]\cong \Loc_\mu(\varphi_{\fX, s, o}).\]
\end{thm}
\begin{proof}
Consider morphisms of correspondences
\[
\xymatrix{
& \fX^{\mu, +} \ar_{\gr}[dl] \ar^{\ev}[dr] \ar^{f_2}[d] & \\
\fX^{\mu} \ar^{f_1}[d] & [\fX^{\mu, +}/\bG_m] \ar_{\overline{\gr}}[dl] \ar^{\overline{\ev}}[dr] \ar^{g_2}[d] & \fX \ar^{f_3}[d] \\
\fX^{\mu}\times B\bG_m \ar^{g_1}[d] & \Filt([\fX/\bG_m]) \ar_{\widetilde{\gr}}[dl] \ar^{\widetilde{\ev}}[dr] & [\fX/\bG_m] \ar@{=}[d] \\
\Grad([\fX/\bG_m]) && [\fX/\bG_m]
}
\]

Let $u\colon \fX^{\mu}\rightarrow \fX$ and $\widetilde{u}\colon \Grad([\fX/\bG_m])\rightarrow [\fX/\bG_m]$. We denote the d-critical structure on $[\fX/\bG_m]$ by $\widetilde{s}$ and the orientation by $\widetilde{o}$, so that $s = f_3^\ast \widetilde{s}$ and $o = f_3^\ast \widetilde{o}$. By construction, we have
\[u^\ast s = f_1^\ast g_1^\ast\widetilde{u}^\ast\widetilde{s},\qquad u^\ast o = f_1^\ast g_1^\ast \widetilde{u}^\ast \widetilde{o}.\]

By Theorem \ref{thm:main_thm} we obtain an isomorphism
\[\varphi_{{\Grad([\fX/\bG_m])}, \widetilde{u}^\ast \widetilde{s}, \widetilde{u}^\ast \widetilde{o}}[I_{[\fX/\bG_m]}]\cong \widetilde{\gr}_*\widetilde{\ev}^! \varphi_{[\fX/\bG_m], \widetilde{s}, \widetilde{o}}.\]

Applying $g_1^!$, using the base change isomorphism for the bottom-left square and using Lemma \ref{lem:indexproperties}(1), we obtain an isomorphism
\[\varphi_{\fX^\mu\times B\bG_m, g_1^\ast\widetilde{u}^\ast \widetilde{s}, g_1^\ast\widetilde{u}^\ast \widetilde{o}}[I^\mu_\fX]\cong \overline{\gr}_*\overline{\ev}^! \varphi_{[\fX/\bG_m], \widetilde{s}, \widetilde{o}}.\]

Applying $f_1^!$, using the base change isomorphism for the top-left square and using $f_1^! \varphi_{\fX^\mu\times B\bG_m}\cong \varphi_{\fX^\mu}[1]$ and $f_3^! \varphi_{[\fX/\bG_m]}\cong \varphi_{\fX}[1]$ we obtain the required isomorphism
\[\varphi_{\fX^\mu, u^\ast s, u^\ast o}[I^\mu_\fX]\cong \overline{\gr}_*\overline{\ev}^! \varphi_{\fX, s, o}.\]
\end{proof}

\subsection{Integral isomorphism for critical loci}\label{ssec:int_crit}

We will describe the isomorphism $\zeta_{\fX}$ explicitly in the case of global critical locus.
Let $\fU$ be a quasi-separated smooth Artin stack with affine stabilizers.
Let $f \colon \fU \to \bA^1$ be a regular function and define $\fX \coloneqq \Crit(f)$ and equip it with the $(-1)$-shifted symplectic structure defined in Example \ref{ex:critical} and equip it with the canonical orientation $o_{\fX}^{\can}$ in
Example \ref{ex:can_ori}.

\begin{lem}\label{lem:can_ori_vanishing_stack}
    There exists a natural isomorphism
    \begin{equation}\label{eq:DT=van_stack}
    \varphi_{\fX, \omega_{\fX}, o_{\fX}^{\can}} \cong \varphi_{f}(\bQ_{\fU}[n]).
    \end{equation}
    where we set $n \coloneqq \dim \fU$.
\end{lem}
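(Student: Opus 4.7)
The plan is to first establish the lemma when $\fU = U$ is a smooth scheme of dimension $n$, and then deduce the stack case by smooth descent along an atlas.

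For the scheme case, let $X = \Crit(f)^{\cl}$ equipped with the d-critical structure $s = f + I_{X/U}^{2}$. The tautological \'etale d-critical chart $\fR^{\mathrm{taut}} = (X, \id, U, f, i)$ covers all of $X$, so by the defining property of the DT perverse sheaf recalled in \S\ref{ssec:DT_perv}, there is a natural isomorphism
\[
\omega_{\fR^{\mathrm{taut}}} \colon \varphi_{X, s, o} \cong \varphi_f|_X \otimes_{\bZ/2\bZ} Q^o_{\fR^{\mathrm{taut}}},
\]
where $\varphi_f = \varphi_f(\bQ_U[n])$ by the convention \eqref{eq:we_are_shifting_vanishing}, and $Q^o_{\fR^{\mathrm{taut}}}$ is the $\bZ/2\bZ$-local system parametrizing local isomorphisms between the chart's canonical orientation $o^{\can}_{\fR^{\mathrm{taut}}}$ and the chosen orientation $o$. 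Taking $o = o^{\can}_{X}$, which is identified with $(o^{\can}_{\fX})^{\cl}$ via \eqref{eq:can_ori_cl}, the two orientations agree tautologically, giving a canonical trivialization $Q^{o^{\can}_X}_{\fR^{\mathrm{taut}}} \cong \underline{\bZ/2\bZ}$ and hence the desired isomorphism
\[
\varphi_{\fX, \omega_{\fX}, o^{\can}_{\fX}} = \varphi_{X, s, (o^{\can}_{\fX})^{\cl}} \cong \varphi_f(\bQ_U[n]).
\]

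For the stack case, choose a smooth surjection $p \colon U \to \fU$ of relative dimension $d$ from a smooth scheme $U$ (necessarily of dimension $n+d$), set $f_U = f \circ p$ and $X_U = \Crit(f_U)$, and let $p_X \colon X_U \to \fX$ denote the induced smooth morphism. The scheme case applied to $f_U$ produces an isomorphism $\varphi_{X_U, \omega_{X_U}, o^{\can}_{X_U}} \cong \varphi_{f_U}(\bQ_U[n+d])$. Combining this with the smooth pullback isomorphism $\Theta_{p_X}$ for DT perverse sheaves, the standard smooth base change $p_X^* \varphi_f(\bQ_{\fU}[n]) \cong \varphi_{f_U}(\bQ_U[n+d])[-d]$ for vanishing cycles, and the orientation comparison \eqref{eq:can_ori_pull} identifying $o^{\can}_{X_U}$ with $p_X^{\star} o^{\can}_{\fX}$, yields a natural isomorphism
\[
p_X^* \varphi_{\fX, \omega_{\fX}, o^{\can}_{\fX}} \cong p_X^* \varphi_f(\bQ_{\fU}[n]).
\]

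The principal obstacle is to descend this isomorphism along $p_X$. For this one must verify that the two pulled-back isomorphisms agree on the fibre product $U \times_{\fU} U$, which via Proposition \ref{prop:sm_chart} reduces to the compatibility of the orientation-pullback associativity \eqref{eq:can_ori_pull_assoc} with the functoriality \eqref{eq:assoc_pull_van_stack} of $\Theta$ under composition of smooth morphisms, and the analogous associativity for the smooth base change of vanishing cycles. Both of these are built into the formalism developed in \S\ref{ssec:DT_perv} and the discussion of canonical orientations, so the cocycle condition holds and the isomorphism descends uniquely to $\fX$.
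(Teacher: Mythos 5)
Your proof is correct and follows essentially the same route as the paper: pull back to a smooth atlas, use the tautological \'etale d-critical chart to trivialize the orientation local system $Q^{o^{\can}}_{\fR}$ (via \eqref{eq:can_ori_cl} and the pullback comparison of canonical orientations), and descend the resulting isomorphism using the functoriality \eqref{eq:assoc_pull_van_stack} of $\Theta$ together with \eqref{eq:van_natural}. Splitting the argument into a scheme case plus descent is only a cosmetic reorganization of the paper's proof, which performs the identical chart computation directly on the atlas.
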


\begin{proof}
    We let $(\fX^{\cl}, s)$ denote the underlying d-critical structure of $(\fX, \omega_{\fX})$ and  $o_{\fX^{\cl}}^{\can}$ denotes the underlying orientation.
    Take a smooth cover $\tilde{q} \colon U \to \fU$ and let $\tilde{f} \colon U \to \bA^1$ be the composition $U \to \fU \to \bA^1$ and set
    $X \coloneqq \Crit(\tilde{f})$.
    We let $q \colon X^{\cl} \to \fX^{\cl}$ be the canonical morphism.
    Then, arguing as the proof of \eqref{eq:can_ori_pull} and using \eqref{eq:can_ori_cl},
    we see that the natural isomorphism $\tilde{q}^* K_{\fU} \otimes K_{U / \fU} \cong K_{U}$ defines an isomorphism of orientations 
    \begin{equation}\label{eq:can_ori_smpullback_stack}
    o_{X^{\cl}}^{\can} \cong q^{\star} o_{\fX^{\cl}}^{\can}.
    \end{equation}
    We let $\fR = (\Crit(f)^{\cl}, \id, U, f, i)$ be the tautological \'etale d-critical chart for $(X^{\cl}, q^{\star} s)$. Then the $\bZZ$-local system $Q^{o}_{\fR}$ is canonically trivial.
    In particular, we have a natural isomorphism
    \[
    \varphi_{X^{\cl}, q^{\star}s, o_{X^{\cl}}^{\can}} \cong \varphi_{\tilde{f}}.
    \]
    Then we obtain a natural isomorphism
    \[
    q^{*} \varphi_{\fX^{\cl}, s, o_{\fX^{\cl}}^{\can}} \xrightarrow[\cong]{\Theta_{q, s, o_{\fX^{\cl}}^{\can }}}  \varphi_{X^{\cl}, q^{\star}s, o_{X^{\cl}}^{\can}}[-d] \cong \varphi_{\tilde{f}}[-d] \cong q^{*}\varphi_{f}(\bQ_{\fU}[n])
    \]
    where we set $d \coloneqq \dim q$ and the final isomorphism is constructed using \eqref{eq:van_natural}.
    It is clear from the commutativity of the diagram \eqref{eq:assoc_pull_van_stack}
    that it descends to the desired isomorphism.
\end{proof}

We keep the notation. 
By Lemma \ref{lem:crit_localize}, we have a natural equivalence of $(-1)$-shifted symplectic stacks
\[
\Grad(\fX) \cong \Crit (f |_{\Grad(\fU)}).
\]
By arguing as the construction of the map \eqref{eq:can_ori_loc_action}, we can construct a natural isomorphism
\begin{equation}\label{eq:can_ori_localize_stack}
    o_{\Grad(\fX)}^{\can} \cong u^{\star} o_{\fX}^{\can}.
\end{equation}
By arguing as Example \ref{ex:hyp_const}, we obtain a natural isomorphism
\begin{equation}\label{eq:const_loc_stack}
\Loc_{\fU}(\bQ_{\fU}) \cong \bQ_{\Grad(\fU)}[- 2n^-]
\end{equation}
where we set $n^- \coloneqq \rank T_{\fU}|_{\Grad(\fU)}^- $.
With these preparations, we can explicitly describe $\zeta_{\fX, \omega_{\fX}, o_{\fX}^{\can}}$:

\begin{prop}\label{prop:compati_CoHA_pre}
    Set $f_0 \coloneqq f |_{\Grad(\fU)}$, $n \coloneqq \dim \fU$ and $n_0 \coloneqq \dim \Grad(\fU)$.
    Then the following diagram commutes:
    \begin{equation}\label{eq:compati_CoHA_pre}
    \begin{aligned}
    \xymatrix@C=50pt{
    {\varphi_{\Grad(\fX), u^{\star} \omega_{\fX}, u^{\star} o^{\can}_{\fX}}[\vdim \Filt(\fX)]} 
    \ar[d]_-{\cong}^-{\eqref{eq:DT=van_stack}}
    \ar[rr]_-{\cong}^-{\zeta_{\fX, \omega_{\fX}, o_{\fX}^{\can}}}
    & {}
    & {\Loc_{\fX}(\varphi_{\fX, \omega_{\fX}, o_{\fX}^{\can}}) }
    \ar[d]_-{\cong}^-{\eqref{eq:DT=van_stack}} \\
    {\varphi_{f_0}(\bQ_{\Grad(\fU)})[\vdim \Filt(\fX) + n_0]}
    \ar[r]_-{\cong}^-{\eqref{eq:const_loc_stack}}
    & {\varphi_{f_0} (\Loc_{\fU}(\bQ_{\fU}))[n]}
    \ar[r]_-{\cong}^-{\eqref{eq:van_natural}}
    & {\Loc_{\fX}(\varphi_{f} (\bQ_{\fU}) ) [n] }
    }
    \end{aligned}
    \end{equation}
    where the left vertical map is constructed using the identification \eqref{eq:can_ori_localize_stack}.
    
\end{prop}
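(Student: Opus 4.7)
The strategy is to reduce to the case where $\fU$ is an algebraic space and then unpack the construction of the integral isomorphism in Theorem \ref{thm:main_thm}. By Proposition \ref{prop:QCha_atlas}, $\Grad(\fU)$ is covered by smooth morphisms of the form $q_{\mu}\colon U^{\mu}\to \Grad(\fU)$ coming from $\bG_m$-equivariant charts $(U,\mu,\bar{q})$ of $\fU$. Since both sides of the diagram are naturally identified with their smooth pullbacks under $q_\mu$ (for the top row via Proposition \ref{prop:zeta_sm_space} and Lemma \ref{lem:rho_q_assoc}; for the bottom row via the standard smooth base change for vanishing cycles in \eqref{eq:sm_localization_van} together with \eqref{eq:can_ori_smpullback_stack}), it suffices to check commutativity after pulling back along each such $q_{\mu}$. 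This reduces the problem to the tautological setup where $U$ is a smooth scheme with $\bG_m$-action $\mu$, $\tilde{f}=f\circ\tilde{q}\colon U\to\bA^1$ is $\bG_m$-invariant, and $X=\Crit(\tilde{f})$.

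In this reduced situation, I would use the tautological \'etale d-critical chart $\fR^{\mathrm{taut}}=(X^{\cl},\id,U,\tilde f,i)$. For this chart, the graded $\bZ/2\bZ$-local system $Q_{\fR^{\mathrm{taut}}}^{o^{\can}}$ attached to the canonical orientation is canonically trivial, and the isomorphism $\omega_{\fR^{\mathrm{taut}}}$ becomes precisely the identification $\varphi_{X^{\cl}}\cong \varphi_{\tilde f}$ underlying \eqref{eq:DT=van_stack}. The analogous statement holds for the tautological d-critical chart of $\Grad(X)^{\cl}=\Crit(\tilde f|_{U^{\mu}})^{\cl}$, and the compatibility of canonical orientations under localization in \eqref{eq:can_ori_loc_action} matches the identification \eqref{eq:can_ori_localize_stack}. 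Consequently, the map $\beta_{\fR^{\mathrm{taut}}}$ appearing in the proof of Theorem \ref{thm:hyp_DT} becomes the identity, so the definition of $\zeta_{X^{\cl},\tilde f+I^2,o^{\can}}$ collapses precisely to the natural transformation $\zeta_{\mu,\tilde f}\colon \varphi_{\tilde f|_{U^{\mu}}}\circ \Loc_{\mu_0}\xrightarrow{\sim}\Loc_{\mu_0}\circ\varphi_{\tilde f}$ of Proposition \ref{prop:van_commute_hyp}.

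It then remains to observe that the bottom horizontal composite of \eqref{eq:compati_CoHA_pre}, after pullback to the chart, is \emph{also} built out of $\zeta_{\mu,\tilde f}$ applied to the constant sheaf $\bQ_U$: the identification \eqref{eq:const_loc_stack} in the chart reduces to the canonical trivialization $\Loc_{\mu}(\bQ_U)\cong \bQ_{U^{\mu}}[-2n^-]$ of Example \ref{ex:hyp_const}, and the map \eqref{eq:van_natural} in the chart is, by definition of $\zeta_{\mu,\tilde f}$, the inverse of the latter isomorphism composed with $\zeta_{\mu,\tilde f}$ on $\bQ_U$. Thus both composites in \eqref{eq:compati_CoHA_pre} equal $\zeta_{\mu,\tilde f}(\bQ_U[\dim U])$ after the trivial identifications, which proves commutativity.

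The main obstacle will be a careful bookkeeping of the shifts and signs: one must verify that the degree shifts $\vdim\Filt(\fX)$, $n_0$, $n$, and the hidden shift $2n^-$ introduced by \eqref{eq:const_loc_stack} match using the identity $I_{\fX}=\vdim\Filt(\fX)|_{\Grad(\fX)}$ from \eqref{eq:Ind=vdim}, and that the various trivializations of the canonical orientation, of $Q_{\fR^{\mathrm{taut}}}^{o^{\can}}$, and of the hyperbolic localization of the constant sheaf assemble consistently, with the sign conventions of \eqref{eq:can_ori} and Remark \ref{rmk:sign_canori} absorbed into the isomorphisms \eqref{eq:can_ori_loc_action} and \eqref{eq:can_ori_localize_stack}. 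Once these bookkeeping issues are handled, the diagram reduces to a tautology by the above analysis.
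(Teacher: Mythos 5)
Your proposal is correct and follows essentially the same route as the paper: reduce via $\bG_m$-equivariant charts (Proposition \ref{prop:QCha_atlas}) using the smooth-pullback compatibility of all maps, then observe that on the tautological d-critical chart the orientation local system $Q^{o^{\can}}_{\fR^{\mathrm{taut}}}$ is trivial, the comparison \eqref{eq:can_ori_loc_action} matches \eqref{eq:can_ori_localize_stack}, and $\zeta$ collapses to the exchange map $\zeta_{\mu,\tilde f}$ of Proposition \ref{prop:van_commute_hyp}, with shifts matched via \eqref{eq:Ind=vdim} and Example \ref{ex:hyp_const}. The paper's proof does exactly this, spelling out the orientation-compatibility square (argued as in \eqref{eq:can_ori_loc_pull_commutes}) that you subsume under your final bookkeeping remark.
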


\begin{proof}
    Take a $\bG_m$-equivariant chart $(U, \mu, \bar{\tilde{q}})$ of $\fU$.
    It is enough to prove the claim after pulling back the diagram to $X^{\mu}$.
    Set $\tilde{f} \coloneqq f \circ q$, $\tilde{f}_0 \coloneqq f|_{U^{\mu}}$ and $X \coloneqq \Crit(f)$.
    Let $q \colon X^{\cl} \to \fX^{\cl}$ be the natural morphism.
    Arguing as the proof of the commutativity of the diagram \eqref{eq:can_ori_loc_pull_commutes},
    we see that the following diagram commutes:
    \[
    \xymatrix@C=50pt{
    {(q^{\star} o^{\can}_{\fX^{\cl}})^{\mu}}
    \ar[r]_-{\cong}^-{\eqref{eq:can_ori_smpullback_stack}}
    \ar[d]_-{\cong}^-{\eqref{eq:ori_sm_localize_stack}}
    & {(o^{\can}_{X^{\cl}})^{\mu}}
    \ar[r]_-{\cong}^-{\eqref{eq:ori_beta}}
    & o_{X^{\mu, \cl}}^{\can}
    \ar@{=}[d] \\
    {q_{\mu}^{\star} u^{\star} o_{\fX^{\cl}}^{\can}}
    \ar[r]_-{\cong}^-{\eqref{eq:can_ori_localize_stack}}
    & q_{\mu}^{\star}  o_{\Grad(\fX)^{\cl}}^{\can}
    \ar[r]_-{\cong}^-{\eqref{eq:can_ori_smpullback_stack}}
    & {o^{\can}_{X^{\mu, \cl}}}.
    }
    \]
    Using the commutativity of this diagram and the fact that
     each map appearing in the diagram \eqref{eq:compati_CoHA_pre} commutes with smooth pullback (which follows immediately from the construction),
     it is enough to prove the commutativity of the following diagram:
     \[
     \xymatrix@C=50pt{
     {\varphi_{X^{\mu, \cl}, \tilde{s}^{\mu}, (o_{X^{\cl}}^{\can})^{\mu}}[I^{\mu}_{X^{\cl}}]}
     \ar[rr]_-{\cong}^-{\zeta_{\mu, \tilde{s}, o_{X^{\cl}}^{\can}}}
     \ar[d]_-{\cong}
     & {}
     & {\Loc_{\mu} (\varphi_{X^{\cl},  \tilde{s}, o_{X^{\cl}}^{\can}}) }
     \ar[d]_-{\cong} 
     \\
     {\varphi_{\tilde{f}_0} (\bQ_{U^{\mu}})[I^{\mu}_{X^{\cl}} + m_0] }
     \ar[r]_-{\cong}^-{\eqref{eq:hyp_const}}
     & {\varphi_{\tilde{f}_0} ( \Loc_{\mu}(\bQ_{U}))[m]}
     \ar[r]_-{\cong}^-{\eqref{eq:van_natural}}
     & {\Loc_{\mu}(\varphi_{\tilde{f}} (\bQ_{U}) )[m]}
     }
     \]
     where we set $m \coloneqq \dim U$, $m_0 \coloneqq \dim U^{\mu}$, 
     $\tilde{s}$ is the canonical d-critical structure on $X^{\cl}$ defined using the critical locus description and
     the left vertical map is constructed using the natural isomorphism 
     $(o_{X^{\cl}}^{\can})^{\mu} \cong o_{X^{\mu, \cl}}^{\can}$.
     This is immediate from the construction of $\zeta_{\mu, \tilde{s}, o_{X^{\cl}}^{\can}}$ in Theorem \ref{thm:hyp_DT}.
     
\end{proof}

\subsection{Ungraded cases}\label{ssec-sign-dep}

Throughout the paper, we have been working with orientations with gradings, even though the DT perverse sheaves themselves depend only on the underlying ungraded orientations.
In this subsection, we discuss how to obtain results for d-critical stacks with ungraded orientations from the graded cases.

Let $\mathrm{pt}$ be a point with the trivial d-critical structure $s^{\mathrm{triv}}$.
We let $o_{\mathrm{pt}}^{\mathrm{odd}}$ and $o_{\mathrm{pt}}^{\mathrm{even}}$ be orientations on $\mathrm{pt}$ with the odd and even gradings respectively.
Consider an isomorphism of orientations \footnote{One may assume that the role of $\sqrt{-1}$ and $- \sqrt{-1}$ are interchangeable here. However, we have already implicitly chosen $\sqrt{-1}$ when constructing the isomorphism $c_y$ in \eqref{eq:quad_trivialize} to orient the complex plane.}
\[
o_{\mathrm{pt}}^{\mathrm{odd}} \boxtimes o_{\mathrm{pt}}^{\mathrm{odd}} \cong o_{\mathrm{pt}}^{\mathrm{even}}, \quad 1 \boxtimes 1 \mapsto - \sqrt{-1}.
\]
which, together with the Thom--Sebastiani isomorphism, induces an isomorphism
\begin{equation}
\varphi_{\mathrm{pt}, s^{\mathrm{triv}}, o_{\mathrm{pt}}^{\mathrm{odd}}} \boxtimes \varphi_{\mathrm{pt}, s^{\mathrm{triv}}, o_{\mathrm{pt}}^{\mathrm{odd}}} \cong \varphi_{\mathrm{pt}, s^{\mathrm{triv}}, o_{\mathrm{pt}}^{\mathrm{even}}} \cong \mathbb{Q}_{\mathrm{pt}}. \label{eq-square-triv}
\end{equation}
In particular, there is an isomorphism
\[
 a \colon \varphi_{\mathrm{pt}, s^{\mathrm{triv}}, o_{\mathrm{pt}}^{\mathrm{odd}}} \cong \mathbb{Q}_{\mathrm{pt}}.
\]
It is shown in \cite[Lemma 3.4]{kin21} one can choose $a$ so that composite 
\begin{equation}
\mathbb{Q}_{\mathrm{pt}} \cong 
\mathbb{Q}_{\mathrm{pt}} \boxtimes \mathbb{Q}_{\mathrm{pt}} \xrightarrow[\cong]{a^{-1} \boxtimes a^{-1}}\varphi_{\mathrm{pt}, s^{\mathrm{triv}}, o_{\mathrm{pt}}^{\mathrm{odd}}} \boxtimes \varphi_{\mathrm{pt}, s^{\mathrm{triv}}, o_{\mathrm{pt}}^{\mathrm{odd}}} \xrightarrow[\cong]{\eqref{eq-square-triv}}  \mathbb{Q}_{\mathrm{pt}}
\end{equation}
is the identity map, and we will fix such an $a$ throughout the subsection.

Let $(\fX, s)$ be a d-critical stack with $\fX$ quasi-separated and having affine stabilizers.
We let $o^{\mathrm{ungr}}$ be an ungraded orientation, i.e., a choice of an ungraded line bundle $\cL$ on $\fX^{\mathrm{red}}$ together with an isomorphism of ungraded line bundles $o^{\mathrm{ungr}} \colon \mathcal{L}^{\otimes 2} \cong K_{\fX, s}$.
We let $o^{\mathrm{even}}$ and $o^{\mathrm{odd}}$ be the orientations on $(\fX, s)$ with odd and even gradings having $o^{\mathrm{ungr}}$ as the underlying ungraded orientations.
Then we have a natural isomorphism
\begin{equation}\label{eq-even-odd-exchange}
\varphi_{\fX, s, o^{\mathrm{odd}}} \xrightarrow[\cong]{\mathrm{TS}^{-1}} \varphi_{\fX, s, o^{\mathrm{even}}} \boxtimes \varphi_{\mathrm{pt}, s^{\mathrm{triv}}, o_{\mathrm{pt}}^{\mathrm{odd}}} \xrightarrow[\cong ]{\id \boxtimes a^{-1}}
\varphi_{\fX, s, o^{\mathrm{even}}}. 
\end{equation}
The associativity of the Thom--Sebastiani isomorphism \eqref{eq:TS_assoc_stack} implies that this is inverse to the following composite:
\begin{equation}\label{eq-even-odd-inverse}
\varphi_{\fX, s, o^{\mathrm{even}}} \xrightarrow[\cong]{\mathrm{TS}^{-1}} \varphi_{\fX, s, o^{\mathrm{odd}}} \boxtimes \varphi_{\mathrm{pt}, s^{\mathrm{triv}}, o_{\mathrm{pt}}^{\mathrm{odd}}} \xrightarrow[\cong ]{\id \boxtimes a^{-1}}
\varphi_{\fX, s, o^{\mathrm{odd}}}. 
\end{equation}
We will set
\[
\varphi_{\fX, s, o^{\mathrm{ungr}}} \coloneqq \varphi_{\fX, s, o^{\mathrm{even}}}.
\]
By Theorem \ref{thm:main_thm} and \eqref{eq-even-odd-exchange}, we have a natural isomorphism
\begin{equation}
    \zeta_{\fX, s, o^{\mathrm{ungr}}} \colon  \varphi_{\Grad(\fX), u^{\star} s, u^{\star} o^{\mathrm{ungr}}}[I_{\fX}] \cong 
    \Loc_{\fX} (\varphi_{\fX, s, o^{\mathrm{ungr}}}), \label{eq-ungr-zeta}
\end{equation}
where $u^{\star} o^{\mathrm{ungr}}$ is defined as the underlying ungraded line bundle of $u^{\star} o^{\mathrm{even}}$.
For an orientation $o$ on $\fX$ with $o^{\mathrm{ungr}}$ the underlying ungraded orientation, we construct an isomorphism
\begin{equation}
 (u^{\star } o )^{\mathrm{ungr}} \cong u^{\star} o^{\mathrm{ungr}}\label{eq-ungr-local-compat}
\end{equation}
with the underlying map of line bundles given by $\sqrt{-1}^{I_{\fX} \cdot |o|}$.
Using \eqref{eq-even-odd-exchange}, we define an isomorphism 
\begin{equation}
\varphi_{\fX, s, o} \cong \varphi_{\fX, s, o^{\mathrm{ungr}}}. \label{eq-phi-ungr}
\end{equation}
Then one easily sees that the following diagram commutes:
\begin{equation}\label{eq-zeta-ungr-compati}
\begin{tikzcd}
	{\varphi_{\Grad(\fX), u^\star s, u^{\star} o}[I_{\fX}]} &[20pt] &[20pt] {\mathrm{Loc}_{\fX}(\varphi_{\fX, s, o})} \\
	{\varphi_{\Grad(\fX), u^\star s, (u^{\star} o)^{\mathrm{ungr}}}[I_{\fX}]} & {\varphi_{\Grad(\fX), u^\star s, u^{\star} o^{\mathrm{ungr}}}} & {\mathrm{Loc}_{\fX}( \varphi_{\fX, s, o^{\mathrm{ungr}}})}
	\arrow["{{\zeta_{\fX, s, o}}}", from=1-1, to=1-3]
	\arrow["{\eqref{eq-phi-ungr}}"', from=1-1, to=2-1]
	\arrow["{\eqref{eq-phi-ungr}}", from=1-3, to=2-3]
	\arrow["{{\eqref{eq-ungr-local-compat}}}"', from=2-1, to=2-2]
	\arrow["\zeta_{\fX, s,o^{\mathrm{ungr}}}"', from=2-2, to=2-3]
\end{tikzcd}
\end{equation}
We construct an isomorphism
\begin{equation}
u_1^{\star} u^{\star} o_{\mathrm{ungr}} \cong u_2^{\star} u^{\star} o_{\mathrm{ungr}}.
\label{eq:ori_localize_assoc_ungr}
\end{equation}
so that the following diagram commutes:
\[\begin{tikzcd}
	{u_1^{\star} u^{\star}o^{\mathrm{ungr}}} & {u_1^{\star} (u^{\star}o^{\mathrm{even}})^{\mathrm{ungr}}} & {(u_1^{\star} u^{\star}o^{\mathrm{even}})^{\mathrm{ungr}}} \\
	{u_2^{\star} u^{\star}o^{\mathrm{ungr}}} & {u_2^{\star} (u^{\star}o^{\mathrm{even}})^{\mathrm{ungr}}} & {(u_2^{\star} u^{\star}o^{\mathrm{even}})^{\mathrm{ungr}}.}
	\arrow["\eqref{eq-ungr-local-compat}", from=1-1, to=1-2]
	\arrow["\eqref{eq:ori_localize_assoc_ungr}"',from=1-1, to=2-1]
	\arrow["\eqref{eq-ungr-local-compat}",from=1-2, to=1-3]
	\arrow["\eqref{eq:ori_localize_assoc}", from=1-3, to=2-3]
	\arrow["\eqref{eq-ungr-local-compat}",from=2-1, to=2-2]
	\arrow["\eqref{eq-ungr-local-compat}",from=2-2, to=2-3]
\end{tikzcd}\]
Then it follows from the commutativity of \eqref{eq-zeta-ungr-compati} that the associativity statement analogous to Proposition \ref{prop:zeta_assoc_stack} holds for the isomorphisms \eqref{eq-ungr-zeta}.

\section{Applications to the moduli of objects in 3CY categories}

In the previous sections, we have established a hyperbolic localization statement for d-critical stacks. In this section, we explain how to use this to define cohomological Hall algebras for $3$Calabi--Yau categories.

In \S \ref{ssec:CoHA}, we will construct the cohomological Hall algebras for a moduli-like stack introduced in \S \ref{ssec:Moduli-like}.
In \S \ref{ssec:moduli_dg} --- \S \ref{ssec:moduli_attractor}, we will show that the setup in \S \ref{ssec:CoHA} can be used to construct the cohomological Hall algebras for $3$-Calabi--Yau categories with  strong orientation data.
In \S \ref{ssec:CYcompori}, we will give examples of $3$-Calabi--Yau categories with strong orientation data using deformed Calabi--Yau completion.
In \S \ref{ssec:comparisonKS}, we will show that our construction recovers Kontsevich--Soibelman's cohomological Hall algebras for quivers with potentials.

\subsection{Moduli-like stacks}\label{ssec:Moduli-like}

In this section we introduce the geometric setting necessary to define the cohomological Hall algebra. It will be given by a $(-1)$-shifted symplectic stack with a $B\bG_m$-action and an addition map satisfying certain conditions. For a $B\bG_m$-action on a stack $\fX$, let $\Psi\colon B\bG_m\times \fX\rightarrow \fX$ be the corresponding action map. For $n\in\bZ$ we obtain maps
\[\iota_n\colon \fX\longrightarrow \Grad(\fX)\]
adjoint to
\[B\bG_m\times \fX\xrightarrow{(-)^n\times \id} B\bG_m\times \fX\xrightarrow{\Psi}\fX,\]
where the first map is induced by the $n$-th power map on $\bG_m$. For instance, $\iota_0=\iota\colon \fX\rightarrow \Grad(\fX)$ is induced by the projection $B\bG_m\rightarrow \pt$. Similarly, for $(n,m)\in\bZ$ we obtain maps
\[\iota_{n, m}\colon \fX\rightarrow \Grad^2(\fX)\]
adjoint to
\[B\bG_m\times B\bG_m\times \fX\xrightarrow{(-)^n\times (-)^m\times \id}B\bG_m\times B\bG_m\times \fX\xrightarrow{\id\times \Psi}B\bG_m\times \fX\xrightarrow{\Psi} \fX.\]

\begin{defin}\label{def:modulilike}
A \defterm{moduli-like stack} is a derived stack $\fM$ which is locally geometric and locally of finite presentation, together with the following data:
\begin{enumerate}
    \item A $B\bG_m$-action on $\fM$.
    \item An $\bE_\infty$-structure on $\fM$ commuting with the $B\bG_m$-action. We denote by $\Phi_n\colon \fM^n\rightarrow \fM$ the corresponding addition maps and by $\eta\colon \pt\rightarrow \fM$ the unit map.
    \item A $(-1)$-shifted symplectic structure $\omega$ on $\fM$. We require it to be additive, in the sense that there is a homotopy $\Phi_2^\ast\omega\sim \omega\boxplus\omega$ of $(-1)$-shifted closed two-forms on $\fM\times \fM$ and a homotopy of homotopies of $(-1)$-shifted closed two-forms witnessing commutativity of the pullbacks along
    \[
    \xymatrix@C=1cm{
    \fM\times\fM\times \fM \ar^-{\id\times \Phi_2}[r] \ar^{\Phi_2\times \id}[d] & \fM\times \fM \ar^{\Phi_2}[d] \\
    \fM\times \fM \ar^{\Phi_2}[r] & \fM
    }
    \]
\end{enumerate}
These data are required to satisfy the following conditions:
\begin{enumerate}
    \item The composites
    \[\tilde{\Phi}_2\colon \fM^2\xrightarrow{\iota_0\times \iota_1}\Grad(\fM^2)\xrightarrow{\Grad(\Phi_2)}\Grad(\fM)\]
    and
    \[\tilde{\Phi}_3\colon \fM^3\xrightarrow{\iota_{0,0}\times \iota_{1,0}\times \iota_{1,1}} \Grad^2(\fM^3)\xrightarrow{\Grad^2(\Phi_3)}\Grad^2(\fM)\]
    are open immersions.
    \item The map $\eta\colon \pt\rightarrow \fM$ is an open immersion.
    \item The image of $\tilde{\Phi}_3$ lands in the matching locus of $\Grad^2(\fM)$.
    \item The base change of $\Filt(\fM^{\cl})\rightarrow \Grad(\fM^{\cl})$ along $\iota_n\colon \fM^{\cl}\rightarrow \Grad(\fM^{\cl})$ is an isomorphism.
\end{enumerate}
\end{defin}

\begin{rmk}
As we will explain in Proposition \ref{prop:modulilikemoduli}, a natural example of a moduli-like stack is the derived moduli stack of objects in a $3$-Calabi--Yau category.
\end{rmk}

\begin{rmk}
A structure similar to that of a moduli-like stack was introduced in \cite[Assumption 3.1]{JoyceVertex}.
\end{rmk}

Now let $\fM$ be a moduli-like stack. By Corollary \ref{cor:lagattractorcorrespondence} we have a $(-1)$-shifted Lagrangian structure on the attractor correspondence $\Grad(\fM)\xleftarrow{\gr}\Filt(\fM)\xrightarrow{\ev}\fM$. Its pullback along $\tilde{\Phi}_2\colon \fM^2\rightarrow \Grad(\fM)$ gives a $(-1)$-shifted Lagrangian correspondence
\begin{equation}\label{eq:M2filt}
\begin{aligned}
\xymatrix{
& \fM^{2-\filt} \ar_{\gr}[dl] \ar^{\ev}[dr] & \\
\fM^2 && \fM
}
\end{aligned}
\end{equation}
together with a splitting $\fM^2\rightarrow \fM^{2-\filt}$ of the map $\gr\colon \fM^{2-\filt}\rightarrow \fM^2$. Let
\[\chi\colon \fM\times \fM\rightarrow \bZ\]
be the locally-constant function obtained by restricting the index function $I_{\fM}\colon \Grad(\fM)\rightarrow \bZ$ along $\tilde{\Phi}_2$.

Using Lemma \ref{lem:Lagrangiancorrespondenceretract} the above Lagrangian correspondence gives an isomorphism
\begin{equation}\label{eq:orientationmultiplicative}
\Phi_2^*\ori_{\fM}\cong \ori_{\fM}\boxtimes \ori_{\fM}
\end{equation}
of graded $\mu_2$-gerbes on $\fM^2$. In particular, if $o$ is an orientation of $\fM$, there is a canonical orientation $\Phi_2^\star o$ of $\fM^2$.

Moreover, by Corollary \ref{cor:matching_composite} we obtain that there is an equivalence between the composite Lagrangian correspondences
\[
\xymatrix{
& \fM\times \fM^{2-\filt} \ar_{\id\times\gr}[dl] \ar^{\id\times\ev}[dr] && \fM^{2-\filt} \ar_{\gr}[dl] \ar^{\ev}[dr] & \\
\fM^3 && \fM^2 && \fM
}
\]
and
\[
\xymatrix{
& \fM^{2-\filt}\times \fM \ar_{\gr\times \id}[dl] \ar^{\ev\times \id}[dr] && \fM^{2-\filt} \ar_{\gr}[dl] \ar^{\ev}[dr] & \\
\fM^3 && \fM^2 && \fM.
}
\]
In particular, we get that $\chi$ satisfies a cocycle relation:
\begin{equation}\label{eq:chicocycle}
(\Phi_2^\star\boxtimes \id) \chi + \pr^\star_{12}\chi = (\id\boxtimes \Phi_2^\star)\chi + \pr^\star_{23}\chi.
\end{equation}

We denote the composite Lagrangian correspondence by
\begin{equation}\label{eq:M3filt}
\begin{aligned}
\xymatrix{
& \fM^{3-\filt} \ar_{\gr^{(2)}}[dl] \ar^{\ev^{(2)}}[dr] & \\
\fM^3 && \fM.
}
\end{aligned}
\end{equation}

Using \eqref{eq:ori_assoc_general} we obtain 2-isomorphisms witnessing commutativity of the diagram
\begin{equation}\label{eq:orientationassociative}
\begin{aligned}
\xymatrix{
(\Phi_2^*\boxtimes \id)\Phi_2^*\ori_{\fM} \ar^{\eqref{eq:orientationmultiplicative}}[d] \ar^{\sim}[rr] && (\id\boxtimes \Phi_2^*)\Phi_2^*\ori_{\fM} \ar^{\eqref{eq:orientationmultiplicative}}[d] \\
(\Phi_2^*\ori_{\fM})\boxtimes \ori_{\fM} \ar_{\eqref{eq:orientationmultiplicative}}[dr] && \ori_{\fM}\boxtimes (\Phi_2^*\ori_{\fM}) \ar^{\eqref{eq:orientationmultiplicative}}[dl] \\
& \ori_{\fM}\boxtimes \ori_{\fM} \boxtimes \ori_{\fM}.
}
\end{aligned}
\end{equation}

In particular, if $o$ is an orientation of $\fM$, there is an isomorphism of orientations
\begin{equation}
\label{eq:moduli_ori_assoc}
(\Phi_2^\star\boxtimes \id)\Phi_2^\star o\cong (\id\boxtimes \Phi_2^\star)\Phi_2^\star o
\end{equation}
of $\fM^3$.

\begin{defin}\label{defin:CYorientationdata}
Let $\fM$ be a moduli-like stack. An \defterm{orientation data on $\fM$} is given by the following data\footnote{Our definition of the orientation data slightly differs from the one in \cite[Definition 4.2]{ju21}, where they defined the orientation data to be an isomorphism class of $o$ such that the isomorphisms $\lambda$ and $\tau$ exist.}:
\begin{enumerate}
    \item An orientation $o$ of $\fM$.
    \item An isomorphism of orientations $\lambda\colon \Phi_2^\star o\cong o\boxtimes o$.
    \item An isomorphism of orientations $\tau\colon \eta^\star o\cong o_{\triv}$, where $o_{\triv}$ is the trivial orientation of $\pt$.
\end{enumerate}
An orientation data is called a \defterm{strong orientation data} if the diagrams
\[
\xymatrix{
(\Phi_2^\star\boxtimes \id)\Phi_2^\star o \ar_{\sim}^{\eqref{eq:moduli_ori_assoc}}[rr] \ar^{(\Phi_2^\star\boxtimes \id)\lambda}[d] && (\id\boxtimes \Phi_2^\star) \Phi_2^\star o \ar^{(\id\boxtimes \Phi_2^\star)\lambda}[d] \\
(\Phi_2^\star o)\boxtimes o \ar_{\lambda\boxtimes \id}[dr] && o\boxtimes (\Phi_2^\star o) \ar^{\id\boxtimes \lambda}[dl] \\
& o\boxtimes o\boxtimes o, &
}
\]
and
\[
\xymatrix@C=1cm{
(\eta^\star\boxtimes \id)\Phi_2^\star o\ar^-{(\eta^\star\boxtimes \id)\lambda}[r] \ar_{\sim}[d] & \eta^\star o\boxtimes o \ar^{\tau\boxtimes \id}[d] \\
o \ar^{\sim}[r] & o_{\triv}\boxtimes o
}\qquad \xymatrix@C=1cm{
(\id\boxtimes \eta^\star)\Phi_2^\star o\ar^-{(\id\boxtimes \eta^\star)\lambda}[r] \ar_{\sim}[d] & o\boxtimes \eta^\star o \ar^{\id\boxtimes \tau}[d] \\
o \ar^{\sim}[r] & o\boxtimes o_{\triv}
}
\]
commute.
\end{defin}

\begin{rmk}
    Let $X$ be a smooth projective Calabi--Yau threefold and $\fM_{\Perf(X)}$ the moduli stack of perfect complexes on $X$.
    In \cite{ju21}, Joyce and Upmeier constructed an orientation data for $\fM_{\Perf(X)}$.
    However, it is not known whether their orientation data upgrades to a strong orientation data.
\end{rmk}

\subsection{Cohomological Hall algebra}\label{ssec:CoHA}

As an application of Corollary \ref{cor:Joyce_conj_attractor},
we will construct the cohomological Hall algebra for moduli-like stacks $\fM$ with a strong orientation data.

Set $\Gamma \coloneqq \pi_0(\fM_{})$ and write the connected component decomposition by
\[
\fM = \coprod_{\alpha \in \Gamma} \fM_{\alpha}.
\]

Since $\gr\colon \Filt(\fM)\rightarrow \Grad(\fM)$ is an $\bA^1$-deformation retract by \cite[Lemma 1.3.8]{hlp14}, so is $\fM^{n-\filt}\rightarrow \fM^n$ for $n=2, 3$. Therefore, $\pi_0(\fM^{n-\filt})\cong \pi_0(\fM^n)\cong \pi_0(\fM)^n$, where the first isomorphism follows from \cite[Lemma 1.3.7]{hlp14} and the second isomorphism follows from the assumption that $\fM$ is locally of finite presentation. Let
\[\fM^{n-\filt} = \coprod_{\alpha_1, \cdots, \alpha_n \in \Gamma} \fM^{n-\filt}_{\alpha_1, \dots, \alpha_n}\]
be the connected component decomposition.

The $\bE_\infty$-structure on $\fM$ induces an abelian monoid structure on $\Gamma$.
The Lagrangian correspondences \eqref{eq:M2filt} and \eqref{eq:M3filt} restrict to the following correspondences:
\[
\xymatrix{
{}
& {\fM_{\alpha_1, \alpha_2}^{2-\filt}}
\ar[ld]_-{\gr}
\ar[rd]^-{\ev}
& {} \\
{\fM_{\alpha_1} \times \fM_{\alpha_2}}
& {}
& {\fM_{\alpha_1 + \alpha_2}}
}
\qquad
\xymatrix{
{}
& {\fM_{\alpha_1, \alpha_2, \alpha_3}^{3-\filt}}
\ar[ld]_-{\gr^{(2)}}
\ar[rd]^-{\ev^{(2)}}
& {} \\
{\fM_{\alpha_1} \times \fM_{\alpha_2} \times \fM_{\alpha_3}}
& {}
& {\fM_{\alpha_1 + \alpha_2 + \alpha_3}.}
}
\]

By \eqref{eq:chicocycle} the index function $I_\fM\colon \Grad(\fM)\rightarrow \bZ$ restricts under the inclusion $\tilde{\Phi}_2\colon \fM^2\rightarrow \Grad(\fM)$ to a 2-cocycle $\chi\colon \Gamma\times \Gamma\rightarrow \bZ$.

\begin{thm}\label{thm:integral_isomorphism_moduli}
Let $\fM$ be a moduli-like stack with an orientation data such that $\fM^{\cl}$ is quasi-separated and has affine stabilizers. Then there exists a natural isomorphism
\[
\zeta_{\alpha_1, \alpha_2} \colon \varphi_{\fM_{\alpha_1}}  \boxtimes \varphi_{\fM_{\alpha_2}} [\chi(\alpha_1, \alpha_2)] \cong \gr_{ *} \ev^! \varphi_{\fM_{\alpha_1 + \alpha_2}} 
\] with the following properties:
\begin{itemize}
    \item \textup{(Unitality)} Let $0 \in \Gamma$ be the unit element. 
    Then the maps
    \begin{align*}
        \varphi_{\fM_{\alpha}} &\cong \varphi_{\fM_{\alpha}} \boxtimes \bQ \cong  \varphi_{\fM_{\alpha}} \boxtimes \eta^*\varphi_{\fM_0} 
        \xrightarrow[\cong]{\zeta_{\alpha, 0}} \varphi_{\fM_{\alpha}}, \\
        \varphi_{\fM_{\alpha}} &\cong \bQ \boxtimes \varphi_{\fM_{\alpha}} \cong \eta^*\varphi_{\fM_{0}} \boxtimes \varphi_{\fM_{\alpha}} 
        \xrightarrow[\cong]{\zeta_{0, \alpha}} \varphi_{\fM_{\alpha}}
    \end{align*}
    are the identities.
    
    \item \textup{(Associativity)} Assume further that our orientation data is a strong orientation data. Then the following diagram commutes:
    \[
    \xymatrix@C=50pt{
    {\varphi_{\fM_{\alpha_1}}} \boxtimes {\varphi_{\fM_{\alpha_2}}} \boxtimes {\varphi_{\fM_{\alpha_3}}} [\chi(\alpha_1, \alpha_2, \alpha_3)]
    \ar[r]_-{\cong}^-{\zeta_{\alpha_1, \alpha_2} \boxtimes \id}
    \ar[d]_-{\cong}^-{\id  \boxtimes \zeta_{\alpha_2, \alpha_3}}
    & { (\gr_{ *} \ev^! \varphi_{\fM_{\alpha_1 + \alpha_2}}) \boxtimes \varphi_{\fM_{\alpha_3}}}[{\chi(\alpha_1 + \alpha_2, \alpha_3)}]
    \ar[d]_-{\cong}^-{\zeta_{\alpha_{1} + \alpha_2, \alpha_3}} \\
    {\varphi_{\fM_{\alpha_1}} \boxtimes 
    (\gr_{*} \ev_{}^! \varphi_{\fM_{\alpha_2 + \alpha_3}})}[{\chi(\alpha_1, \alpha_2 +  \alpha_3)}]
    \ar[r]_-{\cong}^-{\zeta_{\alpha_1, \alpha_2 + \alpha_3}}
    & {\gr^{(2)}_{*} \ev^{(2), !}_{} \varphi_{\fM_{\alpha_1 + \alpha_2 + \alpha_3}}},
    }
    \]
    where we set $\chi(\alpha_1, \alpha_2, \alpha_3) \coloneqq \chi(\alpha_1, \alpha_2) + \chi(\alpha_1 + \alpha_2, \alpha_3) = \chi(\alpha_2, \alpha_3) + \chi(\alpha_1, \alpha_2 + \alpha_3)$.
\end{itemize}
\end{thm}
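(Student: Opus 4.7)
The plan is to derive the multiplication and its properties by pulling back the integral isomorphism of Corollary~\ref{cor:Joyce_conj_attractor} along the open immersion $\tilde{\Phi}_2 \colon \fM^2 \hookrightarrow \Grad(\fM)$. Writing $u \colon \Grad(\fM) \to \fM$ for the evaluation and applying $\tilde{\Phi}_2^\star$ to the Lagrangian correspondence $\fM \leftarrow \Grad(\fM) \leftarrow \Filt(\fM) \to \fM$, axiom (1) of Definition~\ref{def:modulilike} identifies the pullback with $\fM^2 \xleftarrow{\gr} \fM^{2-\filt} \xrightarrow{\ev} \fM$, under which $u^\star\omega_{\fM}$ becomes $\omega\boxplus\omega$, $u^\star o$ becomes $\Phi_2^\star o$, and $\vdim\Filt(\fM)|_{\fM^2}$ becomes $\chi$. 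Restricting $\zeta_{\fM}$ to each component and composing with the Thom--Sebastiani isomorphism of Corollary~\ref{cor:TS_derived} together with the orientation isomorphism $\lambda \colon \Phi_2^\star o \cong o\boxtimes o$ then yields
\[
\zeta_{\alpha_1,\alpha_2}\colon \varphi_{\fM_{\alpha_1}}\boxtimes\varphi_{\fM_{\alpha_2}}[\chi(\alpha_1,\alpha_2)] \cong \gr_*\ev^!\varphi_{\fM_{\alpha_1+\alpha_2}}.
\]

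For unitality, I would combine three inputs: the unit open immersion $\eta\colon\pt\to\fM$ from Definition~\ref{def:modulilike}(2), condition (4) which implies that $\iota_0\colon\fM\to\Grad(\fM)$ identifies $\fM$ with $\gr^{-1}(\iota_0(\fM))$ so that $\iota_0^*\zeta_{\fM}$ is the unitality instance of Corollary~\ref{cor:Joyce_conj_attractor}, and the isomorphism $\tau \colon \eta^\star o\cong o_{\triv}$ which turns $\eta^*\varphi_{\fM_0}$ into $\bQ$. Chasing through the Thom--Sebastiani trivialization that involves $\tau$ on one factor and the identity on the other gives the desired identity.

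Associativity is the main obstacle, and I would argue as follows. The map $\tilde{\Phi}_3\colon \fM^3 \to \Grad^2(\fM)$ lands in the matching locus by condition (3), so Corollary~\ref{cor:matching_composite} identifies the pullback of the composite attractor correspondence on $\Grad^2(\fM)$ with the iterated correspondence $\fM^3 \leftarrow \fM^{3-\filt} \to \fM$ defined in \eqref{eq:M3filt}; moreover the two ways of composing filtered moduli (splitting the outer or the inner step first) become canonically identified. Now pull back the associativity diagram in Corollary~\ref{cor:Joyce_conj_attractor} along $\tilde{\Phi}_3$, interleaving with the multiplicativity (Thom--Sebastiani) diagram of Corollary~\ref{cor:Joyce_conj_attractor}. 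What remains is to check that the two resulting decompositions of $\Phi_3^\star o$ into $o\boxtimes o\boxtimes o$---one via $(\Phi_2\boxtimes\id)\circ\Phi_2$ with successive applications of $\lambda$, the other via $(\id\boxtimes\Phi_2)\circ\Phi_2$---agree with the canonical associator of localized orientations appearing in \eqref{eq:localize_ori_assoc_derived} (equivalently \eqref{eq:moduli_ori_assoc}, via Remark~\ref{rmk:ori_compati_with_general}). This is precisely the pentagon-type condition imposed by the strong orientation data; hence the two composites $\zeta_{\alpha_1+\alpha_2,\alpha_3}\circ(\zeta_{\alpha_1,\alpha_2}\boxtimes\id)$ and $\zeta_{\alpha_1,\alpha_2+\alpha_3}\circ(\id\boxtimes\zeta_{\alpha_2,\alpha_3})$ coincide.

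The technical core of the argument, and the step that will require the most care, is the translation between the two associators: the ``geometric'' one coming from the iterated Lagrangian correspondence and hyperbolic localization (which is supplied to us by Corollary~\ref{cor:Joyce_conj_attractor}), and the ``orientation-theoretic'' one encoded by the commuting hexagon in Definition~\ref{defin:CYorientationdata}. Remark~\ref{rmk:ori_compati_with_general} identifies the former with the iterate of the Lagrangian-correspondence associator \eqref{eq:ori_assoc_general}; combined with \eqref{eq:moduli_ori_assoc} this reduces the compatibility to the strong orientation axiom, closing the diagram.
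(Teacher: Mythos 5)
Your proposal is correct and follows essentially the same route as the paper: construct $\zeta_{\alpha_1,\alpha_2}$ by restricting the integral isomorphism of Corollary~\ref{cor:Joyce_conj_attractor} along $\tilde{\Phi}_2$ and composing with the Thom--Sebastiani isomorphism and the orientation identification (via Remark~\ref{rmk:ori_compati_with_general}), deduce unitality from the unitality clause of the integral isomorphism, and prove associativity by restricting the associativity clause along $\tilde{\Phi}_3$ into the matching locus and reducing the comparison of the geometric associator with the orientation-data associator to the strong orientation axiom. The only difference is cosmetic: you spell out the unitality step (condition (4) of Definition~\ref{def:modulilike} and the trivialization $\tau$) which the paper treats as immediate.
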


\begin{proof}
    Consider the inclusion
    $\tilde{\Phi}_2 \colon \fM_{\alpha_1} \times \fM_{\alpha_2} \to \Grad(\fM)$.
    By assumption, there exists a natural equivalence of symplectic structures
    $\omega_{\fM_{\alpha_1}} \boxplus \omega_{\fM_{\alpha_2}} \sim \tilde{\Phi}_2^{\star} u^{\star} \omega_{\fM}$.
    Also, by the definition of the orientation data and Remark \ref{rmk:ori_compati_with_general},
    we have a natural isomorphism of orientations 
    $o_{\alpha_1 } \boxtimes o_{\alpha_2} \cong \tilde{\Phi}_2^{\star} u^{\star} o$,
    where $o$ denotes the orientation for $\fM$ and $o_{\alpha}$ is its restriction to $\fM_{\alpha}$.
    Therefore, by using Corollary \ref{cor:TS_derived} and Corollary \ref{cor:Joyce_conj_attractor}, we obtain an isomorphism $\zeta_{\alpha_1, \alpha_2}$ by the composition
    \[
    \varphi_{\fM_{\alpha_1}, o_{\alpha_1}}  \boxtimes \varphi_{\fM_{\alpha_2}, o_{\alpha_2}} [\chi(\alpha_1, \alpha_2)]
    \xrightarrow[\cong]{\TS} 
    \varphi_{\fM_{\alpha_1} \times \fM_{\alpha_2} \tilde{\Phi}_2^{\star} u^{\star} o}  [\chi(\alpha_1, \alpha_2)]
    \xrightarrow[\cong]{\zeta_{\fM_{\alpha_1  + \alpha_2}}} 
    \gr_! \ev^* \varphi_{\fM_{\alpha_1+\alpha_2}, o}.
    \]

    The unitality of $\zeta_{\alpha_1, \alpha_2}$ is an immediate consequence of the unitality property of $\zeta_{\fX, \omega_{\fX}, o}$ proved in Corollary \ref{cor:Joyce_conj_attractor}.

    For the associativity of $\zeta_{\alpha_1, \alpha_2}$, 
    consider the inclusion $\tilde{\Phi}_3 \colon \fM_{\alpha_1}\times \fM_{\alpha_2}\times \fM_{\alpha_3} \to \Grad^2(\fM)$ and let $\tilde{\Phi}_3'\colon \fM_{\alpha_1}\times \fM_{\alpha_2}\times \fM_{\alpha_3} \to \Grad^2(\fM)$ its post-composition with the swapping isomorphism $\sigma\colon \Grad^2(\fM)\rightarrow \Grad^2(\fM)$. By the associativity property of $\zeta_{\fX, \omega_{\fX}, o}$ proved in Corollary \ref{cor:Joyce_conj_attractor}, we see that the following diagram commutes:
    \begin{equation}\label{eq:CoHa_assoc_without_orientationdata}
    \begin{aligned}
    \xymatrix{
    {\varphi_{\fM_{\alpha_1} \times \fM_{\alpha_2} \times \fM_{\alpha_3}, \tilde{\Phi}_3^{\star} u_1^{\star} u^{\star} o  }[n_{123}]}
    \ar[d]_-{\cong}
    \ar[r]_-{\cong}^-{\zeta_{\fM_{\alpha_1 + \alpha_2} \times \fM_{\alpha_3}}}
    & {(\gr \times \id)_* (\ev \times  \id)^! \varphi_{\fM_{\alpha_1 + \alpha_2} \times \fM_{\alpha_3}, \tilde{\Phi}_2^\star u^\star o \boxtimes o_{\alpha_3}}  
    [n_{12,3}] }
    \ar[dd]_-{\cong}^-{\zeta_{\fM_{\alpha_1 + \alpha_2 + \alpha_3}}} \\
    {\varphi_{\fM_{\alpha_1} \times \fM_{\alpha_2} \times \fM_{\alpha_3}, \tilde{\Phi}_3^{',\star} u_1^{\star} u^{\star} o  }[n_{123}]}
    \ar[d]_-{\cong}^-{\zeta_{\fM_{\alpha_1} \times \fM_{\alpha_2 + \alpha_3}}}
    & {} \\
    {( \id \times \gr)_* ( \id \times \ev)^! \varphi_{\fM_1 \times \fM_{\alpha_2 + \alpha_3}, o_{\alpha_1} \boxtimes \tilde{\Phi}_2^{\star} u^{\star} o }   [n_{1,23}] }
    \ar[r]_-{\cong}^-{\zeta_{\fM_{\alpha_1 + \alpha_2 + \alpha_3}}}
    & {\gr^{(2)}}_* \ev^{(2), !} \varphi_{\fM_{\alpha_1 + \alpha_2 + \alpha_3}, o}.
    }
    \end{aligned}
    \end{equation}
    Here we set $n_{123} \coloneqq \chi (\alpha_1, \alpha_2, \alpha_3)$,
    $n_{12, 3} \coloneqq \chi(\alpha_1 + \alpha_2, \alpha_3)$ and
    $n_{1, 23} \coloneqq \chi(\alpha_1, \alpha_2 + \alpha_3)$
    and the upper left vertical map is induced from the isomorphism of orientations \eqref{eq:localize_ori_assoc_derived}.
    By the definition of the strong orientation data and the associativity statement in Remark \ref{rmk:ori_compati_with_general},
    the following diagram of orientations commutes:
    \[
    \xymatrix{
    {(o_{\alpha_1} \boxtimes o_{\alpha_2}) \boxtimes o_{\alpha_3}}
    \ar[r]^-{\cong}
    \ar[d]^-{\cong}
    & {(\tilde{\Phi}_2 \times \id)^{\star} (u^{\star} \boxtimes \id^{\star}) (o_{\alpha_1 + \alpha_2} \boxtimes o_{\alpha_3})}
    \ar[r]^-{\cong}
    & {\tilde{\Phi}_3^{\star} u_1^{\star} u^{\star} o}
    \ar[d]^-{\cong} \\
    {o_{\alpha_1} \boxtimes (o_{\alpha_2} \boxtimes o_{\alpha_3})}
    \ar[r]^-{\cong}
    & {(\id \times \tilde{\Phi}_2)^{\star} (\id^{\star} \boxtimes u^{\star}) (o_{\alpha_1} \boxtimes o_{\alpha_2 + \alpha_3})}
    \ar[r]^-{\cong}
    & {\tilde{\Phi}_3^{', \star} u_1^{\star} u^{\star} o}.
    }
    \]
    Therefore the statement holds by the commutativity of the diagram \eqref{eq:CoHa_assoc_without_orientationdata}, the associativity of the Thom--Sebastiani isomorphism in Corollary \ref{cor:TS_derived} together with the unitality and the multiplicativity of the integral isomorphism in Corollary \ref{cor:Joyce_conj_attractor}.
    
\end{proof}

\begin{rmk}\label{rmk:sign_assoc}
If the orientation data is not strong, it follows from the proof of Theorem \ref{thm:integral_isomorphism_moduli} that the associativity property of the map $\zeta_{\alpha_1, \alpha_2}$ holds true only up to some choice of sign.
\end{rmk}

For $\Gamma$-graded dg vector spaces $V$ and $W$, 
we define the twisted tensor product by
\[
V \otimes^{\mathrm{tw}} W \coloneqq \bigoplus_{\alpha, \beta \in \Gamma} V_{\alpha} \otimes W_{\beta} [- \chi(\alpha, \beta)].
\]
It defines a monoidal structure on the $\infty$-category of $\Gamma$-graded complexes.
The following corollary is an immediate consequence of Theorem \ref{thm:integral_isomorphism_moduli}:

\begin{cor}\label{cor:CoHA}
Let $\fM$ be a moduli-like stack with a strong orientation data satisfying the following conditions:
\begin{itemize}
    \item $\fM^{\cl}$ is a quasi-separated Artin stack with affine stabilizers.
    \item $\fM^{\cl}$ is $\Theta$-reductive, i.e. the map $\ev\colon \Filt(\fM^{\cl})\rightarrow \fM^{\cl}$ is proper over each connected component of the source.
    \item $\eta\colon \pt\rightarrow \fM^{\cl}$ is a closed immersion.
\end{itemize}
For each $\alpha \in \Gamma$, set $\cH_{\alpha} \coloneqq  H^{\bullet}(\fM_{\alpha}, \varphi_{\fM_{\alpha}})$ and $\cH \coloneqq \bigoplus_{\alpha \in \Gamma} \cH_{\alpha}$.
Then there are natural morphisms
\[
*^{\Hall} \colon  \cH \otimes^{\mathrm{tw}} \cH \to \cH,\qquad \bQ \rightarrow \cH_0
\]
which define an associative algebra, the \defterm{cohomological Hall algebra} for $\fM$.
\end{cor}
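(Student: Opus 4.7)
The strategy is to obtain $*^{\Hall}$ by composing the integral isomorphism $\zeta_{\alpha_1,\alpha_2}$ of Theorem~\ref{thm:integral_isomorphism_moduli} with the trace map for the proper morphism $\ev\colon \fM^{2-\filt}_{\alpha_1,\alpha_2}\to \fM_{\alpha_1+\alpha_2}$. The $\Theta$-reductivity assumption on $\fM^{\cl}$ ensures $\ev$ is proper on each connected component, so the adjunction counit $\ev_!\ev^!\to \id$ is defined. Taking hypercohomology of $\zeta_{\alpha_1,\alpha_2}$, using K\"unneth on the source and the identity $H^\bullet(\fM_{\alpha_1}\times\fM_{\alpha_2},\gr_*\ev^!(-))=H^\bullet(\fM^{2-\filt}_{\alpha_1,\alpha_2},\ev^!(-))$ on the target, and then applying the counit gives
\[
\cH_{\alpha_1}\otimes \cH_{\alpha_2}\;\xrightarrow{\zeta_{\alpha_1,\alpha_2}}\; H^\bullet\bigl(\fM^{2-\filt}_{\alpha_1,\alpha_2},\, \ev^!\varphi_{\fM_{\alpha_1+\alpha_2}}\bigr)[-\chi(\alpha_1,\alpha_2)]\;\longrightarrow\; \cH_{\alpha_1+\alpha_2}[-\chi(\alpha_1,\alpha_2)],
\]
which is the desired component of $*^{\Hall}$ on the twisted tensor product.

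\textbf{Unit, associativity, and unitality.} The hypothesis that $\eta\colon \pt\to \fM^{\cl}$ is a closed immersion, combined with condition~(2) of Definition~\ref{def:modulilike} saying it is an open immersion, makes $\pt$ a clopen component of $\fM_0^{\cl}$. The trivialisation $\tau\colon \eta^\star o\cong o_{\triv}$ furnished by the orientation data identifies $\eta^*\varphi_{\fM_0}$ with $\bQ$, and pushforward along the clopen $\eta$ produces the unit $\bQ\to \cH_0$. For associativity, both iterated products $(a*b)*c$ and $a*(b*c)$ factor through hypercohomology of the three-step filtration $\fM^{3-\filt}_{\alpha_1,\alpha_2,\alpha_3}$; by Assumption~(3) of Definition~\ref{def:modulilike} the inclusion $\tilde{\Phi}_3$ lands in the matching locus, so Corollary~\ref{cor:matching_composite} identifies both iterated Lagrangian compositions with the single correspondence $\fM^{3-\filt}$. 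Combining the sheaf-level associativity supplied by Theorem~\ref{thm:integral_isomorphism_moduli}(ii), which uses the strong orientation data, with functoriality of proper pushforward translates into the cohomological associativity of $*^{\Hall}$, and the unit property of $\zeta$ in Theorem~\ref{thm:integral_isomorphism_moduli} gives unitality of $\bQ\to \cH_0$.

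\textbf{Expected main obstacle.} The delicate point is carefully tracking proper base change across the two factorisations of $\fM^{3-\filt}$. One must verify that the trace for the composed proper morphism $\ev^{(2)}$ agrees with the iterated traces coming from either parenthesisation, and that the base-change isomorphisms relating $\ev^!$ with pullback along $\gr\times\id$ or $\id\times\gr$ intertwine the sheaf-level associativity of $\zeta$ with the cohomological associativity of $*^{\Hall}$. This is a standard but technical six-functor argument, which must be performed on the matching locus in order to invoke Corollary~\ref{cor:matching_composite}; the presence of the matching condition in Definition~\ref{def:modulilike}~(3) is exactly what makes this possible.
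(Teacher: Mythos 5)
Your proposal is correct and follows essentially the same route as the paper, which derives Corollary \ref{cor:CoHA} directly from Theorem \ref{thm:integral_isomorphism_moduli} by passing to hypercohomology, using the counit for the proper map $\ev$ (properness coming from $\Theta$-reductivity), obtaining the unit from $\eta$ being both an open and a closed immersion, and getting associativity from the associativity/unitality properties of $\zeta_{\alpha_1,\alpha_2}$ supplied by the strong orientation data. The six-functor bookkeeping you flag (compatibility of the counits and base change along the two factorisations of $\fM^{3-\filt}$) is exactly what the paper treats as immediate.
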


\begin{ex}
    Let $X$ be a smooth Calabi--Yau threefold and set $\eC = \Perf(X)$.
    In this case, points in $\fM_{\eC}$ correspond to compactly supported perfect complexes on $X$.
    Let $\fM \subset \fM_{\eC}$ be the locus consisting of compactly supported coherent sheaves (i.e., complexes concentrated in degree zero).
    As we will see in Proposition \ref{prop:modulilikemoduli}, $\fM$ admits a moduli-like structure in a natural way.
    Assume that $\fM$ admits an orientation data: this is satisfied for projective $X$ by \cite{ju21} and for local curves and surfaces by Corollary \ref{cor:strong_orientation_completion}.
    For each $\gamma \in H_*(X)$, define
    \[
    \fM_{\gamma} \coloneqq \{ [E] \in \fM \mid \ch(E) = \gamma \}, \quad
    \cH_{\gamma} \coloneqq H^{\bullet}(\fM_{\gamma}, \varphi_{\fM_{\gamma}}).
    \]
    Set $\cH_X \coloneqq \bigoplus_{\gamma} \cH_{\gamma}$.
    Then Corollary \ref{cor:CoHA} implies that there exists a $H_*(X)$-graded multiplication
    \[
   *^{\Hall} \colon \cH_X \otimes^{\mathrm{tw}} \cH_{X} \to \cH_{X}
    \]
    which is associative up to some choice of sign by Remark \ref{rmk:sign_assoc}.
    If the orientation data is strong (which is satisfied for local curves and local surfaces as we will see in Corollary \ref{cor:strong_orientation_completion}),
    the multiplication $*^{\Hall}$ defines an associative algebra structure on $\cH_X$.
\end{ex}

\subsection{Moduli of objects in dg categories}\label{ssec:moduli_dg}

Let $\dgcat$ be the $\infty$-category of $\C$-linear idempotent complete small stable $\infty$-categories and exact functors; equivalently it is the $\infty$-category of $\C$-linear dg categories localized along Morita equivalences \cite[Corollary 5.7]{coh13}. We will simply refer to objects of $\dgcat$ as dg categories. For $\eC\in\dgcat$ we denote by $\Fun^{\exact}(\eC, -)$ the stable $\infty$-category of $\C$-linear exact functors. There is a natural symmetric monoidal structure on $\dgcat$ with $\Perf_\C\subset \Mod_\C$, the $\infty$-category of perfect complexes of $\C$-vector spaces, the unit. Binary coproducts and binary products in $\dgcat$ coincide, and we denote them by $\eC\oplus \eD$.

\begin{lem}\label{lem:posetenvelope}
Let $P$ be a finite poset. Then its stable envelope is $\Perf(P^{\op})=\Fun(P^{\op}, \Perf_\C)$. In other words, restriction along the Yoneda embedding $P\rightarrow \Perf(P^{\op})$ given by $p\mapsto \C[\Hom_P(-, p)]$ induces an equivalence
\[\Fun^{\exact}(\Perf(P^{\op}), \eC)\longrightarrow \Fun(P, \eC)\]
for any $\eC\in\dgcat$.
\end{lem}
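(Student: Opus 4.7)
The plan is to identify $\Perf(P^{\op})$ as the free $\C$-linear idempotent-complete stable $\infty$-category generated by $P$, by passing to Ind-completions and applying the universal property of presheaf categories. Throughout, write $\mathrm{PSh}(P):=\Fun(P^{\op},\Mod_\C)$.

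First I would invoke the universal property of $\C$-linear presheaves: for any $\C$-linear presentable $\infty$-category $\cD$, restriction along the Yoneda embedding $y\colon P\to \mathrm{PSh}(P)$, $p\mapsto \C[\Hom_P(-,p)]$, induces an equivalence
\[\Fun^{\mathrm{L}}(\mathrm{PSh}(P),\cD)\xrightarrow{\sim}\Fun(P,\cD),\]
where $\Fun^{\mathrm{L}}$ denotes colimit-preserving functors. This follows from \cite[Theorem 5.1.5.6]{htt} combined with the fact that $\mathrm{PSh}(P)\simeq \Fun(P^{\op},\eS)\otimes \Mod_\C$ is the free $\C$-linear presentable cocompletion of $P$.

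Next I would identify the compact objects of $\mathrm{PSh}(P)$ with $\Perf(P^{\op})=\Fun(P^{\op},\Perf_\C)$. Because $P$ is finite, mapping spaces in $\mathrm{PSh}(P)$ are finite limits over $P$ of mapping spaces in $\Mod_\C$; such finite limits commute with filtered colimits, so a functor $F\colon P^{\op}\to\Mod_\C$ is compact iff each $F(p)$ is compact in $\Mod_\C$, i.e.\ perfect. In particular $\Ind(\Perf(P^{\op}))\simeq \mathrm{PSh}(P)$, and $y$ factors through the full subcategory of compact objects.

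Finally I would invoke the standard Ind-$\Perf$ adjunction for $\dgcat$: for $\eC\in\dgcat$, $\Ind(\eC)$ is a $\C$-linear compactly generated stable presentable $\infty$-category with compact objects $\eC$, and
\[\Fun^{\exact}(\eC',\eC)\simeq \Fun^{\mathrm{L},\omega}(\Ind(\eC'),\Ind(\eC)),\]
where $\Fun^{\mathrm{L},\omega}$ denotes colimit-preserving functors that preserve compact objects. Specialising to $\eC'=\Perf(P^{\op})$ and combining with the universal property above, I obtain
\[\Fun^{\exact}(\Perf(P^{\op}),\eC)\simeq \Fun^{\mathrm{L},\omega}(\mathrm{PSh}(P),\Ind(\eC))\subset \Fun(P,\Ind(\eC)),\]
where the subcategory on the right consists of those functors whose corresponding colimit-preserving extension preserves compact objects. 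Since the compact objects of $\mathrm{PSh}(P)$ are generated under finite colimits and retracts by the image of $y\colon P\to \mathrm{PSh}(P)$, a colimit-preserving functor preserves compact objects iff its restriction along $y$ lands in $\eC\subset\Ind(\eC)$. This cuts out exactly $\Fun(P,\eC)$ and yields the desired equivalence.

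The only real subtlety lies in matching Yoneda conventions (so that the variance $P$ vs.\ $P^{\op}$ comes out right) and in promoting Lurie's cocompletion theorem to the $\C$-linear enriched setting; neither is a serious obstacle once the four ingredients (the universal property of presheaves, compactness in finite diagram categories, the $\Ind$–$\Perf$ correspondence, and generation of compact objects by representables) are in place.
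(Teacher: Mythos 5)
Your proposal is correct and takes essentially the same route as the paper's proof: pass to Ind-completions, apply the universal property of presheaf categories (\cite[Theorem 5.1.5.6]{htt}) together with the identification $\Fun(P^{\op},\eS)\otimes\Mod_\C\simeq\Fun(P^{\op},\Mod_\C)$, identify the compact objects of $\Fun(P^{\op},\Mod_\C)$ with $\Fun(P^{\op},\Perf_\C)$ using finiteness of $P$, and then descend to exact functors on compact objects. The only differences are cosmetic: you verify the compactness identification directly via finite limits commuting with filtered colimits where the paper cites \cite[Proposition 2.8]{ao23}, and you spell out (via thick generation by the representables) the final passage from $\Ind(\eC)$-valued to $\eC$-valued functors, which the paper leaves implicit.
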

\begin{proof}
Let $\eC\in\dgcat$. Its ind-completion $\Ind(\eC)\in\Mod_{\Mod_\C}(\mathrm{Pr}^L)$ is a stable presentable $\C$-linear $\infty$-category. Thus, by \cite[Theorem 5.1.5.6]{htt} restriction along the Yoneda embedding induces an equivalence
\[\Fun^L(\Fun(P^{\op}, \eS), \Ind(\eC))\longrightarrow \Fun(P, \Ind(\eC)),\]
where on the left we consider the $\infty$-category of functors preserving small colimits. By \cite[Corollary 2.2]{ao23} the natural functor $\Fun(P^{\op}, \eS)\otimes \Mod_\C\rightarrow \Fun(P^{\op}, \Mod_\C)$ is an equivalence, and so we get that the restriction functor
\[\Fun^{L, \C}(\Fun(P^{\op}, \eS)\otimes \Mod_\C, \Ind(\eC))\longrightarrow \Fun^L(\Fun(P^{\op}, \eS), \Ind(\eC)),\]
where on the left we consider colimit-preserving $\C$-linear functors, is an equivalence. Since $\Fun(P^{\op}, \Mod_\C)$ is compactly generated,
using \cite[Proposition 5.3.5.10]{htt} we obtain that the restriction
\[\Fun^{\exact}(\Fun(P^{\op}, \Mod_\C)^\omega, \Ind(\eC))\longrightarrow \Fun(P, \Ind(\eC))\]
is an equivalence. Since $P$ is a finite poset, by \cite[Proposition 2.8]{ao23} we conclude that
\[\Fun^{\exact}(\Perf(P^{\op}), \eC)\longrightarrow \Fun(P, \eC)\]
is an equivalence.
\end{proof}

For $\eC\in\dgcat$ we denote its $\infty$-category of (right) modules by
\[\Mod_\eC=\Fun^{\exact}(\eC^{\op}, \Mod_\C).\]
We refer to objects of $\Mod_{\eC^{\op}\otimes\eD}$ as $(\eC, \eD)$-bimodules. There are relative tensor product functors and internal Hom functors; we refer to \cite[Section 2]{bdi} for more details on the Morita theory of dg categories.

For $\eC\in\dgcat$ we denote $\eC^e=\eC^{\op}\otimes\eC$. We also denote by $\eC\in\Mod_{\eC^e}$ the diagonal bimodule $x, y\mapsto \Hom_\eC(x, y)$. The \defterm{Hochschild complex} is $\HH(\eC)=\eC\otimes_{\eC^e} \eC$. It carries a natural $S^1$-action and the \defterm{negative cyclic complex} is $\HC^{-}(\eC)=\HH(\eC)^{S^1}$.

\begin{defin}$ $
\begin{itemize}
    \item A dg category $\eC$ is of \defterm{finite type} if $\eC\in\dgcat$ is compact.
    \item A dg category $\eC$ is \defterm{smooth} if the diagonal bimodule $\eC\in\Mod_{\eC^e}$ is compact. In this case, we have the dual bimodule $\eC^!=\Hom_{\eC^e}(\eC, \eC^e)\in\Mod_{\eC^e}$.
\end{itemize}
\end{defin}

We have the following basic facts:
\begin{itemize}
    \item A finite type dg category is smooth \cite[Proposition 2.14]{tv07}.
    \item If $\eC$ is smooth, there is an isomorphism $\HH(\eC)\cong \Hom_{\eC^e}(\eC^!, \eC)$.
\end{itemize}

Next, let us recall the construction of the moduli stack of objects. Let $\eC$ be a dg category.
\begin{defin}
The \defterm{moduli stack $\fM_{\eC}$ of objects in $\eC$} is the functor
\[
    \cdga^{\leq 0} \ni R \mapsto \Hom_{\dgcat}(\eC, \Perf_R) \in \eS.
\]
We refer to functors $\eC\rightarrow \Perf_R$ as \defterm{pseudo-perfect $\eC$-modules}.
\end{defin}

We also have a similar description of mapping stacks.
\begin{lem}\label{lem:mapsintomoduli}
Let $X$ be a derived prestack and $\eC$ a dg category. Then there is a natural equivalence
\[\Map(X, \fM_{\eC})(R)\cong \Hom_{\dgcat}(\eC, \Perf(X\times \Spec R))\]
for $R\in\cdga^{\leq 0}$.
\end{lem}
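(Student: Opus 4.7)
The plan is to reduce both sides of the claimed equivalence to the same limit over the affine charts of $X \times \Spec R$. By adjunction between the mapping stack and the product, I would first rewrite the left-hand side as
\[\Map(X, \fM_{\eC})(R) = \Map(X \times \Spec R, \fM_{\eC}),\]
so it suffices to prove the more general statement that for any derived prestack $Y$ (which we then apply to $Y = X \times \Spec R$), there is a natural equivalence
\[\Map(Y, \fM_{\eC}) \cong \Hom_{\dgcat}(\eC, \Perf(Y)).\]

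Next, I would unfold both sides using right Kan extension from affine derived schemes. Since $\fM_{\eC}$ is defined as a functor on $\cdga^{\leq 0}$ and extended to derived prestacks by right Kan extension, we have
\[\Map(Y, \fM_{\eC}) \simeq \lim_{\Spec R' \to Y}\fM_{\eC}(R') = \lim_{\Spec R' \to Y}\Hom_{\dgcat}(\eC, \Perf_{R'}),\]
where the limit ranges over the opposite of the comma category of $\Spec R' \to Y$ with $R' \in \cdga^{\leq 0}$. On the other hand, the category $\Perf(Y)$ of perfect complexes on the derived prestack $Y$ is by definition the right Kan extension
\[\Perf(Y) \simeq \lim_{\Spec R' \to Y}\Perf_{R'},\]
where the limit is taken in $\dgcat$.

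Finally, I would use the fact that the corepresentable functor $\Hom_{\dgcat}(\eC, -)\colon \dgcat \to \eS$ preserves all limits, so that
\[\Hom_{\dgcat}(\eC, \Perf(Y)) \simeq \Hom_{\dgcat}\Bigl(\eC, \lim_{\Spec R' \to Y}\Perf_{R'}\Bigr) \simeq \lim_{\Spec R' \to Y}\Hom_{\dgcat}(\eC, \Perf_{R'}).\]
Combining the two computations yields the required equivalence, and specializing to $Y = X \times \Spec R$ gives the lemma. There is no real obstacle here; the statement is essentially a formal consequence of the definitions plus the fact that limits in $\dgcat$ of module categories are computed as limits of the underlying $\infty$-categories, so that the Kan extension defining $\Perf(Y)$ agrees with what one would compute slot-by-slot. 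The only point requiring a small amount of care is verifying that the indexing category and the limit in $\dgcat$ of the $\Perf_{R'}$ are indeed well-behaved, but this is standard and already used implicitly in the construction of $\fM_{\eC}$ itself.
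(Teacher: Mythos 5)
Your proposal is correct and follows essentially the same route as the paper: reduce to showing $\Hom(Y,\fM_{\eC})\cong\Hom_{\dgcat}(\eC,\Perf(Y))$ for an arbitrary derived prestack $Y$, note it holds on affines by the definition of $\fM_{\eC}$, and conclude because both sides send colimits of prestacks to limits (the paper states this directly, while you spell out the limits over affine charts and use that $\Perf(Y)$ is a right Kan extension and $\Hom_{\dgcat}(\eC,-)$ preserves limits — the same argument).
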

\begin{proof}
It is enough to prove that $\Hom(X, \fM_{\eC})\cong \Hom_{\dgcat}(\eC, \Perf(X))$ for any derived prestack $X$. If $X=\Spec A$ is affine, this holds by the definition of the moduli stack of objects. As any derived prestack is obtained as a colimit of derived affine schemes and both $\Hom(X, \fM_{\eC})$ and $\Hom_{\dgcat}(\eC, \Perf(X))$ send colimits in $X$ to limits, the claim follows.
\end{proof}
If $\eC$ is of finite type, it is shown in \cite[Theorem 3.6]{tv07} that $\fM_{\eC}$ is a locally geometric derived stack.

\begin{defin}
Let $\eC$ be a smooth dg category. A \defterm{$d$-dimensional Calabi-Yau structure on $\eC$} is a negative cyclic class $\tilde{c} \in |\HC^{-}(\eC)[-d]|$ whose underlying Hochschild class $c \in | \HH(\eC)[-d] | \cong |\Hom_{\eC^e}(\eC^!, \eC)[-d]|$ defines an equivalence $\eC^!\rightarrow \eC[-d]$ of $\eC^e$-modules.
\end{defin}

\begin{rmk}
In this paper we only consider left Calabi--Yau structures in the terminology of \cite{bdi}, so the adjective ``left'' will often be dropped.
\end{rmk}

\begin{ex}
The dg category $\Perf_\C^{\oplus n}$ has $\HC^-(\Perf_\C^{\oplus n})\cong \C[\![u]\!]^{\oplus n}$. The class $\tilde{c}_n=(1, \dots, 1)$ defines a $0$-Calabi--Yau structure on $\Perf_\C^{\oplus n}$.
\end{ex}

Given an exact functor $F\colon \eC\rightarrow \Perf_R$ we get a map
\[\HC^-(\eC)\xrightarrow{\HC^-(F)} \HC^-(R)\xrightarrow{\tilde{\kappa}_2} \Omega^{\geq 2}(R)[2],\]
where the map $\tilde{\kappa}_2$ is defined using the HKR isomorphism as in \cite[Proposition 5.2]{bdii}. By naturality, we get an induced map
\begin{equation}\label{eq:HC_to_forms}
\tilde{\kappa}_2\colon \HC^-(\eC)\longrightarrow \Omega^{\geq 2}(\fM_{\eC})[2].
\end{equation}
If $\eC$ is of finite type, it is shown in \cite[Theorem 5.5]{bdii} that it sends $d$-Calabi--Yau structures on $\eC$ to $(2-d)$-shifted symplectic structures on $\fM_{\eC}$.

    Let $P_n=\{0<1<\dots < n-1\}$ be the poset of integers between $0$ and $n-1$.
    
    \begin{defin}
    The \defterm{moduli stack of $n$-filtered objects in $\eC$} is
    \[\fM^{n-\filt}_{\eC} = \fM_{\eC\otimes \Perf(P_n)}.\]
    \end{defin}
    By Lemma \ref{lem:posetenvelope} the $R$-points of $\fM^{n-\filt}_{\eC}$ are given by
    \[\fM^{n-\filt}_{\eC}(R) = \Map(P_n^{\op}, \Fun^{\exact}(\eC, \Perf_R)),\]
    so $\fM^{n-\filt}_{\eC}$ parametrizes sequences $M_{n-1}\rightarrow \dots\rightarrow M_0$ of pseudo-perfect modules. We will be interested in the following functors:
    \begin{itemize}
        \item $\iota_1\colon \Perf_\C\rightarrow \Perf(P_n)$ given by the inclusion of the constant sheaf $\cO_{P_n}\in\Perf(P_n)$ sending $i\in P_n\mapsto \C$ and any $i\rightarrow j$ to the identity. By restriction, it induces a map $\ev\colon \fM^{n-\filt}_{\eC}\rightarrow \fM_{\eC}$ sending a filtered pseudo-perfect module $M_{n-1}\rightarrow \dots\rightarrow M_0$ to $M_0$.
        \item $P^\delta_n\rightarrow \Perf(P_n)$, where $P^\delta_n$ is the same underlying set as $P_n$ considered as a discrete category, given by sending $i\in P^\delta_n$ to the skyscraper sheaf $i$. By the universal property of $\Perf(P^\delta_n)$ it induces an exact functor $\iota_\gr\colon \Perf(P^\delta_n)\rightarrow \Perf(P_n)$. Since $\eC\otimes\Perf(P^\delta_n)\cong \eC^{\oplus n}$, it induces a map $\gr\colon \fM^{n-\filt}_{\eC}\rightarrow \fM_{\eC}^n$ sending a filtered pseudo-perfect module to its associated graded.
        \item The functor $\pi\colon \Perf(P_n)\rightarrow \Perf(P^\delta_n)$ left inverse to $\iota_\gr$ given by restriction along $P^\delta_n\rightarrow P_n$. By restriction, it induces a map $\sigma\colon \fM_{\eC}^n\rightarrow \fM^{n-\filt}_{\eC}$ which is a section of $\gr$. It sends $M_0, \dots, M_{n-1}$ to $M_{n-1}\rightarrow M_{n-1}\oplus M_{n-2}\rightarrow \dots$
        \item The diagonal functor $\Perf_\C\rightarrow \Perf(P^\delta_n)=\Perf_\C^{\oplus n}$ given by the inclusion of the constant sheaf $\cO_{P^\delta_n}\in\Perf(P^\delta_n)$. By restriction, it induces a map $\Phi_n\colon \fM^n_{\eC}\rightarrow \fM_{\eC}$ given by sending $M_0, \dots, M_{n-1}$ to $M_0\oplus \dots \oplus M_{n-1}$. We have $\Phi_n=\ev\circ \sigma$.
    \end{itemize}

    We obtain a correspondence
    \begin{equation} \label{eq:exactsequencecorrespondence}
    \begin{aligned}
    \xymatrix{
    & \fM^{n-\filt}_{\eC} \ar[ld]_-{\gr} \ar[rd]^-{\ev} & \\
    \fM_{\eC}^n \ar@/_10pt/[ru]_-{\sigma} && \fM_{\eC},
    }
    \end{aligned}
    \end{equation}

    Let us now endow this correspondence with a Lagrangian structure when $\eC$ carries a $d$-Calabi--Yau structure. The following is essentially shown in \cite[Theorem 5.14]{bdi}.
    
    \begin{prop}\label{prop:filteredcospan}
    The cospan of dg categories $\Perf(P^\delta_n)\rightarrow \Perf(P_n)\leftarrow \Perf_\C$ carries a unique $0$-Calabi--Yau structure which restricts to the $0$-Calabi--Yau structure $\tilde{c}_n$ on $\Perf(P^\delta_n)\cong \Perf_\C^{\oplus n}$ and the $0$-Calabi--Yau structure $\tilde{c}_1$ on $\Perf_\C$.
    \end{prop}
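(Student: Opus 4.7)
The plan is to apply the general criterion for relative Calabi--Yau structures on cospans of dg categories developed in \cite[Section 5]{bdi}; the statement is essentially \cite[Theorem 5.14]{bdi} specialized to the cospan at hand.

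First, I would unpack what a $0$-Calabi--Yau structure on the cospan consists of: a negative cyclic class $\tilde{c} \in |\HC^{-}(\Perf(P_n))|$ together with compatibility data relating it to the classes $\tilde{c}_n$ and $\tilde{c}_1$ via $\iota_\gr$ and $\iota_1$, plus a non-degeneracy condition on the corresponding map of $\Perf(P_n)$-bimodules. The key computational input is that $P_n$, being a totally ordered finite poset, has contractible nerve; hence $\HH(\Perf(P_n)) \simeq \C$ in degree zero and $\HC^{-}(\Perf(P_n)) \simeq \C[\![u]\!]$, while $\HC^{-}(\Perf(P_n^\delta)) \simeq \C[\![u]\!]^{\oplus n}$. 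The maps induced by $\iota_\gr$ and $\iota_1$ on $\HH_0$ can be read off from the images of the standard generators (the skyscrapers $k_i$ and the constant sheaf $\cO_{P_n}$). Since all relevant complexes are concentrated in degree zero, the space of cyclic classes together with the homotopy data reduces to a discrete comparison, and existence and uniqueness of $\tilde{c}$ follow immediately.

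The main obstacle is verifying the non-degeneracy condition. This amounts to producing a biCartesian square of $\Perf(P_n)$-bimodules that witnesses the relative duality between the three categories, and is the essential content of \cite[Theorem 5.14]{bdi} applied to this cospan. Concretely, one exhibits the Koszul-type resolution of the diagonal bimodule of $\Perf(P_n)$ associated with the totally ordered structure of $P_n$: the diagonal is filtered by bimodules whose graded pieces are induced from $\iota_\gr$ (built from the skyscrapers) and $\iota_1$ (built from the constant sheaf). Tracing this resolution through the bimodule pairing produced by $\tilde{c}$ yields the required equivalence of dualizing data and completes the verification of non-degeneracy.
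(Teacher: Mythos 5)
There is a genuine gap in your key computation. You claim that because the nerve of $P_n$ is contractible one has $\HH(\Perf(P_n))\simeq \C$ and $\HC^{-}(\Perf(P_n))\simeq \C[\![u]\!]$. This is false for $n\geq 2$: $\Perf(P_n)$ is the category of representations of the $A_n$-quiver (functors out of the poset), not the category of local systems on its contractible nerve, and it admits a semiorthogonal decomposition into $n$ copies of $\Perf_\C$. By additivity of Hochschild and negative cyclic homology --- this is exactly how the paper argues, via \cite[Proposition 5.15]{bdi}: the functors $\pi$ and $\iota_\gr$ induce mutually inverse maps on $\HC^{-}$ --- one gets $\HH(\Perf(P_n))\cong \C^{\oplus n}$ in degree $0$ and $\HC^{-}(\Perf(P_n))\cong \C[\![u]\!]^{\oplus n}$; already for $n=2$ the zeroth Hochschild homology of the path algebra of $A_2$ is two-dimensional. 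Since your ``discrete comparison'' of the images of $\tilde{c}_n$ and $\tilde{c}_1$ takes place in the wrong group, the existence and uniqueness claim does not follow as stated (with the wrong target and wrongly computed induced maps one could even be led to conclude that the two images differ). The correct comparison, which is the content of the paper's proof, is: under the isomorphism $\HC^{-}(\Perf(P_n))\cong \C[\![u]\!]^{\oplus n}$ induced by $\iota_\gr$, the class of the constant sheaf $\cO_{P_n}$ is the diagonal element $(1,\dots,1)$, so the compatibility square commutes on $\pi_0$, and the homotopy making it commute is unique because all complexes involved are concentrated in non-negative cohomological degrees.

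Concerning non-degeneracy, your plan also diverges from the paper and is not carried out. The paper does not verify non-degeneracy by hand: it is absorbed into the reference to \cite[Theorem 5.14]{bdi} (the proposition is introduced as ``essentially shown'' there), and the written proof only supplies the existence and uniqueness of the compatibility homotopy via the cyclic homology computation above. Your proposed Koszul-type resolution of the diagonal bimodule of $\Perf(P_n)$, with graded pieces induced from $\iota_\gr$ and $\iota_1$, is a plausible route to a direct check, but as written (``tracing this resolution through the bimodule pairing yields the required equivalence'') it is only a sketch; to count as a proof you would need to exhibit the relevant (co)Cartesian square of $\Perf(P_n)$-bimodules explicitly, or simply quote \cite[Theorem 5.14]{bdi} as the paper does. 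The essential missing ingredient, in any case, is the corrected computation of $\HC^{-}(\Perf(P_n))$ and of the two induced maps.
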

    \begin{proof}
    Since the functor $\HC^-$ is additive, the functors
    \[\pi\colon \Perf(P_n)\longrightarrow \Perf(P^\delta_n),\qquad \iota_\gr\colon \Perf(P^\delta_n)\longrightarrow \Perf(P_n)\]
    induce inverse maps on $\HC^-$ \cite[Proposition 5.15]{bdi}. Therefore, a $0$-Calabi--Yau structure on the cospan is given by a homotopy commutative diagram
    \[
    \xymatrix{
    \C \ar^{c_1}[r] \ar^{c_n}[d] & \C[\![u]\!] \ar^{\Delta}[d] \\
    \C[\![u]\!]^{\oplus n} \ar^{\id}[r] & \C[\![u]\!]^{\oplus n}
    }
    \]
    where the vertical map on the right is the diagonal. Since all complexes are concentrated in non-negative cohomological degrees, there is a unique homotopy which makes this diagram commutative.
    \end{proof}

    \begin{cor}[{\cite[Corollary 6.5]{bdii}}]\label{cor:BDcorrespondence}
    Let $\eC$ be a finite type dg category equipped with a $d$-Calabi--Yau structure. Then there is a canonical $(2-d)$-shifted Lagrangian structure on the correspondence
    \begin{equation}\label{eq:moduli_attractor_corresp}
    \begin{aligned}
    \xymatrix{
    & \fM^{n-\filt}_{\eC} \ar[ld]_-{\gr} \ar[rd]^-{\ev} & \\
    \fM_{\eC}^n && \fM_{\eC},
    }
    \end{aligned}
    \end{equation}
    In particular, restricting this Lagrangian structure along the section $\sigma\colon \fM_{\eC}^n\rightarrow \fM^{n-\filt}_{\eC}$ we get that $\Phi_n\colon \fM_{\eC}^n\rightarrow \fM_{\eC}$ is compatible with $(2-d)$-shifted symplectic structures.
    \end{cor}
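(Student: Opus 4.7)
The plan is to apply the relative AKSZ construction of \cite{bdii}, which promotes a $d$-Calabi--Yau structure on a cospan of finite-type dg categories to a $(2-d)$-shifted Lagrangian structure on the induced correspondence of moduli stacks, compatibly with the symplectic structures on the endpoints. Concretely, I would first identify the correspondence \eqref{eq:moduli_attractor_corresp} as the one obtained by taking moduli of objects of the cospan
\[\Perf(P^\delta_n) \longrightarrow \Perf(P_n) \longleftarrow \Perf_\C\]
after tensoring with $\eC$. Using $\eC \otimes \Perf(P) \simeq \Fun^{\exact}(\Perf(P^{\op}),\eC)^{\op}$ and Lemma \ref{lem:mapsintomoduli}, one checks that $\fM_{\eC \otimes \Perf(P_n)} \simeq \fM^{n-\filt}_{\eC}$ and that the maps induced by $\pi\colon \Perf(P_n)\to \Perf(P^\delta_n)$ and $\iota_1\colon \Perf_\C \to \Perf(P_n)$ are precisely $\gr$ and $\ev$; similarly $\iota_\gr$ induces a section on moduli agreeing with $\sigma$.

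Next, Proposition \ref{prop:filteredcospan} equips this cospan with a canonical $0$-Calabi--Yau structure restricting to $\tilde{c}_n$ on $\Perf(P^\delta_n) \simeq \Perf_\C^{\oplus n}$ and to $\tilde{c}_1$ on $\Perf_\C$. Tensoring this $0$-Calabi--Yau cospan with the $d$-Calabi--Yau dg category $\eC$ yields a $d$-Calabi--Yau structure on
\[\eC^{\oplus n} \simeq \eC \otimes \Perf(P^\delta_n) \longrightarrow \eC \otimes \Perf(P_n) \longleftarrow \eC,\]
whose restriction to the endpoints is the original $d$-Calabi--Yau structure on $\eC$, respectively its orthogonal sum on $\eC^{\oplus n}$. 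Applying the AKSZ theorem of \cite[Theorem 5.5]{bdii} in its relative form (i.e. the Lagrangian version used in \cite[Theorem 2.10]{cal15}) now produces the desired $(2-d)$-shifted Lagrangian structure on the correspondence \eqref{eq:moduli_attractor_corresp}, compatibly with the product symplectic structure $\omega_{\fM_\eC}\boxplus\cdots\boxplus\omega_{\fM_\eC}$ on $\fM_\eC^n$ and the symplectic structure $\omega_{\fM_\eC}$ on $\fM_\eC$.

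The "In particular" statement follows formally: because $\sigma$ is a section of $\gr$, the homotopy $\gr^\star(\omega_{\fM_\eC}\boxplus\cdots\boxplus\omega_{\fM_\eC}) \sim \ev^\star \omega_{\fM_\eC}$ supplied by the Lagrangian structure pulls back under $\sigma$ to a homotopy $\omega_{\fM_\eC}\boxplus\cdots\boxplus\omega_{\fM_\eC} \sim \Phi_n^\star \omega_{\fM_\eC}$, exhibiting $\Phi_n$ as a morphism of $(2-d)$-shifted symplectic stacks. The main subtlety I anticipate is bookkeeping: one must verify that the $d$-Calabi--Yau structure induced by tensoring with $\eC$ actually restricts on the endpoints to the classes whose image under \eqref{eq:HC_to_forms} gives the expected symplectic structures on $\fM_\eC^n$ and $\fM_\eC$. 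This reduces to the fact that under the K\"unneth isomorphism $\HH(\eC \otimes \Perf_\C^{\oplus n}) \simeq \HH(\eC)^{\oplus n}$ the class $c \otimes \tilde{c}_n$ equals the $n$-fold diagonal of $c$, which is straightforward but deserves an explicit check; once established, compatibility with the product symplectic form on $\fM_\eC^n$ is automatic from the multiplicativity of the map \eqref{eq:HC_to_forms}.
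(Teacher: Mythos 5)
Your proposal is correct and follows essentially the route the paper intends: the paper proves Proposition \ref{prop:filteredcospan} precisely to supply the $0$-Calabi--Yau structure on the cospan $\Perf(P^\delta_n)\rightarrow\Perf(P_n)\leftarrow\Perf_\C$, and then the corollary is exactly the statement of \cite[Corollary 6.5]{bdii}, obtained by tensoring that cospan with the $d$-Calabi--Yau category $\eC$ and applying the relative version of the moduli-of-objects theorem, just as you describe. The only small inaccuracy is attributional: the relative (Lagrangian) statement you need is the one for moduli of objects proved in \cite{bdii} itself (not the mapping-stack result \cite[Theorem 2.10]{cal15}, which is the AKSZ counterpart used elsewhere in the paper), but this does not affect the argument.
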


    Let us now describe how to compose these Lagrangian correspondences.

    \begin{prop}\label{prop:filtrationcomposition}
    Let $\eC$ be a finite type dg category. The composition of the correspondences
    \[
    \xymatrix{
    & \fM^{n-\filt}_{\eC}\times \fM^{m-1}_{\eC} \ar_{\gr\times \id}[dl] \ar^{\ev\times \id}[dr] && \fM^{m-\filt}_{\eC} \ar_{\gr}[dl] \ar^{\ev}[dr] & \\
    \fM^{n+m-1}_{\eC} && \fM^m_{\eC} && \fM_{\eC}
    }
    \]
    and
    \[
    \xymatrix{
    & \fM^{n-1}_{\eC} \times \fM^{m-\filt}_{\eC}\ar_{\id\times\gr}[dl] \ar^{\id\times \ev}[dr] && \fM^{n-\filt}_{\eC} \ar_{\gr}[dl] \ar^{\ev}[dr] & \\
    \fM^{n+m-1}_{\eC} && \fM^n_{\eC} && \fM_{\eC}
    }
    \]
    are both equivalent to the same correspondence
    \[
    \xymatrix{
    & \fM^{(n+m-1)-\filt}_{\eC} \ar_{\gr}[dl] \ar^{\ev}[dr] & \\
    \fM^{n+m-1}_{\eC} && \fM_{\eC}.
    }
    \]
    Moreover, if $\eC$ is equipped with a $d$-Calabi--Yau structure, this equivalence is compatible with Lagrangian structures.
    \end{prop}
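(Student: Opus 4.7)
The plan is to compute both composed correspondences at the level of cospans of dg categories, and show they yield the same cospan underlying $\fM^{(n+m-1)-\filt}_\eC$. Under the contravariant moduli functor $\eA\mapsto \fM_\eA$, which sends colimits in $\dgcat$ to limits in derived stacks (Lemma \ref{lem:mapsintomoduli}), fiber products of correspondences correspond to pushouts of cospans in $\dgcat$; the basic correspondence $\fM^n_\eC\xleftarrow{\gr}\fM^{n-\filt}_\eC\xrightarrow{\ev}\fM_\eC$ arises from the Calabi--Yau cospan $\eC\otimes \Perf(P_n^\delta)\xrightarrow{\iota_\gr}\eC\otimes \Perf(P_n)\xleftarrow{\iota_1} \eC$ of Proposition \ref{prop:filteredcospan}.

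Accordingly, the first composition corresponds to the pushout cospan whose middle term is
\[\eC\otimes \bigl[\Perf(P_n\sqcup P_{m-1}^\delta)\coprod\nolimits_{\Perf(P_m^\delta)}\Perf(P_m)\bigr].\]
Using the universal property $\Fun^{\exact}(\Perf(P_k),\eD)\cong \Fun(P_k^{\op},\eD)$ from Lemma \ref{lem:posetenvelope}, I would unwind this middle term as the functor $\eD\mapsto\{(M_\bullet,N_1,\dots,N_{m-1},P_\bullet):\ M_0\simeq \mathrm{cofib}(P_1\to P_0),\ N_k\simeq \mathrm{cofib}(P_{k+1}\to P_k)\}$, and exhibit an equivalence with $\Fun(P_{n+m-1}^{\op},\eD)$, i.e., with $(n+m-1)$-filtered objects $Q_\bullet$, via the explicit assignments $Q_0=P_0$, $Q_k=M_k\times_{M_0}P_0$ for $1\le k\le n-1$, and $Q_{n-1+j}=P_j$ for $1\le j\le m-1$ (with inverse recovering $M_k=Q_k/Q_n$ and $P_j=Q_{n-1+j}$). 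By an analogous but simpler argument (no pullbacks required), the second composition matches $(n+m-1)$-filtered objects via the straightforward concatenation $Q_k=M_k$ for $0\le k\le n-1$, $Q_{n-1+j}=P_j$ for $1\le j\le m-1$, subject to the matching $P_0=M_{n-1}$. Both compositions therefore produce the same cospan $\eC\otimes\Perf(P_{n+m-1}^\delta)\to \eC\otimes\Perf(P_{n+m-1})\leftarrow \eC$, proving the first claim.

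For the compatibility with Lagrangian structures when $\eC$ carries a $d$-Calabi--Yau structure, I would invoke the naturality of the AKSZ construction: the Lagrangian structures on the two compositions arise via Corollary \ref{cor:BDcorrespondence} from $0$-Calabi--Yau structures on the respective pushout cospans, which are obtained by composing the Calabi--Yau structures on the individual cospans. By the identification above, both become $0$-Calabi--Yau structures on the cospan $\eC\otimes\Perf(P_{n+m-1}^\delta)\to \eC\otimes\Perf(P_{n+m-1})\leftarrow \eC$, restricting to the canonical $\tilde{c}_{n+m-1}$ and $\tilde{c}_1$ on the endpoints. By the uniqueness statement in Proposition \ref{prop:filteredcospan}, these two Calabi--Yau structures agree, so by the functoriality of AKSZ the two Lagrangian correspondences on the composed fiber product coincide.

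The main technical obstacle is verifying the explicit equivalence at the level of the universal property of the pushout — in particular, confirming that the pullback $Q_k=M_k\times_{M_0}P_0$ in the first composition satisfies the required cofibre relations (e.g., that $\mathrm{cofib}(P_1\to Q_k)\simeq M_k$ follows from $M_0\simeq \mathrm{cofib}(P_1\to P_0)$ by base change of fibre sequences). One must also track variance conventions carefully, noting that $\iota_1$ corresponds to evaluation at $0\in P_n^{\op}$ (the image of the bottom of $P_n$ under Yoneda) and that the gluing across the fibre product is controlled by this single-point matching.
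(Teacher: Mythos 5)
Your proposal is correct in its overall architecture and in fact mirrors the paper's strategy at the two pivotal points: reduce everything to cospans of dg categories (using that $\fM_{(-)}$ turns colimits of dg categories into limits of stacks), and then deduce the compatibility of Lagrangian structures from the \emph{uniqueness} of the $0$-Calabi--Yau structure on $\Perf(P_{n+m-1}^\delta)\to\Perf(P_{n+m-1})\leftarrow\Perf_\C$ established in Proposition \ref{prop:filteredcospan}. Where you genuinely diverge is in the verification that the relevant pushout of cospans is $\Perf(P_{n+m-1})$: the paper passes through Lemma \ref{lem:posetenvelope} to translate the coCartesian-ness of the square of dg categories into a homotopy-coCartesian square of simplicial sets in the Joyal model structure, and then reduces by induction to the inner anodyne map $\Lambda^2_1\to\Delta^2$; you instead compute $\Fun^{\exact}(-,\eD)$ of the pushout as an explicit homotopy fibre product of diagram categories and construct a splicing equivalence with $\Fun(P_{n+m-1}^{\op},\eD)$ via $Q_k=M_k\times_{M_0}P_0$ and $Q_{n-1+j}=P_j$. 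Your splice formulas are right (the cofibre identities you flag do follow from stability/base change), and this route has the virtue of making the composed filtration completely explicit; the paper's route is slicker and avoids the point your approach still owes: the object-level assignments must be promoted to an equivalence of $\infty$-categories natural in $\eD$, and identified with the canonical comparison functor out of the pushout, so that the transported Calabi--Yau structures are the composed ones. Two smaller caveats: the statement that the moduli-of-objects functor carries \emph{compositions of Calabi--Yau cospans} to \emph{compositions of Lagrangian correspondences}, which you invoke as "naturality of AKSZ," is a nontrivial input that the paper outsources to \cite[Section 6.1.2]{bcs20} and should be cited or proved rather than asserted; and you should fix the indexing convention for which graded piece gets refined in each composition so that both compositions are identified with the same standard correspondence (this is a bookkeeping issue, not a mathematical one).
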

    \begin{proof}
    It is shown in \cite[Section 6.1.2]{bcs20} that the functor of moduli of objects sends compositions of Calabi--Yau cospans to compositions of Lagrangian correspondences. Thus, it is enough to prove the claim for Calabi--Yau cospans. The two assertions are proven analogously, so we will only deal with the first claim. Consider a diagram
    \[
    \xymatrix{
    && \Perf(P_{n+m-1}) && \\
    & \Perf_\C^{\oplus(n-1)} \oplus \Perf(P_m) \ar[ur] && \Perf(P_n) \ar[ul] & \\
    \Perf_\C^{\oplus(n+m-1)} \ar[ur] && \Perf_\C^{\oplus n} \ar[ul] \ar[ur] && \Perf_\C \ar[ul]
    }
    \]
    of dg categories. To show that the top square is coCartesian in $\dgcat$, using Lemma \ref{lem:posetenvelope} we have that for any dg category $\eD$ we have $\Fun^{\exact}(\Perf(P_n), \eD)\cong \Fun((\Delta^{n-1})^{\op}, \eD)$, so we have to show that
    \[
    \xymatrix{
    \Delta^0 \ar[r] \ar[d] & \Delta^{m-1} \ar[d] \\
    \Delta^{n-1} \ar[r] & \Delta^{n+m-2}
    }
    \]
    is a homotopy coCartesian square of simplicial sets with respect to the Joyal model structure. By induction, it is enough to prove the claim for $n=m=1$ in which case we have that $\Lambda^2_1=\Delta^1\coprod_{\Delta^0} \Delta^1\rightarrow \Delta^2$ is a categorical equivalence since it is inner anodyne \cite[Lemma 2.2.5.2]{htt}. It remains to show that the above diagram of dg categories represents a composition of Calabi--Yau cospans; but it is obvious, as there is a unique structure of a $0$-Calabi--Yau cospan on $\Perf_\C^{\oplus(n+m-1)}\rightarrow \Perf(P_{n+m-1})\leftarrow \Perf_\C$ by Proposition \ref{prop:filteredcospan}.
    \end{proof}

\subsection{From moduli of objects to mapping stacks}

In this section we explain how to relate moduli of objects to mapping stacks.

\begin{defin}
Let $X$ be a derived stack and $\eD$ a dg category. The $\infty$-category of \defterm{perfect complexes on $X$ with coefficients in $\eD$} is
\[\Perf(X; \eD) = \lim_{\Spec A\rightarrow X} (\Perf_A\otimes \eD).\]
\end{defin}

\begin{ex}\label{ex:PerfPerfcoefficients}
Let $R$ be a connective commutative dg algebra. Then
\[\Perf(X; \Perf_R)\cong \Perf(X\times\Spec R).\]
\end{ex}

\begin{ex}\label{ex:Perfsmoothproper}
If $\eD$ is smooth and proper, it is dualizable in $\dgcat$ and hence $(-)\otimes\eD$ preserves limits. Therefore, the natural functor $\eD\otimes\Perf(X)\rightarrow \Perf(X; \eD)$ is an equivalence.
\end{ex}

An object $E\in\Perf(X; \eD)$ induces a functor
\[T_{E, R}\colon \Fun^{\exact}(\eD, \Perf_R)\longrightarrow \Perf(X; \Perf_R)\cong \Perf(X\times \Spec R)\]
which is natural in $R$.

\begin{prop}\label{prop:modulitomap}
Let $X$ be a derived stack and $\eC,\eD$ a pair of dg categories. An object $E\in\Perf(X; \eD)$ gives rise to a morphism
\[F_E\colon \fM_{\eC\otimes\eD}\longrightarrow \Map(X, \fM_{\eC})\]
of derived stacks.
\end{prop}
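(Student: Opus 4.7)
The plan is to define $F_E$ pointwise on $R$-points and then check naturality in $R$. By Lemma \ref{lem:mapsintomoduli}, a morphism $\fM_{\eC\otimes\eD}\to \Map(X,\fM_\eC)$ is the same data as a natural transformation, in the variable $R\in\cdga^{\leq 0}$, from
\[\Hom_{\dgcat}(\eC\otimes\eD,\Perf_R)\longrightarrow \Hom_{\dgcat}(\eC,\Perf(X\times\Spec R)).\]
So given $M\colon \eC\otimes\eD\to \Perf_R$, I need to produce an exact functor $F_E(M)\colon \eC\to \Perf(X\times\Spec R)$.

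Using the symmetric monoidal structure on $\dgcat$ and the internal hom, curry $M$ to obtain an exact functor
\[\widetilde{M}\colon \eC\longrightarrow \Fun^{\exact}(\eD,\Perf_R).\]
Then post-compose with the functor
\[T_{E,R}\colon \Fun^{\exact}(\eD,\Perf_R)\longrightarrow \Perf(X;\Perf_R)\simeq \Perf(X\times\Spec R)\]
furnished in the paragraph before the proposition (via Example \ref{ex:PerfPerfcoefficients}), setting $F_E(M)\coloneqq T_{E,R}\circ \widetilde{M}$.

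The main step is to verify naturality in $R$: for a morphism $R\to R'$ in $\cdga^{\leq 0}$, the base change functor $\Perf_R\to \Perf_{R'}$ induces a commuting square relating $\widetilde{M}$ and $\widetilde{M\otimes_R R'}$, and $T_{E,-}$ is natural by construction. Both naturalities combine to give the required transformation. There is no real obstacle here beyond bookkeeping with $\infty$-categorical currying; the only mild subtlety is that $\Fun^{\exact}(\eD,\Perf_R)$ is not a priori in $\dgcat$ (it may fail to be small), but this is not an issue since we are working with the space of functors out of it and use it only as an intermediate target, or one can replace it by a sufficiently large full small subcategory containing the image of $\widetilde{M}$ to formally stay inside $\dgcat$.
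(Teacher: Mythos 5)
Your argument is correct and is essentially the paper's own proof: after invoking Lemma \ref{lem:mapsintomoduli}, the paper likewise rewrites $\Hom_{\dgcat}(\eC\otimes\eD,\Perf_R)\cong\Hom_{\dgcat}(\eC,\Fun^{\exact}(\eD,\Perf_R))$ by the tensor--hom adjunction and defines $F_E$ by post-composition with $T_{E,R}$, with naturality in $R$ coming from that of $T_{E,-}$. Your smallness worry is moot, since $\Fun^{\exact}(\eD,\Perf_R)$ is the internal hom of the closed symmetric monoidal structure on $\dgcat$ (it is essentially small), which is exactly what licenses the currying step.
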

\begin{proof}
Using the description of the mapping stacks to the moduli of objects given in Lemma \ref{lem:mapsintomoduli} we have to construct, for every connective cdga $R$, a morphism
\[\Hom_{\dgcat}(\eC\otimes\eD, \Perf_R)\longrightarrow \Hom_{\dgcat}(\eC, \Perf(X\times\Spec R))\]
natural in $R$. By adjunction the left-hand side is isomorphic to
\[\Hom_{\dgcat}(\eC\otimes\eD, \Perf_R)\cong \Hom_{\dgcat}(\eC, \Fun^{\exact}(\eD, \Perf_R))\]
and the map $F_E$ is simply given by post-composition with $T_{E, R}$.
\end{proof}

If $X$ is a derived stack and $\eD$ a dg category, consider the morphisms
\begin{align*}
\HC^-(\Perf(X; \eD))&\longrightarrow \lim_{\Spec A\rightarrow X} \HC^-(\Perf_A\otimes \eD)\\
&\cong \lim_{\Spec A\rightarrow X} (\HH(A)\otimes \HH(\eD))^{S^1} \\
&\longrightarrow \lim_{\Spec A\rightarrow X} (A\otimes \HC^-(\eD)) \\
&\longleftarrow \bR\Gamma(X, \cO)\otimes \HC^-(\eD)
\end{align*}
where we have used the natural projection $\HH(A)\rightarrow A$ which is $S^1$-equivariant. If $X$ is $\cO$-compact, the last backward map is an isomorphism, so in total we obtain a morphism
\[\kappa_{\eD, X}\colon \HC^-(\Perf(X; \eD))\longrightarrow \bR\Gamma(X, \cO)\otimes \HC^-(\eD).\]

\begin{thm}\label{thm:modulitomapsymplectic}
Consider the following data:
\begin{enumerate}
    \item $X$ is an $\cO$-compact derived stack equipped with a $d_1$-preorientation $[X]\colon \bR\Gamma(X, \cO)\rightarrow \C[-d_1]$.
    \item $\eC$ is a dg category equipped with a class $[\eC]\in|\HC^-(\eC)[-d_2]|$.
    \item $E\in\Perf(X; \eD)$.
    \item $\eD$ is a dg category.
\end{enumerate}
Consider the class $[\eD]\in|\HC^-(\eD)[-d_1]|$ given by the composite
\begin{align*}
\C&\xrightarrow{[E]} \HC^-(\Perf(X; \eD))\\
&\xrightarrow{\kappa_{\eD, X}} \bR\Gamma(X, \cO)\otimes \HC^-(\eD) \\
&\xrightarrow{[X]\otimes \id} \HC^-(\eD)[-d_1]
\end{align*}
Then the morphism
\[F_E\colon \fM_{\eC\otimes\eD}\longrightarrow \Map(X, \fM_{\eC})\]
is compatible with $(2-d_1-d_2)$-shifted presymplectic structures, where the $(2-d_1-d_2)$-shifted presymplectic structure on the left is defined by the image of $[\eC]\otimes [\eD]$ under $\HC^-(\eC)\otimes \HC^-(\eD)\rightarrow \HC^-(\eC\otimes \eD)$ and on the right induced by the AKSZ procedure from the $(2-d_2)$-shifted presymplectic structure on $\fM_{\eC}$ defined by $[\eC]$.
\end{thm}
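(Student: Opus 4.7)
The plan is to reduce the statement to an equality of classes in $\HC^-(\Perf(\fN))$ where $\fN = \fM_{\eC\otimes\eD}$, and then verify this equality by functoriality of the Chern-character-type map $\tilde{\kappa}_2$ together with the HKR-compatibility that defines $\kappa_{\eD, X}$.

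Set $\fM = \fM_\eC$. Both presymplectic structures on $\fN$ are images under $\tilde{\kappa}_2\colon \HC^-(\Perf(\fN)) \to \Omega^{\geq 2}(\fN)[2]$ of explicit classes. For the form induced by $[\eC]\otimes[\eD]$, the corresponding class is $u_*\bigl([\eC]\otimes[\eD]\bigr)$, where $u\colon \eC\otimes\eD\to \Perf(\fN)$ is the universal pseudo-perfect module and $\HC^-(\eC)\otimes\HC^-(\eD)\to\HC^-(\eC\otimes\eD)$ is the lax monoidal structure. For the AKSZ form pulled back via $F_E$, naturality of the AKSZ $\kappa$-map and of $\tilde{\kappa}_2$ (both being compatible with maps of derived stacks and of dg categories) shows that the corresponding class is obtained as follows: start with $[\eC] \in \HC^-(\eC)[-d_2]$, apply $\Phi_*$ where $\Phi\colon \eC\to \Perf(X\times\fN)$ is the functor corresponding via Lemma \ref{lem:mapsintomoduli} to $\ev\circ(\id_X\times F_E)\colon X\times\fN\to \fM$, then apply the composite $\HC^-(\Perf(X\times\fN))\to \bR\Gamma(X,\cO)\otimes \HC^-(\Perf(\fN))$ (a version of $\kappa$ for the trivial coefficients) followed by $[X]\otimes\id$.

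First I would unwind $F_E$: by construction of the map in Proposition \ref{prop:modulitomap}, the functor $\Phi$ factors as
\[
\eC \xrightarrow{\mathrm{id}\otimes(-)} \eC\otimes\eD \xrightarrow{u_X} \Perf(X\times\fN),
\]
where $u_X$ is the dg functor obtained by pulling $u$ back to $X\times\fN$ and pairing the $\eD$-slot with (the pullback of) $E\in\Perf(X;\eD)$; equivalently, $\Phi$ is the composite $T_E\circ u'$ where $u'\colon \eC\to \Fun^{\exact}(\eD,\Perf(\fN))$ is adjoint to $u$. This factorization propagates to $\HC^-$: the class $\Phi_*[\eC]\in \HC^-(\Perf(X\times\fN))$ is the image of $[\eC]\otimes [E]$ under the composite
\[
\HC^-(\eC)\otimes \HC^-(\Perf(X;\eD))\longrightarrow \HC^-(\eC\otimes\Perf(X;\eD))\longrightarrow \HC^-(\Perf(X;\eC\otimes\eD))\xrightarrow{u_{X,*}} \HC^-(\Perf(X\times\fN)),
\]
where $[E]\in\HC^-(\Perf(X;\eD))$ is the Chern-character class of $E$.

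Next I would compare the two classes in $\HC^-(\Perf(\fN))[-d_1-d_2]$. The crucial input is that the map $\kappa_{\eD, X}\colon \HC^-(\Perf(X;\eD))\to \bR\Gamma(X,\cO)\otimes \HC^-(\eD)$ is natural in $\eD$ and is a $\HC^-(\eD)$-linear map that, when post-composed with $[X]\otimes\id$, computes the class $[\eD]\in \HC^-(\eD)[-d_1]$ precisely as $[X]\circ\kappa_{\eD,X}\circ[E]$ by definition. Combining this with the $S^1$-equivariant HKR projection $\HH(A)\to A$ used to construct $\kappa$ (which is also what underlies the AKSZ form on $\Map(X,\fM)$), one obtains a commutative diagram
\[
\xymatrix@C=0.6cm{
\HC^-(\eC)\otimes \HC^-(\Perf(X;\eD)) \ar[r] \ar[d] & \HC^-(\Perf(X;\eC\otimes\eD)) \ar[r]^-{u_{X,*}} \ar[d]^-{\kappa_{\eC\otimes\eD,X}} & \HC^-(\Perf(X\times\fN)) \ar[d]^-{\kappa_{\C,X}} \\
\HC^-(\eC)\otimes \bR\Gamma(X,\cO)\otimes\HC^-(\eD) \ar[r] & \bR\Gamma(X,\cO)\otimes\HC^-(\eC\otimes\eD) \ar[r]^-{\bR\Gamma\otimes u_*} & \bR\Gamma(X,\cO)\otimes \HC^-(\Perf(\fN))
}
\]
After applying $[X]\otimes\id$ to the right column and $\id\otimes[X]\otimes\id$ to the left, the outer composites identify $\Phi_*[\eC]$ (integrated over $X$) with $u_*([\eC]\otimes[\eD])$, which is exactly what we need.

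The main obstacle is the coherent construction of the commutative square above, in particular the naturality statement for $\kappa_{\eD, X}$ in the dg-category variable. This requires handling the symmetric monoidal / lax-monoidal structure on $\HC^-$ together with the $S^1$-action carefully, and checking that the projection $\HH(A)\to A$ induces a transformation of $\dgcat$-indexed functors. Once this coherence is in place, everything else is naturality; I would verify the above diagram on a generic affine $\Spec A\to X$ (where the HKR projection is standard and the pairing with $E|_{\Spec A}\in\Perf_A\otimes\eD$ is explicit), then pass to the limit using $\cO$-compactness of $X$ to commute the limit with $\otimes$.
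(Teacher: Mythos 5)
Your proposal is correct and is essentially the paper's own argument: both presymplectic structures are unwound as composites through negative cyclic homology, and the comparison reduces to the factorization of the classifying functor of the pulled-back family through $\id\otimes E$ and $F$ (equivalently, the universal family), the defining formula for $[\eD]$, and naturality/multiplicativity of the $\kappa$-maps and $\tilde{\kappa}_2$. The only difference is packaging — you phrase the identity universally over $\fM_{\eC\otimes\eD}$ via classes in $\HC^-(\Perf(\fM_{\eC\otimes\eD}))$, whereas the paper checks it on arbitrary $R$-points, which amounts to the same verification since the spaces of forms are right Kan extended from affines, and your own reduction to affine charts over $X$ and passage to the limit is exactly how $\kappa_{\eD,X}$ is constructed there.
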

\begin{proof}
Consider an $R$-point of $\fM_{\eC\otimes \eD}$ specified by an exact functor $F\colon \eC\otimes\eD\rightarrow \Perf_R$ and the corresponding $R$-point of $\Map(X, \fM_{\eC})$ specified by an exact functor $F'\colon \eC\rightarrow \Perf(X\times \Spec R)$. The pullback of the $(2-d_1-d_2)$-shifted presymplectic structure on $\fM_{\eC\otimes\eD}$ to $\Spec R$ is given by the composite
\begin{align*}
\C&\xrightarrow{[\eC]\otimes [\eD]} \HC^-(\eC)\otimes \HC^-(\eD)[-d_1-d_2] \\
&\rightarrow \HC^-(\eC\otimes\eD)[-d_1-d_2] \\
&\xrightarrow{\HC^-(F)} \HC^-(R)[-d_1-d_2] \\
&\xrightarrow{\tilde{\kappa}_2} \Omega^{\geq 2}(R)[2-d_1-d_2].
\end{align*}

Similarly, the pullback of the $(2-d_1-d_2)$-shifted presymplectic structure on $\Map(X, \fM_{\eC})$ to $\Spec R$ is given by the composite
\begin{align*}
\C&\xrightarrow{[\eC]} \HC^-(\eC)[-d_2] \\
&\xrightarrow{\HC^-(F')} \HC^-(\Perf(X\times \Spec R))[-d_2] \\
&\xrightarrow{\kappa_{\Spec R, X}} \bR\Gamma(X, \cO)\otimes \HC^-(R)[-d_2] \\
&\xrightarrow{[X]} \HC^-(R)[-d_1-d_2] \\
&\xrightarrow{\tilde{\kappa}_2} \Omega^{\geq 2}(R)[2-d_1-d_2]
\end{align*}

The claim then follows from the fact that $F'$ is given by the composite
\begin{align*}
\eC&\xrightarrow{\id\otimes E} \eC\otimes\Perf(X; \eD) \\
&\rightarrow \Perf(X; \eC\otimes \eD) \\
&\xrightarrow{F} \Perf(X; \Perf_R) \\
&\cong \Perf(X\times \Spec R).
\end{align*}
\end{proof}

The constructions described in Proposition \ref{prop:modulitomap} and Theorem \ref{thm:modulitomapsymplectic} are functorial in the following way. Suppose $I$ is a small $\infty$-category, $X_\bullet$ is a diagram $I\rightarrow \dSt$ of derived stacks and $\eD_\bullet$ is a diagram $I\rightarrow \dgcat$ of dg categories. Then:
\begin{itemize}
    \item Using the description of natural transformations as an end \cite[Proposition 5.1]{ghn17}, by Proposition \ref{prop:modulitomap} an object $E\in\int_{i\in I} \Perf(X_i;\eD_i)$ produces a natural transformation
    \[F_E\colon \fM_{\eC\otimes\eD_\bullet}\longrightarrow \Map(X_\bullet, \fM_{\eC})\]
    of functors $I^{\op}\rightarrow \dSt$.
    \item If, in addition, we have a class $[\eC]\in|\HC^-(\eC)[-d_2]|$, for every $i\in I$ the derived stack $X_i$ is $\cO$-compact and we are given a $d_1$-preorientation
    \[[X_\bullet]\colon \colim_{i\in I} \bR\Gamma(X_i, \cO)\rightarrow \C[-d_1],\]
    then by Theorem \ref{thm:modulitomapsymplectic} the natural transformation $F_E$ is compatible with $(2-d_1-d_2)$-shifted presymplectic structures.
\end{itemize}

\subsection{Attractor correspondence for the moduli of objects}\label{ssec:moduli_attractor}

Let us now work out a description of the attractor correspondence for the moduli stack of objects. Let $\eC$ be a dg category of finite type. For a finite connected subset $P\subset \Z_{\geq}$ (which is isomorphic to $P^{\op}_n$ for some $n$) consider the correspondence
\[
\xymatrix{
& \fM_{\eC\otimes\Perf(P^{\op})} \ar_{\gr}[dl] \ar^{\ev}[dr] & \\
\fM_{\eC\otimes \Perf(P^\delta)} && \fM_{\eC}
}
\]
An inclusion $P_1\subset P_2$ produces restriction functors $\Perf(P_2^{\op})\rightarrow \Perf(P_1^{\op})$ and hence a morphism of correspondences
\[
\left(
\begin{gathered}
\xymatrix{
& \fM_{\eC\otimes\Perf(P_1^{\op})} \ar_{\gr}[dl] \ar^{\ev}[dr] & \\
\fM_{\eC\otimes \Perf(P_1^\delta)} && \fM_{\eC}
}
\end{gathered}
\right)\longrightarrow\left(
\begin{gathered}
\xymatrix{
& \fM_{\eC\otimes\Perf(P_2^{\op})} \ar_{\gr}[dl] \ar^{\ev}[dr] & \\
\fM_{\eC\otimes \Perf(P_2^\delta)} && \fM_{\eC}
}    
\end{gathered}\right)
\]

\begin{prop}\label{prop:gradedmoduli}
Let $\eC$ be a dg category of finite type. Then
\[
\begin{gathered}
\xymatrix{
& \Filt(\fM_{\eC}) \ar_{\gr}[dl] \ar^{\ev}[dr] & \\
\Grad(\fM_{\eC}) && \fM_{\eC}
}    
\end{gathered}
\cong\colim_{P\subset \bZ_{\geq}\text{, finite, connected}}\left(\begin{gathered}
\xymatrix{
& \fM_{\eC\otimes\Perf(P^{\op})} \ar_{\gr}[dl] \ar^{\ev}[dr] & \\
\fM_{\eC\otimes \Perf(P^\delta)} && \fM_{\eC}
}
\end{gathered}\right)
\]
Moreover, the transition maps $\fM_{\eC\otimes \Perf(P_1^{\op})}\rightarrow \fM_{\eC\otimes \Perf(P_2^{\op})}$ and $\fM_{\eC\otimes \Perf(P_1^{\delta})}\rightarrow \fM_{\eC\otimes \Perf(P_2^{\delta})}$ are open immersions.
\end{prop}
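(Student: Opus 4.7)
The plan is to apply $\Hom_{\dgcat}(\eC, -)$ to the filtered colimit presentation of the cospan $\Perf(B\bG_m\times\Spec R)\leftarrow \Perf([\bA^1/\bG_m]\times\Spec R)\to \Perf_R$ supplied by Proposition~\ref{prop:filteredperfect}, and to identify the resulting terms via tensor--hom adjunction. Concretely, by Lemma~\ref{lem:mapsintomoduli} one has $\Filt(\fM_{\eC})(R)\cong\Hom_{\dgcat}(\eC,\Perf([\bA^1/\bG_m]\times\Spec R))$ together with parallel formulas for $\Grad(\fM_{\eC})(R)$ and $\fM_{\eC}(R)$. Proposition~\ref{prop:filteredperfect} presents this cospan in $\dgcat$ as a filtered colimit of the cospans $\Fun(P^\delta,\Perf_R)\leftarrow\Fun(P,\Perf_R)\to\Perf_R$ indexed by finite connected $P\subset\bZ_{\geq}$, with transition maps given by left Kan extension. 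Since $\eC$ is of finite type---equivalently, compact in $\dgcat$---the functor $\Hom_{\dgcat}(\eC,-)$ preserves this filtered colimit.

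Combining the tensor--hom adjunction in $\dgcat$ with Lemma~\ref{lem:posetenvelope} yields natural equivalences
\[
\Hom_{\dgcat}(\eC,\Fun(P,\Perf_R)) \cong \Hom_{\dgcat}(\eC\otimes\Perf(P^{\op}),\Perf_R) = \fM_{\eC\otimes\Perf(P^{\op})}(R),
\]
and analogously for $P^\delta$. Under these equivalences the three structure maps of the cospan are intertwined with the morphisms $\gr$ and $\ev$ of \S\ref{ssec:moduli_dg}, since both are induced by the same underlying dg-categorical cospan $\Perf(P^\delta)\leftarrow\Perf(P)\to\Perf_\bC$ (restriction to the discrete poset and inclusion of the constant sheaf, respectively). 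This yields the claimed equivalence of correspondences, pointwise in $R$ and hence as derived prestacks.

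The hard part will be checking that the transition maps are open immersions. For $P_1\subset P_2$, the transition map $\fM_{\eC\otimes\Perf(P_1^{\op})}\to\fM_{\eC\otimes\Perf(P_2^{\op})}$ is induced by the restriction functor $\Perf(P_2^{\op})\to\Perf(P_1^{\op})$, whose left adjoint is the fully faithful left Kan extension; this exhibits the transition map as a section of the induced morphism in the reverse direction, and in particular as a derived monomorphism with vanishing relative cotangent complex. For the openness of its image I would appeal to the description, recorded just after Proposition~\ref{prop:filteredperfect}, of an object of $\Perf([\bA^1/\bG_m]\times\Spec R)$ as a $\bZ$-indexed diagram $\{V_m\}$ with $V_m=0$ for $m\gg 0$ and $V_m\to V_{m-1}$ an isomorphism for $m\ll 0$: the essential image of the transition map is precisely the locus where $V_m=0$ for $m>\max P_1$ and $V_m\to V_{m-1}$ is an isomorphism for $m\leq\min P_1$. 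Both acyclicity of a perfect complex and invertibility of a morphism of perfect complexes (equivalently, acyclicity of its cofibre) are Zariski-open conditions on the moduli of pseudo-perfect modules, which together with the cotangent calculation identifies each transition map with the inclusion of an open derived substack.
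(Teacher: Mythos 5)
Your identification of the correspondence with the filtered colimit is exactly the paper's argument: Lemma \ref{lem:mapsintomoduli}, the presentation of Proposition \ref{prop:filteredperfect}, compactness of $\eC$ in $\dgcat$ to pull $\Hom_{\dgcat}(\eC,-)$ through the colimit, and Lemma \ref{lem:posetenvelope} together with the tensor--hom adjunction to recognize each term as $\fM_{\eC\otimes\Perf(P^{\op})}$. One small point worth making explicit (the paper does): the colimit is computed a priori in derived prestacks, and it agrees with the colimit in stacks because $\Filt(\fM_{\eC})$ is already a stack.

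The open-immersion half is where your argument has genuine gaps. First, the inference ``the transition map is a section of the map induced by left Kan extension, hence a derived monomorphism'' is not valid for higher stacks: a split injection need not be $(-1)$-truncated (the basepoint inclusion $\pt\to B\bG_m$ admits the obvious retraction but is not a monomorphism), and adding the vanishing of the relative cotangent complex only gives formal \'etaleness, not monicity. What is true, and what the paper uses implicitly, is that left Kan extension along the full inclusion $P_1\subset P_2$ is fully faithful, so on $R$-points the transition map is the inclusion of exactly those connected components of $P_2$-indexed diagrams $\{V_m\}$ in $\Fun^{\exact}(\eC,\Perf_R)$ with $V_m\simeq 0$ above $P_1$ and $V_m\to V_{m-1}$ invertible below $P_1$. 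Second, and more substantively, you assert that these are Zariski-open conditions, phrasing them for perfect complexes of $R$-modules; but here the $V_m$ are exact functors $\eC\to\Perf_R$, and the openness of ``$V_m\simeq 0$'' for a pseudo-perfect $\eC$-module is precisely the content of the second half of the paper's proof: one chooses a single generator $E$ of $\eC$ (finite type is used again here), so evaluation at $E$ is conservative and the condition becomes the vanishing of a perfect complex of $R$-modules, whose vanishing locus is then shown to be open by induction on Tor-amplitude (reducing to vector bundles, then to the trivial bundle by Zariski-locality); invertibility of $V_m\to V_{m-1}$ is handled by passing to the cofibre. So your route is the paper's route, but as written the monomorphism step is wrong as stated and the openness step is the statement to be proved rather than a citable fact.
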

\begin{proof}
Let us prove that
\[\Filt(\fM_{\eC})\cong \colim_{P\subset \bZ_{\geq}\text{, finite, connected}} \fM_{\eC\otimes\Perf(P^{\op})}.\]
By Lemma \ref{lem:mapsintomoduli} we have
\[\Filt(\fM_{\eC})(R)\simeq \Hom_{\dgcat}(\eC, \Perf([\bA^1/\bG_m]\times \Spec R))\]
for any $R\in\cdga^{\leq 0}$. By Proposition \ref{prop:filteredperfect} we further identify
\[\Hom_{\dgcat}(\eC, \Perf([\bA^1/\bG_m]\times \Spec R))\simeq \Hom_{\dgcat}(\eC, \colim_{P\subset \bZ_{\geq}\text{, finite, connected}}\Fun(P, \Perf_R)).\]
Since $\eC$ is of finite type, the natural morphism
\[\colim_{P\subset \bZ_{\geq}\text{, finite, connected}}\Hom_{\dgcat}(\eC, \Fun(P, \Perf_R))\longrightarrow \Hom_{\dgcat}(\eC, \colim_{P\subset \bZ_{\geq}\text{, finite, connected}}\Fun(P, \Perf_R))\]
is an equivalence. We have $\Fun(P, \Perf_R)\cong \Fun^{\exact}(\Perf(P^{\op}), \Perf_R)$. Therefore,
\[\Hom_{\dgcat}(\eC, \Fun(P, \Perf_R))\simeq \Hom_{\dgcat}(\eC\otimes \Perf(P^{\op}), \Perf_R).\]
Finally, the right-hand side is, by definition, $\fM_{\eC\otimes \Perf(P^{\op})}(R)=\Fun(P, \Fun^{\exact}(\eC, \Perf_R))$. Thus,
\[\Filt(\fM_{\eC})(R)\cong \colim_{P\subset \bZ_{\geq}\text{, finite, connected}} \fM_{\eC\otimes\Perf(P^{\op})}(R)\]
which shows that
\[\Filt(\fM_{\eC})\cong \colim_{P\subset \bZ_{\geq}\text{, finite, connected}} \fM_{\eC\otimes\Perf(P^{\op})},\]
where we consider the colimit in the $\infty$-category of derived prestacks. As $\Filt(\fM_{\eC})$ is a stack, this shows that the right-hand side coincides with the colimit computed in the $\infty$-category of derived stacks.

For a subset $P_1\subset P_2$, we will now show that $\fM_{\eC\otimes\Perf(P_1^{\op})}\rightarrow \fM_{\eC\otimes \Perf(P_2^{\op})}$ is an open immersion. On $R$-points this map is
\[\Map(P_1, \Fun^{\exact}(\eC, \Perf_R))\longrightarrow \Map(P_2, \Fun^{\exact}(\eC, \Perf_R))\]
given by the left Kan extension. The right-hand side is given by the space of $P_2$-labelled diagrams $\{V_m\}_{m\in P_2}$ of exact functors $\eC\rightarrow \Perf_R$ and the left-hand side is given by the union of connected components of those $P_2$-labelled diagrams such that $V_m=0$ if $m>i$ for any $i\in P_1$ and $V_{m+1}\rightarrow V_m$ is an isomorphism if $m<i$ for any $i\in P_1$.

As $\eC$ is of finite type, it admits a generator $E\in\eC$, so that the functor $\Fun^{\exact}(\eC, \Perf_R)\rightarrow \Perf_R$ given by evaluation at $E$ is conservative. Therefore, the claim is reduced to $\eC=\Perf_\C$. Therefore, we have to show the following two claims:
\begin{enumerate}
    \item For any perfect complex $V$ of $R$-modules the locus where it is zero is open in $\Spec R$.
    \item For a morphism of perfect complexes $V_1\rightarrow V_2$ of $R$-modules, the locus where it is an isomorphism is open in $\Spec R$.
\end{enumerate}
By taking the cone, the second claim is reduced to the first claim. Moreover, by induction on the Tor-amplitude, we are reduced to showing the first claim for $V$ a vector bundle on $\Spec R$. Since open immersions are local on the target and a vector bundle is locally trivial, we are reduced to showing the claim for $V$ the trivial vector bundle, where the claim is obvious.

The claims for $\Grad(\fM_{\eC})$ as well as for the morphisms $\Filt(\fM_{\eC})\rightarrow \Grad(\fM_{\eC})$ and $\Filt(\fM_{\eC})\rightarrow \fM_{\eC}$ are proven similarly using the functoriality described in Proposition \ref{prop:filteredperfect}.
\end{proof}

\begin{ex}\label{ex:GradVect}
Setting $\eC = \Perf_\C$ in Proposition \ref{prop:gradedmoduli} we get a description of the moduli stack of graded perfect complexes
\[\Grad(\Perf)\cong \colim_{P\subset \bZ\text{, finite, connected}} \Perf^P.\]
If we set $\Vect\subset \Perf$ the full substack of vector bundles, then we similarly get a description of the moduli stack of graded vector spaces
\[\Grad(\Vect)\cong \colim_{P\subset \bZ\text{, finite, connected}} \Vect^P.\]
\end{ex}

Let us now show that the isomorphism exhibited in Proposition \ref{prop:gradedmoduli} is compatible with Lagrangian structures. Let $\eC$ be a finite type dg category equipped with a $d$-Calabi--Yau structure. For a finite connected subset $P\subset \Z_{\geq}$ Corollary \ref{cor:BDcorrespondence} constructs a $(2-d)$-shifted symplectic structure on $\fM_{\eC}$ as well as a Lagrangian structure on the correspondence
\[
\xymatrix{
& \fM_{\eC\otimes\Perf(P^{\op})} \ar_{\gr}[dl] \ar^{\ev}[dr] & \\
\fM_{\eC\otimes \Perf(P^\delta)} && \fM_{\eC}
}
\]

Similarly, Corollary \ref{cor:lagattractorcorrespondence} constructs a Lagrangian structure on the correspondence
\[
\xymatrix{
& \Filt(\fM_{\eC}) \ar_{\gr}[dl] \ar^{\ev}[dr] & \\
\Grad(\fM_{\eC}) && \fM_{\eC}
} 
\]

\begin{prop}\label{prop:moduliattractorlagrangian}
Let $\eC$ be a finite type dg category equipped with a $d$-Calabi--Yau structure. Let $P\subset \Z_{\geq}$ be a finite connected subset. Then the morphism
\[
\left(\begin{gathered}
\xymatrix{
& \fM_{\eC\otimes\Perf(P^{\op})} \ar_{\gr}[dl] \ar^{\ev}[dr] & \\
\fM_{\eC\otimes \Perf(P^\delta)} && \fM_{\eC}
}
\end{gathered}\right)
\longrightarrow
\begin{gathered}
\xymatrix{
& \Filt(\fM_{\eC}) \ar_{\gr}[dl] \ar^{\ev}[dr] & \\
\Grad(\fM_{\eC}) && \fM_{\eC}
}    
\end{gathered}
\]
is compatible with Lagrangian structures.
\end{prop}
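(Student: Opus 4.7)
The plan is to exhibit both Lagrangian structures as images of a common \emph{oriented cospan} under appropriate AKSZ-type functors and then use a uniqueness argument to identify them.

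First, I would reinterpret the morphism of correspondences as being produced by the machinery of Proposition \ref{prop:modulitomap} and Theorem \ref{thm:modulitomapsymplectic}. Write $n=|P|$. Using Example \ref{ex:Perfsmoothproper} (since $\Perf(P^{\op})$ is smooth and proper), the left Kan extension along $P^{\op}\hookrightarrow \bZ$ together with Proposition \ref{prop:filteredperfect} produces a perfect complex $E_P\in \Perf([\bA^1/\bG_m];\Perf(P^{\op}))$, and similarly complexes $E_{P^\delta}\in\Perf(B\bG_m;\Perf(P^\delta))$ and $E_{\pt}\in\Perf(\pt;\Perf_\C)$ which are compatible along the cospan $B\bG_m\to[\bA^1/\bG_m]\leftarrow\pt$. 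I would verify, using Lemma \ref{lem:mapsintomoduli} and the colimit description of $\Perf([\bA^1/\bG_m])$, that the resulting natural transformation $F_{E_{\bullet}}\colon \fM_{\eC\otimes \Perf(\bullet)}\rightarrow \Map(\bullet,\fM_{\eC})$ recovers (on each finite $P$) the morphism of correspondences appearing in the statement.

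Next, I would apply Theorem \ref{thm:modulitomapsymplectic} functorially along the cospan. Proposition \ref{prop:orientedcospan} gives a $0$-oriented cospan structure on $B\bG_m\to[\bA^1/\bG_m]\leftarrow\pt$, extending the orientations $[B\bG_m]$ and $[\pt]$. The functoriality described after Theorem \ref{thm:modulitomapsymplectic} (via ends) pushes this data through $\kappa_{\Perf(P^{\op}),[\bA^1/\bG_m]}$ to produce a class in $|\HC^-(\Perf(P^{\op}))|$ together with its compatible restrictions to $\Perf(P^\delta)$ and $\Perf_\C$, i.e.\ the structure of a $0$-Calabi--Yau cospan on $\Perf(P^\delta)\to\Perf(P^{\op})\leftarrow\Perf_\C$ which extends $\tilde{c}_1$ and $\tilde{c}_n$. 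By Proposition \ref{prop:filteredcospan} such a Calabi--Yau cospan structure is unique, so it must coincide with the canonical one used in Corollary \ref{cor:BDcorrespondence}. Tensoring with $\eC$ and its $d$-Calabi--Yau class $[\eC]$ and applying the moduli functor $\fM_{-}$ of \cite{bcs20} to this Calabi--Yau cospan yields the left-hand Lagrangian correspondence with its Lagrangian structure.

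Then I would invoke the AKSZ construction for Lagrangian correspondences (\cite[Proposition 3.4.2]{chs21}, as already used in Corollary \ref{cor:lagattractorcorrespondence}) to get, from the oriented cospan of stacks together with the $(2-d)$-shifted symplectic structure on $\fM_{\eC}$, the right-hand Lagrangian correspondence. The naturality of the AKSZ procedure in the cospan (everything is a functor of cospans into the $(\infty,2)$-category of symplectic stacks and Lagrangian correspondences) combined with the identification in the previous paragraph makes $F_{E_\bullet}$ a morphism of Lagrangian correspondences.

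The main obstacle I anticipate is bookkeeping: assembling the naturality of the AKSZ construction with respect to the cospan data so that the \emph{Lagrangian} (not merely symplectic) structures are matched, rather than merely computed pointwise. Concretely, one needs the compatibility datum expressing that the homotopy witnessing $\gr^\star \omega_{\Grad(\fM_{\eC})}\sim \ev^\star\omega_{\fM_{\eC}}$ produced by the AKSZ machinery out of the oriented cospan agrees with the one produced by the Calabi--Yau cospan under $\fM_{-}$. The uniqueness statement in Proposition \ref{prop:filteredcospan} is what makes this bookkeeping tractable: once the Calabi--Yau cospan structures match as data (not just the underlying Calabi--Yau structures on the three vertices), everything downstream agrees by functoriality of AKSZ/moduli.
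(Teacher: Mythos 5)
Your overall strategy is the one the paper uses: realize the morphism of correspondences as $F_E$ for a universal object in $\int_{i}\Perf(X_i;\eD_i)$ over the oriented cospan $B\bG_m\to[\bA^1/\bG_m]\leftarrow\pt$, apply the functorial form of Theorem \ref{thm:modulitomapsymplectic} to get compatibility of Lagrangian structures for \emph{some} $0$-Calabi--Yau cospan structure $[\eD_\bullet]$ on $\Perf(P^\delta)\to\Perf(P^{\op})\leftarrow\Perf_\C$, and then identify $[\eD_\bullet]$ with the canonical cospan of Proposition \ref{prop:filteredcospan} via the uniqueness statement there. (Your construction of the universal object by hand, versus the paper's observation that $\Fun^{\exact}(\eD_i,\Perf_R)\simeq\Fun^{\exact}(\eD_i,\Perf_\C)\otimes\Perf_R$ and $\Perf(X_i\times\Spec R)\simeq\Perf(X_i)\otimes\Perf_R$ force the natural-in-$R$ equivalence of Proposition \ref{prop:filteredperfect} to be of the form $T_{E,R}$, is an inessential difference.)

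The genuine gap is the sentence asserting that the induced cospan structure ``extends $\tilde c_1$ and $\tilde c_n$.'' The uniqueness in Proposition \ref{prop:filteredcospan} only applies once you know the \emph{boundary} Calabi--Yau classes produced by the AKSZ/orientation machinery are exactly $\tilde c_1$ on $\Perf_\C$ and $\tilde c_n$ on $\Perf(P^\delta)$; a priori the theorem only hands you the class $[\Perf(P^\delta)]$ defined as the composite $\C\xrightarrow{[E_0]}\HC^-(\Perf(B\bG_m;\Perf(P^\delta)))\xrightarrow{\kappa}\bR\Gamma(B\bG_m,\cO)\otimes\HC^-(\Perf(P^\delta))\xrightarrow{[B\bG_m]\otimes\id}\HC^-(\Perf(P^\delta))$, and there is no formal reason this equals $\tilde c_n$ without a computation. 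The paper closes this gap by: (i) recalling that $[B\bG_m]$ is pullback along the basepoint $i\colon\pt\to B\bG_m$; (ii) using naturality of $\kappa_{\Perf(P^\delta),-}$ in the stack variable to rewrite the composite as $i^*[E_0]$; and (iii) identifying the universal object $E_0\in\Perf(B\bG_m;\Perf(P^\delta))$ as the functor sending $p\in P^\delta$ to the weight-$p$ character of $\bG_m$, so that $i^*E_0=\cO_{P^\delta}$, whose class is $\tilde c_n$. (The identification on $\Perf_\C$ is the trivial composite of identities, giving $\tilde c_1$.) Without this verification your appeal to uniqueness has nothing to bite on, so you should supply it to complete the argument.
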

\begin{proof}
Let $\spncat$ be the category $0\rightarrow 1\leftarrow 2$ with three objects. Let $X_\bullet\colon \spncat\rightarrow \dSt$ be the diagram of $\cO$-compact derived stacks $B\bG_m\xrightarrow{i_0}[\bA^1/\bG_m]\xleftarrow{i_1}\pt$ equipped with a $0$-orientation defined in Proposition \ref{prop:orientedcospan}. Let $\eD_\bullet\colon \spncat\rightarrow \dgcat$ be the diagram of dg categories $\Perf(P^\delta)\rightarrow \Perf(P^{\op})\leftarrow \Perf_\C$. 

Let us first prove that for every $i\in\spncat$ the natural functor
\[\Fun^{\exact}(\eD_i, \Perf_\C)\otimes \Perf_R\longrightarrow \Fun^{\exact}(\eD_i, \Perf_R)\]
is an equivalence. This is obvious for $i=0, 2$, where $\eD_i$ is a finite direct sum of $\Perf_\C$, so let us just show the claim for $i=1$. Using Lemma \ref{lem:posetenvelope} we are reduced to showing that
\[\Fun(P, \Perf_\C)\otimes \Perf_R\longrightarrow \Fun(P, \Perf_R)\]
is an equivalence. By \cite[Proposition 2.8]{ao23} we have $\Fun(P, \Mod_R)^\omega = \Fun(P, \Perf_R)$ (as well as the same claim with $R$ replaced by $\C$). If we denote by $\widehat{\otimes}$ the symmetric monoidal structure in $\Pr^L$, we are reduced to showing that
\[\Fun(P, \eS)\widehat{\otimes} \Mod_R\longrightarrow \Fun(P, \Mod_R)\]
is an equivalence for any commutative dg algebra $R$, which is \cite[Corollary 2.2]{ao23}.

The natural functor
\[\Perf(X_\bullet)\otimes \Perf(R)\longrightarrow \Perf(X_\bullet\times\Spec R)\]
is also an equivalence, since $X_\bullet$ is perfect \cite[Proposition 4.6]{bzfn}.

Therefore, the equivalence established in Proposition \ref{prop:filteredperfect}, natural in $R$, is uniquely determined by its value on $R=\C$, so that the functor $\Fun^{\exact}(\eD_\bullet, \Perf_R)\rightarrow \Perf(X_\bullet\times\Spec R)$ defined in Proposition \ref{prop:filteredperfect} is given by
\[\Fun^{\exact}(\eD_\bullet, \Perf_R)\xrightarrow{T_{E, R}} \Perf(X_{\bullet} \times \Spec R)\]
for a universal object
\[E\in\int_{i\in\spncat}\Perf(X_i; \eD_i),\]
where we note that by Example \ref{ex:Perfsmoothproper} we have $\Perf(X_i; \eD_j)\cong \Perf(X_i)\otimes \eD_j$ since $\eD_j$ is smooth and proper for every $j\in\spncat$.

Using Theorem \ref{thm:modulitomapsymplectic} we deduce that the morphism of correspondences $F_E$ is compatible with Lagrangian structures, where $\eD_\bullet$ is equipped with a certain structure of a $0$-Calabi--Yau cospan $[\eD_\bullet]$ described in the statement. It remains to show that this $0$-Calabi--Yau structure on $\eD_\bullet$ is equivalent to the one defined in Proposition \ref{prop:filteredcospan}. By uniqueness, it is enough to identify the $0$-Calabi--Yau structures on $\Perf(P^\delta)$ and $\Perf_\C$. The latter one is given by the composite of the identity maps, so it coincides with $\tilde{c}_1$. Let us now compute $[\Perf(P^\delta)]$. By definition, it is given by the composite
\begin{align*}
\C&\xrightarrow{[E_0]} \HC^-(\Perf(B\bG_m; \Perf(P^\delta))) \\
&\xrightarrow{\kappa_{\Perf(P^\delta), B\bG_m}} \bR\Gamma(B\bG_m, \cO)\otimes \HC^-(\Perf(P^\delta)) \\
&\xrightarrow{[B\bG_m]\otimes \id} \HC^-(\Perf(P^\delta)).
\end{align*}
Recall that $[B\bG_m]$ is defined by the pullback
\[\bR\Gamma(B\bG_m, \cO)\xrightarrow{i^*} \bR\Gamma(\pt, \cO)\cong \C\]
along the inclusion of the basepoint $i\colon \pt\rightarrow B\bG_m$. Using naturality of $\kappa_{\Perf(P^\delta), -}$ for the morphism $i\colon \pt\rightarrow B\bG_m$ we get that the class $[\Perf(P^\delta)]$ is given by the composite
\[
\C\xrightarrow{[E_0]} \HC^-(\Perf(B\bG_m; \Perf(P^\delta)))\xrightarrow{i^*} \HC^-(\Perf(P^\delta)).
\]
The object $E_0\in\Perf(B\bG_m; \Perf(P^\delta))\cong \Fun(P^\delta, \Perf(B\bG_m))$ is given by sending $p\in P^\delta$ to the 1-dimensional representation of $\bG_m$ with weight $p$. Thus, $i^* E_0=\cO_{P^\delta}$ is the constant sheaf $p\mapsto \C$ whose class $[i^* E_0]\in \HC^-(\Perf(P^\delta))$ coincides with $\tilde{c}_n$.
\end{proof}

\begin{prop}\label{prop:modulilikemoduli}
Let $\eC$ be a finite type dg category equipped with a $3$-Calabi--Yau structure and $\fM\subset \fM_{\eC}$ an open substack satisfying the following conditions:
\begin{enumerate}
    \item $\fM^{\cl}$ is a quasi-separated Artin stack with affine stabilizers.
    \item $\fM$ is closed under direct sums and it contains the zero module.
\end{enumerate}
Then $\fM$ is a moduli-like stack in the sense of Definition \ref{def:modulilike}.
\end{prop}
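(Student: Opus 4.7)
The plan is to construct the data of Definition~\ref{def:modulilike} and verify the four conditions. The $\bE_\infty$-structure on $\fM_\eC$ comes from the $\bE_\infty$-monoid structure on $\Fun^{\exact}(\eC, \Perf_R)$ given by direct sum in $\Perf_R$, functorial in $R$; by the closure hypothesis this restricts to $\fM$ and gives the addition maps $\Phi_n$ and the unit $\eta$. The $B\bG_m$-action on $\fM_\eC$ encodes the canonical scalar action on every pseudo-perfect module: for $\Spec R \to B\bG_m$ classifying a line bundle $L$, send $V \colon \eC \to \Perf_R$ to $V \otimes L$. This commutes with direct sums and restricts to the open substack $\fM$. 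The $(-1)$-shifted symplectic structure comes from the $3$-Calabi--Yau structure on $\eC$ via \cite[Theorem~5.5]{bdii}, and its additivity together with higher coherence follow from the compatibility of the AKSZ procedure with direct sums, encoded in the $0$-Calabi--Yau cospan structure on $\Perf_\C^{\oplus n} \to \Perf_\C$ of Proposition~\ref{prop:filteredcospan} via Corollary~\ref{cor:BDcorrespondence}.

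For condition~(1), Proposition~\ref{prop:gradedmoduli} realises $\Grad(\fM_\eC)$ as a filtered colimit of open substacks $\fM_{\eC \otimes \Perf(P^\delta)}$ indexed by finite connected $P \subset \bZ_{\geq}$. Taking $P = \{0 < 1\}$ gives $\Perf(P^\delta) \cong \Perf_\C^{\oplus 2}$ and $\fM_{\eC \otimes \Perf(P^\delta)} \cong \fM_\eC^2$, and unwinding the construction identifies this open embedding with $\tilde{\Phi}_2$ for $\fM_\eC$. Restricting to the open substack $\fM^2 \subset \fM_\eC^2$, which lands in $\Grad(\fM) = u^{-1}(\fM)$ (an open substack of $\Grad(\fM_\eC)$ since $\fM \subset \fM_\eC$ is open), gives the open immersion $\tilde{\Phi}_2\colon \fM^2 \hookrightarrow \Grad(\fM)$. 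Iterating Proposition~\ref{prop:gradedmoduli} (which applies because $\eC \otimes \Perf(P^\delta)$ is again of finite type) produces an analogous description of $\Grad^2(\fM_\eC)$ as a colimit of open substacks $\fM_{\eC \otimes \Perf((P \times P')^\delta)}$ over finite connected $P, P' \subset \bZ$. Inside $\fM_{\eC \otimes \Perf((\{0,1\}^2)^\delta)} \cong \fM_\eC^4$ the condition of vanishing of the $(0,1)$-graded piece is open and cuts out a copy of $\fM_\eC^3$ through which $\tilde{\Phi}_3$ factors; restricting to $\fM^3$ yields the required open immersion.

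For condition~(2), the zero object has $\bT_{\fM_\eC, 0} \simeq \RHom(0,0)[1] = 0$ and trivial automorphism group, and the locus $\{V : \dim \End(V) \leq 0\}$ is open by upper semi-continuity of $\dim \End$; together these show $\eta\colon \pt \to \fM$ is an open immersion. For condition~(3), at a point in the image of $\tilde{\Phi}_3$ the tangent complex decomposes as $\bigoplus_{i,j} \RHom(V_i, V_j)[1]$ with $\bG_m^2$-weight $w_j - w_i$ where $w_1 = (0,0)$, $w_2 = (1,0)$, $w_3 = (1,1)$; the set of differences is $\{(0,0),(\pm 1, 0),(0,\pm 1),\pm(1,1)\}$, none of which satisfies $m\ell < 0$. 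For condition~(4), each $\iota_n$ is manifestly a section of $u\colon \Grad(\fM) \to \fM$ (the basepoint of $B\bG_m$ is fixed by the $n$-th power map), so Lemma~\ref{lem:Gradprojectionisomorphism} supplies the required base-change isomorphism.

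The main technical point is the double-grading analogue of Proposition~\ref{prop:gradedmoduli} needed for $\tilde{\Phi}_3$; this is obtained by iterating Proposition~\ref{prop:gradedmoduli}, using smoothness and properness of $\Perf(P^\delta)$ to ensure that $\eC \otimes \Perf(P^\delta)$ remains of finite type.
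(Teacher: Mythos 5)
Your proposal is correct and follows essentially the same route as the paper: the addition and $B\bG_m$-action are the direct-sum and line-bundle-twist structures restricted to the open substack, the additive $(-1)$-shifted symplectic structure comes from Corollary \ref{cor:BDcorrespondence}, openness of $\tilde{\Phi}_2$ and $\tilde{\Phi}_3$ rests on Proposition \ref{prop:gradedmoduli} (plus openness of the vanishing locus of a pseudo-perfect module), the matching-locus condition is the same weight computation with the tangent complex $\Hom(V,V)[1]$ from \cite{tv07}, and condition (4) is Lemma \ref{lem:Gradprojectionisomorphism}. The only deviations are cosmetic — you identify $\Grad(\fM)$ with $u^{-1}(\fM)$ directly where the paper argues that $\Grad(\fM)\rightarrow\Grad(\fM_{\eC})$ is an étale monomorphism, you prove openness of $\eta$ via semicontinuity of $\dim\End$ rather than quoting the vanishing-locus argument inside Proposition \ref{prop:gradedmoduli}, and you spell out the iterated $\Grad^2$ description that the paper leaves as "analogous" — plus you omit the (immediate, via \cite{tv07}) check that $\fM$ is locally geometric and locally of finite presentation, which is part of Definition \ref{def:modulilike}.
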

\begin{proof}
By \cite[Theorem 3.6]{tv07} the stack $\fM_{\eC}$ is locally geometric and locally of finite presentation, so $\fM$ is locally geometric and locally of finite presentation. The stack $\fM_{\eC}$ carries a natural $B\bG_m$-action, which is given on $R$-points by tensoring a pseudo-perfect module $\eC\rightarrow \Perf_R$ with a line bundle on $\Spec R$ considered as a constant pseudo-perfect module. Since $\fM$ is open, this action restricts to $\fM$. Since $\fM$ is closed under direct sums, the $\bE_\infty$-structure on $\fM_{\eC}$ given by direct sum restricts to $\fM$. Finally, by Corollary \ref{cor:BDcorrespondence} we obtain a $(-1)$-shifted symplectic structure on $\fM$ which is additive.

Let us now check the conditions:
\begin{enumerate}
    \item Consider the map $\fM_{\eC}^2\rightarrow \Grad(\fM_{\eC})$ given by the inclusion of graded objects in weights $0$ and $1$. By Proposition \ref{prop:gradedmoduli} it is an open immersion. Since $\fM_{\eC}$ is locally geometric and locally of finite presentation, by Proposition \ref{prop:gradedmoduli} $\Grad(\fM_{\eC})$ is locally geometric and locally of finite presentation. Since $\fM\rightarrow \fM_{\eC}$ is a monomorphism, so is $\Grad(\fM)\rightarrow \Grad(\fM_{\eC})$ (see \cite[Proposition 1.3.1]{hlp14}). Since $\fM\rightarrow \fM_{\eC}$ is formally \'etale, $\Grad(\fM)\rightarrow \Grad(\fM_{\eC})$ is formally \'etale by Proposition \ref{prop:GradFiltcotangent}. Therefore, by \cite[Corollary 2.2.5.6]{HAGII} the morphism $\Grad(\fM)\rightarrow \Grad(\fM_{\eC})$ is \'etale and hence flat. Therefore, $\Grad(\fM)\rightarrow \Grad(\fM_{\eC})$ is an open immersion. In the diagram
    \[
    \xymatrix{
    \fM^2 \ar^{\tilde{\Phi}_2}[r] \ar[d] & \Grad(\fM) \ar[d] \\
    \fM_{\eC}^2 \ar[r] & \Grad(\fM_{\eC})
    }
    \]
    we have proven that all morphisms except for $\tilde{\Phi}_2$ are open immersions. Therefore, $\tilde{\Phi}_2$ is an open immersion. The statement about $\tilde{\Phi}_3$ is proven analogously.

    \item This claim follows from Proposition \ref{prop:gradedmoduli}.
    \item As $\fM\rightarrow \fM_{\eC}$ is formally \'etale, it is enough to show that $\fM_{\eC}^3\rightarrow \Grad^2(\fM_{\eC})$ given by the inclusion of graded objects in weights $(0, 0)$, $(1, 0)$ and $(1, 1)$ lands in the matching locus. Using the description of the tangent complex of $\fM_{\eC}$ from \cite[Corollary 3.17]{tv07}, we see that $\bT_{\fM_{\eC}}|_{\fM_{\eC}^3}$ is concentrated in weights $(0, 0)$, $(\pm 1, 0)$, $(\pm 1, \pm 1)$ and $(0, \pm 1)$. In particular, the morphism $\fM_{\eC}^3\rightarrow \Grad^2(\fM_{\eC})$ lands in the matching locus.
    \item This claim follows from Lemma \ref{lem:Gradprojectionisomorphism}.
\end{enumerate}
\end{proof}

\begin{rmk}
Proposition \ref{prop:moduliattractorlagrangian} shows that for an open substack $\fM\subset \fM_{\eC}$ as in Proposition \ref{prop:modulilikemoduli}, the $(-1)$-shifted Lagrangian correspondence $\fM^2\leftarrow \fM^{2-\filt}\rightarrow \fM$ is a restriction of the $(-1)$-shifted Lagrangian correspondence $\fM_{\eC}^2\leftarrow \fM_{\eC}^{2-\filt}\rightarrow \fM_{\eC}$ from \cite[Corollary 6.5]{bdii}.
\end{rmk}

\subsection{Orientation data on deformed $3$-Calabi--Yau completions}\label{ssec:CYcompori}

We will now prove the existence of a canonical orientation data for deformed $3$-Calabi--Yau completions of a dg category \cite{kel11}.
Let $\eC$ be a finite type dg category and $\tilde{c} \in \HC^{-}_1(\eC)$ be a negative cyclic class whose underlying Hochschild class is denoted by $c$.
Following \cite[\S 5.1]{kel11}, we defined the deformed Calabi--Yau completion $\Pi_3(\eC, \tilde{c})$ of $\eC$ associated with $\tilde{c}$ by the following pushout diagram:
\[
\xymatrix{
{T_{\eC}(\eC^![2])} \ar[r]^-{[\id, 0]} \ar[d]^-{[\id, c]}
& {\eC} \ar[d] \\
{\eC} \ar[r]
& {\Pi_3(\eC, \tilde{c})} \pocorner
}
\]
where $\eC^!$ denotes the inverse dualizing bimodule and $T_{\eC}(\eC^![2])$ denotes the tensor algebra dg category.
It is shown in \cite[Theorem 5.9]{bcs20} that $\Pi_3(\eC, \tilde{c})$ is finite type and \cite{kel18} that the class $\tilde{c}$ induces a $3$-Calabi--Yau structure on $\Pi_3(\eC, \tilde{c})$.

\begin{ex}\label{ex:Ginzburg}
    Let $Q$ be a (finite) quiver.
    We let $\Rep_{Q}^{\Perf}$ denote the dg category of perfect $Q$-representations.
    Take a potential $W \in \HH_0(\Rep_{Q}^{\Perf})$.
    Then Ginzburg \cite[\S 5]{gin06} constructed a non-positively graded dg algebra $\Gamma_3(Q, W)$ whose zeroth cohomology is the Jacobi algebra $\mathrm{Jac}(Q, W)$.
    Then it is shown in \cite[Theorem 6.2]{kel11} that there exists a Morita equivalence
    \[
    \Pi_3(\Rep_{Q}^{\Perf}, \delta W) \simeq \Mod_{\Gamma_3(Q, W)}^{\Perf}.
    \]
    where $\delta$ denotes the mixed differential.
\end{ex}

\begin{ex}\label{ex:local_surface}
    Let $S$ be a smooth algebraic surface and set $X \coloneqq \Tot_S(\omega_S)$.
    A Calabi--Yau threefold of this form is called a \defterm{local surface}.
    Then it is shown in \cite[Corollary 4.2]{km21} that there exists a natural Morita equivalence of $3$-Calabi--Yau categories
    \[
    \Pi_3(\Perf(S), 0) \simeq \Perf(X).
    \]
\end{ex}

\begin{ex}\label{ex:local_curve}
    Let $C$ be a smooth projective curve and $N$ be a rank two vector bundle with $\det(N) \simeq \omega_C$ and set $X \coloneqq \Tot_C(N)$.
    A Calabi--Yau threefold of this form is called a \defterm{local curve}.
    Then by \cite[Lemma 3.3]{kk21} we may find a line bundle $L$ with a surjective morphism $N \to L$.
    Set $Y \coloneqq \Tot_C(L)$. Then it is shown in \cite[Theorem 4.3]{km21} there exists a Hochschild homology class $c \in \HH_0(\Perf(Y))$ with a Morita equivalence of $3$-Calabi--Yau categories
    \[
    \Pi_3(\Perf(Y), \delta c) \simeq \Perf(X).
    \]
\end{ex}

Let $\eC$ be a finite type dg category and $c \in \HH_0(\eC)$ be a Hochschild homology class.
We have a natural map
    \begin{equation}\label{eq:HH_to_function}
    |\HH(\eC)| \to \Gamma(\fM_{\eC}, \cO_{\fM_{\eC}})
    \end{equation}
as constructed in \cite[\S 5.2]{bdii}
and let $f_c \colon \fM_{\eC} \to \bA^1$ be the corresponding function. 
Then it is shown in \cite[Corollary 6.19]{bcs20} that there exists a natural equivalence of $(-1)$-shifted symplectic stacks
\begin{equation}\label{eq:BCS}
\fM_{\Pi_3(\eC, \delta c)} \simeq \Crit(f_c)
\end{equation}
where the $(-1)$-shifted symplectic structure for the left-hand side is constructed using  \eqref{eq:HC_to_forms} and for the right-hand side is constructed using Example \ref{ex:critical}.
Using this equivalence, we will prove the following result:

\begin{prop}\label{prop:strong_orientation_completion}
    The deformed $3$-Calabi--Yau completion $\Pi_3(\eC, \delta c)$ admits a strong orientation data.
\end{prop}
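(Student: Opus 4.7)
Using the equivalence \eqref{eq:BCS} we identify $\fM_{\Pi_3(\eC,\delta c)}\simeq \Crit(f_c)$ and define $o \coloneqq o^{\can}$ via Example \ref{ex:can_ori}. The key input is that the trace map \eqref{eq:HH_to_function} is a morphism of $\bE_\infty$-algebras, where $\Gamma(\fM_\eC,\cO)$ carries the $\bE_\infty$-structure dual to the direct-sum $\bE_\infty$-structure on $\fM_\eC$ and $|\HH(\eC)|$ has the additive $\bE_\infty$-structure. This follows because the Hochschild complex is additive in $\eC$ and the trace of a class along a direct sum of pseudo-perfect modules is the sum of traces, with all higher coherences built in. Applying this to $c$ produces coherent homotopies
\[
\Phi_n^\star f_c \sim f_c^{\boxplus n} \quad (n\geq 0), \qquad \eta^\star f_c \sim 0,
\]
together with their associativity and unit coherences.

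\textbf{Construction of $\lambda$ and $\tau$.} The homotopy $\Phi_2^\star f_c \sim f_c\boxplus f_c$ yields an equivalence of derived critical loci $\Crit(\Phi_2^\star f_c)\simeq \Crit(f_c\boxplus f_c) = \fM\times \fM$. Combined with the product formula \eqref{eq:can_ori_prod}, which gives $o^{\can}_{\Crit(f_c\boxplus f_c)} \cong o^{\can}\boxtimes o^{\can}$, together with the naturality of the canonical orientation under an equivalence of ambient pairs (smooth stack, function), we obtain $\lambda\colon \Phi_2^\star o^{\can} \cong o^{\can}\boxtimes o^{\can}$. For $\tau$, we observe that at the zero module $\eta\colon \pt\to \fM_\eC$ the tangent complex $\bL_{\fM_\eC}|_\eta\simeq \End(0)^\vee[-1]=0$, so $K_{\fM_\eC}|_\eta$ is canonically trivial; combining with $\eta^\star f_c\sim 0$ yields $\tau\colon \eta^\star o^{\can}\cong o_{\triv}$.

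\textbf{Verification of the strong compatibilities.} The three-fold associativity diagram of Definition \ref{defin:CYorientationdata} unpacks to a coherent cell involving the homotopies $\Phi_3^\star f_c \sim f_c^{\boxplus 3}$ obtained in two ways (factoring $\Phi_3$ via $\Phi_2$ on the left or the right factors). Since the $\bE_\infty$-structure provides precisely such a coherence, and since the associativity \eqref{eq:can_ori_prod_assoc} and product-pullback compatibility \eqref{eq:can_ori_prod_pull} transport this to the corresponding statement about canonical orientations, the associativity diagram commutes. The unit diagrams follow analogously from the unit coherences of the $\bE_\infty$-structure together with \eqref{eq:can_ori_prod_pull} applied to the degeneracy maps.

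\textbf{Main obstacle.} The principal technical difficulty is that $\Phi_2\colon \fM_\eC\times \fM_\eC\to \fM_\eC$ is not smooth in general: its derivative at $(E_1,E_2)$ has cofibre $\bR\Hom(E_1,E_2)[1]\oplus \bR\Hom(E_2,E_1)[1]$, which is not concentrated in degree $-1$. Consequently, the smooth pullback compatibility \eqref{eq:can_ori_pull} cannot be invoked directly when comparing $\Phi_2^\star o^{\can}$ with the canonical orientation on $\Crit(\Phi_2^\star f_c)$. The natural way to resolve this is to work smooth-locally: choose a smooth atlas $U\to \fM_\eC$ and cover $\fM_\eC^n$ by products $U^n$ refined by further smooth covers that flatten $\Phi_n$, apply the smooth case of \eqref{eq:can_ori_pull} and \eqref{eq:can_ori_prod} on each chart, and glue using the compatibilities \eqref{eq:can_ori_pull_assoc} and \eqref{eq:can_ori_prod_pull}. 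Alternatively, one may extend the package of canonical orientation compatibilities from smooth morphisms to morphisms between derived critical loci that are Lagrangian with respect to their induced $(-1)$-shifted symplectic structures; in either approach, the $\bE_\infty$-coherence of $f_c$ is precisely what controls the higher compatibilities required for strongness.
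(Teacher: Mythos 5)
There is a genuine gap, and it is exactly the step you flag as the ``main obstacle'' but then leave unexecuted. In Definition \ref{defin:CYorientationdata} the orientation $\Phi_2^\star o$ is \emph{not} a pullback of orientations along the (indeed non-smooth) map $\Phi_2$: it is defined through the isomorphism of orientation gerbes \eqref{eq:orientationmultiplicative} attached to the $(-1)$-shifted Lagrangian correspondence $\fM^2\leftarrow \fM^{2-\filt}\rightarrow \fM$ with its section (Lemma \ref{lem:Lagrangiancorrespondenceretract}), equivalently as the restriction of the localized orientation $u^{\star}o$ on $\Grad(\fM)$ along the open immersion $\tilde{\Phi}_2\colon \fM^2\hookrightarrow \Grad(\fM)$ (Remark \ref{rmk:ori_compati_with_general}). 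So the comparison you must make for $\lambda$ is between $u^{\star}o^{\can}$ and the canonical orientation of $\Grad(\fM_{\Pi_3(\eC,\delta c)})\simeq \Crit\bigl(f_c|_{\Grad(\fM_{\eC})}\bigr)$ (Lemma \ref{lem:crit_localize}), and this is handled not by smooth-local flattening of $\Phi_2$ or by an extension of \eqref{eq:can_ori_pull} to Lagrangian morphisms, but by the direct-sum decomposition $\bL_{\fM_{\eC}}|_{\Grad(\fM_{\eC})}\cong \bL_{\Grad(\fM_{\eC})}\oplus \bL_{\fM_{\eC}}|^{\pm}_{\Grad(\fM_{\eC})}$, which gives a canonical isomorphism $u^{\star}o^{\can}\cong o^{\can}_{\Grad}$; one then restricts along $\tilde{\Phi}_2$, where $f_c|_{\Grad}$ becomes $f_c\boxplus f_c$ (because $\Delta_* c=(c,c)$ under $\HH(\eC\times\eC)\simeq \HH(\eC)^{\oplus 2}$), and applies the product formula for canonical orientations of derived critical loci. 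Your proposal never produces this identification, so $\lambda$ as you construct it compares two objects that have not been related, and the proposed fixes (charts flattening $\Phi_n$, or a hypothetical Lagrangian-functoriality of $o^{\can}$) are neither carried out nor the route that works.

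A secondary issue: for strongness you appeal to an $\bE_\infty$-enhancement of the trace map \eqref{eq:HH_to_function} and to \eqref{eq:can_ori_prod_assoc}, \eqref{eq:can_ori_prod_pull}, but the latter are established in the paper only for critical loci of functions on smooth algebraic spaces and smooth morphisms, not for derived stacks, and the $\bE_\infty$ claim is asserted rather than proved (and is stronger than what is needed; only the identity $f_c\circ\Phi_2=f_c\boxplus f_c$ and its threefold analogue are used). The actual content of the associativity and unit checks consists of the coherence diagrams for canonical orientations on derived stacks — product associativity, associativity of the localization $u^{\star}o^{\can}\cong o^{\can}_{\Grad}$ over the matching locus of $\Grad^2$, and compatibility of localization with products — and these must be established (as the paper does) rather than cited or delegated to an unproven multiplicative structure.
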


\begin{proof}
        We first discuss the generality of the canonical orientation for the derived critical locus defined in Example \ref{ex:can_ori}.
        If we are given regular functions $f_1$ and $f_2$ on derived Artin stacks $\fY_1$ and $\fY_2$ with $\Crit(f_i) = \fX_i$, there exists a natural isomorphism
        \begin{equation}\label{eq:canonical_orientation_product_general}
        o_{\fX_1}^{\can} \boxtimes o_{\fX_2}^{\can} \simeq o_{\fX_1 \times \fX_2}^{\can}
        \end{equation}
        where the canonical orientation $o_{\fX_1 \times \fX_2}^{\can}$ is constructed using the critical locus description $\fX_1 \times \fX_2 \simeq \Crit(f_1 \boxplus f_2)$.
        Further, if we are given yet another derived Artin stack $\fY_3$ with a regular function $f_3$ on it, if we write $\fX_3 = \Crit(f_3)$, the following diagram commutes:
        \begin{equation}\label{eq:product_can_associative}
        \begin{aligned}
        \xymatrix{
        {o_{\fX_1}^{\can} \boxtimes o_{\fX_2}^{\can}  \boxtimes o_{\fX_3}^{\can}} \ar[r]^-{\simeq} \ar[d]^-{\simeq}
        & {o_{\fX_1 \times \fX_2} ^{\can} \boxtimes o_{\fX_3}^{\can}} \ar[d]^-{\simeq} \\
        {o_{\fX_1}^{\can} \boxtimes o_{\fX_2 \times \fX_3} ^{\can}} \ar[r]^-{\simeq}
        & {o_{\fX_1 \times \fX_2 \times \fX_3}^{\can}.}
        }
        \end{aligned}
        \end{equation}
        
        We now claim that the localization of the canonical orientation for $\fX = \Crit(f)$ to $\Grad(\fX)$ is again a canonical orientation.
        By  Lemma \ref{lem:crit_localize}, there exists an equivalence of $(-1)$-shifted symplectic stacks
        $\Grad(\fX) \simeq \Crit(f |_{\Grad(\fX)})$.
        The direct sum decomposition $\bL_{\fY} |_{\Grad(\fY)} = \bL_{\Grad(\fY)} \oplus \bL_{\fY} |_{\Grad(\fY)}^{\pm}$ 
        induces a natural isomorphism
        \[
        \rdet(\bL_{\fY}) \otimes \rdet(\bL_{\fY} |_{\Grad(\fY)}^{\pm})^{\vee } \simeq \rdet(\bL_{\Grad(\fY)})
        \]
        which induces an isomorphism of orientations 
        \begin{equation}\label{eq:ori_localize_canonical}
        u^{\star} o_{\fX}^{\can} \simeq o_{\Grad(\fX)}^{\can}.
        \end{equation}
        See \eqref{eq:can_ori_localize_stack}
        for a related discussion.         
        It is clear that the following diagram commutes:
        \begin{equation}\label{eq:can_ori_localize_assoc}
            \begin{aligned}
             \xymatrix{
             {u_1^{\star} u^{\star} o_{\fX}^{\mathrm{can}} |_{\Grad^2(\fX)_{\mat}} }
             \ar[d]^-{\cong}
             \ar[r]^-{\cong}
             & {u_1^{\star} o_{\Grad(\fX)}^{\can} |_{\Grad^2(\fX)_{\mat}}}
             \ar[r]^-{\cong}
             & {o_{\Grad^2(\fX)_{\mat}}^{\can}}
             \ar@{=}[d]
             \\
             u_2^{\star} u^{\star} o_{\fX}^{\mathrm{can} } |_{\Grad^2(\fX)_{\mat}}
             \ar[r]^-{\cong}
             & {u_2^{\star} o_{\Grad(\fX)}^{\can}} |_{\Grad^2(\fX)_{\mat}}
             \ar[r]^-{\cong}
             & {o_{\Grad^2(\fX)_{\mat}}^{\can}.}
             }
            \end{aligned}
        \end{equation}
        If we are given derived Artin stacks $\fX = \Crit(f \colon \fY \to \bA^1)$, $\fX' = \Crit(f' \colon \fY' \to \bA^1)$, 
         the following diagram commutes by construction:
        \begin{equation}\label{eq:ori_product_localize_compati}
        \begin{aligned}
        \xymatrix{
        {(u \times u')^{\star}(o_{\fX}^{\can} \boxtimes o_{\fX'}^{\can})} \ar[r]^-{\simeq} \ar[d]^-{\simeq}
        & {(u \times u')^{\star}(o_{\fX \times \fX'}^{\can})} \ar[d]^{\simeq} \\
        {u^{\star} o_{\fX}^{\can} \boxtimes u'^{\star} o_{\fX'}^{\can}  } \ar[r]^-{\simeq}
        & {o_{\Grad(\fX) \times 
        \Grad(\fX')}^{\can}.}
        }
        \end{aligned}
        \end{equation}

        We now return to the statement of the proposition.
        We equip $\fM_{\Pi_3(\eC, \delta_c )} \simeq \Crit(f_c)$ with the canonical orientation described above.
        Consider the diagonal map $\Delta \colon \eC \to \eC \times \eC$ .
        Under the identification of $\HH(\eC \times \eC) \simeq \HH(\eC) \oplus \HH(\eC)$,
        we can identify $\Delta_* c$ and $(c, c)$.
        In particular, the composition
        \[
        \fM_{\eC} \times \fM_{\eC} \xrightarrow{{\Phi_2}}  \fM_{\eC} \xrightarrow{f_c} \bA^1
        \]
        is given by $f_{c} \boxplus f_{c}$.
        Then by using \eqref{eq:canonical_orientation_product_general} and \eqref{eq:ori_localize_canonical},
        we obtain a natural isomorphism 
        \[
        \Phi_2^{\star} (o_{\fM_{\Pi_3(\eC, \delta_c )}}^{\can}) \simeq o_{\fM_{\Pi_3(\eC, \delta_c )}}^{\can, \boxtimes 2}.
        \]
        Then the existence of the strong orientation data follows from the commutativity of the diagrams \eqref{eq:product_can_associative}, \eqref{eq:can_ori_localize_assoc} and \eqref{eq:ori_product_localize_compati}.

\end{proof}

\begin{rmk}
    By the same proof, we can show that the deformed $3$-Calabi--Yau completion $\Pi_3(\eC, \tilde{c})$ admits a strong orientation data for arbitrary negative cyclic class $\tilde{c} \colon k \to |\HC^{-}(\eC)|$.
\end{rmk}

The following statement is a consequence of Proposition \ref{prop:strong_orientation_completion} and Example \ref{ex:Ginzburg}, \ref{ex:local_surface} and \ref{ex:local_curve}:

\begin{cor}\label{cor:strong_orientation_completion}
    The following $3$-Calabi--Yau categories admit a strong orientation data:
    \begin{itemize}
        \item[(1)] The dg category of perfect modules $\Mod_{\Gamma_3(Q, W)}^{\Perf}$ over the Ginzburg dg algebra $\Gamma_3(Q, W)$ associated with a quiver with potential $(Q, W)$.

        \item[(2)] The dg category of perfect complexes $\Perf(X)$ for a local surface $X = \Tot_C(\omega_C)$.

          \item[(3)] The dg category of perfect complexes $\Perf(X)$ for a local curve $X = \Tot_C(N)$.
    \end{itemize}
\end{cor}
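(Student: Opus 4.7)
The plan is to reduce each of the three cases to Proposition \ref{prop:strong_orientation_completion} by invoking the three Morita equivalences \eqref{ex:Ginzburg}, \eqref{ex:local_surface}, \eqref{ex:local_curve} stated just before the corollary. Concretely, each of the three $3$-Calabi--Yau categories $\eC$ in the statement is Morita equivalent to a deformed $3$-Calabi--Yau completion $\Pi_3(\eD, \delta c)$ of a finite type dg category $\eD$ by some Hochschild class $c \in \HH_0(\eD)$: for (1) take $\eD = \Rep_Q^{\Perf}$ with $c = W$; for (2) take $\eD = \Perf(S)$ with $c = 0$; for (3) take $\eD = \Perf(\Tot_C(L))$ with the class $c$ produced in Example \ref{ex:local_curve}. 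Since Morita equivalences of left $3$-Calabi--Yau categories induce equivalences of the moduli stacks together with their $(-1)$-shifted symplectic structures, any strong orientation data on one side transports to one on the other.

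The main step is therefore to verify the hypotheses of Proposition \ref{prop:strong_orientation_completion} in each case, namely that $\eD$ is of finite type. For $\Rep_Q^{\Perf}$ with $Q$ a finite quiver this is standard (it is generated by the projective representations at the vertices and has a finite-dimensional path algebra). For $\Perf(S)$ and $\Perf(\Tot_C(L))$ this follows from smoothness and properness (or, in the non-proper case of $\Tot_C(L)$, from the explicit generator coming from a tilting bundle on the projective compactification) together with \cite[Proposition 2.14]{tv07}.

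Once the finite type hypothesis is in place, Proposition \ref{prop:strong_orientation_completion} directly produces the canonical orientation, the compatibility isomorphism $\lambda\colon \Phi_2^\star o \cong o \boxtimes o$, and the unit compatibility $\tau$, together with the two coherence diagrams required for strongness. There is essentially no additional obstacle beyond unpacking the equivalence $\fM_\eC \simeq \Crit(f_c)$ provided by \eqref{eq:BCS} and noting that this identification is natural under direct sums (so that the pullback along $\Phi_2$ matches with the critical locus of $f_c \boxplus f_c$, which is exactly the input used in the proof of Proposition \ref{prop:strong_orientation_completion}). Since the whole argument is a direct invocation of previously established results, I expect the corollary to require only a short paragraph of proof.
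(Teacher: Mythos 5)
Your proposal is correct and follows exactly the paper's route: the corollary is deduced directly from Proposition \ref{prop:strong_orientation_completion} together with the Morita equivalences of left $3$-Calabi--Yau categories in Examples \ref{ex:Ginzburg}, \ref{ex:local_surface} and \ref{ex:local_curve}, which identify moduli stacks with their $(-1)$-shifted symplectic structures and hence transport strong orientation data. One small slip in your justification: the path algebra of a finite quiver with cycles is not finite-dimensional, though $\Rep_Q^{\Perf}$ is nevertheless of finite type (it is Morita equivalent to perfect modules over the smooth, finitely presented algebra $\bC Q$), so the finite-type hypothesis still holds in all three cases.
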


\subsection{Comparison with the Kontsevich--Soibelman's CoHA for quivers with potentials}\label{ssec:comparisonKS}

In this section, we will prove that the cohomological Hall algebra structure for the cohomological 
Donaldson--Thomas invariants for quivers with potentials constructed in Corollary \ref{cor:CoHA} coincide with the one constructed in \cite[\S 7.6]{ks10}.

Let $Q$ be a finite quiver
with $Q_0$ the set of vertices and $Q_1$ the set of edges.
For a dimension vector $ \mathbf{d} \in \bZ_{\geq 0}^{Q_0}$, we let $\fM_{\mathbf{d}}$ denote the moduli stack of representations of $Q$ with dimension vector $\mathbf{d}$.
We define a pairing $(-, -)_Q$ of dimension vectors by
\[
(\mathbf{d}, \mathbf{d}')_Q \coloneqq \sum_{(e \colon i \to j) \in Q_1} \mathbf{d}_i \mathbf{d}_j' - \mathbf{d}_j \mathbf{d}_i'.
\]
Consider the following correspondence:
\[
\xymatrix{
{}
& \fM_{\mathbf{d}, \mathbf{d}'}^{2-\filt}
\ar[ld]_-{\gr}
\ar[rd]^-{\ev}
&  \\
{\fM_{\mathbf{d} } \times \fM_{\mathbf{d}' } }
& 
& {\fM_{\mathbf{d} + \mathbf{d}'}.} 
}
\]
Since all spaces appearing in this diagram are smooth,
we obtain a natural isomorphism
\begin{equation}\label{eq:quiver_without_potential_attractor}
\bQ_{\fM_{\mathbf{d}}}^{\mathrm{Perv}} \boxtimes \bQ_{\fM_{\mathbf{d}'}}^{\mathrm{Perv}} [(\mathbf{d}, \mathbf{d}')_{Q}] \cong \gr_* \ev^! \bQ_{\fM_{\mathbf{d} + \mathbf{d}'}}^{\mathrm{Perv}}
\end{equation}
where  $\bQ^{\mathrm{Perv}}$ denotes the constant sheaf shifted to be perverse.
Assume now that we are given a potential $W \colon \bC \to |\HH(\Rep_Q^{\Perf})|$
and $f_{W, \mathbf{d}} \colon \fM_{Q, \mathbf{d}} \to \bA^1$ be the associated function.
Then it is clear that we have an identity $f_{W, \mathbf{d} + \mathbf{d}'} \circ \ev = (f_{W, \mathbf{d}} \boxplus f_{W, \mathbf{d}'}) \circ \gr $.
Therefore, by applying the vanishing cycle functor $\varphi_{f_{W, \mathbf{d}} \boxplus f_{W, \mathbf{d}'}}$
to the isomorphism \eqref{eq:quiver_without_potential_attractor} and
using the Thom--Sebastiani isomorphism \eqref{eq:Thom_Sebastiani_functor} and \eqref{eq:van_natural}, we obtain the following map\footnote{
One can show that this map is an isomorphism by repeating the proof of Theorem \ref{thm:main_thm}
or by Proposition \ref{prop:Braden_appendix}.
} of shifted perverse sheaves
\begin{equation}\label{eq:quiver_integrality}
\varphi_{f_{W}, \mathbf{d}} \boxtimes \varphi_{f_{W}, \mathbf{d}'} [(\mathbf{d}, \mathbf{d}')_Q]
\to \gr_* \ev^! \varphi_{f_{W}, \mathbf{d} + \mathbf{d}'}.
\end{equation}
Write $\cH_{Q, W, \mathbf{d}} \coloneqq H^*(\fM_{\mathbf{d}}, \varphi_{f_{W}, \mathbf{d}})$.
By passing the map \eqref{eq:quiver_integrality} to the left adjoint and using the properness of $\ev$, we obtain a morphism
\[
*^{\Hall}_{\mathrm{KS}} \colon \cH_{Q, W, \mathbf{d}} \otimes^{\mathrm{tw}} \cH_{Q, W, \mathbf{d}'} \to \cH_{Q, W, \mathbf{d} + \mathbf{d}'},
\]
which defines an algebra structure on $\cH_{Q, W} \coloneqq \bigoplus_{\mathbf{d}} \cH_{Q, W, \mathbf{d}}$ and is called Kontsevich--Soibelman's cohomological Hall algebras for $(Q, W)$.

Now take $\eC = \Mod^{\Perf}_{\Gamma_3(Q, W)}$ where $\Gamma_3(Q, W)$ denotes the Gizburg dg algebra.
Let $\fM \subset \fM_{\eC}$ denote the locus consisting of modules concentrated in degree zero.
Then there exists a natural identification $\pi_0(\fM) \cong \mathbb{Z}_{\geq 0}^{Q_0}$
and we have natural isomorphism of $(-1)$-shifted symplectic stacks
\[
\fM \cong \coprod_{\mathbf{d} \in \bZ_{\geq 0}^{Q_0}} \Crit(f_{W, \mathbf{d}})
\]
as we have seen in \eqref{eq:BCS}. We let $\fM_{\mathbf{d}} \subset \fM$ denotes the open and closed substack corresponding to $\Crit(f_{W, \mathbf{d}})$.
We equip $\fM$ with the strong orientation data constructed in Proposition \ref{prop:strong_orientation_completion} and write
\[
\cH_{Q, W}^{3\text{-}\mathrm{CY}} \coloneqq \bigoplus_{\mathbf{d} \in \bZ^{Q_0}} H^{\bullet}(\fM_{\mathbf{d}}, \varphi_{\fM_{\mathbf{d}}}).
\]
We denote the multiplication constructed in Corollary $\ref{cor:CoHA}$ by $*^{\mathrm{Hall}}_{3\text{-}\mathrm{CY}}$.
The following statement is an immediate consequence of Lemma \ref{lem:can_ori_vanishing_stack}
 and Proposition \ref{prop:compati_CoHA_pre}:

 \begin{prop}\label{prop:CoHAcomparison}
     There exists a natural isomorphism of associative algebras
     \[
     (\cH_{Q, W}, *^{\mathrm{Hall}}_{\mathrm{KS}}) \cong
     (\cH_{Q, W}^{3\text{-}\mathrm{CY}}, *^{\mathrm{Hall}}_{3\text{-}\mathrm{CY}}).
     \]
 \end{prop}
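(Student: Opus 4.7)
The plan is to reduce the comparison to the explicit description of the integral isomorphism for critical loci provided by Proposition \ref{prop:compati_CoHA_pre}, and then pass to adjunctions to obtain the multiplications. Throughout, write $\fU_{\mathbf{d}} = \fM_{Q,\mathbf{d}}$, $\fX_{\mathbf{d}} = \Crit(f_{W,\mathbf{d}}) = \fM_{\mathbf{d}}$, and $n_{\mathbf{d}} = \dim \fU_{\mathbf{d}}$.

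First, I would identify the attractor correspondence for the $(-1)$-shifted symplectic stack $\fM$ with the critical locus of the attractor correspondence for the smooth stack $\coprod_{\mathbf{d}} \fU_{\mathbf{d}}$. More precisely, applying Lemma \ref{lem:crit_localize} to $f_{W,\mathbf{d}+\mathbf{d}'}$ and restricting along the open immersion $\tilde{\Phi}_2\colon \fU_{\mathbf{d}}\times \fU_{\mathbf{d}'}\hookrightarrow \Grad(\fU_{\mathbf{d}+\mathbf{d}'})$ of graded objects in weights $(0,1)$, one obtains natural equivalences of $(-1)$-shifted symplectic stacks and of Lagrangian correspondences
\[
\fM_{\mathbf{d}}\times \fM_{\mathbf{d}'} \longleftarrow \fM^{2-\filt}_{\mathbf{d},\mathbf{d}'}\longrightarrow \fM_{\mathbf{d}+\mathbf{d}'}
\ \cong \
\Crit(f_{W,\mathbf{d}}\boxplus f_{W,\mathbf{d}'}) \longleftarrow \Crit(f_{W,\mathbf{d}+\mathbf{d}'}|_{\fU^{2-\filt}}) \longrightarrow \Crit(f_{W,\mathbf{d}+\mathbf{d}'})
\]
where $\fU^{2-\filt}_{\mathbf{d},\mathbf{d}'}$ is the moduli of short exact sequences of $Q$-representations with dimension vectors $(\mathbf{d},\mathbf{d}')$. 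A direct tangent-weight computation identifies the locally constant function $\chi(\mathbf{d},\mathbf{d}')$ with the quiver Euler pairing $(\mathbf{d},\mathbf{d}')_Q$.

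Second, I would apply Proposition \ref{prop:compati_CoHA_pre} to $f_{W,\mathbf{d}+\mathbf{d}'}$ with $\fU = \fU_{\mathbf{d}+\mathbf{d}'}$, restricted to the component $\tilde{\Phi}_2(\fU_{\mathbf{d}}\times\fU_{\mathbf{d}'})$ of $\Grad(\fU_{\mathbf{d}+\mathbf{d}'})$. Combined with the compatibility of $\zeta_{\fX}$ with Thom--Sebastiani (Proposition \ref{prop:zeta_TS_stack}, Corollary \ref{cor:Joyce_conj_attractor}) and the identification of canonical orientations given in the proof of Proposition \ref{prop:strong_orientation_completion}, this shows that, under the isomorphism $\varphi_{\fM_{\mathbf{d}}}\cong \varphi_{f_{W,\mathbf{d}}}(\bQ_{\fU_{\mathbf{d}}}[n_{\mathbf{d}}])$ supplied by Lemma \ref{lem:can_ori_vanishing_stack}, the integral isomorphism $\zeta_{\mathbf{d},\mathbf{d}'}$ of Theorem \ref{thm:integral_isomorphism_moduli} identifies with the composition
\[
\varphi_{f_{W,\mathbf{d}}}(\bQ_{\fU_{\mathbf{d}}})\boxtimes \varphi_{f_{W,\mathbf{d}'}}(\bQ_{\fU_{\mathbf{d}'}})[n_{\mathbf{d}}+n_{\mathbf{d}'}+(\mathbf{d},\mathbf{d}')_Q] \xrightarrow[\cong]{\TS} \varphi_{f_{W,\mathbf{d}}\boxplus f_{W,\mathbf{d}'}}(\bQ_{\fU_{\mathbf{d}}\times\fU_{\mathbf{d}'}})[\cdots]
\]
followed by the natural transformation \eqref{eq:van_natural} for the inclusion $\fU^{2-\filt}\hookrightarrow \fU_{\mathbf{d}+\mathbf{d}'}$ and the hyperbolic-localization-of-the-constant-sheaf isomorphism \eqref{eq:hyp_const}. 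This is precisely the sheaf-level map \eqref{eq:quiver_integrality} defining the KS multiplication.

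Third, I would pass to the adjoint/cohomological formulation: applying $\gr_!$ to both multiplications and using that $\ev$ is proper on each connected component (so that $\ev_!\cong \ev_*$), the two multiplications are obtained by adjunction from the same sheaf-theoretic isomorphism. A final step checks that the unit of the KS algebra (the class of the zero representation) corresponds to the unit constructed in Corollary \ref{cor:CoHA}, which is immediate from the unitality property of $\zeta_{\fX}$ in Corollary \ref{cor:Joyce_conj_attractor} together with the observation that $\eta\colon \pt\to \fM_0$ is an isomorphism of derived stacks. The main obstacle, and the only place genuine work is required, is the diagrammatic unwinding in the second step: one must check that the sequence of natural transformations and base changes packaged inside the integral isomorphism $\zeta_{\fX}$ produced by Theorem \ref{thm:main_thm} reduces, when $\fX$ is a global critical locus inside a smooth stack with a trivially-acting $B\bG_m$-factor coming from an open embedding into $\Grad(\fU)$, to exactly the elementary Thom--Sebastiani plus \eqref{eq:van_natural} construction used by Kontsevich--Soibelman; Proposition \ref{prop:compati_CoHA_pre} carries out this diagrammatic comparison in the single-$\bG_m$ case, and the two-factor case follows from it together with Proposition \ref{prop:zeta_TS_stack}.
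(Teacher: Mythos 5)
Your proposal is correct and follows essentially the same route as the paper: the paper deduces the comparison as an immediate consequence of Lemma \ref{lem:can_ori_vanishing_stack} (identifying the DT sheaf with the shifted vanishing cycle sheaf for the canonical orientation) and Proposition \ref{prop:compati_CoHA_pre} (the critical-locus description of the integral isomorphism), which is exactly the core of your second and third steps. Your additional unwinding — Lemma \ref{lem:crit_localize} for the attractor correspondence, the Thom--Sebastiani compatibilities, the identification of $\chi$ with $(-,-)_Q$, and the unit check — just makes explicit what the paper treats as immediate.
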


\section{Applications to cohomological DT invariants of $3$-manifolds}

In this section we explain how to construct a parabolic induction map for cohomological DT invariants of $3$-manifolds as in \cite[Theorem 3.47]{ns23} as well as define a cohomological Hall algebra structure.

\subsection{Local systems}

Let $M\in\eS$ be a space and $\LocSys(M)=\Fun(M, \Mod_\C)$ the presentable stable $\infty$-category of chain complexes of local systems of $\C$-vector spaces on $M$. Let $\eC=\LocSys(M)^\omega\subset \LocSys(M)$ be the subcategory of compact objects. For a morphism $f\colon M\rightarrow N$ of spaces we have the restriction functor $f^*\colon \LocSys(N)^\omega\rightarrow \LocSys(M)^\omega$ and its left adjoint $f_\sharp\colon \LocSys(M)^\omega\rightarrow \LocSys(N)^\omega$. Recall that $M$ is \defterm{finitely dominated} if it is a compact object in $\eS$. In this case, we have the following objects:
\begin{itemize}
    \item The constant local system $\C_M\in\LocSys(M)$ with fibre $\C$ is compact, i.e. it defines an object $\C_M\in\LocSys(M)^\omega$.
    \item There is an orientation local system $\zeta_M\in\LocSys(M)^\omega$ which represents the functor of homology $\rC_\bullet(M; -)\colon \LocSys(M)^\omega\rightarrow \Perf_\C$.
\end{itemize}

\begin{lem}\label{lem:connectedcoproduct}
Let $M$ be a connected space. Then the functor $\Map(M_\B, -)\colon \dSt\rightarrow \dSt$ commutes with coproducts.
\end{lem}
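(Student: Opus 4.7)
The plan is to verify the universal property of coproducts directly, by checking that for every affine derived test scheme $T$ the natural map
\[
\coprod_i \Map_{\dSt}(T, \Map(M_\B, \fX_i)) \longrightarrow \Map_{\dSt}\!\bigl(T, \Map(M_\B, \textstyle\coprod_i \fX_i)\bigr)
\]
is an equivalence. By the $(- \times M_\B, \Map(M_\B,-))$ adjunction this is the same as showing
\[
\coprod_i \Map_{\dSt}(T \times M_\B, \fX_i) \xrightarrow{\ \sim\ } \Map_{\dSt}\!\bigl(T \times M_\B, \textstyle\coprod_i \fX_i\bigr).
\]

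First I would reduce to the case where $T$ is a \emph{connected} affine derived scheme, meaning that $\pi_0(\cO_T)$ has no nontrivial idempotents. Both sides are sheaves in $T$ for the Zariski (in fact \'etale) topology, and a sheaf converts the decomposition $\Spec(R_1\times R_2)\simeq \Spec R_1\sqcup \Spec R_2$ into a product; hence it is enough to treat connected affine $T$. Next, writing $M$ as a small $\infty$-groupoid one has $M_\B = \colim_{x\in M}\pt$ in $\dSt$, since the constant-stack functor is a left adjoint. Because products in $\dSt$ commute with colimits in each variable, $T\times M_\B \simeq \colim_{x\in M} T$, and therefore for any derived stack $\fY$
\[
\Map_{\dSt}(T\times M_\B,\fY)\;\simeq\;\lim_{x\in M}\fY(T)\;\simeq\;\Fun(M,\fY(T)).
\]

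The last step is to combine this with the fact that, on a connected affine $T$, the coproduct in $\dSt$ agrees with the pointwise coproduct of values: $(\coprod_i\fX_i)(T)\simeq \coprod_i \fX_i(T)$. Indeed, the sheafification defining the coproduct is trivial on $T$ because $T$ admits no nontrivial clopen decomposition over which distinct summands could be glued. Applied to $\fY=\coprod_i \fX_i$ the previous identity becomes
\[
\Map_{\dSt}(T\times M_\B,\textstyle\coprod_i\fX_i)\;\simeq\;\Fun\!\bigl(M,\textstyle\coprod_i \fX_i(T)\bigr),
\]
and the claim reduces to the elementary assertion that for a connected $\infty$-groupoid $M$ and any family of spaces $\{Z_i\}$,
\[
\Fun\!\bigl(M,\textstyle\coprod_i Z_i\bigr)\;\simeq\;\coprod_i \Fun(M,Z_i).
\]
This holds because the composite $M\to \coprod_i Z_i\to \pi_0(\coprod_i Z_i)=\coprod_i\pi_0(Z_i)$ is a map out of a connected $\infty$-groupoid, hence factors through a single $\pi_0(Z_i)$, and thus the original functor factors through the corresponding summand $Z_i$.

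The main technical subtlety will be step one: one must know that $\coprod_i \Map(M_\B,\fX_i)$ denotes the coproduct computed in $\dSt$ (after sheafification) rather than the naive pointwise one, and similarly on the inside of $\Map(M_\B,-)$. The reduction to connected affines together with the sheaf property for finite ring products sidesteps this, and the rest of the proof is formal manipulation of colimits and of functor $\infty$-categories out of a connected groupoid.
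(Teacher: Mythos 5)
Your route is genuinely different from the paper's: you argue pointwise on affine test objects, using $T\times M_\B\simeq\colim_{x\in M}T$ and the fact that a connected $\infty$-groupoid maps into a single summand of a disjoint union, whereas the paper identifies $\Fun(I,\dSt)\simeq\dSt_{/I_\B}$, observes that $\Map(M_\B,-)$ is an $M$-indexed limit, and checks that the forgetful functor $\dSt_{/I_\B}\to\dSt$ preserves such limits because $\lim_{x\in M}I_\B\simeq \Hom_{\eS}(M,I)_\B\simeq I_\B$ when $M$ is connected and $I$ discrete. Your strategy could be made to work, but as written it has a genuine gap at the reduction step: the assertion that ``it is enough to treat connected affine $T$'' does not follow from the sheaf property for $\Spec(R_1\times R_2)\simeq \Spec R_1\sqcup \Spec R_2$. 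A general connective cdga over $\C$ (the test objects for $\dSt$ here are arbitrary, cf.\ Lemma \ref{lem:mapsintomoduli}) need not be a finite product of connected ones --- e.g.\ $R=\prod_{n\in\bN}\C$ has infinitely many idempotents and $\Spec R$ admits no decomposition into open connected pieces --- and connected affines do not form a basis for the Zariski or \'etale topology. This matters because for disconnected $T$ the comparison you are checking genuinely fails at the presheaf level: a $T$-point of $\Map(M_\B,\coprod_i\fX_i)$ may use different summands over different clopen pieces of $T$, and likewise $(\coprod_i\fX_i)(T)\not\simeq\coprod_i\fX_i(T)$. So after the reduction step breaks down, the remaining argument does not cover all test objects, and the map of sheaves is not verified to be an equivalence.

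The gap is repairable without changing your basic idea, but it needs an actual argument in place of the reduction. One fix: describe the sections of a coproduct of sheaves over an arbitrary affine $T$ directly --- a section is a locally constant map $|T|\to I$ (which by quasi-compactness of $|T|$ has finite image, i.e.\ a finite clopen decomposition of $T$) together with sections of the corresponding $\fX_i$ over the pieces; equivalently, work relative to the constant sheaf $I_\B$. Feeding this description into your identification $\Map_{\dSt}(T\times M_\B,\fY)\simeq\Fun(M,\fY(T))$ and using connectedness of $M$ to see that $\Fun(M,-)$ preserves the relevant coproducts then gives the statement for all affine $T$ --- which is essentially a pointwise version of the paper's observation that $\lim_{x\in M}I_\B\simeq I_\B$. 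An alternative fix, restricting to finitely presented (hence Noetherian, hence finitely-many-idempotent) test cdgas, would require showing that both sides commute with filtered colimits of cdgas, which you do not address and which is not obvious for arbitrary $\fX_i$ and arbitrary connected $M$.
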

\begin{proof}
Let $I$ be a set. The colimit functor $\Fun(I, \dSt)\rightarrow \dSt$ lifts to an equivalence $\Fun(I, \dSt)\cong \dSt_{/I_\B}$. As $\Map(M_\B, -)$ is an $M$-indexed limit, we have to show that the forgetful functor $U\colon \dSt_{/I_\B}\rightarrow \dSt$ preserves $M$-indexed limits. For a diagram $F\colon M\rightarrow \dSt_{/I_\B}$ we have
\[U\left(\lim_{x\in M} F(x)\right)\cong \left(\lim_{x\in M} UF(x)\right)\times_{\lim_{x\in M} I_\B} I_\B.\]
The limit $\lim_{x\in M} I_\B$ is equivalent to $\Hom_{\eS}(M, I)_B$ and since $I$ is discrete, the constant map $I\rightarrow \Hom_{\eS}(M, I)$ is an equivalence.
\end{proof}

\begin{defin}
An \defterm{oriented $d$-dimensional Poincar\'e space} is a finitely dominated space $M\in\eS$ equipped with a fundamental class $[M]\in\rC_d(M; \C)$ such that the induced morphism $\zeta_M\rightarrow \C_M[-d]$ is an isomorphism.
\end{defin}

It is shown in \cite[Proposition 5.1]{bdi} that if $M$ is finitely dominated, $\LocSys(M)^\omega$ is a finite type dg category. Moreover, if $M$ is an oriented $d$-dimensional Poincar\'e space, the image of the fundamental class under the inclusion of constant loops
\[i\colon \rC_\bullet(M; \C)\longrightarrow (\rC_\bullet(LM; \C))^{S^1}\simeq \HC^{-}(\LocSys(M)^\omega)\]
defines a $d$-Calabi--Yau structure on $\LocSys(M)^\omega$ by \cite[Theorem 5.4]{bdi}.

\begin{thm}\label{thm:Mlocsys}
Let $M\in\eS$ be a finitely dominated space. Then there is an equivalence of derived stacks
\[\Map(M_\B, \Perf)\simeq \fM_{\LocSys(M)^\omega}.\]
Moreover, if $M$ is an oriented $d$-dimensional Poincar\'e space, this is an equivalence of $(2-d)$-shifted symplectic stacks, where on the left the $(2-d)$-shifted symplectic structure is induced by the AKSZ procedure from the $2$-shifted symplectic structure on $\Perf$ and the $d$-orientation of $M_\B$ while on the right the $(2-d)$-shifted symplectic structure is defined by the $d$-Calabi--Yau structure on $\LocSys(M)^\omega$.
\end{thm}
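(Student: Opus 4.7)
The plan is to prove the equivalence of underlying derived stacks first by a functor-of-points comparison, then to promote it to an equivalence of $(2-d)$-shifted symplectic stacks via the AKSZ machinery of Theorem \ref{thm:modulitomapsymplectic}. For the first claim, I fix $R \in \cdga^{\leq 0}$ and compute both sides: Lemma \ref{lem:mapsintomoduli} gives $\fM_{\LocSys(M)^\omega}(R) \cong \Hom_{\dgcat}(\LocSys(M)^\omega, \Perf_R)$, while by definition of $\Perf$ as the stack of perfect complexes, $\Map(M_\B, \Perf)(R) \cong \Fun(M, \Perf_R)$. These two spaces are identified via the universal property
\[\Fun^{\exact}(\LocSys(M)^\omega, \eA) \cong \Fun(M, \eA), \qquad \eA \in \dgcat,\]
which expresses $\LocSys(M)^\omega$ as the stable envelope of $M$ over $\C$. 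This is proven by the same argument as Lemma \ref{lem:posetenvelope} (with the finite poset $P$ replaced by the finitely dominated space $M$), using \cite[Corollary 2.2, Proposition 2.8]{ao23} together with compactness of $M$ in $\eS$. The resulting identification is natural in $R$ and defines the desired equivalence of derived stacks.

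For the symplectic part, I apply Theorem \ref{thm:modulitomapsymplectic} with $X = M_\B$ equipped with the $d$-orientation $[M]$ coming from the fundamental class, $\eC = \Perf_\C$ carrying its canonical $2$-Calabi--Yau class $\tilde{c}_1$, $\eD = \LocSys(M)^\omega$, and the universal object $E \in \Perf(M_\B; \LocSys(M)^\omega) \cong \Fun(M, \LocSys(M)^\omega)$ corresponding to the identity functor under the universal property above. The output is a morphism $F_E$, which coincides with the equivalence built in the previous paragraph, together with the compatibility with $(2-d)$-shifted presymplectic structures: the source carries the presymplectic structure associated to $\tilde{c}_1 \otimes [\eD]$ for a class $[\eD] \in |\HC^-(\LocSys(M)^\omega)[-d]|$ defined by the AKSZ formula in Theorem \ref{thm:modulitomapsymplectic}.

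The main obstacle is identifying this AKSZ-produced class $[\eD]$ with the image of $[M]$ under the constant loop inclusion $i\colon \rC_\bullet(M; \C) \to \HC^-(\LocSys(M)^\omega)$ used in \cite[Theorem 5.4]{bdi} to define the Calabi--Yau structure on $\LocSys(M)^\omega$. Unwinding the definition, $[\eD]$ is the composite
\[\C \xrightarrow{[E]} \HC^-(\Fun(M, \LocSys(M)^\omega)) \xrightarrow{\kappa_{\LocSys(M)^\omega, M_\B}} \rC^*(M; \C) \otimes \HC^-(\LocSys(M)^\omega) \xrightarrow{[M] \otimes \id} \HC^-(\LocSys(M)^\omega)[-d],\]
so the question reduces to a naturality statement relating $\kappa$ to Poincaré duality and to the constant loop inclusion. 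The key observation is that under the universal property $E$ corresponds to the identity functor, which makes $\kappa_{\LocSys(M)^\omega, M_\B}([E])$ factor through the diagonal modulo HKR identifications, so that pairing with $[M]$ reproduces the constant loops applied to the fundamental class. Once this compatibility of Hochschild classes is verified, the nondegeneracy of $[M]$ (Poincaré duality) promotes the presymplectic structures on both sides simultaneously to $(2-d)$-shifted symplectic structures, and $F_E$ is the required equivalence of shifted symplectic stacks.
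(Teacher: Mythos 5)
Your first half is fine, though it takes a different route from the paper: you prove the universal property $\Fun^{\exact}(\LocSys(M)^\omega,\eA)\simeq\Fun(M,\eA)$ directly, whereas the paper builds $F_{E_M}$ and shows it is an equivalence by noting that both $\fM_{\LocSys(M)^\omega}$ and $\Map(M_\B,\Perf)$ carry colimits of spaces to limits of stacks and agree at $M=\pt$. Your route works, but the citation of \cite[Proposition 2.8]{ao23} is misplaced: the analogue of that statement for a general space, namely $\Fun(M,\Mod_\C)^\omega\simeq\Fun(M,\Perf_\C)$, is false (already for $M=S^1$, where $\LocSys(S^1)^\omega\simeq\Perf_{\C[x^{\pm1}]}$ contains $\C[x^{\pm1}]$, which is not pointwise perfect). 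Since your universal property is stated for $\LocSys(M)^\omega$ defined as compact objects, the Yoneda/Ind-completion argument of Lemma \ref{lem:posetenvelope} goes through once you replace that step by the observation that the objects $x_\sharp\C$, $x\in M$, are compact generators and that every compact object is a retract of a finite colimit of their shifts; finite domination of $M$ is not needed for this part. So the first paragraph is repairable, and you should also actually check (rather than assert) that your functor-of-points identification agrees with $F_E$ for the tautological object $E$, which is straightforward since $E$ corresponds to the identity under your universal property.

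The genuine gap is in the symplectic half. The whole difficulty of the second statement is the identification of the AKSZ-produced class
\[
[\eD]\;=\;\bigl([M]\otimes\id\bigr)\circ\kappa_{\LocSys(M)^\omega,M_\B}\circ[E]\;\in\;|\HC^-(\LocSys(M)^\omega)[-d]|
\]
with the class $i([M])$ coming from the inclusion of constant loops, which is what defines the $d$-Calabi--Yau structure of \cite{bdi}. At precisely this point you write that $\kappa([E])$ ``factors through the diagonal modulo HKR identifications'' and that the claim holds ``once this compatibility of Hochschild classes is verified'' --- i.e.\ you restate the needed identity without an argument. A direct verification is not routine: it requires tracking the identification $\HC^-(\LocSys(M)^\omega)\simeq\rC_\bullet(LM;\C)^{S^1}$ together with its $S^1$-equivariant structure against the definition of $\kappa$, and ``factoring through the diagonal'' does not by itself produce the constant-loop map. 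The paper avoids any such computation by a uniqueness argument: both the AKSZ composite and the constant-loop inclusion are maps $\rC_\bullet(M;\C[\![u]\!])\to\rC_\bullet(LM;\C)^{S^1}$ natural in $M$ and equal to the identity for $M=\pt$; since $\rC_\bullet(-;\C[\![u]\!])$ preserves colimits and every space is a colimit of points, such a natural transformation is unique, hence the two classes agree. Without this (or an honest explicit computation), your proof of the shifted symplectic comparison is incomplete; the rest of that paragraph (nondegeneracy from Poincar\'e duality and from \cite[Theorem 5.5]{bdii}) is fine once the class identification is in place.
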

\begin{proof}
Consider the functor, natural in $M\in\eS$,
\[M\longrightarrow \LocSys(M)^\omega\]
given by $\pt\mapsto \C\in\Perf_\C$ for $M=\pt$. Such a functor is unique as $M=\colim_{x\in M} \pt$. This functor defines a universal object
\[E_M\in\Perf(M_\B; \LocSys(M)^\omega)\cong\lim_{x\in M} (\LocSys(M)^\omega).\]
Applying Proposition \ref{prop:modulitomap} we obtain a morphism
\[F_{E_M}\colon \fM_{\LocSys(M)^\omega}\longrightarrow \Map(M_\B, \Perf).\]
The object $E_M$ is functorial in $M$ in the sense that it is the value on $M$ of a universal object
\[E\in\int_{M\in\eS}\Perf(M_\B; \LocSys(M)^\omega).\]
The construction $M\mapsto \LocSys(M)^\omega$ defines a functor $\eS\rightarrow \dgcat$ which preserves colimits. Therefore, both $\Map(M_\B, \Perf)$ and $\fM_{\LocSys(M)^\omega}$ viewed as functors $\eS^{\op}\rightarrow \dSt$ send colimits of spaces to limits of derived stacks. Moreover, $F_{E_M}$ is obviously an isomorphism for $M=\pt$ and hence it is an isomorphism for any $M$. This proves the first statement.

Next, suppose $M$ is an oriented $d$-dimensional Poincar\'e space with fundamental class $[M]\in\rC_d(M; \C)$. Using Theorem \ref{thm:modulitomapsymplectic} we get that the morphism
\[\fM_{\LocSys(M)^\omega}\longrightarrow \Map(M_\B, \Perf)\]
is compatible with $(2-d)$-shifted symplectic structures, with a certain $d$-Calabi--Yau structure $[\LocSys(M)^\omega]$ on $\LocSys(M)^\omega$ described in the statement. It remains to show that
\[[\LocSys(M)^\omega]\sim i([M])\in|\HC^-(\LocSys(M)^\omega)[-d]|.\]
By definition the class $[\LocSys(M)^\omega]$ is given by the composite
\begin{align*}
\C&\xrightarrow{[E_M]} \HC^-(\Perf(M_\B; \LocSys(M)^\omega)) \\
&\xrightarrow{\kappa_{\LocSys(M)^\omega, M_\B}} \rC^\bullet(M)\otimes \HC^-(\LocSys(M)^\omega) \\
&\xrightarrow{[M]\otimes \id} \HC^-(\LocSys(M)^\omega)[-d]
\end{align*}
By definition of $E_M$ this composite is equivalent to the image of $[M]$ under a map
\[\rC_\bullet(M;\C[\![u]\!])\longrightarrow \rC_\bullet(LM;\C)^{S^1}\cong \HC^-(\LocSys(M)^\omega)\]
which is natural in $M$ and which is the identity for $M=\pt$. The inclusion of constant loops
\[i\colon \rC_\bullet(M;\C[\![u]\!])\longrightarrow \rC_\bullet(LM;\C)^{S^1}\]
is also natural in $M$ and is the identity for $M=\pt$. But since $\rC_\bullet(M;\C[\![u]\!])$ preserves colimits in $M$, such a natural transformation is unique, which implies the claim.
\end{proof}

\begin{ex}
Let $M=S^1$ be the circle which has a fundamental class $[S^1]\in\rC_1(S^1; \C)$. Then we can identify $\LocSys(S^1)^\omega\cong \Perf_{\C[x^{\pm 1}]}$ and Theorem \ref{thm:Mlocsys} in this case reduces to a $1$-shifted symplectic equivalence
\[\Map(S^1_\B, \Perf)\cong \fM_{\Perf_{\C[x^{\pm 1}]}}\]
from \cite[Proposition 5.1]{bcs23}.
\end{ex}

\subsection{Parabolic restriction for character stacks and its orientation}

Let us now consider a more general setting. For an algebraic group $G$ we denote by
\[\Loc_G(M) = \Map(M_\B, BG)\]
the \defterm{character stack}. Fixing a point $x\in M$, we have the \defterm{framed character stack}
\[\Loc^{\fr}_G(M) = \Loc_G(M) \times_{B G} \pt,\]
where the morphism $\Loc_G(M)\rightarrow B G$ is given by evaluation at $x$. If $M$ is connected, the framed character stack is represented by an affine derived scheme and
\[\Loc_G(M)\cong [\Loc^{\fr}_G(M) / G].\]

Recall the following facts about shifted symplectic structures on classifying stacks:
\begin{itemize}
    \item If $G$ is equipped with a non-degenerate $G$-invariant symmetric bilinear pairing $c\in\Sym^2(\fg^*)^G$ on its Lie algebra $\fg$, there is a canonical $2$-shifted symplectic structure on $BG$.
    \item If $G$ is connected reductive with $P\subset G$ a parabolic subgroup with Levi factor $L$, then there is a unique $2$-shifted Lagrangian structure on the correspondence
    \begin{equation}\label{eq:parabolicLagrangian}
    \xymatrix{
    & BP \ar[dl] \ar[dr] & \\
    BL && BG
    }
    \end{equation}
    which restricts to the chosen $2$-shifted symplectic structure on $BG$, see \cite[Lemma 3.4]{saf17}.
\end{itemize}

\begin{prop}\label{prop:parabolicintersectionLagrangian}
Let $G$ be a connected reductive group equipped with a non-degenerate pairing $c$ and $P_1, P_2\subset G$ parabolic subgroups with the same Levi factor $L\subset G$. Then the diagram
\begin{equation}\label{eq:parabolicintersectiondiagram}
\xymatrix{
& BP_1 \ar[dl] \ar[dr] & \\
BL & B(P_1\cap P_2) \ar[u] \ar[d] & BG \\
& BP_2 \ar[ul] \ar[ur]
}
\end{equation}
has a unique structure of a 2-fold $2$-shifted Lagrangian correspondence, which restricts to the chosen $2$-shifted symplectic structure on $BG$.
\end{prop}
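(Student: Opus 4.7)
The plan is to establish existence via the general theory of fibre products of Lagrangian correspondences, and then deduce uniqueness from a cohomological computation analogous to the one behind \cite[Lemma 3.4]{saf17}.

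First I would identify $B(P_1\cap P_2)\simeq BP_1\times_{BG} BP_2$ as derived stacks, which is a standard computation since the fibres of $BP_i\to BG$ are the partial flag varieties $G/P_i$ and $G/(P_1\cap P_2)\simeq (G/P_1)\times_{G/G}(G/P_2)$ realised in the appropriate homotopical sense. Given this identification, I would invoke the general fact (implicit in \cite{cal15,chs21} and used throughout this paper, e.g.\ in Corollary~\ref{cor:Filt_Gr_twofolded}) that the fibre product of two $n$-shifted Lagrangian correspondences $\fX_1\leftarrow \fL_1\rightarrow \fX_2$ and $\fX_1\leftarrow \fL_2\rightarrow \fX_2$ along their common target(s) carries a canonical structure of a $2$-fold $n$-shifted Lagrangian correspondence. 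Applied to the two Lagrangian correspondences $BL\leftarrow BP_i\rightarrow BG$ provided by \eqref{eq:parabolicLagrangian}, this produces the required $2$-fold Lagrangian structure on the diagram \eqref{eq:parabolicintersectiondiagram}.

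For uniqueness, I would adapt the strategy of \cite[Lemma 3.4]{saf17}. Given the $2$-shifted symplectic form on $BG$ (determined by $c$) and the induced symplectic structure on $BL$, the space of $2$-fold Lagrangian structures on \eqref{eq:parabolicintersectiondiagram} with fixed boundary data forms a torsor (if non-empty) over an iterated space of $2$-shifted closed $2$-forms on $B(P_1\cap P_2)$ and its substacks satisfying certain vanishing conditions. The key point is that the relevant spaces of closed forms on classifying stacks of (possibly non-reductive) algebraic groups can be computed via $\bDR$; for an algebraic group $H$ with Levi decomposition $H = L\ltimes U$, the $L$-semisimplicity of $U$-representations makes the complex $\bDR(BH)$ relatively rigid. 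Concretely one must show that the space of homotopies of homotopies needed to fill the $2$-fold Lagrangian datum, beyond the uniquely determined $1$-dimensional data from \cite{saf17}, is contractible, which reduces to vanishing of certain $L$-invariants in tensor powers of $\mathfrak{u}_{P_1}\cap\mathfrak{u}_{P_2}$ in the appropriate cohomological degrees.

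The main obstacle will be this final cohomological vanishing/contractibility verification for $B(P_1\cap P_2)$. Unlike $BG$ and $BL$ whose de Rham complexes are computed by Chevalley--Eilenberg / invariant theory for a reductive group, $P_1\cap P_2$ is not reductive, and one must use its Levi decomposition carefully together with the compatibility conditions imposed by the already-fixed $2$-shifted forms on the two parabolic Lagrangians $BP_1,BP_2$. I would handle this by running the $L$-isotypic decomposition of $\mathfrak{u}_{P_1}\cap\mathfrak{u}_{P_2}$ and observing that the nontrivial isotypic components contribute only in strictly positive cohomological degrees to $\bDR(B(P_1\cap P_2))$, which forces the torsor of $2$-fold fillings to be contractible and yields uniqueness.
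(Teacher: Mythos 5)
Your existence step fails at its first move: the identification $B(P_1\cap P_2)\simeq BP_1\times_{BG}BP_2$ is false. The fibre of $BP_1\times_{BG}BP_2\to BG$ is $(G/P_1)\times(G/P_2)$ (equivalently, the fibre product is the double quotient $[P_1\backslash G/P_2]$), whereas $B(P_1\cap P_2)$ has fibre $G/(P_1\cap P_2)$, i.e.\ it is only the locally closed stratum corresponding to the identity double coset. Already for $P_1=P_2=P$ you would be claiming $BP\simeq [P\backslash G/P]$, and for opposite parabolics $BL$ is merely the open Bruhat cell of $[P_1\backslash G/P_2]$. Consequently the appeal to a general "fibre product of Lagrangian correspondences is a $2$-fold Lagrangian correspondence" principle does not apply, and even restricting such a structure to a locally closed substack would not preserve the non-degeneracy condition, since that condition is a statement about tangent complexes. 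With this step gone, nothing in your proposal verifies the limit-diagram (Lagrangian, as opposed to isotropic) condition at all. In the paper this is exactly the content of the second half of the proof: one checks that $B(P_1\cap P_2)\to BP_1\times_{BG\times BL}BP_2$ is a $1$-shifted Lagrangian morphism, which reduces to the acyclicity of the four-term complex $\fp_{12}\to\fp_1\oplus\fp_2\to\fg\oplus\fl\to\fp_{12}^*$, proved by direct inspection of $H^0,H^1,H^3$ and an Euler characteristic count using $\dim\fp_i=(\dim\fg+\dim\fl)/2$. Some such linear-algebra verification is unavoidable and is missing from your argument.

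Your uniqueness sketch is in the right spirit but is considerably heavier than necessary, and as written it is not a proof: the "torsor over an iterated space of closed $2$-forms" and the proposed $L$-isotypic vanishing in positive degrees are left unverified. The paper's route is simpler: for \emph{any} affine algebraic group $H$ the space $\cA^{2,\cl}(BH,2)$ is the discrete set $\Sym^2(\mathfrak{h}^*)^H$, so all homotopy-coherence data (the homotopies and the homotopy of homotopies) is automatically unique once the underlying invariant pairings match; matching is then a weight argument with respect to a common maximal torus, showing $\Sym^2(\fl^*)^L\to\Sym^2(\fp_i^*)^{P_i}$ and $\Sym^2(\fl^*)^L\to\Sym^2(\fp_{12}^*)^{P_1\cap P_2}$ are isomorphisms and $\Sym^2(\fg^*)^G\to\Sym^2(\fp_i^*)^{P_i}$ is injective. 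If you repair your argument, I would recommend adopting this discreteness observation for both existence and uniqueness of the isotropic datum, and then supplying the acyclicity computation for non-degeneracy.
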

\begin{proof}
If $H$ is an affine algebraic group, the space $\cA^{2, \cl}(BH, 2)$ of closed two-forms on $BH$ of degree $2$ is equivalent to the set $\Sym^2(\mathfrak{h}^*)^H$ of $H$-invariant symmetric bilinear pairings on the Lie algebra of $H$.

We first prove that there is a unique structure of a 2-fold $2$-shifted isotropic correspondence which restricts to the chosen $2$-shifted presymplectic structure on $BG$. Let $\fp_1$, $\fp_2$ and $\fp_{12}$ be the Lie algebras of $P_1$, $P_2$ and $P_1\cap P_2$. Consider the correspondence
\[
\xymatrix{
& \Sym^2(\fp_1^*)^{P_1} \ar[d] & \\
\Sym^2(\fl^*)^L \ar[ur] \ar[dr] & \Sym^2(\fp_{12}^*)^{P_1\cap P_2} & \Sym^2(\fg^*)^G \ar[ul] \ar[dl] \\
& \Sym^2(\fp_2^*)^{P_2} \ar[u] &
}
\]
given by the pullback of closed two-forms along the diagram 
\eqref{eq:parabolicintersectiondiagram}.

Using \cite[Theorem 6.2.7]{springer} we may choose Borel subgroups $B_i\subset G$ contained in $P_i$. By \cite[Corollary 8.3.10]{springer} the intersection $B_1\cap B_2$ contains a maximal torus $T\subset G$; let $\ft$ be its Lie algebra. Let
\[\fp_1 = \fl\oplus \fu_1,\qquad \fp_2 = \fl\oplus \fu_2\]
be $L$-invariant splittings.

Let $(-, -)$ be an $L$-invariant pairing on $\fp_i$. For any $x,y\in\fp_i$ of $T$-weights $\alpha,\beta$ we have $(\alpha+\beta)(x, y) = 0$. In particular, this implies that $(\fu_i, \fp_i) = 0$. Thus, the map $\Sym^2(\fl^*)^L\rightarrow \Sym^2(\fp_i^*)^L$ is an isomorphism. We have $[\fu_i,\fp_i]\subset \fu_i$, so the map $\Sym^2(\fp_i^*)^{P_i}\rightarrow \Sym^2(\fp_i^*)^L$ is an isomorphism. An analogous argument shows that $\Sym^2(\fl^*)^L\rightarrow \Sym^2(\fp_{12}^*)^{P_1\cap P_2}$ is an isomorphism. The two maps $\Sym^2(\fg^*)^G\rightarrow \Sym^2(\fp_i^*)^{P_i}\xleftarrow{\sim} \Sym^2(\fl^*)^L$ are equal to the restriction along $L\subset G$. Since the restriction $\Sym^2(\fg^*)^G\rightarrow \Sym^2(\ft^*)$ is injective, so is the restriction $\Sym^2(\fg^*)^G\rightarrow \Sym^2(\fp_i^*)^{P_i}$. This finishes the construction of the unique structure of a 2-fold $2$-shifted isotropic correspondence.

Let us now show that if the chosen pairing on $\fg$ is non-degenerate, then this 2-fold $2$-shifted isotropic correspondence is Lagrangian. The fact that $BL\leftarrow BP_i\rightarrow BG$ is a $2$-shifted Lagrangian correspondence is shown in \cite[Lemma 3.4]{saf17}. So, we have to check that $B(P_1\cap P_2)\rightarrow BP_1\times_{BG\times BL} BP_2$ is a $1$-shifted Lagrangian morphism. This boils down to showing that the complex
\[\fp_{12}\rightarrow \fp_1\oplus \fp_2\rightarrow \fg\oplus \fl\rightarrow \fp_{12}^*\]
concentrated in degrees $[0, 3]$ is acyclic. Since $\fp_{12}\rightarrow \fp_1\oplus \fp_2$ is injective, $H^0$ vanishes. Its $H^1$ vanishes since $(\fp_1\oplus \fp_2) / \fp_{12}\rightarrow \fg$ is injective. Its $H^3$ vanishes since the composite $\fg\cong \fg^*\rightarrow \fp_{12}^*$ is surjective. The Euler characteristic of this complex is $\dim\fg + \dim\fl - \dim \fp_1 - \dim\fp_2$. Since $\dim \fp_i = (\dim \fg + \dim \fl)/2$, the Euler characteristic of this complex vanishes and hence $H^2=0$, i.e. this complex is acyclic.
\end{proof}

In particular, if $M$ is a closed oriented $3$-manifold and $G$ is equipped with a non-degenerate pairing on its Lie algebra, there is a $(-1)$-shifted symplectic structure on $\Loc_G(M)$ defined by the AKSZ procedure. Let us recall the orientation on character stacks following \cite{ns23}.

\begin{prop}\label{prop:characterstackorientation}
Let $M$ be a closed spin $3$-manifold.
\begin{enumerate}
    \item For a connected reductive group $G$ equipped with a non-degenerate pairing $c\in\Sym^2(\fg^*)^G$ there is an orientation $o_G$ on $\Loc_G(M)$.
    \item For a pair of connected reductive groups $G_1, G_2$ equipped with non-degenerate pairings $c_i\in\Sym^2(\fg_i^*)^{G_i}$ there is an isomorphism $o_{G_1\times G_2}\simeq o_{G_1}\boxtimes o_{G_2}$ of orientation on $\Loc_{G_1\times G_2}(M)\simeq \Loc_{G_1}(M)\times \Loc_{G_2}(M)$ which satisfies an associativity relation.
    \item For a parabolic subgroup $P\subset G$ whose Levi factor is $L$ there is an orientation $\rho_P$ of the Lagrangian correspondence
    \[
    \xymatrix{
    & \Loc_P(M) \ar[dl] \ar[dr] & \\
    \Loc_L(M) && \Loc_G(M)
    }
    \]
    which restricts to $o_L$ and $o_G$ on $\Loc_L(M)$ and $\Loc_G(M)$. For $P=G$ the orientation $\rho_P$ is the identity orientation of the diagonal Lagrangian correspondence. The orientation $\rho_P$ for conjugate parabolic subgroups are equal.
    \item Given a pair of parabolic subgroups $P_1\subset G_1$ and $P_2\subset G_2$ with Levi factors $L_1$ and $L_2$, under the isomorphisms $\lambda_{L_1, L_2}$ and $\lambda_{G_1, G_2}$ we have an equality $\rho_{P_1}\boxtimes \rho_{P_2} = \rho_{P_1\times P_2}$ of orientations of the product correspondence
    \[
    \xymatrix{
    & \Loc_{P_1}(M)\times \Loc_{P_2}(M) \ar[dl] \ar[dr] & \\
    \Loc_{L_1}(M)\times \Loc_{L_2}(M) && \Loc_{G_1}(M)\times \Loc_{G_2}(M)
    }
    \]
\end{enumerate}
\end{prop}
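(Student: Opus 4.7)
The plan is to follow the strategy of \cite{ns23}, where orientation data for character stacks of spin manifolds is constructed from the spin structure viewed as a trivialization of the pullback of a certain $\mu_2$-gerbe on $B\mathrm{SO}(3)$. The key local identification to exploit is that, for a representation $W$ of $G$ with an invariant symmetric pairing $b \in \Sym^2(W^*)^G$, the pairing $b$ combined with Poincar\'e duality endows $\rC^\bullet(M; W_\rho)$ with a symmetric self-duality $\rC^\bullet(M; W_\rho) \cong \rC^\bullet(M; W_\rho)^\vee[-3]$ for any $G$-local system $\rho$. The orientation gerbe for the associated AKSZ-$(-1)$-shifted symplectic structure then only depends on the mod-$4$ spin bordism class of $M$ valued in the $\Z/4\Z$-graded Picard groupoid of $\mu_2$-gerbes, which is trivialized by the choice of spin structure.

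First I would construct $o_G$. By the AKSZ computation, the cotangent complex of $\Loc_G(M)$ at $\rho$ is $\rC^\bullet(M; \mathfrak{g}_\rho)[1]$, with $\mathfrak{g}_\rho$ endowed with the $G$-invariant pairing $c$. The pairing plus Poincar\'e duality give a symmetric self-dual quadratic structure on $\rC^\bullet(M; \mathfrak{g}_\rho)[2]$. Applying the general principle that a self-dual complex $E\cong E^\vee[-3]$ on an oriented $3$-manifold admits a canonical square root of $\rdet(E)$ once a spin structure is fixed (see \cite[\S 3]{ns23}), we obtain a graded line bundle $\cL_G$ together with an isomorphism $\cL_G^{\otimes 2}\cong \rdet(\bL_{\Loc_G(M)})$. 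The multiplicativity $o_{G_1\times G_2}\cong o_{G_1}\boxtimes o_{G_2}$ comes from the decomposition $\mathfrak{g}_1\oplus \mathfrak{g}_2$ of Lie algebras and the compatibility of the above square-root construction with direct sums of quadratic complexes, while the associativity of this isomorphism is forced by the commutativity of the Koszul braiding on pairs of quadratic complexes concentrated in cohomological degrees $[0,3]$.

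For the parabolic statement, I would use Proposition \ref{prop:parabolicintersectionLagrangian} and the AKSZ transfer: the cotangent fibre of $\Loc_P(M)\to \Loc_L(M)$ at $\rho$ is $\rC^\bullet(M; \mathfrak{u}_\rho)[1]$, and the relative Lagrangian structure in $\Loc_G(M)$ identifies it with $\rC^\bullet(M; \mathfrak{u}^-_\rho)[-1]^\vee$ via the perfect pairing $c\colon \mathfrak{u}\otimes \mathfrak{u}^- \to \C$. Combining this with the two square-root constructions for $\mathfrak{l}$ and $\mathfrak{g}$ yields the desired isomorphism $\xi$ from \eqref{eq:compositeLagrangiancorrespondencedeterminant} enhanced to a square-root isomorphism, i.e.\ an orientation $\rho_P$ of the correspondence. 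The case $P=G$ is tautological, and the conjugation invariance follows because the pairing $c$ is $G$-invariant so conjugation acts trivially on the Poincar\'e-duality data. Compatibility with products $\rho_{P_1\times P_2}=\rho_{P_1}\boxtimes \rho_{P_2}$ reduces, by the same $\mathfrak{u}_1\oplus \mathfrak{u}_2$ decomposition, to the multiplicativity statement in (ii).

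The principal obstacle will be to verify that the spin-structure-dependent square-root construction is genuinely functorial in the Lie algebra quadratic input, so that the two orientations produced from $\mathfrak{g}$ and $\mathfrak{l}$ together with the pairing on the nilradical patch correctly into a single orientation $\rho_P$, and that this patching is associative for iterated parabolic compositions (which is what gets used in the CoHA construction). Concretely, one must check that the signed discrepancy between the spin square-root of the self-dual complex for $\mathfrak{g}$ and that obtained by stacking $\mathfrak{l}$ with the Lagrangian pair $(\mathfrak{u},\mathfrak{u}^-)$ is trivial; this amounts to showing a mod-$2$ index identity on $3$-manifolds, which is handled in \cite{ns23} via a direct topological argument using the vanishing of $\pi_3$ of the relevant orientation classifier. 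Once this is in place, all the remaining compatibilities (iii) and (iv) are formal consequences of the functoriality of AKSZ and of the quadratic square-root construction.
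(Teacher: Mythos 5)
Your route differs fundamentally from the paper's. The paper (following \cite{ns23}) does not construct a square root of a nontrivial line: the spin structure determines a canonical Euler structure (a $\mathrm{Spin}^c$ structure with trivial $c_1$), and together with the canonical homology orientation this produces Reidemeister--torsion volume forms $\overline{\vol}_G$, $\overline{\vol}_L$, $\overline{\vol}_P$ which trivialize $\rdet(\bL_{\Loc_G(M)})$, $\rdet(\bL_{\Loc_L(M)})$, $\rdet(\bL_{\Loc_P(M)})$ outright. All orientations are then taken to be the structure sheaf mapped to the torsion form, and every compatibility in (ii)--(iv) is a \emph{strict equality} of sections of line bundles on classical stacks (multiplicativity $\overline{\vol}_{G_1\times G_2}=\overline{\vol}_{G_1}\wedge\overline{\vol}_{G_2}$, gluing of torsion for $P$, and the fact that $\overline{\vol}_P$ depends only on the stack $BP$, which gives conjugation invariance); there is no coherence to check because there is no room for homotopies. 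Your proposal instead builds $o_G$ as a spin-dependent square root of $\rdet$ of the self-dual complex $\rC^\bullet(M;\fg_\rho)$ via a bordism-invariance principle, and never invokes the Euler structure or torsion at all.

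The genuine gap is in your treatment of (iii), which is the actual content of the proposition. You reduce the existence of $\rho_P$ (compatibly with $o_L$ and $o_G$ through the decomposition $\fg=\fl\oplus\fu\oplus\fu^-$) to a ``signed discrepancy'' being trivial, i.e.\ a mod-$2$ index identity, and you discharge it by asserting it is ``handled in \cite{ns23} via a direct topological argument using the vanishing of $\pi_3$ of the relevant orientation classifier.'' No such argument is what \cite{ns23} supplies; what \cite[Theorem 3.45]{ns23} proves is precisely the torsion statement above, in a framework you have not set up, so your citation does not close the step in your framework. Without it you have neither the existence of $\rho_P$ nor its key properties: in the square-root picture the case $P=G$ is not ``tautological'' (you must identify your square root with the identity orientation of the diagonal correspondence), and ``conjugation acts trivially on the Poincar\'e-duality data'' only gives an isomorphism of orientations, whereas the statement (and its later use in Proposition \ref{prop:parabolicrestriction}) requires equality, which in the paper follows because $\overline{\vol}_P$ is attached to $BP$ rather than to the embedding $P\subset G$. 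Likewise the associativity in (ii) and the product compatibility in (iv) are not ``formal'' in your setting; they require exactly the functoriality you flag as the principal obstacle and do not prove. To repair the argument you should either carry out the mod-$2$ index comparison yourself or switch to the torsion volume forms, where these compatibilities are equalities by construction.
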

\begin{proof}
The claim is essentially shown in \cite[Theorem 3.45]{ns23} to which we refer for details. Let us sketch how to construct the relevant compatibility isomorphisms. First, as explained in \cite[Remark 3.15]{ns23}, there is a canonical homology orientation of $M$, i.e. a trivialization of the (graded) line $\det H_\bullet(M; \C)$ (note that it lies in degree zero as $\chi(M)=0$). The spin structure on $M$ allows us to choose a canonical Euler structure in the sense of \cite[Section 3.2]{ft99}, i.e. a trivialization of the homological Euler class $\xi\colon e(M)\sim 0$ whose characteristic class is $c(\xi)=0\in H_1(M; \Z)$. Equivalently, it is the choice of a $\mathrm{Spin}^c$ structure with the trivial $c_1$. Let $G$ be an algebraic group and consider the derived stack $\fX=\Loc_G(M)$. By \cite[Proposition 3.23]{ns23} we get that $\vdim(\Loc_G(M)) = 0$ and there is a trivialization of the determinant of the cotangent complex of $\Loc_G(M)$ given by the torsion volume form $\vol_G$. Let $\overline{\vol}_G$ be the corresponding section of $\rdet(\bL_{\fX})$. For a pair of algebraic groups $G_1, G_2$ we have an equality $\overline{\vol}_{G_1\times G_2} = \overline{\vol}_{G_1}\wedge \overline{\vol}_{G_2}$ as sections of
\[\rdet(\bL_{\Loc_{G_1\times G_2}(M)})\simeq \rdet(\bL_{\Loc_{G_1}(M)}) \boxtimes \rdet(\bL_{\Loc_{G_2}(M)}).\]
Note that for sections of line bundles on classical stacks there is no room for nontrivial homotopies).

\begin{enumerate}
    \item We define the orientation $o_G$ to be given by the structure sheaf $\cO_{\Loc_G(M)^{\red}}$ equipped with an isomorphism $\cO_{\Loc_G(M)^{\red}}^{\otimes 2}\simeq \rdet(\bL_{\Loc_G(M)})$ given by $1\mapsto \overline{\vol}_G$.
    \item We have an isomorphism
    \[\cO_{\Loc_{G_1\times G_2}(M)^{\red}}\simeq \cO_{\Loc_{G_1}(M)^{\red}}\boxtimes \cO_{\Loc_{G_2}(M)^{\red}}\]
    given by $1\mapsto 1\boxtimes 1$. The equality $\overline{\vol}_{G_1\times G_2} = \overline{\vol}_{G_1}\wedge \overline{\vol}_{G_2}$ then implies that it defines an isomorphism of orientations $o_{G_1\times G_2}\simeq o_{G_1}\boxtimes o_{G_2}$. The associativity of this isomorphism follows from the associativity of the above isomorphism of structure sheaves.
    \item We define the orientation $\rho_P$ using the isomorphism $\cO_{\Loc_P(M)^{\red}}\simeq \rdet(\bL_{\Loc_G(M)})$ given by $1\mapsto \overline{\vol}_P$. It is shown in \cite[Theorem 3.45]{ns23} that it is compatible with the orientation $o_G$ and $o_L$ of $\Loc_G(M)$ and $\Loc_L(M)$. Since it depends on the stack $BP$ and not on the subgroup $P$, the orientation $\rho_P$ is invariant under conjugation.
    \item The multiplicativity of $\rho_P$ follows from the relation $\overline{\vol}_{P_1\times P_2}=\overline{\vol}_{P_1}\wedge \overline{\vol}_{P_2}$.
\end{enumerate}
\end{proof}

So, if $M$ is a closed spin $3$-manifold and $G$ a connected reductive group equipped with a non-degenerate pairing $c\in\Sym^2(\fg^*)^G$, we obtain a perverse sheaf 
\[\varphi_{\Loc_G(M)}\in\Perv(\Loc_G(M))\]
given by the DT perverse sheaf associated to the $(-1)$-shifted symplectic derived Artin stack $\Loc_G(M)$ equipped with the orientation $o_G$ from Proposition \ref{prop:characterstackorientation}. For a pair $G_1,G_2$ of such groups, the Thom--Sebastiani isomorphism from Proposition \ref{prop:TS_DT_stack} as well as the isomorphism of orientations $o_{G_1\times G_2}\simeq o_{G_1}\boxtimes o_{G_2}$ from Proposition \ref{prop:characterstackorientation} gives an isomorphism
\[
\varphi_{\Loc_{G_1\times G_2}(M)}\cong \varphi_{\Loc_{G_1}(M)}\boxtimes \varphi_{\Loc_{G_2}(M)}
\]
of perverse sheaves on $\Loc_{G_1\times G_2}(M)\simeq \Loc_{G_1}(M)\times \Loc_{G_2}(M)$ which satisfies an associativity relation.

One can compose parabolic subgroups in the following way.

\begin{lem}\label{lem:paraboliccomposition}
Let $G_3$ be a connected reductive group with a parabolic subgroup $P_{23}\subset G_3$ with Levi factor $G_2$ and $P_{12}\subset G_2$ is a parabolic subgroup with Levi factor $G_1$. Consider the fibre product $P_{13} = P_{12}\times_{G_2} P_{23}$. Then $P_{13}\subset G_3$ is a parabolic subgroup with Levi factor $G_1$.
\end{lem}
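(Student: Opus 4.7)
My plan is to prove Lemma \ref{lem:paraboliccomposition} by explicitly identifying the Levi decomposition of $P_{13}$ and verifying that it contains a Borel subgroup of $G_3$. Let $U_{23}\subset P_{23}$ and $U_{12}\subset P_{12}$ denote the unipotent radicals, so that the Levi decompositions give semidirect product descriptions $P_{23}=G_2\ltimes U_{23}$ and $P_{12}=G_1\ltimes U_{12}$. The map $P_{23}\to G_2$ appearing in the fibre product is the quotient by $U_{23}$, so by construction $P_{13}=P_{12}\times_{G_2}P_{23}$ is the preimage of $P_{12}$ under this quotient map. In particular it sits in a short exact sequence $1\to U_{23}\to P_{13}\to P_{12}\to 1$, which (using that the inclusion $P_{12}\subset G_2\subset P_{23}$ provides a splitting) identifies $P_{13}=P_{12}\ltimes U_{23}=G_1\ltimes(U_{12}\ltimes U_{23})$.

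Next, I will verify that $P_{13}$ is a parabolic subgroup of $G_3$, which reduces to showing that $P_{13}$ contains some Borel subgroup of $G_3$. Choose a Borel $B_2\subset G_2$ with $B_2\subset P_{12}$; such a Borel exists by \cite[Theorem 6.2.7]{springer} applied to $G_2$. Lifting $B_2$ to $P_{23}$ via the splitting yields a closed connected solvable subgroup $B_3:=B_2\ltimes U_{23}\subset P_{23}$, and a standard dimension count (using $\dim P_{23}=\dim G_2+\dim U_{23}$ and that $B_2$ is Borel in $G_2$) shows $B_3$ is a Borel subgroup of $P_{23}$. Since any Borel of the parabolic subgroup $P_{23}$ is also a Borel of $G_3$, we obtain $B_3\subset P_{13}$, whence $P_{13}$ is parabolic in $G_3$.

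Finally, I will identify the Levi factor. The subgroup $U_{13}:=U_{12}\ltimes U_{23}\subset P_{13}$ is unipotent and normal (normality follows because $U_{23}\triangleleft P_{23}\supset P_{13}$, while $U_{12}\triangleleft P_{12}$ normalizes $U_{23}$ since $U_{12}\subset G_2$ acts on $U_{23}$ by conjugation inside $P_{23}$). The quotient $P_{13}/U_{13}\cong G_1$ is reductive, so $U_{13}$ is the unipotent radical of $P_{13}$, and the inclusion $G_1\hookrightarrow P_{12}\hookrightarrow P_{13}$ provides a Levi factor identified with $G_1$. I do not expect any serious obstacle here; the only small subtlety is keeping track of the semidirect product structure to justify that $U_{12}\ltimes U_{23}$ is actually a subgroup and is normal in $P_{13}$, which follows directly from the fact that $U_{12}\subset G_2$ sits inside the Levi factor of $P_{23}$ and hence acts on $U_{23}$.
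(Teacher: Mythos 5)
Your proof is correct, and the Levi-factor identification (writing $P_{13}=P_{12}\ltimes U_{23}=G_1\ltimes(U_{12}\ltimes U_{23})$ and checking that $U_{12}\ltimes U_{23}$ is the unipotent radical because the quotient $G_1$ is reductive) is essentially the same as in the paper. Where you genuinely diverge is the parabolicity step. The paper argues via properness: since $BP_{12}\to BG_2$ and $BP_{23}\to BG_3$ are proper and the top square is Cartesian, the composite $BP_{13}\to BG_3$ is proper, which is equivalent to completeness of $G_3/P_{13}$, i.e.\ to $P_{13}$ being parabolic. You instead invoke the classical criterion that a closed subgroup containing a Borel subgroup is parabolic, and you exhibit a Borel explicitly, namely $B_3=B_2\ltimes U_{23}$ with $B_2\subset P_{12}$ a Borel of $G_2$, using that Borels of the parabolic $P_{23}$ are Borels of $G_3$. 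Both routes are standard and correct; the paper's properness argument is shorter, makes no explicit choice of Borel, and fits the stack-theoretic setting in which the lemma is used (properness of $BP\to BG$ is exactly what is invoked elsewhere, e.g.\ for the parabolic induction map), whereas your argument is more classical and produces an explicit Borel inside $P_{13}$, which is in the same spirit as the later discussion of standard parabolics in Lemma \ref{lem:standardparabolic}. Two small presentational points: the normality of $U_{12}\ltimes U_{23}$ in $P_{13}$ is stated a bit awkwardly in your parenthetical — the cleanest phrasing is that it is the preimage of the normal subgroup $U_{12}\triangleleft P_{12}$ under the projection $P_{13}\to P_{12}$ with kernel $U_{23}$ — and the criterion ``closed subgroup containing a Borel is parabolic'' deserves a citation alongside your reference for the existence of $B_2\subset P_{12}$. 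Neither affects correctness.
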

\begin{proof}
Consider the diagram
\[
\xymatrix{
&& BP_{13} \ar[dl] \ar[dr] && \\
& BP_{12} \ar[dl] \ar[dr] && BP_{23} \ar[dl] \ar[dr] & \\
BG_1 && BG_2 && BG_3
}
\]
where the top square is Cartesian. Since $BP_{12}\rightarrow BG_2$ is proper, the morphism $BP_{13}\rightarrow BP_{23}$ is proper. Since $BP_{23}\rightarrow BG_3$ is also proper, this implies that $BP_{13}\rightarrow BG_3$ is proper, i.e. $P_{13}\subset G_3$ is parabolic. Now, consider a Levi decomposition $P_{23} = G_2\ltimes U_{23}$, where $U_{23}$ is the unipotent radical of $G_2$. Therefore, $P_{13} = P_{12}\ltimes U_{23}$. Its unipotent radical is $U_{12}\ltimes U_{23}$, where $U_{12}\subset P_{12}$ is the unipotent radical of $P_{12}$. Therefore, the Levi factor of $P_{13}$ is $P_{13} / (U_{12}\ltimes U_{23})\simeq G_1$.
\end{proof}

\begin{prop}\label{prop:paraboliccompositionorientation}
Consider the setting of Lemma \ref{lem:paraboliccomposition} and let $M$ be a closed spin $3$-manifold. Consider the diagram
\[
\xymatrix{
&& \Loc_{P_{13}}(M) \ar[dl] \ar[dr] && \\
& \Loc_{P_{12}}(M) \ar[dl] \ar[dr] && \Loc_{P_{23}}(M) \ar[dl] \ar[dr] & \\
\Loc_{G_1}(M) && \Loc_{G_2}(M) && \Loc_{G_3}(M)
}
\]
where the top square is Cartesian. Then the composite of the orientation $\rho_{P_{23}}$ and $\rho_{P_{12}}$ is equal to $\rho_{P_{13}}$.
\end{prop}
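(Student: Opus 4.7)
My plan is to reduce the statement to a multiplicativity property of the torsion volume forms $\overline{\vol}_{P_{ij}}$ under the fibre product structure inherited from Lemma \ref{lem:paraboliccomposition}. First I would observe that since $BP_{13} \simeq BP_{12} \times_{BG_2} BP_{23}$, applying $\Map(M_\B, -)$ yields
\[
\Loc_{P_{13}}(M) \simeq \Loc_{P_{12}}(M) \times_{\Loc_{G_2}(M)} \Loc_{P_{23}}(M),
\]
so the outer diagram in the proposition is precisely the composition of the two smaller Lagrangian correspondences. Thus the question is to identify the orientation on the composite produced by \eqref{eq:ori_assoc_general} with the orientation $\rho_{P_{13}}$ specified in Proposition \ref{prop:characterstackorientation}.

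By construction, the composite orientation is built out of the isomorphism $\pi$ in Lemma \ref{lem:xi_theta_Lagcorresp} applied to the fibre sequence
\[
\bL_{\Loc_{P_{23}}(M)/\Loc_{G_3}(M)}\big|_{\Loc_{P_{13}}(M)} \longrightarrow \bL_{\Loc_{P_{13}}(M)/\Loc_{G_3}(M)} \longrightarrow \bL_{\Loc_{P_{12}}(M)/\Loc_{G_2}(M)}\big|_{\Loc_{P_{13}}(M)}
\]
of relative cotangent complexes. Each $\rho_{P_{ij}}$ is the trivialization $1 \mapsto \overline{\vol}_{P_{ij}}$ of $\rdet(\bL_{\Loc_{G_k}(M)})$ restricted to $\Loc_{P_{ij}}(M)^{\red}$ (after using the Lagrangian identifications to identify target determinants with source determinants). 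So the problem reduces to verifying that
\[
\overline{\vol}_{P_{13}} = \overline{\vol}_{P_{12}} \wedge \overline{\vol}_{P_{23}}
\]
under the isomorphism on graded line bundles coming from the above fibre sequence together with the canonical identifications of \eqref{eq:compositeLagrangiancorrespondencedeterminant}.

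This last identity is a multiplicativity property of Reidemeister torsion under the fibration $BP_{13} \to BP_{23}$ of classifying stacks with homotopy fibre $BP_{12} / BG_2$, combined with the fact that, under the AKSZ correspondence, $\overline{\vol}_P$ on $\Loc_P(M)$ comes from the canonical homology orientation and $\mathrm{Spin}^c$-structure of $M$ pulled to $BP$. The spin structure on $M$ guarantees that no extra sign or twisting is introduced along the way; equivalently, the multiplicativity at the level of $BP$ lifts to $\Loc_P(M)$ by the AKSZ naturality already used in the proof of parts (2) and (4) of Proposition \ref{prop:characterstackorientation}.

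The main obstacle will be bookkeeping of signs and conventions: one has to verify that the graded/Koszul structure implicit in the map $\pi$ of Lemma \ref{lem:xi_theta_Lagcorresp} matches the wedge on torsion volumes without introducing additional parity factors, and that the various ``dual'' identifications \eqref{eq:Lag_corresp_TL_det} in the composite Lagrangian correspondence agree with the choices used to define $\rho_{P_{12}}$ and $\rho_{P_{23}}$ individually. Once this sign bookkeeping is done—essentially along the lines of the arguments in \cite[\S 3]{ns23}—the equality $\rho_{P_{23}} \circ \rho_{P_{12}} = \rho_{P_{13}}$ of orientations of Lagrangian correspondences follows from the multiplicativity of $\overline{\vol}_P$ together with the definition of composition of orientations in \eqref{eq:ori_assoc_general}.
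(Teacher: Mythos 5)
Your overall route is the same as the paper's: identify $\Loc_{P_{13}}(M)\simeq \Loc_{P_{12}}(M)\times_{\Loc_{G_2}(M)}\Loc_{P_{23}}(M)$, so the outer diagram is the composite Lagrangian correspondence, and then reduce the equality of orientations to a multiplicativity statement for the torsion volume forms $\overline{\vol}_{P_{ij}}$. However, your reduction target is stated incorrectly, and this is where the argument would break if taken literally. The composite orientation produced by \eqref{eq:ori_assoc_general} is controlled by the \emph{glued} volume form
\[
\overline{\vol}_{P_{12}}\wedge \overline{\vol}_{P_{23}}\wedge \overline{\vol}_{G_2}^{-1}
\]
on the fibre product (this is the glued volume form of \cite[\S 2.2]{ns23}), not by $\overline{\vol}_{P_{12}}\wedge\overline{\vol}_{P_{23}}$. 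The latter is a section of $\rdet(\bL_{\Loc_{P_{12}}(M)})\otimes\rdet(\bL_{\Loc_{P_{23}}(M)})$ restricted to the fibre product, which differs from $\rdet(\bL_{\Loc_{P_{13}}(M)})$ precisely by the factor $\rdet(\bL_{\Loc_{G_2}(M)})$ coming from the fibre sequence of cotangent complexes of the fibre product; so the identity you propose to verify does not even compare sections of the same graded line. The correct statement to prove is $\overline{\vol}_{P_{13}}\mapsto \overline{\vol}_{P_{12}}\wedge\overline{\vol}_{P_{23}}\wedge\overline{\vol}_{G_2}^{-1}$ under the identification $\Loc_{P_{13}}(M)\simeq\Loc_{P_{12}}(M)\times_{\Loc_{G_2}(M)}\Loc_{P_{23}}(M)$.

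Once the target is corrected, the final step is simpler than what you sketch: you do not need a separately argued ``multiplicativity of Reidemeister torsion under the fibration $BP_{13}\to BP_{23}$'' or an appeal to the spin structure to rule out signs. The torsion volume forms of \cite{ns23} are, by construction, compatible with gluing/fibre products of character stacks (this is exactly \cite[\S 2.2]{ns23}), so the corrected identity holds by construction. The spin structure only enters through the choice of canonical Euler structure defining each $\overline{\vol}_P$, not through any additional sign bookkeeping at this step; likewise the compatibility of the Koszul conventions in $\pi$ and \eqref{eq:Lag_corresp_TL_det} with the glued volume form is already built into the definition of the composite orientation and does not require a separate verification beyond what Lemma \ref{lem:xi_theta_Lagcorresp} provides.
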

\begin{proof}
The orientation are defined using the torsion volume forms $\overline{\vol}_{P_{ij}}$ described in Proposition \ref{prop:characterstackorientation}. The composite of the orientation $\rho_{P_{23}}$ and $\rho_{P_{12}}$ is defined using the volume form $\overline{\vol}_{P_{12}}(M)\wedge \overline{\vol}_{P_{23}}(M)\wedge \overline{\vol}_{G_2}(M)^{-1}$ on $\Loc_{P_{12}}(M)\times_{\Loc_{G_2}(M)} \Loc_{P_{23}}(M)$. This is precisely the glued volume form described in \cite[Section 2.2]{ns23}. But by construction the torsion volume forms are compatible with gluing, i.e. under the isomorphism
\[\Loc_{P_{13}}(M)\simeq \Loc_{P_{12}}(M)\times_{\Loc_{G_2}(M)} \Loc_{P_{23}}(M)\]
we have $\overline{\vol}_{P_{13}}\mapsto \overline{\vol}_{P_{12}}(M)\wedge \overline{\vol}_{P_{23}}(M)\wedge \overline{\vol}_{G_2}(M)^{-1}$.
\end{proof}

\subsection{Cohomological Hall algebra for $3$-manifolds}

In this section $M$ denotes a connected closed spin $3$-manifold and $G$ a connected reductive group. Recall that for a cocharacter $\lambda\colon \bG_m\rightarrow G$ we have defined the stabilizer and attractor subgroups $G_\lambda,G^+_\lambda\subset G$ and for a pair of commuting cocharacters $\hat{\lambda}=(\lambda_1,\lambda_2)\colon \bG_m^2\rightarrow G$ the subgroups $G_{\hat{\lambda}}, G^{+, +}_{\hat{\lambda}}\subset G$.

\begin{prop}\label{prop:LocGattractor}
Let $T\subset G$ be a maximal torus and $W$ the Weyl group. Choose a non-degenerate $G$-invariant symmetric bilinear pairing $c\in\Sym^2(\fg^*)^G$ on the Lie algebra of $G$. Let $M$ be a closed oriented $3$-manifold.

\begin{enumerate}
    \item For any cocharacter $\lambda\colon \bG_m\rightarrow G$ the subgroup $G^+_\lambda\subset G$ is parabolic and $G_\lambda\subset G^+_\lambda$ is the Levi factor.
    \item Any parabolic subgroup $P\subset G$ arises as $G^+_\lambda\subset G$ for some cocharacter $\lambda\colon \bG_m\rightarrow G$.
    \item The natural morphisms $X_\bullet(T)/W\rightarrow X_\bullet(G)/G$ and $X^2_\bullet(T)/W\rightarrow X^2_\bullet(G)/G$ are isomorphisms.
    \item For a pair of commuting cocharacters $\hat{\lambda}=(\lambda_1,\lambda_2)\colon \bG_m^2\rightarrow G$ the subgroup $(G^+_{\lambda_1})_{\lambda_2}\times_{G_{\lambda_2}} G^+_{\lambda_2}\subset G$ is parabolic with Levi factor $G_{\hat{\lambda}}$.
    \item We have
    \[\left((G^+_{\lambda_1})_{\lambda_2}\times_{G_{\lambda_2}} G^+_{\lambda_2}\right)\cap \left((G^+_{\lambda_2})_{\lambda_1}\times_{G_{\lambda_1}} G^+_{\lambda_1}\right) = G^{+,+}_{\hat{\lambda}}.\]
    \item The attractor correspondence \eqref{eq:grad_filt_diagram} for $\Loc_G(M)$ is equivalent to
    \[
    \xymatrix{
    & \coprod_{\lambda \in X_\bullet(T) / W} \Loc_{G_\lambda^+}(M) \ar[dl] \ar[dr] & \\
    \coprod_{\lambda \in X_\bullet(T) / W} \Loc_{G_\lambda}(M) && \Loc_G(M)
    }
    \]
    as a $(-1)$-shifted Lagrangian correspondence, where the latter structure is obtained by the AKSZ procedure from \eqref{eq:parabolicLagrangian} with $P=G^+_\lambda$ and $L=G_\lambda$.
    \item The diagram \eqref{eq:GradFiltLagrangiancorrespondence} for $\Loc_G(M)$ is equivalent to
    \[
    \xymatrix{
    & \coprod_{\hat{\lambda} \in X^2_\bullet(T) / W} \Loc_{\left((G^+_{\lambda_1})_{\lambda_2}\times_{G_{\lambda_2}} G^+_{\lambda_2}\right)}(M) \ar[dl] \ar[dr] & \\
    \coprod_{\hat{\lambda} \in X^2_\bullet(T) / W} \Loc_{G_{\hat{\lambda}}}(M) & \coprod_{\hat{\lambda} \in X^2_\bullet(T) / W} \Loc_{G^{+,+}_{\hat{\lambda}}}(M) \ar[u] \ar[d] & \Loc_G(M) \\
    & \coprod_{\hat{\lambda} \in X^2_\bullet(T) / W} \Loc_{\left((G^+_{\lambda_2})_{\lambda_1}\times_{G_{\lambda_1}} G^+_{\lambda_1}\right)}(M) \ar[ul] \ar[ur] &
    }
    \]
    as a 2-fold $(-1)$-shifted Lagrangian correspondence, where the latter structure is obtained by the AKSZ procedure from Proposition \ref{prop:parabolicintersectionLagrangian} with $L=G_{\hat{\lambda}}$, $P_1 = (G^+_{\lambda_1})_{\lambda_2}\times_{G_{\lambda_2}} G^+_{\lambda_2}$, $P_2= (G^+_{\lambda_2})_{\lambda_1}\times_{G_{\lambda_1}} G^+_{\lambda_1}$ and $P_1\cap P_2 = G^{+, +}_{\hat{\lambda}}$.
\end{enumerate}
\end{prop}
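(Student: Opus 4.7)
Parts (1)--(3) are essentially standard facts about reductive groups. For (1), the attractor subgroup $G_\lambda^+$ is parabolic with Levi $G_\lambda$ by the Bia{\l}ynicki--Birula-type description of parabolic subgroups (see e.g.\ the appendix of Conrad--Gabber--Prasad, or Springer's book); every cocharacter $\lambda$ factors through a maximal torus and one decomposes $\fg$ into $\lambda$-weight spaces, identifying $\mathrm{Lie}(G_\lambda^+) = \fg^{\geq 0}$, with nilradical $\fg^{>0}$ and Levi part $\fg^0 = \mathrm{Lie}(G_\lambda)$. For (2), if $P\subset G$ is parabolic with Levi $L$, then picking a central cocharacter $\lambda\colon \bG_m\rightarrow Z(L)$ whose weights on the nilradical $\mathrm{Lie}(P)/\mathrm{Lie}(L)$ are strictly positive realises $P = G_\lambda^+$. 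For (3), any cocharacter factors through some maximal torus, and maximal tori are $G$-conjugate; any pair of commuting cocharacters has image contained in a common maximal torus since their joint image is a torus.

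For (4), I would apply Lemma~\ref{lem:paraboliccomposition} to the composition $G_{\hat{\lambda}} \subset G_{\lambda_2} \subset G$. The outer inclusion $G_{\lambda_2} \subset G_{\lambda_2}^+ \subset G$ is a Levi pair by (1). The restricted cocharacter $\lambda_1|_{G_{\lambda_2}}$ is a cocharacter of the reductive group $G_{\lambda_2}$, and applying (1) to it shows $(G_{\lambda_1}^+)_{\lambda_2} = (G_{\lambda_2})_{\lambda_1}^+$ is parabolic in $G_{\lambda_2}$ with Levi $G_{\hat{\lambda}}$. Lemma~\ref{lem:paraboliccomposition} then yields that the fibre product in the statement is parabolic in $G$ with Levi $G_{\hat{\lambda}}$. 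For (5), since $G$ is affine, a rational map $\bA^2\dashrightarrow G$ defined on $\bA^1\times\bG_m \cup \bG_m\times\bA^1$ extends to $\bA^2$ by normality (the missing locus is codimension $2$), so $G_{\hat{\lambda}}^{+,+} = G_{\lambda_1}^+\cap G_{\lambda_2}^+$. Unwinding the fibre-product description of the intersection via the Levi decomposition $G_{\lambda_2}^+ = G_{\lambda_2}\ltimes U_{\lambda_2}$, an element $g = \ell \cdot u$ lies in both subgroups iff $\ell\in (G_{\lambda_1}^+)_{\lambda_2}$ and $u$ attracts under $\lambda_1$, which is precisely the condition to be in $G_{\hat{\lambda}}^{+,+}$.

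For (6) and (7), the key observation is that the attractor correspondence is obtained by $\Map(-,BG)$ applied to cospans of stacks, and $\Map(M_\B\times -,BG) = \Map(-,\Loc_G(M))$. Applying Theorem~\ref{thm:Grad_quotient} to $BG = [\pt/G]$, we identify $\Grad(BG) = \coprod_{\lambda} BG_\lambda$, $\Filt(BG) = \coprod_\lambda BG_\lambda^+$, and similarly for $\Grad^2$ and $\Filt^2$, using (3) to reindex conjugacy classes over $X_\bullet(T)/W$. Passing to $\Loc_G(M) = \Map(M_\B,BG)$ and using that mapping stacks commute with the relevant limits (and with the coproduct indexed by a set, since $M$ is connected by Lemma~\ref{lem:connectedcoproduct}), we get the claimed descriptions of $\Grad$, $\Filt$, $\Grad^2$, $\Filt^2$ for $\Loc_G(M)$. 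The Lagrangian/$2$-fold Lagrangian structure then follows from the AKSZ functoriality \cite[Proposition 3.4.2]{chs21}: the attractor (resp.\ iterated attractor) cospan on $BG$ carries a canonical structure of $2$-shifted Lagrangian (resp.\ $2$-fold $2$-shifted Lagrangian) correspondence by Corollary~\ref{cor:lagattractorcorrespondence} (resp.\ Corollary~\ref{cor:Filt_Gr_twofolded}), and this coincides with the parabolic structure from \cite[Lemma 3.4]{saf17} (resp.\ Proposition~\ref{prop:parabolicintersectionLagrangian}) by the uniqueness statements therein (the underlying presymplectic data is determined by the pairing $c$).

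The main technical point is this last identification of Lagrangian structures on $BG$: that the AKSZ-Lagrangian coming from the attractor cospan $B\bG_m\to [\bA^1/\bG_m]\leftarrow \pt$ (together with the trivial $B\bG_m$-action on $BG$) agrees with the parabolic Lagrangian of \cite{saf17}. This is where uniqueness of the $2$-shifted Lagrangian structure on $BG_\lambda \leftarrow BG_\lambda^+ \to BG$ extending the chosen symplectic form on $BG$ is crucial; the analogous $2$-fold uniqueness statement is exactly Proposition~\ref{prop:parabolicintersectionLagrangian}, which is why we proved it earlier. Everything else is essentially bookkeeping and naturality of AKSZ.
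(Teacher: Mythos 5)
Your proposal is correct and follows essentially the same route as the paper: (1)--(3) are the standard facts the paper cites (Springer, Conrad--Gabber--Prasad), (4) is exactly the paper's application of Lemma~\ref{lem:paraboliccomposition} to the chain $G_{\hat\lambda}\subset G_{\lambda_2}\subset G$, and (6)--(7) reproduce the paper's argument via Theorem~\ref{thm:Grad_quotient} applied to $BG$, compatibility of mapping stacks with $\Grad$/$\Filt$ and with coproducts (Lemma~\ref{lem:connectedcoproduct}), AKSZ functoriality, and the uniqueness statements of \cite[Lemma 3.4]{saf17} and Proposition~\ref{prop:parabolicintersectionLagrangian}. For (5) your Hartogs/codimension-two extension argument is just the coordinate-free version of the paper's explicit computation that the two subrings of $\C[t_1^{\pm1},t_2^{\pm1}]$ intersect in $\C[t_1,t_2]$, so the content is the same (only your final "unwinding" sentence could be tightened to the observation that each fibre product is contained in one of $G^{+}_{\lambda_1}$, $G^{+}_{\lambda_2}$ and contains $G^{+,+}_{\hat\lambda}$).
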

\begin{proof}
The first two statements are shown in \cite[Proposition 8.4.5]{springer}. The third statement is shown in \cite[Lemma C.3.5]{cgp}.

The fourth statement follows from Lemma \ref{lem:paraboliccomposition}: $(G^+_{\lambda_1})_{\lambda_2}\subset G_{\lambda_2}$ is parabolic with Levi factor $G_{\hat{\lambda}}$ and $G^+_{\lambda_2}\subset G$ is parabolic with Levi factor $G_{\lambda_2}$.

Let us now prove the fifth statement. Since $G$ is affine, it boils down to proving that the intersection of the two subspaces $\C[t_1, t_2^{\pm 1}]\times_{\C[t_2^{\pm 1}]} \C[t_2]$ and $\C[t_1^{\pm 1},t_2]\times_{\C[t_1^{\pm 1}]} \C[t_1]$ of $\C[t_1^{\pm 1}, t_2^{\pm 1}]$ is equal to $\C[t_1, t_2]\subset \C[t_1^{\pm 1}, t_2^{\pm 1}]$ which is obvious.

Now we prove the sixth statement. Specializing Theorem \ref{thm:Grad_quotient} to $X=\pt$ we get that the attractor correspondence for $BG$ is
\[
\xymatrix{
& \coprod_{\lambda \in X_\bullet(T) / W} BG^+_\lambda \ar[dl] \ar[dr] & \\
\coprod_{\lambda \in X_\bullet(T) / W} BG_\lambda && BG
}
\]

Using the $2$-shifted symplectic structure on $BG$ determined by the non-degenerate pairing $c$, there is an induced structure of a $2$-shifted Lagrangian correspondence using Corollary \ref{cor:lagattractorcorrespondence}. By uniqueness of a Lagrangian structure on $BG_\lambda\leftarrow BG^+_\lambda\rightarrow BG$, it coincides with the one defined in \cite{saf17}.

Since $\Loc_G(-)$, $\Filt(-)$ and $\Grad(-)$ are all mapping stacks, we get that the attractor correspondence for $\Loc_G(M)$ is obtained by applying $\Map(M_\B, -)$ to this $2$-shifted Lagrangian correspondence. We can then commute the coproduct over $X_\bullet(T)/W$ with $\Map(M_B, -)$ since $M$ is connected and its homotopy type is finitely dominated.

The proof of the seventh statement is identical by using uniqueness of the $2$-fold $2$-shifted Lagrangian correspondence shown in Proposition \ref{prop:parabolicintersectionLagrangian}.
\end{proof}

Let $B\subset G$ be a Borel subgroup, $T\subset B$ a maximal torus, $\Phi^+$ the corresponding set of positive roots of $G$, $\Delta\subset \Phi^+$ the subset of simple roots. Recall that a \defterm{standard parabolic subgroup} $P\subset G$ is a parabolic subgroup containing $B$. By \cite[Theorem 8.4.3]{springer} we have a bijection of sets
\begin{equation}\label{eq:standardparabolicbijection}
\{\text{standard parabolic subgroups $P\subset G$}\}\leftrightarrow \{\text{subsets $I\subset \Delta$}\}
\end{equation}
given by assigning to a standard parabolic subgroup $P$ with Levi factor $L$ the subset of simple roots appearing in the root decomposition of the Lie algebra of $L$.

\begin{lem}\label{lem:standardparabolic}$ $
\begin{enumerate}
    \item Consider a cocharacter $\lambda\colon \bG_m\rightarrow T$. Then the parabolic subgroup $G^+_\lambda\subset G$ is standard if, and only if, $\lambda(\alpha)\geq 0$ for every $\alpha\in\Delta$. Under the bijection \eqref{eq:standardparabolicbijection} the corresponding subset of simple roots is
    \[I = \{\alpha\in\Delta\mid \lambda(\alpha) = 0\}.\]
    \item Consider a pair of cocharacters $\hat{\lambda}=(\lambda_1,\lambda_2)\colon \bG_m^2\rightarrow T$ such that $\lambda_1(\alpha)\lambda_2(\alpha)\geq 0$ for every $\alpha\in\Phi^+$. Then the image of the inclusion $\Loc_{G_{\hat{\lambda}}}(M)\subset \Grad^2(\Loc_G(M))$ described in Proposition \ref{prop:LocGattractor}(7) lies in the matching locus.
\end{enumerate}
\end{lem}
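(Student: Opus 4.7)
The plan is to reduce both parts to standard root-theoretic statements, using the description of $\Loc_G(M)$ as a mapping stack from $M_\B$ to $BG$.

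For part (1), I would invoke the structure theory recalled in \cite[Proposition 8.4.5]{springer}: the parabolic $G^+_\lambda$ is generated by $T$ together with the root subgroups $U_\alpha$ for $\alpha\in\Phi$ with $\lambda(\alpha)\geq 0$, and its Levi factor $G_\lambda$ is generated by $T$ together with $U_\alpha$ for $\alpha$ with $\lambda(\alpha)=0$. Since $B$ is generated by $T$ together with the $U_\alpha$ for $\alpha\in\Phi^+$, the inclusion $B\subset G^+_\lambda$ is equivalent to $\lambda(\alpha)\geq 0$ for every $\alpha\in\Phi^+$. Because positive roots are non-negative integer combinations of simple roots, this reduces to the same inequality on $\Delta$, giving the stated criterion. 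The identification of the associated subset of simple roots then comes directly from the description of the Levi $G_\lambda$ as generated by $T$ and the $U_\alpha$ for roots orthogonal to $\lambda$, whose intersection with $\Delta$ is $\{\alpha\in\Delta\mid\lambda(\alpha)=0\}$.

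For part (2), I would unwind the definition of the matching locus from \S\ref{ssec:Matching} applied to $\fX=\Loc_G(M)$. A point $x\in\Grad^2(\fX)$ above $\Loc_{G_{\hat\lambda}}(M)$ corresponds to a $G_{\hat\lambda}$-local system $\rho$ on $M$ together with the homomorphism $\hat\lambda\colon \bG_m^2\to G_{\hat\lambda}$ factoring through the center. Since $\Loc_G(M)=\Map(M_\B, BG)$ and $\bL_{BG}\simeq \fg^*[-1]$, the cotangent complex $\bL_{\fX, u(x)}$ is computed by $C^\bullet(M;\mathrm{ad}(\rho)^*)[-1]$, and the $\bG_m^2$-grading on this complex is inherited from the $\bG_m^2$-action on $\fg$ via $\hat\lambda$. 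Hence the weight $(m,l)$-part of $H^i(\bL_{\fX, u(x)})$ is a cohomology group of $M$ with coefficients in the weight-$(m,l)$ subspace $(\fg_{(m,l)})^*$ of the coadjoint representation.

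The weight decomposition $\fg=\ft\oplus\bigoplus_{\alpha\in\Phi}\fg_\alpha$ puts $\ft$ in weight $(0,0)$ and $\fg_\alpha$ in weight $(\lambda_1(\alpha),\lambda_2(\alpha))$. The hypothesis $\lambda_1(\alpha)\lambda_2(\alpha)\geq 0$ for $\alpha\in\Phi^+$ is symmetric under $\alpha\mapsto-\alpha$ and hence holds for every root. Consequently no weight space $\fg_{(m,l)}$ with $m\cdot l<0$ is non-zero, so $H^i(\bL_{\fX, u(x)})_{(m,l)}=0$ for every such $(m,l)$ and every $i\in\bZ$, and $x$ is matching. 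I do not anticipate a serious obstacle; the only point requiring care is correctly identifying the $\bG_m^2$-grading on the cotangent complex as coming from the $\hat\lambda$-action on the adjoint representation.
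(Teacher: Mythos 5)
Your proposal is correct and follows essentially the same route as the paper: part (1) is the standard structure theory of the attractor parabolic $G^+_\lambda$ and its Levi $G_\lambda$ (you phrase it via root subgroups, the paper via the root decomposition of Lie algebras, which is the same argument), reducing the positivity condition from $\Phi^+$ to $\Delta$. For part (2) the paper likewise observes that the $\hat\lambda$-bigrading of $\fg$ has $\fg_{(m,l)}\neq 0$ only when $ml\geq 0$ and concludes via the description of the cotangent complex of the mapping stack (citing \cite[Proposition 1.5]{ns23}), which is exactly the computation you spell out.
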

\begin{proof}
Let $\fg$ be the Lie algebra of $G$ and let
\[\fg = \ft\oplus \bigoplus_{\alpha\in\Phi} \fg_\alpha\]
be the root decomposition.
\begin{enumerate}
    \item The Lie algebra of $G^+_\lambda$ has the root decomposition
    \[(\fg)^+_\lambda = \ft\oplus \bigoplus_{\alpha\in\Phi,\ \lambda(\alpha)\geq 0} \fg_\alpha.\]
    So, $G^+_\lambda$ is standard if, and only if, $\lambda(\alpha)\geq 0$ for every $\alpha\in\Phi^+$. Since every positive root is a sum of simple roots, this condition is equivalent to $\lambda(\alpha)\geq 0$ for every $\alpha\in\Delta$. The Lie algebra of $G_\lambda$ has the root decomposition
    \[(\fg)_\lambda = \ft\oplus \bigoplus_{\alpha\in\Phi,\ \lambda(\alpha) = 0} \fg_\alpha.\]
    The subset $I\subset \Delta$ of simple roots corresponding to a standard parabolic subgroup $P\subset G$ is precisely the set of simple roots appearing in the root decomposition of the Lie algebra of the Levi subgroup.
    \item Consider the bigrading of $\fg$ given by $\hat{\lambda}$. By the condition in the statement, we have that the weight $(m, l)$-part $\fg_{(m, l)}$ of $\fg$ is nonzero only if $ml\geq 0$. Therefore, using the description of the cotangent complex of the mapping stack (see e.g. \cite[Proposition 1.5]{ns23}) we obtain that the image of $\Loc_{G_{\hat{\lambda}}}(M)\rightarrow \Grad^2(\Loc_G(M))$ lands in the matching locus.
\end{enumerate}
    
\end{proof}

We will now show that the hyperbolic localization for the perverse sheaf $\varphi_{\Loc_G(M)}$ gives rise to a parabolic restriction isomorphism.

\begin{prop}\label{prop:parabolicrestriction}
Let $P\subset G$ be a parabolic subgroup with Levi factor $L$ and consider the correspondence
\[
\xymatrix{
& \Loc_P(M) \ar_{\pi_L}[dl] \ar^{\pi_G}[dr] & \\
\Loc_L(M) && \Loc_G(M)
}
\]
There is an isomorphism
\[\zeta_P\colon \varphi_{\Loc_L(M)}\cong (\pi_L)_*\pi_G^! \varphi_{\Loc_G(M)}\]
invariant under conjugation of $P$ with the following properties:
\begin{enumerate}
    \item For the parabolic subgroup $G\subset G$ we have $\zeta_G=\id$.
    \item For a pair of parabolic subgroups $P_1\subset G_1$ and $P_2\subset G_2$ with Levi factors $L_1$ and $L_2$ the diagram
    \[
    \xymatrix@C=1.5cm{
    \varphi_{\Loc_{L_1}(M)}\boxtimes \varphi_{\Loc_{L_2}(M)} \ar^-{\zeta_{P_1}\boxtimes \zeta_{P_2}}_-{\cong}[r] \ar^{\TS}_{\cong}[d] & (\pi_{L_1})_*\pi_{G_1}^!(\varphi_{\Loc_{G_1}(M)})\boxtimes (\pi_{L_2})_*\pi_{G_2}^!(\varphi_{\Loc_{G_2}(M)}) \ar^{\TS}_{\cong}[d] \\
    \varphi_{\Loc_{L_1\times L_2}(M)} \ar^-{\zeta_{P_1\times P_2}}_-{\cong}[r] & (\pi_{L_1\times L_2})_*\pi_{G_1\times G_2}^!(\varphi_{\Loc_{G_1\times G_2}(M)})
    }
    \]
    \item For parabolic subgroups $P_{12}, P_{23}, P_{13}$ as in Lemma \ref{lem:paraboliccomposition} consider the diagram
    \[
    \xymatrix{
    && \Loc_{P_{13}}(M) \ar_{\pi_{P_{12}}}[dl] \ar^{\pi_{P_{23}}}[dr] && \\
    & \Loc_{P_{12}}(M) \ar_{\pi_{G_1}}[dl] \ar^{\pi'_{G_2}}[dr] && \Loc_{P_{23}}(M) \ar_{\pi''_{G_2}}[dl] \ar^{\pi_{G_3}}[dr] & \\
    \Loc_{G_1}(M) && \Loc_{G_2}(M) && \Loc_{G_3}(M).
    }
    \]
    Then the diagram
    \[
    \xymatrix{
    \varphi_{\Loc_{G_1}(M)} \ar^-{\zeta_{P_{13}}}_-{\cong}[r] \ar^{\zeta_{P_{12}}}_{\cong}[d] & (\pi_{G_1})_*(\pi_{P_{12}})_*\pi_{P_{23}}^!\pi_{G_3}^! \varphi_{\Loc_{G_3}(M)} \ar_{\cong}[d] \\
    (\pi_{G_1})_*(\pi'_{G_2})^! \varphi_{\Loc_{G_2}(M)} \ar^-{\zeta_{P_{23}}}_-{\cong}[r] & (\pi_{G_1})_*(\pi'_{G_2})^!(\pi''_{G_2})_*\pi_{G_3}^! \varphi_{\Loc_{G_3}(M)}
    }
    \]
    commutes.
\end{enumerate}
\end{prop}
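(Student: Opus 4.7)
The plan is to define $\zeta_P$ by specializing the integral isomorphism of Corollary \ref{cor:Joyce_conj_attractor} to $\fX = \Loc_G(M)$, and then to deduce the three listed properties from unitality, multiplicativity, and associativity of that isomorphism, using Proposition \ref{prop:characterstackorientation} to identify orientations. First, by Proposition \ref{prop:LocGattractor}(2) every parabolic $P \subset G$ arises as $G_\lambda^+$ for some cocharacter $\lambda$, so the correspondence $\Loc_L(M) \leftarrow \Loc_P(M) \rightarrow \Loc_G(M)$ is the component of the attractor correspondence for $\Loc_G(M)$ indexed by the conjugacy class of $\lambda$ (Proposition \ref{prop:LocGattractor}(6)). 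Since $\chi(M) = 0$, the locally-constant function $I_{\fX}$ vanishes on every component, so the integral isomorphism restricts to an isomorphism $\varphi_{\Grad(\Loc_G(M))} \cong \gr_* \ev^! \varphi_{\Loc_G(M)}$. On the $\lambda$-component this reads $\varphi_{\Loc_L(M), u^\star o_G} \cong (\pi_L)_* \pi_G^! \varphi_{\Loc_G(M), o_G}$; we then invoke Proposition \ref{prop:characterstackorientation}(3) to identify $u^\star o_G$ with $o_L$ via the orientation $\rho_P$ of the Lagrangian correspondence, yielding $\zeta_P$. Conjugation invariance follows from the conjugation invariance of $\rho_P$ in Proposition \ref{prop:characterstackorientation}(3) together with the fact that the identification of $\Loc_{G_\lambda^+}(M)$ with the $\lambda$-component of $\Filt(\Loc_G(M))$ depends only on the conjugacy class of $\lambda$.

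For unitality, take $\lambda = 0$ so that $P = L = G$; then the corresponding inclusion $\Loc_G(M) \hookrightarrow \Grad(\Loc_G(M))$ is a section of $u$ in the sense of the unitality clause of Corollary \ref{cor:Joyce_conj_attractor}, and the identification $u^\star o_G \simeq o_G$ through $\rho_G$ is the identity by the second sentence of Proposition \ref{prop:characterstackorientation}(3). For multiplicativity, Proposition \ref{prop:characterstackorientation}(4) states $\rho_{P_1 \times P_2} = \rho_{P_1} \boxtimes \rho_{P_2}$, so the multiplicativity clause of Corollary \ref{cor:Joyce_conj_attractor} combined with the Thom--Sebastiani compatibility of the localized orientation \eqref{eq:ori_product_derived} yields the claimed commutative square.

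For associativity, given $P_{12} \subset G_2$ and $P_{23} \subset G_3$ we choose cocharacters $\lambda_1 \colon \bG_m \to G_2$ and $\lambda_2 \colon \bG_m \to G_3$ realizing them (after lifting $\lambda_1$ to $\hat{\lambda} = (\lambda_1, \lambda_2) \colon \bG_m^2 \to G_3$, which is possible because $\lambda_1$ factors through the Levi $G_2 = (G_3)_{\lambda_2}$). By Proposition \ref{prop:LocGattractor}(4),(5) and Lemma \ref{lem:paraboliccomposition}, the composition of the two parabolic correspondences $\Loc_{G_1}(M) \leftarrow \Loc_{P_{12}}(M) \rightarrow \Loc_{G_2}(M) \leftarrow \Loc_{P_{23}}(M) \rightarrow \Loc_{G_3}(M)$ is realized as a component of the composition of attractor correspondences for $\Loc_{G_3}(M)$; moreover, by Lemma \ref{lem:standardparabolic}(2) applied on each factor of the Levi decomposition (and the fact that one may conjugate so that both $\lambda_1, \lambda_2$ take nonnegative values on the positive roots of $G_3$), the relevant image lies in the matching locus, so that the associativity clause of Corollary \ref{cor:Joyce_conj_attractor} applies. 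Composing the two instances of $\zeta$ and using Proposition \ref{prop:paraboliccompositionorientation}, which says that the composite of $\rho_{P_{23}}$ and $\rho_{P_{12}}$ equals $\rho_{P_{13}}$, yields the associativity square for $\zeta_{P_{13}}$.

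The main obstacle is bookkeeping for the orientations: the integral isomorphism a priori produces the localized orientation $u^\star o_G$ on the graded stack, which differs by the twist \eqref{eq:Lagrangiancorrespondenceorientation} from $o_L$, and tracking through the associativity diagram requires identifying iterated localizations with a single localization in a coherent way. The crucial input that resolves this is Proposition \ref{prop:paraboliccompositionorientation}, whose proof in turn relies on the compatibility of torsion volume forms with gluing; once this is invoked, all remaining compatibilities are formal consequences of the structural properties of the integral isomorphism recorded in Corollary \ref{cor:Joyce_conj_attractor}.
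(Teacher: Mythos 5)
Your construction of $\zeta_P$ and your treatment of properties (2) and (3) follow essentially the same route as the paper (integral isomorphism of Corollary \ref{cor:Joyce_conj_attractor} applied to the component of the attractor correspondence indexed by a cocharacter $\lambda$ with $G^+_\lambda=P$, plus the orientation comparisons of Propositions \ref{prop:characterstackorientation} and \ref{prop:paraboliccompositionorientation}). However, there is a genuine gap: you never show that the resulting isomorphism depends only on the subgroup $P$ and not on the chosen cocharacter $\lambda$. Two cocharacters with $G^+_{\lambda_1}=G^+_{\lambda_2}=P$ need not be conjugate (already for a Borel in $\GL_2$, any strictly dominant $\lambda$ works), so they index \emph{different} components of $\Grad(\Loc_G(M))$, and the integral isomorphism a priori produces different isomorphisms for the abstractly identified correspondence $\Loc_L(M)\leftarrow\Loc_P(M)\rightarrow\Loc_G(M)$. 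Your appeal to "the identification depends only on the conjugacy class of $\lambda$" handles conjugation of $\lambda$, not this ambiguity. Without well-definedness, $\zeta_P$ is only defined after an arbitrary choice, conjugation invariance does not follow, and your verification of (3) — which, like the paper's, is carried out only for carefully chosen cocharacters adapted to a maximal torus (so that Lemma \ref{lem:standardparabolic}(2) puts you in the matching locus) — does not yield the statement for the canonical $\zeta_{P_{12}},\zeta_{P_{23}},\zeta_{P_{13}}$; note also that $\zeta_{P_{13}}$ must be the same isomorphism in all associativity diagrams in which it appears. The paper closes this gap in the last part of its proof: given $\lambda_1,\lambda_2$ with $G^+_{\lambda_1}=G^+_{\lambda_2}=P$, the two cocharacters commute, the pair $\hat\lambda=(\lambda_1,\lambda_2)$ lands $\Loc_{G_{\hat\lambda}}(M)$ in the matching locus of $\Grad^2(\Loc_G(M))$ (since $\fg_{(m,l)}\neq 0$ forces $ml\geq 0$), and then properties (1) and (3), applied to the resulting $2$-fold correspondence together with the associativity clause of Corollary \ref{cor:Joyce_conj_attractor}, force $\zeta_{G^+_{\lambda_1}}=\zeta_{G^+_{\lambda_2}}$. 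You would need to add this argument (or an equivalent one) to complete the proof.

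A smaller point: your unitality argument only treats $\lambda=0$ via the section clause of Corollary \ref{cor:Joyce_conj_attractor}, whereas the associativity step (in the paper's pentagon, the map $\zeta_{(G_1)^+_{\lambda_2}}$) requires the identity statement for arbitrary central cocharacters, i.e.\ for any $\lambda$ with $G_\lambda=G$; the paper proves this via the framed character stack chart, on which a central cocharacter acts trivially, and \eqref{eq:zeta_unital}. One could instead argue that every central cocharacter also gives a section of $u$, but then one must still check that the orientation $\rho_G$ agrees with the natural identification $\iota^{\star}u^{\star}o\cong o$ used in the unitality clause; this verification is absent from your proposal.
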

\begin{proof}
By Proposition \ref{prop:LocGattractor}(2) we may find a cocharacter $\lambda\colon \bG_m\rightarrow G$ such that $P = G^+_\lambda$ and $L=G_\lambda$. By \cite[Proposition 3.23]{ns23} we have $\vdim(\Loc_P(M)) = 0$. In particular, we have $I_{\Loc_{G_\lambda}(M)} = 0$ by \eqref{eq:Ind=vdim}.

Let $u\colon \Loc_L(M)\rightarrow \Loc_G(M)$ be the map induced by the inclusion $G_\lambda\subset G$. Let $\omega_{\Loc_G(M)}$ be the natural $(-1)$-shifted symplectic structure on $\Loc_G(M)$. The natural structure of a Lagrangian correspondence on $\Loc_L(M)\leftarrow \Loc_P(M)\rightarrow \Loc_G(M)$ induces an equivalence between the $(-1)$-shifted symplectic structure $u^\ast\omega_{\Loc_G(M)}$ and $\omega_{\Loc_L(M)}$. Using the orientation $\rho_{G^+_\lambda}$ of the Lagrangian correspondence $\Loc_{G_\lambda}(M)\leftarrow \Loc_{G^+_\lambda}(M)\rightarrow \Loc_G(M)$ from Proposition \ref{prop:characterstackorientation} we obtain an isomorphism between the localized orientation $u^\star o_G$ and $o_L$. Therefore, using the isomorphism $\zeta_{\Loc_G(M)}$ from Corollary \ref{cor:Joyce_conj_attractor} we obtain an isomorphism
\[\zeta_{G^+_\lambda}\colon \varphi_{\Loc_{G_\lambda}(M)}\cong (\pi_{G_\lambda})_*\pi_G^!\varphi_{\Loc_G(M)}.\]

Let us now show that it satisfies the relevant properties for \emph{some} realizations of the parabolic subgroups as $P=G^+_\lambda$:
\begin{enumerate}
    \item As $\Loc_G(M_1\amalg M_2)=\Loc_G(M_1)\times \Loc_G(M_2)$, it is enough to prove the claim for $M$ connected with a chosen basepoint. In this case, we have a  chart $q\colon X=\Loc^{\fr}_G(M)\rightarrow \Loc_G(M)$ given by the framed character stack. Suppose the cocharacter $\lambda\colon \bG_m\rightarrow G$ is such that $G_\lambda = G$. Then $\lambda$ factors through the centre of $G$. The pair $(X, \lambda)$ defines a $\bG_m$-equivariant chart of $\Loc_G(M)$ and, since $\eta$ is surjective, it is enough to prove the statement after the pullback by $q$. The isomorphism $q^*(\zeta)$ constructed in Theorem \ref{thm:main_thm} coincides with the hyperbolic localization isomorphism from Theorem \ref{thm:hyp_DT} for the attractor correspondence $X^{\lambda}\leftarrow X^{\lambda,+}\rightarrow X$. But since the $\bG_m$-action given by $\lambda$ is trivial, this isomorphism is the identity by the commutativity of the diagram \eqref{eq:zeta_unital}.
    \item The multiplicativity of $\zeta$ follows from the multiplicativity property of the isomorphism $\zeta$ from Corollary \ref{cor:Joyce_conj_attractor} as well as the equality $\rho_{P_1}\boxtimes \rho_{P_2} = \rho_{P_1\times P_2}$ of orientations of the parabolic restriction correspondence shown in Proposition \ref{prop:characterstackorientation}(4).
    \item Fix a minimal parabolic subgroup $B\subset P_{13}$ and a maximal torus $T\subset B$ with the corresponding set $\Delta$ of simple roots. Then $P_{23}, P_{13}\subset G$ are standard parabolic subgroups. Moreover, $B\cap G_2\subset G_2$ is a Borel subgroup and $P_{12}\supset B\cap G_2$. Let $I_1\subset I_2\subset \Delta$ be the sets of simple roots of $G_1\subset G_2\subset G_3$. Then $I_2\subset \Delta$ is the subset of simple roots corresponding to the parabolic $P_{23}$ under \eqref{eq:standardparabolicbijection}, $I_1\subset \Delta$ is the subset corresponding to $P_{13}$ and $I_1\subset I_2$ is the subset corresponding to $P_{12}$.

    Consider cocharacters
    \[\lambda_{23}\colon \bG_m\rightarrow T,\qquad \lambda_{12}\colon \bG_m\rightarrow T\]
    satisfying the following conditions:
    \begin{align*}
    \lambda_2(\alpha) &= 0\quad \forall\alpha\in I_2,\qquad \lambda_2(\alpha) > 0\quad \forall \alpha\in \Delta\setminus I_2 \\
    \lambda_1(\alpha) &= 0\quad \forall\alpha\in I_1,\qquad \lambda_1(\alpha) > 0\quad \forall \alpha\in \Delta\setminus I_1.
    \end{align*}
    Then using Lemma \ref{lem:standardparabolic}(1) as well as the fact that \eqref{eq:standardparabolicbijection} is a bijection, we get
    \[P_{23} = (G_3)^+_{\lambda_2},\qquad P_{12} = (G_2)^+_{\lambda_1},\qquad P_{13} = (G_3)^+_{\lambda_1}.\]
    Let $\hat{\lambda}=(\lambda_1, \lambda_2)\colon \bG_m^2\rightarrow T$. Then by Lemma \ref{lem:standardparabolic}(2) the image of $\Loc_{G_{\hat{\lambda}}}(M)\rightarrow \Grad^2(\Loc_G(M))$ lands in the matching locus. Using the associativity property of the isomorphism $\zeta$ from Corollary \ref{cor:Joyce_conj_attractor} as well as the fact that the orientation of the parabolic restriction diagram is compatible with compositions shown in Proposition \ref{prop:paraboliccompositionorientation} we get a commutative diagram
    \[
    \xymatrix{
    & \varphi_{\Loc_{G_1}(M)} \ar_{\zeta_{(G_2)^+_{\lambda_1}}}[dl] \ar^{\zeta_{(G_1)^+_{\lambda_2}}}[dr] & \\
    (\pi_{G_1})_*(\pi'_{G_2})^! \varphi_{\Loc_{G_2}(M)} \ar_{\zeta_{(G_3)^+_{\lambda_2}}}[d] && \varphi_{\Loc_{G_1}(M)} \ar^{\zeta_{(G_3)^+_{\lambda_1}}}[d] \\
    (\pi_{G_1})_*(\pi'_{G_2})^!(\pi''_{G_2})_*\pi_{G_3}^! \varphi_{\Loc_{G_3}(M)} \ar_{\cong}[rr] && (\pi_{G_1})_*(\pi_{P_{12}})_*\pi_{P_{23}}^!\pi_{G_3}^! \varphi_{\Loc_{G_3}(M)}
    }
    \]
    But by part (1) the map $\zeta_{(G_1)^+_{\lambda_2}}$ is the identity, so we get the claim.
\end{enumerate}

Let us finally show that the isomorphism $\zeta_{G^+_\lambda}$ is independent of the choice of $\lambda$ and merely depends on the underlying subgroup $P\subset G$. Namely, given two cocharacters $\lambda_1,\lambda_2\colon \bG_m\rightarrow G$ such that $P=G^+_{\lambda_1}=G^+_{\lambda_2}$, we want to show that the isomorphisms $\zeta_{G^+_{\lambda_1}}$ and $\zeta_{G^+_{\lambda_2}}$ are equal. Since $G_{\lambda_1} = G_{\lambda_2}$ and $\lambda_i(t)\in G_{\lambda_i}$, the cocharacters $\lambda_1$ and $\lambda_2$ commute, so they define a homomorphism $\hat{\lambda}=(\lambda_1, \lambda_2)\colon \bG_m^2\rightarrow G$. Consider the bigrading of $\fg$ given by $\hat{\lambda}$. Since $G^+_{\lambda_1} = G^+_{\lambda_2}$, we have that the weight $(m, l)$-part $\fg_{(m, l)}$ of $\fg$ is nonzero only if $ml\geq 0$. As in the proof of Lemma \ref{lem:standardparabolic}(2) we obtain that the image of $\Loc_{G_{\hat{\lambda}}}(M)\rightarrow \Grad^2(\Loc_G(M))$ lands in the matching locus. Now consider the 2-fold correspondence
\[
\xymatrix{
& \Loc_{(G^+_{\lambda_1})_{\lambda_2}\times_{G_{\lambda_2}} G^+_{\lambda_2}}(M) \ar[dl] \ar[dr] & \\
\Loc_{G_{\hat{\lambda}}}(M) & \Loc_{G^{+,+}_{\hat{\lambda}}}(M) \ar[u] \ar[d] & \Loc_G(M) \\
& \Loc_{(G^+_{\lambda_2})_{\lambda_1}\times_{G_{\lambda_1}} G^+_{\lambda_1}}(M), \ar[ul] \ar[ur] &
}
\]
where the maps $G_{\hat{\lambda}}\leftarrow (G^+_{\lambda_1})_{\lambda_2}\rightarrow G_{\lambda_2}$, $G_{\hat{\lambda}}\leftarrow (G^+_{\lambda_2})_{\lambda_1}\rightarrow G_{\lambda_1}$ and $\left((G^+_{\lambda_1})_{\lambda_2}\times_{G_{\lambda_2}} G^+_{\lambda_2}\right)\leftarrow G^{+, +}_{\hat{\lambda}}\rightarrow \left((G^+_{\lambda_2})_{\lambda_1}\times_{G_{\lambda_1}} G^+_{\lambda_1}\right)$ are all equalities as subgroups of $G$.

Using properties (1) and (3) we get that
\[\zeta_{(G^+_{\lambda_1})_{\lambda_2}\times_{G_{\lambda_2}} G^+_{\lambda_2}} = \zeta_{G^+_{\lambda_2}}\]
and
\[\zeta_{(G^+_{\lambda_2})_{\lambda_1}\times_{G_{\lambda_1}} G^+_{\lambda_1}} = \zeta_{G^+_{\lambda_1}}.\]
Using the associativity property of the isomorphism $\zeta$ from Corollary \ref{cor:Joyce_conj_attractor} we have
\[\zeta_{(G^+_{\lambda_1})_{\lambda_2}\times_{G_{\lambda_2}} G^+_{\lambda_2}} = \zeta_{(G^+_{\lambda_2})_{\lambda_1}\times_{G_{\lambda_1}} G^+_{\lambda_1}}.\]
\end{proof}

Consider the \defterm{cohomological DT invariants}
\[H_{G, M} = H^\bullet(\Loc_G(M), \varphi_{\Loc_G(M)}).\]
Using multiplicativity of the orientation $o_G$ we get a map
\[H_{G_1, M}\otimes H_{G_2, M}\longrightarrow H_{G_1\times G_2, M}.\]
Using the parabolic restriction isomorphism constructed in Proposition \ref{prop:parabolicrestriction} we obtain the following construction.

\begin{cor}\label{cor:parabolicinduction}
Let $P\subset G$ be a parabolic subgroup with Levi factor $L$. There is a \defterm{parabolic induction map}
\[\ind_P\colon H_{L, M}\longrightarrow H_{G, M}\]
with the following properties:
\begin{enumerate}
    \item For the parabolic subgroup $G\subset G$ the parabolic induction
    \[\ind_G\colon H_{G, M}\longrightarrow H_{G, M}\]
    is the identity.
    \item For a pair of parabolic subgroups $P_1\subset G_1$ and $P_2\subset G_2$ with Levi factors $L_1$ and $L_2$ we have
    \[\ind_{P_1}\otimes \ind_{P_2} = \ind_{P_1\times P_2}\colon H_{L_1, M}\otimes H_{L_2, M}\longrightarrow H_{G_1\times G_2, M}.\]
    \item For parabolic subgroups $P_{12}, P_{23}, P_{13}$ as in Lemma \ref{lem:paraboliccomposition} we have
    \[\ind_{P_{23}}\circ \ind_{P_{12}} = \ind_{P_{13}}\colon H_{G_1, M}\longrightarrow H_{G_3, M}.\]
\end{enumerate}
\end{cor}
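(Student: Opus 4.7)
My plan is to define $\ind_P$ as the composite obtained by applying $H^\bullet(\Loc_G(M), -)$ to the adjoint of $\zeta_P$ from Proposition \ref{prop:parabolicrestriction}. Concretely, because $G/P$ is projective, the map $BP \to BG$ is proper, so $\pi_G \colon \Loc_P(M) \to \Loc_G(M)$ is proper on each connected component (by standard properties of mapping stacks out of a connected, finitely dominated base). Hence $\pi_{G!} \cong \pi_{G*}$ and we have a counit $\mathrm{cu}_{\pi_G}\colon \pi_{G!}\pi_G^! \to \id$. I would then set
\[
\ind_P \colon H^\bullet(\Loc_L(M), \varphi_{\Loc_L(M)}) \xrightarrow{\zeta_P} H^\bullet(\Loc_L(M), (\pi_L)_*\pi_G^!\varphi_{\Loc_G(M)}) = H^\bullet(\Loc_P(M), \pi_G^!\varphi_{\Loc_G(M)}) \xrightarrow{\mathrm{cu}_{\pi_G}} H^\bullet(\Loc_G(M), \varphi_{\Loc_G(M)}).
\]

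For property~(1), the identity $\zeta_G = \id$ from Proposition \ref{prop:parabolicrestriction}(1) combined with the fact that the counit for the identity map is the identity gives $\ind_G = \id$. For property~(2), I would combine the multiplicativity of $\zeta$ in Proposition \ref{prop:parabolicrestriction}(2), the Thom--Sebastiani compatibility for $\varphi_{\Loc_{G_1 \times G_2}(M)} \cong \varphi_{\Loc_{G_1}(M)} \boxtimes \varphi_{\Loc_{G_2}(M)}$, and the fact that the counit for $\pi_{G_1} \times \pi_{G_2}$ agrees with the product of counits for $\pi_{G_1}$ and $\pi_{G_2}$ (standard base-change/functoriality for $(-)^!$ and $(-)_!$ under external products of proper maps).

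The main obstacle is property~(3). Here I need to chain two applications of $(\zeta, \mathrm{cu})$ and show they equal a single application for $P_{13}$. The key observation is that by Proposition \ref{prop:parabolicrestriction}(3), the outer boundary $\zeta_{P_{13}}$ equals the composite $\zeta_{P_{23}} \circ \zeta_{P_{12}}$ under the natural base-change identification $(\pi'_{G_2})^!(\pi''_{G_2})_* \cong (\pi_{P_{12}})_*\pi_{P_{23}}^!$ coming from the Cartesian square defining $\Loc_{P_{13}}(M)$. Properness of $\pi_{G_2}''$ makes this base change honest. Then I must verify that composing the counits $\mathrm{cu}_{\pi_{G_1}'}$ and $\mathrm{cu}_{\pi_{G_3}}$ (applied at the two stages) agrees with the counit $\mathrm{cu}_{\pi_{G_3}\circ \pi_{P_{23}}}$ for the composite proper map, modulo this base-change isomorphism. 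This is a standard but fiddly diagram chase in six-functor formalism: it reduces to the compatibility of counits with composition of proper pushforwards and with base change along $\pi_{P_{12}}$ (which is also proper, as $P_{13} \to P_{12}$ is proper). Once this diagram is drawn out, associativity of $\ind$ follows formally.

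Since the construction depends only on the conjugacy class $[P]$ of the parabolic (by invariance of $\zeta_P$ under conjugation) and only on functorial data of six operations, the properties (1)--(3) are essentially forced once (3) is carefully unwound. I expect no substantive difficulty beyond the careful bookkeeping of base change and counit compatibilities in step (3).
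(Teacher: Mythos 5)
Your proposal is correct and follows essentially the same route as the paper: $\ind_P$ is defined as the composite of $\zeta_P$ from Proposition \ref{prop:parabolicrestriction} with the counit $(\pi_G)_*\pi_G^!\to\id$ coming from properness of $\pi_G$ (which the paper imports from \cite{ns23} rather than rederiving), and properties (1)--(3) are deduced from the corresponding properties of $\zeta_P$. The only difference is that you spell out the six-functor bookkeeping (Künneth compatibility of counits for (2), base change and composition of counits for (3)) that the paper leaves implicit; this is consistent with the paper's argument and introduces no gap.
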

\begin{proof}
Consider the correspondence
\[
\xymatrix{
& \Loc_P(M) \ar_{\pi_L}[dl] \ar^{\pi_G}[dr] & \\
\Loc_L(M) && \Loc_G(M)
}
\]
It is shown in the proof of \cite[Theorem 3.47]{ns23} that the morphism $\pi_G\colon \Loc_P(M)\rightarrow \Loc_G(M)$ is representable and proper. In particular, there is a natural transformation $(\pi_G)_*\pi_G^!\rightarrow \id$. We define the parabolic induction map $\ind_P$ as the composite
\begin{align*}
H^\bullet(\Loc_L(M), \varphi_{\Loc_L(M)})&\xrightarrow{\zeta_P} H^\bullet(\Loc_L(M), (\pi_L)_*\pi_G^!\varphi_{\Loc_G(M)}) \\
&\cong H^\bullet(\Loc_G(M), (\pi_G)_*\pi_G^!\varphi_{\Loc_G(M)}) \\
&\rightarrow H^\bullet(\Loc_G(M), \varphi_{\Loc_G(M)}).
\end{align*}
The three properties of $\ind_P$ follow from the corresponding properties of the isomorphism $\zeta_P$.
\end{proof}

\begin{rmk}
The parabolic induction map $\ind_P$ depends on the choice of a spin structure on $M$ as follows. The volume form $\overline{\vol}_G$ that goes into the definition of the orientation depends on the canonical Euler structure (and hence the spin structure) as shown in \cite[Proposition 3.23]{ns23}. In particular, the orientation $o_G$ for two different spin structures are canonically isomorphic since $G$ is unimodular; however, the orientation $\rho_P$ does depend on the spin structure. The difference of two spin structures $s_1, s_2$ defines an element of $H^1(M; \Z/2\Z)$. Denote its image under the composite
\[H^1(M; \Z/2\Z)\longrightarrow H^2(M; \Z)\cong H_1(M; \Z)\]
of the Bockstein homomorphism and Poincar\'e duality by $h$. It is a 2-torsion element of $H_1(M; \Z)$ which represents the difference of the corresponding canonical Euler structures. The modular character $P\rightarrow \bG_m$ of $P$ factors through $\Delta_P\colon L\rightarrow \bG_m$, and hence it defines a rank 1 local system on $\Loc_L(M)$. Denote by $\langle \Delta_P, h\rangle$ the monodromy of this local system along $h$ which defines a $\mu_2$-valued function on $\Loc_L(M)$. Then the two parabolic induction maps differ by a sign:
\[\ind^{s_2}_P = \ind^{s_1}_P\cdot \langle \Delta_P, h\rangle.\]
\end{rmk}

We now collect all the results we have proven to define a cohomological Hall algebra for $\GL_n$ and cohomological Hall modules for $\SO_n$ and $\Sp_{2n}$. For this, let us first describe a class of parabolic subgroups of classical groups (we denote by $\{e_i\}$ the standard basis of $\C^n$):
\begin{itemize}
    \item Let $P^{\GL}_{n,m}\subset \GL_{n+m}$ for $n,m\geq 0$ be the parabolic subgroup preserving the flag $\spn\{e_1, \dots, e_m\}\subset \spn\{e_1, \dots, e_{n+m}\}$. Its Levi factor is $\GL_n\times \GL_m$.
    \item Let $P^{\GL}_{n_1, n_2, n_3}\subset \GL_{n_1+n_2+n_3}$ for $n_1,n_2,n_3\geq 0$ be the parabolic subgroup preserving the flag
    \[\spn\{e_1, \dots, e_{n_1}\}\subset \spn\{e_1, \dots, e_{n_1+n_2}\}\subset \spn\{e_1, \dots, e_{n_1+n_2+n_3}\}.\]
    Its Levi factor is $\GL_{n_1}\times \GL_{n_2}\times \GL_{n_3}$.
    \item Let $\SO_n$ be the group of isometries of the symmetric bilinear form on $\C^n$ with $(e_i, e_j) = \delta_{i,n-j}$ of determinant $1$. Let $P^{\SO}_{n, m}\subset \SO_{n+2m}$ for $n,m\geq 0$ be the parabolic subgroup preserving the isotropic flag $\C^m=\spn\{e_1, \dots, e_m\}\subset \spn\{e_1, \dots, e_{n+2m}\}$. Its Levi factor is $\SO_n\times \GL_m$.
    \item Let $P^{\SO}_{n_1, n_2, n_3}\subset \SO_{n_1+2n_2+2n_3}$ for $n_1,n_2,n_3\geq 0$ be the parabolic subgroup preserving the isotropic flag
    \[\spn\{e_1, \dots, e_{n_1}\}\subset \spn\{e_1, \dots, e_{n_1+n_2}\}\subset \spn\{e_1, \dots, e_{n_1+2n_2+2n_3}\}.\]
    Its Levi factor is $\SO_{n_1}\times \GL_{n_2}\times \GL_{n_3}$.
    \item Let $\Sp_{2n}$ be the group of invertible transformations preserving the symplectic form
    \[(e_i, e_j) = \delta_{i,2n-j},\ i\leq n,\qquad (e_i, e_j) = -\delta_{i, 2n-j},\ i > n.\]
    Let $P^{\Sp}_{2n, m}\subset \Sp_{2n+2m}$ be the parabolic subgroup preserving the isotropic flag $\spn\{e_1, \dots, e_m\}\subset \spn\{e_1, \dots, e_{2n+2m}\}$. Its Levi factor is $\Sp_{2n}\times \GL_m$.
    \item Let $P^{\Sp}_{2n_1, n_2, n_3}\subset \Sp_{2n_1+2n_2+2n_3}$ for $n_1,n_2,n_3\geq 0$ be the parabolic subgroup preserving the isotropic flag
    \[\spn\{e_1, \dots, e_{n_1}\}\subset \spn\{e_1, \dots, e_{n_1+n_2}\}\subset \spn\{e_1, \dots, e_{2n_1+2n_2+2n_3}\}.\]
    Its Levi factor is $\Sp_{2n_1}\times \GL_{n_2}\times \GL_{n_3}$.
\end{itemize}

We have
\begin{equation}\label{eq:PGLcomposition}
P^{\GL}_{n_1, n_2} \times_{\GL_{n_1+n_2}} P^{\GL}_{n_1+n_2, n_3} = P^{\GL}_{n_1,n_2,n_3} = P^{\GL}_{n_2, n_3}\times_{\GL_{n_2+n_3}} P^{\GL}_{n_1, n_2+n_3},
\end{equation}
\begin{equation}\label{eq:PSOcomposition}
P^{\SO}_{n_1, n_2} \times_{\SO_{n_1+2n_2}} P^{\SO}_{n_1+2n_2, n_3} = P^{\SO}_{n_1,n_2,n_3} = P^{\GL}_{n_2, n_3}\times_{\GL_{n_2+n_3}} P^{\SO}_{n_1, n_2+n_3}
\end{equation}
and
\begin{equation}\label{eq:PSpcomposition}
P^{\Sp}_{2n_1, n_2} \times_{\Sp_{2n_1+2n_2}} P^{\Sp}_{2n_1+2n_2, n_3} = P^{\Sp}_{2n_1,n_2,n_3} = P^{\GL}_{n_2, n_3}\times_{\GL_{n_2+n_3}} P^{\Sp}_{2n_1, n_2+n_3}.
\end{equation}

Using the orientation $o_G$ and $\rho_P$ we obtain an orientation data for the category of local systems on $M$. Recall that by Theorem \ref{thm:Mlocsys} we have
\[\fM_{\LocSys(M)^\omega}\cong \Map(M_\B, \Perf).\]
Let
\[\fM = \Map(M_\B, \Vect)\]
be the substack of local systems of vector spaces.

\begin{prop}\label{prop:LocSysorientation}
Let $M$ be a connected closed spin $3$-manifold. Let $\eC = \LocSys(M)^\omega$ and $\fM\subset \fM_{\eC}$ the substack of local systems of vector spaces.
\begin{enumerate}
    \item There is an isomorphism of $(-1)$-shifted symplectic stacks $\fM\cong \coprod_n \Loc_{\GL_n}(M)$.
    \item $\fM$ has affine diagonal and is $\Theta$-reductive.
    \item $\fM$ is a moduli-like stack.
    \item There is a strong orientation data on $\fM$.
\end{enumerate}
\end{prop}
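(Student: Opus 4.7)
The plan is to derive (1)--(3) from results already assembled in the paper, and then to produce (4) by transporting the orientation data on character stacks from Proposition~\ref{prop:characterstackorientation} along the identification in (1). For (1), Theorem~\ref{thm:Mlocsys} applied to the closed oriented $3$-manifold $M$ gives a $(-1)$-shifted symplectic equivalence $\fM_\eC \simeq \Map(M_\B, \Perf)$, under which $\fM = \Map(M_\B, \Vect)$ corresponds to local systems of perfect complexes that are locally free in a single degree. Since $M$ is connected, Lemma~\ref{lem:connectedcoproduct} decomposes this as $\coprod_{n\geq 0} \Map(M_\B, B\GL_n) = \coprod_n \Loc_{\GL_n}(M)$, matching the $(-1)$-shifted symplectic structures. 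For (2), each $\Loc_{\GL_n}(M) = [\Loc_{\GL_n}^{\fr}(M)/\GL_n]$ is a quotient of an affine derived scheme by an affine group, giving affine diagonal; for $\Theta$-reductivity, Proposition~\ref{prop:LocGattractor}(6) identifies $\Filt(\fM) \to \fM$ on the component of rank $n$ with the disjoint union of evaluation maps $\Loc_P(M) \to \Loc_{\GL_n}(M)$ for parabolic subgroups $P\subset \GL_n$, each of which is representable and proper by the argument cited in the proof of Corollary~\ref{cor:parabolicinduction} (via \cite[Theorem~3.47]{ns23}).

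For (3), I will apply Proposition~\ref{prop:modulilikemoduli} to $\eC = \LocSys(M)^\omega$. The dg category is of finite type by \cite[Proposition~5.1]{bdi} and carries a $3$-Calabi--Yau structure from the orientation of $M$, as used in Theorem~\ref{thm:Mlocsys}. The open inclusion $\Vect \subset \Perf$ (used already in the proof of Proposition~\ref{prop:gradedmoduli} and recorded in Example~\ref{ex:GradVect}) induces an open immersion $\fM \subset \fM_\eC$ because mapping stacks out of the compact source $M_\B$ preserve openness of the target; $\fM$ obviously contains the zero module and is closed under direct sums; and $\fM^{\cl}$ is quasi-separated with affine stabilizers by step (2).

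For (4), define $o$ componentwise as $o_{\GL_n}$ from Proposition~\ref{prop:characterstackorientation}(1). To produce $\lambda$, I will use that the direct-sum map
\[
\Phi_2\colon \Loc_{\GL_{n_1}}(M) \times \Loc_{\GL_{n_2}}(M) \longrightarrow \Loc_{\GL_{n_1+n_2}}(M)
\]
factors as $\Loc_{\GL_{n_1}\times\GL_{n_2}}(M) \xrightarrow{\sigma} \Loc_{P^{\GL}_{n_1,n_2}}(M) \xrightarrow{\pi_G} \Loc_{\GL_{n_1+n_2}}(M)$, where $\sigma$ is the section of $\pi_L$ induced by the Levi inclusion. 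Pulling back the orientation $\rho_{P^{\GL}_{n_1,n_2}}$ of the parabolic Lagrangian correspondence along $\sigma$, and combining with the product isomorphism $o_{\GL_{n_1}\times \GL_{n_2}} \cong o_{\GL_{n_1}}\boxtimes o_{\GL_{n_2}}$ from Proposition~\ref{prop:characterstackorientation}(2), yields $\lambda$. The trivialization $\tau$ is tautological because $\eta$ picks out the rank-zero component $\Loc_{\GL_0}(M) = \pt$. The associativity square for $\lambda$ will then follow from Proposition~\ref{prop:paraboliccompositionorientation} applied to the two factorizations in~\eqref{eq:PGLcomposition}, together with the multiplicativity constraint in Proposition~\ref{prop:characterstackorientation}(4); the two unitality squares involving $\tau$ collapse to the case of rank-zero summands.

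The main obstacle will be matching the bookkeeping in (4): one must check that the Lagrangian correspondence $\Loc_{\GL_{n_1}\times \GL_{n_2}}(M) \leftarrow \Loc_{P^{\GL}_{n_1,n_2}}(M) \to \Loc_{\GL_{n_1+n_2}}(M)$ coming from Proposition~\ref{prop:LocGattractor}(6) agrees, under the equivalence of (1), with the attractor correspondence $\fM^2 \leftarrow \fM^{2-\filt} \to \fM$ produced by the moduli-like structure in (3) (via Proposition~\ref{prop:moduliattractorlagrangian}), and similarly at the level of iterated attractors indexed by cocharacters of $\GL_n^2$. Once these identifications are set up, the compatibilities in Proposition~\ref{prop:characterstackorientation} translate directly into the axioms of Definition~\ref{defin:CYorientationdata}.
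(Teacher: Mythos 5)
Your proposal is correct and follows essentially the same route as the paper: decompose $\fM \cong \coprod_n \Loc_{\GL_n}(M)$ via Lemma~\ref{lem:connectedcoproduct}, obtain the moduli-like structure from Proposition~\ref{prop:modulilikemoduli}, and build the strong orientation data from $o_{\GL_n}$ and $\rho_{P^{\GL}_{a,b}}$ through the identification of the attractor correspondence with the parabolic correspondences (Propositions~\ref{prop:gradedmoduli} and~\ref{prop:LocGattractor}), with associativity from \eqref{eq:PGLcomposition} and Proposition~\ref{prop:paraboliccompositionorientation} and unitality from the rank-zero case. The only cosmetic deviation is your $\Theta$-reductivity argument via properness of $\Loc_P(M)\to\Loc_{\GL_n}(M)$, where the paper instead cites \cite[Example 5.1.8]{hlp14}, and the compatibility you flag as the main obstacle is exactly what Proposition~\ref{prop:LocGattractor}(6)--(7) supplies.
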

\begin{proof}$ $
\begin{enumerate}
    \item Write
    \[\Vect\simeq \colim_n B \GL_n.\]
    Since $M$ is connected and finitely dominated, by Lemma \ref{lem:connectedcoproduct} we have
    \[\Map(M_\B, \Vect)\cong \coprod_n \Map(M_\B, B\GL_n).\]

    \item Choosing a basepoint, we may identify
    \[\Loc_{\GL_n}(M)\cong [\Loc^{\fr}_{\GL_n}(M) / \GL_n].\]
    This is a quotient of an affine scheme by an affine algebraic group, so the diagonal is affine. Moreover, by \cite[Example 5.1.8]{hlp14} it is $\Theta$-reductive.

    \item The claim follows from Proposition \ref{prop:modulilikemoduli}.

    \item By Proposition \ref{prop:gradedmoduli} the moduli stack $\fM^{2-\filt}$ parametrizes fibre sequences $\cL_1\rightarrow \cL_2\rightarrow \cL_3$ of local systems of perfect complexes on $M$, such that $\cL_1$ and $\cL_3$ are local systems of vector spaces. Therefore, $\cL_2$ is a local system of vector spaces and this is an exact sequence. Thus, the correspondence $\fM^2\leftarrow \fM^{2-\filt}\rightarrow \fM$ is equivalent to
    \[
    \xymatrix{
    & \coprod_{a,b} \Loc_{P_{a,b}}(M) \ar[dl] \ar[dr] & \\
    \coprod_{a,b} \Loc_{\GL_a}(M)\times \Loc_{\GL_b}(M) && \coprod_n \Loc_{\GL_n}(M)
    }
    \]
    Moreover, by Proposition \ref{prop:LocGattractor}(6) this equivalence is compatible with $(-1)$-shifted Lagrangian structures. Therefore, the orientation on $\fM$ follows from the orientation $o_{\GL_n}$ of $\Loc_{\GL_n}(M)$ and orientation $\rho_{P_{a, b}}$ of $\Loc_{\GL_a}(M)\times \Loc_{\GL_b}(M)\leftarrow \Loc_{P_{a,b}}(M)\rightarrow \Loc_{\GL_{a+b}}(M)$ constructed in Proposition \ref{prop:characterstackorientation}.

    Using \eqref{eq:PGLcomposition} we have
    \[(P^{\GL}_{n_1, n_2}\times\GL_{n_3}) \times_{\GL_{n_1+n_2}\times \GL_{n_3}} P^{\GL}_{n_1+n_2, n_3} = P^{\GL}_{n_1,n_2,n_3} = (\GL_{n_1}\times P^{\GL}_{n_2, n_3})\times_{\GL_{n_1}\times \GL_{n_2+n_3}} P^{\GL}_{n_1, n_2+n_3}.\]

    Thus, the associativity of the orientation data follows from Proposition \ref{prop:paraboliccompositionorientation}. The unitality of the orientation data follows from the fact that $\rho_{P_{n,0}}$ and $\rho_{P_{0, n}}$ are the identity orientations for the correspondence $\Loc_{\GL_n}(M)\leftarrow \Loc_{\GL_n}(M)\rightarrow \Loc_{\GL_n}(M)$.
\end{enumerate}
\end{proof}

\begin{thm}\label{thm:manifoldhall}
Let $M$ be a closed spin $3$-manifold. Consider the graded vector spaces
\begin{align*}
\cH^{\GL} &= \bigoplus_{n\geq 0} H_{\GL_n, M} \\
\cH^{\SO} &= \bigoplus_{n\geq 0} H_{\SO_n, M} \\
\cH^{\Sp} &= \bigoplus_{n\geq 0} H_{\Sp_{2n}, M}
\end{align*}
Then the maps
\begin{align*}
H_{\GL_n, M}\otimes H_{\GL_m, M}&\rightarrow H_{\GL_n\times \GL_m, M}\xrightarrow{\ind_{P^{\GL}_{n,m}}} H_{\GL_{n+m}, M} \\
H_{\SO_n, M}\otimes H_{\GL_m, M}&\rightarrow H_{\SO_n\times \GL_m, M}\xrightarrow{\ind_{P^{\SO}_{n,m}}} H_{\SO_{n+2m}, M} \\
H_{\Sp_{2n}, M}\otimes H_{\GL_m, M}&\rightarrow H_{\Sp_{2n}\times \GL_m, M}\xrightarrow{\ind_{P^{\Sp}_{2n,m}}} H_{\Sp_{2n+2m}, M}
\end{align*}
as well as the obvious isomorphism $e\colon \C\cong H_{\GL_0, M}$ define a graded algebra structure on $\cH^{\GL}$ and graded $\cH^{\GL}$-module structures on $\cH^{\SO}$ and $\cH^{\Sp}$.
\end{thm}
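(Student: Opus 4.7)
The plan is to define each structure map as a two-step composition: first the product map on DT cohomology coming from the Thom--Sebastiani/multiplicativity of the orientation $o_G$, and second the parabolic induction map of Corollary \ref{cor:parabolicinduction}. Concretely, for the algebra structure on $\cH^{\GL}$ the multiplication will be
\[
m_{n,m}\colon H_{\GL_n,M}\otimes H_{\GL_m,M}\longrightarrow H_{\GL_n\times\GL_m,M}\xrightarrow{\ind_{P^{\GL}_{n,m}}} H_{\GL_{n+m},M},
\]
and the unit $e$ will come from the canonical identification $H_{\GL_0,M}\cong H^\bullet(\pt,\bQ)\cong\bQ$ induced by $o_{\GL_0}$ being the trivial orientation of $\pt$. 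The module structures on $\cH^{\SO}$ and $\cH^{\Sp}$ are defined analogously with $P^{\SO}_{n,m}$ and $P^{\Sp}_{2n,m}$ in place of $P^{\GL}_{n,m}$.

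The central task is to verify associativity. For three classes in $H_{\GL_{n_1},M}\otimes H_{\GL_{n_2},M}\otimes H_{\GL_{n_3},M}$, the two bracketings give maps into $H_{\GL_{n_1+n_2+n_3},M}$ that can be written as products followed by parabolic inductions along $P^{\GL}_{n_1,n_2}\times\GL_{n_3}$ and $P^{\GL}_{n_1+n_2,n_3}$, versus $\GL_{n_1}\times P^{\GL}_{n_2,n_3}$ and $P^{\GL}_{n_1,n_2+n_3}$. I would first use the multiplicativity of the product isomorphism of DT sheaves (which is associative by Proposition \ref{prop:TS_DT_stack}, together with the multiplicativity of the orientation $o_G$ from Proposition \ref{prop:characterstackorientation}(2)), to reduce both composites to expressions purely in terms of parabolic inductions. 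Then Corollary \ref{cor:parabolicinduction}(2) lets me combine each $\ind_{P}\otimes\ind_{G'}$ into $\ind_{P\times G'}$, where the factor $\ind_{\GL_k}=\id$ is absorbed as a trivial induction via Corollary \ref{cor:parabolicinduction}(1). Finally Corollary \ref{cor:parabolicinduction}(3), applied with the composition identities \eqref{eq:PGLcomposition}, identifies both resulting iterated inductions with $\ind_{P^{\GL}_{n_1,n_2,n_3}}$. Unitality is immediate: $P^{\GL}_{n,0}=\GL_n=P^{\GL}_{0,n}$, so $\ind_{P^{\GL}_{n,0}}$ and $\ind_{P^{\GL}_{0,n}}$ are identities by Corollary \ref{cor:parabolicinduction}(1), and the product map with the unit in $H_{\GL_0,M}$ reduces to the identity by the unital compatibility of the Thom--Sebastiani isomorphism.

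The compatibility of the module structures with the algebra structure runs on exactly the same template: given classes in $H_{\SO_{n_1},M}\otimes H_{\GL_{n_2},M}\otimes H_{\GL_{n_3},M}$, one way goes through $P^{\SO}_{n_1,n_2}$ then $P^{\SO}_{n_1+2n_2,n_3}$, the other through $P^{\GL}_{n_2,n_3}$ then $P^{\SO}_{n_1,n_2+n_3}$, and the fibre-product identity \eqref{eq:PSOcomposition} together with Corollary \ref{cor:parabolicinduction}(2)--(3) identifies both with $\ind_{P^{\SO}_{n_1,n_2,n_3}}$. Similarly \eqref{eq:PSpcomposition} handles the symplectic case, and the mixed unitality statements $e\cdot x=x$ follow from $P^{\SO}_{n,0}=\SO_n$ and $P^{\Sp}_{2n,0}=\Sp_{2n}$ via Corollary \ref{cor:parabolicinduction}(1).

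The only point demanding genuine attention rather than routine bookkeeping is checking that the passage from $\ind_{P}\otimes\id$ to $\ind_{P\times G'}$ (and its composition with a second induction) strictly implements the identifications \eqref{eq:PGLcomposition}, \eqref{eq:PSOcomposition}, \eqref{eq:PSpcomposition} as subgroups of the appropriate product -- this is where one must check that the Levi/parabolic inclusions match the ones appearing in our definition of the $P$'s, so that Corollary \ref{cor:parabolicinduction}(3) can legitimately be applied. Once this is confirmed, assembling the commutative diagrams above establishes that $\cH^{\GL}$ is an associative unital graded algebra and that $\cH^{\SO}$, $\cH^{\Sp}$ are graded $\cH^{\GL}$-modules.
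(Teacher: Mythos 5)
Your proposal is correct, and for the module structures on $\cH^{\SO}$ and $\cH^{\Sp}$ it is essentially identical to the paper's argument: reduce via Corollary \ref{cor:parabolicinduction}(1)--(2) to an equality of two iterated parabolic inductions, then identify both with $\ind_{P^{\SO}_{n_1,n_2,n_3}}$ (resp.\ $\ind_{P^{\Sp}_{2n_1,n_2,n_3}}$) using the fibre-product identities \eqref{eq:PSOcomposition}, \eqref{eq:PSpcomposition} and Corollary \ref{cor:parabolicinduction}(3). Where you diverge is the algebra structure on $\cH^{\GL}$: you prove its associativity and unitality by running the very same parabolic-induction argument with \eqref{eq:PGLcomposition}, whereas the paper instead invokes Corollary \ref{cor:CoHA}, i.e.\ the general cohomological Hall algebra machinery for moduli-like stacks, made applicable by Proposition \ref{prop:LocSysorientation} (which identifies the attractor correspondence of $\coprod_n\Loc_{\GL_n}(M)$ with the parabolic correspondences and upgrades $o_{\GL_n}$, $\rho_{P^{\GL}_{a,b}}$ to a strong orientation data). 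Your route is more uniform and self-contained at this point --- all three structures are verified by the same direct computation, and you avoid having to check the hypotheses of Corollary \ref{cor:CoHA} ($\Theta$-reductivity, affine stabilizers, strong orientation data) --- while the paper's route makes explicit that the $\GL$ multiplication is an instance of the abstract 3CY CoHA construction, which is conceptually valuable but relies on the identification of the two multiplications furnished by Proposition \ref{prop:LocSysorientation}. The one point you rightly flag, that the subgroup identifications in \eqref{eq:PGLcomposition} must be matched with the Levi/parabolic data entering Corollary \ref{cor:parabolicinduction}(3) after inserting the extra $\GL$ factor, is exactly the refinement $(P^{\GL}_{n_1,n_2}\times\GL_{n_3})\times_{\GL_{n_1+n_2}\times\GL_{n_3}}P^{\GL}_{n_1+n_2,n_3}=P^{\GL}_{n_1,n_2,n_3}=(\GL_{n_1}\times P^{\GL}_{n_2,n_3})\times_{\GL_{n_1}\times\GL_{n_2+n_3}}P^{\GL}_{n_1,n_2+n_3}$ that the paper records in the proof of Proposition \ref{prop:LocSysorientation}, so no gap remains.
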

\begin{proof}
The fact that $\cH^{\GL}$ is a unital associative algebra is shown in Corollary \ref{cor:CoHA}. The unitality and associativity of the $\cH^{\GL}$-actions on $\cH^{\SO}$ and $\cH^{\Sp}$ are checked analogously, so we will only prove the claim for $\cH^{\SO}$.

We begin by establishing unitality. The multiplication map
\[H_{\SO_n, M}\otimes H_{\GL_0, M}\longrightarrow H_{\SO_n, M}\]
consists of the following two composites:
\[H_{\SO_n, M}\xrightarrow{\id\otimes e} H_{\SO_n, M}\otimes H_{\GL_0, M}\rightarrow H_{\SO_n\times \GL_0, M}\cong H_{\SO_n, M}\]
and
\[H_{\SO_n, M}\xrightarrow{\ind_{P^{\SO}_{n, 0}}} H_{\SO_n, M}.\]
The first composite is clearly the identity. The second composite involves parabolic induction along the parabolic subgroup $P^{\SO}_{n, 0}=\SO_n\subset \SO_n$ and is hence also the identity.

Next, let us establish associativity. Namely, we have to show that the two composites
\[
H_{\SO_{n_1}, M}\otimes H_{\GL_{n_2}, M}\otimes H_{\GL_{n_3}, M}\xrightarrow{\ind_{P^{\SO}_{n_1,n_2}}\otimes \id} H_{\SO_{n_1+2n_2}, M}\otimes H_{\GL_{n_3}, M}\xrightarrow{\ind_{P^{\SO}_{n_1+2n_2,n_3}}} H_{\SO_{n_1+2n_2+2n_3}, M}
\]
and
\[
H_{\SO_{n_1}, M}\otimes H_{\GL_{n_2}, M}\otimes H_{\GL_{n_3}, M}\xrightarrow{\id\otimes \ind_{P^{\GL}_{n_2,n_3}}} H_{\SO_{n_1}, M}\otimes H_{\GL_{n_2+n_3}, M}\xrightarrow{\ind_{P^{\SO}_{n_1,n_2+n_3}}} H_{\SO_{n_1+2n_2+2n_3}, M}
\]
are equal. Using the multiplicativity and unitality properties of the parabolic induction maps (Corollary \ref{cor:parabolicinduction}) we are reduced to showing that the two composites
\begin{equation}\label{eq:Hallmanifoldassociative1}
H_{\SO_{n_1}\times \GL_{n_2}\times \GL_{n_3}}\xrightarrow{\ind_{P^{\SO}_{n_1,n_2}\times \GL_{n_3}}} H_{\SO_{n_1+2n_2}\times \GL_{n_3}}\xrightarrow{\ind_{P^{\SO}_{n_1+2n_2,n_3}}} H_{\SO_{n_1+2n_2+2n_3}, M}
\end{equation}
and
\begin{equation}\label{eq:Hallmanifoldassociative2}
H_{\SO_{n_1}\times \GL_{n_2}\times \GL_{n_3}}\xrightarrow{\ind_{\SO_{n_1}\times P^{\GL}_{n_2,n_3}}} H_{\SO_{n_1}\times \GL_{n_2+n_3}}\xrightarrow{\ind_{P^{\SO}_{n_1,n_2+n_3}}} H_{\SO_{n_1+2n_2+2n_3}, M}
\end{equation}
are equal.

Using \eqref{eq:PSOcomposition} we have
\[(P^{\SO}_{n_1, n_2}\times\GL_{n_3}) \times_{\SO_{n_1+2n_2}\times \GL_{n_3}} P^{\SO}_{n_1+2n_2, n_3} = P^{\SO}_{n_1,n_2,n_3} = (\SO_{n_1}\times P^{\GL}_{n_2, n_3})\times_{\SO_{n_1}\times \GL_{n_2+n_3}} P^{\SO}_{n_1, n_2+n_3}.\]
Therefore, by the associativity property of parabolic induction shown in Corollary \ref{cor:parabolicinduction}(3) the two composites \eqref{eq:Hallmanifoldassociative1} and \eqref{eq:Hallmanifoldassociative2} are both equal to $\ind_{P^{\SO}_{n_1,n_2,n_3}}$.
\end{proof}

\appendix

\section{Braden's theorem for stacks}

The aim of this appendix is to prove a generalization of Braden's theorem on hyperbolic localization (= Theorem \ref{thm:Braden}) to the attractor correspondences of Artin stacks.
Though we have not used this result in our paper, we record it here since this can be proved analogously to Theorem \ref{thm:main_thm} and is interesting in its own right.
A similar result in the context of D-modules can be found in \cite[Theorem 4.3.4]{dg16}.

Let $\fX$ be a quasi-separated Artin stack with affine stabilizers.
We set $\Filt^{-}(\fX) \coloneqq \Map([\bA^{1, -} / \bG_m], \mathfrak{X})$ where $\bA^{1, -}$ denotes the affine line equipped with $\bG_m$-action of weight $-1$.
We let $\gr^{-} \colon \Filt^{-}(\fX) \to \Grad(\fX)$ and $\ev^{-} \colon \Filt^{-}(\fX) \to \fX$ denote the natural maps.
The following is the main result in this appendix, which generalizes Braden's theorem (= Theorem \ref{thm:Braden}):

\begin{prop}\label{prop:Braden_appendix}
    There exists a natural isomorphism of functors between monodromic constructible derived categories:
    \[
    \gr_* \ev^! \cong \gr^{-}_! \ev^{-, *}.
    \]
\end{prop}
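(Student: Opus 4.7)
The approach follows the strategy of Theorem \ref{thm:main_thm}: construct a natural transformation via standard six-operation manipulations, then verify it is an isomorphism by descent along $\bG_m$-equivariant charts (Proposition \ref{prop:QCha_atlas}), reducing at each chart to the classical Braden's theorem (Theorem \ref{thm:Braden}).

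First, I would construct a natural transformation $\gr^{-}_{!}\ev^{-,*} \to \gr_* \ev^!$ using the units and counits for the various adjunctions on the attractor and repeller correspondences $\Grad(\fX) \xleftarrow{\gr} \Filt(\fX) \xrightarrow{\ev} \fX$ and $\Grad(\fX) \xleftarrow{\gr^-} \Filt^{-}(\fX) \xrightarrow{\ev^-} \fX$. This amounts to a Beck--Chevalley-type morphism and is the natural generalization of Braden's own transformation.

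Second, for each $\bG_m$-equivariant chart $\fQ = (X,\mu,\bar q) \in \mathrm{Cha}_{\fX}^{\bG_m}$, I would pull back along the smooth atlas map $q_{\mu} \colon X^{\mu} \to \Grad(\fX)$. Exactly as in the construction of $\rho_q$ in \eqref{eq:rho_q}, smooth base change together with the purity transform produces a natural isomorphism
\[
q_{\mu}^{*} \circ \gr_{*} \ev^{!} \;\cong\; \gr_{\mu,*} \ev_{\mu}^{!} \circ q^{*}\,[2d^{-}],
\]
where $d^{-} = \rank_{-} T_{X/\fX}$. A parallel argument on the repeller side, using that $\gr^{-}_{!}$ commutes with $*$-pullback by the base change for $!$-functors, yields
\[
q_{\mu}^{*} \circ \gr^{-}_{!} \ev^{-,*} \;\cong\; \gr^{-}_{\mu,!} \ev_{\mu}^{-,*} \circ q^{*}\,[2d^{-}].
\]
Since $q^{*}\cF$ is $\mu$-equivariant on $X$ (the $\bG_m$-action on $\fX$ is trivial), it is monodromic, and the classical Braden's theorem identifies the right-hand sides. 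This proves the natural transformation becomes an isomorphism after pullback along each $q_{\mu}$, which suffices since the $q_{\mu}$ are jointly surjective.

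Third, I would verify gluing of the local isomorphisms under morphisms of $\bG_m$-equivariant charts, so that smooth descent (as in Corollary \ref{cor:QCha_glue}) produces a global isomorphism. For a morphism $h \colon \fQ_1 \to \fQ_2$ of charts, the compatibility is analogous to Lemma \ref{lem:rho_q_assoc} and follows from the associativity of the Beck--Chevalley transforms (Lemma \ref{lem:Beck_Chevalley_bc}, Lemma \ref{lem:Exi_associative}) applied both to the attractor and the repeller side, together with the functoriality of Braden's isomorphism with respect to $\bG_m$-equivariant smooth pullback.

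The main obstacle is the gluing step: one must carefully track the purity shifts on both sides so that the classical (shift-free) Braden isomorphism intertwines the two local identifications and descends through the atlas. This is a bookkeeping exercise in the six-functor formalism, routine in structure but delicate in detail, and entirely parallel to the diagram chases already carried out in Section \ref{ssec:hyp_stack}; once settled, the proof concludes exactly as for Theorem \ref{thm:main_thm}.
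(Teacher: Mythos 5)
There is a genuine gap, in two places. First, your comparison map is never actually constructed: the square relating the attractor and repeller correspondences is not Cartesian, so there is no ``naive'' Beck--Chevalley/unit--counit morphism between $\gr^-_!\ev^{-,*}$ and $\gr_*\ev^!$. The paper gets the map by first invoking the contraction lemma (Corollary \ref{cor:gr_contr_lemma}) to replace $\gr_*$ by $\sigma^*$ and $\gr^-_!$ by $\sigma^{-,!}$, and then observing (Lemma \ref{lem:app_etale}) that $\Grad(\fX)\to\Filt(\fX)\times_{\fX}\Filt^-(\fX)$ is \'etale, so the square formed by $\sigma,\sigma^-,\ev,\ev^-$ is pullable and the transform \eqref{eq:gen_bc2} applies; without these two inputs (or an explicit Drinfeld--Gaitsgory-style specialization construction) your ``First'' step is only a gesture. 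Second, your invertibility argument is a non sequitur as written: identifying $q_\mu^*\gr_*\ev^!$ and $q_\mu^*\gr^-_!\ev^{-,*}$ with the chart-level hyperbolic localization functors and then noting that Theorem \ref{thm:Braden} makes those \emph{abstractly} isomorphic does not show that your specific natural transformation becomes invertible after pullback. What is needed is that the pulled-back transformation is intertwined, via those identifications, with the chart-level Braden base-change map; this is exactly what the paper's use of the associativity of the generalized base-change transforms (Lemma \ref{lem:Exi_associative}, \cite[Proposition 3.5.4]{fyz23}) accomplishes, factoring $q_\mu^*\eta$ into the base-change map of \cite[Theorem 3.1.6]{dg14} composed with smooth base-change isomorphisms.

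Relatedly, your ``Third'' step is both unnecessary and unsound in this setting. If you have a single global transformation, invertibility can be checked after pullback along the jointly surjective smooth maps $q_\mu$, so no gluing is required. If instead you intend to build the isomorphism by gluing chart-level Braden isomorphisms, Corollary \ref{cor:QCha_glue} does not apply: it is a descent statement for sections of a sheaf, and $\Hom$ between arbitrary constructible complexes in the derived category is not a sheaf on the lisse-\'etale site (that device worked in Theorem \ref{thm:main_thm} only because the objects there were perverse). So the correct structure is: construct one global map (contraction lemma plus the pullable square), then verify invertibility chart by chart via the associativity of base-change transforms and the algebraic-space Braden theorem.
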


We need the following lemma, which is immediate from the description of the cotangent complexes on both sides\footnote{One can more generally show that the map is an open immersion.}:

\begin{lem}\label{lem:app_etale}
    The natural map $\Grad(\fX) \to \Filt(\fX) \times_{\fX} \Filt^{-}(\fX)$ is an \'etale morphism.
\end{lem}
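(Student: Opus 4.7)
The plan is to verify that $f \coloneqq (\sigma, \sigma^-) \colon \Grad(\fX) \to P \coloneqq \Filt(\fX)\times_{\fX} \Filt^-(\fX)$ is locally of finite presentation and has a trivial relative cotangent complex, which together imply that $f$ is étale. Local finite presentation is automatic since all relevant constructions preserve this property, so the entire content of the lemma is the cotangent vanishing.

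First I would compute $f^*\bL_P$. From the pullback square defining $P$, there is a natural fibre sequence
\[
u^*\bL_\fX \longrightarrow \sigma^*\bL_{\Filt(\fX)} \oplus \sigma^{-,*}\bL_{\Filt^-(\fX)} \longrightarrow f^*\bL_P,
\]
where $u \coloneqq \ev\circ\sigma = \ev^-\circ\sigma^-$ is the canonical evaluation $\Grad(\fX) \to \fX$. Applying Proposition \ref{prop:GradFiltcotangent} in the positive and negative $\bG_m$-weight conventions gives canonical identifications $\bL_{\Filt(\fX)/\Grad(\fX)}|_{\Grad(\fX)} \cong (u^*\bL_\fX)^-$ and $\bL_{\Filt^-(\fX)/\Grad(\fX)}|_{\Grad(\fX)} \cong (u^*\bL_\fX)^+$, together with $\bL_{\Grad(\fX)} \cong (u^*\bL_\fX)^0$. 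Because $\sigma$ is a section of $\gr$, the cotangent fibre sequence for $\gr$ splits after pullback along $\sigma$, giving
\[
\sigma^*\bL_{\Filt(\fX)} \;\cong\; (u^*\bL_\fX)^0 \oplus (u^*\bL_\fX)^-,
\]
and analogously $\sigma^{-,*}\bL_{\Filt^-(\fX)} \cong (u^*\bL_\fX)^0 \oplus (u^*\bL_\fX)^+$.

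Next, using the natural weight decomposition $u^*\bL_\fX \cong (u^*\bL_\fX)^0 \oplus (u^*\bL_\fX)^+ \oplus (u^*\bL_\fX)^-$, I would identify the first map in the fibre sequence above. By $\bG_m$-equivariance and the compatibility $\ev\circ\sigma = u$, this map is given on weight components by the inclusion of $(u^*\bL_\fX)^0 \oplus (u^*\bL_\fX)^-$ into the first summand and the inclusion of $(u^*\bL_\fX)^0 \oplus (u^*\bL_\fX)^+$ into the second. Its cofibre is then canonically isomorphic to the remaining ``antidiagonal'' copy $(u^*\bL_\fX)^0 \cong \bL_{\Grad(\fX)}$, and the induced map $f^*\bL_P \to \bL_{\Grad(\fX)}$ is the identity, so the fibre sequence $f^*\bL_P \to \bL_{\Grad(\fX)} \to \bL_{\Grad(\fX)/P}$ shows $\bL_{\Grad(\fX)/P} \simeq 0$.

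The main technical step is justifying that the map $u^*\bL_\fX \to \sigma^*\bL_{\Filt(\fX)}$ respects the $\bG_m$-weight decompositions in the form claimed. This should follow by unwinding the functoriality of the cotangent construction along $\ev\circ\sigma = u$ combined with the $\bG_m$-equivariance built into the mapping-stack definitions of $\Grad$ and $\Filt$; if desired, one can further reduce to explicit calculation by pulling back along a $\bG_m$-equivariant smooth atlas $[U/H] \to \fX$ as in Proposition \ref{prop:QCha_atlas}, where on the affine scheme $U$ every complex in sight becomes a concrete $\bG_m$-representation and the weight projections are manifest.
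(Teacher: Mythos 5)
Your argument is correct and is precisely the computation the paper is gesturing at: the paper dismisses this lemma with "immediate from the description of the cotangent complexes on both sides," and your proof carries out exactly that — split $\sigma^*\bL_{\Filt(\fX)}$ and $\sigma^{-,*}\bL_{\Filt^-(\fX)}$ into weight pieces via Proposition \ref{prop:GradFiltcotangent} and the sections, observe that the positive and negative parts cancel in the cofibre defining $f^*\bL_P$, and conclude $\bL_{\Grad(\fX)/P}\simeq 0$. No substantive differences from the paper's (implicit) route.
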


\begin{proof}[Proof of Proposition \ref{prop:Braden_appendix}]
    Let $\sigma \colon \Grad(\fX) \to \Filt(\fX)$ and $\sigma^{-} \colon \Grad(\fX) \to \Filt^{-}(\fX)$
    be the canonical sections of $\gr$ and $\gr^{-}$, respectively.
    Then by using Proposition \ref{prop:contraction}, there exists a natural isomorphism
    \[
    \gr_* \cong \sigma^*, \quad \gr^{-}_! \cong \sigma^!.
    \]
    Therefore it is enough to construct an equivalence between
    $\sigma^* \ev^!$ and $\sigma^{-, !} \ev^{-, *}$.
    Consider the following diagram:
    \[
    \xymatrix{
    {\Grad(\fX)} 
    \ar[r]^-{\sigma} 
    \ar[d]^-{\sigma^{-}}
    & {\Filt(\fX)} 
    \ar[d]^-{\ev} \\
    {\Filt^{-}(\fX).} \ar[r]^-{\ev^{-}}
    & {\fX.}
    }
    \]
    By Lemma \ref{lem:app_etale}, this diagram is pullable (see \S \ref{sssec:gen_bc}).
    Therefore, we can define the base change transform \eqref{eq:gen_bc2}
    \[
    \eta \colon \sigma^* \ev^! \to \sigma^{-, !} \ev^{-, *}.
    \]
    We will show that this map is invertible.
    To see this, take a $\bG_m$-equivariant chart $(X, \mu, \bar{q})$ of $\fX$ such that 
    $q_{\mu} \colon X^{\mu} \to \Grad(\fX)$ is surjective using Proposition \ref{prop:QCha_atlas}.
    It is enough to show that $q_{\mu}^* \eta$ is invertible.
    Consider the following diagram:
    \[
    \xymatrix{
    {X^{\mu}}
    \ar[r]^-{q_{\mu}}
    \ar[d]^{\sigma_{\mu}^{-}}
    & {\Grad(\fX)} 
    \ar[r]^-{\sigma}
    \ar[d]^{\sigma^{-}}
    & {\Filt(\fX)}
    \ar[d]^-{\ev} \\
    {X^{\mu, -}}
    \ar[r]^-{q_{\mu}^{-}}
    & {\Filt^{-}(\fX)}
    \ar[r]^-{\ev^{-}}
    & {\fX.}
    }
    \]
    Since $q_{\mu}$ and $q_{\mu}^{-}$ are smooth, the left square is pullable and the base change map 
    $q_{\mu}^* \sigma^{-, !}[2\dim q_{\mu}] \to \sigma_{\mu}^{-, !} q_{\mu}^{-, *}[2 \dim q_{\mu}^{-}]$ is invertible.
    Using the associativity of the base change map \eqref{eq:gen_bc2} for pullable squares \cite[Proposition 3.5.4]{fyz23}, it is enough to prove the invertibility of the base change map  \eqref{eq:gen_bc2} for the outer squares.
    Now consider the following diagram:
       \[
    \xymatrix{
    {X^{\mu}}
    \ar[r]^-{\sigma_{\mu}}
    \ar[d]^{\sigma_{\mu}^{-}}
    & {X^{\mu, +}} 
    \ar[r]^-{q_{\mu}^{+}}
    \ar[d]^{\ev_{\mu}}
    & {\Filt(\fX)}
    \ar[d]^-{\ev} \\
    {X^{\mu, -}}
    \ar[r]^-{\ev_{\mu}^{-}}
    & {X}
    \ar[r]^-{q}
    & {\fX.}
    }
    \]
    These diagrams are obviously pullable. 
    The base change map \eqref{eq:gen_bc2} for the left square is invertible by \cite[Theorem 3.1.6]{dg14}
    and the base change map \eqref{eq:gen_bc2} for the right square is invertible by the smoothness of $q$ and $q_{\mu}$.
    Therefore, we conclude the invertibility of the base change map \eqref{eq:gen_bc2} for the outer square.
    
\end{proof}

\begin{rmk}
    Though we have proved it for analytic sheaves on complex Artin stacks,
    the same method works for any sheaf theory context for which Braden's theorem on hyperbolic localization for $\bG_m$-action on algebraic spaces (= Theorem \ref{thm:Braden}) holds.
    For example, one can prove the same result for $\ell$-adic sheaves on Artin stacks over any field
    and also for mixed Hodge modules using the six-functor formalism \cite{Tub2}.
\end{rmk}

\begin{rmk}
    By repeating the discussion in \S \ref{ssec-nontriv-action}, we may extend Proposition \ref{prop:Braden_appendix} to stacks with non-trivial $\mathbb{G}_{\mathrm{m}}$-actions.
\end{rmk}

\section{Sign convention in shifted symplectic geometry}

In this appendix, we will explain some sign conventions regarding shifted symplectic perfect complexes.
Throughout, we fix a derived stack $\mathfrak{X}$ and $E$ be a perfect complex over $\mathfrak{X}$.

\subsection{Commutation of the duality and the shift functors}

Here we will explain our choice of the equivalence 
\[
\beta_n \colon E[n]^{\vee} \cong E^{\vee}[-n].
\]
We warn the reader that our choice is different from the standard one used in the literature e.g. in \cite[\S 1.3]{Con}.
However, our choice of the sign has an advantage that the diagrams \eqref{eq:dual_shift} and \eqref{eq:dual_shift_n}
 commute.
Also, it seems that most of the literature discussing symmetric perfect obstruction theory implicitly uses the same sign convention as ours: see \eqref{eq-symplectic_sign_-1}.

For a perfect complex $F \in \mathrm{Perf}(\mathfrak{X})$, we let 
\[
\Delta_F \colon F \to 0 \to F[1]
\]
the fibre sequence given by the rotation of $F \xrightarrow{\id} F \to 0$.
Consider a map
\[
\alpha_1 \colon (E[1])^{\vee} \cong E^{\vee}[-1]
\]
which naturally induce the following equivalence of fibre sequences:
\begin{equation}\label{eq-tauto-fibreseq-equiv}
    \begin{tikzcd}
	{\Delta_{E}^{\vee}:E[1]^{\vee}} & 0 & {E^{\vee}} \\
	{\Delta_{E^{\vee}[-1]}:E^{\vee}[-1]} & 0 & {E^{\vee}.}
	\arrow[from=1-1, to=1-2]
	\arrow["{\alpha_1}"', shift left=5, from=1-1, to=2-1]
	\arrow[from=1-2, to=1-3]
	\arrow[Rightarrow, no head, from=1-2, to=2-2]
	\arrow[Rightarrow, no head, from=1-3, to=2-3]
	\arrow[from=2-1, to=2-2]
	\arrow[from=2-2, to=2-3]
\end{tikzcd}
\end{equation}
For an integer $n$, we define
\[
\alpha_n \colon (E[n])^{\vee} \cong E^{\vee}[-n]
\]
inductively so that the following diagram commutes for any $n$ and $m$:
\[\begin{tikzcd}
	{E[n + m]^{\vee}} & {E[n]^{\vee}[-m]} \\
	{E^{\vee}[-n -m]} & {E^{\vee}[-n -m].}
	\arrow["{\alpha_m}", from=1-1, to=1-2]
	\arrow["{\alpha_{n + m}}"', from=1-1, to=2-1]
	\arrow["{\alpha_n}", from=1-2, to=2-2]
	\arrow[Rightarrow, no head, from=2-1, to=2-2]
\end{tikzcd}\]
Finally, we set
\begin{equation}\label{eq-beta-def}
\beta_n \coloneqq (-1)^n \cdot \alpha_n .
\end{equation}

Assume now that $E$ is equipped with an $k$-shifted symplectic form $\omega \in \Gamma(\mathfrak{X}, \wedge^2 E^{\vee} [k])$. Let 
\[
\phi_{E} \colon E^{\vee} \to E[k]
\]
be the map given by the composite
\[
 E \xrightarrow{\omega} (\wedge^2 E^{\vee})[k] \otimes E \cong (\wedge^2 E^{\vee} \otimes E)[k] \hookrightarrow (E^{\vee} \otimes E^{\vee} \otimes E)[k] \xrightarrow[]{\id \otimes \mathrm{ev}} E^{\vee}[k].
\]
Then an easy computation implies that the following diagram commutes:
\[\begin{tikzcd}
	E &[2cm] {E^{\vee}[k]} \\
	{E^{\vee \vee}} \\
	{(E^{\vee}[k])^{\vee}[k]} & {E^{\vee}[k].}
	\arrow["\cong"', from=1-1, to=1-2]
	\arrow["{(-1)^{k(k-1)/2 + 1} \cdot \phi_E}", from=1-1, to=1-2]
	\arrow["\cong", from=1-1, to=2-1]
	\arrow[Rightarrow, no head, from=1-2, to=3-2]
	\arrow["{\beta_k}"', from=2-1, to=3-1]
	\arrow["\cong", from=2-1, to=3-1]
	\arrow["{\phi_{E}^{\vee}[k]}", from=3-2, to=3-1]
	\arrow["\cong"', from=3-2, to=3-1]
\end{tikzcd}\]
See \cite[Lemma 0.2]{symplecticsign} for the detail.
In particular, when $k = -1$, we have
\begin{equation}\label{eq-symplectic_sign_-1}
\phi_E = \phi_E^{\vee}[-1]
\end{equation}
under the identification $\beta_{-1} \colon E^{\vee}[-1]^{\vee} \cong E^{\vee \vee}[1] \cong E[1]$.

\printbibliography

\end{document}